\documentclass[11pt,reqno]{amsart}

\usepackage[margin=0.94in]{geometry}  
 \usepackage{amsmath,amssymb}
 \usepackage{algpseudocode}
 \usepackage{algorithm}
 \usepackage{algorithmicx}
 \usepackage{caption}
 \usepackage{subcaption}
 \usepackage{amsthm}
 \numberwithin{equation}{section}
 \usepackage{amsfonts}
 \usepackage{graphicx}
  \usepackage{mathrsfs}
 \usepackage{hyperref}
 \usepackage{makecell}
 \usepackage{longtable}
 \usepackage{comment}
 \usepackage{enumitem}
\usepackage{tikz}

\usepackage{amsmath}
\usepackage{amssymb}
\usepackage{amsthm}
\usepackage{amsmath}
\usepackage{setspace}
\usepackage{xcolor}
\usepackage{fancyhdr}
\usepackage{amssymb}
\usepackage{amsthm}
\usepackage{listings}
    \usepackage{cite}
    \usepackage{tabularx}
    \usepackage{graphicx}
    \usepackage{epstopdf}    
    \usepackage{epsfig}
    \usepackage{float}
    \usepackage{ listings}
    \usepackage{appendix}
 \theoremstyle{plain}

 \newtheorem{thm}{Theorem}[section]
 
\newtheorem*{conjecture*}{Conjecture}

 \newtheorem{prop}[thm]{Proposition}
 \newtheorem{lem}[thm]{Lemma}
 \newtheorem{cor}[thm]{Corollary}
 \theoremstyle{definition}
 \newtheorem{definition}[thm]{Definition}

 \newtheorem{remark}[thm]{Remark}
  \newtheorem{ques}{Question}

\newcommand\ang[1]{ { \la {#1}\ra } }

\newcommand \nlinf[1]{ {\| #1 \|}_{\linf} }

\newcommand \nchi[1]{ {\| #1 \|}_{\cX} }
\newcommand \nchib[1]{ {\| #1 \|}_{\bcX} }

\def \beps {\bar \e}

\let \hyr = \hyperref
\let \its = \itshape

\def \cXb {\cX_2}
\def \cXc {\cX_3}

\def \cLa {\cL_{\al}}
\def \cLab {\cL_{\wa}}
\def \cEab {\cE_{\al, \b}}
\def \cNab {\cN_{\al, \b}}

\def \cRa { \td \cR_{\al}}

\def \wwd {\om_{\mw{1D}}}

\def \vpa {\vp_\sst}
\def \vpb {\vp_b}

\def \vpcc {\vp_c}

\def \vpk {\vp_{\mw{one}}}

\def \hk { {  \f{1+\kp}{2} } }
\def \mhk { {  \f{1-\kp}{2} } }

\def \sst { \mf{ne}}

\def \muc { \mu_{\mf{ctr}} }

\def \alb{\bar \al}
\def \vmu{ \vec \mu }

\def \nmu { { n_{\mu}} }
\def \nst { n_{\sst}}

\def \he {\hat \e_{\b}}

\def \hal {\hat \al_l}
\def \hau {\hat \al_u}

\def \cpsidxo {\mCC_{\psio/x,0} }
\def \cpsixo {\mCC_{\psio_x, 0} }

\def \cpsidxJ {\mCC_{\psio/x, J}}
\def \cpsixJ {\mCC_{\psio_x, J}}

\def \Cpsidxo {\MCC_{\psio/x,0} }

\def \Cpsidx {\MCC_{\psio/x} }

\def \CpsidxJ {\MCC_{\psio/x, J}}
\def \CpsixJ {\MCC_{\psio_x, J}}

\def \dda{\mathscr{E}_0}

\def \ddc{\mathscr{E}_2}

\def \ddd {\nu}

 \def\bw{ \bar W}

 \def \bwf{ \bar{\mathsf{ W}}_{\mf{F}} }
 \def \bwfa{ \bar{\mathsf{ W}}_{\mf{F, A}} }
 \def \bwp{ \bar{\mf{ W}}_{\mf{P}} }

 \def \cwf {\wh W_{F, \chi_2}}

\def \bD{\mathbb{D}}
\def \bDD { \mathbb{D}_{\d_*} }

\def \linf {L^{\infty}}

 \def\bv{ \bar V}

  \def \dfix { \d_{\cF}}
 \def \dfixa { \d_{\cF,1}}

\def \bbb { \kp_\mf{O}}

\def \wa{ W_{\b} }

\def \waa{ \bar{W}_{\al} }
\def \vaa {\bar V_{\al}}

\def \clone { \bar c_{l, \al } } 
\def \cwone { \bar c_{\om, \al } }

\def \wwwa {\overline W_{\alb}}
\def \vvva {\bar V_{\alb}}

\def \ww {\om}
\def \psio {\mathring{\psi}}
\def \psiox {\mathring{\psi}_x}
\def \psioy {\mathring{\psi}_y}
\def \psioa {\mathring{\psi}_{\al} }

\def \Wmix { \bw_{ \mf{mix}}}
\def \Vmix { \bv_{ \mf{mix}}}

\def \psia { \psi_{\al} }

\def \psioab {\mathring{\psi}_{\alb} }
\def \Vo { \mathring{V} }

\def \cls {\bar c_{l, \al}}
\def \cws  {\bar c_{\om, \al}}

\def \clss {c_{l,  \al, *}}
\def \cwss  { c_{\om, \al, *}}

\def \wwd {\ww_{\mathsf{1D}}}

\def \cw {c_{\om}}

 \def \va{ V_{\al ,\b}}
 \def \cJa{ \cJ_{\al} }
 
  \def \cJab{ \cJ_{\al, \b} }
 \def \cJaa{ \la \cJ_{\al, \b} \ra }

 \def \cJe{ \la \cJ_{\e} \ra }

\def \cJM { \cJ_{\mf{ M} }}

\def \mmf {\mf{m}}

 \def \ga {g_{\al, \b}}

\def \voa { \Vo_{\al, \b}}

\def \crab{ \cR_{\al,\b}}

\def \tcr {\td \cR}
\def \crbb {\td \cR_{\alb}}

\def \lamst {\lam_{\mathsf{st}}}

\def \lamalb  {\lam_{\alb}}
\def \lamcL {\lam_{\cL}}

\def \cff {C_{\mathsf{far}}}

\def \cFR {\cF_{\al, \R}}

\def \Rmua { R_{\mu, 1} }
\def \Rmub { R_{\mu, 2} }

\def \lgp { \log_+ }
\def \lgx { |\log_+ x| }
\def \lgy { |\log_+ y| }

\def \cWg { \cW_{\g} }

\def \com {c_{\om}}

\def \cca {\mf{C}_0}
\def \ccb {\mf{C}_1}
\def \ccc {\mf{C}_2}

\def \bcX {{\mathcal X_{\alb}}}

\def \linff {L^{\infty}}

\def \mBs  {\mathsf{Bs}}
\def \nnx { N_{\mmx}}
\def \nnX { N_X}
\def \xed {X_{\mf{max}}}

\def \mmx {\mw{x}}
\def \GQF { \mf{QG}_5 }
\def \AAF {\mf{A}}

\def \ss {\mathbf{s}}

\let \tts = \textstyle

 \let \mw = \mathrm
 \let \we = \wedge
\let \mf = \mathsf
\let  \rsa =\rightsquigarrow

\let \tf = \tfrac

\let \nolim = \nolimits

 \let\pa=\partial
 \let\al=\alpha
 \let\b=\beta
 \let\d=\delta
 \let\g=\gamma
 \let \gam = \gamma 
 \let\e=\varepsilon

 \let \kp = \kappa
 \let\lam=\lambda
 
 \let\s=\sigma
 \let\f=\frac

 \let \les = \lesssim
  \let \gtr = \gtrsim
 \let\om=\omega
 
 \let \th = \theta
  
 \let \pr = \prime
 \let \vp = \varphi
 
 \let \Gam = \Gamma
\let\B = \Big
 \let\D=\Delta

 \let\td = \tilde
 
 \let\wh=\widehat
 \let \mr = \mathring

 \let\teq \triangleq
 
 \let\pa=\partial
 
 \let \vs = \vspace

 \def\cE{{\mathcal E}}
 \def\cF{{\mathcal F}}

 \def\cI{{\mathcal I}}
 \def\cJ{{\mathcal J}}
 \def\cK{{\mathcal K}}
 \def\cL{{\mathcal L}}
 \def\cM{{\mathcal M}}
 \def\cN{{\mathcal N}}
 
 \def\cP{{\mathcal P}}
 
 \def\cR{{\mathcal R}}
 
 \def\cT{{\mathcal T}}

 \def\cW{{\mathcal W}}
 \def\cX{{\mathcal X}}

 \def\cM{{\mathcal M}}
 \def\cN{{\mathcal N}}

 \def\la{\langle}
 \def\ra{\rangle}

\def\one{\mathbf{1}}

\def \uds{ \underset }

 \newcommand{\bit}{\begin{itemize}}
 \newcommand{\eit}{\end{itemize}}

 \newcommand{\bseq}{\begin{subequations}}
 \newcommand{\eseq}{\end{subequations}}

 \newcommand{\beq}{\begin{equation}}
 \newcommand{\eeq}{\end{equation}}
  \newcommand{\bal}{\begin{aligned} }
  \newcommand{\eal}{\end{aligned}}
    \newcommand{\bga}{ \begin{gathered} }
  \newcommand{\ega}{ \end{gathered} }
 \newcommand{\ben}{\begin{eqnarray}}
 \newcommand{\een}{\end{eqnarray}}
 \newcommand{\beno}{\begin{eqnarray*}}
 \newcommand{\eeno}{\end{eqnarray*}}

  \newcommand{\bb}{\mathbf{b}}

 \newcommand{\uu}{\mathbf{u}}

 \newcommand{\xx}{\mathbf{x}}
 \newcommand{\yy}{\mathbf{y}}

\newcommand{\mCC}{\mathsf{c}}
\newcommand{\MCC}{ \mathscr{C} }

 \newcommand{\R}{\mathbb{R}}

\newcommand{\sgn}{\mathrm{sgn}}
 \newcommand{\supp}{\mathrm{supp}}

\def \alb{\bar \al}
\def \vmu{ \vec \mu }

\def \bD{\mathbb{D}}
\def \bDD { \mathbb{D}_{\dfix} }

 \def \wa{ W_{\b}}
 \def \va{ V_{\al ,\b}}

\def \crab{ \cR_{\al,\b}}

 \let \mw = \mathrm
 \let \we = \wedge
\let \mf = \mathsf

 \author{Jiajie Chen}

 \address{Department of Mathematics, University of Chicago, Chicago, IL 60637.}
\email{\href{jiajiechen@uchicago.edu}{jiajiechen@uchicago.edu}}

 \date{ \today}

\title[$C^{\infty}$ limiting 1D Profiles]{Asymptotically Self-Similar Blowup for 3D Incompressible Euler with $C^{1, 1/3-}$ Velocity I: $C^{\infty}$ 1D Limiting  Profiles}

\begin{document}

\begin{abstract}

We consider a one-parameter family of 1D models for the 3D axisymmetric incompressible Euler equation 
with $C^{\alpha}$ vorticity and without swirl near the symmetry axis.  For $\alpha = \frac13$, 
we impose a crucial normalization and construct a $C^{\infty}$ self-similar blowup profile with 
unbounded 1D stream function and infinite spatial blowup rate, using a fixed-point argument around a numerically constructed approximate profile. 
For $\alpha < \frac13$ sufficiently  close to $\frac13$, we perturb the $\frac13$-profile 
and analytically construct exact smooth 1D profiles with bounded stream function and finite spatial blowup rate. In the companion work~\cite{chen2026eulerII}, for any \(\alpha \in (0,\frac13)\), 
we lift these 1D blowup profiles to construct exact \(C^{1,\alpha}\) self-similar blowup 
profiles for 3D Euler, and build on them to prove sharp asymptotically self-similar blowup 
for 3D axisymmetric Euler without swirl from \(C_c^\alpha\) initial vorticity and 
\(C^{1,\alpha} \cap L^2\) initial velocity.

\end{abstract}

 \setcounter{tocdepth}{1}
 \hypersetup{bookmarksdepth=2}

 \maketitle
 \tableofcontents

\section{Introduction}

We consider a one-parameter family of 1D models for the 3D incompressible Euler equations
\beq\label{eq:1D}
\pa_t \ww +  2  \psia  \pa_x \ww =  - (1-\al) \pa_x \psi_{\al} \ww , \quad 
\psia( \ww )(x) \teq  - \int_\R |x-y|^{\al}  \ww(y)  d y ,
\eeq
for $x \in \R$ and $\al \in (0, 1)$. The model \eqref{eq:1D}
was derived in \cite{chen2026eulerII,hou2024potential} to describe the behavior of the 3D  axisymmetric Euler without swirl near the symmetry axis, with $C^{\al}$ vorticity. Here $\psia$ models the angular stream function, $\ww$ models the angular vorticity, and $\al$ is the H\"older regularity exponent.

The question of finite-time singularity formation for the 3D incompressible Euler equations from smooth finite-energy initial data is a major open problem in mathematical fluid mechanics \cite{majda2002vorticity}. In recent years, substantial progress on Euler singularity formation has been achieved. In the remarkable work \cite{elgindi2019finite}, Elgindi established singularity formation for 3D axisymmetric Euler  without swirl, with \(C^{1,\al}\) velocity for small \(\al\).
The result was later extended to allow small swirl \cite{elgindi2019stability}.  With Hou \cite{chen2019finite2}, we established finite-time blowup for 2D Boussinesq and 3D axisymmetric Euler with \(C^{1,\al}\) velocity, large swirl, boundary, and small $\al$.  In joint works with Hou \cite{ChenHou2023a,ChenHou2023b}, we established stable nearly self-similar blowup for 2D Boussinesq and 3D axisymmetric Euler with smooth boundary from smooth initial data. This provides the first construction of a 3D Euler singularity from smooth data in a smooth domain. See also the review paper \cite{chen2025singularity}.  In \cite{cordoba2023finite},  Cordoba, Martinez-Zoroa, and Zheng used iterative layer constructions to establish blowup for 3D axisymmetric Euler with $C^{1,\alpha} \cap C^{\infty}(\R^3 \backslash \{0 \})$ velocity and small $\al$. By introducing a solution-dependent bounded force $f \in C^{1,1/2-}$, Cordoba and Martinez-Zoroa \cite{cordoba2023blow} established blowup for the forced 3D Euler with a smooth velocity. Inspired by \cite{cordoba2023finite},  in \cite{chen2024remarks}, we refined the construction in \cite{elgindi2019finite,chen2019finite2} to obtain a self-similar blowup solution with $C^{\infty}$ regularity except the blowup point. In \cite{elgindi2023instability},  Elgindi-Pasqualotto constructed finite-time blowup for Boussinesq  
equations in $\R^2$ and for 3D axisymmetric Euler with $C^{1,\al}$ velocity and small $\al$.

Besides the rigorous blowup results for 3D Euler, considerable effort has also been devoted to studying potential singularity formation mechanisms through simplified 1D models.
The study of 1D fluid models has a long history, dating back to the CLM model \cite{CLM85} for 3D Euler. Since then, many 1D models have been proposed, including the De Gregorio \cite{DG90}, CCF \cite{cordoba2005formation}, gCLM \cite{OSW08}, CKY \cite{choi2015finite}, and HL \cite{choi2014on} models. 
In recent years, self-similar methods have played an important role in constructing 
 blowup from smooth data for several such models, including the gCLM model 
with small $|a|$ by Elgindi--Jeong \cite{Elg17}, the De Gregorio and HL models by Chen--Hou--Huang \cite{chen2019finite}, the gCLM model with \(a=\f12\) and sharp blowup results near the endpoints by Chen \cite{chen2020singularity,chen2021regularity,chen2020slightly}, and the CLM model with multi-scale structure by Huang--Qin--Wang \cite{huang2024multi}. See also the self-similar blowup for  gCLM model with $C^{\al}$ data \cite{Elg17,chen2019finite,zheng2023exactly} and the De Gregorio model with $H^1$ self-similar profiles \cite{huang2023self}.

\subsection{Motivation}

Although many 1D nonlocal fluid models develop singularities from \emph{smooth data}, including 
all the above-mentioned models on the real line, these singularities have not led to singularities for incompressible fluid equations in a smooth domain with \(C^{1,\al}\) velocity without requiring small \(\al\). This naturally leads to the following questions:

\begin{ques}\label{ques:1D_model}
Can one lift a singularity from a 1D nonlocal fluid model to an incompressible fluid equation in 
$\R^2$ or $\R^3$ with \(C^{1,\al}\cap L^2\) velocity---for example, Euler, Boussinesq, SQG, or IPM---without requiring \(\al\) to be small? 
What properties of the 1D singularity are needed for such a construction?
\end{ques}

There are two fundamental obstructions to such a lifting. First, many connections between 1D models and fluid equations produce solutions with \emph{infinite energy}. 
Second, incompressible fluid equations such as 3D Euler are \emph{nonlocal}, whereas a 1D model captures only \emph{lower-dimensional} information.

One of the main motivations of this work and the companion work \cite{chen2026implosion} is to provide an affirmative answer to Question \ref{ques:1D_model}. 
In this work, we construct \(C^\infty\) self-similar blowup profiles for \eqref{eq:1D} with \(\al\le \f13\) and \(\al\) close to \(\f13\). 
The central difficulty is to construct these 1D profiles with strong stability properties 
and \emph{arbitrary large} spatial blowup exponents, which induce a strongly anisotropic self-similar flow in $\R^3$. In \cite{chen2026eulerII}, we lift these 1D blowup profiles to construct exact 3D self-similar profiles, and establish sharp asymptotically self-similar blowup 
for 3D axisymmetric Euler without swirl  from \(C_c^\al\) initial vorticity and \(C^{1,\al}\cap L^2\) initial velocity for any \(\al\in(0,\tfrac13)\). Moreover, we precisely characterize the limiting blowup behavior as \(\al\to(\tfrac13)^-\) using the key $\f13$-profile \(\wwwa\) constructed in Theorem \ref{thm:main_1D}. To the best of our knowledge, the results in this work and \cite{chen2026eulerII} provide the first example in which a singularity from a 1D nonlocal fluid model is lifted to construct blowup for incompressible fluid equations in \(\R^2\) or \(\R^3\) with \(C^{1,\al}\cap L^2\) velocity, without requiring \(\al\) to be small.

In a very recent and remarkable paper, Shkoller \cite{shkoller2026incompressible} 
developed a new Lagrangian framework to prove finite-time blowup for 3D axisymmetric Euler without swirl \eqref{eq:Euler}, with initial data $\uu_0 \in C^{1,\alpha} \cap L^2$ for the \emph{entire range} $\alpha \in (0, \f13)$. 
The companion work~\cite{chen2026eulerII} and~\cite{shkoller2026incompressible} were carried out independently and use different methods to construct \(C^{1,\alpha-} \cap L^2\) blowup solutions to 3D axisymmetric Euler without swirl for the entire range \(\alpha\in(0,\f13)\).

\subsection{Self-similar profile equation}

Consider 3D axisymmetric Euler without swirl
\beq\label{eq:Euler}
\bga
 \pa_t \om^{\th} + ( u^r \pa_r + u^z \pa_z ) \om^{\th} =  r^{-1 } u^r \om^{\th} ,  
 \qquad
  -\D_{\R^5} \psi_{\mw{3D}} = r^{-1} \om^{\th},
 \quad (u^r, u^z) = \tf{1}{r} ( -\pa_z, \pa_r )(r^2 \psi_{\mw{3D}}), 
\ega
\eeq
where
$\om^{\th}(r, z), \psi^{\th}(r, z) = r \psi_{\mw{3D}}(r, z) $ are the angular vorticity and angular stream function,  
and $(r, \th, z)$ is the cylindrical coordinate in $\R^3$. 
For $ \al \in (0, 1)$. 
Consider $\om_{\mw{3D}}(r, z) = r^{-\al} \om^{\th}$. By assuming $\om_{\mw{3D}}$ are constant in $r$: $\om_{\mw{3D}}(r, z) = \ww(z)$, 
and restricting \eqref{eq:Euler}
on the symmetry axis $r=0$, one derives the 
1D model \eqref{eq:1D}. We refer to \cite[Section 8]{hou2024potential} and \cite[Section 2]{chen2026eulerII} for the derivations. \footnote{
The 1D model \eqref{eq:1D} was essentially derived in the work of Hou-Zhang \cite[Section 8]{hou2024potential} with $n=3$. 
Our main observation in \cite{chen2026eulerII} is that the nonlocal operator can be significantly simplified as in \eqref{eq:1D}. See \cite[Lemma 3.9]{chen2026eulerII}.
}

For some self-similar exponents $c_{\om,*}, c_{l, *}$ to be determined, plugging the self-similar blowup ansatz 
\beq\label{eq:blowup_ansatz}
 \om(x, t) = (1-t)^{c_{\om,*}} W_*\big( \f{x}{ (1-t)^{c_{l,*} }} \big)  ,
\eeq
to \eqref{eq:1D} and matching the power of $1-t$, we obtain the equation for the self-similar profile
\beq\label{eq:1D_dyn00}
\bal
 (c_{l, *} x + 2 \psi_{\al} ) \pa_x W_*  =   (c_{\om, *} - (1 - \al) \pa_x \psia) W_*, 
 \quad  c_{\om, *} + \al c_{l, *} = -1 ,
\eal 
\eeq
where $\psia(W_*)$ is given by \eqref{eq:1D}.
We consider profile $W_*$ with odd symmetry. To impose normalization conditions on the solution, it is convenient to introduce the rescaling equation 
\beq\label{eq:1D_dyn0}
 (c_{l} x + 2 \psi_{\al} ) \pa_x W  =   (c_{\om} - (1 - \al) \pa_x \psia) W, 
\quad \psia(W)(x)  \teq  \int_0^{\infty} ( |x+y|^{\al} - |x-y|^{\al}) W(y) d y ,
\eeq
where we have symmetrized the kernel for $\psia$ in \eqref{eq:1D} using the odd symmetry of $W$. 

Given a solution to \eqref{eq:1D_dyn0} with $\cw + \al c_l \neq 0$, using the relation 
\beq\label{eq:1D_rescale}
( c_{l, * },  \  c_{\om, *} , \ W_* ) \teq  - ( c_{\om} +\al c_l )^{-1} (c_l , \ \com, \ W), 
\eeq
and multiplying \eqref{eq:1D_dyn0} by $- (c_{\om} +\al c_l )^{-2}$, we obtain a solution 
to \eqref{eq:1D_dyn00}.

To distinguish between solutions of \eqref{eq:1D_dyn00} and \eqref{eq:1D_dyn0}, we use the star notation $W_*$ to denote a solution of \eqref{eq:1D_dyn00}. 
While equation \eqref{eq:1D_dyn0} and \eqref{eq:1D_dyn00} are equivalent, 
formally, we have two modulation parameters $c_l, c_{\om}$ in \eqref{eq:1D_dyn0}, which make it convenient to impose vanishing conditions.

\vs{0.1in}
\paragraph{\bf Normalization}
To analyze a profile $W$ with a slow decay, we introduce the modified stream function $\psio$ 
and the self-similar velocity $V$
\bseq\label{eq:1D_normal}
\beq
 \psioa \teq \psia - \psi_{\al, x}(0) x , \quad V \teq \tf12 c_l x  + \psia  ,
\eeq
for any $\al \in (0, \f13]$. We impose the following normalizations on $V$ at $0$ and choose $\com, c_l$ as
\footnote{
\label{foot:wx0}
We can impose the two normalizations in \eqref{eq:1D_normal:b} since \eqref{eq:1D_dyn0} admits two trivial symmetries.
If $(c_l, \com, W)$ solves \eqref{eq:1D_dyn0}, for any $a, b > 0$,  we obtain  that $( a c_l, a \com,  a W_b)$ with $W_b(y) \teq b^{\al} W(b y)$ also solves \eqref{eq:1D_dyn0}.
In addition to $\pa_x V(0)=1$, we will impose a normalization condition on $ W_x(0)$ close to $-1$ when we construct the approximate profile.
}
\beq\label{eq:1D_normal:b}
V_x(0) = 1, \quad c_l + 2 \psi_{\al, x}(0) = 2 V_x(0) = c_{\om} - (1-\al) \psi_{\al, x}(0) .
\eeq

With the above normalizations, we rewrite $V$ as 
\beq\label{eq:1D_normal:c}
V = (\tf{1}{2} c_l + \psi_{\al, x}(0) ) x  + (\psia - \psi_{\al, x}(0) x) 
= V_x(0) x + \psioa = x + \psioa ,
\eeq
\eseq
and rewrite the profile equation \eqref{eq:1D_dyn0} for odd function $W$ with general $\al$
\beq\label{eq:1D_dyn}
\bga 
 2 V \pa_x W = (3- \al - (1-\al) V_x) W.  
\ega 
\eeq

We derive $\psioa$ using its associated kernel \eqref{eq:ker} and then construct $V$ via \eqref{eq:1D_normal:c}.

\begin{definition}[\bf $\alpha$-profile]
We call a nontrivial, odd, locally Lipschitz solution \(W\) of \eqref{eq:1D_dyn} 
satisfying $|W(x)| \les \min(|x|, 1 )$ for any $x$ an \(\alpha\)-profile.
\end{definition}

\subsection{Main results: existence of $C^{\infty}$ $\al$-profiles }

The following theorem is an abbreviated form of Theorem \ref{thm:reg_alb} 
for the $\f13$-profile, and Theorem \ref{thm:1D_profile_prop} 
for the $\al$-profile with $\al < \f13$ and $\al$ close to $\f13$.

\begin{thm}[\bf $C^{\infty}$ self-similar profiles--abbreviated version]\label{thm:main_1D}
There exists a small $\beps_4 \in (0,\f13)$ such that for any 
$\al \in [\f13-\beps_4,\f13]$, there exists a $C^\infty$ 
$\al$-profile $\waa$ solving \eqref{eq:1D_dyn}. Moreover, $\waa$ is odd in $x$,
satisfies $\waa(x)<0$ for $x>0$, and obeys the normalization $\pa_x \waa(0)=\pa_x \wwwa(0)<0$.
Let $\vaa=x+\psioa(\waa)$ be the associated velocity. Then, for any $k\geq 0$,
\beq\label{eq:thm_smooth}
  |\pa_x^k \waa | \les_k \ang x^{-k -\hal},  \quad 
 |\pa_x^{k+2} \vaa |   \les_k  \ang x^{-k-1
  + \al - \hal}  ,
\eeq
where $\hal -\al \in [ \f{8.9}{8} \e, \f{9.1}{8}\e]$ and  $\e = \f13 - \al $. 
In particular, $\hal = \f13$ for $\al =\f13$.  Moreover, for any $\al \in [\f13 -\beps_4, \f13)$, the 1D model \eqref{eq:1D}
admits a self-similar blowup profile and blowup solution 
\[
\bal
 \om(x, t) & = (1-t)^{ \cwss } W_*\big( \f{x}{ (1-t)^{ \clss }} \big)  , \\
  (W_*, \clss, \cwss) & = - ( \cws + \al \cls )^{-1} (\waa, \cls, \cws) ,
  \eal
\]
satisfying $ \cwss + \al \clss = -1$ and 
\beq\label{eq:blowup_scaling}
 - ( \cws + \al \cls )   = 8 + O( \e^{1-\kp}) , \quad 
\clss = \f{8}{9} \e^{-1} + O( \e^{-\kp }) , \quad 
\cwss = - \f{8}{27} \e^{-1} + O( \e^{-\kp}) .
\quad  
\eeq
 All the implicit constants in the above estimates are independent of $\al, \e$.

\end{thm}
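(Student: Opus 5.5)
The proof has two stages: first the endpoint $\al=\f13$, then a perturbation to $\al<\f13$. The blowup statement and the scalings \eqref{eq:blowup_scaling} follow afterwards by bookkeeping.

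\emph{Step 1: the $\f13$-profile.} I construct $\wwwa$ by a fixed point around a numerically computed approximate profile $\bar W_0$. The profile is odd, satisfies $\bar W_0<0$ on $x>0$, decays like $x^{-1/3}$, and obeys the normalizations $V_x(0)=1$ and $\pa_x\bar W_0(0)\approx -1$. Writing $W=\bar W_0+\td W$, I recast \eqref{eq:1D_dyn} as $\cL\td W=-\cR(\bar W_0)+\cN(\td W)$. Here $\cL$ is the linearization, which consists of transport along $2\bar V$ plus the damping term $-(3-\al-(1-\al)\bar V_x)$ plus the nonlocal part coming from $\psioa$.

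The key structural observation is that $\bar V(x)=x+\psioa$ has a simple zero at $0$. Since $\al=\f13$ and $W\sim x^{-1/3}$, the kernel gives $\psioa\sim$ a $\log$-type growth, so $V$ is unbounded. Along characteristics one can therefore solve the transport ODE explicitly: $W=\exp(\int \frac{3-\al-(1-\al)V_x}{2V})$ times a constant. I would set up the map $\td W\mapsto$ (ODE solve with $\psioa(\bar W_0+\td W)$ frozen) in a weighted $C^1$ space with weights $\ang{x}^{1/3}$. The two modulation parameters $c_l,\com$ are absorbed by the normalization \eqref{eq:1D_normal:b} and by $\pa_x W(0)$, as in footnote~\ref{foot:wx0}.

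Contraction requires a computer-assisted bound on the residual $\cR(\bar W_0)$ together with an analytic bound on the Lipschitz constant of $\td W\mapsto\psioa(\td W)$, for which I use the kernel \eqref{eq:ker}.

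\emph{Step 2: regularity.} $C^\infty$ smoothness and \eqref{eq:thm_smooth} follow by bootstrapping through the ODE. Near $0$ the equation is a regular-singular ODE with an analytic coefficient ratio, because $V$ vanishes simply and $W$ is odd, so the solution is smooth. At infinity I differentiate the equation, using that $\pa_x^{k+2}V$ gains decay from the smoothing of the $|x|^\al$ kernel.

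\emph{Step 3: $\al<\f13$.} Set $\e=\f13-\al$ and perturb $\wwwa$. The main obstacle is that the decay rate $\hal$ shifts by $O(\e)$ and the far field changes character: $\psioa$ becomes bounded and $c_l$ becomes finite but of size $\e^{-1}$. This is a singular perturbation at infinity.

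To handle it, I first determine the far-field exponent by balancing leading terms of \eqref{eq:1D_dyn} at large $x$. The asymptotics $W\sim x^{-\hal}$ and $V\sim x+ C x^{1+\al-\hal}$ give $\hal-\al\approx\f98\e$. I then build an improved approximate profile by multiplying $\wwwa$ by an $\e$-dependent correction $\ang{x}^{-(\hal-\f13)}$. The fixed point of Step 1 is rerun in $\e$-dependent weighted spaces, with the stability constants of Step 1 shown to be uniform in $\e$.

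\emph{Step 4: bookkeeping.} Finally, $\cws+\al\cls=-8+O(\e^{1-\kp})$ follows from the normalization together with the large-$x$ relation between $c_l$ and $\hal-\al$. The rescaling \eqref{eq:1D_rescale} then yields $\clss\approx\f89\e^{-1}$ and $\cwss=-1-\al\clss\approx-\f{8}{27}\e^{-1}$.
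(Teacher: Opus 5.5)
Your Step 1 has a genuine gap. In a space that only imposes $|\td W|\lesssim\ang{x}^{-1/3}$, the fixed-point map is not even bounded, so no computer-assisted residual bound can make it a contraction. The cause is the superlinear velocity $\bv\approx 4x\log x$ combined with a nonlocal effect. Take $\td W$ odd with $\td W=c\,x^{-1/3}$ for large $x$. Your construction gives exactly $W=\bw\exp\big(\int_0^x\f{\cR(\bw)+\td\cR(\td W)}{2V}\big)$, where $\td\cR(\td W)=-\f23\big(\psio_x-\f{\psio}{x}\big)-2\f{\psio}{x}\big(\f{x\pa_x\bw}{\bw}+\f13\big)$ and $\psio=\psio(\td W)$. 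The identity $\int_0^\infty K_{\alb,\D}(1,z)z^{-1/3}\,dz=-\f23$ gives $\psio_x-\f{\psio}{x}\to-\f23c$. Meanwhile $\f{\psio}{x}=O(\log x)$ and $\f{x\pa_x\bw}{\bw}+\f13=O((\log x)^{-4/3})$. Hence $\td\cR(\td W)\to\f49c$ and the integrand is $\approx\f{c}{18\,x\log x}$. To first order in $c$, the output therefore grows relative to $\bw$ like $(\log x)^{c/18}$ and leaves your space. The same identity forces $\cca=-6$, so the far field is rigid and perturbations of the profile's own size are inadmissible.

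Two ideas are missing. First, the approximate profile must have exactly this leading coefficient, so that $|\cR(\bw)|\lesssim(\log x)^{-2/3}$ decays; the paper builds in $x^{-1/3}(\cca+\ccb(\log x)^{-1/3}+\ccc(\log x)^{-2/3})$. Second, the $\f49c$ effect is really a \emph{damping}. Write $\psio_x-\f{\psio}{x}=x\,\pa_x\big(\f{\psio}{x}+2\alb\cJ\big)-2\alb x^{\alb}w$ and integrate the first piece by parts. Up to integrable remainders this leaves $\f{\td\cR(w)}{2\bv}\approx-\f{1}{3x\log x}\f{w}{\bw}$. This term must be absorbed by the integrating factor $\rho=(\log x/\log x_1)^{1/3}$, which is the cancellation \eqref{eq:cancel_rho}. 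The perturbation is then placed in a space where it decays \emph{faster} than the profile, $|w|\lesssim x^{-1/3}(\log x)^{-1/3}$.

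Step 3 is underspecified exactly where the work lies. Your correction $\ang{x}^{-(\hal-\f13)}$ is the paper's $\wa=\wwwa\ang{x}^{\b}$, but your heuristic for $\hal$ is inconsistent as written. For $\al<\f13$ one has $V=(1-\psi_{\al,x}(0))x+\psi_\al$, with linear coefficient $1-2\al\cJ_\al(W)(\infty)\approx 4/(\hal-\al)\sim\e^{-1}$. It is this large coefficient, not a term $Cx^{1+\al-\hal}$, that yields $\hal-\al\approx\f98\e$. The paper fixes $\b$ by solving $G(\b)=-\f83$, which cancels the $O(1)$ far-field part of $\crab$.

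More seriously, ``rerun Step 1 with constants uniform in $\e$'' is not a mechanism. Once $\e\log x\gtrsim1$, $\wa$ and $\wwwa$ are no longer close, $V_{\al,\b}/x$ saturates at size $\e^{-1}$ instead of growing like $\log x$, and the Step 1 cancellations are unavailable. The paper instead uses the norm $\max(\vpa,\muc\vpb)$:
\begin{itemize}
\item In the near field it perturbs a separate computer-assisted contraction ($\lamst=0.95$) for the operator linearized around the \emph{exact} $\f13$-profile.
\item In the far field the weight $\vpb=|\wa|^{-1}\ang{x}^{-\e_1}\cJaa^{-\kp}\vpk$ generates an effective damping. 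The main nonlocal term $2(\alb-\b)(\psio_x-\f{\psio}{x})$ is controlled through the sharp constant $\cff=2^{4/3}-\f23<2$, giving the contraction factor $\cff/(2\kp)<1$ for $\kp\in(\cff/2,1)$.
\end{itemize}

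Near $x=0$ there is no ``analytic coefficient ratio,'' because the coefficients depend nonlocally on $W$. Smoothness, with constants uniform in $\e$, comes from bootstrapping $Z=W/x$, using that $\psio$ gains a derivative over $W$ and that $V/x\gtrsim\min(\lgp x,\e^{-1})$. Your Step 4 uses that $-\cws/\cls$ is the exact decay rate of $\waa$ together with the normalization; this is fine once the quantitative asymptotics above are available.
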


We introduce a linear operator 
around the profile $(\waa, \vaa)$ constructed in Theorem \ref{thm:main_1D} 
\beq\label{def:cLa_intro}
\bga
  \cLa (w) \teq \waa  \int_0^{x}  \f{  \cRa(w)}{2 \vaa }  ,  
  \quad  \cRa( w )   = - (1-\al) \psiox (w) - 2 \psio  \f{\pa_x \waa}{\waa}  , 
 \ega
\eeq

The second main result is the contraction estimate of $\cLa$ in Theorem \ref{thm:contra_lin_exact};
 an abbreviated version is stated below.

\begin{thm}[\bf Uniform contraction estimates--abbreviated version]\label{thm:main_contract}
There exist $\beps_5 \in (0, \beps_4]$ and $\lamcL \in (0, 1)$ such that, for any $ \al \in (\f13 - \beps_5, \f13)$, the following holds. There exists a weight $\vp_{\al}  > 0$ such that the operator $\cL_{\al}$ defined in \eqref{def:cLa_intro} satisfies the contraction estimate
\[
    \nlinf{ \vp_{\al} \cLa(w)} \leq  \lamcL \nlinf{ w \vp_{\al}} ,
\] 
for any $w \vp_{\al} \in L^{\infty}$.  In particular, the contraction parameter $\lamcL$ is uniform in $\al$. 

\end{thm}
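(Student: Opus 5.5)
We would prove the contraction estimate by splitting $\cLa$ into the two pieces of $\cRa$ and tracking the singular behavior at $x=0$ and $x=\infty$. Write $\cLa(w)=\cL_1(w)+\cL_2(w)$, where $\cL_1(w)=-(1-\al)\waa\int_0^x \psiox(w)/(2\vaa)$ and $\cL_2(w)=-\waa\int_0^x \psio(w)\,\pa_x\waa/(\waa\vaa)$. Near $0$ we have $\vaa\approx x$ and $\waa\approx \pa_x\waa(0)x$. The vanishing conditions built into $\psio$ (namely $\psio_x(0)=0$) make the integrands integrable, so both pieces are $O(x)$ near $0$. This suggests the weight $\vp_\al\sim x^{-1}$ near $0$. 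At infinity we use the decay rates of Theorem \ref{thm:main_1D}: $|\waa|\les \ang x^{-\hal}$ and $\vaa\sim x$, so $\vp_\al$ should grow like a power $x^{\g}$ with $\g$ chosen between the decay exponents.

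The first step is to treat the endpoint $\al=\f13$. There $(\wwwa,\vvva)$ is a numerically constructed approximate profile, corrected by a fixed-point argument, with explicit bounds. We would design a piecewise weight $\vp_{\alb}$ (for instance $x^{-1}$ near $0$, explicit intermediate powers, and $x^{\g}$ at infinity) and prove $\nlinf{\vp_{\alb}\cL_{\alb}w}\le\lam_0\nlinf{\vp_{\alb}w}$ with some $\lam_0<1$. The inequality reduces to a pointwise bound $\vp(x)|\waa(x)|\int_0^x(|K_1(x,y)|+|K_2(x,y)|)\vp(y)^{-1}\,dy\le\lam_0$. Here $K_i$ are the kernels obtained by substituting the explicit formula for $\psio$ via $|x\pm y|^{\al}$ and exchanging the order of integration. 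This becomes a computer-assisted verification of a one-variable sup of explicit integrals, with rigorous tail estimates for large $x$ from the asymptotics $\ang x^{-1/3}$.

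The second step is a perturbation to $\al\in(\f13-\beps_5,\f13)$. By Theorem \ref{thm:1D_profile_prop}, $\waa$ is $O(\e^{c})$-close to $\wwwa$ in weighted norms on compact regions. However, the tail exponents change from $\f13$ to $\hal$, and the kernel $|x-y|^\al$ changes as well. We would take $\vp_\al$ equal to $\vp_{\alb}$ on $[0,R]$ and switch to $x^{\g_\al}$ beyond, with $\g_\al$ adapted to $\hal$. Then we bound $\cLa-\cL_{\alb}$ on the bounded region by $o(1)$ as $\e\to0$, and show the far-field part contracts by an explicit asymptotic calculation. In the far field the operator is essentially a scaling-homogeneous integral operator, so its norm can be computed by a Mellin-type/power computation as a continuous function of $\al$. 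This yields $\lamcL=(1+\lam_0)/2$ uniformly.

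The main obstacle is uniformity in the far field as $\al\to\f13$. The blowup rate $\clss\sim\e^{-1}$ and the mismatch $\hal-\al\sim\e$ mean the weights and the decay interact at scales $x\sim e^{c/\e}$. Here the closeness to the $\f13$-profile fails, and the transition region must be handled by the homogeneous asymptotic operator with error terms that are uniform. We would first try to prove a logarithmically uniform bound for the homogeneous operator with exponent pairs $(\al,\hal)$ near $(\f13,\f13)$, using continuity of its explicit norm.
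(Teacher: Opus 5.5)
Your near-field plan is the paper's. Theorem \ref{thm:near_field_stab} is a computer-assisted contraction for $\cL_{\wwwa}$ in a weight $\vpa\asymp|x|^{-1.2}+1$ that is bounded at infinity. There, integrating by parts with the exact profile equation rewrites the operator as $-\wwwa\big(\f13\psio(w)/\vvva+\f43\int_0^x\psio(w)/\vvva^2\big)$. This avoids derivatives of the non-explicit correction $w_{\alb}$, and it avoids the loss from bounding the two pieces of $\cRa$ separately. For $\al<\f13$ the estimate is then perturbed on $[0,R]$ with $R$ independent of $\e$.

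The gap is the far field. You claim that for $x\ge R$ the operator is essentially scaling-homogeneous, with a Mellin-type norm continuous in $\al$ up to $\f13$. This is false uniformly in $\e$.
\begin{itemize}
\item On $R\le x\lesssim e^{C/\e}$ one has $\vaa\asymp x\cJaa$ with $\cJaa\asymp\min(\lgp x,\e^{-1})$, not $\vaa\sim x$. Moreover, $\waa$ still carries the logarithmic structure of $\wwwa$.
\item For $x\gg e^{C/\e}$ the operator is asymptotically homogeneous, but its coefficients degenerate, e.g.\ $\vaa/x\to\f12\bar c_{l,\al}\asymp\e^{-1}$. So there is no homogeneous limit at $(\f13,\f13)$ whose norm could be continuous.
\end{itemize}
You correctly flag this regime as the main obstacle. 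But ``try to prove a logarithmically uniform bound'' is not an argument, and this is exactly the step the theorem is about.

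What closes the far field depends on the weight.

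If $\g_\al$ is adapted to $\hal$, then $\vp_\al$ is $|\waa|^{-1}$ up to factors $x^{-O(\e)}$ and powers of $\cJaa$. In that case $\psio(w)/\vaa$ is of the same order as $w/\waa$, there is no small parameter, and contraction rests on sharp constants. This is the paper's route (Theorem \ref{thm:contra_lin_exact}, built on Theorem \ref{thm:contra_lin}):
\begin{itemize}
\item The weight is $\max(\vpa,\muc\vpcc)$ with $\vpcc\approx|\waa|^{-1}\ang x^{-\kp_1\e}\cJaa^{-\kp}\vpk$.
\item One splits $\td\cR(w)$ as in \eqref{eq:lin_nloc_ep:b} and bounds $\psiox-\psio/x$ with the explicit constant $\cff=2^{4/3}-\f23<2$ (Proposition \ref{prop:vel_al}).
\item This term is dominated by the damping $2\va\pa_x(|\wa|\vpb)^{-1}$ produced by the weight, giving the ratio $\cff/(2\kp)<1$ since $\kp>\cff/2$.
\item The factors $\vpk$ and $\ang x^{-\kp_1\e}$ absorb $(\e+2\b)\psiox$ where $\e\log x\gtr 1$.
\end{itemize}
None of this appears in your proposal.

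If you only want the abbreviated statement, which allows any weight, the missing observation is different. The large factor $\vaa/x$ is a source of smallness rather than an obstruction, provided the weight stays subcritical at infinity. Take $\vp_\al=\vpa$. The same integration by parts for the exact $\al$-profile gives $\cLa(w)=-\waa\big(\f{1-\al}{2}\f{\psio(w)}{\vaa}+\f{3-\al}{2}\int_0^x\f{\psio(w)}{\vaa^2}\big)$. Using $|\psio(w)|\les x^{1+\al}\nlinf{w\vpa}$, $|\waa|\les x^{-\hal}$ and $\vaa\gtr x\cJaa$, you get $\vpa|\cLa(w)|\les(\cJaa(x)^{-1}+x^{-1/6})\nlinf{w\vpa}$ for $x\ge1$. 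This is $\les(\log R)^{-1}$ on $[R,\infty)$ once $\e\le(\log R)^{-1}$. Combined with the perturbed near-field bound $\lamst+C(R)\e^{c}$, this closes. Such a weight, however, does not give the profile-adapted control the paper needs afterwards.
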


We highlight two key features of the profiles and discuss other properties 
for the lifting construction.

\begin{remark}[\bf Infinite blowup rates and normalization]\label{rem:infty}
The blowup rates of the \emph{smooth} limiting $\f13$-profile $\wwwa$ are \emph{infinite}: $c_l = \infty, \com = -\infty$, and $\wwwa$ has a slow decay rate $\wwwa \sim x^{-1/3}$. 
As a result,  the integrand in \eqref{eq:1D_dyn0} only has a non-integrable decay rate $y^{-1}$ and its stream function is not well-defined by the formula \eqref{eq:1D_dyn0}; moreover, we obtain  $\psi_{\alb, x}(0) = -\infty$. To capture these unbounded quantities, we impose  the normalization in \eqref{eq:1D_normal:c} \emph{a priori} so that the $+\infty$ from $c_l$ and  $-\infty$ 
from $\pa_x \psi_{\alb} (0)$ are \emph{cancelled}, leading to a normalized value $2$. 
The same cancellation applies to $\com,  \psi_{\alb, x} (0)$ in \eqref{eq:1D_dyn0}. 
To overcome the slow decay of the profile, we introduce  the modified stream function in \eqref{eq:1D_normal}, which is well-defined for odd $\ww$ with a slow decay rate due to the improved decay rate of the kernel in $y$; see the integral formula in \eqref{eq:ker} and the estimate  \eqref{eq:ker_decay}. 
In particular, the formulation \eqref{eq:1D_dyn} avoids the need to evaluate $\psi_{\al,x}(0), c_l, c_{\om}$.

\end{remark}

\begin{remark}[\bf Self-similar velocity with superlinear growth]
\label{rem:superlinear}
Due to the slow decay of the profile $\wwwa$, both the modified stream function 
$ \psio_{\alb}(\wwwa)$ and the self-similar velocity 
$\vvva = x + \psio_{\alb}(\wwwa)$ grow like $ x \lgp x$ for $x$ sufficiently large. This stands in sharp contrast to existing works on (nearly) self-similar blowup in incompressible fluids \cite{elgindi2019finite,elgindi2019stability},\cite{chen2019finite2,ChenHou2023a,ChenHou2023b} and related 1D models 
\cite{chen2019finite,chen2021HL,huang2024self,huang2025exact}, where the fluid velocity 
(corresponding to $\psi$ in \eqref{eq:1D_dyn0}) has a \emph{sublinear} growth. 
Thus, in our case, the nonlinear terms $ 2 V \pa_x W, V_x W$ in \eqref{eq:1D_dyn0}  \emph{cannot} be treated perturbatively for large $x$, which is a fundamental new difficulty in the present work.
To overcome this difficulty, we prove that the perturbation satisfies a \emph{faster} and stronger decay estimate, gaining a factor of $|\log x|^{-1/3}$ relative to the (approximate) profile.
\footnote{
This stands in sharp contrast to existing works on self-similar blowup analysis for fluid-type PDEs, including \cite{chen2019finite,chen2021HL,chen2019finite2,elgindi2019finite,ChenHou2023a,chen2024Euler,chen2024vorticity}, where the perturbation satisfies 
 \emph{slower} decay estimates than the profile while the nonlinear estimates still close. In our setting, closing the nonlinear estimates requires propagating \emph{faster} decay of the perturbation.
}

\end{remark}

 To the best of our knowledge, Theorem \ref{thm:main_1D} provides, within the class of 1D nonlocal models related to incompressible fluids, the first example  of nontrivial $C^{\infty}$ profiles with \emph{infinite} blowup rates, as well as the first example of self-similar blowup profiles with \emph{superlinear} growth of the self-similar velocity.

\subsubsection{Properties of the 1D profiles for the 3D lifting construction}

To lift the construction of the 1D singularities to 3D in \cite{chen2026eulerII}, we require 
the following two crucial properties of the 1D profiles.

\vs{0.05in}
\paragraph{\bf Stability of 1D profiles}

A crucial property of the 1D profile \(\waa\) is the contraction estimate in the
fixed-point formulation, with \(\lamcL<1\) uniformly in \(\al\) as
\(\al\to\f13\); see Theorems~\ref{thm:main_contract} and
\ref{thm:contra_lin_exact}. Moreover, in Theorem~\ref{thm:1D_profile_prop}, we
establish several estimates for \(\waa\), with implicit constants independent
of \(\al\in[\f13-\beps,\f13)\). Among them, the far-field cancellation estimates in item~(iv) of Theorem~\ref{thm:1D_profile_prop} are crucial for overcoming the logarithmic growth in several far-field estimates in~\cite{chen2026eulerII}.

We refer to these estimates, especially the contraction estimate, as the
stability properties of \(\waa\), since their implicit constants are
independent of \(\al\). As a result, we can use \(\e=\f13-\al\) as a small
parameter and perturb the estimates for \(\waa\) to estimate the 3D fixed-point
map.

\vs{0.05in}
\paragraph{\bf Arbitrary large spatial scaling exponent $\cls$}

The large spatial scaling $\cls \asymp (\tfrac{1}{3}-\al)^{-1}$
in \eqref{eq:1D_dyn0}, combined with incompressibility, induces strongly anisotropic self-similar flow $ (c_l r + U^r , c_l z + U^z)$  in the 3D profile equation \cite{chen2026eulerII},
\footnote{
Here, $(U^r, U^z)$ is the velocity 
associated with $ r^{\al} \waa $: 
  $-\D_{\R^5} \psi_{\mw{3D}} = r^{-1} \cdot r^{\al}\waa,  \ ( U^r, U^z) = \tf{1}{r} ( -\pa_z, \pa_r )(r^2 \psi_{\mw{3D}})$.
} 
with the $r$-directional component  much stronger than the $z$-directional one: $ \f{1}{r} (c_l r + U^r) \gtr \e^{-1} \f{1}{z} | c_l z + U^z | $, up to $\log$-correction. While the 1D model only captures 3D Euler \eqref{eq:Euler} in the $z$-direction along the axis $r=0$, 
the strong anisotropic structure allows us to extend the construction to the \emph{entire} range of $r>0$.  Moreover, the large scaling parameter $\cls$ provides a crucial large parameter
in the lifting construction in \cite{chen2026eulerII}.
See more discussion in \cite[Section 1.2.1]{chen2026eulerII}.

 In this work, we establish these two properties for the 1D profiles, thereby answering the second 
question in Question \ref{ques:1D_model}.

\vs{-0.05in}
\subsubsection{Related constructions}

Recently, Huang-Qin-Wang-Wei \cite{huang2024self,huang2025exact} developed an elegant Schauder fixed-point argument  to construct self-similar blowup profiles for gCLM model with \emph{any} \(a\leq 1\) and HL model. 
In comparison, our goal is not only to construct 1D self-similar blowup profiles, but also to establish the above two crucial \emph{quantitative} properties of the profiles for the 3D lifting construction in \cite{chen2026eulerII}. These properties are not directly provided by the Schauder fixed-point approach.

We note that \eqref{eq:1D} develops finite-time blowup from a large class of initial data $\om_0$. Consider $\om_0(x)$ that is odd in $x$ and  satisfies  $\om_0(x) \leq 0$ for $ x > 0 $; these properties are preserved by \eqref{eq:1D}. 
By symmetrizing the kernels and exploiting sign properties, one can \emph{analytically} estimate 
\[
\f{d}{dt} \pa_x \psia(t, 0) \leq - c_{\al}  (\pa_x \psia(t, 0))^2,
\quad  \pa_x \psia(t, 0) = 2 \al \int_0^{\infty} \om(y) y^{\al - 1} d y  < 0,
\]
with $ 0  < c_{\al} \les \f{1}{3} -\al$ for any $\al < \f{1}{3}$. Thus, the model blows up in finite time for any  $\al < \f{1}{3}$. 
 Since this blowup mechanism is not used in the 3D construction \cite{chen2026eulerII}, we do not pursue it here.

We also mention a question of independent interest: the dynamic
nonlinear stability of the 1D profile \(\bar W_\alpha\). This is different from the
\emph{finite codimension stability} of the 3D profiles established in~\cite{chen2026eulerII}. The contraction estimates for \(\cL_\alpha\) in Theorem~\ref{thm:contra_lin_exact} may be useful in this direction.

\subsection{Ideas of the proof}\label{sec:idea}

In this section, we outline the proofs of Theorems \ref{thm:reg_alb}, \ref{thm:1D_profile_prop}, and \ref{thm:contra_lin_exact}, which give the full versions of Theorems \ref{thm:main_1D} and \ref{thm:main_contract}. Below, we denote $\alb = \f13$.

A fundamental idea for obtaining \emph{sharp} self-similar blowup is to perturb 
a \emph{non-blowup profile}, an approach developed in our earlier works on gCLM, DG models 
\cite{chen2020slightly, chen2021regularity} and \cite{chen2023nearly}.

\subsubsection{\bf Construct $\f13$-profile in 1D}\label{sec:idea_step1} 

We recall the fundamental difficulties in constructing the $\f13$-profile from Remarks 
\ref{rem:infty}, \ref{rem:superlinear}, especially $V$ with a superlinear growth.
To overcome this difficulty, first, we derive \emph{a priori} the leading order asymptotics 
of the $\f13$-profile and capture it by constructing an approximate steady state $\bw$  to the dynamic equation of \eqref{eq:1D_dyn}  with $\bw =  |x|^{- 1/3 }  ( -6 + \ccb \log |x|^{-1/3 }  + \ccc \log |x|^{-2/3} )$  for $x \gg 1$,
where $\ccb, \ccc$ are given in \eqref{def:WF_para}.  To obtain the exact profile, we linearize \eqref{eq:1D_dyn0} around $\bw$ and 
reformulate the equation of perturbation $\td \ww = \wwd - \bw$, as a fixed point problem along the characteristic with a map $\cF$.
The central step is to prove Theorem \ref{thm:onto} that $\cF$ maps a ball into itself. 
The proof relies on the following key ingredients.

\vs{0.05in}
\paragraph{\bf Cancellations}
To overcome the log-growth in the estimate, we exploit several crucial cancellations to gain additional decay of order $|\log x|^{-b}$ with some $b>0$ for large $x$.

\begin{itemize}[leftmargin=1em]

\item  The key cancellation is the \emph{local-nonlocal} cancellation, which yields an additional decay
 factor $|\log x|^{-1}$ in estimating nonlocal terms. 
See Remark \ref{rem:int_wg} and Section \ref{sec:log_cancel} for further discussion.

\item {\bf Main terms in $\psio, \psio_x$.}
For $w(x) \sim c |x|^{-1/3}$ with large $x > 0$, a direct estimate yields 
$ \psio \sim x \log x, |\psio_x| \les \log x$. 
An important observation is that the factor $\log x$ only arises from the 
operator $\cJ = \cJ_{\alb}$ in \eqref{eq:Jw}. In particular, in Corollary \ref{cor:vel_est}, we prove 
\[
 | \tf{1}{x}\psio + 2 \alb \cJ(w)| 
 + |\psio_x + 2 \alb \cJ(w)|
 \les   \| w \|_{\bcX}, \quad   | \tf{1}{x} \psio  - \psio_x |
 \les |\lgp x|^{-1/3}  \| w \|_{\bcX} .
\]
For large $x$, we extract the main term $\cJ$ from the nonlocal terms 
$\psio, \psio_x$ for sharp estimates.

\item {\bf Asymptotics of $\bw$.}  By exploiting the explicit far-field asymptotics of the 
$\bw$ given in \eqref{eq:asym}, we obtain cancellations among the coefficients in \eqref{eq:err}.
For example, we gain a crucial decaying factor $|\log x|^{-4/3}$ from 
$ |\f{x \pa_x \bw}{\bw} + \f{1}{3}| \les |\log  x|^{-4/3}$. See 
\eqref{eq:decay:W} in Lemma \ref{lem:W_asym}.

\end{itemize}

\paragraph{\bf Nonlocal estimates}

Another main difficulty is that the term $\td \cR(w)$ in $\cL(w)$ \eqref{eq:F_lin} depends on $w$ nonlocally. 
To obtain sharp estimates of the nonlocal terms $ \psio(w)$, we use the scaling symmetry 
of the kernel and then partition the domain of the \emph{rescaled} integral. By bounding a number of $(<200)$ \emph{explicit} integrals, we obtain sharp estimates for $\psio, \pa_x \psio$. See more discussion in 
Section \ref{sec:u_est}.

\vspace{0.05in}
\paragraph{\bf Singularly weighted estimates}
To prove Theorem \ref{thm:onto}, we perform singularly weighted estimates. 
The singular weight $\vp$ with $\bb_1 < -1$ in \eqref{norm:X} plays a role similar to 
extracting a damping term for stability analysis of (nearly) self-similar blowup in 
\cite{ChenHou2023a,chen2019finite,chen2021HL}. By designing 
the weight $\vp$ in \eqref{norm:X}, we obtain a smaller constant $\lam$ for the contraction estimate \eqref{eq:lin_contra}. We choose $\max \bb_i =\f{1}{3}$ in \eqref{norm:X} and \eqref{def:mu_b} so that the weight $\vp$ in \eqref{norm:X} captures the decay :
$|w| \leq \vp^{-1} \nlinf{ w \vp }  \les \ang x^{-1/3} \nlinf{ w \vp } $.

We also establish a \emph{near-field contraction estimate}
 in Theorem \ref{thm:near_field_stab} for a linear operator $\cL_{\wwwa}$ in the fixed point map around the exact profile $\wwwa$. 

\vspace{0.05in}
\paragraph{\bf Computer-assistance}
We use computer assistance to construct the approximate profile $\bw$ and to verify some inequalities for the fixed-point argument. Due to the logarithmic growth of $V/x$ for the self-similar velocity (see Remark \ref{rem:superlinear}), we need to derive the first few leading-order terms of $\tf{1}{x} V(\bw)$ for the approximate profile with very large $x$. Moreover, we describe the computation for the stream function $\psio(\bw)$ of the approximate profile, 
 especially the \emph{explicit function $\psio(\bwf)$} \eqref{eq:W_rep}, with rigorous error control. The rigorous computation for $\psio(\bwf), \psio(\bwp)$ requires detailed discussion and expansions, such as Taylor expansion, and accounts for most of Appendix \ref{app:numerics}, 
apart from the standard setup and error estimate.

The full computer-assisted proof requires only light computation. Using a precomputed matrix—which otherwise takes about 30 minutes to generate—the complete verification finishes in less than 3 minutes. We refer to Appendix \ref{app:code} for implementation details and a link to the codes.

\vs{0.05in}

The remaining steps are purely analytic (pen-and-paper).

\vs{0.05in}
\paragraph{\bf Qualitative estimates and properties of the profiles}
After proving that the fixed point map $\cF$ maps a ball 
$\bDD$ in some weighted $L^{\infty}$ space  into itself, 
we then perform \emph{qualitative estimates} to show that $\cF$ is compact and continuous. 
A fixed point is obtained by a standard Schauder fixed-point argument. See Remark \ref{rem:Banach_Schauder}. We prove that the profile $\wwwa \in C^{\infty}$ in Theorem \ref{thm:reg_alb}
using the properties that $\psioa$ is more regular than $\wwwa$ and estimating $\wwwa$ using the profile equation \eqref{eq:1D_dyn}.

\vs{-0.05in}
\subsubsection{\bf Construct $\al$ -profile}\label{sec:idea_step2}

We perform careful \emph{perturbative estimates} to 
construct $\al$-profiles with $\al = \f13 - \e$ by exploiting the smallness of $\e \ll 1$.
A key difficulty is that the $\f13$-profile $\wwwa$ cannot be used directly as an approximate profile for \eqref{eq:1D_dyn} with $ \al < \f13$, since  it has a \emph{$O(1)$ relative residual error} for large $x$ and it does not capture the correct decay 
rate of $(\f13-\e)$-profile, which \emph{determines} the \emph{finite} blowup rate $(c_l, c_{\om})$ of the profile. A key step is to construct an approximate profile of the form
$\wa = \ang x^{\b} \wwwa$ for \eqref{eq:1D_dyn}, which captures the decay rate of $(\f13-\e)$-profile.
By carefully expanding the residual error for $\xx \gg 1$, we derive the main term, which determines $\b = - \f{\e}{8} + l.o.t.$, and obtain a small error with an improved decay rate. 
See Theorem \ref{thm:1D_error}.

We then perform a fixed point argument around $\wa$ 
and perturb the  \emph{near-field contraction property} of $\cL_{\wwwa}$  
 in Theorem \ref{thm:near_field_stab} for a linear operator $\cL_{\wwwa}$  to obtain similar contraction for a similar operator $\cL_{ \wa}$.
Combining this property and far-field contraction estimates for  $\cL_{ \wa}$, which requires $\kp < 1$ in \eqref{def:kp}, we construct the exact $\al$ -profile $\waa$. We further perturb the contraction property for $\cL_{ \wa}$ to obtain that for $\cL_{\waa}$ in Theorem \ref{thm:contra_lin_exact}. 
For \(\al<\alb\), the \(\e\)-improved decay of the integrand in \eqref{eq:1D_dyn0}, namely \(O(y^{-1-C\e})\), allows us to define the stream function \(\psio(\waa)\) through \eqref{eq:1D_dyn0}, with   \(\pa_x\psia(\waa), \tf{1}{x} \psia(\waa)  \) uniformly bounded. This contrasts with the logarithmic growth of \(\pa_x\psio_{\alb}(\waa), \tf{1}{x} \psio_{\alb}(\waa)  \). 
Then using normalization \eqref{eq:1D_normal:c}, we determine finite blowup rate $\bar c_{\om, \al, \mw{ 1D} }, \bar c_{l, \al, \mw{ 1D} }$ in \eqref{eq:wa_decay_1D} for the 1D model. These results are summarized in Theorem \ref{thm:1D_profile_prop}.

\vs{0.1in}

\paragraph{\bf Computer-assisted proofs in PDEs}
In recent years, there have been substantial developments in computer-assisted proofs for nonlinear PDEs using rigorous interval arithmetic. 
We highlight a few advances in incompressible fluids and related models, including the construction of nontrivial global smooth solutions to SQG \cite{castro2020global}, singularity formation from smooth data in 3D incompressible Euler \cite{ChenHou2023a,ChenHou2023b} and related models \cite{chen2019finite,chen2021HL}, and nonuniqueness of Leray--Hopf weak solutions 
for 3D incompressible Navier-Stokes without forcing \cite{hou2025nonuniqueness}. 
Computer-assisted proofs have also been used to construct imploding profiles for compressible Euler  \cite{buckmaster2022smooth} and the supercritical nonlinear wave equation \cite{buckmaster2024blowup}, as well as self-similar profiles for nonlinear Schrödinger and complex Ginzburg--Landau equations \cite{donninger2026self,dahne2024self}. Note that by imposing radial symmetry, the profile equations in  \cite{buckmaster2024blowup,buckmaster2022smooth,donninger2026self,dahne2024self} reduce to ODEs. 
See also the survey \cite{gomez2019computer}.

\subsection{Organization and notations}

The rest of the paper is organized as follows. In Section \ref{sec:ASS}, we construct an approximate profile for the $\f13$-profile and formulate the fixed-point map for the exact $\f13$-profile.
In Section \ref{sec:onto}, we develop nonlocal estimates and estimate the fixed-point map $\cF$.
In Section \ref{sec:schauder}, we construct the exact $\f13$-profile 
and prove a near-field contraction estimate.

The arguments in Sections \ref{sec:C_inf}-\ref{sec:1D_profile} are purely analytic. In Section \ref{sec:C_inf}, we  prove $C^{\infty}$-regularity estimates and several 
properties of the $\f13$-profile, and prove Theorem \ref{thm:reg_alb}. 
In Section \ref{sec:1D_profile_appr}, we construct the approximate profile for the $\al$-profile with $\al<\f13$. In Section \ref{sec:1D_profile}, we construct the exact $\al$-profile and prove the main Theorems \ref{thm:1D_profile_prop}, \ref{thm:contra_lin_exact} for its properties.
In Appendix \ref{app:basic}, we prove some basic estimates.
In Appendix \ref{app:numerics}, we present the numerical and analytic methods to estimate the approximate $\f13$-profile and its velocity rigorously. 
In Appendix \ref{app:proof}, we bound the integrals for the fixed point argument and 
discuss the implementation of the computer-assisted proof in Appendix \ref{app:code}.

We emphasize that Sections \ref{sec:onto}--\ref{sec:schauder}, 
Section \ref{sec:C_inf}, and Sections~\ref{sec:1D_profile_appr}--\ref{sec:1D_profile} can be read independently of one another. 
The computer-assisted estimates are confined to 
Section \ref{sec:onto}, Section \ref{sec:exist_profile}--\ref{sec:near_stable},
and Appendices \ref{app:numerics}--\ref{app:proof}, while Sections \ref{sec:contin}, \ref{sec:compact}, Sections \ref{sec:C_inf}--\ref{sec:1D_profile}, and Appendix \ref{app:basic} are purely analytic.

\vs{0.05in}

\paragraph{\bf Notations }
For each variable or operator, we list right of it the equation or result in which it is first defined.
We reserve  $ \alb, \al, \b, \e  $ for the parameters related to the regularity index $\al$ or $\e$:
\[
\alb \teq  \tf{1}{3}, \quad \e \teq \tf13 - \al, \quad   
\b: \mw{Thm} \ \ref{thm:1D_error}, 
    \quad \he \teq \tf13 - \al -\b = \e - \b : \eqref{def:para}.
\]

We introduce the following notations $\lgp(x)$, 
\beq\label{def:lgp}
  \lgp(x) = \log( x + 2 ),\quad \ang x = (1 + x^2)^{1/2},
\eeq
We have  $\lgp(x) \gtr 1$ for any $x \geq 0$.
We use $A \teq B$ to say that quantity A is defined to be equal to quantity/expression $B$.
We use $A \asymp B$ to denote $ C_1 A \leq B \leq C_2 A$ for some absolute constants $C_i > 0$.  We use calligraphic letters to denote operators or their values on the profile:
\[
\cK_{\bullet} : \eqref{eq:ker} , \ 
\cJa : \eqref{eq:Jw}, \  \cF : \eqref{eq:fix_map},
\  \cL, \, \cN, \, \cE : \eqref{eq:F_lin} , 
\  \cR,  \td \cR : \eqref{eq:lin_nloc},
\quad 
\cFR, \,  \cNab, \, \cEab, \, \cLab :  \eqref{eq:F_lin_ep} .
\]

We use special fonts of $C$ to denote specific constants and functions 
\[
 \mCC_{\bullet} : \eqref{eq:vel_const}, \quad \MCC_{\bullet} : \eqref{eq:vel_const}, \mw{Lem} \ref{lem:vel_est}, \quad \cca, \ccb, \ccc : \eqref{def:WF_para}.
\]

\section{Fixed point problem for the $\f13$-profile }\label{sec:ASS}

In this section, we first introduce the nonlocal operators. We then derive the far-field asymptotics of the profile, discuss the construction of an approximate $\frac13$-profile, and formulate the fixed-point map for the exact $\frac13$-profile.

\subsection{Kernels for nonlocal terms}

For $\al \in (0, \f13]$, including $\al = \f13$, we introduce the following symmetrized kernels related to 
$\psio(w), \pa_x \psio(w)$ with odd $w$ in \eqref{eq:1D_dyn0}, \eqref{eq:1D_normal}: 
\bseq\label{eq:ker}
\beq\label{eq:ker:b}
\bal
  K_{\al, 1}(x, y) & \teq |x + y|^{\al} - |x-y|^{\al} - 2 \al x y^{\al - 1}  , \\
  K_{\al, 2}(x, y) & \teq \al |x+y|^{\al-1} - \al \cdot  \sgn(x-y) |x-y|^{\al-1} - 2 \al y^{\al-1}  .
\eal
\eeq

We also introduce the truncated kernels $K_{\al, i, J}, K_{\al, \D}$ that are regular near $y =0$ 
\beq\label{eq:ker:c}
\bal
  K_{\al, 1, J}(x, y) & \teq |x + y|^{\al} - |x-y|^{\al} - 2 \al x y^{\al - 1} \one_{y \geq x} ,  \\
    K_{\al, 2, J}(x, y) &  \teq \al |x+y|^{\al-1} - \al \cdot  \sgn(x-y) |x-y|^{\al-1} - 2 \al y^{\al-1}  \one_{y \geq x},  \\ 
  K_{\al, \D}(x, y) & \teq  K_{\al, 2}(x, y) - x^{-1} K_{\al, 1}(x, y) .
\eal
\eeq

We define the operators associated with the above five kernels :
\beq
  \cK_{\bullet}(w)(x) = \int_{\R_+} K_{\bullet}(x, y) w(y) d y , \quad 
  \bullet \in \{  (\al, 1), (\al, 2) , (\al, 1, J) , (\al, 2, J) , (\al, \D) \} .
\eeq

For $w$ being odd in $x$, from the definition of $\psia$ in \eqref{eq:1D_dyn0} and $\psioa, V$ in \eqref{eq:1D_normal}, we can rewrite $\psio$ as 
\beq\label{eq:ker:a}
\bal
\psioa(w) &= \cK_{\al, 1}(w), \quad   \psio_{\al, x}(w)  = \cK_{\al, 2}(w), \quad V(w)   = x + \cK_{\al,1}(w) , \quad   V_x(w)  = 1 + \cK_{\al, 2}(w) , \\
\eal
\eeq
and view $\psioa(w),\pa_x \psioa(w)$ as linear operators for $w$. Throughout this paper, we use \(\psioa\) and \(\cK_{\alpha,1}\), as well as \(\partial_x \psioa\) and \(\cK_{\alpha,2}\), interchangeably. The \(\psioa\)-function is more informative, while estimates are more conveniently carried out using the \(\cK\)-operator. To simplify notation, we suppress the dependence of $\psi, \psio$ on $\al$ when no confusion arises.

\eseq

\bseq
We introduce the $\cJ_{\al}$-operator
 \beq\label{eq:Jw}
 \cJ_{\al}( w )(x) = \int_0^x y^{\al - 1} w d y.
 \eeq

For $w$ with a sufficiently fast decay, from the definition of $\psi$ in \eqref{eq:1D_dyn0}, we obtain
\beq\label{eq:iden_psiz_J}
 \pa_x \psia(0) = 2 \al \cJ_{\al}(w)(\infty).
\eeq

By definition, we have 
\beq\label{eq:cK_cKJ}
  \cK_{\al, 1, J}(w) = \cK_{\al, 1}(w) + 2 \al x \cJ_{\al}(w),
  \quad    \cK_{\al, 2, J}(w) = \cK_{\al, 2}(w) + 2 \al  \cJ_{\al}(w) .
\eeq
\eseq 

We use the subscript $J$ in kernels $\cK_{\al,i,J}$ \eqref{eq:ker:c} to indicate that the operator 
$  \cK_{\al, i, J}$ and $ \cK_{\al, i}$ differs by an operator related to $\cJa$.

\vs{0.05in}

\paragraph{\bf Scaling, decay, and integrability}
For $x, y \geq 0$, using the scaling symmetries, writing $z = \f{y}{x}$, and a simple calculations, we obtain the scaling 
\beq\label{eq:ker_scale}
\bal
  K_{ \bullet}(x, y) &= |x|^{\al} K_{\bullet}(1, z), 
  \quad  && \bullet \in \{ (\al, 1), (\al, 1, J)  \},  \\  
  K_{ \bullet}(x, y) &= |x|^{\al -1} K_{\bullet }(1, z), \quad && \bullet \in \{ (\al, 2), (\al, 2, J), (\al, \D)  \},  \\ 
 \eal
\eeq
and decay
\beq\label{eq:ker_decay}
 |K_{\al, 1}(1, z)|  \les \one_{z \leq 2 } z^{\al-1} + \one_{z > 2} |z|^{\al-3} , \quad | K_{\al,2}(1, z) |  \les \one_{z \leq 2} ( z^{\al-1} + |z-1|^{\al-1} )
 + \one_{z > 2} |z|^{\al-3}.
\eeq

The kernel $K_{\bullet}(x,y), \bullet \in \{ (\al, 1,J), (\al, 2, J), (\al, \D) \}$ 
satisfies similar estimates but are more regular near $y=0$. 
We refer these basic estimates 
of $K$ to Lemma \ref{lem:K_decay_basic}.  By subtracting $2 \al x y^{\al - 1}$ or $2\al y^{\al-1}$ in the kernels in \eqref{eq:ker:b}  and using the fast decay $|K_{\al, i}(x, y)| \les_x y^{\al-3}$ in $y$, 
we obtain that the operators $\cK_{\al, i}(W)(x)$ are well-defined for $W$ with a slow decay, e.g. $|W(y)| \les |y|^{-\al}$ for large $y$. 
Note that  $2 \al x y^{\al - 1}$ or $2\al y^{\al-1}$ arise from the integrand 
 in $\pa_x \psia(W)(0), x \pa_x \psia(W)(0)$.

\subsection{Asymptotics of the $\f13$-profile}\label{sec:ansatz}

In the rest of this section and in Sections \ref{sec:onto},  \ref{sec:schauder}, we construct the $\f13$-profile. We focus on  $\al =\alb = \f{1}{3}$ and simplify $\psio_{\alb}$ as $\psio$.  

A fundamental step is to derive the precise  asymptotic of the profile. We consider $W$ odd in $x$, 
\[
W(x) \leq 0 , \ \forall x \geq 0,  \quad W(x) \sim  - C_W |x|^{-\g}, C_W > 0  
\  \mbox{for large \ } x > 0.
\]

Inspired by numerical simulation, for $\al =\alb $, we aim to construct a profile 
with the asymptotics
\beq\label{eq:ansatz_V}
    \lim_{x\to \infty} V_x(x) =\infty, 
   \quad \lim_{x\to \infty} V / x  =\infty.
\eeq

Since $W, V$ are odd in $x$, we focus on $x > 0$. 
We present a heuristic derivation below and refer to the \emph{(a posteriori)} far-field estimates of the profile and error in Appendix \ref{app:nloc_very_far},\ref{app:cR_far} for rigorous 
derivations. We use $A \sim B$ to mean that $B$ is the main term of $A$ with $|A-B| = o(B)$ as $|x| \to \infty$.

\vs{0.05in}
\paragraph{\bf Ansatz of the decay rate} 

Under the ansatz \eqref{eq:ansatz_V}, the first crucial observation is that $\gamma = \f13$. 
In fact, if $\g > 1/3$, one can obtain  $\tf{1}{x} V, V_x \in L^{\infty}$ for $|W(x)| \les 
\ang x^{-\g}$ using the kernels \eqref{eq:ker}. This violates \eqref{eq:ansatz_V}.
If $\g < 1/3$, using the scaling analysis for the kernel $\cK_{\alb, 1}$ for $\psio$ in \eqref{eq:ker:b}, we obtain $\psio \sim C_\psi x^{1 + \alb- \g}$ for some $C_\psi \neq 0$. Since $\alb - \g >0$, we obtain 
$V = x + \psio \sim C_\psi x^{1 + \alb - \g}$ 
and $ \pa_x V \sim C_\psi (1 + \alb - \g) x^{\alb -\g}$ for $x$ large enough. Thus, using the equation \eqref{eq:1D_dyn}, we may match the asymptotics of both sides of \eqref{eq:1D_dyn}
\[
  2 C_\psi (-C_W) (-\g)  x^{\alb -\g -\g} = - (1-\alb)  \cdot (- C_W) C_\psi (1 + \alb - \g) x^{\alb -\g -\g} + l.o.t. .
\]
Since $\alb = \f13, C_W , C_\psi \neq 0$, we obtain $-2 \g = - \f23 ( \f43 -\g)$, which implies $\gamma = \f13$. 

For $|W| \les \ang x^{-1/3}$, by estimating the kernel $\cK_{\alb,i} = 
\cK_{\alb, i, J} - 2 \alb x \cJ_{\alb}$ (see Corollary \ref{cor:vel_est} Lemma \ref{lem:vel_al} with $\al =b =  \alb$ and $\psi \equiv 1$), we obtain
\beq\label{eq:asym_V1}
\bal
V & 
= x + \cK_{\alb, i, J} - 2 \alb x \cJ_{\alb}
=  O(x) - 2 \alb x \cJ_{\alb},  \quad 
 V_x  = 1 + \cK_{\alb, 2, J} - 2 \alb  \cJ_{\alb}
= O(1) - 2 \alb \cJ_{\alb}.
\eal
\eeq

From the definition of $\cJ_{\alb}$ in \eqref{eq:Jw}, for $W(x) \sim - C_W x^{-\g}$ with $ C_W >0$, we obtain
$\cJ_{\alb} \asymp - c^{\pr}\log x $ for $x \gg 1$. Therefore, we obtain 
\beq\label{eq:ansatz_V2}
 V/x, \quad  V_x \asymp \log x, 
 \quad \cJ_{\alb}  \asymp -\log x.
\eeq
for $x \gg 1$ and they becomes unbounded as $x\to 0$, which is consistent with the ansatz \eqref{eq:ansatz_V}.

Based on \eqref{eq:ansatz_V2},  we refine the asymptotics of $W$.  Multiplying $x^{1/3}$ on both sides of  \eqref{eq:1D_dyn}, we rewrite  \eqref{eq:1D_dyn} equivalently as 
\beq\label{eq:1D_dyn2}
  2 V \pa_x ( W x^{1/3}) = ( \tf83 - \tf 23 V_x + \tf23 \cdot \tf{V}{x}) W x^{1/3}
  = - \tf{2}{3} (  V_x - \tf{V}{x} - 4 ) \cdot W x^{1/3} .
\eeq

Since $V \asymp x \log x$ for large $x$, we consider the following ansatz for the asymptotics of $W$ 
\bseq\label{eq:asym}
\beq 
  W = x^{-1/3} (\cca + \ccb (\log x)^{\b_1} + O |\log x|^{2 \b_1}  ), 
  \quad \mbox{for \ } x \gg 1,
\eeq 
 with $\b_1 < 0$, $\cca \neq 0$. We perform the expansion for large $x$ and will derive that 
\beq\label{eq:asym_b}
\cca = -6, \quad  \b_1 = -\tf{1}{3}.
\eeq 
\eseq 

\paragraph{\bf Expand the kernel}

Recall the kernel $K_{\alb, \D}$ from \eqref{eq:ker:c}. For $x\geq 0$,
using a change of variable $y = x z$, 
and 
$K_{\D}(x ,  y) = x^{\al-1} K_{\D}(1, z)$, we obtain 
\[
\bal
    V_x - \f{V}{x} & = 
   \int_0^{\infty} K_{\alb, \D}(x, y) W(y)  d y
    = \int_0^{\infty} K_{\alb, \D}( 1,  z)  z^{-\alb}  \cdot ( W(x z) (xz)^{\alb} )  d z .
  \eal
\]

Since for fixed $z$, the asymptotics of $W$ in \eqref{eq:asym} implies
$W(x z) (xz)^{\alb} = \cca + \ccb |\log x|^{\b_1} + O( |\log x|^{2\b_1}) $, for $x \gg 1$, we obtain (see Appendix \ref{app:nloc_very_far} )
\beq\label{eq:ansatz_Vmix}
    V_x - \f{V}{x} 
  = (\cca + \ccb |\log x|^{\b_1} ) \cdot \int_0^{\infty} K_{\alb, \D}(1, z) z^{-\alb} d z + O(  |\log x|^{2 \b_1} ) .
\eeq

Using the definition of $K_{\alb, \D}$ from \eqref{eq:ker:b}, its decay estimates \eqref{eq:K_decay:mix} and a direct calculation yields a remarkable identity 
\begin{align}\label{eq:asym_int}
\int_0^{\infty} K_{\alb, \D}(1, z) z^{-\alb} 
& =  \int_0^{\infty} ( \alb |1 + z|^{\alb-1 } - \alb \cdot \sgn(1-z) |1-z|^{ \alb-1 } 
-  ( |1 +z|^{ \alb }  - |1-z|^{ \alb }  ) ) z^{-\alb} d z \notag \\
& = z^{1-\alb} ( |1-z|^{\alb} - |1+z|^{\alb } ) \B|_0^{\infty} 
= - 2 \alb  = - \tf23 .
\end{align}
As $z\to \infty$, we obtain $|z-1|^{\alb} - |1+z|^{\alb }
= - 2 \alb z^{\alb-1} + o(z^{\alb-1})$, which gives a nontrivial integral.

Under \eqref{eq:asym}, for large $x$, we obtain 
\[
V \asymp x \log x, \quad  | W x^{\alb}| \asymp 1, \quad 
 | \pa_x( W x^{\alb}) | \asymp x^{-1} (\log x)^{\b_1 - 1},
\quad  |  V   \pa_x( W x^{\alb}) | \asymp |\log x|^{\b_1}  .
\]
Hence, the left hand side in \eqref{eq:1D_dyn2} vanishes for large $x$,
which along with 
$ | W x^{\alb}| \asymp 1 $ implies $  V_x - \f{ V}{x} - 4  \to 0$ as $x \to \infty$. 
Using \eqref{eq:ansatz_Vmix} and \eqref{eq:asym_int}, we match the coefficients and obtain 
\[
 0 = \lim_{ x \to \infty} V_x - \tf{1}{x} V - 4 =  - \tf23 \cca  - 4 , \ \Longrightarrow  \  \cca = -6.
\]

To determine $\b_1$, we further match the next term. Using \eqref{eq:asym}, \eqref{eq:asym_V1}, the definition of $\cJ_{\alb}$ \eqref{eq:Jw}, for large $x$, we obtain the leading order terms 
\[
\bal
 \f{V}{x} &= O(1) - 2 \alb \int_0^x y^{\alb - 1} W(y) d y  = - 2 \alb \int_1^x \cca y^{-1}   d y  + O(|\log(x)|^{1+\b_1} )   = - 2 \alb  \cca \log(x)  + l.o.t.
 \eal
\]

Using the above estimate, for large $x$, we estimate the 
left hand side (LHS) of \eqref{eq:1D_dyn2}  as 
\[
\bal
  \mf{LHS}_{\eqref{eq:1D_dyn2} } & = - 4 \alb x \cdot (  \cca \log(x) +   l.o.t. ) \cdot  \pa_x( \ccb (\log x)^{\b_1} + l.o.t.)
   = - 4 \alb \cca \ccb \b_1 (\log x)^{\b_1}+ l.o.t. .
  \eal
\]

For the right hand side of \eqref{eq:1D_dyn2}, 
using \eqref{eq:ansatz_Vmix} and $- \f23 \cca = 4$, we get 
\[
\bal
  \mf{RHS}_{\eqref{eq:1D_dyn2} }
& = (- \f{2}{3})^2  \ccb  (\log(x))^{\b_1}   \cdot \cca + 
O( |\log x |^{2 \b_1} ) = \f{4}{9}  \cca \ccb  (\log(x))^{\b_1}  +O( |\log x |^{2 \b_1} ).
\eal
\]
Since $\b_1 < 0$, matching the term $(\log(x))^{\b_1} $, we obtain $\b_1$ in \eqref{eq:asym}:
\[
\bal
  - 4 \bar \al \cca \ccb \b_1 &= \f{4}{9} \cca \ccb, \ \Longrightarrow   \b_1 = - \f{1}{3}. 
  \eal
\]

The next-order expansion \emph{does not} determine $\ccb$; however, it implies
\footnote{
  We do not use identity \eqref{eq:relation_C2} in the proof, but impose it in the ansatz for  approximate profile to improve accuracy. 
}
\beq\label{eq:relation_C2}
\ccc  = \tf{1}{12} \ccb^2.
\eeq

\begin{remark}[\bf Far-field asymptotics]\label{rem:far_asym}
A distinctive feature of the nonlocal equation \eqref{eq:1D_dyn} with $\al = \f13$
is that the precise far-field asymptotics of $W$: $\cca x^{-1/3}$, can be derived \emph{explicitly}.
\footnote{
In comparison, for related models such as De Gregorio \cite{chen2019finite} and HL model \cite{chen2021HL}, if certain quantities, such as $W_x(0)$ or $u_x(0)$,  are normalized  at $x=0$ as in \eqref{eq:1D_normal}, $\cca,\b_1$ in the far-field asymptotics $W\sim \cca x^{\b}$ are not  explicit determined.
}
By enforcing the asymptotics in constructing the approximate profile, we obtain a much smaller \emph{relative} residual error $|\bar\cR|$ for large $x$, with decay
$|\bar\cR|\lesssim |\lgp x|^{-2/3}.$
This decay is crucial for overcoming the unboundedness of $ \f{1}{x} V$ and $V_x$ in \eqref{eq:ansatz_V} in the fixed-point argument. 
Notably, the exact value of  $\ccb$ for  $x^{-1/3}(\log x)^{-1/3}$ \eqref{eq:asym}  is not needed to obtain $|\bar\cR|\lesssim |\lgp x|^{-2/3}$; see Lemma \ref{lem:W_asym}.

\end{remark}

\subsection{Construction of approximate profile}\label{sec:appr_profile}

 We follow the works \cite{ChenHou2023a,ChenHou2023b,chen2019finite,chen2021HL} to construct an approximate blowup profile by solving the dynamic equation of \eqref{eq:1D_dyn} with $\al = \alb$
\bseq\label{eq:1D_dyn_solve}
\beq
 \pa_t W + 2 V \pa_x W = (3- \alb - (1-\alb) V_x) W .
\eeq
 In addition to $\pa_x V(0)=1$ \eqref{eq:1D_normal:c}, we further impose a normalization condition 
 at the origin
\beq\label{eq:1D_dyn_solve:b}
 \pa_x W(t, 0) = -1 ,
\eeq
\eseq
to remove a trivial scaling symmetry. See Footnote \ref{foot:wx0} for motivation. 
We then solve the above equation numerically for long enough time. Due to the odd symmetry of $W$, we only need to construct the approximate profile for $x \geq 0$.
Based on the \emph{ansatz} of the asymptotics \eqref{eq:asym} for $W$, 
for $x\geq 0$, we use the following basis representation for the approximate profile
\bseq\label{eq:W_rep}
\begin{gather}
 \bw = \bwp +\bwf , \\
    \bwf(x) \teq    \chi_1( x ) |x|^{- \f13 }  \big( \cca + \ccb (\log |x|)^{-\f13 } 
   + \ccc (\log |x| )^{- \f23 } \big), \quad 
    \chi_1(x) =  \f{  (x- z_0)^5}{ z_0^5 + (x- z_0 )^5} \one_{x\geq z_0} \notag 
\end{gather}
The explicit function $\bwf(x)$ captures the far-field asymptotics of $\bw $ given by \eqref{eq:asym}.
We design the mesh $ \mmx_{-1} < \mmx_{0} < 0 = \mmx_1 < ... < \mmx_{ \nnx + 2} $ 
\eqref{def:x_i}
 to discretize a large domain $[0, \mmx_{ N_{\bwp}}] $ and design
\beq\label{eq:W_rep:WP}
\bwp  = \sum \nolimits_{1\leq i \leq \nnx } 
a_i B_i(x) \in C^{2, 1} ,  \quad  \supp(\bwp)  \subset [ -\xed, \xed ], \quad 
 \ \xed \approx 8.8 \cdot 10^{27}
\eeq
\eseq
to captures the near-field of the profile with some coefficient $a_i$. 
Each basis $B_i(x) \in C^{2,1}$ is a 4-th order Bspline basis and is a piecewise cubic polynomial with a compact support. We refer more details of the mesh $\mmx_i$ and basis $B_i(x)$ and its property to
 Appendix \ref{sec:basis}.  In particular, $\bwp$ is a piecewise cubic polynomial 
 and has global $C^{2,1}$ regularity. We use the subscript $\mf{F}, \mf{P}$ to denote \textit{far-field}, \textit{polynomial}. 

 We cannot determine the exact value $\ccb$ and use an approximate value for it. 
 \footnote{
We first do the numerical simulation without imposing the explicit decay by choosing 
$ \bwf \equiv 0$. We then use the numerical solution to fits the decay rates, which provides 
an approximation for $\ccb$.
 }
From \eqref{eq:asym_b} and \eqref{eq:relation_C2}, we choose $\cca, \ccb, \ccc$ as  
\beq\label{def:WF_para}
\cca = -6, \quad \ccb \approx 1.5745 , \quad \ccc = \tf{1}{12} \ccb^2 ,
\quad z_0 = \mmx_{300} .
\eeq
We have imposed the decay rate $|\log x|^{-1/3}$ corresponding to $\b_1 = - \tf13 $ \eqref{eq:asym_b} in the form \eqref{eq:W_rep}. 
To obtain the values for the coefficients $a_i$ in \eqref{eq:W_rep}, we derive the equation for $w = W  - \bwf$
\[
 \pa_t w = F(w) \teq - 2 V \cdot \pa_x ( w + \bw) +
 (3- \al - (1-\al) V_x ) ( w + \bw) ,
 \quad V = \psio(w) + V(\bw).
\]

\paragraph{\bf Numerical scheme}
We use the following steps to update the values for $a_i$.
\bit[leftmargin=0.5em] 
\item \textbf{Step 0: Initialization.} Compute  nonlocal terms $V( \bwf)$ and $\bwf, \pa_x \bwf$ on the mesh $\mmx_i, 1\leq i \leq  \nnx$. Choose initial data $w_0  = - \f{x}{ (1 + 0.12 x^2)^{2/3}} - \bwf$. 
\footnote{
We choose odd initial data  $w_0 + \bwf$ that is negative for $x > 0$ decay no slower than $x^{-1/3}$. 
Apart from these requirements, no further fine-tuning of the initial data is needed.
}
Use the Bspline interpolation for $w_0$ to obtain the initial coefficients $a_i(0)$ in \eqref{eq:W_rep}.
Choose the time step $k$ according to suitable CFL condition. 

\item \textbf{Step 1: Update $w(t_n) \to w(t_{n+1})$.} 
Suppose that the coefficient $a_i( t_n)$ at $n$-th step and at time $t_n$
for $w(t_n)$ has been obtained.  To compute the forcing term $F(w)$ for a given $w$ in the representation \eqref{eq:W_rep} with coefficients $a_i$, we use the formula \eqref{eq:W_rep:WP} and take the derivative $\pa_x w$ on the basis function \eqref{eq:W_rep:WP} to evaluate $w$ and $w_x$ on the mesh $\mmx_i$. We evaluate the nonlocal terms $V( w), V_x(w)$ using the integrals \eqref{eq:ker}. 
With $F(w)$ being computed, we use a second-order Runge-Kutta method to discretize the temporal variable and update the PDE. 

\item \textbf{Step 2.} Repeat Step 1 until the \emph{relative} residual error $\cR = \f{F(w)}{ w + \bwf}$ becomes small enough and stop the computation. We determine $\bwp=w$ and set $\bw=\bwp+\bwf$ as the approximate profile.
\eit 
In Figure \ref{fig:solu_profile}, we plot the approximate profiles for $\bar W, \psio(\bar W) = \bv - x$. 
\begin{figure}[t]
\centering
\includegraphics[width=0.55\textwidth]{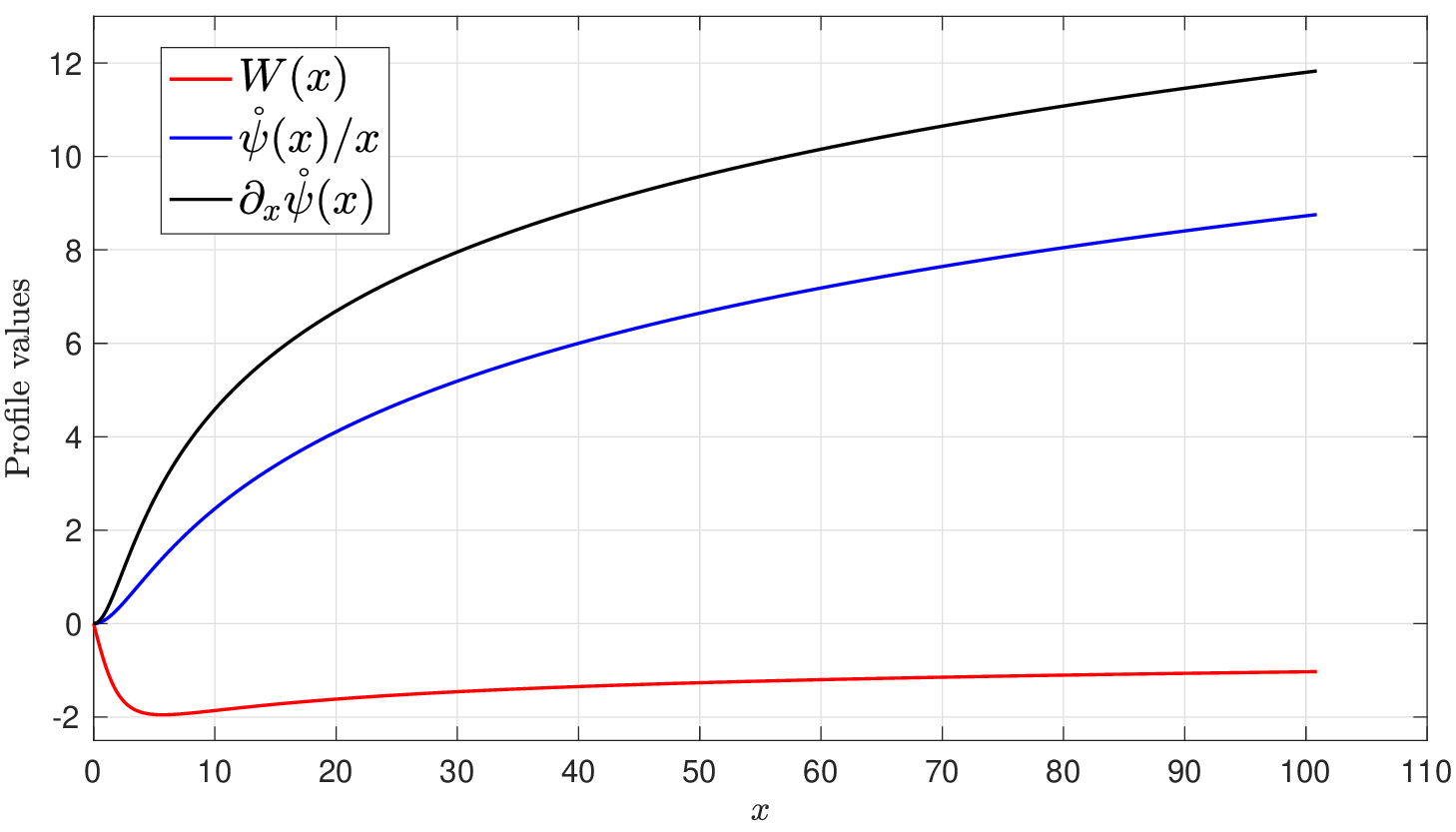}
\caption{
Approximate Profiles for \(\bar W\), \(\mathring{\psi}/x\), \(\partial_x \mathring{\psi}\) on grid points in $[0, 100]$. $\psio$ is the numerical evaluation of $\psio(\bw)$ with \emph{superlinear} growth.
}
\label{fig:solu_profile}
\end{figure}

\vs{0.1in}
\paragraph{\bf Truncation and Round-off  error}
The most difficult step in constructing the approximate profile 
is to compute the nonlocal terms 
$V(w), V_x(w)$ for $w= \bwf, \bwp$ with small error. 
For later fixed point argument, we require a small 
\emph{relative} residual error, which \emph{only} has a decay rates $O(|\log x |^{-2/3})$ 
for large $x$. To overcome this difficulty, we choose a large domain with $\xed \approx 8.8 \cdot 10^{27}$ and design the explicit functions $\bwf$ to capture the first three leading order far-field 
asymptotics in \eqref{eq:W_rep}. Choosing a large domain typically leads to significant round-off error, particularly in deriving the nonlocal terms $V, V_x$ \eqref{eq:ker}. 
Inspired by \cite{chen2021HL}, we compute the nonlocal terms using a combination of Gaussian quadrature rule and \emph{exact} integration to 
reduce the round-off error. We refer the methods and error estimates to Appendix \ref{app:nonlocal}-\ref{app:bwf_far}. For  rigorous proof, we control all truncation and round-off errors using  numerical analysis and \hyr[bd:intval]{\its Interval Arithmetic}. See Appendix \ref{app:piece_bound}.
These methods have gradually matured following the works \cite{chen2021HL,chen2019finite,ChenHou2023a,ChenHou2023b}. We emphasize that, in constructing the approximate profile 
$\bwp$ for \eqref{eq:W_rep}, we \emph{do not} need to track any error or \emph{use} interval arithmetic.

\begin{remark}[\bf $\pa_x \bw(0) \neq 1$ for approximate profile]\label{rem:wx0}

Due to numeric error, $\bw$ does not preserve 
\eqref{eq:1D_dyn_solve:b} $ \bw_x(0)=-1$;
instead, it satisfies \( |\bw_x(0) + 1| \leq 10^{-6} \) (see Lemma \ref{lem:W_asym}). The \emph{exact} 
identity  $ \bw_x(0)=-1$ \emph{is not used} 
in constructing the \emph{exact} profile $\wwwa$. 
On the other hand, we have $|\psio(\bw)(x) | \les |x|^2$ by Corollary \ref{cor:vel_est}.
Thus, $\pa_x V(\bw)(0)=1$ in \eqref{eq:1D_normal:b} holds for the numerical profile.

\end{remark}

\subsection{Fixed point map}\label{sec:fix_point_map}

In the rest of this section and Sections \ref{sec:onto}, \ref{sec:schauder}, we construct an exact profile around the approximated profile constructed numerically in 
Section \ref{sec:appr_profile}. A fundamental difficulty is that closing the nonlinear estimates typically requires control of the Lipschitz norm of the transport coefficient, which is \emph{unbounded} in this case, i.e. $V_x$ in \eqref{eq:1D_normal}, \eqref{eq:1D_dyn} is not uniformly bounded.
\footnote{
 This stands in sharp contrast to the canonical estimates for fluid-type equations, such as the continuation criteria based on $\nlinf{ \nabla \uu}$ for the Euler equations.  Thus, we cannot use methods similar to \cite{chen2019finite,chen2021HL} by proving the nonlinear stability of an approximate profile using energy method.
}
Instead, we rewrite the steady equation of \eqref{eq:1D_dyn} as a fixed point problem.

Let $\bw$ be the approximate profile. To present the derivation consistently, 
we retain $\alb$ and do not simplify some expressions  involving $\alb$, such as $ 3-\alb$. 
For a Lipschitz weight $\rho > 0$  to be chosen (see \eqref{eq:rho1}),  multiplying both sides of the profile equation \eqref{eq:1D_dyn} by $\f{\rho}{\bw}$ with $\al =\alb = \f13$, we obtain
\beq\label{eq:fix_map1}
 2 V \pa_x( \f{W}{ \bw } \rho ) 
 = \B( 3 - \alb - (1-\alb) V_x - 2 V \f{\pa_x \bw}{ \bw }  \B) 
 \f{W}{ \bw } \rho 
+ 2 V \cdot \pa_x \rho  \f{W}{ \bw} .
\eeq

We introduce the relative residual operator, which is linear in $W$:
\beq\label{eq:err}
 \cR(W) =  3 - \alb - (1-\alb) V_x(W) - 2 V(W) \f{\pa_x \bw}{ \bw} ,
\eeq
where $V = V(W) = x + \cK_{\alb, 1}(W)$  is a linear operator of $W$ with $\cK_{\al,i}$ defined in 
\eqref{eq:ker}. The name ``relative" reflects that $\cR(\bw)$ measures the 
residual error of the approximate profile $\bw$, relative to itself. In particular,
if $\bw$ is an \emph{exact} solution to \eqref{eq:1D_dyn}, we obtain $\cR(\bw)=0$. We decompose 
$W$ into the approximate profile $\bw$ and the perturbation $w$ satisfying
\[
 W = \bw + w, \quad  w \mbox{ \  is odd in \ } x,  \quad   \lim\nolim_{x \to 0} |x|^{-1} w(x) = 0.
\]

Cancelling $2 V \pa_x \rho$ on both sides, we rewrite \eqref{eq:fix_map1} as 
\bseq\label{eq:eqn_equiv}
\beq\label{eq:eqn_equiv:a}
\bal
  2 V  \pa_x( \f{w}{\bw} \rho)
  + 2 V \pa_x \rho
  & = \cR(W) \f{W}{\bw} \rho + 2 V \pa_x \rho (1 + \f{w}{\bw}) \, ,  \\
  \iff \qquad \qquad   2 V \pa_x( \f{w }{\bw} \rho) & = \cR(W) \f{W}{\bw} \rho + 2 V \pa_x \rho  \f{w}{\bw}  \, .
  \eal
\eeq
Since $ \lim_{x \to 0} \f{1}{ |x| } w(x) = 0$ and $\pa_x \bw(0) \neq 0$ 
by Lemma \ref{lem:bw_basic_sign}, integrating \eqref{eq:eqn_equiv:a}
from $0$ to $x$, we obtain 
\beq
   \f{w}{ \bw} \rho =  \int_0^x \f{ \cR(W) }{  2 V } \cdot \f{W}{\bw} \rho
   +  \f{\pa_x \rho}{\rho} \cdot   \f{w}{\bw} \rho  . 
\eeq
\eseq

We encode the relation on the right hand side as a map $\cF$ on $w$:
\bseq\label{eq:fix_map}
\beq\label{eq:fix_mapa}
  \cF( w) = \f{\bw}{\rho}  \int_0^x \B( \f{\cR( w + \bw)}{2 V} \cdot \f{w + \bw}{\bw} 
  \rho + \f{\pa_x \rho}{\rho} \cdot \f{ w }{\bw} \rho \B),
  \quad V =   x + \psio(w + \bw) 
\eeq
where $\psio$ is the modified stream function 
defined in \eqref{eq:ker}.

Then solving \eqref{eq:eqn_equiv} is equivalent to a fixed point problem
\beq\label{eq:fix_mapc}
 w =  \cF(w) ,  \quad w \mbox{ \  is odd in \ } x,  \quad   \lim\nolim_{x \to 0} |x|^{-1} w(x) = 0.
\eeq
\eseq

We note that the map $\cF$ depends on the choice of $\rho$ and choosing different $\rho$ may lead to 
different map. We suppress the dependence of $\cF$ on $\rho$ when no confusion arises.

\begin{remark}[\bf Integrating factor]\label{rem:int_wg}

The weight $\rho$ plays a fundamental role as an integrating factor. For a model problem 
$2 x \pa_x w = - c(x) w + R(w)$, 
where $c(x)$ is a given function and $R(w)$ denotes lower order terms, 
we first choose a weight $\rho$ to rewrite it equivalently 
\beq\label{eq:int_rho_motivate}
  2 x \pa_x( w \rho ) = R(w) \rho , \quad 2 x \pa_x \rho = c \rho.
\eeq

Following \eqref{eq:eqn_equiv} and \eqref{eq:fix_map}, we can rewrite it as a fixed point problem. 
The key advantage of the formulation \eqref{eq:int_rho_motivate} is that $\rho$ captures the sharp decay of $w$ and we do not need to estimate 
the \emph{linear} term $c(x) w$ in the fixed point argument.
While there are \emph{infinite many} different ways to reformulate solving \eqref{eq:1D_dyn} as a fixed point problem, without capturing the damping / growing effect of such a term, we cannot 
design the correct functional space to prove the fixed point argument.

For \eqref{eq:fix_map1}, we exploit a fundamental local-nonlocal cancellation.
\emph{On average}, the perturbation of 
the nonlocal term $- (1-\alb) V_x - 2 V \tf{\pa_x \bw}{ \bw}$
in $\cR(W)$ \eqref{eq:eqn_equiv}  behaves effectively like a local term: 
\[
- (1-\alb) \psio_x - 2 \psio \f{\pa_x \bw}{ \bw}  
= \f{4}{9} x^{\alb} w + l.o.t.
\]
for large $x$, where the lower-order terms (l.o.t.) gain an additional decay factor of $|\log x|^{-1}$.
The above relation is justified in the integral in \eqref{eq:fix_mapa} by applying an integration by part argument. 
 We choose an integrating factor $\rho$ precisely to captures the effect of this extra local terms. 
 Since $\rho \sim |\log x|^{1/3}$ is growing, this crucial weight allows us to prove 
 that the perturbation has a \emph{faster} decay rate $ x^{-1/3} |\log x|^{-1/3}$ than the 
approximate profile. See Remark \ref{rem:superlinear}.
 This is a crucial step to overcome the logarithmic growth of $V_x$ in \eqref{eq:ansatz_V}.  See more discussion in Section \ref{sec:log_cancel}.

\end{remark}

\subsection{Decomposition of the map}\label{sec:decomp1}
We use $W, V$ to denote the full solution, $\bw, \bv  = V(\bw)$ to denote the approximate solution, $w$ to denote the perturbation, and further introduce $\bar g$ as:
\beq\label{eq:nota1}
\bv = V(\bw), \quad 
  \bar g(x) \teq \f{ \bv(x)}{x} ,
  \quad 
  W = w + \bw,  \quad V = x + \psio(w + \bw) = \bv + \psio(w) 
\eeq

Since $\cR(\bw)$ is the residual error, we decompose the map \eqref{eq:fix_map} as 
\bseq\label{eq:F_lin}
\beq
  \cF(w) = \cL(w) + \cE(w) + \cN(w),
\eeq
where $\cL,  \cN, \cE$ denote the linear part, nonlinear part, and the error part, respectively, 
\beq
\bga
  \cL (w) \teq \f{\bw}{\rho}  \int_0^{x} \B( \f{ \td \cR(w)}{2 \bv} \rho
 + \f{\pa_x \rho}{\rho} \cdot  \f{w}{\bw} \rho \B)  ,  \\
 \cN \teq \f{\bw}{\rho}  \int_0^x \td \cR(w) \cdot \B( \f{W}{2V \bw} - \f{1}{2 \bv}  \B) \rho,
 \quad 
  \cE \teq  \f{\bw}{\rho} \int_0^x  \f{ \cR(\bw)}{2 V} \cdot \f{W}{\bw} \rho .
 \ega
\eeq
In the above derivation, we have further decomposed the residual operator $\cR(W) = \cR(\bw + w)$ into the residual error $\cR(\bw)$ and a linear operator $\td \cR$, which depends on $w$ linearly and nonlocally:
\beq\label{eq:lin_nloc}
\bga
      \cR(W ) = \cR(\bw) + \td \cR(w),  \\
      \cR(\bw) \teq  3 - \alb - (1-\alb) \bv_x - 2 \bv \f{\pa_x \bw}{ \bw} , \quad 
     \td \cR( w ) \teq -  \f{2}{3}  \psiox (w) - 2 \psio \f{\pa_x \bw}{\bw} .
  \ega
\eeq
\eseq

For the weight $\rho$ defined in \eqref{eq:rho1} and parameters $\vmu = (\mu_1, \mu_2,.., \mu_\nmu), \bb=(b_1, b_2,..,b_\nmu )$ with $\mu_i > 0, b_i \in [-1.2, \f13 ], b_1< ..<b_\nmu$ given in 
\eqref{def:mu_b}, Appendix \ref{app:para_wg}, we define the weight and norm 
\bseq\label{norm:X}
\beq
  || w ||_{\bcX} \teq ||  w \vp \rho ||_{L^{\infty}},
  \quad 
  \vp(x ) \teq \max_{i \leq \nmu} \mu_i^{-1} |x|^{b_i} ,
\eeq
To distinguish \(\bcX\)-norm  in \eqref{norm:X} for \(\alpha=\alb\) from  \(\cX\)-norm 
\eqref{norm:X_ep} used later for \(\alpha<\alb\), we use the subscript \(\alb\) for the former $\bcX$.  We define a $\d$-ball in the $\bcX$-norm
\beq
      \bD_{\d} \teq \{ w: || w||_{\bcX} \leq \d \},
\eeq
\eseq
where $\rho$ is the weight to be defined in \eqref{eq:rho1}. We aim to prove the following fixed point estimates. 

\begin{thm}\label{thm:onto}
There exists $\dfix >0$ such that, for $|| w ||_X \leq \dfix$, we have 
\beq\label{eq:onto}
   || \cF(w) ||_X \leq \dfix.
\eeq
Moreover, there exists $\lamalb \in (0,1)$, such that for any $w \in \bcX$, we have 
\beq\label{eq:lin_contra}
||\cL(w)||_{\bcX} \leq \lamalb || w||_{\bcX}.
\eeq
\end{thm}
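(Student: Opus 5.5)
We use the decomposition $\cF = \cL + \cE + \cN$ from \eqref{eq:F_lin}. First we prove the linear contraction \eqref{eq:lin_contra} with some $\lamalb<1$. Then we bound $\|\cE(w)\|_{\bcX} \le \cE_0$ uniformly for $w\in \bD_{\d}$, and prove $\|\cN(w)\|_{\bcX}\le C_N \|w\|_{\bcX}^2$ for $\|w\|_{\bcX}$ small. The onto estimate \eqref{eq:onto} then follows once $\dfix$ satisfies $\lamalb \dfix + \cE_0 + C_N\dfix^2 \le \dfix$. This is possible if the residual $\cR(\bw)$ is small enough that $\cE_0 < (1-\lamalb)\dfix$ for some $\dfix$ with $C_N \dfix < (1-\lamalb)/2$. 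The residual smallness is a computer-assisted bound on $\cR(\bw)$, with decay $|\lgp x|^{-2/3}$ for large $x$ as in Remark \ref{rem:far_asym} and Lemma \ref{lem:W_asym}.

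\textbf{Linear estimate.} For $\|w\vp\rho\|_{\linf}\le 1$ we have $|w|\le (\vp\rho)^{-1}$. We bound $\psio(w)=\cK_{\alb,1}(w)$ and $\psiox(w)=\cK_{\alb,2}(w)$ pointwise using the scaling \eqref{eq:ker_scale}. With $y=xz$ we get
\[
|\psio(w)(x)| \le x^{1+\alb}\int_0^\infty |K_{\alb,1}(1,z)|\,(\vp\rho)^{-1}(xz)\,dz .
\]
We partition the rescaled $z$-domain into finitely many intervals on which the power-type weights $\mu_i^{-1}|x|^{b_i}$ have a fixed active index, and bound each piece by an explicit integral. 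This gives weight functions $\MCC$ as in Lemma \ref{lem:vel_est} and Corollary \ref{cor:vel_est}.

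For large $x$ this crude bound loses a $\log x$ coming from $\cJ$. There we instead write $\cK_{\alb,i}=\cK_{\alb,i,J}-2\alb x^{2-i}\cJ(w)$. Using $\f{x\pa_x \bw}{\bw}=-\f13+O(|\log x|^{-4/3})$, we extract the main part of $\td\cR(w)$:
\[
-\tf23\psiox - 2\psio\tf{\pa_x\bw}{\bw} = -\tf23\big(\psiox-\tf1x\psio\big) + \text{l.o.t.}
\]
The bracket is controlled by Corollary \ref{cor:vel_est} with a gain of $|\lgp x|^{-1/3}$.

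Next we plug these bounds into $\cL(w)=\f{\bw}{\rho}\int_0^x(\cdots)$ and use $\bv\asymp x\lgp x$. An integration by parts in the $\cJ$-term yields the effectively local term $\f49 x^{\alb}w$ (Remark \ref{rem:int_wg}), and this term is absorbed by the choice of $\rho$ with $2x\pa_x\rho \sim c\rho$. The remaining contributions are bounded on each mesh interval by rigorous interval arithmetic in the near field. In the far field they are bounded analytically, using the explicit asymptotics of $\bw$, $\bv$ and the $|\lgp x|^{-b}$ gains. Near $x=0$ we use $\bv\approx x$, $\bw\approx \bw_x(0)x$ and the singular weight exponent $b_1<-1$, which produces a damping factor strictly less than one. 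Taking the supremum of $\vp\rho\,|\cL(w)|$ over all regions gives $\lamalb<1$.

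\textbf{Error and nonlinear terms.} For $\cE$, the same pointwise integration applies with $\cR(\bw)$ given. We also need $V=\bv+\psio(w)$ to stay comparable to $\bv$, which holds since $|\psio(w)|\les \dfix\, |\bv|$ by the velocity estimates. For $\cN$, we write
\[
\f{W}{2V\bw}-\f1{2\bv}=\f{w}{2V\bw}-\f{\psio(w)}{2V\bv},
\]
which is $O(\|w\|_{\bcX})$ relative to $\f1{2\bv}$. Since $w$ decays faster than $\bw$ by $|\lgp x|^{-1/3}$, this remains true in the far field despite the superlinear growth of $V$ (Remark \ref{rem:superlinear}). Combining this with the $\td\cR$ bound gives the quadratic estimate.

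The main obstacle is the linear estimate in the far field. There $\bv/x$ and $\psiox$ grow logarithmically, so a constant below one is obtained only by combining the exact cancellation in $\td\cR$, the growing integrating factor $\rho\sim|\log x|^{1/3}$, and sharp, computer-verified constants for the finitely many explicit kernel integrals. The near- and intermediate-field parts also require care: there the contraction constant is a rigorous numerical bound, verified over a domain extending to $\xed$.
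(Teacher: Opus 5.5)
Your plan follows the paper's proof. The shared ingredients are the splitting $\cF=\cL+\cE+\cN$, nonlocal bounds from kernel scaling over a fixed partition of the $z$-domain, extraction of $-\tfrac23(\psiox-\tfrac1x\psio)$, the integration by parts whose leftover local term is cancelled by $\rho$, and computer-assisted bounds up to $\xed$ with explicit asymptotics beyond it.

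The difference is only in how the pieces are assembled. The paper verifies a single pointwise inequality $B_{\mf{tot}}(x,\dfix)\le\dfixa<\dfix$, together with $\MCC_{\psio/x}(x)\,x\,\dfix\le\tfrac12\bv$, and reads off $\lamalb=\dfixa/\dfix$ from its linear part; this is sharper than adding separate suprema. In your scheme, if $C_N\dfix<(1-\lamalb)/2$, you actually need $\cE_0\le(1-\lamalb)\dfix/2$, not just $\cE_0<(1-\lamalb)\dfix$.
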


\begin{thm}\label{thm:schauder}
The map $\cF: \bDD  \to \bDD$ is  continuous, and compact, 
with respective to the $\bcX$-norm. Moreover, $\cF(w)(x)$ is locally Lipschitz and satisfies $|\pa_x \cF(w)(x)| \les_R |x|^{\alb+1} $ for any $R >0$,
with uniform constants for any $w \in \bDD$.
\end{thm}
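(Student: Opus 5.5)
The goal is Theorem \ref{thm:schauder}: $\cF:\bDD\to\bDD$ is continuous and compact in the $\bcX$-norm, and the images $\cF(w)$ are locally Lipschitz with uniform bounds. The plan is to first get uniform pointwise bounds for everything entering the integrand of $\cF$ in \eqref{eq:fix_mapa}. From there, compactness follows from Arzel\`a--Ascoli on compact sets together with uniform smallness of the weighted norm near $0$ and near $\infty$. Continuity follows from the linear dependence of $\psio,\psio_x$ on $w$. Theorem \ref{thm:onto} already gives $\cF(\bDD)\subset\bDD$.

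\textbf{Step 1: uniform bounds for $\cF(w)$ and its derivative.} For $w\in\bDD$ we have $|w|\le \dfix\,\vp^{-1}\rho^{-1}$, and the weight exponents satisfy $b_1<-1$ and $\max b_i=\f13$. So $|w|\les |x|^{-b_1}$ near $0$, which is $o(|x|)$, and $|w|\les \ang x^{-1/3}|\lgp x|^{-1/3}$ at infinity.

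By the kernel estimates \eqref{eq:ker_decay} (Lemma \ref{lem:K_decay_basic}) and Corollary \ref{cor:vel_est}, the functions $\psio(w)$ and $\psio_x(w)$ are bounded locally uniformly in $x$. Near $0$ they satisfy $|\psio(w)|\les |x|^{1+\alb}$ or better, and at infinity $|\psio(w)|\les x\lgp x$.

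Lower bounds on the velocity come from the smallness of $\dfix$: $V=\bv+\psio(w)$ satisfies $V\asymp x$ near $0$ (recall $\bv_x(0)=1$) and $V>0$ for $x>0$, with the same bounds as for $\bv$. We also use $\bw<0$ with $\bw_x(0)\neq0$ (Lemma \ref{lem:bw_basic_sign}).

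Differentiating \eqref{eq:fix_mapa} gives
\[
\pa_x\cF(w)=\pa_x\Big(\f{\bw}{\rho}\Big)\int_0^x(\cdots)+\f{\bw}{\rho}\,(\text{integrand at }x).
\]
Near $0$, the integrand is $O(|x|^{\alb})$: indeed $\cR(W)=O(|x|^{\alb})$, since $\cR(W)(0)=3-\alb-(1-\alb)V_x(0)-2=0$, and $\pa_x\rho/\rho\cdot w/\bw$ is small. This yields $|\pa_x\cF(w)|\les_R |x|^{\alb+1}$-type bounds on $[0,R]$. The exact power should be read off from the kernel regularity of $\psio_x$ near $0$, which is H\"older of order $\alb$. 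On compacts away from $0$ the bounds are uniform Lipschitz bounds.

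\textbf{Step 2: compactness.} Take a sequence $w_n\in\bDD$. By Step 1, $\cF(w_n)\vp\rho$ is equicontinuous on each $[\d,R]$. A diagonal Arzel\`a--Ascoli argument then gives a subsequence converging locally uniformly on $(0,\infty)$.

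To upgrade this to $\bcX$-convergence we need tails that are uniformly small in the weighted norm. Near $0$, the bound $|\cF(w)|\les |x|\int_0^x|x|^{\alb}$ beats the weight $|x|^{b_1}$ up to a positive power; this should be checked against the smallest $b_i$.

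At infinity, this is the main obstacle, since the weight is critical. The $\bcX$-bound for $\cF(w)$ only shows $|\cF(w)\vp\rho|\le\dfix$, not decay. I would try an interpolation in the weights: repeat the onto estimate of Theorem \ref{thm:onto} with a slightly stronger weight, e.g.\ $\rho$ replaced by $\rho|\lgp x|^{\eta}$ or the power $\f13$ replaced by $\f13+\eta$, to gain a factor $|\lgp x|^{-\eta}$. Alternatively, split $\cF=\cE+\cL+\cN$. Then $\cE(w)$ has extra decay from the residual bound $|\cR(\bw)|\les|\lgp x|^{-2/3}$ in Lemma \ref{lem:W_asym}. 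For $\cL$ and $\cN$, the integral from a large $X_0$ to $x$ is controlled by the local-nonlocal cancellation, whereas the part from $0$ to $X_0$ involves only $w$ restricted to a compact set, up to the nonlocal tail, which has a decaying kernel. So the part from $0$ to $X_0$ converges, and the part from $X_0$ to $x$ is small uniformly in $n$ once $X_0$ is large.

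\textbf{Step 3: continuity.} For $w_n\to w$ in $\bcX$, linearity and the nonlocal bounds give $\psio(w_n)\to\psio(w)$ in the weighted sense used in Step 1. Since $V$ is bounded below, $W/(V\bw)$ depends continuously on $w$. Dominated convergence in the integral defining $\cF$ then gives locally uniform convergence, and the uniform tail smallness from Step 2 upgrades this to convergence in $\bcX$.
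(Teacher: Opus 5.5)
Your Step 1 and your handling of $x$ near $0$ are fine (compare \eqref{eq:comp_Lip_cT}). The gap is at infinity, where your argument aims at a false target. You try to show that the tails of $\cF(w_n)\vp\rho$ are uniformly small, and your first route seeks a gain $|\lgp x|^{-\eta}$ by measuring $\cF(w)$ with a stronger weight. This cannot hold. The weight is critical: $\vp^{-1}\rho^{-1}\asymp x^{-1/3}|\lgp x|^{-1/3}$ is exactly the size of the $\ccb$-term of $\bw$. As a result, $\cF(w)\vp\rho$ does not tend to $0$ but to a limit that has no reason to vanish. Already for $w=0\in\bDD$ one has $\cF(0)=\cE(0)$ and
\[
\cE(0)\vp\rho=\vp\bw\int_0^x\frac{\cR(\bw)}{2\bv}\rho\,dy\ \longrightarrow\ -6\mu_{\nmu}^{-1}\int_0^\infty\frac{\cR(\bw)}{2\bv}\rho\,dy .
\]
So a bound with $\rho$ replaced by $\rho|\lgp x|^{\eta}$ would force this integral of the numerical residual to vanish. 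Likewise, $w_{\alb}\vp\rho$ has a limit proportional to the discrepancy between $\ccb$ in \eqref{def:WF_para}, which is only a fitted value, and the actual coefficient of $x^{-1/3}(\log x)^{-1/3}$ in $\wwwa$. For the same reason, your claim that $\cE(w)$ ``has extra decay'' holds only for its integrand, not for $\cE(w)$ in the $\bcX$-norm. Step 3 invokes the same uniform tail smallness, so it inherits the gap.

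What is needed is uniform convergence at infinity, not decay. The paper uses the decomposition \eqref{eq:comp_lin_decom} of $\cL$ with $x_1=0$: the term $-\tfrac13(\psio/y+2\alb\cJ)_y\rho/\bar g$ is integrated by parts, producing the boundary term $\cP_0$, and $II_2+III$ are combined through the cancellation \eqref{eq:cancel_rho}. This gives $\cT(w)=\cF(w)\vp\rho=\vp\bw\bigl(\cP_0(w)+\int_0^x I_{\cF}(w)\,dy\bigr)$ with, uniformly on $\bDD$, $|I_{\cF}|\les |x|^{\alb}\we x^{-1}|\lgp x|^{-1-\alb}$ and $|\vp\bw\,\cP_0|\les|\lgp x|^{\alb-1}$. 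This integration by parts is where the local--nonlocal cancellation you mention is really needed. Without it, the $\cL$-integrand is only $O(x^{-1}|\lgp x|^{-1})$ by \eqref{eq:cR_decay_moti:b}, which is not integrable, so your ``part from $X_0$ to $x$'' would not be uniformly small.

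Since $\vp\bw\to-6\mu_{\nmu}^{-1}$, one gets $|\cT(w)(x)-\cT(w)(\infty)|\les|\lgp x|^{-\alb}$ uniformly in $w$, as in \eqref{eq:cT_far}. Compactness then follows from Arzel\`a--Ascoli on each $[0,n]$, a diagonal argument, and extracting a subsequence along which the bounded numbers $\cT(w_n)(\infty)$ converge. Your second route becomes exactly this argument once the convergence of ``the part from $0$ to $X_0$'' is attributed to such an extraction, or to convergence at the point $X_0$ from Arzel\`a--Ascoli. Restricting $w_n$ to a compact set gives no convergence by itself.

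For continuity, locally uniform convergence together with this uniform Cauchy property at infinity suffices. The paper argues more directly that $\cF$ is Lipschitz on $\bDD$. It uses that $\cL$ is bounded by \eqref{eq:lin_contra}. It also uses that the integrands of $\cE(w_1)-\cE(w_2)$ and $\cN(w_1)-\cN(w_2)$ are bounded by integrable majorants times $\|w_1-w_2\|_{\bcX}$, whose integrals stay bounded after multiplication by $\vp|\bw|\asymp|x|^{b_1+1}\vee 1$.
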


Theorems \ref{thm:onto} and \ref{thm:schauder} imply the following results.

\begin{thm}[\bf Existence of fixed point]\label{thm:fix_point}
The map $\cF$ admits a fixed point $w_{\alb} = \cF(w_{\alb})$
with  $ \| w_{\alb} \|_{\bcX} \leq \dfix$. 
The function $w_{\alb}$ is odd, locally Lipschitz and satisfies $|\pa_x w_{\alb}(x)| \les_R |x|^{\alb+1} $ for any $R >0$.
Moreover, $ \wwwa = \bw +  w_{\alb}$ solves the profile equation \eqref{eq:1D_dyn} with $\wwwa$ being locally Lipschitz. 
\end{thm}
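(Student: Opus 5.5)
Theorem~\ref{thm:fix_point} follows from the Schauder fixed-point theorem applied to $\cF$ on the closed convex set $\bDD$; the remaining work is to check regularity and to undo the reformulation. First, $\bDD=\{w:\|w\|_{\bcX}\le \dfix\}$ is a closed, convex, bounded subset of the Banach space $\{w:\ w\vp\rho\in L^\infty\}$. We restrict to the closed subspace of odd functions with $\lim_{x\to0}|x|^{-1}w=0$. Closedness of the vanishing condition under the $\bcX$-norm should follow from the singular weight: $\vp\gtrsim |x|^{b_1}$ with $b_1<-1$ near $0$, so $\|w\|_{\bcX}<\infty$ already forces $|w|\les |x|^{-b_1}=o(|x|)$.

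By Theorem~\ref{thm:onto}, $\cF(\bDD)\subset\bDD$. By Theorem~\ref{thm:schauder}, $\cF$ is continuous and compact on $\bDD$, so $\cF(\bDD)$ is precompact. Schauder's theorem then gives $w_{\alb}\in\bDD$ with $\cF(w_{\alb})=w_{\alb}$. Oddness is preserved by working in the odd subspace, i.e., $\cF$ is defined on $x\ge0$ and extended oddly. The local Lipschitz property and the bound $|\pa_x w_{\alb}|\les_R|x|^{\alb+1}$ are inherited directly from Theorem~\ref{thm:schauder}, since $w_{\alb}=\cF(w_{\alb})$.

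It remains to show that $\wwwa=\bw+w_{\alb}$ solves \eqref{eq:1D_dyn}. Since $w_{\alb}$ is locally Lipschitz, the identity $w_{\alb}\rho/\bw=\int_0^x(\cdots)$ can be differentiated almost everywhere. Multiplying by $2V$ recovers \eqref{eq:eqn_equiv:a}. Reversing the algebra of \eqref{eq:fix_map1}, i.e., adding back $2V\pa_x\rho$ and multiplying by $\bw/\rho$, which is legitimate because $\bw<0$ for $x>0$ by Lemma~\ref{lem:bw_basic_sign} and $\rho>0$, gives $2V\pa_x W=(3-\alb-(1-\alb)V_x)W$ almost everywhere for $x>0$. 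Oddness then extends the equation to $x<0$, and at $x=0$ both sides vanish.

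The main obstacle is that $V$ vanishes at $0$, so the derivation requires the integrand $\cR(W)W\rho/(2V\bw)+\ldots$ to be locally integrable near $0$ and the integrated form to be equivalent to the differential one. I would handle this using $V\sim x$ near $0$ (from $V_x(0)=1$ and $|\psio|\les x^2$), together with $w=o(x)$ and the estimate $|\pa_x w_{\alb}|\les|x|^{\alb+1}$. Finally, $\wwwa$ is locally Lipschitz since both $\bw\in C^{2,1}$ and $w_{\alb}$ are, and the decay bound $|\wwwa|\les\min(|x|,1)$ follows from the weight, because $\vp\rho\gtrsim\ang x^{1/3}$ for large $x$.
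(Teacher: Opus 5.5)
Your proposal is correct and follows essentially the same route as the paper. Both arguments apply Schauder's theorem on the closed convex ball $\bDD$ using Theorems~\ref{thm:onto} and \ref{thm:schauder}, take the regularity of $w_{\alb}$ from the identity $w_{\alb}=\cF(w_{\alb})$, and reverse the derivation \eqref{eq:fix_map1}--\eqref{eq:fix_map} to recover \eqref{eq:1D_dyn}. Your extra remarks only make explicit details the paper leaves to Theorem~\ref{thm:onto}: that the vanishing condition at $0$ follows automatically from $b_1<-1$, and that the integrand is integrable near $x=0$ because $V\sim x$ there.
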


In Theorem \ref{thm:reg_alb}, we further prove that the profile $\wwwa \in C^{\infty}$ and 
establish decay estimates.

\begin{proof}[Proof of Theorem \ref{thm:fix_point}]
By definition of $\bD_{\d}$ in \eqref{norm:X}, $\bD_{\d}$ is convex and closed for any $\d > 0$. 
By Theorems \ref{thm:onto} and \ref{thm:schauder}, $\cF$ continuously maps $\bD_{\dfix}$ into itself 
and $\cF(\bD_{\dfix})$ is compact. 
Applying the Schauder fixed-point theorem, we obtain that $\cF$ has a fixed point in $\bD_{\dfix}$. 

Since $w_{\alb} = \cF(w_{\alb})$, the regularity of $w_{\alb}$ follows from Theorem \ref{thm:schauder}.
Since $w_{\alb} \in \bcX$ and $ | \pa_x w_{\alb}| \les |x|^{1 + \alb}$, undoing the derivation in
\eqref{eq:fix_map1}-\eqref{eq:fix_map}, we prove that $\wwwa = \bw + w_{\alb}$ solves 
the profile equation \eqref{eq:1D_dyn} and it is locally Lipschitz.
\end{proof}

 The key step in our whole proof of Theorems \ref{thm:fix_point} is to prove that the linear operator $\cL$ is a contraction, i.e. \eqref{eq:lin_contra}. We treat the nonlinear and error terms $\cN, \cE$ perturbatively. We prove Theorem \ref{thm:onto} in Section \ref{sec:onto} and Theorem \ref{thm:schauder} in Section \ref{sec:schauder}.

\begin{remark}[\bf  Banach vs Schauder]\label{rem:Banach_Schauder}
Based on  Theorem \ref{thm:onto} 
that $\cF$ maps  $\bD_{\dfix}$ (a closed convex set) into itself,  and estimate \eqref{eq:lin_contra}, one may apply either Banach or Schauder fixed point argument to construct a solution. 
In the Banach approach, one must further show that
$\cF$ is a contraction. This follows essentially from 
the contraction estimate for $\cL$ \eqref{eq:lin_contra} since $\cE, \cN$  can be estimated 
 perturbatively.  
 We adopt the Schauder approach, since the map $\cF$ \eqref{eq:fix_map} gains a derivative. The remaining steps to show that 
$\cF$ is continuous and compact are \textit{purely qualitative},
 and they avoid tracking profile-dependent constants and verifying additional inequalities for the estimates of $\cN, \cE$, despite these being purely perturbative.

\end{remark}

\section{Estimates of the fixed point map}\label{sec:onto}

In this section, we prove that $\cL$ is a contraction and Theorem \ref{thm:onto}. We first develop nonlocal estimates in Section \ref{sec:u_est}, then we decompose $\cL$ in Section \ref{sec:lin_dec}, and estimate different terms in $\cL$ in the rest of the sections.

\subsection{Weighted $L^{\infty}$ estimates of velocity}\label{sec:u_est}

When  $\al = \alb$, we simplify  $\cJ_{\al}(w)$ \eqref{eq:Jw} as
 \beq\label{eq:Jw_alb}
 \cJ( w )(x) = \cJ_{\alb} = \int_0^x y^{\alb - 1} w d y. 
 \eeq

 For any scalar $b$, vectors $\vmu, \bb \in \R^n$, and interval $I \subset \R^+$, we define the constants 
\footnote{
  The subscript $0$ in the parameters $ \cpsidxo,  \cpsixo,   \mCC_{\D, 0}$ \eqref{eq:vel_const} indicates that $\mCC_{\bullet, 0}$ is an intermediate variable. 
}
\bseq\label{eq:vel_const}
\beq
\bal
    \cpsidxo( b, I) & \teq \tts{\int}_I | K_{ \alb, 1}(1, z) | \cdot |z|^{-b}  d z,
\qquad 
 \cpsidxJ(b, I)  \teq \tts{\int}_I | K_{ \alb, 1, J}(1, z) | \cdot |z|^{-b}  d z, \\
\cpsixo( b, I) & \teq \tts{\int}_I | K_{ \alb, 2}(1, z)| \cdot |z|^{-b}  d z, \qquad 
 \cpsixJ(b, I)  \teq \tts{ \int}_I | K_{ \alb, 2, J}(1, z) | \cdot |z|^{-b}  d z  ,
\\ 
   \mCC_{\D, 0}(b, I) & \teq \tts{\int}_I |K_{\alb, 1}(1, z) - K_{\alb, 2}(1, z) | \cdot |z|^{-b} d z .
  \eal
\eeq

Let $\cI = \{ I_i  \}_{i=1}^N$ be a collection of intervals that partition $\R_+$, i.e., $\cup_i I_i = \R_+$. Given parameters $\vmu, \bb \in \R^n$ and the 
partition $\cI$, we define the functions $\MCC_{\bullet}(\vmu, \bb, \cI, x)$ as
\beq
\bga
  \MCC_{\bullet}( \vmu , \bb, \cI, x ) \teq \tts{ \sum_{ I \in \cI } } \min_{ i\leq n}( \mu_i \mCC_{\bullet}( b_i, I)  |x|^{\alb - b_i} ), \\
\ega
\eeq
where $\bullet$ denotes one of the following five cases 
\[
\bullet \in \{ (\psio/x, 0),  (\psio/x, J), (\psio_x, 0), (\psio_x, J ) , (\D, 0) \} .
\]
By definitions, for any $1\leq i \leq \nmu$, where $\nmu$ denotes the length of the vector $\vmu,\bb$,
we obtain
\beq\label{eq:vel_const:upper}
  \MCC_{\bullet}( \vmu , \bb, \cI, x ) 
  \leq  \tts{ \sum_{ I \in \cI } }  \mu_i \mCC_{\bullet}( b_i, I)  |x|^{\alb - b_i} 
  = \mu_i \mCC_{\bullet}( b_i, \R_+) |x|^{\alb - b_i } .
\eeq

\eseq

Using the estimate of kernels from Lemma \ref{lem:K_decay_basic}, 
for kernels $\bullet \in \{ ( \psio/x, 0 ), (\psio_x, 0)  \}$, we obtain
\bseq\label{eq:vel_const_est}
\beq\label{eq:vel_const_est:a}
\bal
 | \mCC_{\bullet}(b, I)| & \leq    | \mCC_{\bullet}(b, \R_+)|  \les  \int_0^{\infty}
( \one_{z \leq 2} ( z^{\alb-1} + |z-1|^{\alb-1} )
 + \one_{z > 2} |z|^{\alb-3} ) |z|^{-b} d z \\
 & \les \int_0^{1/2} z^{\alb-1-b} d z+ 1 \les_b \f{1}{\alb - b }, \quad \forall \ b \ \in [-1.5, \alb) .
\eal
\eeq

For \(b\ge \alb\), these two constants become $\infty$, and the corresponding estimates are no longer useful
\beq\label{eq:vel_const_est:infty}
 | \mCC_{\bullet}(b, I)| = \infty, \quad \forall \, b \geq \alb, \quad  \bullet \in \{ ( \psio/x, 0 ), (\psio_x, 0)  \} .
\eeq
For other kernels $\bullet \in \{ ( \psio/x, J ), (\psio_x, J), (\D, 0)  \}$, we have \emph{uniform estimates} up to $b \leq 0.99$ \footnote{
The integral in 
\eqref{eq:vel_const_est:b} becomes unbounded as $b \to 1^-$. But we never choose $b \geq 1/2$.
}
: 
\beq\label{eq:vel_const_est:b}
\bal
 | \mCC_{\bullet}(b, I)| & \leq    | \mCC_{\bullet}(b, \R_+)|  \les  \int_0^{\infty}
( \one_{z \leq 2} ( 1 + |z-1|^{\alb-1} )
 + \one_{z > 2} |z|^{\alb-3} ) |z|^{-b} d z \les 1, \ \forall  \, b \ \in [-\f32, 0.99] .
\eal
\eeq
\eseq
The correction of the kernel $2\alb x y^{\alb-1} \one_{y < x}, 
2\al  y^{\alb-1} \one_{y < x}$ in $K_{\alb,1,J}, K_{\alb,2,J}$ \eqref{eq:ker:b}, \eqref{eq:ker:c} removes the singularity $z^{\alb-1}$ in 
\eqref{eq:vel_const_est:a}, resulting in uniform boundedness as $b \to \f13$.

\begin{remark}[\bf Intervals $I_i$ and functions $\MCC_{\bullet}( \vmu , \bb, \cI, x ) $]\label{rem:part}

To obtain a relatively sharp estimate for $\psio, \psio_x$, we only need to use a few intervals
$\cI = \{ I_i \}_{ i = 1}^N$ with $N = 6$ to partition $\R_+$. These intervals are given in \eqref{def:interval}. We choose the parameters $\vmu, \bb$ for the weight $\vp$ in \eqref{def:mu_b}.
With $\cI, \vmu, \bb$ being chosen, 
since the kernels have good sign properties and are not singular, it is elementary to derive rigorous upper bound for the 
 constants $ \mCC_{\bullet}(b, I)$ in \eqref{eq:vel_const}, which involves $5 \cdot \# \cI \cdot \# \vmu
 \leq 150$ 
 explicit integrals. 
 See Appendix \ref{app:sharp_constant}. Here, $\# A$ denote the number of element in $A$.
After bounding these constants, we obtain the \emph{explicit} upper bound 
for $\MCC_{\bullet}(\vmu, \bb, \cI, x)$ in  \eqref{eq:vel_const}. 
\end{remark}

Now, we state our main estimates for  $\psio, \psio_x$ defined in \eqref{eq:ker},

\begin{lem}\label{lem:vel_est}
Let $\alb = \f{1}{3}$. 
Suppose that $\vmu = (\mu_1, .., \mu_\nmu), \bb =(b_1,.,, b_\nmu )$ with $\mu_i > 0, b_i \in (- \f{5}{3}, 1)$
and $b_1 < b_2 < .. < b_\nmu$.  Recall  the $\bcX$-norm, weight $\vp = \max( \mu_i^{-1} |x|^{b_i} )$ from \eqref{norm:X}, and  the functions $  \MCC_{\bullet}( \vmu , \bb, \cI, x ) $ from  \eqref{eq:vel_const}. 

If $\max_i(b_i) < \alb$, 
\footnote{
We require $\max(b_i) < \alb$ since the constants $\cpsidxo( b_\nmu, I) ,  \cpsixo(b_\nmu, I) = \infty$ for $b_\nmu = \max b_i \geq \alb$ by \eqref{eq:vel_const_est:infty}.
}
we have the following estimates for $ \psio(w)$ \eqref{eq:ker:a} 
\bseq\label{eq:vel_est}
\beq\label{eq:vel_est:a}
\bga
\B|\tf{1}{x} \psio   \B| \leq \Cpsidxo(\vmu, \bb, \cI,  x )  || w \vp ||_{ L^{\infty} } , \quad 
               | \psio_x  | \leq \MCC_{\psio_x, 0}(\vmu, \bb, \cI,  x)  || w \vp ||_{ L^{\infty} } .         
\ega
\eeq

If $\max_i(b_i ) < 1$, we have the following estimates
\beq\label{eq:vel_est:b}
\bal
  \B|\tf{1}{x} \psio + 2 \alb \cJ\B| & \leq \CpsidxJ(\vmu, \bb, \cI,  x )  || w \vp ||_{ L^{\infty} }
        \quad 
                | \psio_x + 2 \alb \cJ  | \leq \CpsixJ(\vmu, \bb, \cI,  x)  || w \vp ||_{ L^{\infty} } , 
          \\         
     \B| \psio_x - \tf{1}{x} \psio  \B| &  \leq  \MCC_{\D, 0}(\vmu, \bb, \cI,  x) || w \vp ||_{ L^{\infty} } .
  \eal
  \eeq

Moreover, for any weight $\rho$ and parameter $x_1$ appearing in the $\bcX$-norm \eqref{norm:X} that 
satisfies  $\rho \geq 1$ and $x_1 > 10$,  we have the following improved decay estimates for $ \cJ(w)$ \eqref{eq:Jw_alb} and $\f{\psio}{x}, \psio_x$
\footnote{
  Since it is relatively simple to obtain piecewise sharp bound 
  for the integrand in the function $\MCC_J(\vmu, \bb, x)$ in \eqref{eq:J_est}, we do not partition the integral $\MCC_J(\vmu, \bb, x)$ as that in \eqref{eq:nonloc_idea}. 
}
\beq\label{eq:J_est}
\bal
|\cJ(w) | & \leq \MCC_J(\vmu, \bb, x) || w ||_{\bcX} ,  \quad 
 \MCC_J(\vmu, \bb,    x)  = \int_0^x y^{\alb-1}  \min_i (\mu_i |y|^{-b_i}) \rho(y)^{-1} d y, \\
       |\tf{1}{x} \psio  | &  \leq 
 \min( \Cpsidxo(\vmu, \bb, \cI, x)  , \ \CpsidxJ(\vmu, \bb, \cI, x) +2 \alb  \MCC_J(\vmu, \bb, x ) ) || w ||_{\bcX} \teq  \Cpsidx( \vmu, \bb, \cI,  x  ) || w ||_{\bcX},  \\
  | \psio_x | & \leq \min( \MCC_{\psio_x, 0}(\vmu, \bb, \cI,  x) ,   \ \MCC_{\psio_x, J}(\vmu, \bb, \cI,  x) + 2 \alb   \MCC_J( \vmu, \bb, x ) ) || w ||_{\bcX}
 \teq \MCC_{\psio_x}( \vmu, \bb,\cI, x  )  || w ||_{\bcX}.
\eal
\eeq

Suppose $ \max_i b_i = \alb$. 
For any $\g \in (0, 1)$, we have an improved decay estimate:
\begin{gather}\label{eq:est_vmix}
       | \psio_x - \tf{1}{x} \psio |
      \leq  \MCC_{\D}(\vmu, \bb, \cI,  x) || w ||_{\bcX},  
\end{gather}
where  $ \MCC_{\D}(\vmu, \bb,\cI, x)$ is defined as follows and $\mu_\nmu$ is the coefficient 
$\mu_\nmu^{-1} |x|^{\alb}$ in the weight $\vp$ \eqref{norm:X}
\[
     \MCC_{\D}(\vmu, \bb,\cI, x) \teq \min\B( (1 - x_1^{-\g} )^{\alb-2}
     \cdot x^{-\g(1 + \alb)  }  \mu_\nmu     
      + (1-\g)^{-1/3} \rho(x)^{-1} \MCC_{\D,0}(\vmu, \bb, \cI, x ), \  \MCC_{\D,0}(\vmu, \bb, \cI,  x) \B)  .    
\]

\eseq

\end{lem}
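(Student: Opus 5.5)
The approach is the scaling substitution $y = xz$ followed by a pointwise bound on $w$ through the weight. For $x>0$, \eqref{eq:ker:a} and the scaling \eqref{eq:ker_scale} give
\[
\tfrac{1}{x}\psio(w)(x) = \int_0^\infty x^{\alb-1}K_{\alb,1}(1,z)\, w(xz)\, x\, dz
= x^{\alb}\int_0^\infty K_{\alb,1}(1,z)\,w(xz)\,dz ,
\]
and similarly $\psio_x = x^{\alb}\int_0^\infty K_{\alb,2}(1,z) w(xz)\,dz$. By \eqref{eq:cK_cKJ}, the combinations $\frac1x\psio+2\alb\cJ$ and $\psio_x+2\alb\cJ$ have the same form with the $J$-kernels, and $\psio_x-\frac1x\psio$ has the same form with $K_{\alb,\D}$; we bound $|K_{\alb,\D}(1,z)| \le |K_{\alb,1}(1,z)-K_{\alb,2}(1,z)|$ as written in \eqref{eq:vel_const}.

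Since $\vp(y)\ge \mu_i^{-1}|y|^{b_i}$ for every $i$, we have $|w(xz)| \le \min_i \mu_i |xz|^{-b_i}\|w\vp\|_{L^\infty}$. We split $\R_+$ into the intervals $I\in\cI$ and, on each $I$, choose the best index $i$ separately. This gives the factor $\mu_i\mCC_\bullet(b_i,I)\,x^{\alb-b_i}$ on each interval; summing over $I$ yields \eqref{eq:vel_est:a} and \eqref{eq:vel_est:b}. Finiteness of the constants follows from \eqref{eq:vel_const_est:a} when $\max b_i<\alb$, and from \eqref{eq:vel_const_est:b} for the $J$- and $\D$-kernels when $\max b_i<1$; at $z\to\infty$ the bound $z^{\alb-3-b}$ is integrable because $b>-5/3$. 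The estimate \eqref{eq:J_est} for $\cJ$ is immediate from $|w(y)|\le \min_i\mu_i y^{-b_i}\rho(y)^{-1}\|w\|_{\bcX}$. For the bounds on $\frac1x\psio$ and $\psio_x$, we take the minimum of two routes: the direct bound, and the $J$-bound combined with the triangle inequality and the $\cJ$ estimate. In both we use $\rho\ge1$, so that $\|w\vp\|_{L^\infty}\le\|w\|_{\bcX}$.

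The main obstacle is \eqref{eq:est_vmix}, where the gain $\rho(x)^{-1}$ is needed although $\rho$ varies with $y$. The plan is to split the $z$-integral at $z=x^{-\g}$, i.e.\ $y=x^{1-\g}$.

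\textbf{Near region $z\le x^{-\g}$.} Here $z<1$ is small, and we Taylor-expand
\[
K_{\alb,\D}(1,z) = \alb(1+z)^{\alb-1}-\alb(1-z)^{\alb-1} - (1+z)^{\alb} + (1-z)^{\alb},
\]
which is odd in $z$. Its derivative is bounded by $C(1-z)^{\alb-2}$, so $|K_{\alb,\D}(1,z)|\lesssim (1-x_1^{-\g})^{\alb-2} z$; the constant is controlled once we use $x\ge x_1$ (for $x<x_1$ we fall back to the second argument of the min). Bounding $|w(xz)|\le \mu_\nmu (xz)^{-\alb}\|w\|_{\bcX}$ through the $b_\nmu=\alb$ component (with $\rho\ge1$), the near piece is at most
\[
x^{\alb}\int_0^{x^{-\g}} z\,(xz)^{-\alb}\,dz \cdot \mu_\nmu(\cdots)\lesssim x^{-\g(2-\alb)}\mu_\nmu(\cdots),
\]
which is at most the claimed $x^{-\g(1+\alb)}$ since $2-\alb\ge 1+\alb$. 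The exact sharp constant should come out after carefully bounding the Taylor remainder.

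\textbf{Far region $z\ge x^{-\g}$.} Here $y\ge x^{1-\g}$. We need $\rho(y)\ge (1-\g)^{1/3}\rho(x)$ for $y\ge x^{1-\g}$, $y\le x$. This holds if $\rho\sim(\log y)^{1/3}$ is increasing and satisfies $\rho(x^{1-\g})\ge(1-\g)^{1/3}\rho(x)$; for $y\ge x$ we use monotonicity instead. This property must be checked from the definition \eqref{eq:rho1}, and it is the step whose failure would need adjusting. Granting it, the far piece is bounded by the same interval-wise argument as above, giving $(1-\g)^{-1/3}\rho(x)^{-1}\MCC_{\D,0}$. Taking the minimum with the plain bound from \eqref{eq:vel_est:b} completes the proof.
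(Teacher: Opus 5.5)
Your proposal is correct and is essentially the paper's proof: the scaling $y=xz$ with an interval-by-interval choice of the best power gives \eqref{eq:vel_est:a}--\eqref{eq:J_est}. For \eqref{eq:est_vmix} you split at $y=x^{1-\g}$; the far piece uses $\rho(y)\ge\rho(x^{1-\g})\ge(1-\g)^{1/3}\rho(x)$, which is immediate from the monotone form of $\rho$ in \eqref{eq:rho1}. The case $x<x_1$ is covered by \eqref{eq:vel_est:b}, because $\rho=1$ there makes the second argument of the minimum the smaller one.

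The near-piece constant you left open closes as you expected. The mean-value bound $|K_{\alb,\D}(1,z)|\le 4\alb(1-x_1^{-\g})^{\alb-2}z$ gives at most $\tfrac{4\alb}{2-\alb}(1-x_1^{-\g})^{\alb-2}\mu_\nmu x^{-\g(2-\alb)}$. Since $\tfrac{4\alb}{2-\alb}<1$ and $x\ge x_1>1$, this is dominated by the stated term $(1-x_1^{-\g})^{\alb-2}\mu_\nmu x^{-\g(1+\alb)}$; your exponent $2-\alb$ is in fact the sharper one.
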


For large $x$, compared to the first estimate in \eqref{eq:vel_est:b}, the second estimate 
in \eqref{eq:J_est} has an extra 
crucial decaying factor $|\log x|^{-1/3}$ due to the weight $\rho$. Note that the functions $\MCC_J, \MCC_{\psio/x}$ depend on the weight $\rho$.  We suppress their dependence on $\rho$ when no confusion arises.

\begin{proof}
The main idea is to use the scaling symmetry of the kernel and estimate the integral directly.

Given a $m$-homogeneous kernel $K(y, z)$, i.e. $K(\lam y, \lam z ) = \lam^m K(y, z)$, for any $x > 0$, using $ |w| \leq \vp^{-1} \nlinf{ w \vp }$ and a change of variable $y = x z$,  we estimate 
\bseq\label{eq:nonloc_idea}
\beq
\bal
S & \teq | \int_{ \R_+ } K(x, y) w(y) d y |  \leq \nlinf{ w\vp}    \int_{ \R_+ } | K(x, y) | \vp(y)^{-1} d y \\
 & = \nlinf{ w \vp }   \int_{ \R_+ } | K(x, x z) | \vp(x z)^{-1} d (x z)  = \nlinf{ w\vp} |x|^{m+1} \int_{\R_+} K(1, z) \min_i( \mu_i  |x z|^{-b_i} ) d z .
 \eal
\eeq

It is quite technical to estimate the upper bound for all $x \geq 0$ effectively. 
\footnote{
One can obtain a sharper upper bound using a suitable dyadic partition in $x$; see, e.g., \cite{ChenHou2023a,ChenHou2023b}. 
Here we use simpler estimates, since the model is much simpler and the contraction estimates have a much larger margin.
}
Instead, we derive a weaker bound by partitioning $\R_+ = \cup_{j=1}^N I_j$, $\cI = \{ I_j \}_{j=1}^N $ and then estimate 
\begin{align}
  S & \leq \nlinf{ w \vp }   |x|^{m+1} \sum_{I \in \cI} \int_{I} K(1, z) \min_i( \mu_i  |x z|^{-b_i} ) d z
  \leq  \nlinf{ w \vp }    |x|^{m+1}  \sum_{I \in \cI} \min_{i} \B( \int_{I} K(1, z)  \mu_i  |x z|^{-b_i}  d z \B) 
  \notag \\
  & =  \nlinf{ w \vp } \sum\nolimits_{I \in \cI} \min_i\B(  \mu_i |x|^{m+1 - b_i} \int_{I} |K(1, z)| \cdot |z|^{-b_i} d z  \B) . \label{eq:nonlocal_idea:b}
\end{align}
\eseq

The estimate with the trivial partition $\cI = \{\R_+\}$ is not sharp enough for our estimate.
\footnote{
Since the weights $\mu_i | x |^{-b_i}$ have different behaviors, 
for different interval $I \in \cI$, the minimum in \eqref{eq:nonlocal_idea:b} may be attained by different $i$, yielding a much sharper estimate than that obtained with the trivial partition $I_1 = \R_+$.
}
See Remark \ref{rem:part}. For large $x$, we further improve the estimates using the weight $\rho$ in \eqref{norm:X}, \eqref{eq:rho1}.

\vs{0.05in}

\paragraph{\bf Proof of \eqref{eq:vel_est:a},  \eqref{eq:vel_est:b}}
We only prove the estimate of $\f{\psio}{x}$ as the proof for other estimates in \eqref{eq:vel_est:a},
\eqref{eq:vel_est:b} are similar. Since $K_{1,\alb}$ is $\alb$-homogeneous, using \eqref{eq:ker} and the above estimate \eqref{eq:nonloc_idea} with $K(x, y) = K_{1,\alb}(x, y), m = \alb$, we obtain 
\bseq\label{eq:vel_est_pf1}
\beq
\bal
  |\f{ \psio}{x}| & = \B| \f{1}{x} \int_{\R_+}  K_{\alb, 1 }(x, y)  w(y) dy \B|
\leq  |x|^{-1} 
 \nlinf{ w \vp } 
\sum_{I\in \cI} \min_i\big(  \mu_i |x|^{\alb +1 - b_i} \int_{I} |K_{\alb, 1}(1, z)| \cdot |z|^{-b_i} d z  \big) .
  \eal
\eeq
Using the notation \eqref{eq:vel_const}, we prove the estimate of $\f{ \psio}{x}$ in \eqref{eq:vel_est:a}:
\beq
    |\tf{ \psio}{x}| \leq |x|^{-1} \| w \|_{\bcX}
\sum\nolimits_{I \in \cI} \min_i\B(  \mu_i |x|^{\alb +1 - b_i} \mCC_{\psio/x,0}( b_i, I )  \B)
= \MCC_{ \psio/x, 0 }(\vmu, \bb, \cI, x)   \| w \|_{\bcX}  \, ,
\eeq
\eseq

\vs{0.05in}
\paragraph{\bf Proof of \eqref{eq:J_est}}

Recall $\cJ$ from \eqref{eq:Jw_alb}.
The estimate of $\cJ$ in \eqref{eq:J_est} follows from the pointwise estimate 
$|w| \leq \min_i(\mu_i |x|^{-b_i } ) \rho(x)^{-1} $ \eqref{norm:X}. 
Using estimate  \eqref{eq:J_est}, \eqref{eq:vel_est:b}, and triangle inequality, we obtain another estimate of $\psio $
\[
   | \psio | = 
   | ( \psio + 2 \alb x \cJ) | +  |2 \alb x \cJ| 
  \leq  ( |x| \MCC_{\psio/x, J}(\vmu, \bb, \cI, x) + 2 |\alb  x | \MCC_J(x)   ) \cdot \| w \|_{\bcX}.
  \]
Optimizing the above two estimates, we prove the estimate of $\psio$ in \eqref{eq:J_est}. 
The estimate of $\psio_x$ in \eqref{eq:J_est} is proved similarly.

\vs{0.05in}
\paragraph{\bf Proof of \eqref{eq:est_vmix}}

If $b_\nmu = \alb$, by definition of $\bcX$-norm \eqref{norm:X}, we have $|w| \leq \mu_\nmu |x|^{-\alb} \rho(x)^{-1}$. If $x < x_1$, since $\rho(x) = 1$ \eqref{eq:rho1} and $\rho^{-1} (1-\gam)^{-1} > 1$, the estimate \eqref{eq:est_vmix} follows from \eqref{eq:vel_est:b}. 

Next, we consider $x > x_1> 10$. 
Recall the kernel $K_{\D}$ of $\psio_x - \f{\psio}{x}$ from \eqref{eq:ker}.
We decompose
\[
\bal
| \psio_x - \f{\psio}{x}| & \leq \big( \int_0^{x^{1-\gam}} + \int_{x^{1-\gam}}^{\infty} \big) | K_{\D}(x, y) w(y) | d y \teq I_1 + I_2. \\
\eal
\]
For $y \geq x^{1-\g}$, by definition of $\rho$ \eqref{eq:rho1}, we get
\beq\label{eq:vel_est_pf2}
  \rho(y) \geq  \B( \f{\log x^{1-\g}}{\log x_1} \B)^{1/3} \vee 1
  \geq (1-\g)^{1/3} \cdot \B( \B( \f{\log x}{\log x_1} \B)^{1/3} \vee 1  \B)
  = (1-\g)^{1/3} \rho(x).
\eeq

Since $|w(y)| \leq \min(\mu_i |y|^{-b_i}) \rho(y)^{-1} || w ||_{\bcX}$, using \eqref{eq:vel_est_pf2} and following  \eqref{eq:vel_est_pf1}, we obtain
\[
  |I_2 | \leq (1-\g)^{-1/3} \rho(x)^{-1} 
  \int_0^{\infty} | K_{\D}(x, y) | \min(\mu_i |y|^{-b_i})  d y  \cdot || w ||_{\bcX}
  \leq (1-\g)^{-1/3} \rho(x)^{-1}  \MCC_{\D, 0}(\vmu, \bb, \cI, x) || w ||_{\bcX}.
\]
For $I_1$, changing $y = x z$, using $\rho \geq 1$ and $|w| \leq \mu_\nmu |x|^{-\alb} \rho(x)^{-1} || w ||_{\bcX} \leq \mu_\nmu |x|^{-\alb}  || w||_{\bcX}$, we get
\[
  \bal
  I_1 & \leq \mu_\nmu \int_0^{x^{1-\g}} | K_{\D}(x, y) \cdot y^{-\alb} | d y  \| w \|_{\bcX}
 = \mu_\nmu \int_0^{ x^{-\g} } | K_{\D}(x, x z) |\cdot  |x z|^{-\alb}  x d z  \| w \|_{\bcX}  \\
 & = \mu_\nmu  \int_0^{x^{-{\g}}} |K_{\D}(1, z) \cdot z^{-\alb} |  d z  \| w \|_{\bcX}  .
\eal
\]

From the definition of $K_{\D}$ \eqref{eq:ker}, since $\alb =\f13 \in (0, 1)$, using  $ 0\leq z \leq x^{-\g} \leq x_1^{-\g }< 1 $ and mean-value theorem with $|\xi| \geq 1- x_1^{-\g},
\forall  \, \xi \in [1-z, 1 + z]$, we have 
\[
\bal
  |K_{\D}(1, z)| & = \B| |1 + z|^{\alb} - |1-z|^{\alb} - \alb (  (1  + z)^{\alb-1} - |1-z|^{\alb-1}  ) \B|\\
  & \leq 2 \alb z ( (1 - x_1^{-\g} )^{\alb-1}
   +  (1 - \alb) (1 - x_1^{-\g} )^{\alb-2} ) 
   \leq    4 \alb  (1 - x_1^{-\g} )^{\alb-2} z.
   \eal
\]

Using the above two estimates and $\alb = \f{1}{3}$, we estimate $I_1$ as 
\[
\bal
I_1 & \leq  4 \alb (1 - x_1^{-\g} )^{\alb-2} \cdot \mu_\nmu \int_0^{x^{-\g}} z^{\alb-1} \cdot z d z || w ||_{\bcX}
= \f{  4 \alb  (1 - x_1^{-\g} )^{\alb-2} } { 1 + \alb } 
\cdot \mu_\nmu
x^{-\g(1 + \alb) }  || w ||_{\bcX}  \\
 & =     (1 - x_1^{-\g} )^{\alb-2} \cdot \mu_\nmu  x^{-\g  (1 + \alb) }  || w ||_{\bcX}  .
 \eal
\]
The above estimates of $I_1, I_2$ imply the first estimate in \eqref{eq:est_vmix}.
Combining the above estimate and \eqref{eq:vel_est:b}, we prove \eqref{eq:est_vmix}. 
\end{proof}

Lemma \ref{lem:vel_est} implies the following estimates for $\psio$ without explicit constants, which will be used to prove \emph{qualitative estimates} and Theorem \ref{thm:schauder}.

\begin{cor}[\bf Nonlocal estimates with implicit constants]\label{cor:vel_est}
Recall $\alb = \f{1}{3}$ and $\lgp x $ from \eqref{def:lgp}.  Let $\MCC_{\bullet}(x) =\MCC_{\bullet}(\vmu, \bb, \cI, x)$ be the coefficients  in \eqref{eq:vel_const}. 
For any weight $\vp$ in \eqref{norm:X} with parameters  $\vmu, \bb \in \R^{\nmu}$  satisfying 
$\mu_i>0, \max b_i =  \alb$ and $b_1 \in [-\f32, 0]$ (not necessary those in \eqref{def:mu_b}), we have the following estimates for $\MCC_{\bullet}, \psio$ 
\bseq\label{eq:coe_asym}
\beq
  \MCC_{\psio/x,J}(x) +  \MCC_{\psio_x,J}( x)  \les \min( |x|^{\alb - b_1},  1 ),
  \  \ 
   |\tf{1}{x} \psio+ 2\alb \cJ|  +  |\psio_x + 2 \alb \cJ|
    \les \min( |x|^{\alb - b_1},  1 ) \nlinf{w \vp} . \label{eq:coe_asymp:0} \\
\eeq

In addition, for the weight $\rho$ chosen in \eqref{eq:rho1} and the norm \eqref{norm:X}, we have
\begin{align}
  \MCC_{\psio/x}(x)  +  \MCC_{\psio_x}( x)  \les \min( |x|^{\alb - b_1},  |\lgp x|^{2/3} ),
  \  \
   |\tf{\psio}{x}|  +  |\psio_x|
    \les \min( |x|^{\alb - b_1},  |\lgp x|^{2 \alb} ) \nchib{ w } , \label{eq:coe_asymp:a} \\
\MCC_{\D}(x)  \les \min( |x|^{\alb - b_1},  |\lgp x|^{- \alb} ), \  \ 
 | \psio_x -\tf1x \psio| 
\les  \min( |x|^{\alb - b_1},  |\lgp x|^{- \alb} ) \nchib{ w } ,
\label{eq:coe_asymp:b}
\end{align}
\eseq
where the implicit constant depends on $\vmu, \bb$ in the norm \eqref{norm:X} 
and $x_1$ in the weight $\rho$ \eqref{eq:rho1}.

\end{cor}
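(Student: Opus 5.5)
The plan is to read off each bound from Lemma \ref{lem:vel_est}, together with the crude bounds on the constants $\mCC_\bullet(b,I)$ in \eqref{eq:vel_const_est}. The pointwise bounds on $\psio$ are immediate once the coefficient bounds are known, since Lemma \ref{lem:vel_est} bounds each quantity by $\MCC_\bullet(x)$ times $\nlinf{w\vp}$ or $\nchib{w}$. It therefore suffices to bound the functions $\MCC_\bullet$.

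\textbf{Small $x$.} I use \eqref{eq:vel_const:upper} with $i=1$. For the $J$-kernels and $\D$ I apply \eqref{eq:vel_const_est:b}, which is valid since $b_1\in[-\f32,0]$. This gives $\MCC_{\bullet}(x)\le \mu_1\mCC_\bullet(b_1,\R_+)|x|^{\alb-b_1}\les |x|^{\alb-b_1}$. For the $0$-kernels, $b_1\le 0<\alb$, so \eqref{eq:vel_const_est:a} gives the same bound with constant $\les 1/(\alb-b_1)$. The quantity $\MCC_{\psio/x}$ is a minimum that includes $\MCC_{\psio/x,0}$, so it inherits the $|x|^{\alb-b_1}$ bound; the same holds for $\MCC_{\psio_x}$ and for $\MCC_\D\le\MCC_{\D,0}$.

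\textbf{Large $x$.} For \eqref{eq:coe_asymp:0}, I take $i=\nmu$ with $b_\nmu=\alb$ in \eqref{eq:vel_const:upper}. Then $|x|^{\alb-b_\nmu}=1$ and $\mCC_{\bullet}(\alb,\R_+)\les1$ for the $J$-kernels by \eqref{eq:vel_const_est:b}, so the bound is $\les1$. For \eqref{eq:coe_asymp:a}, I use the second option in the minimum \eqref{eq:J_est}, namely $\MCC_{\bullet,J}+2\alb\MCC_J$. The first term is $\les1$, and it remains to show $\MCC_J(x)\les|\lgp x|^{2/3}$. Using $\min_i(\mu_i y^{-b_i})\le \mu_1y^{-b_1}$ for $y\le1$ and $\le\mu_\nmu y^{-\alb}$ for $y\ge1$, I get
\[
\MCC_J(x)\les \int_0^1 y^{\alb-1-b_1}dy+\int_1^x y^{-1}\rho(y)^{-1}dy .
\]
With $\rho(y)=\max(1,(\log y/\log x_1)^{1/3})$ from \eqref{eq:rho1}, the second integral is $\les_{x_1}\int^{\log x}\max(1,s)^{-1/3}ds\les|\lgp x|^{2/3}$. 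This is exactly where the weight $\rho$ turns the $\log x$ growth into $|\lgp x|^{2/3}$.

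\textbf{The $\D$ bound.} For \eqref{eq:coe_asymp:b}, I use the first option in $\MCC_\D$ with a fixed $\g=\f12$ (any $\g\in(0,1)$ works). The term $x^{-\g(1+\alb)}$ decays polynomially and so is $\les|\lgp x|^{-1/3}$. The other term is $(1-\g)^{-1/3}\rho(x)^{-1}\MCC_{\D,0}(x)$, where $\MCC_{\D,0}\les\mu_\nmu\mCC_{\D,0}(\alb,\R_+)\les1$ by \eqref{eq:vel_const_est:b} with $b=\alb\le0.99$. Since $\rho(x)^{-1}\les_{x_1}|\lgp x|^{-1/3}$, this term is also $\les|\lgp x|^{-1/3}$.

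Combining the small-$x$ and large-$x$ regimes gives each minimum in \eqref{eq:coe_asym}, with constants depending on $\vmu,\bb,x_1$. The only step that is more than bookkeeping is the $\MCC_J$ computation; the main thing to check is that $\rho^{-1}\sim(\log y)^{-1/3}$ integrates against $dy/y$ to give the $2/3$ power.
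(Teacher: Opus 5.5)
Your proposal is correct and is essentially the paper's proof. Both arguments read the bounds off Lemma \ref{lem:vel_est} and control the coefficients through \eqref{eq:vel_const:upper} and \eqref{eq:vel_const_est} at the extreme exponents $b_1$ and $b_{\nmu}=\alb$. Both bound $\MCC_J$ using $\rho^{-1}\les |\lgp y|^{-1/3}$ to get the $|\lgp x|^{2/3}$ growth, and both take $\g=\f12$ in \eqref{eq:est_vmix}. The only cosmetic difference is in the small-$x$ bound for $\MCC_{\psio/x}$ and $\MCC_{\psio_x}$: you take it from the $0$-kernel branch via \eqref{eq:vel_const_est:a}, while the paper uses the $J$-branch together with a small-$x$ bound on $\MCC_J$.
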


Estimate \eqref{eq:coe_asym} show that for large $x$,
the leading order parts of $\tf{1}{x} \psio, \psio_x$ are given by $-2 \alb \cJ$.

\begin{proof}

Let $\{ \R_+\}$ be the trivial partition for $\R_+$. By definition \eqref{eq:vel_const}, 
for any partition $\cI$, we obtain 
\beq\label{eq:cor_vel_pf0}
  \MCC_{\bullet}(\vmu, \bb, \cI, x) \leq   \MCC_{\bullet}(\vmu, \bb, \{ \R_+\}, x).
\eeq

To simplify notation, we do not track constants that depend on $\vmu, \bb$ and simplify 
$\MCC_{\bullet}(\vmu, \bb, \cI,  x)$ as $\MCC_{\bullet}(x)$. Since 
$b_i \in [-3/2, \alb]$ for $1\leq i\leq \nmu$,
using estimate \eqref{eq:vel_const_est}, we obtain
\[
|\mCC_{\bullet}(b_i, \R_+ )|
 \les 1 ,\quad \forall \, \bullet \in \{   (\psio/x, J),  (\psio_x, J ) , (\D, 0) \} .
\]
Thus, using the definition of $\MCC_{\bullet}$ in \eqref{eq:vel_const} and $ \max_i b_i = \alb$, we obtain 
\beq\label{eq:cor_vel_pf1}
  |\MCC_{\bullet}( x)| \les \min( |x|^{\alb - b_1}, |x|^{\alb - \alb} )
  = \min(  |x|^{\alb - b_1}, 1 ) , \quad \forall \, \bullet \in \{   (\psio/x, J),  (\psio_x, J ) , (\D, 0) \}.
\eeq
Using the above estimates and \eqref{eq:vel_est:b}, we prove \eqref{eq:coe_asymp:0}.

From \eqref{eq:rho1}, we obtain  $\rho(x)^{-1} \les   |\lgp x|^{-\alb} \we 1$. 
We estimate $\MCC_J$ from \eqref{eq:J_est} 
\[
  \MCC_J(x) \les \int_0^x y^{\alb - 1} \min( |y|^{-\alb}, 1 ) \min(1, |\lgp y|^{-\alb } )  d y
  \les \min(1, |\lgp x|^{2\alb}).
\]

Applying  the above two estimates  to $\MCC_{\psio/x}, \MCC_{\psio_x}$ \eqref{eq:J_est}, we prove
\[
  |\MCC_{\psio/x}(x) | \les \min( |x|^{\alb - b_1},  |\lgp x|^{2 \alb} ),
  \quad 
  \MCC_{\psio_x}(x) \les \min( |x|^{\alb - b_1},  |\lgp x|^{2 \alb} ).
\]

Finally, we estimate $\MCC_{\D}$ from \eqref{eq:est_vmix}. Choosing $\g = \f{1}{2} $ in \eqref{eq:est_vmix} and using \eqref{eq:cor_vel_pf1} for $\MCC_{\D, 0}(x)$ and  $\rho(x)^{-1} \les   |\lgp x|^{-\alb} \we 1$, we prove 
\[
  \MCC_{\D}(x) \les \min( |x|^{\alb - b_1}, |\lgp x|^{-\alb}  ).
\]
We complete the proof.
\end{proof}

\subsection{Qualitative estimate of the approximate profile and error}\label{sec:ASS_est}

We have the following estimates for the profile $\bv, \bw$ constructed in Section \ref{sec:appr_profile}.
We use these estimates \emph{only} to prove \emph{qualitative estimates} and Theorem \ref{thm:schauder}. 
While these estimates are  \emph{qualitative}, they motivates several decompositions in estimating the fixed point map $\cF$ in later Section \ref{sec:log_cancel}, \ref{sec:lin_dec}.

\begin{lem}[\bf Non-degenerate (Computer-assisted)]\label{lem:bw_basic_sign}

Let $\bw = \bwp + \bwf \in C^{2,1}$ be the approximate profile constructed in Section \ref{sec:appr_profile}. 
The profile $\bw, \bv \teq V(\bw)$ satisfies
\[
  \pa_x \bw(0)  \in [-1 - 10^{-6}, -1 + 10^{-6}],  
  \quad \bw(x) \leq - 10^{-10} \cdot \min( |x|,  | x|^{-1}), \quad
   \bv(x)  \geq 10^{-10} \cdot x , \quad  \forall  \ x \geq 0 .
  \]

\end{lem}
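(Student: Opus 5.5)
The plan is to split $[0,\infty)$ into a near field $[0,x_a]$, a computational range $[x_a,X_b]$, and a far field $[X_b,\infty)$. Each inequality is verified by rigorous interval arithmetic on the explicit representation \eqref{eq:W_rep} in the first two regions and by the asymptotic expansions in the far field. Throughout, $\bw=\bwp+\bwf$ is an explicit piecewise cubic B-spline plus the explicit function $\bwf$, and $\bv=x+\cK_{\alb,1}(\bw)$ is computed with the certified quadrature and exact-integration scheme of Appendix \ref{app:numerics}, with all truncation and round-off errors bounded.

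\textbf{Step 1: the value $\pa_x\bw(0)$.} Since $z_0=\mmx_{300}>0$, we have $\chi_1\equiv 0$ near $0$, so $\pa_x\bw(0)=\sum_i a_i B_i'(0)$. This is a finite sum of explicit rational numbers, which we evaluate with interval arithmetic to get the enclosure $[-1-10^{-6},-1+10^{-6}]$.

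\textbf{Step 2: the sign bound on $\bw$.} Near $0$ we use the expansion $\bw(x)=\pa_x\bw(0)x+O(x^3)$, valid because $\bw$ is odd and piecewise cubic on the first mesh cell. This gives $\bw\le -\tf12 x$ for $x\le x_a$. On each mesh cell of $[x_a,\mmx_{\nnx+2}]$, $\bwp$ is a cubic with known coefficients and $\bwf$ is explicit and monotone-controllable. Bounding $\bw\cdot\max(x,x^{-1})$ from above on each cell (using derivative bounds for subdivision) gives the bound $\le -10^{-10}$. Beyond $\xed$ we have $\bw=\bwf=x^{-1/3}(\cca+\ccb(\log x)^{-1/3}+\ccc(\log x)^{-2/3})\chi_1$ with $\cca=-6$. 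For $\log x\ge 60$ the bracket is $\le -5$, so $\bw\le -5x^{-1/3}\le -10^{-10}x^{-1}$.

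\textbf{Step 3: the bound $\bv\ge 10^{-10}x$.} This is the step I expect to be the main obstacle, because $\bv$ is nonlocal and must be controlled uniformly up to $\infty$. Near $0$, $\bv$ is odd with $\bv_x(0)=1$ (Remark \ref{rem:wx0}), and $|\psio(\bw)|\les x^2$ by Corollary \ref{cor:vel_est}. To get an explicit threshold, I would bound $\bv_{xx}$ on $[0,x_a]$ by a computable constant, yielding $\bv\ge \tf12 x$ there. On the mesh range, I would use the certified enclosures of $\bv(\mmx_i)$ together with an a priori bound on $\bv_x$ between nodes to conclude $\bv/x\ge 10^{-10}$.

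In the far field, $\bv=x+\cK_{\alb,1,J}(\bw)-2\alb x\cJ(\bw)$. Since $\bw\le 0$ on $(0,\infty)$, $\cJ(\bw)$ is negative and decreasing, so $-2\alb x\cJ(\bw)\ge -2\alb x\cJ(\bw)(X_b)>0$, and this term grows like $4x\log x$. Meanwhile Corollary \ref{cor:vel_est} gives $|\cK_{\alb,1,J}(\bw)|\le Cx$, with an explicit $C$ obtained from \eqref{eq:vel_const_est:b} and the weighted bound on $\bw$. The only real risk is that the certified value $-\tf23\cJ(\bw)(X_b)$ fails to dominate $1+C$. I would first handle this by taking $X_b$ at the end of the mesh, where $-\cJ$ is already of size $\approx 6\log X_b\gg C$. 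If that is insufficient, I would instead use the sharper partitioned constants $\MCC_{\psio/x,J}$ of Lemma \ref{lem:vel_est}.
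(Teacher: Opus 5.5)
Your proposal is correct and is essentially the paper's argument: the lemma is a by-product of the certified piecewise interval bounds for $\bw$ and $\bv$ on $[0,\xed]$ (derivative bounds for the B-spline and explicit parts, certified nonlocal evaluation, interpolation estimates), combined with far-field bounds built on the decomposition $\bv = x+\cK_{\alb,1,J}(\bw)-2\alb x\cJ(\bw)$ as in \eqref{eq:V_mix:far}. The only variation is that for $x\ge\xed$ you use the monotonicity of $\cJ(\bw)$ (which follows from $\bw\le 0$) instead of the paper's explicit expansion $\cJ(\bw)=\cJM+O(1)$ from \eqref{eq:J_main_far}--\eqref{eq:V_main:far}; this suffices because the lemma only claims $\bv\gtrsim x$, provided your constant $C$ is the rigorously enclosed $\mCC_{\psio/x,J}(\alb,\R_+)\,\|\bw\,y^{1/3}\|_{L^\infty}$ and not the implicit constant of Corollary \ref{cor:vel_est}.
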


We prove Lemma \ref{lem:bw_basic_sign} with computer-assistance, which is a trivial consequence of 
the (non-trivial) tight piecewise bounds for the profile.  The above lemma implies qualitative estimates :

\begin{lem}\label{lem:W_asym}
Let $\bw = \bwp + \bwf \in C^{2,1}$ be the approximate profile constructed in Section \ref{sec:appr_profile}, $\bv = V(\bw)$ be the velocity \eqref{eq:ker},
and $\bar g = \f{\bv}{x}$ as defined in \eqref{eq:nota1}. For any $x \geq 0$, we have 
\bseq\label{eq:decay}
\beq\label{eq:decay:W}
\bw \leq 0, \quad |\bw| \asymp \min( |x|, \ang x^{-\alb}), \quad  |\f{x \pa_x \bw}{\bw} + \f{1}{3}| \les |\lgp x|^{-4/3}.
\eeq
where $\lgp x $ is defined in \eqref{def:lgp}. For any $x \geq 1$,  $\bv$ satisfies
\beq\label{eq:decay:V}
\bga
\bv \gtr x \lgp x , \quad 
|\bv - 4 x \log x | \les |\lgp x|^{2/3} .
\ega
\eeq

Let $ \cR(\bw)$ be relative residual error defined in \eqref{eq:lin_nloc}. For 
$x \geq 0$, we have 
\beq\label{eq:lin_nloc:far}
 |\bv_x - \tf{1}{x} \bv - 4  | \les  |\lgp x|^{-1/3}, \quad
 |\cR(\bw)| \les \min( |x|^{1 + \alb},  |\lgp x |^{-2/3} ) .
\eeq 
In the above estimates, the implicit constants only depend on $\bw$ and are independent of $x$.

\eseq

\end{lem}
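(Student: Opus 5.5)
The plan is to split $\R_+$ into a compact near field $[0, \xed']$, with $\xed' \ge \max(\xed, z_0)$ large, and a far field where $\bwp \equiv 0$, so that $\bw = \bwf$ is the explicit function \eqref{eq:W_rep}.

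\textbf{Near field.} On any compact set, $\bw \in C^{2,1}$ is odd, and Lemma~\ref{lem:bw_basic_sign} gives $\bw_x(0) \approx -1$ and $\bw \le -10^{-10}\min(|x|,|x|^{-1})$. These immediately give $\bw \le 0$ and $|\bw| \asymp \min(|x|, \ang x^{-\alb})$ there. The quantity $x\bw_x/\bw$ is continuous on $(0,\xed']$ and tends to $1$ as $x\to 0$ (Taylor expansion at $0$ using odd symmetry), so it is bounded. For $\bv$, Corollary~\ref{cor:vel_est} applies, since $|\bw| \les \ang x^{-\alb}$ places $\bw$ in a weighted space with $b_1 \le 0$ and $\max b_i = \alb$. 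It gives $|\psio(\bw)| \les |x|^{1+\alb}$ near $0$ and bounded $\tf1x\bv, \bv_x$ on compacts. Together with $\bv \ge 10^{-10}x$, this yields all claimed bounds on compact sets, since $\lgp x \asymp 1$ there.

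For the residual near $0$, $\cR(\bw)$ vanishes at order $|x|^{1+\alb}$ because of the normalization $\bv_x(0)=1$ and $x\bw_x/\bw = 1 + O(x^2)$:
\[
\cR(\bw)(0) = 3-\alb-(1-\alb)\cdot 1 - 2\cdot 1 = 0 ,
\]
and the corrections are $O(|x|^{1+\alb})$, coming from $\psio_x(\bw) = O(|x|^{1+\alb})$ via the kernel expansion of $K_{\alb,2}$ near $x=0$.

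\textbf{Far field, profile.} For $x \ge \xed'$ the ratio $\bw_x/\bw$ is computed explicitly. Writing $L = \log x$,
\[
x\bw_x/\bw = -\tf13 + \frac{\tfrac d{dL}\big(\cca + \ccb L^{-1/3} + \ccc L^{-2/3}\big)}{\cca + \dots} + O(x^{-5}) = -\tf13 + O(L^{-4/3}).
\]
The $O(x^{-5})$ term comes from $\chi_1$.

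\textbf{Far field, velocity.} This is the main step. Set $f(y) = \bw(y)\,y^{\alb} - \cca$ and decompose $\bw = \cca\, \one_{y\ge1}\, y^{-\alb} + r$, so that $|r| \les \ang y^{-\alb}\lgp(y)^{-1/3}$ with a compact near-field part. By \eqref{eq:asym_V1}, $\tf1x\bv = 1 + \tf1x\cK_{\alb,1,J}(\bw) - 2\alb\cJ(\bw)$, and
\[
\cJ(\bw)(x) = \cca \log x + O(L^{2/3}),
\]
because $\int^x y^{-1}(\log y)^{-1/3}\,dy \sim L^{2/3}$. Since $-2\alb\cca = 4$, this gives the main term $4x\log x$.

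It remains to show $|\tf1x\cK_{\alb,1,J}(\bw)| \les L^{2/3}$; in fact it is $O(1)$. The kernel $K_{\alb,1,J}(1,z)z^{-\alb}$ is integrable by \eqref{eq:vel_const_est:b}, and the scaling \eqref{eq:ker_scale} applies. This proves \eqref{eq:decay:V}: the bound $\bv \gtr x\lgp x$ follows for large $x$, and on compacts it comes from Lemma~\ref{lem:bw_basic_sign}.

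\textbf{Far field, $\bv_x - \tf1x\bv$.} Use the kernel $K_{\alb,\D}$ as in \eqref{eq:ansatz_Vmix}:
\[
\int K_{\alb,\D}(1,z)\, z^{-\alb}\, \big(\cca + f(xz)\big)\,dz .
\]
The constant part gives $\cca\cdot(-\tf23) = 4$ by \eqref{eq:asym_int}. The remainder is bounded by splitting at $z = x^{-1/2}$, as in the proof of \eqref{eq:est_vmix}. On $z \ge x^{-1/2}$ we have $|f(xz)| \les L^{-1/3}$. On $z \le x^{-1/2}$ the kernel is $O(z)$, giving $O(x^{-2/3})$. The compact near-field part of $\bw$, together with the $\one_{y\ge1}$ cutoff, contributes $O(x^{\alb-1})$ by the decay of $K_\D$. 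This yields $|\bv_x - \tf1x\bv - 4| \les L^{-1/3}$.

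\textbf{Far field, residual.} Rewrite
\[
\cR(\bw) = \tf83 - \tf23\bv_x - 2\tf{\bv}{x}\cdot\frac{x\bw_x}{\bw}
= -\tf23\big(\bv_x - \tf{\bv}x - 4\big) - 2\tf{\bv}{x}\Big(\frac{x\bw_x}{\bw} + \tf13\Big).
\]
The second term is $O(L\cdot L^{-4/3}) = O(L^{-1/3})$. However, the target is $O(L^{-2/3})$, so the first term must be expanded one order further. This is the expected obstacle: the $L^{-1/3}$ coefficient of $\bv_x - \tf1x\bv - 4$ equals $-\tf23\ccb L^{-1/3}$. It must cancel against the leading part of $-2\cdot 4L\cdot(\text{the } L^{-4/3} \text{ term})$, which is
\[
-8L\cdot\tf{-\ccb/3}{\cca}\,L^{-4/3} = \tf{8\ccb}{3\cca}\,L^{-1/3} = -\tf49\ccb\, L^{-1/3} .
\]
Combined with $-\tf23\cdot(-\tf23\ccb) = \tf49\ccb$, the two cancel, consistent with $\b_1 = -\tf13$ from Section~\ref{sec:ansatz}.

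To make this rigorous, I would expand $f(xz) = \ccb L^{-1/3} + O(L^{-2/3}(1+|\log z|))$. The $|\log z|$ error is absorbed because $K_\D(1,z)z^{-\alb}|\log z|$ is integrable. I would similarly expand $\tf{\bv}{x} = 4L + O(L^{2/3})$, and note that the $L^{2/3}$ error multiplies $O(L^{-4/3})$, giving $O(L^{-2/3})$. This yields $|\cR(\bw)| \les L^{-2/3}$.
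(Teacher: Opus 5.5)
Your proof is correct and follows essentially the paper's route. The near field comes from the computer-assisted Lemma \ref{lem:bw_basic_sign} together with Corollary \ref{cor:vel_est}. The far field uses the explicit form of $\bwf$, the relation $\bv=-2\alb x\cJ(\bw)+O(x)$, the identity \eqref{eq:asym_int}, and the $(\log x)^{-1/3}$ cancellation in $\cR(\bw)$; the paper carries these out with explicit constants in Appendix \ref{app:nloc_very_far} and Appendix \ref{app:cR_expand}.

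What you (and the paper) actually prove is $|\bv-4x\log x|\les x|\lgp x|^{2/3}$. The bound displayed in \eqref{eq:decay:V} is missing the factor $x$ and is false as written, since $\bv/x-4\log x\approx-\ccb(\log x)^{2/3}$. Your near-field treatment of $\cR(\bw)$ also rightly keeps the term $2(1-x\bw_x/\bw)=O(x^2)$, which the paper's displayed identity for $\cR(\bw)$ omits.
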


\begin{proof}

Since $ \bw \in C^{2,1}$ is odd, Lemma \ref{lem:W_asym} implies $\bw(x) < 0, \forall x > 0$
and $|\f{ x \pa_x \bw}{\bw}(x)| \les_R 1 $ for any $|x| \leq R$. 
From \eqref{eq:W_rep},
we obtain $\bw(x) = \bwf(x)$ for $x$ beyond the support of $\bwp$. 
Estimate \eqref{eq:decay:W} follows from a direct calculation for $\bwf$ \eqref{eq:W_rep}, 
which is omitted.

Lemma  \ref{lem:bw_basic_sign} implies \eqref{eq:decay:V} for any finite $x$. Since $\bv = x + \psio(\bw)$, using 
$ \nlinf{(x^{-1} + x^{\alb}) \bw} \les 1$ by \eqref{eq:decay:W} and 
using \eqref{eq:coe_asymp:0} in Corollary \ref{cor:vel_est} with $(\mu_1, \mu_2, b_1, b_2) = (1,1,-1, 1/3)$ and \eqref{eq:decay:W}, we obtain $\bv  = - 2 \alb  x \cJ(\bw) + O(x)$.
For large $x$, by expanding $\cJ(\bw)$ using the formula for $\bwf$ in \eqref{eq:W_rep} 
and $-2 \alb \cca = 4$, we  prove \eqref{eq:decay:V}. 

From \eqref{eq:lin_nloc} and \eqref{eq:ker:a}, we get $\cR(\bw) =    - (1-\alb) \psiox(\bw) - 2 \psio(\bw) \tf{\pa_x \bw}{ \bw} .$
Using Corollary \ref{cor:vel_est} with $(\mu_1, \mu_2, b_1, b_2) = (1,1, -1, 1/3)$,
we obtain estimate \eqref{eq:lin_nloc:far} for any $x, |x| \leq R $. 
We derive the \emph{quantitative} far-field estimates for $V(\bw)(x)$ in Appendix  \ref{app:nloc_very_far}  and prove the first estimate \eqref{eq:lin_nloc:far}. 
We derive the \emph{quantitative} far-field estimates for $\bar \cR(\bw)$ in 
Appendix \ref{app:cR_expand} and use the far-field estimates for $\bv$ 
to prove the decay estimates for $\bar \cR$ in \eqref{eq:lin_nloc:far}. 
While the \emph{qualitative} estimates in \eqref{eq:lin_nloc:far} are not 
difficult to prove using the explicit form for $\bwf$ \eqref{eq:W_rep} 
and by following Section \ref{sec:ansatz}, to avoid repetition we derive them from the more technical \emph{quantitative} estimates. 
\end{proof}

\subsection{Decomposition of linear operator}\label{sec:lin_dec}

The most difficult term in the linear part $\cL$ \eqref{eq:F_lin} is 
\beq\label{eq:cR_decay_moti0}
       \td \cR( w ) = -  \f{2}{3} \psiox (w) - 2 \psio( w) \f{\pa_x \bw}{\bw}  
       = -  \f{2}{3} \psiox (w) +  \f{2}{3} \cdot \f{ \psio}{x} - 2 \cdot \f{ \psio }{x} (  \f{x \pa_x \bw}{\bw}  + \f13)
\eeq

For large $x$, we exploit cancellations between $\psiox $ 
 and $\f{ \psio }{x}$, and in $    \f{x \pa_x \bw}{\bw}  + \f13$, as motivated in Section \ref{sec:idea_step1}. 
 For $w \in \bcX$, applying \eqref{eq:coe_asymp:b} to $ \psiox -\psio /x$ and \eqref{eq:coe_asymp:a} to $\psio/x$ 
from Corollary \ref{cor:vel_est}, using the estimates of $ \f{x \pa_x \bw}{\bw} + \f13, \bv$  from Lemma \ref{lem:W_asym}, and $\rho \les |\lgp x|^{1/3}$ \eqref{eq:rho1}, we obtain
\beq\label{eq:cR_decay_moti:b}
|\f{\td \cR(w)}{ \bv } \rho| \les |x|^{-1} |\lgp x|^{-1} \| w \|_{\bcX},
 \quad \f{\td \cR(w)}{ \bv } \rho  = - \f{2}{3} ( \psiox - \f{ \psio }{x} ) \cdot \f{\rho}{\bw}  + O( |\lgp x|^{-4/3} \| w \|_{\bcX} ),
 \eeq

The above term appears in weighted estimate for $\cL w$ in \eqref{eq:F_lin}.  Despite exploiting the cancellation, the above estimate for $\frac{\td \cR(w)}{\bv}\rho$ is not $L^1$-integrable and is therefore insufficient to close the estimate of $\cL w$ in \eqref{eq:F_lin}. 
We must exploit additional cancellations.

\subsubsection{Local-nonlocal cancellation}\label{sec:log_cancel}

To exploit further cancellation for large $x$,  we estimate $\td \cR$ differently.  For $x_1$ large enough to be chosen, we define
\beq\label{eq:rho1}
  \rho(x) = 1, \quad x \leq x_1,  \quad \rho(x) =  \B( \f{ \log x }{ \log x_1 } \B)^{1/3}, \quad  x > x_1.
\eeq
Clearly, $\rho$ is Lipschitz. 
For large $x$, we use the last expression in \eqref{eq:cR_decay_moti0} for $\td \cR$ 
to isolate the term $ \psiox - \f{ \psio }{x}$.
Since $\pa_x \rho(x) = 0$ for $x \leq x_1$,  we decompose the linear operator $\cL$ 
in \eqref{eq:F_lin} as follows 
\bseq\label{eq:lin}
\beq\label{eq:lin:a}
\bal
 \f{ \cL ( w) }{\bw} \rho
  & =  \int_0^{x_1 \wedge x}  \f{ \td \cR(w)}{2 \bv} \rho  d y
  + 
\int_{x_1 \wedge x }^x \f{  - \f{2}{3} ( \psioy - \f{ \psio }{y} )
 - 2 \f{ \psio }{y} ( \f{1}{3} + \f{ y  \bw_y}{\bw}  )
   }{2 \bv} \rho  d y
     + \int_{x_1 \we x}^x \f{\pa_y \rho}{ \rho } \cdot  \f{ w }{\bw} \rho  d y  \\
     &  \teq I + II + III .
\eal
\eeq

 Recall the notations $\bar g(x) = \f{1}{x}\bv(x) , \cJ$ from \eqref{eq:nota1}. 
We decompose the first integrand in $II$ as
\beq\label{eq:lin:c}
II  
  = -\f{1}{3} \int_{x_1 \wedge x }^x  \f{ ( \f{ \psio }{y} + 2 \alb \cJ )_y  }{\bar g} \rho
   + \f{1}{3} \int_{x_1 \wedge x }^x \f{2 \alb \cJ_y}{ \bar g} \rho 
   - \int_{x_1 \wedge x }^x \f{  \f{ \psio }{y} ( \f{1}{3} + \f{ y  \bw_y}{\bw}  )
   }{ \bv} \rho 
   \teq  II_1 + II_2 + II_3.
\eeq

Since $x_1$ is bounded and the integrand in $I(x)$ satisfies \eqref{eq:cR_decay_moti:b}, $I(x)$ is bounded for large $x$. From estimates \eqref{eq:cR_decay_moti0} and \eqref{eq:cR_decay_moti:b}, the integrand in $II_3$ is 
$L^1$-integrable. Below, we further exploit the cancellation between $III$ and $II_1, II_2$ to gain additional decay. Note that $II_i, III=0$ for $x \leq x_1$. 
\eseq
For $x > x_1$, we explore smallness from $II_1$ by performing integration by parts 
\beq\label{eq:lin1}
  II_1 = -   \f{ \f{ \psio }{y} + 2 \alb \cJ }{ 3 \bar g} \rho \B|_{x_1}^x
  + \f{1}{3} \int_{x_1}^x (\f{ \psio }{y} + 2 \alb \cJ) (\f{\rho}{\bar g})_y  d y.
\eeq

Using estimate \eqref{eq:decay:V}, definition of $\rho$ \eqref{eq:rho1}, and 
the bound \eqref{eq:coe_asymp:a}, one obtains 
\[
  | \f{\f{ \psio }{y} + 2 \alb \cJ }{ 3 \bar g} |  \les |\log y|^{-4/3},  \quad 
  | (\f{ \psio }{y} + 2 \alb \cJ) (\f{\rho}{\bar g})_y | \les
 \f{1}{ y } |\log y |^{-\f13 } \| w \|_{\bcX}  (\f{\rho}{\bar g})_y
   \les \f{1}{y} | \log y |^{-2}  \| w \|_{\bcX} .
\]
We gain an extra decay factor $|\log y|^{-1}$ for the second term compared to the estimate 
\eqref{eq:cR_decay_moti:b}.

It remains to estimate $II_2$ and $III$. Since $\cJ_y = w y^{\alb - 1}$ and 
\beq\label{eq:II2_Jx}
\f{2 \alb \cJ_y}{3 \bar g} \rho = \f{2 \alb w y^{\alb - 1}}{ 3 \bar g} \rho = \f{ 2 \alb w y^{\alb}}{ 3 \bv}  \rho 
= \f{2 \alb y^{\alb} \bw }{ 3 \bv} \cdot \f{w}{\bw} \rho, 
\eeq
we combine $II_2$ and $III$ as follows 
\beq\label{eq:lin2}
  II_2 + III = \int_{x_1}^x \B( \f{\pa_y \rho}{\rho}  + \f{2\alb y^{\alb} \bw }{3 \bv}  \B) 
  \cdot \f{w}{\bw} \rho .
\eeq

From \eqref{eq:decay} and Corollary \ref{cor:vel_est}, we obtain that $I, II_1, II_3$ are integrable as $x \to \infty$.

We choose $\rho$ \eqref{eq:rho1}, which captures the asymptotics of $w$, to obtain the crucial cancellation
\beq\label{eq:cancel_rho}
   \f{\pa_y \rho}{\rho} + \f{2 \alb y^{\alb} \bw}{3 \bv}
    = \f{1}{3} \cdot \f{1}{y} |\log y |^{- 1} + \f{2}{3} \cdot \f{1}{3} \cdot \f{-6}{4} ( y^{-1} |\log y|^{-1}  + O ( y |\log y|^{-4/3}) ) = O( y^{-1} |\log y|^{-4/3} ) ,
\eeq
for large $y$. Using the above crucial reformulation, we treat $I, II_1, II_2 + III, II_3$ perturbatively. 

\begin{remark} 

The cancellation in \eqref{eq:cancel_rho} is nonlocal since we need to perform the key integration by parts \eqref{eq:lin1}.  The slowest decay part in $II$ is governed by $II_2$ \eqref{eq:lin:c} with integrand 
$ \mw{RHS}_{\eqref{eq:II2_Jx}} $ depending on $w$ \emph{locally} \eqref{eq:II2_Jx}. 
Instead of bounding  $ \mw{RHS}_{\eqref{eq:II2_Jx}} $ directly, we introduce $\rho$ in the fixed point map \eqref{eq:fix_map} as an integrating factor. 
One may choose $\rho$ so that $\mw{LHS}_{ \eqref{eq:cancel_rho}}=0$.  We choose the explicit form in  \eqref{eq:rho1} to simplify the estimates for $\rho$.
See Remark \ref{rem:int_wg}.

\end{remark}

\subsection{Estimate linear terms}\label{sec:lin_est}

We estimate $\cL$ using the decomposition in Section \ref{sec:lin_dec} and nonlocal estimates in Lemma \ref{lem:vel_est}. 
Since the parameters $\vmu, \bb$ in \eqref{def:mu_b} and intervals $\cI=\{ I_i \}_{i\geq 1}$ in \eqref{def:interval} have been fixed, below we abbreviate the functions $\MCC_{\bullet}(\vmu,\bb,\cI,x)$ by $\MCC_{\bullet}(x)$ for $\bullet \in \{ \psio/x,\psio_x,\D, (\psio/x, J), (\psio_x, J) \}$ in Lemma~\ref{lem:vel_est}.

\vspace{0.05in}
\paragraph{\bf Estimate of $I$}

Recall $I$ from \eqref{eq:lin:a} and $\td \cR$ from \eqref{eq:lin_nloc}.  Since $\rho(x) = 1$ for $x \leq x_1$, we simplify 
\[
 I = \int_0^{x_1 \wedge x}  \f{ 1 }{2 \bv}  \cdot ( - \f{2}{3} \psioy - 2 \f{ \psio }{y} \cdot \f{y \bw_y}{\bw} )(y) d y,
\]

To exploit the cancellation between the integral of $\psioy$ and $ \psio$, we apply integration by parts to the integrand $  - \f13 \f{ \psioy }{ \bv}$
\[
  I =   -  \f{\psio}{3\bv} \B|_0^{x_1 \wedge x}  + \f{1}{3} \int_0^{x_1 \wedge x} \psio ( \f{1}{\bv} )_y
  - \int_0^{x_1 \wedge x } \f{ 2 \f{\psio}{ y } \cdot \f{ y  \bw_y }{\bw} }{2 \bv}
  =  - \int_0^{x_1 \wedge x}  ( \f{ y  \bv_y }{3 \bv} + \f{ y \bw_y }{\bw} ) \cdot \f{\psio}{ y \bv} d y
  - \f{1}{3} \cdot \f{ \psio(x\we x_1) }{\bv(x\we x_1)}   ,
  \]
where the boundary term at $0$ vanishes since $|\psio(x)| \les |x|^{1+\alb} \nchib{w} $ for $x$ near $0$ 
by Corollary \ref{cor:vel_est} and all functions in the integrand evaluate at $y$.

Recall $\bar g(x) = \f{\bv(x)}{x}$  from \eqref{eq:nota1}. Applying Lemma \ref{lem:vel_est} to $\psio(x)/x$, we obtain 
\beq\label{eq:est_I}
  |I| \leq 
   \B(  \f{ \MCC_{\psio/x}( x\we x_1 ) }{3  |\bar g( x\we x_1 )| }   + 
 \int_0^{x_1 \we x} \MCC_{\psio/x}(y) \B|   \big(\f{ y  \bv_y }{3 \bv} + \f{ y \bw_y}{\bw} \big) \f{1}{\bv} \B| d y \B) 
  \cdot || w ||_{\bcX}
  \teq B_{ I } \cdot || w ||_{\bcX}.
\eeq

\vspace{0.05in}
\paragraph{\bf Estimate of $II_1$}

Recall $II_1$ from \eqref{eq:lin1}. Applying Lemma \ref{lem:vel_est}
 and $\pa_y (\f{\rho}{\bar g} )
 = \f{  \rho_y \bar g -  \rho \bar g_y }{\bar g^2} $, we obtain 
\beq\label{eq:est_II1}
  |II_1| \leq \B(  | \f{\rho  \MCC_{\psio/x, J} }{3 \bar g }(x) | 
  +
| \f{\rho  \MCC_{\psio/x, J} }{3 \bar g}(x_1) | 
+ \f{1}{3} \int_{x_1}^x \MCC_{\psio/x, J}(y) \B| \f{\rho_y \bar g - \rho \bar g_y}{\bar g^2}   \B|  d y
\B)  \cdot || w ||_{\bcX}
\teq B_{II, 1}(x)  \cdot || w ||_{\bcX}.
\eeq

\vspace{0.05in}
\paragraph{\bf Estimate of $II_3$}

Recall $II_3$ from \eqref{eq:lin:c}. Applying Lemma \ref{lem:vel_est}, we obtain
\beq\label{eq:est_II3}
  |II_3| \leq 
  \int_{x_1}^x \B| \f{ \MCC_{\psio/x}(y) \cdot ( \f{1}{3} + \f{ y  \bw_y}{\bw}(y)  )  }{ \bv(y)} \B|  \rho(y) d y \cdot \nchib{ w }
  \teq B_{II, 3}(x)  \cdot || w ||_{\bcX}.
\eeq

\vspace{0.05in}
\paragraph{\bf Estimate of $II_2 + III$}

Recall $II_2 + III$ from \eqref{eq:lin2}. By definition of the $\chi$-norm \eqref{norm:X}, we obtain
\[
  |w| \rho(y) \leq || w ||_{\bcX} \min_i( \mu_i |y|^{-b_i}) .
\]
Using the above pointwise estimate, we obtain
\beq\label{eq:est_II_III}
  |II_2 + III | \leq \int_{x_1}^x \B| 
     \f{\pa_y \rho}{\rho} + \f{2 \alb y^{\alb} \bw}{3 \bv} \B| 
     \cdot \min_i( \mu_i |y|^{-b_i}) \f{1}{|\bw|} d y  \cdot || w ||_{\bcX} 
     \teq B_{II, 2, III}(x) \cdot || w ||_{\bcX} .
\eeq

\vspace{0.05in}
\paragraph{\bf Summary of the estimates}

Combining the above estimates and using the decomposition in \eqref{eq:lin}, we estimate 
the weighted norm of $\cL$ as
\beq
  | \rho \vp \cL w | 
  = \rho \vp \cdot \f{ |\bw|}{ \rho} |( I + II + III) |
  \leq \vp  | \bw | 
  ( \uds{  \eqref{eq:est_I}  }{ B_{I} } + \uds{ \eqref{eq:est_II1} }{ B_{II, 1} }  + \uds{ \eqref{eq:est_II3} }{  B_{II,3 } }
  + \uds{ \eqref{eq:est_II_III} }{ B_{II,2, III} } )(x) \cdot || w ||_{\bcX} .
  \label{eq:lin_est}
\eeq
where the label below $B_{\bullet}$ indicates the equation in which  $B_{\bullet}$ is defined.  Note that the coefficients, e.g. $B_{I}$, only depend on the parameters $\vmu, \bb, \cI$ 
via the functions \eqref{eq:vel_const} and the profile $\bw, \bv$.

\subsection{Estimate nonlinear and error terms}\label{sec:N_err}

Recall nonlinear $\cN$ and error terms $\cE$ from \eqref{eq:F_lin} 
\beq\label{eq:N_err_recall}
   \cN \teq \f{\bw}{\rho}  \int_0^x \td \cR(w) \cdot \B( \f{W}{ \bw } \cdot \f{1}{2V} - \f{1}{2 \bv}  \B) \rho,
 \quad 
  \cE \teq  \f{\bw}{\rho} \int_0^x  \f{ \cR(\bw)}{2 V} \cdot \f{W}{\bw} \rho .
\eeq

Using estimate \eqref{eq:J_est}, we obtain 
\bseq\label{eq:non_est1}
\beq
|\psio(y)| \leq  \MCC_{\psio/x}(y) y  \nchib{ w }, \quad 
\eeq
which implies 
\beq
  V(y) \geq \bv - |\psio| \geq \bv - \MCC_{\psio/x}(y) y  \nchib{ w },
  \quad |V(y)| \geq (\bv - \MCC_{\psio/x}(y) y  \nchib{ w } )_+,
\eeq
where we denote $a_+ \teq \max(a, 0)$. Since $W = \bw + w$ and $|| w ||_{\bcX} = || w \vp \rho ||_{L^{\infty}}$ \eqref{norm:X}, we have 
\beq\label{eq:non_est1:b}
|w \rho| \leq  \vp^{-1} \nchib{w}, \quad \B| \f{W}{\bw} \rho \B| \leq \B|\f{\bw + w}{\bw} \rho \B|
\leq
  \rho + \vp^{-1} |\bw|^{-1} || w||_{\bcX} .
\eeq
\eseq

Using the above pointwise bound for $V, W$, we estimate the error $\cE$ in \eqref{eq:N_err_recall} as 
\beq\label{eq:est_err}
  |\cE| \f{\rho}{\bw} \leq   \int_0^x \f{|\cR(\bw)| }{ 2 (\bv - \MCC_{\psio/x}(y) y  || w ||_{\bcX})_+ }
  \cdot  (\rho + \vp^{-1} |\bw|^{-1} || w||_{\bcX}) d y
  \teq B_{\cE}(x, || w ||_{\bcX}) .
\eeq
where all functions in the integrand evaluate at $y$.

\vs{0.1in}
\paragraph{\bf Estimate of $\cN$}

Next, we estimate $\cN$ in \eqref{eq:N_err_recall}.  Applying the two decompositions of $\td \cR(w)$ \eqref{eq:lin_nloc}:
\bseq\label{eq:est_cR}
\beq
\td \cR(w)(x) 
= - \f{2}{3} \psiox - 2 \f{  \psio }{x} \cdot \f{x \bw_x}{\bw}(x)
= ( - \f{2}{3} - 2 \f{x \bw_x}{\bw} ) \psiox
+ 2 \f{x \bw_x}{\bw}  ( \psiox - \f{ \psio }{x} ),
\eeq
applying Lemma \ref{lem:vel_est} to $\psiox, \f{ \psio }{x}, \psiox - \f{ \psio }{x}$, 
we bound $\td \cR(w)$ as
\beq\label{eq:est_cR:b}
\bga
  |\td \cR(w)(x) |  \leq B_{\cR}(x) || w ||_{\bcX}, \\
  \quad B_{\cR}(x) \teq \min\B( \f{2}{3} \MCC_{\psio_x}( x) 
 + 2 \B| x \f{\bw_x}{ \bw } \B| \MCC_{\psio/x}(  x ),
 \B| \f{2}{3} + 2 \f{x \bw_x}{\bw} \B| \MCC_{\psio_x}(  x) 
 + 2 \B| x \f{\bw_x}{ \bw } \B| \MCC_{\D}( x ) \B).
 \ega
\eeq
\eseq

Recall $\bar g(y) = \f{\bv(y)}{y}$ from \eqref{eq:nota1}. Using 
$W = \bw +w, V = \bv + \psio(w)$, we decompose 
\[
       \f{W}{2V \bw } - \f{1}{2\bv} 
   = \f{ \bw + w}{ 2 V \bw} - \f{1}{2 \bv}
   = \f{w}{2 V \bw} + \f{\bv- V}{2 \bv V}
   =  \f{w}{2 V \bw} - \f{ \psio  }{2 \bv V}
   = \f{1}{ 2 V} ( \f{w}{ \bw} - \f{ \psio }{ y \bar g } ) .
\]

Using \eqref{eq:non_est1} and the above decomposition, we estimate 
\[
\bal
  \B| \B(    \f{W}{2V \bw } - \f{1}{2\bv} \B) \rho(y) \B|
   & =  \B| \f{1}{ 2 V} ( \f{w}{ \bw} - \f{ \psio }{ y \bar g } ) \B| \rho
  \leq \f{1}{2|V|} (   \vp^{-1} \bw^{-1}   + \bar g^{-1} \MCC_{\psio/x}(y) \cdot \rho )|| w ||_{\bcX} \\
& \leq  
   \f{  \vp^{-1} \bw^{-1}   + \bar g^{-1} \MCC_{\psio/x}(y)\cdot \rho  }{ 2 ( \bv - \MCC_{\psio/x}(y) y || w ||_{\bcX} )_+ } \cdot || w ||_{\bcX}.
   \eal
\]
All the functions evaluate at $y$. Using the above estimates and the estimate of $\cR$ \eqref{eq:est_cR}, we prove 
\beq
|\cN \f{\rho}{\bw} |  \leq  \int_0^x \uds{ \eqref{eq:est_cR:b} }{ B_{\cR}(y) }
\cdot 
\f{  \vp(y)^{-1} |\bw(y)|^{-1}   + \bar g(y)^{-1} \MCC_{\psio/x}(y) \rho(y)  }{ 2 ( \bv(y) - \MCC_{\psio/x}(y) y || w ||_{\bcX} )_+ } 
d y
\cdot  
   || w ||_{\bcX}^2
   \teq  B_{\cN}(x, \| w \|_{\bcX})  \cdot    || w ||_{\bcX}^2. 
   \label{eq:est_non}
\eeq

Note that $B_{\cN}(x, || w ||_{\bcX}),
B_{\cE}(x, || w ||_{\bcX})$ only depend on the parameters $\vmu, \bb, \cI$ 
via the functions \eqref{eq:vel_const}, the profile 
$\bw, \bv$, and $|| w ||_{\bcX}$. Moreover, they are  increasing in $|| w||_{\bcX}$.

\subsection{Summary of the estimates}
Combining \eqref{eq:lin_est},\eqref{eq:est_err},and \eqref{eq:est_non}, we establish
\begin{align}
  |\cF(w) \vp \rho |  &\leq 
 |\cL + \cN + \cE| \vp \rho  \notag  \\
 & \leq     \vp  | \bw |  \B(  
  ( \uds{ \eqref{eq:est_I} }{ B_I}  + \uds{ \eqref{eq:est_II1} }{ B_{II, 1} } + \uds{ \eqref{eq:est_II3} }{ B_{II,3 }}
  + \uds{ \eqref{eq:est_II_III}  }{ B_{II,2, III} } ) || w ||_{\bcX}   
   + \uds{ \eqref{eq:est_non} }{ B_{\cN}(x, || w ||_{\bcX}) } \cdot  || w ||_{\bcX}^2
  + \uds{ \eqref{eq:est_err} }{  B_{\cE}(x, || w||_{\bcX} ) }    \B) \notag \\
 & \teq B_{\mf{tot}}(x, || w ||_{\bcX}) . 
  \label{eq:lin_tot} 
\end{align}
where the upper bound only depend on the parameters $\vmu, \bb, \cI$ via the functions \eqref{eq:vel_const}, the profile  $\bw, \bv$, and $|| w ||_{\bcX}$. 
The label below $B_{\bullet}$ indicates the equation in which  $B_{\bullet}$ is defined.
All of these functions are increasing in $\nchib{w}$. 

We verify the following lemma on the estimates with computer assistance and refer more details to Appendix \ref{app:proof}. 
In the left panel of Figure \ref{fig:fix_point_contraction}, we plot the \emph{rigorous} piecewise 
interval-arithmetic bounds for functions $\vp  | \bw | B_{\bullet}(x)$ in Lemma \ref{lem:close},  
relative to $\dfix$, over a large domain $[0, 10^{27}]$.

\begin{lem}[\bf Computer-assisted]\label{lem:close}
Let $\dfix = 3.5 \cdot 10^{-5} , \dfixa = 3.4 \cdot 10^{-5}$.
Let  $\vmu, \bb, x_1$ be the parameters chosen in \eqref{def:mu_b}, \eqref{def:x1},
and $\cI = \{ I_i\}_{i\geq 1}$ be the intervals chosen in \eqref{def:interval}. For any $x \geq 0$, the approximate profile $\bw, \bv$ satisfy 
\bseq\label{lem:profile}
\begin{gather}
  \MCC_{\psio/x}(\vmu, \bb,\cI, x) x \dfix \leq \tf{1}{2} \bv,  \label{eq:profile:a} \\
 \uds{ \eqref{eq:lin_tot} }{B_{\mf{tot}}(x, \dfix)} =
\vp  | \bw |  \B(  
  (  \uds{ \eqref{eq:est_I} } {B_I }+ \uds{ \eqref{eq:est_II1}} { B_{II, 1} } + \uds{ \eqref{eq:est_II3} }{ B_{II,3 } }
   + \uds{ \eqref{eq:est_II_III} } { B_{II,2, III} } ) \dfix  +\uds{ \eqref{eq:est_non}} { B_{\cN}(x, \dfix ) }  \dfix^2
  +\uds{ \eqref{eq:est_err} }{ B_{\cE}(x, \dfix ) }   \B) \leq \dfixa <  \dfix   .  \label{eq:profile:b}
\end{gather}
where the label below $B_{\bullet}$ indicates the equation in which  $B_{\bullet}$ is defined.
\eseq
\end{lem}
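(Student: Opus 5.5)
The plan is a computer-assisted verification of two explicit scalar inequalities on $[0,\infty)$. Lemma~\ref{lem:close} is not derived analytically from earlier results. With $\vmu,\bb,x_1,\cI$ fixed, every function in \eqref{eq:profile:a}--\eqref{eq:profile:b} is determined by three ingredients:
\begin{itemize}
\item the approximate profile $\bw,\bv$;
\item the finitely many constants $\mCC_\bullet(b_i,I_j)$ in \eqref{eq:vel_const};
\item the explicit weights $\rho,\vp$.
\end{itemize}
The argument therefore splits into three stages:
\begin{enumerate}
\item obtain rigorous upper bounds for the $\mCC_\bullet(b_i,I_j)$;
\item obtain rigorous piecewise enclosures of $\bw$, $\bw_x$, $\bv$, $\bv_x$, and hence of $\bar g$ and $x\bw_x/\bw$;
\item assemble the quantities $B_\bullet$ on a partition of $[0,\infty)$ and check the inequalities with interval arithmetic.
\end{enumerate}

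\textbf{Stage 1: the kernel constants.} I would bound the at most $150$ integrals $\mCC_\bullet(b_i,I_j)$ one by one. On each $I_j$ the kernels $K_{\alb,1},K_{\alb,2},K_{\alb,1,J},K_{\alb,2,J}$ and the difference $K_{\alb,1}-K_{\alb,2}$ have explicit forms and, I expect, a definite sign on subintervals. Near the singular points $z=0$ and $z=1$, and on the tail $z>2$, I would bound $|K(1,z)|$ by explicit power functions, using Taylor expansion and the estimates \eqref{eq:ker_decay}, and then integrate these exactly. On the remaining compact pieces I would use interval-arithmetic quadrature, controlling the error by derivative bounds.

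\textbf{Stage 2: the profile.} On the range $x\le \xed$, I would enclose $\bwp$ exactly, since it is a piecewise cubic B-spline, and enclose $\bwf$ explicitly. The velocities $\bv,\bv_x$ require more work. I would compute them by splitting the integral in \eqref{eq:ker} into Gaussian quadrature on the B-spline part and exact or Taylor-expanded integration for $\bwf$, keeping rigorous truncation and round-off bounds (Appendix~\ref{app:numerics}).

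For $x$ beyond the computational domain (up to and including $x=\infty$), direct evaluation is impossible. There I would instead use the quantitative far-field expansions behind Lemma~\ref{lem:W_asym}:
\[
\bv=4x\log x+O(|\lgp x|^{2/3}),\qquad |x\bw_x/\bw+\tf13|\les|\lgp x|^{-4/3},\qquad |\cR(\bw)|\les|\lgp x|^{-2/3},
\]
all with explicit constants. In the same regime the functions $\MCC_\bullet(x)$ reduce to a single dominant power $|x|^{\alb-b_i}$, so they can be estimated in closed form.

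\textbf{Stage 3: assembling the bounds.} With these enclosures, \eqref{eq:profile:a} becomes a pointwise check on each mesh cell, and it also guarantees $(\bv-\MCC_{\psio/x}y\dfix)_+\ge\bv/2$ inside $B_\cN$ and $B_\cE$. The terms $B_{II,1}$ and $B_{\cN}$ contain pointwise parts. The terms $B_I$, $B_{II,3}$, $B_{II,2,III}$, $B_\cE$ and $B_\cN$ contain integrals $\int_0^x$, which I would bound cumulatively: on each cell, bound the integrand by its interval enclosure, multiply by the cell length, and accumulate the running sums. Multiplying by $\vp|\bw|$ on each cell then gives an enclosure of $B_{\mf{tot}}(x,\dfix)$, which I would compare against $\dfixa$.

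\textbf{Main obstacle: the far field.} The hard part is the region $x\to\infty$, where the integrals must converge with explicit tails. For $x$ large I would bound each integrand analytically:
\begin{itemize}
\item in $B_{II,2,III}$, by the cancellation \eqref{eq:cancel_rho}, which gives order $y^{-1}|\log y|^{-4/3}$;
\item in $B_{II,3}$, by order $y^{-1}|\log y|^{-4/3}$ as well;
\item in $B_\cE$, by $y^{-1}|\log y|^{-2/3}\rho$, multiplied by $\vp|\bw|\sim|\log x|^{-1/3}$ after weighting.
\end{itemize}
I would then integrate these bounds explicitly from the end of the computed domain to $x$. One must check that $\vp|\bw|$ times each resulting tail stays bounded and below the margin. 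This is delicate for $B_\cE$, whose weighted contribution only decays like $|\log x|^{1/3-1/3}$ relative to the other terms. It will require tight explicit constants in the far-field expansion, and a sufficiently large $x_1$, so that the total remains under $\dfixa=3.4\cdot10^{-5}$.
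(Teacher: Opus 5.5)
Your overall route is the paper's: rigorous enclosures of $\bw$, $\bv$, $\cR(\bw)$ and the constants $\mCC_\bullet(b_i,I)$ up to $\xed$, cumulative cellwise bounds for the integrals, and explicit log-power tails beyond $\xed$ (Appendix~\ref{app:proof}). However, the far-field bookkeeping, which you call the main obstacle, is wrong in a way that would make the verification fail.

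\textbf{The far-field growth of the coefficient functions is misidentified.} You say that for large $x$ the functions $\MCC_\bullet$ reduce to a single power $|x|^{\alb-b_i}$. That holds for the functions in \eqref{eq:vel_const}, but not for the ones entering the $B_\bullet$, which are $\MCC_{\psio/x},\MCC_{\psio_x}$ from \eqref{eq:J_est} and $\MCC_{\D}$ from \eqref{eq:est_vmix}.
\begin{itemize}
\item Since $\mCC_{\psio/x,0}(\alb,[0,1])=\infty$ by \eqref{eq:vel_const_est:infty}, the best power form of $\MCC_{\psio/x,0}$ grows like $x^{\alb-0.2}$. Then $\MCC_{\psio/x}\,x\,\dfix$ eventually exceeds $\tf12\bv\approx 2x\log x$, so \eqref{eq:profile:a} fails. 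The $B_{II,3}$ integrand $\MCC_{\psio/x}\Wmix\rho/\bv$ is also not integrable.
\item The constant power bound $\mu_5\mCC_{\D,0}(\alb,\R_+)$ for $\MCC_\D$ gives $B_{\cR}=O(1)$. The $B_\cN$ integrand is then $\asymp (y\log y)^{-1}$, and $B_\cN$ diverges like $\log\log x$.
\end{itemize}
What makes the tails converge is the weight $\rho$ \emph{inside the nonlocal estimates}, not only in the cancellation \eqref{eq:cancel_rho}.
\begin{itemize}
\item Since $|w|\le \mu_5 y^{-1/3}\rho^{-1}\|w\|_{\bcX}$, you get $\MCC_J\les(\log x)^{2/3}$ by \eqref{eq:J_e_asym}. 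Hence $\MCC_{\psio/x}\le \MCC_{\psio/x,J}+2\alb\MCC_J\les(\log x)^{2/3}$, and similarly for $\MCC_{\psio_x}$; this is what the paper uses in \eqref{eq:vel_const_asym_psi} and \eqref{eq:V_l_asym}.
\item The splitting at $y=x^{1-\g}$ in \eqref{eq:est_vmix} gives $\MCC_\D\les\rho^{-1}\sim(\log x)^{-1/3}$.
\end{itemize}
Your own rate $y^{-1}|\log y|^{-4/3}$ for $B_{II,3}$ silently uses the first of these, which contradicts your power claim.

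\textbf{The $B_\cE$ scalings are off.} First, $\vp|\bw|\to|\cca|/\mu_5=6/5000$ is of order one, not $|\log x|^{-1/3}$; the factor $\rho$ cancels out of $B_{\mf{tot}}$. Second, the integrand carries $1/\bv\sim(4y\log y)^{-1}$, so it is $\les y^{-1}(\log y)^{-5/3}\rho\sim y^{-1}(\log y)^{-4/3}$. With your bound $y^{-1}|\log y|^{-2/3}\rho$ the integral would grow like $(\log x)^{2/3}$ and the lemma would be false. In fact $B_\cE$ is no more delicate than the other terms: after these corrections every weighted term converges to a finite limit. What you do need is an explicit constant in $|\cR(\bw)|\le C(\log x)^{-2/3}$. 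The paper obtains it from the expansion of $\bv_x-\bv/x-4$ to $O((\log x)^{-4/3})$ (Appendix~\ref{app:nloc_very_far}) and the algebra in Appendix~\ref{app:cR_expand}. Also, $x_1$ is fixed by \eqref{def:x1}, so it is not a parameter you can enlarge.

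\textbf{The endpoint $x=0$ is not treated.} There $\vp|\bw|\sim|x|^{1+b_1}=|x|^{-0.2}$ is unbounded, and the integrands of $B_I,B_\cN,B_\cE$ contain $\MCC_{\psio/x}/\bv$ and $\cR(\bw)/\bv$, which are $0/0$ at $y=0$. A cellwise interval enclosure on the first cell therefore returns $\infty$. Likewise, \eqref{eq:profile:a} cannot be certified by comparing enclosures of two quantities that both vanish at $0$. You need analytic small-$x$ bounds:
\begin{itemize}
\item $\MCC_{\psio/x}\le\mu_1\mCC_{\psio/x,0}(b_1,\R_+)|x|^{\alb-b_1}$;
\item $\cR(\bw)(0)=0$ (from $\bv_x(0)=1$), together with a bound on $\pa_x^2\cR(\bw)$.
\end{itemize}
These give $B_\bullet\les|x|^{1+\alb}$ and hence a bounded $B_{\mf{tot}}$ near $0$. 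Check \eqref{eq:profile:a} in the divided form $\MCC_{\psio/x}\dfix\le\tf12\bar g$.
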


\begin{figure}[t]
\centering
\captionsetup{width=0.95\textwidth}

\begin{minipage}{0.54\textwidth}
\centering
\includegraphics[height=5.2cm]{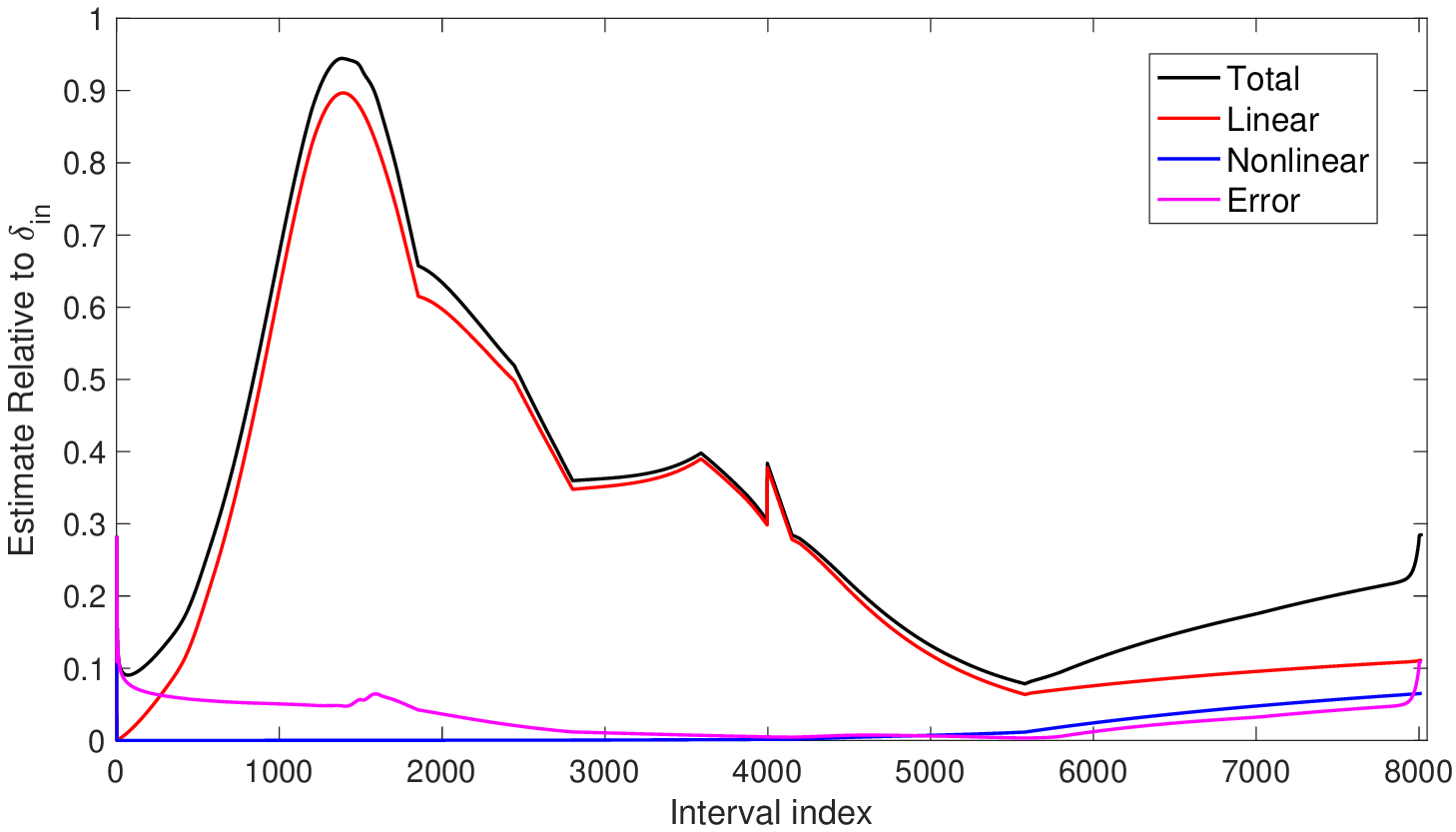}
\end{minipage}
\hfill
\begin{minipage}{0.44\textwidth}
\centering
\includegraphics[height=5.2cm]{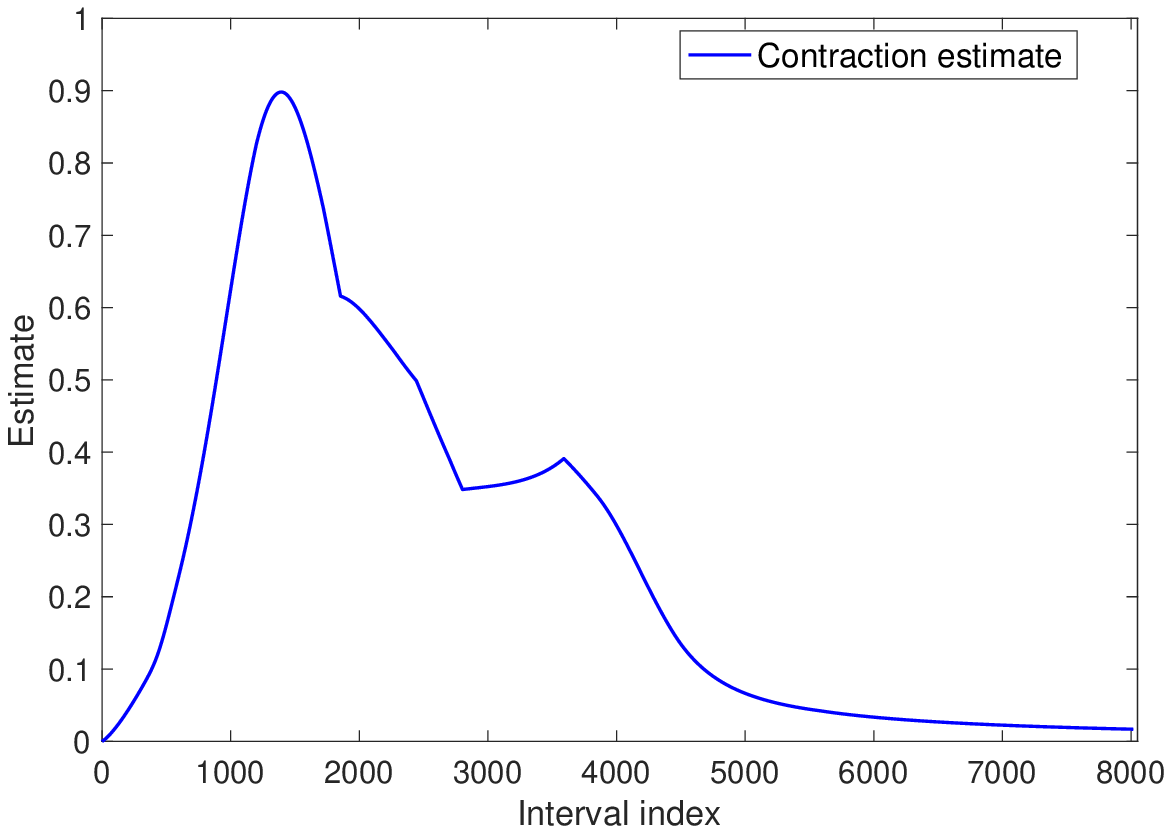}
\end{minipage}
\caption{  
Rigorous piecewise bounds over \(8000\) intervals covering \([0,10^{27}]\).
Left: estimates relative to \(\dfix\) for the fixed-point map 
in Lemma \ref{lem:close}, including  linear, nonlinear, and error bounds associated with
$\vp  | \bw | (B_I+B_{II,1}+B_{II,3}+B_{II,2,III})(x)$,  $\vp  | \bw | B_{\cN}(x,\dfix)\dfix$, \ $ \vp  | \bw | \dfix^{-1}  B_{\cE}(x,\dfix) .$ 
Right: estimate $B_\sst(x)$ for contration estimate in Lemma \ref{lem:profile2}.
}
\label{fig:fix_point_contraction}

\end{figure}

Using Lemma \ref{lem:close}, we are in a position to prove Theorem \ref{thm:onto}.

\begin{proof}[Proof of Theorem \ref{thm:onto}]

We fix an arbitrary $w$ with $|| w||_{\bcX}\leq \dfix$. Using \eqref{eq:non_est1} and \eqref{eq:profile:a}, we get 
\beq\label{eq:bound_V}
  V \geq \bv -   \MCC_{\psio/x}(\vmu, \bb, x) x \dfix \geq \bv / 2 \geq 0.
\eeq
Thus, $V(x) >0 $ for any $x > 0$. Since $B_{\cN}$ \eqref{eq:est_non}, $B_{\cE}$ \eqref{eq:est_err} are increasing in $|| w||_{\bcX}$, using estimate \eqref{eq:lin_tot} and \eqref{eq:profile:b} from Lemma \ref{lem:profile}, 
for any $w$ with $||w ||_{\bcX} \leq \dfix$, we obtain
\[
  \| \cF(w) \|_{\bcX} \leq B_{\mf{tot}}(x, \dfix) < \dfix,
\] 
and prove \eqref{eq:onto}. Since $\cL$ is linear, 
for any $w \in \bcX$, using \eqref{eq:lin_est} and \eqref{eq:profile:b}, we obtain
\[
\bal
  \| \cL(w) \|_{\bcX}
  \leq 
  \vp  | \bw | 
  (  B_{I}  + B_{II, 1} + B_{II,3 }
  + B_{II,2, III} ) || w ||_{\bcX} 
  \leq (\d_1/  \dfix) || w ||_{\bcX}  ,
  \eal
\]
with $\d_1/  \dfix < 1$ and prove \eqref{eq:lin_contra} with $\lam_{\alb} =  \dfixa/ \dfix $. 
We complete the proof of Theorem \ref{thm:onto}.
\end{proof}

\section{Qualitative properties of $\cF$ and a near-field contraction estimate}\label{sec:schauder}

In this section, we prove Theorem \ref{thm:schauder} using purely analytic arguments (pen-and-paper). Then we prove Theorem \ref{thm:near_field_stab} for a near-field contraction estimate of the linear operator 
around the \emph{exact} profile with computer-assistance.

The proof of Theorem \ref{thm:schauder} is divided into the following two lemmas.

\begin{lem}\label{lem:contin}
The map $\cF: \bD_{\dfix} \to \bD_{\dfix}$ is continuous with respect to the $\bcX$-norm.  
\end{lem}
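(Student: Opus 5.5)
We take $w_1, w_2 \in \bD_{\dfix}$, set $h = w_1 - w_2$, and show $\|\cF(w_1) - \cF(w_2)\|_{\bcX} \les \|h\|_{\bcX}$ with a constant depending only on $\bw$, $\dfix$ and the parameters of the norm. This gives Lipschitz continuity, hence continuity.

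We use the decomposition $\cF = \cL + \cN + \cE$ from \eqref{eq:F_lin}. Since $\cL$ is linear, \eqref{eq:lin_contra} gives $\|\cL(w_1) - \cL(w_2)\|_{\bcX} \le \lamalb \|h\|_{\bcX}$ directly. It therefore remains to estimate the differences of $\cN$ and $\cE$. For these we write $W_j = \bw + w_j$ and $V_j = \bv + \psio(w_j)$. By \eqref{eq:bound_V}, which rests on \eqref{eq:profile:a}, we have $V_j \ge \bv/2$, and hence $V_j \gtr x \lgp x$ for $x \ge 1$ and $V_j \gtr x$ near $0$ by Lemma \ref{lem:bw_basic_sign}.

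For the error term we write the difference of integrands as
\[
\cR(\bw)\Big(\f{1}{2V_1} - \f{1}{2V_2}\Big)\f{W_1}{\bw}\rho + \f{\cR(\bw)}{2V_2}\cdot\f{h}{\bw}\rho ,
\qquad
\f{1}{V_1} - \f{1}{V_2} = -\f{\psio(h)}{V_1 V_2}.
\]
By Corollary \ref{cor:vel_est}, $|\psio(h)| \les x \min(|x|^{\alb-b_1}, |\lgp x|^{2\alb})\|h\|_{\bcX}$. By \eqref{eq:lin_nloc:far}, $|\cR(\bw)| \les \min(|x|^{1+\alb}, |\lgp x|^{-2/3})$. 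We also have $|W_j/\bw|\rho \les \rho + \vp^{-1}|\bw|^{-1}$ from \eqref{eq:non_est1:b}. Collecting these, the integrand is bounded by $\|h\|_{\bcX}$ times $y^{-1}|\lgp y|^{-4/3}$ up to further log factors coming from $\vp^{-1}|\bw|^{-1}\rho^{-1}$. We then multiply by the outer factor $\vp|\bw|/\rho$.

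The nonlinear term is treated the same way. Its integrand is $\td\cR(w)\cdot \f{1}{2V}\big(\f{w}{\bw} - \f{\psio}{y\bar g}\big)\rho$, which is bilinear in $w$ apart from the factor $1/V$. We expand the difference into terms each containing one factor of $h$: a $\td\cR(h)$ term, bounded by $B_{\cR}\|h\|_{\bcX}$ as in \eqref{eq:est_cR:b}; an $h/\bw$ or $\psio(h)$ term; and a $1/V_1 - 1/V_2$ term. The coefficients are bounded exactly as in \eqref{eq:est_non}, using $B_{\cR}\les \min(|x|^{\alb-b_1}, |\lgp x|^{-\alb})$, which follows from Corollary \ref{cor:vel_est} and Lemma \ref{lem:W_asym}.

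The main obstacle is checking that these crude, implicit-constant bounds produce a function of $x$ that stays bounded after multiplication by $\vp|\bw|/\rho$, uniformly in $x \in [0,\infty)$. We split the range into near field and far field.

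Near $0$ we use $|\psio(h)| \les |x|^{1+\alb-b_1}$. Together with $V \gtr x$, $|\bw|\asymp x$ and $b_1 < -1$, this makes the integrands integrable at $0$, and the outer factor is bounded there.

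In the far field $\vp|\bw|/\rho \les |\lgp x|^{-1/3}$ (using $\max b_i = \alb$), so it suffices that the integral grows at most like $|\lgp x|^{1/3}$. We aim to show the integrands are $O(y^{-1}|\lgp y|^{-1})$ or better; integrating then gives at most $\log\log$ growth, which the prefactor absorbs. If the log bookkeeping for $\cN$ is too crude, we fall back on rewriting $\td\cR$ via \eqref{eq:est_cR} to use the improved bound on $\psio_x - \psio/x$.

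Continuity of $\cF$ then follows, since $\|\cF(w_1) - \cF(w_2)\|_{\bcX} \les \|h\|_{\bcX}$ with a uniform constant.
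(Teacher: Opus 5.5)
Your overall strategy is the paper's: a Lipschitz bound via $\cF=\cL+\cN+\cE$, \eqref{eq:lin_contra} for $\cL$, and one factor of $h$ in each difference for $\cN$ and $\cE$. The gap is in the weight bookkeeping that decides whether the argument closes. Since $\cF(w)=\frac{\bw}{\rho}\int_0^x(\cdots)$ and $\|f\|_{\bcX}=\|f\vp\rho\|_{L^\infty}$, the $\rho$ in the norm cancels the $1/\rho$ in $\cF$. The outer factor is therefore $\vp|\bw|$, not $\vp|\bw|/\rho$, and by \eqref{eq:wg_rat} it satisfies $\vp|\bw|\asymp |x|^{b_1+1}\vee 1$. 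This has two consequences:
\begin{itemize}
\item At infinity the factor is $\asymp 1$, not $|\lgp x|^{-1/3}$. So the step ``integrands $O(y^{-1}|\lgp y|^{-1})$, integral grows like $\log\log x$, prefactor absorbs it'' fails. You need the integrands in $L^1(1,\infty)$, i.e.\ decay like $y^{-1}|\lgp y|^{-1-\delta}$ for some $\delta>0$.
\item Near $0$ the factor is not bounded: it grows like $|x|^{-0.2}$, since $b_1=-1.2$. You need $\int_0^x$ to vanish faster than $|x|^{-b_1-1}$. It does, because the integrands are $O(|y|^{\alb-b_1-1})$ there.
\end{itemize}

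The repair uses bounds you already quote, and it is what the paper does. Set $\cI_{\D}=\frac{W_1\rho}{2V_1\bw}-\frac{W_2\rho}{2V_2\bw}$. Using $\MCC_{\psio/x}\les|\lgp x|^{2/3}$, $\rho\les|\lgp x|^{1/3}$ and $V_j\ge \bv/2\gtr x\lgp x$, one gets $|\cI_{\D}|\les (x\lgp x)^{-1}(|x|^{-b_1-1}\we 1)\|h\|_{\bcX}$. From this:
\begin{itemize}
\item $|\cR(\bw)|\les|\lgp x|^{-2/3}$ makes the $\cE$-integrand $O(x^{-1}|\lgp x|^{-5/3})$.
\item $|\td\cR(f)|\les(|x|^{\alb-b_1}\we|\lgp x|^{-1/3})\|f\|_{\bcX}$ makes the $\cN$-integrand $O(x^{-1}|\lgp x|^{-4/3})$.
\end{itemize}
Both are integrable. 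The $|\lgp x|^{-1/3}$ decay of $\td\cR$ is essential. It comes from $\MCC_{\D}$ and from $\frac{x\bw_x}{\bw}+\frac13=O(|\lgp x|^{-4/3})$. The crude bound $|\td\cR|\les|\lgp x|^{2/3}$ would give a non-integrable $x^{-1}|\lgp x|^{-1/3}$. So what you call a fallback for $\cN$, rewriting $\td\cR$ via \eqref{eq:est_cR}, must be the main route.
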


\begin{lem}\label{lem:compact}
The map $\cF: \bDD \to \bD_{\dfix}$ is compact with respect to the $\bcX$-norm.
Moreover,  $\cF(w)(x)$ is locally Lipschitz and satisfies  $|\pa_x \cF(w)(x)| \les_R |x|^{\alb+1} $ for any $R >0$,
with constants independent of $w \in \bDD$.
\end{lem}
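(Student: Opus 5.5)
The statement to prove is Lemma~\ref{lem:compact}. The plan is to derive a pointwise formula for $\pa_x \cF(w)$ from \eqref{eq:fix_mapa}. We then use it to obtain, uniformly for $w\in\bDD$, three things: (a) the local Lipschitz bound near and away from the origin; (b) equicontinuity on compact sets; (c) uniform smallness of the weighted quantity $|\cF(w)\vp\rho|$ as $x\to\infty$ and as $x\to0$, in a form that is uniform in $w$. Compactness in the weighted $L^\infty$ norm then follows from Arzel\`a--Ascoli combined with this uniform tail control. Given any sequence $\cF(w_n)$, we extract a subsequence converging locally uniformly on $(0,\infty)$. The tails at $0$ and $\infty$ are uniformly small in $\|\cdot\vp\rho\|_{L^\infty}$, so the subsequence is Cauchy in $\bcX$.

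\textbf{Step 1 (derivative bound).} Differentiating \eqref{eq:fix_mapa} gives
\[
\pa_x\cF(w)=\pa_x\Big(\f{\bw}{\rho}\Big)\f{\rho}{\bw}\cF(w)+\f{\bw}{\rho}\Big(\f{\cR(W)}{2V}\f{W}{\bw}\rho+\f{\rho_x}{\rho}\f{w}{\bw}\rho\Big).
\]
We bound each factor uniformly for $w\in\bDD$. First, $V\ge \bv/2\gtr x$ by \eqref{eq:bound_V} and Lemma~\ref{lem:bw_basic_sign}. Second, $|\cR(W)|\le|\cR(\bw)|+B_{\cR}\dfix$; near $0$ this is $\les|x|^{1+\alb}$, using Lemma~\ref{lem:W_asym} and Corollary~\ref{cor:vel_est}, which gives $|\psio_x|,|\psio/x|\les|x|^{\alb-b_1}$ with $b_1\le -1$. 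Third, $|W/\bw|\les 1$ near $0$, since $|w|\les\vp^{-1}\les|x|^{-b_1}$. Fourth, $\bw\in C^{2,1}$, and $|\cF(w)|\le \dfix\vp^{-1}\rho^{-1}$. Near $x=0$, $\cR(W)/(2V)\cdot W\rho\les|x|^{\alb}\cdot|x|$, so the second term is $\les |x|^{1+\alb}$. The first term involves $\pa_x(\bw/\rho)\cdot\cF/\bw$. I expect this to require writing $\cF(w)/\bw\cdot\rho$ as the integral itself, whose integrand is $O(|x|^{\alb})$, so the integral is $O(|x|^{1+\alb})$. On $[r,R]$ with $r>0$, every factor is bounded by continuity and positivity of $\bv,|\bw|$, which gives local Lipschitz bounds uniform in $w$.

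\textbf{Step 2 (tails).} As $x\to\infty$ we need $\sup_{w}\sup_{x>X}|\cF(w)\vp\rho|\to0$. The computer-assisted bound only gives $\le\dfixa$, not decay, so this is the main obstacle. I would decompose $\cF=\cL+\cN+\cE$ and use the decomposition \eqref{eq:lin} with the qualitative bounds of Corollary~\ref{cor:vel_est} and Lemma~\ref{lem:W_asym}. For large $x$, the integrals $I$, $II_1$, $II_3$, $II_2+III$ and those defining $\cN,\cE$ converge absolutely, with tails controlled by $|\log y|^{-4/3}/y$-type integrands. They therefore have limits uniformly Cauchy in $x$, but not zero. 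Hence the weighted quantity $\vp|\bw|\rho^{-1}\cdot\rho\cdot(\text{bounded})$ need not vanish. I would instead exploit that $\vp|\bw|\asymp |x|^{1/3}\cdot x^{-1/3}$ is bounded and that the limits of $\rho\cF/\bw$ exist uniformly. For compactness it suffices that $\rho\cF(w)/\bw$ converges as $x\to\infty$ uniformly in $w$, since a diagonal argument also captures the limit value. Concretely, I would prove that $f_w(x):=\rho\cF(w)/\bw$ satisfies $|f_w(x)-f_w(y)|\le \eta(\min(x,y))$ with $\eta\to0$ independent of $w$. Near $0$, $|f_w(x)|\les|x|^{1+\alb}$ uniformly by Step 1, which gives smallness of $\vp\rho|\cF|$ as $x\to 0$ after comparing with $\vp|\bw|\les |x|^{1+b_1}$.

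\textbf{Step 3 (conclude).} Compactness then follows. Given $w_n\in\bDD$, the functions $f_{w_n}$ are uniformly bounded and equicontinuous on compact subsets of $(0,\infty)$, uniformly small near $0$, and uniformly Cauchy at $\infty$. Arzel\`a--Ascoli plus a diagonal argument, together with the existence of limits at $\infty$, yield a subsequence converging uniformly on $[0,\infty]$. Multiplying by the bounded factor $\vp|\bw|$ gives convergence in $\bcX$, and the bound $|\pa_x\cF(w)|\les_R|x|^{\alb+1}$ comes from Step 1.
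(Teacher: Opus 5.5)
Your proposal is correct and follows essentially the same route as the paper: uniform integrand bounds give the local Lipschitz estimate. The $\cL+\cN+\cE$ decomposition, including the integration by parts \eqref{eq:lin1} for $II_1$ whose boundary term decays like $|\lgp x|^{-2/3}$, yields integrands $\les y^{-1}|\lgp y|^{-4/3}$ and hence a limit at infinity attained uniformly in $w$. Arzel\`a--Ascoli with a diagonal argument then finishes the proof. Differentiating $\cF$ directly instead of splitting into $\cP_0,\dots,\cP_3$, $I_\cN$, $I_\cE$ is only cosmetic.

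One small slip: in Step 3 you call $\vp|\bw|$ bounded, but near the origin $\vp|\bw|\asymp|x|^{1+b_1}$ with $b_1=-1.2$, which is unbounded. Your own Step 2 bound $\vp\rho|\cF(w)|\les|x|^{2+b_1+\alb}$ already covers that region, so just split $[0,\delta]\cup[\delta,\infty)$ explicitly when passing from uniform convergence of $\rho\cF(w_n)/\bw$ to convergence in $\bcX$.
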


We prove Lemma \ref{lem:contin} in Section \ref{sec:contin} and Lemma \ref{lem:compact} in Section \ref{sec:compact} by purely analytic arguments.

Throughout this section, the implicit constants may depend on the profile $\bw,\bv$ and the parameters 
$\vmu, \bb$ in the norm \eqref{norm:X}, $x_1$ in \eqref{eq:rho1}, 
and the intervals $\cI$ chosen in \eqref{def:interval}. Since $b_1 \leq -1$, 
using Lemma \ref{lem:W_asym} and considering $x \leq 1$ and $x \geq 1$, we estimate the weight $\vp$ \eqref{norm:X} as
\beq\label{eq:wg_rat}
\f{\vp^{-1}}{|\bw| } 
\les 
  \f{  |x|^{-b_1} \we |x|^{-\alb}  }{ |x| \we |x|^{-\alb}}
  \les |x|^{-b_1 -1} \we 1 , 
   \quad 
  \vp |\bw|
  \asymp (|x|^{b_1} \vee |x|^{\alb} ) 
  ( |x| \we |x|^{ -\alb} )
\asymp |x|^{b_1 + 1} \vee 1.
\eeq

\subsection{Proof of continuity}\label{sec:contin}

In this section, we prove Lemma \ref{lem:contin} 
by proving  that $\cF$ is Lipschitz.

Consider two perturbation $w_1, w_2$ with $|| w_i||_{\bcX} \leq \dfix$. We adopt notations similar to \eqref{eq:nota1}
\beq\label{eq:nota_dif}
 V_i = \bv + \psio(w_i),
\quad W_i = \bw + w_i,  \quad  w_{\D} = w_1 - w_2.
\eeq

We shall estimate 
\beq\label{eq:dif_decomp}
\bal
\cF(w_1) - \cF(w_2) 
& = (\cL w_1 - \cL w_2) + (\cN(w_1) - \cN(w_2)) + (\cE(w_1) - \cE(w_2)) \\
& \teq \cL_{\D}(w_1, w_2) + \cN_{\D}(w_1, w_2) + \cE_{\D}(w_1, w_2).
\eal
\eeq

To simplify notation, we simplify $\cT_{\D}(w_1, w_2) = \cT_{\D}$ for $\cT = \cL, \cN, \cE$.

Using estimates \eqref{eq:non_est1} for $V, W_i$, \eqref{eq:wg_rat}, and estimate \eqref{eq:profile:a} from Lemma \ref{lem:profile}, we obtain 
\bseq\label{eq:Sch_est_VW} 
\begin{gather}
  V_i \geq \bv - \MCC_{\psio/x}(x) |x| \cdot|| w_i||_{\bcX} \geq 
  \bv - \MCC_{\psio/x}(x) |x| \dfix \geq \tf{1}{2} \bv, \\
  | \tf{W_i}{\bw} \rho| \les 
  |\rho| + \vp^{-1} \bw^{-1} || w_i ||_{\bcX}
  \les |\lgp x|^{\alb} + |x|^{-b_1 - 1} \we 1 \les |\lgp x|^{\alb},
\end{gather}
\eseq
where we have used $|| w_i||_{\bcX} \les 1$.

\vs{0.1in}

\paragraph{\bf Estimate of $\cL_{\D}$}
Since $\cL$ is linear, using \eqref{eq:lin_contra} in Theorem \ref{thm:onto}, we prove
\beq\label{eq:dif_lin}
\| \cL_{\D} \|_{\bcX} = \| \cL( w_1 - w_2) \|_{\bcX}
  \leq \| w_1 - w_2  \|_{\bcX}.
\eeq

To estimate $\cE_{\D}, \cN_{\D}$, we decompose a term appearing in both $\cE_{\D}$ and $\cN_{\D}$:
\bseq\label{eq:dif_rat}
\beq
  \cI_{\D}(w_1, w_2)
\teq \f{1}{2 V_1} \cdot  \f{W_1}{\bw} \rho 
  -   \f{1}{2 V_2} \cdot  \f{W_2}{\bw} \rho 
  =  \f{ (V_2 - V_1)}{2 V_1 V_2} \cdot  \f{W_1}{\bw} \rho 
  + \f{1}{2 V_2} \cdot \f{W_1 - W_2}{\bw} \rho.
\eeq

For any $w_i \in \bcX$ with $\| w_i \|_{\bcX} \leq \dfix$, applying estimate \eqref{eq:J_est} 
and Corollary \ref{cor:vel_est} to $V_2 - V_1 = \psio(w_1 - w_2) $, using $W_1 - W_2 = w_1 - w_2 = w_{\D}$,
estimates \eqref{eq:Sch_est_VW} for $\f{W_1}{\bw} \rho$ and $V_i$ we obtain 
\[
  |\cI_{\D}(w_1, w_2)(x)| \leq \B( \f{  x \MCC_{\psio/x} |\lgp x|^{ \alb }
} {2 |V_1 V_2|}
  + |\f{ 1 }{2 V_2} | \cdot \f{\vp^{-1}}{ | \bw | } \B) || w_{\D} ||_{\bcX}
  \les  \f{ 1 }{\bv}   \B( \f{  x \MCC_{\psio/x} |\lgp x|^{\alb}  } { \bv }
  +  \f{\vp^{-1}}{ |\bw | } \B) || w_{\D} ||_{\bcX} .
\]

Applying the estimates from Lemma \ref{lem:W_asym} to $\bw, \bv$, 
Corollary \ref{cor:vel_est} for $\MCC_{\psio/x}$, and the estimate \eqref{eq:wg_rat} for $(\vp \bw)^{-1}$, we obtain 
\beq
\bal
  |\cI_{\D}(w_1, w_2)(x)|  & \les   \f{  1 }{ x \lgp x }
  \cdot  \big( \f{  x \cdot (|x|^{\alb - b_1} \we |\lgp x|^{2\alb}  ) |\lgp x|^{\alb}  }{x \cdot \lgp x }
  + |x|^{-b_1 - 1} \we 1 
  \big) || w_{\D} ||_{\bcX} \\
  & \les  ( x \lgp x )^{-1} ( |x|^{\alb - b_1} \we 1 
  + |x|^{-b_1 - 1} \we 1 )  || w_{\D} ||_{\bcX}  \\
  & \les 
   ( x \lgp x )^{-1}   (|x|^{-b_1 - 1} \we 1)   || w_{\D} ||_{\bcX} .
\eal
\eeq
\eseq
where we recall $\lgp x \gtr 1$ from \eqref{def:lgp}.

\vs{0.1in}
\paragraph{\bf Estimate of $\cE_{\D}$}
Recall $\cE$ from \eqref{eq:F_lin}. 
Since $b_1 \leq -1$, applying \eqref{eq:dif_rat} 
for $\cI_{\D}$ and Lemma \ref{lem:W_asym} for $\cR(\bw)$, we estimate  the integrand in $\cE_{\D}$ as 
\beq\label{eq:dif_err_int}
  \bal
 I_{\cE, \D}(w_1, w_2)  & \teq  \f{\cR(\bw)}{2 V_1} \cdot  \f{W_1}{\bw} \rho 
  -   \f{\cR(\bw)}{2 V_2} \cdot  \f{W_2}{\bw} \rho 
  =  \cR(\bw) \cI_{\D} \, , \\
 |I_{\cE, \D}(w_1, w_2) | & \les (|x|^{1+\alb} \we  |\lgp x|^{- 2 \alb})   \cdot ( x \lgp x )^{-1} \cdot  (|x|^{-b_1 - 1} \we 1)   \nchib{ w_{\D} } \\
& \les ( |x|^{\alb -b_1 - 1} \we |x|^{-1} |\lgp x |^{-2 \alb - 1} )   \cdot  \nchib{ w_{\D} }.
  \eal
\eeq

In particular,  $I_{\cE, \D}$ is $L^1$-integrable. 
Using the above estimate, $b_1 \leq -1$ \eqref{def:mu_b},  and estimate of $\vp \bw$ \eqref{eq:wg_rat}, we obtain
\begin{align}
  |\cE_{\D} \rho \vp| &\les  \vp |\bw| \int_0^x I_{\cE, \D}(y) d y
  \les (|x|^{b_1+1} \vee 1) \int_0^x 
  |y|^{-b_1 - 1} \we |y|^{-1} |\lgp y |^{-2 \alb - 1}  d y || w ||_{\bcX} \notag \\
  & \les 
(|x|^{b_1+1} \vee 1)
  ( |x|^{-b_1 }  \we  1 ) || w ||_{\bcX}
  \les (|x| \we 1 ) || w ||_{\bcX} \les || w ||_{\bcX} . \label{eq:dif_err}
\end{align}

\paragraph{\bf Estimate of $\cN_{\D}$}

Recall $\cN$ from \eqref{eq:F_lin}. 
Since $\td \cR$ is linear, using the operator 
$\cI_{\D}(\cdot)$ in \eqref{eq:dif_rat}, we decompose the integrand in $\cN_{\D}$ as 
\beq\label{eq:dif_non_int}
\bal
  I_{\cN, \D} & \teq \td \cR(w_1) (  \f{W_1}{\bw} \cdot \f{1}{2 V_1} - \f{1}{2\bv} ) \rho
  -  \td \cR(w_2) (  \f{W_2}{\bw}  \cdot \f{1}{2 V_2} - \f{1}{2\bv} ) \rho \\
  & = (\td \cR(w_1) - \td \cR(w_2)) (  \f{W_1}{\bw} \cdot \f{1}{2 V_1} - \f{1}{2\bv} ) \rho
  + \td \cR(w_2) (  \f{W_1}{\bw} \cdot \f{1}{2 V_1}  -  \f{W_2}{\bw} \cdot \f{1}{2 V_2}   ) \rho  \\
& = \td \cR(w_{\D}) \cdot \cI_{\D}(w_1, 0) 
+ \td \cR(w_2) \cdot \cI_{\D}(w_1, w_2 )  .
\eal
\eeq

Since $\td \cR$ is linear, using estimate \eqref{eq:est_cR} for $\td \cR$,
 Corollary \ref{cor:vel_est} for the coefficients, and Lemma \ref{lem:W_asym} for the profile $\bw, \bv$, 
for $f = w_{\D} \in \bcX$ or $w_i \in \bcX$, we estimate 
\beq\label{eq:non_dif_est1}
\bal
   |\td \cR(f)|
   & \les \B( \B| 2 \f{x \bw_x}{\bw} + \f{2}{3} \B| \MCC_{\psio_x}(x)
   + \B| \f{x \bw_x}{\bw} \B| \MCC_{\D}(x) \B) \| f \|_{\bcX} \\
  & \les \B( (1 \we |\lgp x|^{-4/3} ) ( |x|^{\alb - b_1} \we  |\lgp x|^{2/3}  )
  + ( |x|^{\alb - b_1} \we  |\lgp x|^{-1/3}  ) \B)
  \| f \|_{\bcX} \\
  & \les ( |x|^{\alb - b_1} \we  |\lgp x|^{-1/3}  )
  \| f \|_{\bcX} . \\
\eal
\eeq

Note that  estimate \eqref{eq:dif_rat} holds for any $w_1, w_2 \in \bcX$ with $||w_i ||_{\bcX} \leq \dfix$. Applying \eqref{eq:nota_dif} with $w_2 = 0$, which implies $\psio(w_2) =0, w_{\D} = w_1, W_2 = \bw, V_2 = \bv$, we obtain 
\[
  | ( \f{W_1}{\bw} \cdot \f{1}{2 V_1} - \f{1}{2\bv} ) \rho |
  =  |\cI_{\D}(w_1, 0) |
  \les  ( x \lgp x )^{-1}   (|x|^{-b_1 - 1} \we 1)   || w_1 ||_{\bcX} . 
\]

Applying \eqref{eq:non_dif_est1} with $f = w_{\D}$ and $w_2$ 
and \eqref{eq:dif_rat} for $\cI_{\D}(w_1, w_2)$, we estimate $I_{\cN, \D}$  
in \eqref{eq:dif_non_int} as
\[
  |I_{\cN, \D}|
\les 
( |x|^{\alb - b_1} \we  |\lgp x|^{-1/3}  ) \cdot ( x \lgp x )^{-1}   (|x|^{-b_1 - 1} \we 1)
\cdot ( || w_{\D}||_{\bcX} || w_1 ||_{\bcX} 
+ || w_2||_{\bcX} || w_{\D}||_{\bcX}  ) .
\]

Since $||w_i ||_{\bcX} \les 1$, $1\les \lgp x $,
 and $b_1 \leq -1$ \eqref{def:mu_b},  we simplify the bound as 
\beq\label{eq:dif_non_int2}
    |I_{\cN, \D}| 
  \les ( |x|^{\alb - 2 b_1 - 2} \we x^{-1} |\lgp x|^{-4/3} )     || w_{\D}||_{\bcX} .
\eeq
In particular, $I_{\cN, \D}$ is $L^1$-integrable. Using the above estimate and 
estimate of $\vp \bw$ \eqref{eq:wg_rat},  we obtain
\beq\label{eq:dif_non}
\bal
  |\cN_{\D} \rho \vp|
  &\les \vp |\bw| \int_0^x |I_{\cN, \D}|
\les \vp |\bw| \int_0^x  ( |x|^{\alb - 2 b_1 - 2} \we x^{-1} |\lgp x|^{-4/3} )     || w_{\D}||_{\bcX}  \\
& \les ( |x|^{b_1+1} \vee 1 )
( |x|^{ \alb - 2 b_1 -1 }  \we 1)  || w_{\D}||_{\bcX}
\les ( |x|^{\alb- b_1} \we 1 )  || w_{\D}||_{\bcX}.
\eal
\eeq

Combing the estimates \eqref{eq:dif_lin}, \eqref{eq:dif_non}, \eqref{eq:dif_err}, we prove 
\[
||\cF(w_1) - \cF(w_2)||_{\bcX}
\leq
   ||( \cL_{\D} + \cN_{\D } + \cE_{\D} ) \rho \vp  ||_{L^{\infty}}
   \les || w_1 - w_2 ||_{\bcX}.
\]
In particular, $\cF$ is a continuous map from $\bD_{\dfix}$ to $\bD_{\dfix}$ with respect to 
the $\bcX$-norm.

\subsection{Proof of compactness}\label{sec:compact}

In this section, we prove Lemma \ref{lem:compact} on the compactness of $\cF$.

We adopt the decomposition in \eqref{eq:F_lin}
\[
  \cF(w) = \cL w + \cN(w) + \cE(w).
\]
To prove compactness,  we show that $\cF(w)$ is continuous 
near $\infty$ uniformly for $w \in \bDD$,  and $\cF(w)(x)$ is Lipschitz for 
$|x|\leq R$ with any $R>0$ by estimating the integrand in $\cF(w)$.

\vs{0.05in}

\paragraph{\bf Estimate of integrands in $\cL$} 
We fix $w$ with $|| w||_{\bcX} \leq \dfix <1$. 
We apply the decomposition of $\cL$ in Section \ref{sec:lin_dec}, \eqref{eq:lin1}, \eqref{eq:lin2}, \eqref{eq:lin} with $x_1 = 0$ so that $I =0$ and
\beq\label{eq:comp_lin_decom} 
\bal
  \f{\cL(w) \rho }{\bw} & =
  II_1 + (II_2 +III) + II_3
=  \cP_0(x) + \int_0^x I_{\cL}(y) d y , & \quad I_{\cL} &\teq \cP_1 + \cP_2 + \cP_3,   \\
  \cP_0(x) & \teq  -  \f{ \f{ \psio }{x} + 2 \al \cJ }{ 3 \bar g} \rho \B|_{0}^x, \ 
& \quad \cP_1(y)  & \teq \f{1}{3}(\f{ \psio }{y} + 2 \alb \cJ) (\f{\rho}{\bar g})_y ,  \\
\cP_2(y) & \teq  \big( \f{\pa_y \rho}{\rho}  + \f{2\alb y^{\alb} \bw }{3 \bv}  \big) 
   \f{w}{\bw} \rho, 
& \quad \cP_3(y) & \teq  \f{  - 2 \f{ \psio }{y} ( \f{1}{3} + \f{ y  \bw_y}{\bw}  )  }{2 \bv} \rho . 
\eal
\eeq

Recall $\bar g = \f{1}{x} \bv = 1 + \f{1}{x} \psio(\bw) $ from 
\eqref{eq:nota1}.  Since $ |\bw | \les \min( |x|, |x|^{-\alb} )$ by Lemma \ref{lem:W_asym}, using 
\eqref{eq:coe_asymp:0} in Corollary \ref{cor:vel_est} with $(\mu_1, \mu_2, b_1 , b_2) = (1,1, -1, 1/3)$
and triangle inequality,   we obtain
\[
\bal
& |\pa_x \bar g(x) |  = \f{1}{x} |\pa_x \psio(\bw) - \f{1}{x} \psio(\bw)| 
\les \f{1}{x} \cdot  \min( |x|^{\alb + 1},1) \nlinf{ \bw (|x|^{-1} + |x|^{\alb}) } \les \min( |x|^{\alb}, \ang x^{-1} ) .
\eal
\]
Using  $\rho$ from \eqref{eq:rho1} and $x_1 > 1$ \eqref{def:x1}, the above estimate and 
$\bar g \gtr \lgp x$ from \eqref{eq:decay:V}, we obtain
\begin{gather}\label{eq:rho_g}
|\f{\rho}{\bar g} | \les |\lgp x|^{-2/3}  ,   \\
\B| \pa_x (\f{\rho}{\bar g}) \B|
= \B| \f{\pa_x \rho \cdot \bar g - \rho \pa_x \bar g}{\bar g^2} \B|
\les  \f{  x^{-1} |\lgp x |^{\alb - 1 + 1} \one_{x \geq 1} 
+ ( |x|^{\alb} \we x^{-1}) |\lgp  x|^{\alb}
    }{  |\lgp x |^2 }
  \les 1 \we x^{-1} |\lgp x|^{\alb -2} ,  \notag
\end{gather}

Using Corollary \ref{cor:vel_est} for nonlocal terms, $\alb =\f13$, the above estimate, and $||w||_{\bcX} \les 1$, we estimate
\bseq\label{eq:est_lin_int}
\beq
\bal
  |\cP_1| & \les (|x|^{\alb - b_1} \we  1 )
  \cdot 
   (1 \we x^{-1} |\lgp x|^{\alb -2} )  || w||_{\bcX} \\
   & \les |x|^{ \alb - b_1 } \we  x^{-1} |\lgp x|^{\alb-2} 
   \les |x|^{\alb - b_1 -1} \we  x^{-1} |\lgp x|^{-1 - \alb}. 
\eal
\eeq

Applying Corollary \ref{cor:vel_est} for nonlocal terms and Lemma \ref{lem:W_asym} for the profile, we estimate $ \cP_3$ as 
\beq
  |\cP_3| \les   \f{ ( |x|^{\alb - b_1} \we |\lgp x|^{2\alb}  )  |\lgp x |^{-4/3}}{ x \lgp x }  \cdot |\lgp x|^{\alb} || w||_{\bcX}
  \les |x|^{\alb - b_1 - 1} \we x^{-1} |\lgp|^{-1 - \alb}.
 \eeq

For $\cP_2$, using Lemma \ref{lem:W_asym} for $\bv$, \eqref{eq:W_rep} for $\bw$, \eqref{eq:rho1}, and $| |\log  x | - |\lgp x| | \les x^{-1}$, we obtain 
\[
   \f{\pa_x \rho}{\rho} + \f{2\alb x^{\alb} \bw}{ 3 \bv }
   = \one_{x \geq x_1} \f{1}{3} \cdot \f{1}{x} |\log x|^{- 1}
   - \f{2}{9} \cdot \f{6}{4 x \lgp x} + O( x^{-1}  |\lgp x |^{-1 - \alb} )
   =  O( x^{-1} |\lgp x |^{-1 - \alb} ).
\]
Moreover, using \eqref{eq:wg_rat} for  $ \f{\vp^{-1}}{\bw}$ and $||w ||_{\bcX} \les 1$, we have 
\[
     |\f{\pa_x \rho}{\rho} + \f{2\alb x^{\alb} \bw}{ 3 \bv }|
\les (1 \we x^{-1})
+   \f{x^{1+\alb}}{x \lgp x}
 \les x^{\alb},
 \quad |\f{w}{\bw } \rho|
 \les \f{\vp^{-1}}{|\bw|} || w ||_{\bcX}
 \les  |x|^{-b_1 - 1}  \we 1.
\]

Using the above estimates and \eqref{eq:wg_rat} for  $ \f{\vp^{-1}}{\bw}$, we obtain
\beq
\bal
  |\cP_2| & 
  \les   ( |x|^{\alb} \we  x^{-1} |\lgp x |^{-1 - \alb} ) \cdot (|x|^{-b_1 - 1}  \we 1) 
   \les  |x|^{\alb -b_1 -1}  \we  x^{-1} |\lgp x |^{-1 - \alb} .
  \eal
\eeq
\eseq

Applying the estimates \eqref{eq:est_lin_int} to \eqref{eq:comp_lin_decom}, 
and using $b_1 \leq -1$, 
we establish 
\begin{align}\label{eq:est_lin_int2}
|I_{\cL}| = | \cP_1 + \cP_2 + \cP_3|
   \les |x|^{\alb -b_1 -1}  \we  x^{-1} |\lgp x |^{-1 - \alb}
   \les |x|^{\alb }  \we  x^{-1} |\lgp x |^{-1 - \alb}.
\end{align}

For $\cP_0$, applying \eqref{eq:coe_asymp:0} in Corollary \ref{cor:vel_est},
$\bar g \asymp \lgp x$ from Lemma \ref{lem:W_asym},  and \eqref{eq:wg_rat}, we obtain
\bseq\label{eq:est_lin_int3}
\beq
 |\cP_0  \vp \bw|  \les  \f{ ( |x|^{\alb - b_1} \we 1 ) }{ \lgp x   } \cdot |\lgp x|^{\alb}
 \cdot (|x|^{b_1 + 1} \vee 1 ) \les 
 |x|^{\alb + 1 } \we |\lgp x|^{\alb - 1 } .
\eeq

Using $\| w \|_{\bcX} \les 1, |w| \les |x| \cdot \| w \|_{\bcX}  \les |x|$, and Corollary \ref{cor:vel_est}, we obtain
\[
\bal
& |\f{\psio(w)}{x} + 2 \alb \cJ |   \les
|x|^{\alb+1} \| w \|_{\bcX}  \les |x|^{\alb+1} ,
\  | \pa_x (\f{ \psio(w)}{x} + 2 \alb \cJ  ) |
\les  \f{1}{x}   \B| \pa_x \psio - \f{\psio }{x}  \B| + |w x^{\alb-1}| 
\les |x|^{\alb} \nchib{w} \les |x|^{\alb} .
\eal
\]

 Using the above estimate and \eqref{eq:rho_g}, 
for $|x| \leq R$ with any $R > 0$, we obtain 
\beq
  |\pa_x ( \cP_0 )|  \les  |\pa_x ( \f{\psio}{x} + 2 \alb \cJ )| \cdot |\f{\rho}{\bar g}|
+ \B| ( \f{\psio}{x} + 2 \alb \cJ ) \cdot \pa_x ( \f{\rho}{\bar g} ) \B|  \\
\leq C(R) |x|^{\alb}.
\eeq

\eseq

\paragraph{\bf Estimate of integrands in $\cN, \cE$} 
We apply the estimates in Section \ref{sec:contin} with $ w_2= 0$. 
We fix an arbitrary  $w_1$ with $||w_1 ||_{\bcX} \leq \dfix < 1$. Using the notations \eqref{eq:nota1} and \eqref{eq:nota_dif}, we obtain 
\[
  W_2 = \bw, \quad V_2 = \bv, \quad w_2 = 0, \quad w_{\D} = w_1 - w_2 = w_1.
\]

Since $w_2 = 0$ and $\td \cR(w_2) = 0$, using \eqref{eq:dif_non_int} and \eqref{eq:dif_err_int} with $w_2 =0$, we obtain
\[
\bal
I_{\cE}(w_1) & \teq  \f{\cR(\bw)}{2 V_1} \cdot  \f{W_1}{\bw} \rho 
= \f{\cR(\bw)}{2 V_1} \cdot  \f{W_1}{\bw} \rho  
  -   \f{\cR(\bw)}{2 \bv} \cdot  \f{\bw}{\bw} \rho 
+  \f{\cR(\bw)}{2 \bv} \cdot  \f{\bw}{\bw} \rho  = I_{\cE, \D}(w_1, 0)
 +  \f{\cR(\bw)}{2 \bv} \rho
   , \\
I_{\cN}(w_1) & \teq \td \cR(w_1) (  \f{W_1}{\bw} \cdot \f{1}{2 V_1} - \f{1}{2\bv} ) \rho
= I_{\cN,\D}(w_1, 0),  \\
\eal
\]

Using \eqref{eq:dif_err_int}, \eqref{eq:dif_non_int2}, $|| w_1 ||_{\D} \les 1$, $b_1 \leq -1$,
and $\alb = \f{1}{3}$, we obtain
\[
\bal
|I_{\cE, \D}(w_1, 0)| & \les ( |x|^{\alb-b_1 -1} \we |x|^{-1} |\lgp x |^{- 2 \alb - 1}  ) || w_1 ||_{\bcX} 
\les |x|^{\alb} \we |x|^{-1} |\lgp x |^{- 1-\alb} ,
  \\
  |I_{\cN}(w_1)| & \les  ( |x|^{\alb - 2 b_1 - 2} \we x^{-1} |\lgp x|^{-4/3} )  || w_1||_{\bcX}
  \les |x|^{\alb} \we x^{-1} |\lgp x|^{-1 - \alb}.
  \eal
\]

Applying Lemma \ref{lem:W_asym} to $\bar \cR(\bw), \bv$ and \eqref{eq:rho1} to $\rho$, we obtain 
\[
  |\f{\cR(\bw)}{2 \bv} \rho|
  \les \f{|x|^{1+\alb} \we |\lgp x|^{-2 \alb}}{ x \lgp x } \cdot |\lgp x|^{\alb}
\les |x|^{\alb} \we  x^{-1} |\lgp x|^{-1 - \alb}.
\]

\paragraph{\bf Summary of the estimates}
 Combining the above estimates and \eqref{eq:est_lin_int2} for $I_{\cL}$, we obtain 
\beq\label{eq:compact_N_int}
\bal
I_{\cF}(w_1) & \teq I_{\cL}(w_1) + I_{\cE}(w_1) + I_{\cN}(w_1), \\
 |I_{\cF}(w_1)|
  &  \leq 
     |\tf{\cR(\bw)}{2 \bv} \rho| + |I_{\cL}| +| I_{\cE, \D}| + |I_{\cN}| 
     \les   |x|^{\alb} \we  x^{-1} |\lgp x|^{-1 - \alb},
\eal
\eeq
for any $ w_1$ with $|| w_1||_{\bcX} \leq \dfix$. Recall $\cN$ and $\cE$ from \eqref{eq:F_lin} and $\cL(w)$ from  $\f{\bw}{\rho} \times$
\eqref{eq:comp_lin_decom} 
\[
  \cL(w) = \f{\bw}{\rho} \B( \cP_0(w)  +\int_0^{x} I_{\cL}(w, y) d y \B),
  \ 
  \cN(w) = \f{\bw}{\rho} \int_0^x I_{\cN}( w, y) d y,
  \  \cE(w) =  \f{\bw}{\rho} \int_0^x I_{\cE}( w, y) d y.
\]

We introduce the weighted function
\beq\label{def:comp_cT}
  \cT(w)(x) = \cF(w)  \vp \rho =  (\cL(w) + \cN(w) + \cE(w)) \vp \rho
  = \vp \bw \B( \cP_0(w) +  \int_0^{x} I_{\cF}( w, y) d y \B).
\eeq

Recall that the weight $\vp$ in \eqref{norm:X} is piecewise smooth, $\vp = \mu_1 |x|^{b_1}$ for $|x| \les 1$, and is Lipschitz away from $0$ . Since  $|\bw| \les |x|$ by Lemma \ref{lem:W_asym}, 
for any $w \in \bDD$, combining the estimates \eqref{eq:est_lin_int3}, 
\eqref{eq:compact_N_int}, 
and using $ b_1 + 1 + \alb > 0$ by \eqref{def:mu_b}, we obtain
\beq\label{eq:comp_Lip_cT}
\bal
 |\pa_x \cT(w)(x)|
 & \les |\pa_x (\vp \bw) | \B( | \cP_0(w) |  +  \int_0^{x}  | I_{\cF}( w, y) | d y \B)
 + |\vp \bw | \cdot \B( |\pa_x \cP_0(w) | + | I_{\cF}( w, x) | \B) \\
 &\les_R |x|^{b_1} \cdot |x|^{\alb + 1}  + |x|^{  b_1 + 1} |x|^{\alb} \les_R |x|^{\alb + 1 +b_1} 
 \les_R 1.  
 \eal
\eeq
for any $R >0$. Since  $\cF(w) \rho$ is less singular than than $\cT(w)(x)$ near $0$,
similarly, we have 
\beq\label{eq:comp_Lip_cF}
  |\pa_x \cF(w)| = \B| \pa_x \B( \bw \B( \cP_0(w) +  \int_0^{x} I_{\cF}( w, y) d y \B) \B)  \B|
  \les_R |x|^{\alb + 1}.
\eeq

\paragraph{\bf Uniformly continuous near $ \infty$ }

From Lemma \ref{lem:W_asym}, the definition of $\vp$ \eqref{norm:X} and \eqref{def:mu_b}, 
and $\bw$ \eqref{eq:W_rep}, for $x > C$ with $C$ sufficiently large, we have 
\[
\vp = \mu_\nmu^{-1} |x|^{\alb}, \quad    \tf{\bw}{\rho}  \cdot \vp \rho = \bw \vp =  - 6 \mu_\nmu^{-1} + O( |\log x |^{-\alb} ) .
\]

Fix an arbitrary $w \in \bDD$.
Recall the formula of $\cT(w)$ from \eqref{def:comp_cT}. 
Since $\cT(w)(x)$ is continuous for $x\in \R$, 
$I_{\cF}$ are $L^1$-integrable by \eqref{eq:compact_N_int}, and $\cP_0 \bw \vp \to 0$ as $x \to \infty$ by \eqref{eq:est_lin_int3},  the limit $\cT(w)(\infty) =\lim_{x \to \infty } \cT(w)(x)$ exists. 
Using \eqref{def:comp_cT}, \eqref{eq:est_lin_int3},
and \eqref{eq:compact_N_int}, for large $ x>  C$, we have 
\beq\label{eq:cT_far}
\bal
  |\cT(w)(\infty) - \cT(w)(x)|  
& \les | \bw \vp \cP_0(x)| + |\bw \vp( x ) - \bw \vp(\infty)|   \B| \int_0^{\infty} I_{\cF}
   \B|
+ |\bw \vp( x ) | \cdot \B| \int_x^{\infty} I_{\cF} \B| \\
& \les  |\lgp x |^{-\alb}
+ \int_x^{\infty} y^{-1} |\lgp y|^{-1-\alb}  d y. \les  |\lgp x |^{-\alb} .
\eal
\eeq

\paragraph{\bf Compactness of $\cT(\cdot)$}

We can view $\cT(w)$ is an odd function in $x$. For any $R>0$, from estimate \eqref{eq:comp_Lip_cT}, $\cT(w)$ is equicontinuous on $[0, R]$, uniformly for $w \in \bD_{\dfix}$. From Theorem \ref{thm:onto}, for any $w$ with $||w||_{\bcX} \leq \dfix$, we obtain 
$| \cT(w)(x) | \leq \dfix$.  Now, we fix a sequence $\{ w_i \}_{i \geq 0}$ with $||w_i||_{\bcX} \leq \dfix$. 
For fixed $n$, applying Arzela-Ascoli theoren, up to extracting a subsequence, we obtain that 
$ \{\cT(w_i)\} $ is a Cauchy-Sequence in $L^{\infty}([0, n])$.
Using a diagonal argument,  up to extracting a subsequence, we obtain that 
$ \cT(w_i) $ is a Cauchy-Sequence in $L^{\infty}([0, n])$, for any $n \geq 1$. 
Since the limit $\cT(w_i)(\infty)$ exists for each $w_i$ by \eqref{eq:cT_far}
and $ |\cT(w_i)(\infty)| \les 1$, we can further assume that $\cT(w_i)(\infty)$ converges. 

Next, we show that $\{\cT(w_i)\}$ is a Cauchy-Sequence in $L^{\infty}([0, \infty))$. We fix a small $\e > 0$. Using \eqref{eq:cT_far}, by taking $N$ large enough, we obtain 
\[
\bal
   |\cT( w_i )(x) - \cT(w_i)(\infty)| 
   & \leq C |\lgp x|^{-\alb} 
   \leq C |\log N|^{-\alb} < \tf{1}{3} \e, \quad && \forall \ x \geq  \tf12 N.
\eal 
\]

Since $\{  \cT(w_i)\}$ is a Cauchy-sequence in $L^{\infty}( [0, N] )$, taking $M > N$ large enough and using the above estimates, for any $ i, j \geq M$, we obtain
\[
  |\cT(w_i)(x) - \cT(w_j)(x)| < \tf{2 \e}{3} +   ||\cT(w_i)(x) - \cT(w_j)(x)||_{L^{\infty}( [0, M] )}  < \e.
\]

Since $\e$ is arbitrary, we prove that  $\{\cT(w_i)\}$ is a Cauchy-Sequence in $L^{\infty}(\R)$.
Since $L^{\infty}(\R)$ is complete, up to extracting a subsequence, we obtain that $\cT(w_i) \to f_* \in L^{\infty}$. Thus, we prove that $\cF: \bD_{\dfix} \to \bD_{\dfix}$  is compact with respect to the $\bcX$-norm. From \eqref{eq:comp_Lip_cF}, we obtain that $\cF(w)(x)$ is locally Lipschitz and satisfies estimate \eqref{eq:comp_Lip_cF}.
We prove Lemma \ref{lem:compact}.

\subsection{Existence of the profile}\label{sec:exist_profile}

Using Theorem \ref{thm:fix_point}, we have constructed a fixed point $w_{\alb}$ to the map $\cF$ \eqref{eq:fix_map} with $ \nchi{w_{\alb}} \leq \dfix$. As a result, we construct an exact self-similar profile 
$( \wwwa, \vvva)$ to \eqref{eq:1D_dyn} in the form:
\beq\label{eq:profile_alb}
 \wwwa = \bw + w_{\alb}, \quad  \vvva = V( \wwwa) = x + \psio( \wwwa)
 = \bv + \psio(w_{\alb}).
\eeq

Recall the weight $\vp \gtr |x|^{-1} + |x|^{\alb}$ from \eqref{norm:X} and 
\eqref{def:mu_b} for the parameters.  From Theorem \ref{thm:fix_point}, we obtain 
\beq\label{eq:profile_waaa1}
\bal
   |\wwwa(x)| \leq |\bw| + |w_{\alb}|  \leq |\bw| + \vp^{-1} \nchi{ w_{\alb}}
   \les \min(|x|, |x|^{-\alb})  .
\eal 
\eeq
 
 Applying \eqref{eq:vel_est} in Lemmas \ref{lem:vel_est},  using \eqref{eq:profile:a}, 
 and \eqref{eq:decay:V} in Lemma \ref{lem:W_asym}, we obtain
\beq\label{eq:profile_vvva1}
\bal
| \psio( w_{\alb})|  & \leq \MCC_{\psio/x}(\vmu, \bb, \cI, x) x \nchi{w_{\alb}} \leq \MCC_{\psio/x}(\vmu, \bb, \cI, x) x \dfix, \\
 \vvva & \geq \bv -  \MCC_{\psio/x}(\vmu, \bb, \cI, x) x \dfix \geq \bv / 2 \gtr x \lgp x .
\eal
\eeq

 We define a linear operator around the exact profile which is similar to \eqref{eq:F_lin} with $\rho \equiv 1$ 
\beq\label{eq:lin_near:def}
\bga
    \cL_{\wwwa}( f) \teq \wwwa \int_0^x  \f{  \crbb(f)(y) }{ 2 \vvva(y) }  d y  , 
    \quad 
    \crbb(f) \teq - \f{2}{3} \psio_x(f) - 2 \psio(f) \f{\pa_x \wwwa}{\wwwa} ,
    \quad \psio(f) = \cK_{\alb, 1}(f) .
  \ega
\eeq

\subsection{Near-field contraction estimate of the linearized operator}\label{sec:near_stable}

We choose the parameter $\bb$ \eqref{def:mu_b} 
and parameters $\vmu_\sst, \bb_{\sst}$ in \eqref{def:mu_b_st}
with $\mu_{\sst, i} >0, \bb_{\sst, i} \in [-\f32, 0]$, and define the weight 
\beq\label{def:vpa}
  \vpa(x) = \max_{i} ( \mu_{\sst, i}^{-1} |x|^{\bb_{\sst, i}} ), 
  \quad  \bb_{\sst,1} = b_1 \in [ -\tf32, \, 0].
\eeq
The subscript $\sst$ stands for 
near-field since $\vpa$ does not grow as $\vp$ in $\bcX$-norm \eqref{norm:X}. 

We have the following near-field contraction estimate for $   \cL_{\wwwa}$ similar to \eqref{eq:lin_contra} in Theorem  \ref{thm:onto}.

\begin{thm}\label{thm:near_field_stab}

Let $\vpa$ be the weight defined in \eqref{def:vpa} with parameters $ \vmu_\sst, \bb_\sst$ given 
in \eqref{def:mu_b_st}.  For any $x \geq 0$ and any $w \vpa \in L^{\infty}$,  we have the following estimates 
\beq\label{eq:lin_near:cR_est}
\vpa(\xx)  |\cL_{\wwwa}(w)(x) | =  \vpa(\xx) |\wwwa(\xx) \B| \int_0^x \f{   \crbb(w)(\yy)   }{2 \vvva(\yy) } d \yy \B| \leq \lamst
   \| w \vpa \|_{L^{\infty}} ,  \quad
   \lamst \teq 0.95.
\eeq
\end{thm}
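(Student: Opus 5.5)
The plan is to reduce the estimate for $\cL_{\wwwa}$ around the exact profile to an estimate for the analogous operator around the approximate profile $\bw$. That reduced estimate has explicit coefficients and can be verified with computer assistance, in the same way as Lemma \ref{lem:close}. The perturbation $w_{\alb}$ is then treated as small, using $\nchib{w_{\alb}} \leq \dfix = 3.5\cdot 10^{-5}$.

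\textbf{Step 1: rewrite the integrand without $\pa_x \wwwa$.} The term $\pa_x \wwwa$ is controlled only qualitatively, so we eliminate it by integrating by parts as in the estimate of $I$:
\[
\int_0^x \f{\crbb(w)}{2\vvva}\,dy = -\f{\psio(w)}{3\vvva}\Big|_0^x - \int_0^x \Big(\f{y\pa_y\vvva}{3\vvva}+\f{y\pa_y\wwwa}{\wwwa}\Big)\f{\psio(w)}{y\vvva}\,dy .
\]
We then use the profile equation \eqref{eq:1D_dyn} with $\al=\alb$, namely $\f{y\pa_y\wwwa}{\wwwa} = \f{y(\f83-\f23\pa_y\vvva)}{2\vvva}$. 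This expresses the coefficient purely in terms of $\vvva$ and $\pa_y \vvva$, with no derivative of $\wwwa$. The boundary term at $0$ vanishes because $|\psio(w)|\les |x|^{1+\alb}\|w\vpa\|_{L^\infty}$, by Lemma \ref{lem:vel_est} with $\bb_{\sst,1}=b_1$.

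\textbf{Step 2: bound by nonlocal constants and weights.} Next we apply Lemma \ref{lem:vel_est} with the parameters $(\vmu_\sst,\bb_\sst)$, taking $\rho\equiv 1$ and using \eqref{eq:vel_est:a}, which is valid since $\max\bb_{\sst,i}\le 0<\alb$. This gives $|\psio(w)|\le x\,\MCC_{\psio/x,0}(\vmu_\sst,\bb_\sst,\cI,x)\|w\vpa\|_{L^\infty}$. Combined with Step 1, it yields the bound $\vpa|\wwwa|\,B_{\sst}(x)\|w\vpa\|_{L^\infty}$, where $B_\sst$ is an explicit integral of $\MCC_{\psio/x,0}$ against $|\f83-2\pa_y\vvva|\,/(\ldots\vvva^2)$-type coefficients, plus the boundary term.

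\textbf{Step 3: replace exact by approximate quantities.} We write $\wwwa=\bw+w_{\alb}$ and $\vvva=\bv+\psio(w_{\alb})$. The bounds $|w_{\alb}|\le \vp^{-1}\rho^{-1}\dfix$, \eqref{eq:profile_vvva1}, and the estimates on $\psio_x(w_{\alb})$ from \eqref{eq:J_est} give explicit two-sided envelopes for $\wwwa$, $\vvva$ and $\pa_x\vvva$ in terms of $\bw$, $\bv$, $\bv_x$, with relative errors of size $O(\dfix)$ times computable functions. Inserting these envelopes into $B_\sst$ produces a fully explicit upper bound. We then verify with interval arithmetic on the $8000$ intervals that this bound is at most $0.95$ on $[0,10^{27}]$; this is the right panel of Figure \ref{fig:fix_point_contraction}.

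\textbf{Step 4: the tail, which is the main obstacle.} For $x\ge 10^{27}$ the weight $\vpa$ does not grow, so $\vpa|\wwwa|\sim |x|^{-\alb}$ decays. Meanwhile the integrand is $O(y^{-1}(\lgp y)^{-1})$ times $\MCC_{\psio/x,0}\cdot |\ldots|$, whose integral grows at most like a power of $\log x$. Using Lemma \ref{lem:W_asym} and Corollary \ref{cor:vel_est}, the product is therefore $\les x^{-\alb}(\lgp x)^{C}$, which is tiny beyond $10^{27}$. The difficulty is making these constants explicit enough to close rigorously at the cutoff. We plan to handle this with the far-field expansions of Appendix \ref{app:nloc_very_far}.
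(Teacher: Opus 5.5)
Your route is the paper's: integrate by parts in the $\psio_y$ term, eliminate $\pa_y\wwwa$ with the profile equation, and bound $\psio(w)$ by \eqref{eq:vel_est:a} with $(\vmu_{\sst},\bb_{\sst})$. You then replace $|\wwwa|$ by $|\bw|+\dfix\vp^{-1}\rho^{-1}$ and $\vvva$ by $\bv-\MCC_{\psio/x}(\vmu,\bb,\cI,x)\,x\,\dfix$, and verify $\sup_x B_{\sst}\le 0.95$ by interval arithmetic plus a far-field argument. Two steps, however, are wrong as written.

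\textbf{Step 1 stops one line short.} After substituting $\f{y\pa_y\wwwa}{\wwwa}=\f{y(\f43-\f13\pa_y\vvva)}{\vvva}$, the $\pa_y\vvva$ terms cancel exactly:
\[
\f{y\pa_y\vvva}{3\vvva}+\f{y\pa_y\wwwa}{\wwwa}=\f43\cdot\f{y}{\vvva},
\qquad
\int_0^x\f{\crbb(w)}{2\vvva}\,dy=-\f13\,\f{\psio(w)(x)}{\vvva(x)}-\f43\int_0^x\f{\psio(w)}{\vvva^2}\,dy .
\]
So $B_{\sst}$ involves only $\bw$, $\bv$, $\MCC_{\psio/x}$ and $\dfix$, and the envelopes for $\pa_x\vvva$ in your Step 3 are not needed. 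The coefficient $|\f83-2\pa_y\vvva|$ that you carry into $B_{\sst}$ is not what the profile equation produces. It vanishes wherever $\pa_y\vvva=\f43$, which must happen since $\pa_y\vvva$ is continuous, equals $1$ at $0$, and tends to $+\infty$. The true coefficient $\f43\cdot\f{y}{\vvva}$ is strictly positive, so no rescaling of your expression dominates it, and a $B_{\sst}$ built on it does not bound $\cL_{\wwwa}$. Bounding the two uncancelled pieces separately by the triangle inequality would be valid, but it loses a factor $\lgp y$ at infinity.

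\textbf{Step 4 misjudges the growth.} Because $\bb_{\sst,3}=0<\alb$ and $\vpa$ is bounded at infinity, $w$ need not decay. By \eqref{eq:vel_const:upper}, $\MCC_{\psio/x,0}(\vmu_{\sst},\bb_{\sst},\cI,y)$ grows like $\mu_{\sst,3}\,\mCC_{\psio/x,0}(0,\R_+)\,y^{\alb}$, not like a power of $\log y$. This exactly offsets the $x^{-\alb}$ decay of $\vpa|\wwwa|$. The only gain comes from $\vvva\gtr x\lgp x$: the boundary term is $O((\lgp x)^{-1})$ and the integral term is $O((\lgp x)^{-2})$, not $x^{-\alb}(\lgp x)^{C}$. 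That is still small beyond $\xed$, but only once explicit constants are tracked. The paper does this in Appendix \ref{app:proof_near_stab} using \eqref{eq:V_l_asym} and \eqref{eq:vel_const:upper}. With the exact coefficient above, this step is routine rather than the main obstacle.
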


We refer to the above contraction estimate, valid for \emph{all} $x$, as the \emph{near-field contraction estimate}, since the weighted norm $\nlinf{w\vpa}$ does not control $w$ sharply for large $x$ compared with the $\bcX$-norm \eqref{norm:X}. The proof is similar to, but much simpler than, that of Theorem \ref{thm:onto}. The key reason is 
that since $\vvva \gtr x \log x$, given any weight $\vpa \asymp |x|^{ \bb_{\sst, 1}} + |x|^{\bb_{\sst,3}}$ with some $\bb_{\sst, 1}
\in [-\f32, -1], \bb_{\sst, 3} \leq 0$ (see \eqref{def:mu_b_st}),  using  \eqref{eq:vel_est:a} in Lemma \ref{lem:vel_est}, one  obtains the boundedness of $ \cL_{\wwwa}$ in the weighted space
\begin{align}\label{eq:near_field_stab_idea}
\vpa | \cL_{\wwwa}| &  \les \vpa |\wwwa| \int_0^x \f{ \min( |y|^{\alb-  \bb_{\sst,1}}, |y|^{\alb- \bb_{\sst, 3}} ) }{ y \lgp y } d y
\nlinf{ w \vpa}  \\
& \les ( |x|^{ \bb_{\sst,1}} + |x|^{ \bb_{\sst,3} } ) \min( |x|, |x|^{-\alb} ) \cdot \min( |x|^{\alb -  \bb_{\sst,1} }, |x|^{\alb- \bb_{\sst, 3} } ) \nlinf{ w \vpa }  \les  \min( |x|^{1 + \alb} , 1 )  \nlinf{ w \vpa} . \notag
\end{align}

The integrand becomes non-integrable as $a_2 \to \alb$, which is the key difficulty in the proof of Theorem \ref{thm:onto}.
Here, we do not need to design a special weight $\rho$ and exploit delicate cancellation 
for faster decay estimates as those in the proof of Theorem \ref{thm:onto}. 
Since we choose $\bb_{\sst,3}\leq 0$ in  \eqref{def:mu_b_st}, \eqref{eq:near_field_stab_idea}, we obtain much better estimates for large $y$ due to the $|\lgp y|^{-1}$ factor.

\begin{proof}[Proof of Theorem \ref{thm:near_field_stab}]

We fix any $w$ with $w \vpa \in L^{\infty}$. Applying integration by parts to $\pa_x \psio(w)$ (similar that for $I$-term in Section \ref{sec:lin_est},
with $\bw, \bv, x_1 \we x$ replaced by $\wwwa, \vvva, x$), we obtain
\[
 P\teq \int_0^x  \f{  \crbb(w)(y) }{ 2 \vvva(y) }  d y = 
\int_0^x  \f{  - \f{2}{3} \psioy(w) - 2 \psio(w) \f{\pa_y \wwwa}{\wwwa}   }{ 2 \vvva(y) }  d y  
= 
- \f{1}{3} \f{\psio(w)(x)}{ \vvva(x) }  - \int_0^x \big( \f{ y \pa_y \vvva}{3 \vvva}  + \f{y \pa_y \wwwa}{ \wwwa} \big)  \cdot \f{\psio(w)}{ y \vvva } d y  .
\]
where the boundary term vanishes at $x=0$ due to Corollary \ref{cor:vel_est}. Since $\wwwa, \vvva$ solves the profile equation \eqref{eq:1D_dyn} with $\al = \alb$ exactly, we obtain 
\[
  \f{ y \pa_y \wwwa}{\wwwa} = \f{ (3 - \alb -(1-\alb) \pa_y \vvva) y}{2 \vvva}
  = \f{ y ( \f43 - \f13 \pa_y \vvva) }{\vvva},
  \ \Rightarrow \  \f{ y \pa_y \vvva}{3 \vvva} +   \f{ y \pa_y \wwwa}{\wwwa}  = \f{4}{3} \cdot \f{y}{\vvva}.
\]

Using the above estimates, we obtain 
\[
P = - \f{1}{3} \f{\psio(w)(x)}{ \vvva(x) }  - \f43 \int_0^x \f{  \psio(w)}{ \vvva^2}(y) d y ,
\ \Rightarrow \ \cL_{\wwwa}(w) = - \wwwa\B( \f43 \int_0^x \f{  \psio(w)}{ \vvva^2}(y) d y + \f{1}{3} \f{\psio(w)(x)}{ \vvva(x)} \B).
\]

Applying estimate \eqref{eq:vel_est:a} in Lemma \ref{lem:vel_est} to  $\psio(w)$ with parameters 
 $\vmu_{\sst}, \bb_{\sst}, \cI$, we obtain 
 \footnote{
We remark that $w$ only belongs to $w \vpa \in L^{\infty}$ in Theorem \ref{thm:near_field_stab}, which is much weaker than  the $\bcX$-norm in \eqref{norm:X}.
} 
\beq\label{eq:linW_est1}
\bal
|\psio( w)(x) | & \leq \MCC_{\psio/x}( \vmu_\sst, \bb_\sst , \cI,  x) x   \nlinf{w \vpa} .
\eal
\eeq

Since $ \wwwa = \bw + w_{\alb}$ and $\nchi{w_{\alb}} = \nlinf{ w_{\alb} \vp \rho} \leq \dfix$, we obtain
\[
     |\wwwa| \leq |\bw| + \vp^{-1} \rho^{-1} \nlinf{ w_{\alb} \vp } \leq |\bw| + \dfix \vp^{-1} \rho^{-1}.
\]

From  \eqref{eq:lin_near:def}, we obtain $\cL_{\wwwa}(w)(x) = (\wwwa P)(x)$. 
Applying the estimate of $\vvva$ in \eqref{eq:profile_vvva1} (see \eqref{eq:profile_alb} for relation between $\bv$ and $\vvva$), the above upper bounds for $\wwwa$
and $\psio(w)$, we bound 
\footnote{
The parameters $\bb_\sst, \vmu_\sst$ 
in the numerators in \eqref{eq:linW_est} for the perturbation 
\emph{differ} $\bb, \vmu$ for the perturbation of the profile in the denominator in \eqref{eq:linW_est}.
}
\begin{align}\label{eq:linW_est}
&  |\vpa \cL_{\wwwa}(w)(x)| 
=  |\vpa\wwwa P(x)  |
\leq \vpa ( |\bw| + \dfix \vp^{-1} \rho^{-1}  )  |P(x)| \notag \\
 & \leq \vpa  ( |\bw| + \dfix \vp^{-1} \rho^{-1}) 
 \B( \f{1}{3} \cdot \f{  \MCC_{\psio/x}( \vmu_\sst, \bb_\sst , \cI,  x)  x}{
  \bv(x) -  \MCC_{\psio/x}(\vmu, \bb, \cI, x) x \dfix   }
  + \f{4}{3} \int_0^x  \f{  \MCC_{\psio/x}( \vmu_\sst, \bb_\sst , \cI,  y) y  }{ (  \bv(y) -  \MCC_{\psio/x}(\vmu, \bb, \cI, y) y \dfix  )_+^2 } d y \B)    \nlinf{w \vpa} \notag \\
& \teq B_{\sst}(x)     \nlinf{w \vpa} ,
\end{align}
where the function $B_{\sst}(x)$ only depends on the parameters $\vmu_\sst, \bb_\sst,  \vmu, \bb$ 
and the approximate profile $\bv, \bw$. 
Since  $\dfix$ is very small, the $\dfix$-term is treated perturbatively.
In Lemma \ref{lem:profile2}, we verify $\sup_x  B_\sst(x) < 1$. Applying Lemma \ref{lem:profile2} and \eqref{eq:linW_est}, we complete the proof. 
\end{proof}

\begin{lem}[\bf Computer-assisted]\label{lem:profile2}
Let $B_\sst(x)$ be the function defined in \eqref{eq:linW_est}
and $\dfix$ be as in Lemma \ref{lem:profile}. The approximate profile satisfies 
\[
  \sup\nolimits_{x \geq 0} B_\sst(x)\leq \lamst, \quad \lamst = 0.95.
\]
\end{lem}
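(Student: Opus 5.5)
Lemma \ref{lem:profile2} is a computer-assisted bound of the explicit function $B_\sst(x)$ in \eqref{eq:linW_est}. I would prove it the same way as Lemma \ref{lem:close}: split $[0,\infty)$ into a near-field region $[0,X]$, with $X$ large (of order $10^{27}$, matching the domain of the approximate profile), and a far-field region $[X,\infty)$. I would handle the near field with rigorous piecewise interval-arithmetic bounds and the far field with explicit analytic asymptotics. All ingredients of $B_\sst$ have already been controlled earlier. These are $\bw$ and $\bv$ (Lemma \ref{lem:bw_basic_sign} and the piecewise bounds of Appendix \ref{app:numerics}), $\vp$ and $\rho$ (explicit), and the functions $\MCC_{\psio/x}(\vmu,\bb,\cI,x)$ and $\MCC_{\psio/x}(\vmu_\sst,\bb_\sst,\cI,x)$. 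The latter are finite sums over $I\in\cI$ of minima of explicit powers $|x|^{\alb-b_i}$, with precomputed upper bounds for the constants $\mCC_\bullet(b_i,I)$ from Appendix \ref{app:sharp_constant}, now also computed for the new exponents $\bb_\sst$.

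\textbf{Near field.} On each small interval $[x_k,x_{k+1}]$ I would bound the non-integral parts of $B_\sst$ by monotonicity. The prefactor $\vpa(|\bw|+\dfix\vp^{-1}\rho^{-1})$ is bounded above using upper bounds of $|\bw|$. The boundary term $\tf13\MCC_{\psio/x}(\vmu_\sst,\ldots)x/(\bv-\MCC_{\psio/x}(\vmu,\ldots)x\dfix)$ uses lower bounds of $\bv$ together with \eqref{eq:profile:a}, which guarantees that the denominator is at least $\bv/2>0$. For the integral term $\tf43\int_0^x(\cdots)dy$, I would accumulate piecewise upper bounds of the integrand, each interval contributing length times sup. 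Near $x=0$ the integrand behaves like $y^{1+\alb-\bb_{\sst,1}}/y^2$, so I would use exact power-law integration on $[0,x_1']$ for small $x_1'$ rather than a sup bound. This works since $\bb_{\sst,1}=b_1<\alb$ and $\bv\ge 10^{-10}x$, and near zero $\bv\approx x$.

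\textbf{Far field.} For $x\ge X$, I would follow \eqref{eq:near_field_stab_idea}. Since $\bb_{\sst,i}\le 0$, the largest component of $\vpa$ for large $x$ is $\mu_{\sst,j}^{-1}|x|^{\bb_{\sst,3}}$ with $\bb_{\sst,3}\le 0$. The prefactor then behaves like $|x|^{\bb_{\sst,3}-\alb}$, while $\MCC_{\psio/x}(\vmu_\sst,\ldots)x\les x^{1+\alb-\bb_{\sst,3}}$ and $\bv\ge 4x\log x-C|\lgp x|^{2/3}$ by \eqref{eq:decay:V}. The boundary term is therefore $O(|\log x|^{-1})$ with an explicit constant. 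The tail of the integral, $\int_X^x$, is bounded by the explicit quantity $\int_X^\infty y^{\alb-\bb_{\sst,3}-1}(\log y)^{-2}dy$ times constants. Multiplied by the decaying prefactor $x^{\bb_{\sst,3}-\alb}$ this is tiny, because the integral grows at most like $x^{\alb-\bb_{\sst,3}}/\log^2 x$. The accumulated integral up to $X$ is then multiplied by a prefactor that decays for $x\ge X$, so the bound at $X$ controls it.

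\textbf{Main obstacle.} I expect the hardest step to be making the integral term sharp enough near the transition region $x\sim 1$–$100$. There $\bv/x$ is only moderately large and the contraction margin below $0.95$ is decided, so the bounds on $\MCC_{\psio/x}(\vmu_\sst,\ldots)$ need the partition $\cI$ rather than the trivial one. If the crude accumulation exceeds $0.95$, my first remedy would be to refine the mesh there and to use the exact profile-equation identity already used in the proof of Theorem \ref{thm:near_field_stab}. Failing that, I would retune $\vmu_\sst$ and $\bb_\sst$ in \eqref{def:mu_b_st}.
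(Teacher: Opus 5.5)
Your plan is essentially the paper's argument (Appendix~\ref{app:proof_near_stab}). It uses piecewise interval-arithmetic bounds on $[0,\xed]$ as in Appendix~\ref{app:pf_bound}, and for $x\ge\xed$ asymptotic bounds of the form \eqref{eq:near_field_stab_idea}, based on \eqref{eq:profile:a}, an explicit lower bound $\bv\gtr x\log x$ (take it from \eqref{eq:V_l_asym} rather than the qualitative \eqref{eq:decay:V}), and $\MCC_{\psio/x}(\vmu_\sst,\bb_\sst,\cI,y)\le \mu_{\sst,3}\,\mCC_{\psio/x,0}(\bb_{\sst,3},\R_+)\,|y|^{\alb-\bb_{\sst,3}}$.

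Two small corrections are needed:
\begin{itemize}
\item Since $\alb-\bb_{\sst,3}=\tf13>0$, the tail $\int_X^\infty y^{-2/3}(\log y)^{-2}\,dy$ diverges. Bound $\int_X^x$ by $3x^{1/3}(\log X)^{-2}$ instead, which is what your next sentence effectively does.
\item Near $x=0$ the integrand is harmless ($\sim y^{0.53}$). The factor that actually blows up is the prefactor $\vpa|\bw|\sim x^{-0.2}$. It must be paired analytically with the $O(x^{\alb-\bb_{\sst,1}})$ vanishing of the bracket, exactly as in the paper's treatment of the removable singularity at $x=0$.
\end{itemize}
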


The proof follows the same strategy and methods as those for Lemma \ref{lem:close}. We refer more details to Appendix \ref{app:proof_near_stab}. In the right panel of Figure \ref{fig:fix_point_contraction}, we plot the \emph{rigorous} piecewise interval-arithmetic
 bounds  for $B_\sst(x)$ over a large domain $[0, 10^{27}]$.

\section{Analytic $C^{\infty}$-regularity and sharp decay estimates of the profile}\label{sec:C_inf}

In this section, we prove higher-order regularity 
for the profile equation  \eqref{eq:1D_dyn} with general $\al$, and then use it together with Lemma~\ref{lem:W_asym} to establish the smoothness and decay of the $\tf13$-profile $\wwwa$ and $\vvva$ in Section \ref{sec:smooth_alb}. The proofs are purely analytic (pen-and-paper).

To distinguish nonlocal terms associated with Biot-Savart law with different $\al$, 
in this section, we denote $\psio_{\al}(f) = \cK_{\al, i}(f)$ \eqref{eq:ker}.  In particular, $\psio_{\alb}( f) $ is the same as $\psio$ used in 
Sections \ref{sec:ansatz}-\ref{sec:near_stable}.

We introduce the weighted norm that captures the decay of the derivatives 
{\small
\beq\label{norm:W}
  \| f \|_{\cW_{l}^k} \teq \sum\nolim_{0\leq i\leq k} \nlinf{ \la x \ra^{i+ l} \pa_x^i f },
  \quad \cW_l^{\infty} \teq \cap_{k\geq 0} \cW_l^k.
\eeq
}

We have the following relation between the smoothness of $w$ and $w/x$ for odd function $w$.

\begin{lem}\label{lem:norm_W}
For any $w$ odd in $x$, $k\geq 0$ and $l \in \R$, we have 
$
  || w||_{ \cW_{ l }^k } \les_k    || \f{w}{x} ||_{ \cW_{ l + 1}^k }.$
\end{lem}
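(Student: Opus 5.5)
Set $g = w/x$, so $w = x g$. By Leibniz, $\pa_x^i w = x \pa_x^i g + i \pa_x^{i-1} g$ for $i \geq 1$, and $w = x g$ for $i=0$. The plan is to bound each term in the weighted sup norm, treating $|x|\geq 1$ and $|x|\leq 1$ separately. Oddness of $w$ is only needed so that $g$ is a well-defined (even) function near $0$; the assumption $\| g\|_{\cW_{l+1}^k}<\infty$ already presupposes this.

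\emph{Region $|x| \geq 1$.} Here $|x| \leq \ang x$. For $0\le i\le k$,
\[
\ang x^{i+l} |x \pa_x^i g| \leq \ang x^{i + l + 1} |\pa_x^i g| \leq \| g\|_{\cW_{l+1}^k},
\]
and for $1\le i\le k$,
\[
\ang x^{i+l} \, i\, |\pa_x^{i-1} g| = i\, \ang x^{(i-1) + l + 1} |\pa_x^{i-1} g| \leq i \| g\|_{\cW_{l+1}^k}.
\]
The index shift is exact: the factor $\ang x^{i+l}$ is precisely the weight attached to the $(i-1)$-th derivative in $\cW_{l+1}^k$.

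\emph{Region $|x| \leq 1$.} Here $\ang x \asymp 1$, so all weights are harmless constants depending only on $l,k$. The same Leibniz bound gives $|\pa_x^i w| \les_k \sum_{j \leq i} |\pa_x^j g|$, and each $|\pa_x^j g|\le 2^{|j+l+1|/2}\| g\|_{\cW_{l+1}^k}$ on this region.

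Summing over $0 \leq i \leq k$ gives $\| w\|_{\cW_l^k} \les_k \| g \|_{\cW_{l+1}^k}$, with constants depending on $k$ and $l$ (the dependence on $l$ is only through $\ang x^{l}$ on $|x|\le1$, which is bounded by $2^{|l|/2}$).

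There is no real obstacle; the only point requiring care is the bookkeeping of the weight exponent $i+l$ versus $(i-1)+l+1$, which matches exactly.
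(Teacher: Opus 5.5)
Your argument is the paper's: apply the Leibniz rule to $w = x\cdot \frac{w}{x}$, so that $\partial_x^i w = x\,\partial_x^i(w/x) + i\,\partial_x^{i-1}(w/x)$. The weight $\langle x\rangle^{i+l}$ multiplying $\partial_x^{i-1}(w/x)$ is exactly the $\mathcal{W}^k_{l+1}$ weight for that derivative.

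One correction concerns the split at $|x|=1$, which is unnecessary and costs you the uniformity in $l$ that the lemma claims. Since $|x|\le\langle x\rangle$ for every $x\in\mathbb{R}$, your ``$|x|\ge 1$'' computation already holds on all of $\mathbb{R}$. It gives $\|w\|_{\mathcal{W}^k_l}\le (1+k)\,\|w/x\|_{\mathcal{W}^k_{l+1}}$, with a constant depending only on $k$, as the statement asserts.

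Your $|x|\le 1$ estimate instead brings in factors like $2^{|l|/2}$, so as written you only prove $\lesssim_{k,l}$. This is harmless for the paper's applications, where $l\in\{\gamma,\gamma+1\}$ with $\gamma$ in a fixed bounded range, but it is weaker than the lemma.
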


\begin{proof}
The proof follows from estimating $ \pa_x^k w = \pa_x^k (\f{w}{x} \cdot x)$ using the Leibnize rule and \eqref{norm:W}
\end{proof}

We have the following higher order regularity estimates for the profile to \eqref{eq:1D_dyn} with 
general $\al$.

\begin{thm}[\bf $C^{\infty}$ regularity]\label{thm:reg}
Let $\e = \f13 - \al$ and $\al \in [ \f13 - \f{1}{100}, \f13]$. 
Suppose that $W$ is odd, locally Lipschitz, and solves the profile equation
\beq\label{eq:profile_eqn_recall}
 2 V \pa_x W = (3- \al - (1-\al) V_x) W, \quad V = x + \psioa( W), \quad \psioa(W) = \cK_{\al, 1}(W),
\eeq
as in \eqref{eq:1D_normal:c}, and  it satisfies the following estimates  
\bseq\label{eq:Cinfty_ass}
\beq
\bga
   |W(x)| \leq \s \min( |x|, \, |x|^{ - \g} ), \\ 
   |\pa_x \psioa(W)| + | \tf{1}{x} \psioa(W)  |  \leq \s \min( |x|, \cJe(x) ) , 
  \quad 1 + \tf{1}{x} \psioa(W) \geq \s^{-1} \cJe(x), 
\ega
\eeq
for any $x$, for some constant $\s> 0$  and $ \g \in [\tf13,  \ \tf13 + \tf{1}{100}]$, 
where $\cJe(x)$ is the function given by
\beq
  \quad     \cJe(x) \teq \min( \lgp x , \e^{-1} ) ,
\eeq
\eseq
and we have $\ang {\cJ_0}(x) = \lgp x$. Then the  profile satisfies $ W  \in \cW_{\g}^{\infty} \subset C^{\infty}$ and 
\bseq\label{eq:profile_Ck}
\begin{align}
  |\pa_x^k ( \tf{1}{x} W ) | \les_{k, \s} \la x \ra^{-k - \gam - 1} ,
  \quad  |\pa_x^k W| & \les_{k, \s} \ang x^{-k -\gam} , \qquad   \forall \, k \geq 0 ,  \label{eq:profile_Ck:a}
   \\
   |\pa_x^{k+1} \psioa |   & \les_{k, \s}\la x \ra^{-k + \al -\gam} ,
   \quad \forall \, k \geq 1 . \label{eq:profile_Ck:b}
\end{align}
\eseq
We remark that the implicit constants are \emph{independent of $\al, \e$}.

\end{thm}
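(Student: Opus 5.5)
We argue by induction on $k$, bootstrapping regularity between $W$ and $\psioa(W)$: the nonlocal operator gains one derivative relative to $W$, and the profile equation converts control of $\psioa$ back into control of $\pa_x W$. We work with $f \teq W/x$, which is even and, by the hypotheses, satisfies $|f| \les \ang x^{-1-\g}$. By Lemma~\ref{lem:norm_W}, estimate \eqref{eq:profile_Ck:a} for $W$ then follows from the one for $f$.

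Dividing the profile equation by $x$ and writing $g \teq V/x = 1 + \tf1x \psioa$ gives
\[
 2 g\, x \pa_x W = (3-\al - (1-\al)V_x) W .
\]
Substituting $W = x f$ yields
\[
 2 g\, x\pa_x f = \big(3-\al-2g-(1-\al)V_x\big) f = \big(1-\al-(3-\al)(g-1) - (1-\al)(V_x - g)\big) f ,
\]
so that
\[
 x \pa_x f = \f{ h }{2g} f, \qquad h \teq 1-\al-(3-\al)\tf1x\psioa-(1-\al)\big(\psioa_x-\tf1x\psioa\big).
\]
The hypotheses \eqref{eq:Cinfty_ass} give $g \ge \s^{-1}\cJe$ and $|h|\les \cJe$, so $h/g$ is bounded uniformly in $\al$. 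This is the key point making the constants independent of $\e$: the logarithmic growth of $\tf1x\psioa$ is cancelled by the denominator $g$. Hence $|x\pa_x f| \les |f|$. Near $0$ we instead use $|V|\gtr |x|$ and local Lipschitz regularity. This gives the case $k=1$ for $f$.

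For the induction step, assume $|\pa_x^j f|\les \ang x^{-j-1-\g}$ for $j\le k$. We first derive derivative bounds for the nonlocal quantities. Using the scaling \eqref{eq:ker_scale}, we write
\[
 \tf1x\psioa(W)(x) = \int_0^\infty x^{\al-1}K_{\al,1}(1,z)\, W(xz)\,dz ,
\]
and apply $(x\pa_x)^j$ to the rescaled integrand. Since $(x\pa_x)^j[W(xz)] = ((y\pa_y)^jW)(xz)$ and $|(y\pa_y)^j W|\les \min(|y|,|y|^{-\g})$, the operators $(x\pa_x)^j \tf1x\psioa$ and $(x\pa_x)^j\psioa_x$ satisfy the same bounds $\les \min(|x|,\cJe)$ as in the $j=0$ case. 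This uses the kernel bounds of Lemma~\ref{lem:K_decay_basic} and Corollary~\ref{cor:vel_est}-type estimates, with the splitting $\cK_{\al,i} = \cK_{\al,i,J} - 2\al x\cJ_\al$ and $\cJ_\al(x) \les \cJe$ uniformly in $\al$.

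For the difference combination $\psioa_x - \tf1x\psioa$, the kernel $K_{\al,\D}$ gives bounded quantities. Consequently $(x\pa_x)^j(h/g)$ is bounded for $j\le k$, via the quotient rule, $g\gtr \cJe$, and $(x\pa_x)^j g \les \cJe$. Differentiating $x\pa_x f = \tf{h}{2g} f$ a further $k$ times in the form $(x\pa_x)^{k+1}f = \sum \binom{k}{j}(x\pa_x)^{j}(\tf{h}{2g})\,(x\pa_x)^{k-j}f$ then closes the induction for $|x|\ge1$. On $|x|\le1$, smoothness follows from a local bootstrap: $\psioa$ is $C^{k+1,\al}$ when $W\in C^{k}$, and $V\approx x$ near $0$ so the ODE for $f$ is regular there. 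Converting $(x\pa_x)^k$ bounds into $\ang x^{k}\pa_x^k$ bounds is elementary.

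Finally, \eqref{eq:profile_Ck:b} follows from differentiating under the integral. For $k\ge1$, $\pa_x^{k+1}\psioa$ equals a convolution-type operator applied to $\pa_x^{k}W$. Splitting $y$ near $x$ from $y$ far from $x$, near $y\approx x$ the kernel $|x-y|^{\al-1}$ is integrable and $|\pa^k W|\les \ang x^{-k-\g}$, which gives $\ang x^{-k+\al-\g}$. Away from $y \approx x$, we integrate by parts to move derivatives onto the smooth kernel, which decays like $|x|^{\al-1-k}$ times the integral of $|W|$ up to $\sim x$, and this is again $\ang x^{-k+\al-\g}$. The main obstacle is the uniform-in-$\e$ control of the $(x\pa_x)^j$ derivatives of the nonlocal terms, specifically confirming that the logarithmic $\cJe$ growth appears only through $g$ and $\cJ_\al$ and always cancels in $h/g$. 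This requires carefully tracking the $\cJ_\al$ part separately, since $(x\pa_x)\cJ_\al = x^\al W$ is bounded.
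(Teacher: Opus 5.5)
For $|x|\ge 1$ your argument is sound, and it takes a genuinely different route from the paper.

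\begin{itemize}
\item \emph{The paper.} It bounds $\pa_x^{m}\psioa$ for $m\ge2$ through the derivative-gain estimate \eqref{eq:vel_high:a}, proved by a near/far splitting and integration by parts. It then applies Leibniz and Fa\`a di Bruno in ordinary derivatives to $H/G$.
\item \emph{Your route.} You use homogeneity of the kernels. Set $T(w)=\tf1x\psioa(w)=x^{\al}\int_0^\infty K_{\al,1}(1,z)\,w(xz)\,dz$; note the prefactor is $x^{\al}$, not $x^{\al-1}$ as you wrote. Then $x\pa_x T(w)=\al T(w)+T(y\pa_y w)$, and the same holds for $\pa_x\psioa(w)$. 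So every scaling derivative of $h$ and $g$ reduces to the zeroth-order bound $\les\min(|x|,\cJe)$ applied to $(y\pa_y)^iW$. That bound does follow from $|w|\les\min(|y|,|y|^{-\g})$, via $\cK_{\al,i,J}$ and $|\cJ_\al(w)|\les\cJe$.
\item \emph{What each buys.} Your route dispenses with Lemma \ref{lem:vel_high} at large $|x|$ and keeps the same $\e$-uniformity mechanism ($\cJe$ cancels in $h/g$). Run with $(x\pa_x)^j$ on all of $\R$, your induction closes globally and gives the conormal bounds $|(x\pa_x)^jf|\les\ang x^{-1-\g}$ everywhere. What it cannot give is genuine smoothness at the origin, i.e.\ bounded $\pa_x^k f$ there.
\end{itemize}

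That is where the gap is. Near $0$ the equation is not regular, contrary to your claim.

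\begin{itemize}
\item Because $V\approx x$ vanishes at $0$, $x\pa_x f=\f{h}{2g}f$ has a regular singular point there. Getting $\pa_x^{k+1}f$ therefore requires $\pa_x^{k}(h/x)$; in the paper's notation $H=h/x$.
\item Your stated input, $\psioa\in C^{k+1,\al}$ when $W\in C^k$, only gives $h\in C^{k,\al}$ with $h(0)=0$. Hence $h/x\in C^{k-1,\al}$ and $f\in C^{k,\al}$. That is an $\al$-H\"older gain, not the extra derivative your induction needs.
\item The missing idea is the paper's use of the vanishing structure at the single point $0$. Write $W=xA$ with $A$ even. Then \eqref{eq:vel_high:b} gives $|\pa_x^{k+1}\psioa(x)-\pa_x^{k+1}\psioa(0)|\les|x|\,\|W/x\|_{\cW^k_{\g+1}}$.
\item The Taylor-remainder argument of Lemma \ref{lem:psi_wg} converts this into bounds on $\pa_x^k(\pa_x\psioa/x)$ and $\pa_x^k(\psioa/x^2)$ by $\|W/x\|_{\cW^k_{\g+1}}$ alone. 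So $Z=W/x\in\cW^k_{\g+1}$ already gives $H\in W^{k,\infty}$ near $0$, and $\pa_xZ=\f{H}{G}Z$ gains a full derivative per step.
\item A H\"older bootstrap iterated about $1/\al$ times could also work. You would then have to keep the exponents away from integers to keep constants uniform in $\e$, since $3\al\to1$ as $\e\to0$. As written, the step is not justified.
\end{itemize}

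Finally, your $h$ is off by a constant. The correct identity is $3-\al-2g-(1-\al)V_x=-(3-\al)(g-1)-(1-\al)(V_x-g)=-(1-\al)\pa_x\psioa-2\tf1x\psioa$, with no $1-\al$ term. This is harmless for $|x|\ge1$, but it matters at $0$. Your version gives $h(0)=1-\al\neq0$ and would force $f\sim|x|^{(1-\al)/2}$. The correct $h$ satisfies $|h|\les|x|$ by \eqref{eq:Cinfty_ass}, and that is exactly what your base case $|\pa_xf|\les1$ near $0$ relies on.
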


We first establish high order nonlocal estimates in Section \ref{sec:high_nonlocal} 
and then prove  Theorem \ref{thm:reg} in Section \ref{sec:thm_reg_pf}.

\subsection{High order nonlocal estimates}\label{sec:high_nonlocal}
We have the following higher order estimates.

\begin{lem}\label{lem:vel_high}
 Suppose that $w$ is odd, $\al \in [\f13 - \f{1}{100}, \f13],  \g \in [\f13, \f13 + \f{1}{100} ]$.  Let $\psio_{\al} = \cK_{\al, 1}(w)$ be defined in \eqref{eq:ker:a}. For any $k\geq 1$, we have 
 \bseq\label{eq:vel_high} 
\begin{align} 
 |\pa_x^{k+1} \psioa |   & \les_k \la x \ra^{-k + \al -\gam}  || w ||_{\cWg^k}
   \les_k \la x \ra^{- k + \al -\gam}  \| \tf{w}{x} \|_{\cW_{\g+1}^k},  \label{eq:vel_high:a}  \\ 
    |\pa_x^{k+1} \psio - \pa_x^{k+1} \psio(0)|
    & \les_k |x| \cdot  \| \tf{w}{x} \|_{\cW_{\g+1}^k}. \label{eq:vel_high:b}
\end{align}
\eseq
\end{lem}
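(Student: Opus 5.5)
We treat the nonlocal term through the untruncated representation of $\psi_\al$ and differentiate under the integral after a dyadic/scaling split. Since $\pa_x^2$ annihilates the linear correction $2\al x y^{\al-1}$ in $K_{\al,1}$, for $k\ge1$ we have $\pa_x^{k+1}\psioa = \pa_x^{k+1}\psi_\al$. Formally,
\[
\pa_x^{k+1}\psi_\al(x)=\pa_x^{k+1}\int_\R |x-y|^\al\,(-w(y))\,dy .
\]
We first prove the estimate for $|x|\ge 1$ (the case $|x|\le1$ only needs local boundedness and is easier). Fix $x>1$ and take a smooth partition of unity $1=\chi_{1}+\chi_{2}+\chi_{3}$ in $y$. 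Here $\chi_1$ is supported in $|y-x|\le x/2$, $\chi_2$ in $|y|\le x/4$ (together with its odd reflection), and $\chi_3$ covers the rest, so that $|y|\gtrsim |x|$ and $|x-y|\gtrsim|x|$ there.

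\textbf{Local piece.} On $\chi_1$ we substitute $y=x-h$, so that $\int |h|^\al (\chi_1 w)(x-h)\,dh$. All $k+1$ derivatives can then be moved onto $\chi_1 w$ except one: we put $k$ derivatives on $\chi_1 w$, and the remaining derivative is kept on the kernel as the principal value/integrable $|h|^{\al-1}$ form after one integration by parts. This gives the bound
\[
\int_{|h|\le x/2}|h|^{\al-1}\,|\pa^k(\chi_1 w)(x-h)|\,dh\les x^{\al}\cdot x^{-k-\g}\|w\|_{\cW^k_\g},
\]
since $|\pa^j\chi_1|\les x^{-j}$ and $|\pa^j w|\les x^{-j-\g}$ on the support.

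\textbf{Near-origin piece.} On $\chi_2$ the kernel is smooth in $x$, with $|\pa_x^{k+1}|x\mp y|^\al|\les x^{\al-k-1}$. We use oddness to write the integrand with $|x-y|^\al-|x+y|^\al$, whose $(k+1)$-st $x$-derivative is $\les x^{\al-k-2}|y|$. This yields
\[
x^{\al-k-2}\int_0^{x/4}|y|\,|w|\,dy\les x^{\al-k-2}\,x^{2-\g},
\]
which is acceptable because $\g<2$.

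\textbf{Far piece.} On $\chi_3$ we have $|\pa_x^{k+1}K|\les |y|^{\al-k-1}$ after the same symmetrization, and $\int_{|y|\gtrsim x}|y|^{\al-k-1-\g}\,dy\les x^{\al-k-\g}$, which converges since $k\ge1$.

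The main obstacle is that the cutoffs depend on $x$, so differentiating must be done carefully. The cleanest route is to use scaling: write $\psi(x)=x^{1+\al}\int K(1,z)\,w(xz)\,dz$ and differentiate $w(xz)$ in $x$, producing $z^j w^{(j)}(xz)$, which is controlled by $x^{-j}$ times the weighted norm. The only delicate region is near $z=1$, where at most one derivative may fall on the kernel; this is handled by the integrability of $|1-z|^{\al-1}$.

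The second inequality in \eqref{eq:vel_high:a} is Lemma \ref{lem:norm_W}.

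For \eqref{eq:vel_high:b} we write $w=x\,(w/x)$ and note that $\pa_x^{k+1}\psio$ is odd or even according to $k$. If it is even we apply the mean value theorem, which requires bounding $\pa_x^{k+2}\psio$ near $0$. By the first part with $k+1$ derivatives, this is controlled using the extra factor $x$ from $w=x\cdot(w/x)$: the estimate above with weights at $|x|\le1$ gives boundedness in terms of $\|w/x\|_{\cW^{k}_{\g+1}}$, and the $\ang x^{-k+\al-\g}\le 1\les|x|$ bound handles $|x|\ge1$. If instead $\pa_x^{k+1}\psio$ is odd, it vanishes at $0$ and the same derivative bound applies. To gain the factor $|x|$ without losing a derivative near the origin, we run the local piece with the representation $w(y)=y\,(w/y)$, so that the additional $x$-derivative falls on $y\cdot(w/y)$.
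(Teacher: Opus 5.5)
Your argument for \eqref{eq:vel_high:a} is sound. In the local piece you move $k$ derivatives onto $\chi_1 w$ and keep one on the kernel, and you use plain kernel bounds on the near-origin and far pieces. This gives $\ang x^{-k+\al-\g}$ with uniform constants.

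The $x$-dependence of the cutoffs is harmless. Freeze them at a point $x_0$, differentiate in $x$, and then set $x=x_0$. Alternatively, as in the paper, first write $\pa_x^{k+1}\psioa=\int K(x-y)\,\pa_y^k w(y)\,dy$ with $K(z)=-\al|z|^{\al-1}\sgn(z)$ on the whole line, and split only afterwards.

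Do not switch to the scaling representation, though. Bounding the Leibniz terms of $\pa_x^{k+1}\big[x^{1+\al}\int K(1,z)w(xz)\,dz\big]$ one at a time misses the cancellation $\pa_x^2(xy^{\al-1})=0$. It then fails to give the factor $\ang x^{\al-\g}$ with constants uniform in $\al,\g$.

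The genuine gap is in \eqref{eq:vel_high:b}. Your mean value theorem argument needs $\sup_{|x|\le 1}|\pa_x^{k+2}\psio|\les\|w/x\|_{\cW^k_{\g+1}}$, and this is false.
\begin{itemize}
\item Applying \eqref{eq:vel_high:a} with $k+1$ derivatives costs $\|w\|_{\cW^{k+1}_\g}$.
\item The factorization $w=yA$, $A=w/y$, does not lower the number of derivatives, since $\pa_y^{k+1}(yA)=y\,\pa_y^{k+1}A+(k+1)\pa_y^kA$.
\item The supremum can in fact be infinite. $\pa_y^kA$ is only assumed bounded, so it may jump by some $J\neq 0$ at a point $x_1\in(0,1)$. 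Then $\pa_y^k w$ jumps by $x_1J$ there, $\pa_x^{k+1}\psio$ contains a multiple of $x_1J|x-x_1|^{\al}$, and $\pa_x^{k+2}\psio$ blows up like $|x-x_1|^{\al-1}$ near $x_1$.
\end{itemize}
So in general $\pa_x^{k+1}\psio$ is not Lipschitz on $[0,1]$. Estimate \eqref{eq:vel_high:b} is a Lipschitz bound at the single point $0$, and no derivative bound on $[0,x]$ can deliver it.

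The even/odd distinction does not help: vanishing at $0$ still needs a Lipschitz bound. Your last sentence does not say where the extra derivative goes. Nor can this be repaired by assuming more smoothness, because Lemma \ref{lem:psi_wg} and the induction in Theorem \ref{thm:reg} apply \eqref{eq:vel_high:b} knowing only $w/x\in\cW^k_{\g+1}$.

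The missing idea is to estimate the difference directly through the kernel, without differentiating further. Substitute $\pa_y^k(yA)=y\,\pa_y^kA+k\,\pa_y^{k-1}A$ into $\pa_x^{k+1}\psio=-\pa_x\int|x-y|^{\al}\pa_y^kw\,dy$. In the second term, let the outer $\pa_x$ fall on $\pa_y^{k-1}A$. This gives
\[
\pa_x^{k+1}\psio(x)=\int \tilde K(x,y)\,\pa_y^kA(y)\,dy,\qquad \tilde K(x,y)=-\big(\al\,\sgn(x-y)|x-y|^{\al-1}y+k|x-y|^{\al}\big),
\]
which involves only $\pa_y^kA$. One then checks that
\[
|\tilde K(x,y)-\tilde K(0,y)|\les |x|\big(|x-y|^{\al-1}+|y|^{\al-1}\big) \quad (|y|\le 2|x|), \qquad |\tilde K(x,y)-\tilde K(0,y)|\les |x|\,|y|^{\al-1} \quad (|y|\ge 2|x|).
\]
Integrating against $|\pa_y^kA|\les\ang y^{-k-\g-1}\|A\|_{\cW^k_{\g+1}}$ yields a bound $\les|x|\,\|w/x\|_{\cW^k_{\g+1}}$ for $|x|\le 1$. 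The case $|x|\ge1$ follows from \eqref{eq:vel_high:a}, as you note. The gain $|x|$ comes from the factor $y$ multiplying the singular part $|x-y|^{\al-1}$, i.e.\ from the vanishing of $w$ at the origin.
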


\begin{proof}

Below, implicit constants may depend on $k$, and we do not track this dependence to simplify notation. Since $\al$ and $\g$ are restricted to a fixed range, it is straightforward to verify that the constants below are independent of $\al$ and $\g$.

Recall $\psio$ from \eqref{eq:ker}. 
Since $\pa_x^{k+1} ( x y^{\al-1}) = 0$ for any $k\geq 1$, we obtain
\[
      \pa_x^{k+1} \psioa(x) =  \int K(x-y) \pa_y^k w(y) d y, \quad K(z) \teq - \al |z|^{\al - 1} \sgn(z) .
\]

For $|x| \leq 1$, since $ k \geq 1$, $\al \in [\f13 - \f{1}{100}, \f13]$, and $ \g \in [\f13, \f13 + \f{1}{100} ]$, we obtain 
\[
 |\pa_x^{k+1} \psioa(x)| \les_k || w ||_{\cWg^k}  \int |x- y|^{\al-1} \la y \ra^{-1 - \gam} d y 
\les || w ||_{\cWg^k}.
\]

Next, we fix $ |x| \geq 1$. We choose a smooth cutoff function $\chi$ with $\chi(z) = 1$ for $|z| \leq 1$ and $\chi(z) = 0$ for $|z| \geq 2$. We decompose 
\[
\bga
  \pa_x^{k + 1} \psioa
 =  \int (\chi_1(y) + \chi_2(y))  K(x-y)  \pa_y^k w( y ) d y \teq I_1 + I_2,  \quad 
  \chi_1(y) \teq \chi(  \f{4( y - x) }{|x| }  ),
\quad
  \chi_2 \teq 1 - \chi_1(y). 
  \ega
\]

The integrand in $I_1$ is supported in $y \in \{ |y-x| \leq |x|/2 \}$, where we have $\la y \ra \asymp \la x \ra$. We estimate 
\[
  |I_1| \les \int_{|y- x| \leq |x|/2 } |x - y|^{\al-1} \la y \ra^{-k - \gam } d y
  || w ||_{\cWg^k}
\les \la x \ra^{ -k - \gam } \int_{|y- x| \leq |x|/2 } |x - y|^{\al-1}  d y
  || w ||_{\cWg^k} \les \la x \ra^{-k + \al -\g}  || w ||_{\cWg^k}.
\]

 For $ i \geq 1$, we have 
\beq\label{eq:chi2_est}
  \supp( \pa_y^i \chi_2(y )) \subset \{ y: |x|/4 \leq |y-x| \leq |x| / 2 \},
  \quad  |\pa_y^i \chi_2(y)| \les |x|^{-i}.
  \eeq

Since the integrand in $I_2$ is supported away from $y=x$, 
 applying integration by parts and using the Leibniz rule, we estimate $I_2$ as 
\[
\bal
|I_2| & \les \int  |\pa_y^k ( \chi_2(y) |x-y|^{\al-1} \sgn(x-y)  ) | \cdot |w(y)| d y \\
& \les \sum_{i+j = k} \int_{|x-y| \geq |x|/4} 
|\pa_y^i \chi_2(y)| |x-y|^{\al-1- j}  \la y \ra^{-\g} d y  || w ||_{\cWg^k}
\teq \sum_{i+j =k } II_{i, j} \cdot || w ||_{\cWg^k}
\eal
\]

If $i \geq 1$, using \eqref{eq:chi2_est}, we have $| y |, | x-y | \asymp | x |$ for $y \in \supp(\pa_y^i \chi_2)  $, and hence
\[
  II_{i,j} \les \int_{ |y-x| \leq |x|/2 } |x|^{\al - 1 - j} \cdot |x|^{-i} \la x \ra^{- \g} d y
  \les \la x \ra^{-1 - i - j + 1 + \al - \g} \les \la x \ra^{-k + \al -\g}.
\]

If $ i =0$, we obtain $ j = k \geq 1$. We estimate $II_{0, k}$ as 
\[
\bal
  II_{0, k} &\les \int_{|x - y| \geq |x|/4} |x-y|^{\al - 1- k} \la y \ra^{- \g} d y
  & = ( \int_{ |x-y| \geq |x|/4, |y| \leq 2 |x| } + \int_{ |y| \geq 2 |x| } ) |x-y|^{\al - 1- k} \la y \ra^{- \gam} d y . 
  \eal
\]

Since for $|y| \geq 2|x|$, we have $|x-y| \asymp |y|$. Using 
$\al -1 - k-\g \leq -2$, we obtain 
\[
  II_{0, k} \les |x|^{\al-1-k} \int_{|y| \leq 2 |x|} \la y \ra^{- \gam}
  + \int_{|y| \geq 2 |x|} |y|^{\al - 1 - k - \gam} d y \les
|x|^{\al - 1-k} |x|^{1- \gam} + |x|^{-k + \al -\gam} \les |x|^{-k + \al -\gam}.
  \]
Combining the above estimates and using Lemma \ref{lem:norm_W}, we prove
\eqref{eq:vel_high:a} in Lemma \ref{lem:vel_high}.

\vs{0.1in}
\paragraph{\bf Proof of \eqref{eq:vel_high:b}}
Using estimate \eqref{eq:vel_high:a} and symmetry, we only need to consider $x \in [ 0, 1]$. Denote $ A = \f{w}{y}$. Firstly, we use $w= A \cdot y$ to rewrite $\pa_x^{k+1} \psioa$  :
\beq\label{eq:vel_high_pf2}
\bal
   \pa_x^{k+1} \psioa 
   & =  - \pa_x \int |x-y|^{\al} \pa_y^k (A  y )  d y
   =  - \pa_x \int |x-y|^{\al} ( y \pa_y^k A  + k \pa_y^{k-1} A  ) d y\\
   & = - \int \big( \pa_x |x-y|^{\al} \cdot y \pa_y^k A + k |x-y|^{\al} \pa_y^k A \big) d y 
   = \int \td K(x, y) \pa_y^k A d y,
   \eal 
\eeq
where $\td K$ is defined as 
\[
      \td K(x, y) = - ( \al \cdot \sgn(x- y)|x-y|^{\al-1} y  + k  |x-y|^{\al} ) .
\]

Since $w$ is odd, we obtain that $A$ is even. We further derive
\[
    \pa_x^{k+1} \psioa(x)  - 
      \pa_x^{k+1} \psioa(0)  = \int  \mr{K}(x, y) \pa_y^k A(y) d y, 
      \quad  \mr{K}(x, y)= \td K(x, y)  - \td K(0, y) .
\]
Note that $\td K(x, y)$ is $\al$-homogeneous. Using a direct calculation 
and Mean-value theorem, we yield
\[
  |K_{\mf{sym}}(x, y)| \les  (|x - y|^{\al - 1} + |y|^{\al-1}) x , \quad \forall | y| \leq 2 |x|,  \quad 
  |K_{\mf{sym}}(x, y)| \les x |y|^{\al - 1}, \quad |y| \geq 2 x .
\]

Since $k \geq 1$, $|x|\leq 1$, and $\al - \g \leq 0$, we obtain 
\[
  |  \pa_x^{k+1} \psioa(x)  - 
      \pa_x^{k+1} \psioa(0) | 
      \les || A||_{\cWg^k} \B( \int_{|y|\leq 2 |x| }  ( |x-y|^{\al-1} + |y|^{\al-1}) |x| d y
      + \int_{y \geq 2 x} x y^{\al -1} \la y \ra^{- \gam - 1} d y \B) 
    \les |x| \cdot || A||_{\cWg^k}.
\]
Since $A(x) = w/ x$, we complete the proof.
\end{proof}

\begin{lem}\label{lem:psi_wg}
Let $\psioa = \cK_{\al, 1}(w)$ be defined in \eqref{eq:ker}. Suppose $w$ is odd and $\al \in [\f13 - \f{1}{100}, \f13],  \g \in [\f13, \f13 + \f{1}{100} ]$. For any $k \geq 0$, we have 
\beq\label{eq:psi_wg_van}
\B|\pa_x^k ( \f{ \pa_x \psioa}{x} ) \B| +  |\pa_x^{k} (\f{\psioa}{x^2}) | \les_k 
\B| \B| \f{w}{x} \B|\B|_{\cW_{\gam+1}^k} .
\eeq

\end{lem}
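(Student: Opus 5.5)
The plan is to treat the near field $|x|\le 1$ and the far field $|x|\ge 1$ separately. Write $A=\f{w}{x}$, which is even since $w$ is odd, so that $|w(y)|\le \|A\|_{\cW^0_{\g+1}}\min(|y|,|y|^{-\g})$ and $\|w\|_{\cW^k_\g}\les_k \|A\|_{\cW^k_{\g+1}}$ by Lemma \ref{lem:norm_W}. Two structural facts drive the near field.
\begin{itemize}
\item $\psioa$ is odd and $\pa_x\psioa$ is even.
\item $\psioa(0)=0$ and $\pa_x\psioa(0)=0$. Indeed $K_{\al,2}(0,y)=\al y^{\al-1}+\al y^{\al-1}-2\al y^{\al-1}=0$.
\end{itemize}
Hence $\pa_x\psioa/x$ and $\psioa/x^2$ have removable singularities at $0$, and the task is to control their derivatives without spending more than $k$ derivatives of $A$.

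\textbf{Base case $k=0$.} I would use the scaling \eqref{eq:ker_scale} with $y=xz$ and the decay \eqref{eq:ker_decay}. For $|x|\le1$ this gives
\[
|\pa_x\psioa(x)|\le |x|^{\al}\int_0^\infty |K_{\al,2}(1,z)|\,|w(xz)|\,dz .
\]
Bound $|w(xz)|\le \|A\|\,|x|z$ for $z\le 1/|x|$ and $|w(xz)|\le\|A\|\,|xz|^{-\g}$ otherwise. The singularities $z^{\al-1}$ and $|z-1|^{\al-1}$ are integrable, and the tail $z^{\al-3}\cdot z$ is integrable. This yields $|\pa_x\psioa|\les |x|^{1+\al}\|A\|\le |x|\,\|A\|$, and the same argument with $K_{\al,1}$ gives $|\psioa|\les |x|^{2}\|A\|$.

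For $|x|\ge 1$ the same splitting gives only the logarithmic bound $|\pa_x\psioa|+|\psioa/x|\les \lgp x\,\|A\|$, which is the borderline case $\g=\al=\f13$. After dividing by $x$ this is still bounded.

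\textbf{Near field, $k\ge1$: Taylor decomposition.} Let $f=\pa_x\psioa$ and split $f=P_k+R$, where $P_k$ is the Taylor polynomial of order $k$ at $0$. Its coefficients are $\pa_x^{j+1}\psioa(0)$ with $1\le j\le k$, and each is $\les \|w\|_{\cW^j_\g}\les\|A\|_{\cW^k_{\g+1}}$ by \eqref{eq:vel_high:a}; moreover $P_k(0)=0$. Therefore $P_k/x$ is a polynomial with controlled coefficients and is harmless on $[-1,1]$.

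For the remainder, $R^{(j)}(0)=0$ for $j\le k$. The key input is \eqref{eq:vel_high:b} at level $k$, which gives
\[
|R^{(k)}(x)|=|\pa_x^{k+1}\psioa(x)-\pa_x^{k+1}\psioa(0)|\les |x|\,\|A\|_{\cW^k_{\g+1}} .
\]
Integrating down from $0$ yields $|R^{(j)}(x)|\les |x|^{k-j+1}\|A\|$ for $j\le k$. By Leibniz, $\pa_x^k(R/x)=\sum_{j\le k}c_{j,k}R^{(j)}x^{-1-k+j}$, and every term is $\les\|A\|$.

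For $\psioa/x^2$ the same argument applies with $f=\psioa$ and Taylor order $k+1$. Now $R^{(j)}(0)=0$ for $j\le k+1$, and $R^{(k+1)}=O(|x|)$ again by \eqref{eq:vel_high:b}. This gives $|R^{(j)}|\les|x|^{k+2-j}$, and the terms $R^{(j)}x^{-2-k+j}$ in the Leibniz expansion are bounded.

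I expect the derivative count to be the main obstacle. The naive representation $\pa_x^k(f/x)=\int_0^1 t^k f^{(k+1)}(tx)\,dt$ requires $\pa_x^{k+2}\psioa$, hence $k+1$ derivatives of $w$. The Taylor/Leibniz route above avoids this by using only \eqref{eq:vel_high:b}, which is exactly a first-order vanishing statement at derivative level $k$.

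\textbf{Far field, $|x|\ge1$.} Expand by Leibniz:
\[
\pa_x^k\Big(\f{\pa_x\psioa}{x}\Big)=\sum_{j\le k}c_j\,\pa_x^{j+1}\psioa\; x^{-1-k+j}.
\]
For $j\ge1$, \eqref{eq:vel_high:a} gives $|\pa_x^{j+1}\psioa|\les\ang x^{-j+\al-\g}\le \ang x^{-j}$, so each term is $\les|x|^{-1-k}\|A\|$. The term $j=0$ uses the logarithmic bound from the base case, giving $\lgp x\,|x|^{-1-k}\les 1$.

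For $\pa_x^k(\psioa/x^2)$ the expansion is analogous. The terms with $\pa_x^{j}\psioa$, $j\ge2$, use \eqref{eq:vel_high:a}, while the terms with $j=0$ and $j=1$ use $|\psioa|\les|x|\lgp x$ and $|\pa_x\psioa|\les\lgp x$. Each term is bounded by $\lgp x\,|x|^{-1-k}\les 1$.

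Combining the near-field and far-field estimates proves \eqref{eq:psi_wg_van}. All constants depend only on $k$, since $\al$ and $\g$ range over fixed compact intervals.
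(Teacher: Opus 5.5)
Your proposal is correct, and its core (subtract the Taylor polynomial of $\pa_x\psioa$, resp.\ of $\psioa$; control the top derivative of the remainder by \eqref{eq:vel_high:b}; integrate down from $0$; then apply Leibniz) is exactly the paper's proof. The only difference is that the paper does not split at $|x|=1$. Indeed, \eqref{eq:vel_high:b} already holds for all $x$, since for $|x|\ge 1$ both terms are $O(\|w/x\|_{\cW^k_{\g+1}})$ by \eqref{eq:vel_high:a}. Likewise, the case $k=0$ follows globally from $|w(y)|\les |y|^{1-\al}\|w/x\|_{\cW^0_{\g+1}}$, which gives $|\pa_x\psioa|\les |x|\,\|w/x\|_{\cW^0_{\g+1}}$ and $|\psioa|\les x^2\|w/x\|_{\cW^0_{\g+1}}$ everywhere. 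So your separate far-field Leibniz argument with the logarithmic bound is valid but redundant.
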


\begin{proof}

For $k = 0$, using the definition of $\cK_{\al, i}$ in \eqref{eq:ker},
\[
  |w(x)| \les |x| \ang x^{-\gam-1} \| w / x \|_{ \cW_{\gam + 1}^0 } 
  \les |x|^{1-\al}\| w / x \|_{ \cW_{\gam + 1}^0 } ,
\]
the fact that $\cK_{\al,2}$ is $\al-1$-homogeneous by \eqref{eq:ker_scale}, 
and changing $z = y / x$, we estimate
\[
|\pa_x \psioa| \les \int_0^{\infty} \cK_{\al, 2}(x, y) w d y
\les \| \f{w}{x} \|_{\cW_{\gam+1}^0}   \int_0^{\infty} |\cK_{\al, 2}(x, y) |
 \cdot |y|^{1-\al} d y
\les  \| \f{w}{x} \|_{\cW_{\gam+1}^0} |x| \cdot \int_0^{\infty}|\cK_{\al, 2}(1, z)| z^{1-\al} d y.
\]
By the decay estimate of $K_{\al, 2}$ in \eqref{eq:K_decay_basic:b} in Lemma \eqref{lem:K_decay_basic},
the above integral is bounded. We prove the estimate for $\pa_x \psioa $ with $k=0$ in 
\eqref{eq:psi_wg_van}.  The proof for $\psioa$ in \eqref{eq:psi_wg_van} with $k=0$  is similar.

Next, we consider $ k \geq 1$ and focus on the estimate for $\psio_x$, as the proof for $\psio$ is similar. 

By definition of $\psio$ in \eqref{eq:ker}, we obtain $\psio(0) = \psio_x(0) = 0$.  We rewrite 
\[
 \pa_x^k (\f{ \pa_x \psioa}{x}) 
  = \pa_x^k ( \f{1}{x} I ),
  \quad 
  I \teq  \psio_{\al, x} - \sum\nolimits_{0\leq i \leq k} \f{1}{i!} \pa_x^{i+1} \psio_{\al}(0) x^i  .
\]
The higher order Taylor coefficient vanishes since 
$ \pa_x^k x^{i-1} = 0$ for $ 1 \leq i \leq k$. Using the integral formula of the Taylor expansion for $\psio_x$, we obtain
\[
  I = \int_0^x  \f{   \pa_x^{k+1} \psio(y) }{(k-1)!}  (x- y)^{k-1}  d y 
  - \f{1}{k!} \pa_x^{k+1} \psio(0) x^{k}
  = \int_0^x f(y) \f{ (x-y)^{k-1} }{(k-1)!} d y ,
  \]
where $f(y)$ is defined as 
\[
  f(y) \teq     \pa_x^{k+1} \psio(y) - \pa_x^{k+1} \psio(0)   .
\]

Next, we estimate $\pa_x^j I$ for any $ j \leq k$:
\[
  |\pa_x^j I(x) | 
  \les \one_{j \geq 1} | f(x)  ( \pa_x^{j-1}(x - y)^{k-1} )|_{y = x} |
  + \int_0^x | f(y) \pa_x^j (x - y)^{k-1} |  d y ,
\]
where the first term comes from the derivative acting on the integral $\int_0^x$.

Since $k \geq 1$, applying $|f(x)| \les_k |x| \cdot || w / x ||_{\cW^k_{\g + 1}}$ from Lemma \ref{lem:vel_high} and $j \leq k$, we obtain 
\[
    |\pa_x^j I(x) |  \les_k |x|^{1 + k-j} \cdot || \tf{w}{x}  ||_{\cW^k_{\gam + 1}}.
\]

Applying the Leibniz rule and the above estimate, we prove
\[
  |\pa_x^k \f{I}{x}|
  \les_k \sum\nolimits_{i+j =k} |\pa_x^i x^{-1} \cdot \pa_x^j I|
\les_k  \sum\nolimits_{i+j =k} | x^{-1-i} \cdot x^{k+1 - j} | \cdot \B\| \f{w}{x} \B\|_{\cW^k_{ \gam + 1}}
\les_k  \B\| \f{w}{x} \B\|_{\cW^k_{ \gam + 1}},
\]
and establish the desired estimate for $\pa_x \psioa$. The proof for $\psioa$ is similar.
\end{proof}

\subsection{Proof of Theorem \ref{thm:reg}}\label{sec:thm_reg_pf}
Now, we are ready to prove Theorem \ref{thm:reg}. 
 Denote $W = w + \bw$ and  $\psio = \cK_{\al,1}(W)$ the modified stream function associated with $W$ via \eqref{eq:ker:a}  and $V = x + \psio(W)$ \eqref{eq:ker}. 
 Due to symmetry, it suffices to prove the estimates \eqref{eq:profile_Ck} for $x \geq 0$.
Multiplying \eqref{eq:1D_dyn} with $\f{1}{x}$ , we obtain 
\bseq\label{eq:solu_induc}
\beq
\bga
    \pa_x ( \f{W}{x}) =  \f{ (3-\al) - (1-\al)  V_x - 2\f{V}{x} }{2 V} \cdot \f{W}{x}
    = \f{ - (1-\al) \psio_x -  2\f{\psio}{x} }{ 2 x (1 + \f{ \psio}{x}) }  \cdot \f{W}{x}.
  \ega
\eeq
We rewrite it as 
\beq
\pa_x Z = \f{H(W)}{G(W)} \cdot Z, \quad Z(x) \teq \f{W}{x}, \quad
     H(W) \teq  - (1-\al) \f{\psio_x}{x} - 2\f{\psio}{x^2},
    \quad G(W) \teq 2 ( 1 + \f{\psio}{x} ).
  \eeq
  \eseq

Recall $\lgp x$ from \eqref{def:lgp}. From the assumption \eqref{eq:Cinfty_ass}, we obtain
\beq\label{eq:induc_pf0:b}
   G(W)(x) \asymp \cJe(x), \quad 
   |\psio_x |, \ | \tf{1}{x} \psio | \les  |x| \we  \cJe(x).
\eeq

Since assumption \eqref{eq:Cinfty_ass} implies 
$ |\ang x^{\gam+1} W / x | \les 1$, we obtain $\| W / x \|_{ \cW_{\gam + 1}^0 } \les 1$. Next, we prove $W / x \in \cW_{\gam+1}^k$ using induction on $k \geq 0$.

\vs{0.1in}
\paragraph{\bf Inductive step $k\geq 0$} 
 Suppose that $Z \in \cW_{\gam+1}^k$ for $k\geq 0$ and $|| Z ||_{\cW^k_{\gam+1} } \les_k 1$. We shall prove $Z \in \cW_{\gam+1}^{k+1}$. 
Taking $\pa_x^k$ on \eqref{eq:solu_induc} and using Leibiniz rule, we have
\beq\label{eq:induc_goal}
  |\pa_x^{k+1} Z|
  \les \tts{\sum}_{i+j + l  = k} |\pa_x^i H| \cdot  |\pa_x^j (G^{-1}) | \cdot |\pa_x^l Z |
\eeq

For any $j\geq 0$, using chain rule for $G^{-1}$,  we derive
\beq\label{eq:induc_Ginv}
  |\pa_x^j G^{-1}| 
\les_k \one_{j=0} G^{-1} + \one_{j\geq 1} \sum\nolim_{ 1\leq n \leq j } G^{-(n+1)}  \prod\nolim_{ \sum_{  m \leq n } a_m = j, \ a_m \geq 1 } |\pa_x^{a_m} G| .
\eeq

For $|x| \leq 1$ and $0\leq s \leq k$,  since $ i \leq k$, 
 applying Lemma \ref{lem:psi_wg}, we estimate  $H$ and $\f{\psio}{x}$  as
\bseq\label{eq:induc_pf1}
\beq
 |\pa_x^i H| \les_k || Z||_{\cW_{\gam+1}^k} \les_k 1,  
 \quad |\pa_x^s \f{\psio}{x}|
 = |\pa_x^s  (\f{\psio}{x^2} \cdot x) | 
 \les_k |x  \pa_x^s (\f{\psio}{x^2} ) |
 + | \pa_x^{s-1}( \f{\psio}{x^2}  ) |
  \les_k || Z||_{\cW_{\gam +1}^k} \les_k 1.
\eeq

For $|x| \leq 1$, since $j\leq k$, applying the above estimate for $\psio / x$ and 
\eqref{eq:induc_pf0:b} for $G$, we estimate
\beq
    |\pa_x^j G^{-1}|  \les_k 
    \one_{j=0} + \one_{j\geq 1} \sum\nolim_{ 1\leq n \leq j }   \prod\nolim_{ \sum_{  m \leq n } a_m = j, \ a_m \geq 1 } 1  \les_k 1.    
\eeq
\eseq

For any $x \geq 1$, $ s \leq k$,  $p \in \{ 0, 1\}$ and $q \geq 1$, we apply Lemma \ref{lem:vel_high} 
for $\pa_x^n \psioa, n \geq 2$ and \eqref{eq:induc_pf0:b} 
for $\pa_x \psioa, \psioa$ to estimate 
\beq\label{eq:induc_pf2}
\bal
  | \pa_x^s ( \pa_x^p \psio \cdot x^{-q}) |
  & \les_{p, q} \tts{\sum}_{ m \leq s } |\pa_x^{m+p} \psio \cdot \pa_x^{s-m} x^{-q} | 
  \les_{p, q}  \tts{\sum}_{m\leq p} \cJe(x) \cdot |x|^{1-m-p} \cdot |x|^{-q - s +m} \\
  & \les_{p, q} \cJe(x) \cdot |x|^{1-p-q - s} ,
  \eal
\eeq
where we lose a large factor $\cJe(x)$ in the estimate for $\psioa, \pa_x \psioa$.

For $x \geq 1$ and $s \leq k$,  since $i\leq k$, applying \eqref{eq:induc_pf2} with $(p, q) = (1, 1)$
for $ \f{1}{x}\pa_x \psioa $, $(0, 2)$ for $\f{1}{x^2 } \psioa$, and $(0, 1)$ for $\f{1}{x} \psioa $
we estimate 
\bseq\label{eq:induc_pf3}
\beq
  |\pa_x^i H | \les_k |\pa_x^i ( \f{\psio_x }{x} ) | + 
  |\pa_x^i ( \f{\psio}{x^2} ) | \les_k \cJe(x) \cdot x^{- 1 - i},
  \quad  |\pa_x^s G | \les_k |\pa_x^s  \f{\psio_x }{x}  |
  \les_k \cJe(x) \cdot x^{- s}.
\eeq

Since $0\leq j , a_m\leq k$, applying the above estimates of $\pa_x^s G$ and \eqref{eq:induc_pf0:b} of $G$ to 
\eqref{eq:induc_Ginv}, we yield 
\beq
\bal
  |\pa^j G^{-1}|
& \les_k \one_{j=1} |\cJe(x)|^{-1}
+ \one_{j\geq 1} \sum_{n\leq j} | \cJe(x)|^{-(n+1)} 
 \prod_{ \sum_{  m \leq n } a_m = j, \ a_m \geq 1 }
 \cJe(x) \cdot x^{-a_m} \\
& \les_k 
\one_{j=1} |\cJe(x)|^{-1} 
+  \sum\nolim_{n\leq j} |\cJe(x)|^{-(n+1)}   \cdot | \cJe(x) |^{n} x^{- j}  
\les x^{-j} | \cJe(x)|^{-1}.
\eal
\eeq
\eseq

Combining estimates \eqref{eq:induc_pf1}, \eqref{eq:induc_pf3}, for $i, j \geq 0$, we obtain 
\beq\label{eq:induc_pf4}
  |\pa_x^i H \cdot \pa_x^j G^{-1}|
  \les \one_{|x| \leq 1} + \one_{|x| \geq 1} x^{-1 - i -j} 
  \les \la x \ra^{-1-j-i}.
\eeq

Since $l \leq k$, we obtain $|\pa_x^l Z| \les \la x \ra^{-l- \gam-1} || Z||_{\cW_{\gam +1}^k}$. Thus, applying 
\eqref{eq:induc_pf4} to \eqref{eq:induc_goal}, we prove 
\[
  |\pa_x^{k+1} Z | \les_k \tts{\sum}_{i+j+l = k} \, \la x \ra^{-1-i -j} \cdot \la x \ra^{-l- \gam -1} || Z||_{\cW_{\gam + 1}^k} \les_k \la x \ra^{-k-1- \gam - 1} ,
\]
which implies $Z \in \cW_{\gam + 1}^{k+1}$. By induction, we prove $Z = \f{1}{x} W \in \cW_{\gam + 1}^{k+1}$ for any $ k \geq 0$.  Using Lemma \ref{lem:norm_W}, we prove \eqref{eq:profile_Ck}.
Moreover, using estimates \eqref{eq:vel_high:a} for $ \psio_{\al}$, we prove 
\eqref{eq:profile_Ck}. We complete the proof of Theorem \ref{thm:reg}.

\subsection{Smoothness and sharp decay of the $\f13$-profile}\label{sec:smooth_alb}

In this section, we establish the \(C^\infty\) smoothness of the profile and prove the sharp decay estimates and logarithmic cancellations, which are crucial for later construction of profiles with \(\alpha < \tfrac13\).

\begin{thm}\label{thm:reg_alb}

Let $\wwwa$ be the odd self-similar profile constructed in Theorem \ref{thm:fix_point} 
and  $\vvva = x +  \psio_{\alb}(\bw) ,\psio_{\alb}(\bw)= \cK_{\alb, 1}(\bw)$ be the 
associated velocity. We have 
\beq\label{eq:bw_sign}
\pa_x \wwwa(0) = \pa_x \bw(0) \in [-1 - 10^{-6}, -1 + 10^{-6}], \quad \wwwa \leq 0 ,  \ \forall \, x \geq 0.
\eeq

\begin{enumerate}[label=(\roman*), leftmargin=1.6em]

\item \textsl{$C^{\infty}$-regularity}: We have $\wwwa, \vvva \in C^{\infty}$. 
Moreover, for any $k \geq 0$, 
\beq\label{eq:bw_smooth}
  |\pa_x^k \wwwa | \les_k \ang x^{-k -\alb},  \quad |\pa_x^{k+2} \psio_{\alb}| \les  \ang x^{-k-1}  .
\eeq

\item 
\textsl{Asymptotics}: 
For $x\geq 0$, the profile satisfies the following asymptotics
\bseq\label{eq:bw_est}
\beq\label{eq:bw_est:a}
\bal
| \wwwa| & \asymp \min( |x|, \ang x^{-\alb}), & \quad   | \wwwa + 6 x^{-1/3} | & \les x^{-1/3} | \lgp x|^{-1/3},  \\
  \vvva & \asymp x \lgp x  , &  \quad  |  \pa_x \vvva - \tf{1}{x} \vvva - 4| & \les  |\lgp x|^{-1/3} ,
  \eal
\eeq
and logarithmic cancellations:
\beq\label{eq:bw_est:b}
    |\f{\pa_x \bw}{\bw}| \les x^{-1}, 
    \quad     |\f{x \pa_x \bw}{\bw} + \f13|  \les |\lgp x|^{-4/3}, 
    \quad  | \pa_x  ( \f{ x \pa_x \bw }{\bw} ) | \les  \ang x^{-1} |\lgp x|^{-1}.
\eeq
\eseq

\end{enumerate}

\end{thm}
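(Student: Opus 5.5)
The plan is to derive everything from three ingredients: the fixed point bound $\|w_{\alb}\|_{\bcX}\le\dfix$ from Theorem \ref{thm:fix_point}, the properties of $\bw,\bv$ in Lemmas \ref{lem:bw_basic_sign} and \ref{lem:W_asym}, and the general regularity result Theorem \ref{thm:reg} with $\al=\alb$, $\e=0$, $\g=\alb$ (so that $\cJe=\lgp x$).

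\textbf{Step 1 (sign and $\pa_x\wwwa(0)$).} Since $\vp\gtr |x|^{b_1}$ with $b_1\le -1$ near $0$, we have $|w_{\alb}|\le \dfix\mu_1|x|^{-b_1}=o(|x|)$ as $x\to0$. Hence $\pa_x\wwwa(0)=\pa_x\bw(0)$, and Lemma \ref{lem:bw_basic_sign} gives the stated interval. For the sign, compare $|w_{\alb}|\le\dfix\vp^{-1}\rho^{-1}$ with $|\bw|\ge 10^{-10}\min(|x|,|x|^{-1})$. This is too crude for large $x$, so I would instead use the sharper comparison $\vp^{-1}\rho^{-1}\les |\bw|\,|\lgp x|^{-1/3}$ from \eqref{eq:wg_rat} together with the fact that $\dfix\approx 3.5\cdot10^{-5}$ is small relative to $\vp|\bw|$. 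If the constants are not explicit enough, a cleaner route is the ODE itself: $\wwwa/x$ solves the linear equation \eqref{eq:solu_induc}, $\pa_x Z=(H/G)Z$, with locally integrable coefficient. So $Z$ never vanishes and keeps the sign of $Z(0)=\pa_x\wwwa(0)<0$. I would take this second route, since it avoids any constant tracking.

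\textbf{Step 2 (verifying \eqref{eq:Cinfty_ass}).} The bound $|\wwwa|\les\min(|x|,|x|^{-\alb})$ is \eqref{eq:profile_waaa1}. For the velocity, split $\psio_{\alb}(\wwwa)=\psio(\bw)+\psio(w_{\alb})$ and apply Corollary \ref{cor:vel_est}, i.e.\ \eqref{eq:coe_asymp:a}, to each piece; this gives $|\psio_x|+|\psio/x|\les\min(|x|^{\alb-b_1},|\lgp x|^{2/3})\les\min(|x|,\lgp x)$. The lower bound $1+\psio/x=\vvva/x\gtr\lgp x$ is \eqref{eq:profile_vvva1}. Theorem \ref{thm:reg} then yields $C^\infty$ regularity and \eqref{eq:bw_smooth}, with $k+\alb-\g$ in \eqref{eq:profile_Ck:b} becoming $\ang x^{-k}$ after an index shift. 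I should double-check this exponent against the claimed $\ang x^{-k-1}$ for $\pa_x^{k+2}\psio$; the claimed bound is exactly \eqref{eq:profile_Ck:b} applied at order $k+1$.

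\textbf{Step 3 (asymptotics).} The bound $|\wwwa+6x^{-1/3}|\les x^{-1/3}|\lgp x|^{-1/3}$ follows from the explicit form of $\bwf$ and from $|w_{\alb}|\les x^{-1/3}\rho^{-1}\asymp x^{-1/3}|\lgp x|^{-1/3}$. The relation $\vvva\asymp x\lgp x$ is \eqref{eq:profile_vvva1} combined with the upper bound of Step 2. For $\pa_x\vvva-\vvva/x-4$, write it as $(\bv_x-\bv/x-4)+(\psio_x-\psio/x)(w_{\alb})$ and use \eqref{eq:lin_nloc:far} and \eqref{eq:coe_asymp:b}. The first two cancellations in \eqref{eq:bw_est:b} are \eqref{eq:decay:W} together with smoothness near $0$.

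\textbf{Main obstacle.} The hard part is the last bound, $|\pa_x(x\bw_x/\bw)|\les\ang x^{-1}|\lgp x|^{-1}$. If it is meant for the approximate $\bw$, I would compute directly from $\bwf$ for large $x$: $x\bw_x/\bw=-\tfrac13+c(\log x)^{-4/3}+\dots$, whose derivative is $O(x^{-1}|\log x|^{-7/3})$. On the bounded support of $\bwp$, the $C^{2,1}$ regularity of $\bw$ and its nonvanishing away from $0$ give the bound. If it is meant for $\wwwa$, I would instead use the profile equation, $x\pa_x\wwwa/\wwwa=x(\tfrac43-\tfrac13\vvva_x)/\vvva$, and differentiate. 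This reduces the bound to $\pa_x(x\vvva_x/\vvva)$, which is controlled via $\vvva_x-\vvva/x=4+O(|\lgp x|^{-1/3})$, $\vvva\asymp x\lgp x$, and \eqref{eq:bw_smooth} for $\pa_x^2\psio$. Checking the rate in this case is where I expect the difficulty.
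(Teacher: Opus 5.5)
Your overall route is the paper's: Theorem~\ref{thm:reg} with $\e=0$, $\g=\alb$ gives smoothness, $\|w_{\alb}\|_{\bcX}\le\dfix$ gives the far-field asymptotics, and the profile equation gives the logarithmic cancellations. Your sign argument is a mild variant. You integrate $\pa_x Z=(H/G)Z$ globally from $x=0$, whereas the paper treats $[0,m_1]$ by Taylor expansion, $[m,\infty)$ by \eqref{eq:wwwa_prop_pf2}, and only $[m_1,m]$ along characteristics. Your version is valid and slightly cleaner: $G=2\vvva/x\gtr\lgp x$ by \eqref{eq:profile_vvva1} and $H=O(|x|^{1/3})$ near $0$, so $Z=Z(0)\exp(\int_0^x H/G)$. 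Combined with continuity and the far-field asymptotics, this also yields the lower bound in $|\wwwa|\asymp\min(|x|,\ang{x}^{-\alb})$. You never address that lower bound, so state it explicitly.

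\textbf{One step fails as written.} In Step~2 you apply \eqref{eq:coe_asymp:a} to $\psio(\bw)$. That estimate is in terms of $\|w\|_{\bcX}=\|w\vp\rho\|_{L^\infty}$, and $\|\bw\|_{\bcX}=\infty$. At infinity $\bw\vp\rho\sim(\log x)^{1/3}$, and near $0$, with the weight \eqref{def:mu_b}, $\bw\vp\sim|x|^{-0.2}$. Your intermediate bound $|\psio(\bw)/x|\les|\lgp x|^{2/3}$ is in fact false: $\psio(\bw)/x=\bv/x-1\approx 4\log x$ by \eqref{eq:decay:V}. The bound you actually need, $\les\min(|x|,\lgp x)$, is true. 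Obtain it as the paper does: apply the $\rho$-free estimate \eqref{eq:coe_asymp:0} to $\wwwa$ with $(b_1,b_2,\mu_1,\mu_2)=(-1,\alb,1,1)$, and use the direct bound $|\cJ(\wwwa)|\les\int_0^x y^{\alb-1}\min(y,y^{-\alb})\,dy\les\min(|x|,\lgp x)$.

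\textbf{On \eqref{eq:bw_est:b}.} The paper proves these bounds for $\wwwa$ and later uses them that way, e.g.\ in \eqref{eq:profile_bd1} and the proof of \eqref{eq:wa_est:b}. So the $\bw$ in the display is a typo, as is $\psio_{\alb}(\bw)$ in the definition of $\vvva$. Consequently \eqref{eq:decay:W} does not give the first two bounds. Use instead the identity you already wrote down, $\frac{x\pa_x\wwwa}{\wwwa}+\frac13=\frac{4+\vvva/x-\pa_x\vvva}{3\vvva/x}$; with your Step~3 bounds it gives $\les(\lgp x)^{-4/3}$ immediately. The third bound is not a real obstacle. With $g=\vvva/x$,
\[
\pa_x\Big(\frac{x\pa_x\wwwa}{\wwwa}\Big)=-\frac{\pa_{xx}\vvva}{3g}-\frac{\big(\frac43-\frac13\pa_x\vvva\big)\,\pa_x g}{g^2},
\]
and the following bounds give $\ang{x}^{-1}(\lgp x)^{-1}$ directly:
\begin{itemize}
\item $|\pa_{xx}\vvva|\les\ang{x}^{-1}$ by \eqref{eq:bw_smooth};
\item $|\pa_x g|=|\pa_x\vvva-g|/|x|\les\ang{x}^{-1}$;
\item $|\pa_x\vvva|\les\lgp x$ and $g\gtr\lgp x$.
\end{itemize}
This is exactly the paper's computation.
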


\begin{proof}

Recall from Theorem \ref{thm:fix_point} the locally Lipschitz solution $\wwwa = \bw + w_{\alb}$ to 
the profile equation  \eqref{eq:1D_dyn}.
We apply Theorem \ref{thm:reg} to prove $\wwwa \in C^{\infty}$. From \eqref{eq:profile_waaa1}, \eqref{eq:profile_vvva1}, 
we obtain
\[
   |\wwwa(x)| \les \min(|x|, |x|^{-\alb}). 
   \quad  1 +  \tf{1}{x}  \psioa(\wwwa )  = \tf{1}{x} \vvva  \gtr \lgp x .
\]

Using the definition of $\cJ_{\alb}(\wwwa)$ \eqref{eq:Jw_alb} and the above estimate for $\wwwa$, we bound 
\[
|\cJ_{\alb}(\wwwa)(x)  | \les \int_0^x y^{\alb -1 } (|y| \we |y|^{-\alb} ) d y \les \min( |x|, \lgp x ).
\]

Applying estimate \eqref{eq:coe_asymp:0} 
in Corollary \ref{cor:vel_est} with parameters $n = 2,  (b_1, b_2, \mu_1, \mu_2) = (-1, \alb, 1, 1)$
and the above estimates on $\wwwa$, we obtain 
\beq\label{eq:wwwa_prop_pf0}
\bal
  |\pa_x \psioab( \wwwa)  | + |\tf{1}{x} \psioab(\wwwa)| & \les 
    | \pa_x \psioab ( \wwwa) + 2 \alb \cJ_{\alb}(\wwwa)  | + |\tf{1}{x} \psioab + 2 \alb \cJ|  \\
& \les \min( |x|, 1) + \min( |x|, \lgp x) \les \min(|x|, \lgp x).
\eal
\eeq

Thus, the assumptions in \eqref{eq:Cinfty_ass} are satisfied  for $\al = \f13, \e=0, \gam = \alb$.
Moreover, using Theorem \ref{thm:fix_point}, we obtain that $\wwwa$ is locally Lipschitz.
Using Theorem \ref{thm:reg} and Lemma \ref{lem:norm_W}, we prove $\wwwa / x \in W_{\alb + 1}^{\infty}$and $\wwwa \in \cW_{\alb}^{\infty} \subset C^{\infty}$.

\vs{0.1in}
\paragraph{\bf Proof of \eqref{eq:bw_est:a}}

Recall $\bwp,\bw, \bwf$ from \eqref{eq:W_rep}. It suffices to consider 
$x$ large enough  so that 
$\bw = \bwf$ \eqref{eq:W_rep} and $\wwwa = w_{\alb} + \bw =w_{\alb} + \bwf$. Since $ \| w_{\alb} \|_{\bcX} \leq \dfix$, 
using definition of $\bcX$-norm \eqref{norm:X}, Corollary \ref{cor:vel_est} and estimate \eqref{eq:est_vmix}, we obtain
\beq\label{eq:wwwa_prop_pf1}
| w_{\alb}| \les \min( |x|^{-\bb_1} ,  \ang x^{-1/3} (\lgp x )^{-1/3} ) ,
\quad  |\psio_x(w_{\alb}) - \tf{1}{x} \psio( w_{\alb} )  | \les (\lgp x)^{-1/3}. 
\eeq
Since $\vvva = \vvva + \psio( w_{\alb})$ \eqref{eq:profile_alb}, using \eqref{eq:wwwa_prop_pf1}, the formula for $\bwf$ \eqref{eq:W_rep} and \eqref{eq:lin_nloc:far}, we prove
\beq\label{eq:wwwa_prop_pf2}
 |\wwwa + 6 x^{-\f13}| \les |\bwf + 6 x^{-\f13}| + |x|^{- \f13 } (\log x )^{-\f13}  \les   |x|^{- \f13} (\log x )^{- \f13} , 
 \quad | \pa_x \vvva-\tf{1}{x} \vvva - 4| \les (\lgp x )^{- \f13} .
\eeq

Since $\vvva = x + \psioab(\wwwa)$, combining the lower bound in \eqref{eq:profile_vvva1} and the estimate \eqref{eq:wwwa_prop_pf0},  we prove
$\vvva \asymp x \lgp x $ in \eqref{eq:bw_est:a}.

From \eqref{eq:wwwa_prop_pf2}, we obtain $\wwwa <0$ and $-\wwwa \asymp x^{-1/3}$ for $x> m$ 
with $m$ large enough. Since $\pa_x \bw(0) < -1$ by Lemma \ref{lem:bw_basic_sign},
 $\bb_1  < -1 $ \eqref{def:mu_b}, and $\wwwa \in C^{\infty}$,  we obtain 
\beq\label{eq:deri_wwwa0}
\pa_x \wwwa(0) = \pa_x \bw(0) \in [-1 - 10^{-6}, -1 + 10^{-6}],  \quad \wwwa \asymp -x 
\eeq
for 
 $0\leq x \leq m_1 < m$ with some small $m_1>0$. For $x \in [ m_1, m]$, using the profile equation 
\eqref{eq:1D_dyn} with $\al = \alb$  for $(\wwwa, \vvva)$,  $\vvva \gtr x \lgp x$ 
\eqref{eq:profile_vvva1}, and solving $\wwwa$ along the charactieristic, we obtain
\[
\wwwa(x) <0,  \quad \wwwa(x) \asymp \wwwa( m), \quad x \in [ m_1, m]. 
\]
Combining the above estimates and \eqref{eq:deri_wwwa0}, we prove \eqref{eq:bw_est:a} and \eqref{eq:wa_sign}.

\vs{0.1in}
\paragraph{\bf Proof of \eqref{eq:bw_est:b}}

Multiplying \eqref{eq:1D_dyn} by $x^{1/3}$ and using \eqref{eq:wwwa_prop_pf2} for $\vvva$, we obtain
\[
 2 \vvva \pa_x ( x^{\f13} \wwwa) = ( \f83 - \f23 \pa_x \vvva +  \f{2}{3 x} \vvva ) ( x^{ \f13 } \wwwa)  \, \Rightarrow \, \B| \f{\pa_x ( x^{1/3} \wwwa)  }{ 
 x^{1/3} \wwwa } \B| =  \B|\f{ 
4 - \pa_x \vvva +  \tf{1}{x} \vvva  }{3 \vvva}  \B|
 \les x^{-1} ( \lgp x)^{- \f43}.
\]  
Since $ \f{\pa_x ( x^{1/3} \wwwa)  }{  x^{1/3} \wwwa } = \f13 + \f{ x \pa_x \wwwa }{\wwwa} $, we prove the first two estimates in \eqref{eq:bw_est:b}.

Denote $g = \f{1}{x} \vvva$. Using the profile equation \eqref{eq:1D_dyn} and taking derivatives, we obtain
\[
  \f{x \pa_x \wwwa}{\wwwa} =  \f{\tf 43- \tf13 \pa_x \vvva }{  g },  \quad \pa_x (   \f{x \pa_x \wwwa}{\wwwa} ) 
  = \f{ \pa_x( \tf 43- \tf13 \pa_x \vvva ) }{g} - \f{(\tf 43- \tf13 \pa_x \vvva) \pa_x g }{g^2}.
\]

Using \eqref{eq:profile_Ck} with $\al = \g = \alb$, $\pa_{xx} \vvva = \pa_{xx} \psio_{\alb}(\wwwa)$, $\pa_x g = \f{1}{x}( \pa_x \vvva - \tf1x \vvva)$, and \eqref{eq:wwwa_prop_pf1}, we prove 
\[
 |\pa_x g | \les \ang x ^{-1}, \quad |\pa_{xx} \vvva| \les \ang x^{-1},
 \quad g \gtr \lgp x, \quad \Rightarrow | \pa_x (   \tf{x \pa_x \wwwa}{\wwwa} )  | \les \ang x^{-1} 
 |\lgp x|^{-1}.
\]
We complete the proof.
\end{proof}

\section{Analytic construction of the approximate $(\f13-\epsilon)$-profile }\label{sec:1D_profile_appr}

In this section,  we construct the approximate $C^{1/3-\e}$ profile by perturbing the $C^{1/3}$ profile $\wwwa $ constructed in Theorem \ref{thm:reg_alb}. 
We refer the motivation of this construction to  Section \ref{sec:idea_step2}. The proofs in this section and Section 
\ref{sec:1D_profile} are purely analytic (pen-and-paper).

Since $\wwwa, \vvva$ has been fixed and independent of $\al$, 
 in Section \ref{sec:1D_profile_appr} and \ref{sec:1D_profile},  we treat any constants depending on $\wwwa, \vvva$ as absolute constants. Recall the profile equation from \eqref{eq:1D_dyn}
  \beq\label{eq:1D_dyn1}
\bga 
 2 V \pa_x W = (3- \al - (1-\al) V_x) W  .
\ega 
\eeq
We adopt the notations in \eqref{eq:1D_normal} and impose the normalization conditions \eqref{eq:1D_normal:b}. Denote 
\bseq\label{def:para}
\beq
\bga
\alb \teq  \tf{1}{3}, \quad \e \teq \tf13 - \al,  
    \quad \he \teq \tf13 - \al -\b = \e - \b . \\
\ega
\eeq

We consider parameters in the following ranges 
\beq\label{def:eb_ineq}
 \e =  \alb - \al \in [0,  \tf{1}{1000}  ) , \quad  \b \in [-\e, 0] ,
 \quad \he = \e -\b \in [\e, 2 \e] .
\eeq
\eseq

For $\b \in [-\e, 0] $ to be chosen, we construct an approximate profile $(\wa, \va)$  by perturbing $\wwwa$ 
\beq\label{eq:wa}
\bal
\wa & = \wwwa (1 + x^2)^{\b / 2}, 
    & \quad \va &= x + \psioa(\wa) = x + \psia(\wa) - \psi_{\al,x}(\wa)(0) x, \\
    \cJab  & \teq \cJa(\wa) , & \quad \cJaa  & = (1 + \cJab^2)^{1/2} .
\eal
\eeq
Since $ -\alb + \b + \al-1 <0$, we obtain that the symmetrized integral for $\psia(\wa), \psi_{\al, x}(\wa)(0)$ converges and $\psi_{\al, x}(\wa)(0)$ is bounded.
We define the relative residual error associated with $(\va, \wa)$
\beq\label{eq:err_crab}
    \crab = (3-\al) -(1-\al) \pa_x {\va} - 2 \va \f{\pa_x \wa }{ \wa } .
\eeq

Formally, as $\al \to (\f{1}{3})^-$ and $\b \to 0$, we have $W_{\b} \to \wwwa$ and $\crab \to \cR_{\alb,0} = 0$.

The main result in this section is the following.

\begin{thm}\label{thm:1D_error}

There exists $ \beps_1 < \f{1}{1000}$ such that for any $\al$ with $\alb - \al = \e \in (0, \beps_1 ]$, there exists a unique solution $\b= \b(\al)$ in the range $[- \f{1}{2}\e , 0]$ to
\[
(2\al \e+4\al \b) \int_0^{\infty} \wwwa(y) \ang y^{\b} y^{\al-1} - 2 \b = - \f83,
\]
and $\beta $ satisfies 
\beq   \label{eq:beta_est}
\b  = \b(\al) = - \f{\e}{8} + O(\e^{4/3}) \in [-\f{\e}{2}, 0], 
   \quad \he = \e - \b \in [\e, \f32 \e] .
\eeq
Moreover, the error $\crab$ associated to  $W_{\b} = \wwwa \ang x^{\b}$ satisfies
\beq\label{eq:err_comp_R4}
  | \crab | 
  \les  \e  \min\B(  x, \  \min(  \lgp x, \e^{-1} )   |\lgp x|^{-1/3}   \B) 
  \les \e^{1/3}.
\eeq

\end{thm}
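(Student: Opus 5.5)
The plan is to treat the two claims separately: first solve the scalar equation for $\b$ by a one-dimensional perturbation argument in $\e$, then expand $\crab$ in the far field and use that equation to cancel the leading $O(\e\lgp x)$ part of the error.

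\emph{Step 1: the equation for $\b$.} Define
\[
F(\b) \teq (2\al\e+4\al\b)\int_0^\infty \wwwa \ang y^{\b} y^{\al-1}\,dy - 2\b + \tf83 .
\]
By Theorem \ref{thm:reg_alb} we have $\wwwa = -6y^{-1/3}(1+O(|\lgp y|^{-1/3}))$. Hence the integrand behaves like $-6y^{\b-\e-1}$ at infinity, and, since $\b-\e<0$,
\[
\int_0^\infty \wwwa \ang y^{\b} y^{\al-1}\,dy = -\frac{6}{\e-\b} + O\big((\e-\b)^{-2/3}\big).
\]
To see this, split at $y=1$ and integrate $y^{-(\e-\b)-1}(\lgp y)^{-1/3}$ by the substitution $s=(\e-\b)\log y$. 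Multiplying by $2\al(\e+2\b)$ with $\al=\f13+O(\e)$ gives
\[
F(\b) = -4\,\frac{\e+2\b}{\e-\b} + \tf83 - 2\b + O(\e^{1/3}).
\]
The main part vanishes exactly when $-4(\e+2\b) + \f83(\e-\b) = 0$, i.e. $\b=-\e/8$. Writing $\b=\e t$ with $t\in[-\f12,0]$, the main part is a strictly monotone function of $t$ (its derivative is bounded away from $0$), while the remainder is $O(\e^{1/3})$ and Lipschitz in $t$ with small constant. Indeed, differentiating in $\b$ brings down $\log\ang y$, which costs a factor $(\e-\b)^{-1}$ and is compensated by the $\e$ scaling. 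Monotonicity then gives existence and uniqueness, with $t=-\f18+O(\e^{1/3})$. This is \eqref{eq:beta_est}, and $\he=\e-\b\in[\e,\f32\e]$ follows immediately.

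\emph{Step 2: expansion of $\crab$.} Write $\wa = \wwwa\ang x^{\b}$, so that
\[
\frac{x\pa_x\wa}{\wa} = \frac{x\pa_x\wwwa}{\wwwa} + \b\frac{x^2}{1+x^2}.
\]
Next, decompose $\va = \vvva + (\psioa(\wa)-\psio_{\alb}(\wwwa))$. Subtracting the exact $\f13$-profile equation, which gives $\mathcal R_{\alb}(\wwwa)=0$, one obtains
\[
\crab = \e(\ldots) - (1-\al)\pa_x\delta\psi - 2\,\delta\psi\,\frac{\pa_x\wwwa}{\wwwa} - 2\b\,\va\frac{x}{1+x^2},
\]
where $\delta\psi$ is the difference of the modified stream functions. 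Near the origin everything is $O(\e x)$. This uses smooth dependence of the kernels on $\al$ (the derivative in $\al$ produces $\log$ factors, which are harmless for $x\le 1$) together with $\pa_x\wwwa(0)$ being finite. For large $x$ I would reuse the decomposition $\psioa = \cK_{\al,1,J} - 2\al x\cJa$ from \eqref{eq:cK_cKJ}. The $J$-parts are uniformly bounded by Corollary \ref{cor:vel_est}-type scaling estimates, with differences $O(\e)$, or $O(\e\lgp x)$ where necessary. The dominant contribution is $-2\al x\,\cJab$, where $\cJab(x) = \int_0^x\wwwa\ang y^\b y^{\al-1}$ is of size $\min(\lgp x,\e^{-1})$.

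Collecting terms, the part of $\crab$ that grows like $\e\cdot\cJab$ carries the coefficient
\[
(2\al\e+4\al\b)\,\cJab - 2\b\cdot(\text{velocity}/x) + \ldots,
\]
the leading piece of $-(1-\al)\pa_x\va - 2\va\,\pa_x\wa/\wa$. Using \eqref{eq:bw_est:b} in the form $x\pa_x\wwwa/\wwwa = -\f13+O(|\lgp x|^{-4/3})$ and $\pa_x\vvva - \vvva/x - 4 = O(|\lgp x|^{-1/3})$, this reduces to an expression in $\cJab(x)$. As $x\to\infty$ it tends to the constant $F(\b)-\f83$ times a fixed factor. The equation defining $\b$ is exactly the statement that this limit cancels the constant $\f83$-type term coming from $(3-\al)-(1-\al)\cdot 4+\ldots$. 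After the cancellation, what remains is governed by the tail $\int_x^\infty$ and by the $|\lgp x|^{-1/3}$ corrections. These should give $\e\min(\lgp x,\e^{-1})|\lgp x|^{-1/3}$, which is \eqref{eq:err_comp_R4}; the final bound $\les\e^{1/3}$ follows by maximizing over $x$.

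\emph{Main obstacle.} The hard step is Step 2 in the intermediate range $1\ll\lgp x\lesssim\e^{-1}$. There every $O(\e)$ perturbation is multiplied by $\lgp x$, so one must verify that all $O(\e\lgp x)$ terms cancel exactly and not merely up to a constant. I would first carry out this bookkeeping with $\wwwa$ replaced by its asymptotic form $-6x^{-1/3}$, where every integral is explicit. After that I would control the correction $\wwwa+6x^{-1/3} = O(x^{-1/3}|\lgp x|^{-1/3})$ perturbatively, using the cancellations in \eqref{eq:bw_est:b}.
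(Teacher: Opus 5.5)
Your Step 1 is sound: the $t$-derivative of the remainder is indeed $O(\e^{1/3})$, since the $\b$-derivative of the error integral is $O(\he^{-5/3})$. The paper gets monotonicity more cheaply. On $[-\e/2,0]$ one has $2\al\e+4\al\b\ge 0$ and $\wwwa<0$, so every term of $\pa_\b G$ is negative, where $G=F-\tf83$. Also $G(-\e/2)=\e>-\tf83>G(0)=-4+O(\e^{1/3})$.

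The genuine gap is in Step 2, at the point you leave as ``these should give''. What is missing is the far-field shape of the mixed velocity of $\va$. In the paper this is Lemma \ref{lem:vel_comp}: rescaling $y=xz$ and using homogeneity of the $J$-kernels gives $\cK_{\al,i,J}(\wwwa\ang y^{\b})=x^{-\he}\cK_{\alb,i,J}(\wwwa)+O(\e x^{2-i}\dda)$. Hence $\pa_x\va-\tf1x\va=x^{-\he}(\pa_x\vvva-\tf1x\vvva)+O(\e\dda)\approx 4x^{-\he}$. Subtracting the exact $\tf13$-equation then gives
\[
\crab=(2\al\e+4\al\b)\cJab+\tf83\,(1-x^{-\he})-2\b+O(\ddc).
\]
Each remainder either carries the factor $|1-x^{-\he}|\les\min(\e\lgp x,1)$, e.g.\ $(x^{-\he}-1)(\pa_x\vvva-\tf1x\vvva-4)$, or is multiplied by $\tf{\pa_x\wwwa}{\wwwa}+\tf{1}{3x}=O(x^{-1}|\lgp x|^{-4/3})$. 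Since $\cJab=\cca\he^{-1}(1-\ang x^{-\he})+O(\min(\lgp x,\e^{-1})^{2/3})$, the main part is $(1-x^{-\he})\big[\cca\tf{2\al\e+4\al\b}{\he}+\tf83\big]-2\b$. The bracket is $O(\e^{1/3})$ by the $\b$-equation, and this is the exact cancellation of the $O(\e\lgp x)$ growth when $\e\lgp x\le 1$. When $\e\lgp x>1$, the same $\tf83x^{-\he}$ term cancels the tail $(2\al\e+4\al\b)(\cJab(x)-\cJab(\infty))\approx-\cca\tf{2\al\e+4\al\b}{\he}\,x^{-\he}$. That tail by itself is $O(x^{-\he})$, which is not small for $\lgp x\sim\e^{-1}$.

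Your plan never produces the term $\tf83(1-x^{-\he})$. Declaring the differences of the $J$-parts to be ``$O(\e)$, or $O(\e\lgp x)$ where necessary'' discards a term of exactly the critical size $\min(\e\lgp x,1)$. So the bound $\e\min(\lgp x,\e^{-1})|\lgp x|^{-1/3}$ cannot come out.

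Your account of the constant shows the same problem. Note $(3-\al)-(1-\al)\cdot 4=-3\e$, whereas the $\tf83$ is $4(1-\alb)$, the far-field value of $(1-\alb)(\pa_x\vvva-\tf1x\vvva)$. That term is present for the $\tf13$-profile but decays like $x^{-\he}$ for $\va$.

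Likewise, handling $\wwwa+6x^{-1/3}$ ``perturbatively'' only gives errors of size $|\lgp x|^{-1/3}$. When $\e\lgp x\ll 1$ the target is $\e(\lgp x)^{2/3}$, which is much smaller. The perturbative step works only with a comparison of the above type between the $(\al,\b)$- and $(\alb,0)$-operators acting on the correction, so that the difference acquires the factor $\min(\e\lgp x,1)$.
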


To analyze $\crab$, we compare the terms in $\crab$ and $\bar \cR$ one by one. Below, the reader should pay attention to the estimate for large $x$ since 
$\e$ is very small and $ | \la x \ra^{\e} - 1 | \les \e \log \ang x$ for $\e \log \ang x < 1$.

\begin{remark}[\bf Error $x \pa_x \crab$ is not small]

We cannot use $\wa$ as an approximate profile for constructing the 3D profile in 
\cite{chen2026eulerII} since we perform weighted $W^{1,\infty}$ estimates in
\cite{chen2026eulerII} and $ x \pa_x \crab$ is not small enough to close the estimates. 
We therefore construct the exact $\alpha$-profile in Section \ref{sec:1D_profile} with $x\pa_x\crab = 0$, which is crucial for the 3D construction in \cite{chen2026eulerII}.

\end{remark}

\subsection{Estimate and perturbation of velocity}

In this section, we estimate the difference between $\va$ and $\vvva$. 
We introduce the following functions to track error terms
\beq\label{def:err_del}
\bal
  \dda(x) &= \min( |x|^{  \al + 1} + |x|^{\al +\b + 1} , |x|^{-\he} ), \\
    \ddc(x) & = \min(\e \lgp x, 1) (\lgp x )^{-1/3} + \e \dda(x) + \e (\lgp x)^{-1/3}, \\
\eal
\eeq
where $\lgp x = \log(x + 2) $ for $ x \geq 0$ \eqref{def:lgp}.

Our key observation is that the 
velocity $V_{\al, \b}$ \eqref{eq:wa} can be approximated by 
$\al+\b$-velocity of $\wwwa$. These two terms have the same decay for large $x$. We have the following estimates of the difference.

\begin{lem}[\bf Comparison with $\alb$-nonlocal operator]\label{lem:vel_comp_gen}
Recall the kernel defined in \eqref{eq:ker}. For any $\g \in (\alb- \f{1}{500},\alb)$, $i=1,2$, 
 and $\e_{\g} = \alb - \g$, we have
\bseq\label{eq:vel_comp_gen}  
\begin{align}
  \B|  \cK_{ \g, i, J}( w ) -  |x|^{- \e_{\g}}  \cK_{\bar \al, i, J}( w ) \B| 
 & \les \e_{\g}  |x|^{2 -i} \min( |x|^{\g+1} || w x^{-1} ||_{\linf}, \ |x|^{- \e_{\g}} || w x^{\alb} ||_{\linf} ) , 
  \label{eq:vel_comp_gen:a}   \\
   |\cJ_{\g}(w)(x) - \cJ_{\alb}(w)(x)|  & \les \min( \e_{\g} |\lgp x|^2, \  \e_{\g} x , \ \lgp |x|  )
\cdot \max_{y \leq x} | w (|y|^{-1} + |y|^{\alb}) | ,
  \label{eq:vel_comp_gen:b} 
\end{align}
\eseq

\end{lem}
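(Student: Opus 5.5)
Both estimates reduce to pointwise kernel comparisons after rescaling $y=xz$, using the scaling identities \eqref{eq:ker_scale}. Fix $x>0$ (the claims are trivial at $x=0$) and write $\g=\alb-\e_\g$.

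\textbf{Estimate \eqref{eq:vel_comp_gen:a}.} By homogeneity,
\[
\cK_{\g,i,J}(w)(x)-|x|^{-\e_\g}\cK_{\alb,i,J}(w)(x)=|x|^{\g+2-i}\int_0^\infty \big(K_{\g,i,J}(1,z)-K_{\alb,i,J}(1,z)\big)\,w(xz)\,dz ,
\]
since $|x|^{-\e_\g}|x|^{\alb+1-i}=|x|^{\g+1-i}$. The key step is the kernel-difference bound
\[
|K_{\g,i,J}(1,z)-K_{\alb,i,J}(1,z)| \les \e_\g\,(1+|\log z|+|\log|1-z||)\,k_i(z),
\]
where $k_i$ is the majorant of $K_{\cdot,i,J}(1,z)$ from Lemma \ref{lem:K_decay_basic}: regular near $z=0$, with an $|1-z|^{\alb-1}$ singularity for $i=2$, and decaying like $z^{\alb-3}$ (up to a small power loss) for $z>2$. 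To get it, differentiate each term of $K_{a,i,J}(1,z)$ in the exponent $a$ over $[\g,\alb]$. For instance,
\[
\pa_a\big(|1+z|^a-|1-z|^a-2az^{a-1}\one_{z\ge1}\big)
\]
produces logarithmic factors times the same expressions. The cancellation structure (Taylor expansion in $1/z$ for $z>2$, oddness for $z$ small) is preserved, since the derivative in $a$ commutes with these expansions. The logarithms are then absorbed into a tiny loss of decay, which is harmless because $\e_\g<1/500$.

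With the kernel bound in hand, the two branches of the min come from the two bounds on $w$.
\begin{itemize}
\item Inserting $|w(xz)|\le xz\,\|w/x\|_\infty$ gives the factor $|x|^{\g+3-i}$. Near $z=0$ the truncated kernel gives enough vanishing, and convergence at infinity uses $z\cdot z^{\g-3}\log z$.
\item Inserting $|w(xz)|\le (xz)^{-\alb}\|wx^{\alb}\|_\infty$ gives $|x|^{\g+2-i-\alb}=|x|^{2-i-\e_\g}$. Here the integral converges at $0$ because $z^{-\alb}$ is integrable against the regular kernel, and at infinity because $z^{\alb-3-\alb}$ decays.
\end{itemize}

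\textbf{Estimate \eqref{eq:vel_comp_gen:b}.} Write
\[
\cJ_\g-\cJ_{\alb}=\int_0^x y^{\alb-1}\big(y^{-\e_\g}-1\big)\,w\,dy .
\]
Let $M=\max_{y\le x}|w(|y|^{-1}+|y|^{\alb})|$, so that $|w(y)|\le M\min(y,y^{-\alb})$.
\begin{itemize}
\item For $y\le1$, use $|y^{-\e_\g}-1|\les \e_\g|\log y|$. The contribution is bounded by $\e_\g\int_0^{\min(x,1)} y^{\alb}|\log y|\,dy\les \e_\g\min(x,1)$.
\item For $1\le y\le x$, the integrand is bounded by $y^{-1}|y^{-\e_\g}-1|$. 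Using $|y^{-\e_\g}-1|\les\min(\e_\g\log y,1)$ gives both $\e_\g(\log x)^2$ and $\log x$.
\item Finally, $\e_\g\log^2 x\les \e_\g x$ for $x\ge1$, which gives the $\e_\g x$ branch.
\end{itemize}
Taking the minimum of these bounds gives \eqref{eq:vel_comp_gen:b}.

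\textbf{Main obstacle.} The delicate point is the uniform kernel-difference bound near $z=1$ for $i=2$, together with the far-field cancellation for $z>2$. There I would first apply the mean value theorem in the exponent $a$ to $|1\pm z|^{a}$, and only afterwards expand in $1/z$ to cancel the $2az^{a-1}$ term.
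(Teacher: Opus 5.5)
Your proposal is correct and is essentially the paper's proof. The paper likewise rescales $y=xz$, splits off $|x|^{-\e_\g}\cK_{\alb,i,J}(w)$, and bounds the kernel difference by $\e_\g$ times an integrable majorant. That bound is stated separately as Lemma~\ref{lem:K_decay} and proved by differentiating in the exponent, with the far field handled through the integral formula \eqref{eq:K_intform} rather than a $1/z$ expansion. The paper then inserts $|w(xz)|\les (xz)^k\|w x^{-k}\|_{L^\infty}$ with $k=1,-\alb$, and treats $\cJ_\g-\cJ_{\alb}$ by the same elementary splitting.

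Two small corrections. First, on $(0,1]$ the inequality $|y^{-\e_\g}-1|\les \e_\g|\log y|$ fails once $\e_\g|\log y|\gg 1$. Use $|y^{-\e_\g}-1|\le \e_\g|\log y|\,y^{-\e_\g}$ instead, as the paper does; it still gives $\e_\g\int_0^{\min(x,1)}y^{\alb-\e_\g}|\log y|\,dy\les \e_\g\min(x,1)$. Second, in your ``main obstacle'' remark, apply the mean value theorem to the whole map $a\mapsto K_{a,i,J}(1,z)$, as your earlier paragraph does. Applying it termwise to $|1\pm z|^a$ produces different intermediate exponents and destroys the far-field cancellation.
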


\begin{lem}[\bf Exchanging power weight with nonlocal operator]\label{lem:vel_comp}
Recall the kernel defined in \eqref{eq:ker}, $\dda$ from \eqref{def:err_del}, $\wa$ from \eqref{eq:wa}, and  $\e , \he$ from \eqref{def:para}:
\[
\dda(x) = \min(|x|^{\al + \b+1} + |x|^{\al + 1}, |x|^{-\he}),
\quad \wa= \wwwa \ang x^{\b},  \quad  \e = \bar \al - \al, \quad \he = \bar \al - \al - \b .
\]
Consider $i=1,2$. For  any $\e$ and $\b \in [-\e,0]$ satisfying \eqref{def:eb_ineq} 
and $x \geq 0$, we have 
\bseq\label{eq:vel_pow}
\begin{align}
  |   \cK_{\al, i, J}  ( \wa  ) - 
  \cK_{\al + \b , i, J } (\wwwa)  | & \les \e \cdot |x|^{2-i} \dda, 
  \label{eq:vel_pow:a} \\
|\cJ_{\al}( \wa  ) - \cJ_{\al + \b}(\wwwa) | & \les \e \min(x, 1) , \label{eq:vel_pow:b}   \\
 \B|  \cK_{\al + \b, i, J}( \wwwa) -  |x|^{-\he}  \cK_{\bar \al, i, J}(\wwwa) \B| 
& \les \e \cdot |x|^{2 -i} \dda(x),  \label{eq:vel_comp}  \\
|\cJ_{\al + \b}(\wwwa) - \cJ_{\alb}(\wwwa)|  & \les \min( \e \, |\lgp x|^2, \, \e \cdot x ,\, \lgp |x|  ). 
\label{eq:vel_pow:c} . 
\end{align}
\eseq

\end{lem}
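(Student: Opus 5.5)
The plan is to prove the four estimates in the order \eqref{eq:vel_comp}, \eqref{eq:vel_pow:c}, \eqref{eq:vel_pow:b}, \eqref{eq:vel_pow:a}, treating $\wwwa$ as a fixed profile. By Theorem \ref{thm:reg_alb}, $|\wwwa|\les \min(|x|,\ang x^{-\alb})$, so $\| \wwwa x^{-1}\|_{\linf}+\|\wwwa x^{\alb}\|_{\linf}\les 1$.

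\textbf{The two estimates that follow from Lemma \ref{lem:vel_comp_gen}.} Estimate \eqref{eq:vel_comp} is Lemma \ref{lem:vel_comp_gen}, \eqref{eq:vel_comp_gen:a}, with $\g=\al+\b$ and $w=\wwwa$. Then $\e_\g=\alb-\al-\b=\he\in[\e,2\e]$, which lies in the admissible range since $\e<\f1{1000}$. The resulting bound is
\[
\e\,|x|^{2-i}\min(|x|^{\al+\b+1},|x|^{-\he})\les \e\,|x|^{2-i}\dda(x).
\]
Estimate \eqref{eq:vel_pow:c} is likewise \eqref{eq:vel_comp_gen:b} with the same $\g$, using $\e_\g\les\e$ and $\max_{y\le x}|\wwwa(|y|^{-1}+|y|^{\alb})|\les 1$.

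\textbf{Estimate \eqref{eq:vel_pow:b}.} This one is direct. By definition of $\cJa$ in \eqref{eq:Jw},
\[
\cJ_\al(\wa)-\cJ_{\al+\b}(\wwwa)=\int_0^x y^{\al-1}\wwwa(y)\big(\ang y^{\b}-y^{\b}\big)\,dy .
\]
I split the integral at $y=1$.
\begin{itemize}
\item For $y\le1$, use $|\ang y^\b-y^\b|\les|\b|\,y^{\b}(1+|\log y|)$ and $|\wwwa|\les y$. The integrand is then bounded by $\e\, y^{\al+\b}(1+|\log y|)$, which integrates to $\les \e\min(x,1)$ because $\al+\b>0.3$.
\item For $y\ge1$, use $|\ang y^\b-y^\b|\les|\b|y^{\b-2}$. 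The integrand is then $O(\e y^{-3})$, and its integral is $O(\e)$.
\end{itemize}

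\textbf{Estimate \eqref{eq:vel_pow:a}, the main obstacle.} One cannot simply apply \eqref{eq:vel_comp_gen:a} with $\g=\al$ and $w=\wa$. That would produce an error $\e|x|^{2-i}|x|^{-\e}$, and for large $x$ this is worse than $|x|^{-\he}$ since $\he\ge\e$. It would also discard the extra decay $\ang y^{\b}$ of $\wa$. Instead I compare the kernels directly. Rescale $y=xz$ using the homogeneity \eqref{eq:ker_scale}, and write
\[
K_{\al,i,J}(x,y)\ang y^\b-K_{\al+\b,i,J}(x,y)=K_{\al,i,J}(x,y)\big(\ang y^\b-|x|^\b|z|^{\b}\big)+\big(|x|^{\b}|z|^\b K_{\al,i,J}(x,y)-K_{\al+\b,i,J}(x,y)\big).
\]
The first piece is handled as in \eqref{eq:vel_pow:b}: the gap $\ang y^\b-|y|^\b$ is $O(\e)$ times an integrable weight. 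In the second piece, the terms of the form $|1\pm z|^{\al}|z|^{\b}$ versus $|1\pm z|^{\al+\b}$ (and the analogous subtracted terms in the kernel) differ by factors $|a^\b-b^\b|\le|\b|\,|\log(a/b)|\max(a^\b,b^\b)$. This yields $\e$ times the kernel bound of Lemma \ref{lem:K_decay_basic} with an extra logarithm. That logarithm is integrable near $z=0$, $z=1$ and $z=\infty$, because of the margins $\al+\b-1>-1$ and $\al-3+\alb<-1$.

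Integrating against $|\wwwa(xz)|\les\min(x z,(xz)^{-\alb})$ then gives the two regimes. Near $0$ the bound is $\e|x|^{2-i}(|x|^{\al+1}+|x|^{\al+\b+1})$. For large $x$ it is $\e|x|^{2-i}|x|^{\al+\b-\alb}=\e|x|^{2-i}|x|^{-\he}$, where the power $|x|^{\b}$ produced by the rescaling is exactly what supplies the sharper decay $|x|^{-\he}$.

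The delicate point is to verify that, after subtracting the $\one_{y\ge x}$ correction in $K_{\bullet,i,J}$, the logarithmic factors do not destroy integrability at $z\to\infty$. I will check this first, by expanding both kernels to order $z^{\al+\b-3}\log z$ there.
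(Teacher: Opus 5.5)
The genuine gap is the small-$x$ half of \eqref{eq:vel_pow:a}. For $y\le 1$ your first piece carries $\ang{y}^{\b}-y^{\b}\approx 1-y^{\b}$. This factor has size $|\b|\,|\log y|$, and becomes comparable to $y^{\b}$ once $|\b\log y|\gtrsim 1$. After substituting $y=xz$, the best it gives for $x\le 1$ is $\e\,x^{\al+\b+3-i}(1+|\log x|)$. That is one logarithm more than the bound $\e\,x^{2-i}(x^{\al+1}+x^{\al+\b+1})$ you assert.

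No other splitting can remove this, because the inequality itself fails near $x=0$. Take a bounded, smooth, odd $w$ with $\pa_x w(0)=a$. Rescaling gives $\cK_{\g,1,J}(w)(x)=a\,I_0(\g)\,x^{\g+2}+O(x^3)$, uniformly for $\g$ near $\alb$, where $I_0(\g)=\int_0^\infty K_{\g,1,J}(1,z)\,z\,dz>0$. The integrand is positive, using \eqref{eq:KJ_sign} for $z>1$. By \eqref{eq:bw_sign}, $\pa_x\wa(0)=\pa_x\wwwa(0)\neq 0$. Hence for $i=1$ the left side of \eqref{eq:vel_pow:a} equals $a\bigl(I_0(\al)x^{\al+2}-I_0(\al+\b)x^{\al+\b+2}\bigr)+O(x^3)$. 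Once $x^{\b}\ge 2$, i.e. for $x\le 2^{1/\b}$, this is $\asymp x^{\al+\b+2}$, not $O(\e\,x^{\al+\b+2})$. On $[2^{1/\b},1]$ its ratio to the claimed bound still grows like $|\log x|$.

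The paper's own proof has the same hole: it bounds $II_1$ only for $x>1$, where Lemma~\ref{lem:pertb_pow} applies. What your decomposition actually proves on $x\le 1$ is $\e\,x^{2-i}x^{\al+\b+1}(1+|\log x|)\les \e\,x^{3-i}$, which is the true size. This weaker bound is enough for the later small-$x$ uses; for example, \eqref{eq:crab_near0} only needs $O(\e x)$. So you should state and prove \eqref{eq:vel_pow:a} in that form for $x\le 1$.

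The rest is sound. Your proofs of \eqref{eq:vel_comp} and \eqref{eq:vel_pow:c} (Lemma~\ref{lem:vel_comp_gen} with $\g=\al+\b$, $\e_\g=\he$) and of \eqref{eq:vel_pow:b} are the paper's.

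For \eqref{eq:vel_pow:a} with $x\ge 1$ you split differently. The paper compares $\ang{y}^{\b}$ with the constant $x^{\b}$, so its rescaled pieces are:
\begin{itemize}
\item $x^{2-i}K_{\al,i,J}(1,z)\bigl(x^{\al}\ang{xz}^{\b}-x^{\al+\b}\bigr)$, controlled by Lemma~\ref{lem:pertb_pow};
\item $x^{\al+\b+2-i}\bigl(K_{\al,i,J}-K_{\al+\b,i,J}\bigr)(1,z)$, controlled by Lemma~\ref{lem:K_decay}.
\end{itemize}
Comparing with $y^{\b}$ instead makes your first piece a harmless near-field correction, but pushes the logarithm into the kernel comparison. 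There your ``delicate point'' is a real obstruction if handled term by term. On $z\ge 1$ the subtracted terms differ by $2|\b|z^{\al+\b-1}$. Against $(xz)^{-\alb}$ this contributes $2|\b|\he^{-1}x^{2-i-\he}$, which is $O(1)$ rather than $O(\e)$ times $x^{2-i-\he}$. The $O(|\b|z^{\al+\b-1})$ contributions cancel only in the full sum.

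Close it by writing
\[
z^{\b}K_{\al,i,J}(1,z)-K_{\al+\b,i,J}(1,z)=(z^{\b}-1)K_{\al,i,J}(1,z)+\bigl(K_{\al,i,J}-K_{\al+\b,i,J}\bigr)(1,z).
\]
For the first term use $|z^{\b}-1|\les|\b|\,|\log z|\,(1+z^{\b})$ with Lemma~\ref{lem:K_decay_basic}; for the second use Lemma~\ref{lem:K_decay}. Both terms are then $O(\e)$ times weights integrable against $\min(xz,(xz)^{-\alb})$, and you recover $\e\,x^{2-i-\he}$.
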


We first estimate the  kernels in Section \ref{sec:compare_ker} and then prove 
Lemmas 
\ref{lem:vel_comp_gen}, \ref{lem:vel_comp} in Section \ref{sec:vel_comp}.

\subsubsection{Estimate the difference between kernels}\label{sec:compare_ker}

In this section, we estimate the difference between kernels $K_{\al, i}, K_{\al, i, J}$ with different index $\al$. Firstly, we have

\begin{lem}[\bf Approximation for power weight]\label{lem:pertb_pow}
For any $x > 1$, $z > 0$, and $|\b| \leq 1$, we get
\[
|x^{\al}  \la x z \ra^{\b } - x^{\al + \b}| 
\les |\b| x^{\al + \b} g_{\b}(z),\quad g_{\b}(z) \teq 
( \log \ang z + |\log z| ) \cdot ( 1 + z^{\b} + \ang z^{\b} ).
\]
\end{lem}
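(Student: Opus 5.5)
The plan is to divide out $x^{\al+\b}$ and reduce the claim to a scalar inequality in $t=xz$. Since $x>1$ and $|\b|\le 1$, we have
\[
x^{\al}\la xz\ra^{\b}-x^{\al+\b}=x^{\al+\b}\big(x^{-\b}\la xz\ra^{\b}-1\big),
\]
so it suffices to show
\[
\big|(\la xz\ra/x)^{\b}-1\big|\les |\b|\,g_\b(z).
\]
The tool is the elementary mean-value bound
\[
|e^{\b s}-1|\le |\b s|\,\max(1,e^{\b s}),
\]
applied with $s=\log(\la xz\ra/x)$. It then remains to bound $|s|$ by $\log\la z\ra+|\log z|$, and $(\la xz\ra/x)^{\b}$ by $1+z^{\b}+\la z\ra^{\b}$, up to absolute constants.

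\textbf{Case $z\ge 1/x$.} Here $xz\ge 1$, so $xz\le \la xz\ra\le \sqrt2\,xz$, and hence $\la xz\ra/x\in[z,\sqrt2 z]$. This gives $|s|\le |\log z|+\log\sqrt2$.
\begin{itemize}
\item If $z\ge 1$, then $\log\sqrt2\les \log\la z\ra$, so $|s|\les \log\la z\ra+|\log z|$.
\item If $z<1$, the constant $\log\sqrt2$ is not controlled by $|\log z|$ when $z$ is near $1$. To handle this I will use the sharper identity $s=\log z+\tfrac12\log\big(1+(xz)^{-2}\big)$. It gives $0\le s-\log z\le \tfrac12 (xz)^{-2}$. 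This is harmless when $xz$ is large. When $xz=O(1)$ we have $z\les 1/x<1$, so $z$ is bounded away from $1$ and $|\log z|\gtr 1$ absorbs the constant. More precisely, for $z\in[1/2,1)$ we get $xz\ge x/2>1/2$; there I will bound $\log(1+(xz)^{-2})\le (xz)^{-2}\le 4$. I expect this to still need comparison with $\log\la z\ra\asymp 1$ on $z\ge 1/2$, and that comparison is fine.
\end{itemize}
For the factor $(\la xz\ra/x)^{\b}$: since $\la xz\ra/x\in[z,\sqrt2 z]$, it is $\les z^{\b}$ whether $\b$ is positive or negative.

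\textbf{Case $z<1/x$.} Here $\la xz\ra\in[1,\sqrt2]$, so $\la xz\ra/x\in[1/x,\sqrt2/x]$ with $1/x>z$. Then
\[
|s|\le \log x+\log\sqrt2\le |\log z|+\log\sqrt2\les |\log z|,
\]
because $z<1/x<1$. The last step uses $|\log z|>\log x>0$; if $x$ is close to $1$, then $z<1$ still gives $|\log z|$, and I expect to handle the constant by splitting $z\le 1/2$ versus $z\in(1/2,1)$ and using $\log\la z\ra\gtr 1$ on the second piece. For the factor $(\la xz\ra/x)^{\b}$, note $\la xz\ra/x$ lies between $z$ and $\sqrt2$, so its $\b$-th power is $\les 1+z^{\b}$.

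\textbf{Main obstacle.} The only delicate point is absorbing the additive $\log\sqrt2$ constants near $z\approx 1$ and near $z\approx 1/x$. This is why $g_\b$ contains both $\log\la z\ra$ (which is $\asymp 1$ for $z\gtr 1$) and $|\log z|$. Combining the two cases yields $|s|\,\max(1,e^{\b s})\les g_\b(z)$, which proves the claim.
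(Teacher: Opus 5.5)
Your proof is correct and follows essentially the paper's route: factor out $x^{\al+\b}$ and apply $|e^{u}-1|\le |u|\max(1,e^{u})$ with $u=\b\log(\la xz\ra/x)$. The paper avoids your case analysis by noting that $\la xz\ra/x=(x^{-2}+z^2)^{1/2}$ lies strictly between $z$ and $\la z\ra$ for $x>1$, which gives $|u|\le|\b|\max(|\log z|,\log\la z\ra)$ and $(\la xz\ra/x)^{\b}\le z^{\b}+\la z\ra^{\b}$ at once; this also makes moot your aside that $z$ is bounded away from $1$ when $xz=O(1)$, which is false as $x\to1^+$, though your split at $z=1/2$ already repairs it.
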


\begin{proof}
Denote $A = \f{x^2}{ 1 + (xz)^2}$. 
For $x>1$, we have $ \ang z^{-2} = \f{1}{1 + z^2} <  A < z^{-2}$. Since 
$|e^y - 1| \les |y| (e^y + 1) $  for any $y$, we bound 
\[
\bal
| ( \f{1 + (xz)^2}{x^2} )^{ \f{\b}{2} } -1|
= |A^{-\f{\b}{2}} - 1| 
\les |\b| \cdot |\log A| \cdot ( |A^{-\f{\b}{2}} + 1 ) 
 \les |\b| 
( \log \ang z + |\log z| ) \cdot ( 1 + z^{\b} + \ang z^{\b} ).
\eal
\]
Since $\ang {xz}^2 = 1 + (xz)^2$, multiplying the above estimate by $x^{\al+\b}$, we complete the proof.
\end{proof}

We have the following estimate for the kernel $K_{\al, i}$.
\begin{lem}[\bf Comparison between $\al-$kernels]\label{lem:K_decay}
For  $ |\al_i - \alb| < \f{1}{100}, i=1,2$ and any $z > 0$, the kernels defined in \eqref{eq:ker} satisfy  the following error estimates 
 \bseq\label{eq:K_err_decay}
 \begin{align} 
|K_{\al_1, 1, J}(1,z) - K_{\al_2, 1, J}(1,z) | & \les  |\al_1 -\al_2| 
(\one_{z \leq 3} + \one_{z> 3} z^{\alb + \f{1}{50} - 3} ),  \label{eq:K_err_decay:c} \\
| K_{\al_1, 2, J}(1, z) - K_{\al_2, 2, J}(1, z)| & \les
|\al_1 - \al_2| (  \one_{z \leq 3}  |z-1|^{\alb/2-1}  + \one_{z > 3}   z^{\alb + \f{1}{50} - 3}  ),  \label{eq:K_err_decay:d}  \\
|K_{\al_1, \D}(1, z) - K_{\al_2, \D}(1, z)| & 
\les  |\al_1 - \al_2| (  \one_{z \leq 3}  |z-1|^{\alb/2-1}  + \one_{z > 3}   z^{\alb + \f{1}{50} - 3}  ) .  \label{eq:K_err_decay:mix_err}
\end{align}
\eseq

\end{lem}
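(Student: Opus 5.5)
The plan is to write down the three differences explicitly and estimate each by the mean value theorem in the exponent. Set $\Delta = \al_1 - \al_2$ and, for $p \in [\alb - \f{1}{50}, \alb + \f{1}{50}]$, use
\[
\pa_p\big(|s|^{p}\big) = |s|^{p}\log|s|,
\]
so that
\[
\big||s|^{\al_1}-|s|^{\al_2}\big| \le |\Delta|\,|s|^{p}\,|\log|s||
\]
for some $p$ between $\al_1$ and $\al_2$. Similarly, $p|s|^{p-1}$ has $p$-derivative $|s|^{p-1}(1+p\log|s|)$. Thus each kernel difference equals $|\Delta|$ times the $p$-derivative of the kernel, evaluated at some intermediate $p$, and it suffices to bound $\pa_p K_{p,\bullet,J}(1,z)$ uniformly for $p$ in this window. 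We treat $z\le 3$ and $z>3$ separately.

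\textbf{Near field $z\le 3$.} For $K_{p,1,J}$, the terms $|1\pm z|^p\log|1\pm z|$ are bounded. The term $\pa_p(2p z^{p-1})\one_{z\ge 1}$ is harmless because the indicator removes $z<1$; this is exactly why the $J$-kernels are used. This gives the bound $\one_{z\le 3}$ in \eqref{eq:K_err_decay:c}.

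For $K_{p,2,J}$, the term $|1-z|^{p-1}(1+p|\log|1-z||)$ is $\les |z-1|^{\alb/2-1}$, since $|\log s|\les s^{-\alb/3}$ and $p-1-\alb/3 \ge \alb/2-1$ when $p\ge \alb - \f1{50}$. The remaining terms are bounded on $z\le 3$ (again $y^{p-1}$ only enters for $z\ge1$). The same reasoning applies to $K_{p,\D} = K_{p,2}-K_{p,1}$ evaluated at $x=1$. Here the singular part $2p z^{p-1}$ cancels exactly between $K_{p,2}$ and $K_{p,1}$, since $x=1$. Hence the $z\to0$ behavior is regular, and only the $|z-1|^{\alb/2-1}$ singularity remains.

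\textbf{Far field $z>3$.} This is the step requiring care. The decay $z^{p-3}$ of $K_{p,i}(1,z)$ comes from cancellation in the Taylor expansion in $1/z$, and we must check that the $p$-derivative keeps this cancellation. We write
\[
(z+1)^p - (z-1)^p - 2p z^{p-1} = z^p\,F_p(1/z),
\qquad F_p(t) = (1+t)^p-(1-t)^p-2pt = O_p(t^3).
\]
The remainder is
\[
F_p(t) = t^3 \int_0^1 \tfrac{(1-s)^2}{2}\,\pa_t^3\big[(1+\cdot)^p-(1-\cdot)^p\big](st)\,ds,
\]
which is analytic in $p$ with uniformly bounded $\pa_p$ for $t\le 1/3$. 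Therefore
\[
\pa_p\big(z^pF_p(1/z)\big) = z^{p}\log z\,F_p + z^p\,\pa_pF_p = O\big(z^{p-3}(1+\log z)\big).
\]
Absorbing the logarithm, $\log z\les z^{1/50}$ up to the window slack, gives $\les z^{\alb+\f1{50}-3}$; the exponents fit because $p\le \alb + \f{1}{100}$. The same argument applies to the $\al|\cdot|^{\al-1}$ kernel, using
\[
G_p(t) = p\big[(1+t)^{p-1}-(1-t)^{p-1}\big] - 2p(p-1)t \cdot(\text{appropriate}),
\]
or more simply to the combination $K_{p,2}(1,z) = z^{p-1}H_p(1/z)$ with $H_p(t)=O(t^2)$ ($t=1/z$, so the kernel is $O(z^{p-3})$). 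For $K_{p,\D}$ we subtract the two expansions.

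I expect the only real obstacle to be bookkeeping in the far-field step. Specifically, I need to confirm that differentiating in $p$ preserves the vanishing order of $F_p, H_p$ at $t=0$ and introduces at most a $\log z$ loss. This holds because the vanishing of the first coefficients holds identically in $p$, so their $p$-derivatives vanish too.
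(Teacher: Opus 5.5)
Your proposal is correct and follows essentially the same route as the paper: you differentiate the kernels in the exponent, bound $\partial_\alpha K_{\alpha,i,J}(1,z)$ directly for $z\le 3$, and use an integral Taylor-remainder representation for $z>3$ (your $t^3\int_0^1\tfrac{(1-s)^2}{2}(\cdots)\,ds$ formula is the paper's \eqref{eq:K_intform} after the rescaling $t=1/z$), then apply the mean value theorem and obtain the $K_{\al,\D}$ bound from the two $J$-kernel bounds. Two small cleanups: the $G_p$ line with ``(appropriate)'' can be dropped, since your representation $K_{p,2}(1,z)=z^{p-1}H_p(1/z)$ with $H_p(t)=O(t^2)$ for $z>1$ already does the job; and the far-field exponent closes because the intermediate $p$ satisfies $p<\alb+\tf{1}{100}$, which is the lemma's hypothesis, not the wider window $\alb+\tf{1}{50}$ you set at the start.
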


\begin{proof}

Since $\pa_{\al } x^{\al} = x^{\al} \log x$, for $|\al - \alb| < \f{1}{100}$ and $z \leq 3$, 
using the definitions in \eqref{eq:ker} and the integral formula in \eqref{eq:K_intform}, we obtain 
\[
\bal
  |\pa_{\al} K_{\al, 1, J}(1, z)| 
 &  \les  |1+ z |^{\al} \log |1 + z| 
  + |1-z|^{\al} | \log |1-z| \ | + z^{\al - 1} ( |\log z|+1) \cdot  \one_{z > 1}
\les 1 , \\ 
  |\pa_{\al} K_{\al, 2, J}(1, z)| 
  & \les  |1+ z |^{\al -1} \log |1 + z| 
  + |1-z|^{\al - 1} | \log |1-z|  \ | + z^{\al - 1} ( |\log z| + 1) \cdot \one_{z > 1}   \les  |1-z|^{\alb/2-1} ,
  \eal
\]
with implicit constants uniformly in $\al$ for $|\al - \alb| < \f{1}{100}$. 
 For $z > 3$ and $|\alb - \al| < \f{1}{100}$, using the integral formula in \eqref{eq:K_intform}, we obtain 
\[
      |\pa_{\al} K_{\al, 1, J}(1, z)|  
    +     |\pa_{\al} K_{\al, 2, J}(1, z)|  
   \les |z+1|^{\al -3 } \log |z+1|
  + |z-1|^{\al -3 } | \log |z-1 | \, |
  \les z^{\alb + \f{1}{50}-3} .
\]

Using the mean-value theorem and the above two estimates, we prove \eqref{eq:K_err_decay:c}, \eqref{eq:K_err_decay:d}.

Recall  $K_{\al, \D}(1, z) = K_{\al, 2, J}(1, z) - K_{\al, 1, J}(1, z) $ from \eqref{eq:ker:c}.
Using \eqref{eq:K_err_decay:c}, \eqref{eq:K_err_decay:d}, and triangle inequality,
we prove  \eqref{eq:K_err_decay:mix_err}. 
\end{proof}

\subsubsection{Proof of Lemmas \ref{lem:vel_comp_gen}, \ref{lem:vel_comp}}\label{sec:vel_comp}

Now, we are in a position to prove Lemmas \ref{lem:vel_comp_gen} and \ref{lem:vel_comp}.

\begin{proof}[Proof of Lemma \ref{lem:vel_comp_gen}]

\textbf{Proof of \eqref{eq:vel_comp_gen:a}} Recall the operator $\cK_{\g, i, J}$ and its associated kernel $K_{\g, i, J}$ from \eqref{eq:ker}. 
Since $K_{\g, i, J}$ is $(\g+ 1 - i)$-homogeneous, using a change of variable $y = x z$, we have 
\[
\bal 
  \cK_{\g, i, J} (w)(x) 
& = \int K_{\g,i, J}(x, y) w(y) d y 
= x^{\g + 2 - i}  \int  K_{\g, i, J}(1, z) w (x z) d z  \\
& = x^{\g + 2 -i} \int  (K_{\g, i , J}(1, z) - K_{ \bar \al, i, J}(1, z )) w(x z) d z 
+ x^{\g + 2 - i} \int K_{\bar \al, i, J} (1, z ) w(x z) d z \\
&  \teq I_1 + I_2.
\eal 
\]

 For $I_2$, using the definition of $K_{\g, i, J},\cK_{\g, i, J}$ from \eqref{eq:ker}, $\g - \alb =  - \e_{\g}$, and undoing the change of variables, we get 
\[
  I_2 = x^{ \g - \alb } \cK_{\alb, i, J}(w )(x)  = x^{- \e_{\g}} \cK_{\alb, i, J}(w )(x) .
\]

Let $ k = 1$ or $-\alb$. For $I_1$, applying the estimate \eqref{eq:K_err_decay:c} with $\al \rsa \g$, $|w(x z) |\les (xz)^k \nlinf{ w x^{-k} }$,
and $\alb- \g  = \e_{\g}$ from the statement in Lemma \ref{lem:vel_comp_gen}, we bound 
\[
\bal
  |I_1 | & \les  \e_{\g} x^{ \g + 2 - i} \int_0^{\infty} (\one_{z \leq 3} |z-1|^{\alb/2-1}  + \one_{z>3} z^{\alb + \f{1}{50}-3}  ) (xz)^k d z  \cdot \nlinf{ w x^{-k} }.
\eal
\]
Clearly, the integrand is integrable in $z$ for $k = 1$ or $-\alb$. Thus, we obtain 
\[
|I_1|  \les \e_{\g} x^{\g+ 2-i} x^k \cdot || w x^{-k} ||_{\linf}
= \e_{\g} x^{  2-i} \cdot x^{ k+ \g}  \nlinf{ w x^{-k} }.
\]

Combining the above two estimates for $ k = 1, -\alb$, and using $ \alb - \g = \e_{\g}$, we prove \eqref{eq:vel_comp_gen:a}.

\paragraph{\bf Proof of \eqref{eq:vel_comp_gen:b}}

We fix $x\geq 0$ and denote $M = \max_{y \leq x} |w (y^{-1} + x^{\alb})|$.
Since $\g \in (\alb- \f{1}{100}, \alb)$, using $|1 - y^{- b}| \les  b |\log y |  (1 + y^{-b})$ for  $y > 0, b>0$, 
and $|w(y)| \les \min(y, y^{-\alb}) M$ for $ y \leq x$, we obtain 
\[
\bal
  |\cJ_{\g}(w ) - \cJ_{\alb}( w )|
  & \les \int_0^x |w(y)| y^{\alb-1} | 1 - y^{- (\alb -\g)} | d y 
\les \int_0^x \min(y, y^{-\alb}) y^{\alb-1} | 1 - y^{- (\alb -\g)} | d y M
   \\
 &  \les |\g - \alb| \int_0^x \min(y, y^{-\alb}) y^{\alb-1}  |\log y| (1 + y^{- (\alb-\g) }) d y 
 \cdot M \\
& \les |\g-\alb|  \int_0^x  \one_{y\leq 2} +\one_{y >2} y^{-1} \log y d y
\les |\g - \alb|  \min( x,    | \lgp x|^2)  \cdot M .
  \eal
\]
Moreover, since $\g \leq \alb$, we have the trivial bound 
\[
   | \cJ_{\g}(w) | + |\cJ_{\alb}(w)| \les
   \int_0^x |w(y)| (y^{\alb-1} + y^{\g-1}) d y
  \les \int_0^x \min(y, y^{-\alb})(y^{\alb-1} + y^{\g-1}) d y M
\les \lgp x \cdot M.
\]
We prove the desired estimates. We complete the proof of Lemma \ref{lem:vel_comp_gen}.
\end{proof}

Next, we prove Lemma \ref{lem:vel_comp}.

\begin{proof}[Proof of Lemma \ref{lem:vel_comp}]

\textbf{Proof of \eqref{eq:vel_pow:a}} Recall the operator $\cK_{\g, i, J}$ and its associated kernel $K_{\g, i, J}$ from \eqref{eq:ker}.  Using a change of variable $y = x z$, we obtain
\[
\bal
 & \cK_{\al, i, J}(\wwwa \ang x^{\b})(x)
  - \cK_{\al + \b, i, J}(\wwwa)(x) 
   = \int K_{\al, i, J}(x, y)  \wwwa( y ) \ang y^{\b} 
  - K_{\al + \b, i, J}(x, y) \wwwa(y) d y \\
 & =  \int K_{\al , i, J}(1, z) x^{2-i} ( x^{\al} \la x z \ra^{\b} - x^{\al + \b} ) \wwwa(x z) 
+ ( K_{\al , i, J}(1, z) - K_{\al + \b, i, J}(1, z) ) x^{2-i + \al + \b}  \wwwa(xz) d z \\
& \teq II_1 + II_2.
\eal 
\]

For $II_2$ and $x > 1$, since $|\wwwa(xz) | \les \min(xz, |x z|^{-\alb})$ 
by \eqref{eq:bw_est}, using Lemma \ref{lem:pertb_pow}, we bound 
\[
  |II_1| \les |\b| x^{\al + \b + 2 - i} \int |K_{\al, i, J}(z)| g_{\b}(z)  \min(x z, |x z|^{-\alb}) d z .
\]

Using estimate \eqref{eq:K_decay} for $K$, we bound the integrand in $z$ as 
\[
  |K_{\al, i, J}(z)| g_{\b}(z) z^{\g} 
\les \min(1 + |z-1|^{\al-1}, z^{\al-3})  g_{\b}(z) z^{\g} , \quad \g = 1,  -\alb.
\]
which are $L^1$-integrable, with $L^1$-norm independent of $\al, \e$. Since $ \he = \alb - \al - \b$ \eqref{def:para},  we bound 
\[
  II_1  \les |\b| x^{\al + \b + 2 - i}  \min(x, x^{-\alb})
  = |\b| x^{ 2 -i } \min( |x|^{ \al + \b +1}, x^{ -\he } ).
\]

For $II_2$, using \eqref{eq:K_err_decay:c}, \eqref{eq:K_err_decay:d}, we bound
\[
\bal
  |II_2| 
& \les x^{2-i + \al + \b} |\b| \int (\one_{z \leq 3}  |z-1|^{\alb/2-1} + \one_{z > 3} z^{\alb + \f{1}{50}-3} ) \min( xz, |xz|^{-\alb} ) d z \\
& \les |\b| x^{2-i + \al + \b} \min(x, x^{-\alb})
= |\b| x^{ 2 -i } \min( |x|^{\al + \b +1}, x^{ -\he } ).
\eal
\]
Combining the above two estimates, we prove \eqref{eq:vel_pow:a}.

\paragraph{\bf Proof of \eqref{eq:vel_pow:b}}

From the definition of $\cJ$ \eqref{eq:Jw}, we obtain 
\[
  I \teq \cJ_{\al}(\wwwa \ang x^{\b})(x) - \cJ_{\al+\b}(\wwwa)(x)
   = \int_0^x \wwwa(y) ( y^{\al-1} \ang y^{\b} - y^{\al + \b -1} ) d y.
\]

Since $|e^a - e^b| \les |b-a| (e^{a} + e^b)$ and $|\b| < 1$, for $y \leq 10$, 
we obtain
\[
  |y^{\b} - \ang y^{\b} | 
  = | e^{\b \log y} - e^{\b \log \ang y} | 
  \les   |\b| ( |\log y| + |\log \ang y|) 
\cdot 
  |y^{\b} + \ang y^{\b} |
\les |\b| ( |\log y| + 1)  ( |y|^{\b } + 1 ).
\]
For $y > 10$, we obtain 
\[
    |y^{\b} - \ang y^{\b} | \les |\b| y^{\b-1} |y - \ang y| \les |\b| y^{\b-2}.
\]

Combining the above two estimates and using $|\wwwa(y)| \les \min(y ,|y|^{-\alb})$, we prove 
\[
    |I| \les |\b|
    \int_0^x \min(y, |y|^{-\alb}) y^{\al-1} (  ( | \log y | + 1 )  (|y|^{\b} + 1) \one_{y \leq 10} + y^{\b-2} \one_{|y|>10} )
    \les |\b| \min( x, 1 ).
\]

\paragraph{\bf Proof of \eqref{eq:vel_comp}, \eqref{eq:vel_pow:c}}

Recall $\dda(x) = \min( |x|^{  \al + 1} + |x|^{\al + \b + 1} , |x|^{-\he} )$ from \eqref{def:err_del}. We apply Lemma \ref{lem:vel_comp_gen} with $w = \wwwa, \g = \al + \b$ and using $|\wwwa| \les \min(x, |x|^{-\alb})$ \eqref{eq:bw_est} and $ \alb - \g = \alb -\al - \b = \he,  |\he| \les \e$ \eqref{def:para}. Applying \eqref{eq:vel_comp_gen}, we obtain 
\[
\bal
  |\cK_{\al + \b, i, J}(\wwwa) - |x|^{-\he} \cK_{\alb, i, J}(\wwwa)| 
& \les |\he| \cdot |x|^{2-i} \min( |x|^{ 1 + \g }, |x|^{ -(\alb - \g) } )
\les \e |x|^{2-i} \dda(x), \\
  |\cJ_{\al + \b}(\wwwa) - \cJ_{\alb}(\wwwa)| &  \les \min( \e \, |\lgp x|^2, \, \e \cdot x ,\, \lgp |x|  ).
\eal
\]
We complete the proof.
\end{proof}

\subsubsection{Estimates of velocity and kernels}\label{sec:vel_basic}

\begin{lem}\label{lem:vel_al}

Recall the operators $\cK_{\al, i, J}$ from \eqref{eq:ker}. For any $b \in [0, \f{1}{2} ]$, $\al \in [\f16, \f13]$, and weight $\Gam(x)$ satisfying $|\Gam(x) / \Gam(y)| \les 
 (\lgp(  x / y + y / x ) )^2$, 
we have 
\beq\label{eq:v_al_gen}
  \Gam(x) | \cK_{\al, i, J}(w)(x) | \les  x^{3-i + \al}  \ang x^{  - b-1} 
  || w  ( y^{-1} + y^b )  \Gam  ||_{L^{\infty}}, \quad i =1,2 , \\ 
\eeq
with implicit constants depending uniformly in $\al, \e$.

\end{lem}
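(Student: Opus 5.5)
The plan is to use the homogeneity of $K_{\al,i,J}$ together with the kernel decay bounds of Lemma \ref{lem:K_decay_basic}. After rescaling, this leaves a one-dimensional integral in $z=y/x$, and the weight ratio $\Gam(x)/\Gam(xz)$ is harmless because it only grows like $(\lgp(z+z^{-1}))^2$.

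Set $M=\| w(y^{-1}+y^b)\Gam\|_{L^\infty}$, so that
\[
|w(y)|\le M\,\Gam(y)^{-1}\min(y,y^{-b}).
\]
By \eqref{eq:ker_scale}, with the substitution $y=xz$, we get
\[
\Gam(x)|\cK_{\al,i,J}(w)(x)| \le M\, x^{\al+2-i}\int_0^\infty |K_{\al,i,J}(1,z)|\,\frac{\Gam(x)}{\Gam(xz)}\,\min(xz,(xz)^{-b})\,dz .
\]
The hypothesis gives $\Gam(x)/\Gam(xz)\les (\lgp(z+z^{-1}))^2$. For the kernel I will use the bound
\[
|K_{\al,i,J}(1,z)|\les \one_{z\le 2}\big(1+|z-1|^{\al-1}\big)+\one_{z>2}\,z^{\al-3}.
\]
The factor $1$ near $z=0$ holds because the $J$-correction $2\al y^{\al-1}\one_{y\ge x}$ removes the $z^{\al-1}$ singularity there, as discussed after \eqref{eq:vel_const_est:b}. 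All constants are uniform for $\al\in[\f16,\f13]$.

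The target is $x^{\al+2-i}\min(x,x^{-b})$, which is comparable to $x^{3-i+\al}\ang x^{-b-1}$. I split into two cases.

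\emph{Case $x\le 1$.} I bound $\min(xz,(xz)^{-b})\le xz$. The integral $\int |K|(\lgp(z+z^{-1}))^2 z\,dz$ converges: near $0$ the log-squared is integrable, at $z=1$ the singularity $|z-1|^{\al-1}$ is integrable, and at infinity the integrand is $z^{\al-2}\log^2 z$. This gives the bound $x^{\al+3-i}$.

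\emph{Case $x\ge 1$.} I bound $\min(xz,(xz)^{-b})\le x^{-b}z^{-b}$ for $z\ge 1/x$ and $\le xz$ for $z\le 1/x$. On $[1/x,\infty)$ the integrand $|K|\log^2\cdot z^{-b}$ is integrable, since $b\le\f12<1$ at $0$ and $z^{\al-3-b}$ decays at infinity; this yields $x^{-b}$. On $[0,1/x]$ the contribution is $\int_0^{1/x} xz\log^2(1/z)\,dz\les x^{-1}\log^2 x\les x^{-b}$, using $b\le\f12$. Altogether this gives $x^{\al+2-i-b}$.

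I expect no serious obstacle. The only points needing care are verifying the uniform kernel bounds for $K_{\al,2,J}$ near $z=1$ (the $\sgn$ term) and near $z=0$, and checking that the logarithmic loss from $\Gam$ stays integrable at $z\to 0$, $z\to\infty$, and on the near-zero piece for large $x$.
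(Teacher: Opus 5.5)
Your proposal is correct and follows essentially the same route as the paper: rescale $y=xz$ using the homogeneity of $K_{\al,i,J}$, absorb $\Gam(x)/\Gam(xz)$ into a factor $(\lgp(z+z^{-1}))^2$, and use the kernel bounds of Lemma~\ref{lem:K_decay_basic} to see that $F(z)z$ and $F(z)z^{-b}$ are integrable, where $F(z)=\min(1+|z-1|^{\al-1},z^{\al-3})(\lgp(z+z^{-1}))^2$. The only difference is cosmetic. The paper skips your case split and the cut at $z=1/x$ and simply bounds $\int_0^\infty F(z)\min(xz,(xz)^{-b})\,dz\le\min\big(x\|Fz\|_{L^1},\,x^{-b}\|Fz^{-b}\|_{L^1}\big)$; this works because $Fz^{-b}$ is already integrable on all of $(0,\infty)$, as you yourself observe.
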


\begin{proof}

Denote $f(y) = \min(y, |y|^{-b})$ and $D = \supp(w) \cap \R_+$. Using $\cK_{\al,i, J}$ from \eqref{eq:ker}, we obtain 
\[
   | \Gam \cK_{\al, i, J}(w)(x) |
  =  \Gam(x) |\int_0^{\infty} K_{\al,i, J}(x, y) w(y) d y|
\leq || w (|y|^{-1} + |y|^{b}) \Gam ||_{\linf} \Gam(x) \int_D |K_{\al, i, J}(x, y) |\Gam(y)|^{-1}
f(y) d y
\]

Using a change of variables $y = x z$, $|\Gam(y)/ \Gam(x)|  \les ( \lgp( z + z^{-1}) )^2$, 
$K_{\al, i, J}(x, x z) = x^{\al + 1 -i} K_{\al, i, J}(1, z)$, and 
estimate \eqref{eq:K_decay} in Lemma \ref{lem:K_decay_basic} for $K$, we bound the integral as 
\[ 
\bal
  I & \teq \Gam(x) \int_{  D } |K_{\al, i, J}(x, y)| \Gam(y)^{-1} f(y) d y
  =
    x^{2 + \al -i} \Gam(x) \int_{z \in x^{-1} D }  |K_{\al, i, J}(1,   z) \Gam(x z )^{-1} \min(x z, (x z)^{-b} ) | d z \\
  & \les x^{2 + \al -i} \int_{z \in x^{-1} D } F(z) \min(x z, (x z)^{-b} )  d z .
\eal
\]
where 
\[
  F(z) = \min( |z-1|^{\al-1} +1 , z^{\al-3}) 
  (\lgp (z + z^{-1}))^2 .
\]
Since $b \in [0, \f12], \al \in [\f16, \f13]$, we have $F z, F z^{-b} \in L^1(\R_+)$.
We prove \eqref{eq:v_al_gen}:
\[
  I \les  x^{2 + \al - i} \min( x, x^{-b} )
  \les x^{3  - i + \al} \ang x^{-b-1}.
\]
We complete the proof.
\end{proof}

\begin{lem}\label{lem:J_est}

Recall $\vvva = x + \cK_{\alb, 1}(\wwwa)$ from \eqref{eq:profile_alb}, $\cK_{\al, i}, \cJa$ from \eqref{eq:ker} and \eqref{eq:Jw}. 
For any $x \geq 0$, we have 
\bseq\label{eq:v_profile_est}
\begin{align}
   \tf{1}{x} | \vvva - x + 2 \alb x \cJ_{\alb}(\wwwa)|  
    + | \pa_x \vvva - 1 +  2 \alb \cJ_{\alb}(\wwwa)| & \les \min(1, x^{1 +\alb} ) \, , \label{eq:v_profile_est:a} \\
    \tf{1}{x} | \va - x+ 2 \al x \cJab|  +
      | \pa_x \va - 1 + 2 \al  \cJab|  & \les \min( \ang x^{-\he} , x^{1 + \al}) \, .
    \label{eq:v_profile_est:b}
\end{align}
\eseq

Denote $\e_{\g} = \alb - \g$. For any $\g < \alb$ with $|\alb - \g| \leq 10 \e$, and $x \geq 0$, we have
\bseq\label{eq:J_asym}
\begin{align}
  \cJ_{\g}(\wwwa) & = - 6 \cdot \e_{\g}^{-1} (1 - \ang x^{- \e_{\g} } ) + O( \min(\lgp x ,\e^{-1})^{2/3} ) \, ,    \label{eq:J_asym:a} \\
  |\cJ_{ \g }(\wwwa)|  &\les  \min(x, \lgp x ) ,
  \quad -\cJ_{ \g }(\wwwa) \gtr \min( |x|^{\g+1} ,  \, \e_{\g}^{-1} ( 1 - \ang x^{- \e_{\g} } )    ) .
  \label{eq:J_asym:b}
  \end{align}
\eseq

\end{lem}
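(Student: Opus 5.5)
The plan is to reduce everything to the decomposition $\cK_{\g,i}=\cK_{\g,i,J}-2\g x^{2-i}\cJ_\g$ from \eqref{eq:cK_cKJ}, bound the $J$-parts with the uniform estimates already available, and treat $\cJ_\g(\wwwa)$ through the explicit far-field asymptotics \eqref{eq:bw_est:a}.

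\textbf{Proof of \eqref{eq:v_profile_est:a}.} Since $\vvva-x=\cK_{\alb,1}(\wwwa)$ and $\pa_x\vvva-1=\cK_{\alb,2}(\wwwa)$, the left side equals $\tf1x|\cK_{\alb,1,J}(\wwwa)|+|\cK_{\alb,2,J}(\wwwa)|$. We would apply \eqref{eq:coe_asymp:0} of Corollary \ref{cor:vel_est} with $(\mu_1,\mu_2,b_1,b_2)=(1,1,-1,\alb)$, together with $|\wwwa|\les\min(x,x^{-\alb})$, which gives $\min(1,x^{1+\alb})$.

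\textbf{Proof of \eqref{eq:v_profile_est:b}.} Similarly, the left side equals $\tf1x|\cK_{\al,1,J}(\wa)|+|\cK_{\al,2,J}(\wa)|$. By \eqref{eq:vel_pow:a} and \eqref{eq:vel_comp}, this is within $\e\,\dda$ of $|x|^{-\he}$ times the $\alb$-quantities just bounded. Hence it is $\les \min(1,x^{1+\alb})|x|^{-\he}+\e\dda\les\min(\ang x^{-\he},x^{1+\al})$, where the small-$x$ regime uses $x^{1+\alb-\he}\le x^{1+\al}$ up to constants for $x\le1$ (recall $\b\le0$). Alternatively, Lemma \ref{lem:vel_al} with $b=\alb+\b$ and $\Gam=1$ gives this directly.

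\textbf{Asymptotics of $\cJ_\g(\wwwa)$.} Write $\wwwa=-6y^{-\alb}+r$ for $y\ge1$, where $|r|\les y^{-\alb}|\lgp y|^{-1/3}$ by \eqref{eq:bw_est:a}. Then
\[
\cJ_\g(\wwwa)=O(1)-6\int_1^x y^{-1-\e_\g}dy+\int_1^x y^{\g-1}r\,dy .
\]
The main term is $-6\e_\g^{-1}(1-x^{-\e_\g})$, which differs from $-6\e_\g^{-1}(1-\ang x^{-\e_\g})$ by $O(1)$. The remainder is bounded by
\[
\int_1^x y^{-1-\e_\g}|\lgp y|^{-1/3}dy \les \min(\lgp x,\e_\g^{-1})^{2/3}.
\]
To see this, split at $y=e^{1/\e_\g}$: the piece below gives $(\log x)^{2/3}$ capped at $\e_\g^{-2/3}$, and the tail is $\les\e_\g^{-1/3}\e_\g^{1/3}\cdot\e_\g^{-2/3}$. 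Since $\e_\g\le10\e$, we have $\min(\lgp x,\e_\g^{-1})\les\min(\lgp x,\e^{-1})$ only in one direction. So when $\e_\g\ll\e$ we instead bound $\min(\lgp x,\e_\g^{-1})^{2/3}$, which is fine when $\lgp x\le\e^{-1}$. For larger $x$ we use that the integrand beyond $e^{1/\e}$ contributes $\les\e^{1/3}e^{-\e_\g/\e}$-type terms, i.e. we use $\e_\g\ge$ something; if $\e_\g$ can be arbitrarily small relative to $\e$, the claimed bound with $\e^{-1}$ needs care. \textbf{This bookkeeping is the step I expect to be most delicate}, and I would check whether the intended hypothesis is $\e_\g\asymp\e$; with $\e_\g\asymp\e$ it is immediate.

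\textbf{Proof of \eqref{eq:J_asym:b}.} The upper bound $|\cJ_\g|\les\min(x,\lgp x)$ is the trivial integral estimate, as in the proof of \eqref{eq:vel_comp_gen:b}. For the lower bound, we use $\wwwa<0$ with $|\wwwa|\asymp\min(y,y^{-\alb})$ from \eqref{eq:bw_est:a}, so that
\[
-\cJ_\g\gtr\int_0^x y^{\g-1}\min(y,y^{-\alb})\,dy .
\]
This gives $x^{\g+1}$ for $x\le1$, and $\gtr1+\e_\g^{-1}(1-x^{-\e_\g})$ for $x\ge1$, which dominates the stated minimum.
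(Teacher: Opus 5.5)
Your argument for \eqref{eq:v_profile_est:b} has a sign slip in $\b$, and as written it does not prove the stated bound.

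\textbf{The comparison route fails near the origin.} Since $\alb-\he=\al+\b$, the route through \eqref{eq:vel_pow:a} and \eqref{eq:vel_comp} gives, for $x\le 1$, the term $|x|^{-\he}x^{1+\alb}=x^{1+\al+\b}$. The error $\e\dda$ contains the same term $\e x^{1+\al+\b}$. Because $\b<0$ (indeed $\b=-\e/8+O(\e^{4/3})$), this exceeds $x^{1+\al}$ by the factor $x^{\b}$, which is unbounded as $x\to 0^+$. So the inequality $x^{1+\alb-\he}\les x^{1+\al}$ on $[0,1]$ is false, and this route only yields $\min(\ang x^{-\he},x^{1+\al+\b})$.

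\textbf{The alternative fails at infinity.} Lemma \ref{lem:vel_al} with $b=\alb+\b$ gives the decay $x^{\al-b}=x^{-\e-\b}=x^{-\he}x^{2|\b|}$. This is weaker than $\ang x^{-\he}$ by an unbounded factor.

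\textbf{The fix.} Take $b=\alb-\b\in[0,\tfrac12]$. This is admissible because $|\wa|\les\min(y,\ang y^{-\alb+\b})$. Then Lemma \ref{lem:vel_al} with $\Gam\equiv1$ gives $x^{1+\al}\ang x^{-\alb+\b-1}\les\min(x^{1+\al},\ang x^{-\he})$ in both regimes at once. This is the paper's proof, and the paper then obtains \eqref{eq:v_profile_est:a} as the special case $\al=\alb$, $\b=0$. Your direct derivation of \eqref{eq:v_profile_est:a} from \eqref{eq:coe_asymp:0} in Corollary \ref{cor:vel_est} is also fine. Equivalently, you may keep your comparison argument for $x\ge1$ and use Lemma \ref{lem:vel_al} with any admissible $b$ for $x\le 1$.

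\textbf{On \eqref{eq:J_asym}.} Your argument is the paper's: the same splitting, the same tail bound, and the same lower-bound integral. Your caveat is well founded.
\begin{itemize}
\item The proof gives a remainder $O(\min(\lgp x,\e_\g^{-1})^{2/3})$. The hypothesis $\e_\g\le 10\e$ only gives $\e_\g^{-1}\gtrsim\e^{-1}$, so this does not imply the stated $O(\min(\lgp x,\e^{-1})^{2/3})$. The paper's proof makes the same silent replacement of $\e_\g$ by $\e$.
\item In fact the literal statement fails when $\e_\g\ll\e$. As $\e_\g\to0$ with $\e$ fixed, the remainder tends to $\cJ_{\alb}(\wwwa)+6\log\ang x$. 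This grows like $(\log x)^{2/3}$, because $\wwwa+6x^{-1/3}$ has a non-vanishing $x^{-1/3}(\log x)^{-1/3}$ component: the $\ccb$-term of $\bwf$ dominates $w_{\alb}$ there.
\item Every application of the lemma takes $\g=\al+\b$, so $\e_\g=\he\in[\e,\tfrac32\e]$. The lemma should therefore be stated under $\e_\g\asymp\e$, or with $\e_\g^{-1}$ in the error term. Under that hypothesis your proof of \eqref{eq:J_asym} is complete.
\end{itemize}
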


\begin{proof}

\textbf{Proof of  \eqref{eq:v_profile_est} } Recall from \eqref{eq:ker} and \eqref{eq:wa} that 
\[ 
\bal
     \vvva - x + 2 \alb x \cJ_{\alb}(\wwwa)
   & = \cK_{\alb, 1, J}(\wwwa),  &
    \quad      \pa_x \vvva - 1 + 2 \alb  \cJ_{\alb}(\wwwa)  &= \cK_{\alb, 2, J}(\wwwa) , \\
  \va - x + 2 \al x \cJab  & = \cK_{\al, 1, J}(\wa),  &
\quad   \pa_x \va - 1 + 2 \al  \cJab & = \cK_{\al, 2, J}(\wa).
\eal
\]

Taking $\al = \alb, \b = 0$, we get $\wwwa = \wa, \vvva = \va$ by \eqref{eq:wa} and 
$\he = 0$ by \eqref{def:para}. Thus, estimate \eqref{eq:v_profile_est:b} implies \eqref{eq:v_profile_est:a}, and it suffices to prove \eqref{eq:v_profile_est:b}.   
Applying Lemma \ref{lem:vel_al} with 
$(\al, b, w) \rsa (\al, \alb - \b, \wa), \Gam \equiv 1$, and then using 
$ \al - (\alb - \b ) = - \he$ \eqref{def:para}, we prove \eqref{eq:v_profile_est:b}:
\[
  \tf{1}{x} |\cK_{\al, 1, J}(\wa) |
   + |\cK_{\al, 2, J}(\wa) |
   \les  x^{1 + \al} \ang x^{- (\alb -\b)- 1}
   \les  \min( x^{1 + \al}, \ang x^{-\he} ).
\]

\paragraph{\bf Proof of \eqref{eq:J_asym}}

For $x \geq 0$, using  \eqref{eq:bw_est:a} and $\g <  \alb$, we obtain 
\[
\bal
  \cJ_{\gam}( \wwwa )(x) & = 
  \int_0^x \wwwa y^{ \g-1} d y
 = - \one_{ x \geq 2} \int_1^x  6 y^{\g-1-\alb}  + 
\one_{ x \geq 2} \int_1^x  O( y^{\g-1- \alb} |\lgp y|^{- \alb} )  d y   
 + O(1) \\
 & \teq I_1 + I_2 + I_3.
\eal 
\]

Estimate \eqref{eq:J_asym:a} is trivial for $x \leq 2$.  Since $|x - \ang x| \les \ang x^{-1}$ and $y^{\g-1-\alb} \les 1$ for $y > 1$, using a direct calculation for $I_1$, a change of variable $t = \log y$ for $I_2$, 
and $\e_{\g} = \alb - \g$, we yield
\[
\bal
I_1 & = -6 \f{ 1 - \ang x^{-\e_{\g}} }{  \e_{\g}  },  \quad 
 |I_2|  \les \one_{x \geq 2} \int_{\log 2}^{\log x}
e^{ - \e_{\g} t} t^{- \alb} d t 
\les \one_{x \geq 2} \min(  |\log x|^{1-\alb}, \e_{\g}^{1-\alb}),
\eal 
\]
where the last inequality is obtained by considering $ \e_{\g} t \leq 1$ and $\e_{\g} t > 1$. Since $\alb = \f13$ and $ |x^{-\e} - \ang x^{-\e}| \les \e $ for $x>1$, combining the above estimates, 
we prove \eqref{eq:J_asym:a}. Using  $-\wwwa \asymp  \min( |x|, \ang x^{-\alb})$ from 
\eqref{eq:bw_est:a}, we prove 
\[
 - \cJ_{\gam}( \wwwa )(x)  = -\int_0^x \wwwa y^{ \g-1} d y
 \gtr \int_0^x y^{\g-1} \min( y, y^{-\alb}) d y \gtr \min( |x|^{ \g+1} , \e_{\g}^{-1} ( 1 - \ang x^{-\e_{\g}} ) ).
\]
The first estimate in \eqref{eq:J_asym:b} follows from 
$ |\wwwa| \les \min( |x|, |x|^{-\alb}) $ in \eqref{eq:bw_est:a} and $\g \leq \alb$.
We complete the proof.
\end{proof}

\subsection{Estimate of error}

In this section, we first estimate the error $\crab$ \eqref{eq:err_crab} in terms of $\al, \b$ and then determine $\b$ with a small relative error. The main difficulty is the estimate for large $x$.
Throughout this section, we consider $\b\in [-\e,0]$ as in \eqref{def:eb_ineq} 
and $x \geq 0$ since $\wa, \va$ are odd.

We recall the parameters from \eqref{def:para} and $\Vo_{\al, \b}, \cJab$ from \eqref{eq:ker} and \eqref{eq:wa}
\[
 \alb =  \tf{1}{3}, \quad \e = \alb - \al,  
  \quad \he = \e - \b,\quad \Vo_{\al, \b}(x) = V_{\al, \b}(x) - x,
  \quad \cJab = \cJa(\wa).
\]

Firstly, we rewrite $\crab$ \eqref{eq:err_crab} as
\bseq\label{eq:err_comp_R1}
\beq
\bal
& \crab  = 
(3-\al) - (1 - \al) \pa_x \va  - 2 \va \big( \f{\pa_x \wwwa}{\wwwa} 
+ \b  \f{x}{1 + x^2} 
\big)  \\
& = (3-\al) - \e \pa_x  \va 
- (1 - \bar \al) (  \pa_x \va - \f{\va}{x} )
- 2 \va \big( \f{\pa_x \wwwa}{\wwwa} + \f{1}{3 x}  \big) 
- 2 \va  \b \cdot \f{x}{1 + x^2}  \\
& = (3-\alb) - \e \pa_x \voa
- (1 - \bar \al) (  \pa_x \va - \f{\va}{x} )
- 2 \va \big( \f{\pa_x \wwwa}{\wwwa} + \f{1}{3 x}  \big) 
- 2 \va  \b  \cdot \f{x}{1 + x^2} , \\
\eal
\eeq
where  $\bar \al = \f{1}{3}$. Since $\wwwa$ is the exact profile to \eqref{eq:1D_dyn} with $\al=\alb=\f13$, we get
\beq
  0=  \bar \cR = \f{8}{3} - (1 - \bar \al)( \pa_x \vvva - \f{\vvva}{x} ) 
  - 2 \vvva \big( \f{\pa_x \wwwa}{\wwwa} + \f{1}{3 x}  \big) .
\eeq
\eseq
We estimate $\crab$ by comparing each term in $\crab$ with its counterpart in $\bar \cR$.

\subsubsection{Estimate of $V_{\al, \b}$}\label{sec:err1D_vel}

We use Lemma \ref{lem:vel_comp} to estimate the velocity $V_{\al, \b}$. From \eqref{eq:Jw}, we have
\beq\label{eq:err_comp_KV}
\bal
\cK_{\alb, 2, J}(\wwwa) 
  & = \pa_x \vvva - 1 +  2 \alb \cJ_{\alb }(\wwwa),\quad && \cK_{\alb, 1, J}(\wwwa) = \vvva - x   + 2 \alb x \cJ_{\alb}(\wwwa), \\
\cK_{\al, 2, J}(\wa) 
  & = \pa_x \va - 1 +  2 \al \cJ_{\al}(\wa),\quad  && \cK_{\al, 1, J}(\wa) = \va - x   + 2 \al x \cJ_{\al}(\wa), \\
\eal
\eeq

Recall the kernels from \eqref{eq:ker} and the relation \eqref{eq:Jw}. Using \eqref{eq:vel_pow}  in Lemma \ref{lem:vel_comp} and the above relation, we obtain 
\bseq\label{eq:err_comp_vel1}
\begin{align}
    \cK_{\al, 2, J}(\wa) & = \cK_{\al + \b, 2, J}(\wwwa) + O( \e  \dda(x))  = x^{-\he} \cK_{\alb, 2, J}(\wwwa) + O( \e \dda(x))  
    \\
    \cK_{\al, 1, J}(\wa) & =  \cK_{ \al + \b, 1, J}(\wwwa) + O( \e  |x| \dda(x)) 
   =  x^{-\he} \cK_{  \alb, 1, J}(\wwwa) + O( \e |x| \dda(x)) , \\
  J_{\al}(\wa) & = J_{\al + \b}(\wwwa) + O( |\e| \min(|x|, 1)).
\end{align}
\eseq

Using the above estimates and \eqref{eq:err_comp_KV}, we obtain
\begin{align}\label{eq:err_comp_vel2}
  \pa_x \va - \tf{ 1 }{x} \va
  &  
  = \cK_{\al,2, J}(\wa) - \tf{1}{x} \cK_{\al, 1, J}(\wa)   = x^{-\he}  ( \cK_{\alb, 2, J}(\wwwa)  - \tf{1}{x} \cK_{\alb, 1, J}(\wwwa) )   +  O( \e \dda(x) ) \notag \\
& = x^{-\he}  ( \pa_x \vvva - \tf{1}{x} \vvva ) +  O( \e \dda(x) ) .
\end{align}

Next, we estimate
\[
  \bal
\va - \vvva & = \big ( ( \va - x + 2 \al x \cJ_{\al }(\wa) )  
  - ( \vvva - x + 2 \bar \al x \cJ_{ \bar \al}(\wwwa)) \big)  - 2 \al x ( \cJ_{\al}(\wa) - \cJ_{\al + \b}(\wwwa))  \\
  & \quad -  2 \al  x ( \cJ_{\al + \b}(\wwwa) - \cJ_{\alb}(\wwwa))
   - 2 ( \al - \bar \al ) x \cJ_{\bar \al}(\wwwa) \teq I_1  +I_2 + I_3 + I_4.\\
  \eal 
\]

Using \eqref{eq:err_comp_KV}, \eqref{eq:err_comp_vel1}, $|\cK_{\alb, 1, J}(\wwwa)| \les 
\min(x^{2+\alb}, x)$ (see \eqref{eq:v_profile_est:a}), $\he > 0$ \eqref{def:para},  and $ |1 - x^{-\he}| \les |\he \log x|
(1 + x^{-\he})$, we estimate 
\[
\bal
  |I_1| & = |\cK_{\al,1, J}(\wa) - \cK_{\alb,1,J}(\wwwa)|
  \les |x^{-\he} - 1| \cdot |\cK_{\alb,1,J}(\wwwa)| + \e x \dda \\
& \les |\he \log x | (1 + x^{-\he}) \min( x^{2+\alb}, x ) 
+ \e x \dda  \les \he \min( x^2, x \lgp x)  + \e x \dda.
\eal
\]

For $I_2,  I_4$, since $\alb - \al = \e$, using \eqref{eq:vel_pow:b} and \eqref{eq:J_asym} from Lemma \ref{lem:J_est}, we estimate 
\[
\bga
  |I_2| \les |\e| \min(x^2, x), \quad |I_4| \les  \e x \min(x,  \lgp x) ,  \\
\ega
\]

Using estimate \eqref{eq:vel_pow:c} with $\g = \al + \b $ and $\alb - \g = \he$, we bound $I_3$ as 
\[
   |I_3| \les 
   x \min( \he \lgx^2 , \he x,  \lgx  ).
\]

Since $1 \les \lgp x$, combining the above estimate, and using $\he = \e -\b \in[\e, 2\e]$ \eqref{def:eb_ineq},  we establish 
\beq\label{eq:err_comp_vel3}
  |\va - \vvva | \les \e x \dda
  + \one_{x < 1} \e  x^2 
  + \one_{x > 1}  x \lgp x \cdot ( \min( \e \lgp x, 1) + \e ) .
\eeq

\subsubsection{ Estimate of $\crab$}\label{sec:err1D_R}

Comparing two identities in \eqref{eq:err_comp_R1}, and using \eqref{eq:err_comp_vel2}, 
we estimate $\crab$ as
\begin{align}
  & \crab  = \crab - \bar \cR \notag \\
&  = -\e \pa_x  \voa 
  - (1 - \bar \al)( x^{-\he}-1) ( \pa_x \vvva - \f{\vvva}{x} )  \notag 
  - 2 ( \va - \vvva) \big( \f{\pa_x \wwwa}{\wwwa} + \f{1}{3 x}  \big) 
- 2 \va  \b  \f{x}{x^2 + 1}
  + O( \e \dda) \notag  \\
& \teq II_1 + II_2 + II_3  + II_4 + O( \e \dda ) .  \label{eq:err_comp_R0} 
\end{align}

Below, we aim to estimate $\crab$ up to error $O(\ddc)$ with $\ddc$ defined in \eqref{def:err_del}.

\paragraph{\bf Estimate for $x \leq 2$}

Using \eqref{eq:v_profile_est:a} in Lemma \ref{lem:J_est}, $|x^{-\he} - 1| \les |x^{\he} + 1| \cdot |\he  \log x| $, and $\he \les \e$ by \eqref{def:eb_ineq}, we bound 
\[
\bga
   | ( x^{-\he}-1) ( \pa_x \vvva - \tf{\vvva}{x} ) |
  \les x^{1 + \alb} |  x^{-\he}-1|
  \les |\e \log x| 
  \cdot x^{1 + \alb} |  x^{-\he} +1| \les \e x , \\
    |\e \pa_x \voa | = |\e ( \pa_x \va -1) |
  \les \e x , 
  \quad |\va| \les x.
\ega
\]

 Using \eqref{eq:err_comp_vel3}, for $x \leq 2$, we estimate
\[
|II_3| \les | ( \va - \vvva) ( \tf{\pa_x \wwwa}{\wwwa} + \tf{1}{3 x}  )   |
\les \e  x^2 \cdot x^{-1} \les \e x.
\]

Combining these estimates, we prove
 \beq\label{eq:crab_near0}
   |\crab| \les \e x , \quad x \leq 2.
 \eeq

\paragraph{\bf Estimate for $x \geq 2$}

Recall $\dda,  \ddc$ from \eqref{def:err_del}. Using estimate  \eqref{eq:bw_est:a}:
\beq\label{eq:profile_bd1}
   |\f{\pa_x \wwwa}{\wwwa} + \f{1}{3 x}|
\les |\lgp x|^{-4/3} x^{-1}, \quad 
  |\pa_x \vvva - \f{\vvva}{x} - 4 | \les (\lgp x )^{-1/3} ,
\eeq
and \eqref{eq:err_comp_vel3} for $\va- \vvva$,  we bound $II_3$ in \eqref{eq:err_comp_R0} as
\[
\bal
  |II_3 |
& \les  \B( \lgp x \cdot  ( \min( \e \lgp x, 1) + \e  ) + \e \dda  \B) (\lgp x)^{-4/3} \\
& \les \big( \min( \e \lgp x, 1) + \e \big)  (\lgp x)^{-1/3} 
+ \e (\lgp x)^{-4/3} \les \ddc,  \\
\eal
\]

Since $x \geq 1$ and $\he >0$ \eqref{def:para}, we obtain 
\beq\label{eq:x_he}
   0 \leq 1 - x^{-\he} = 1 - e^{-\he \log x}\les \min(1, \he \log x )
  \les \min(1, \he \lgp x), \quad \forall x \geq 1.
\eeq
Using this estimate and  \eqref{eq:profile_bd1}, we estimate $II_2$ in \eqref{eq:err_comp_R0} as
\beq\label{eq:err_main_II2}
\bal
II_2 & = 
 -  4 (1 - \bar \al)( x^{-\he}-1) 
 - (1 - \bar \al)( x^{-\he}-1) ( \pa_x \vvva - \f{\vvva}{x} -4) 
 \teq II_{21} + II_{22} , \\
|II_{22} | & \les  |x^{-\he} - 1 | \cdot |\pa_x \vvva - \tf{1}{x} \vvva - 4| 
 \les  \min( \he \lgp x, 1 ) | \lgp x|^{-1/3} \les \ddc. 
 \eal
\eeq

We keep $II_{21}$ as a main term. Next, we estimate $II_1, II_4$ in \eqref{eq:err_comp_R0}. Using 
$\voa =\va - x$, Lemma \ref{lem:J_est} and \eqref{eq:err_comp_vel1},\eqref{eq:err_comp_KV}, we obtain
\[
\bal
  \pa_x \voa &= \cK_{\al, 2}(\wa) =  \cK_{\al,2,J}(\wa) 
  - 2 \al \cJab = O(\ang x^{-\he} +  \e \dda )   - 2 \al \cJab, \\
   \tf{1}{x} \va & = 1 - 2 \al \cJab + O( \ang x^{-\he } +\e \dda) . \\
\eal
\]
which along with the definition of $\ddc$ \eqref{def:err_del} implies 
\beq\label{eq:err_main_II1}
  II_1 = - \e    \pa_x \voa = 2 \al \e \cJab + O(\ddc).
\eeq

Since $|\cJab| \les \lgp x$ from Lemma \ref{lem:J_est}, for $x \geq 1$, we have 
\[
|  \va |  \les x \lgp x, \quad  \tf{x}{1 + x^2} = x^{-1} + O(\ang x^{-3}) ,
\] 
which implies 
\beq\label{eq:err_main_II4}
\bal
  II_4 &  = - 2 \b \tf{x}{1 + x^2} \va
  = - 2 \b \tf{1}{x} \va + O(|\b \va | x^{-3} ) \\
  & = - 2 \b (1 - 2\al \cJab)  + O(\e \dda ) + O(|\b| \ang x^{-1}) 
  = - 2 \b (1 - 2\al \cJab) + O(\ddc) .
\eal
\eeq

Combining the above estimates on $II_i$, we estimate $\crab$ as 
\beq\label{eq:err_comp_R2}
\bal
      \crab & = 2\al \e \cJab   - 4 (1 - \bar \al)(x^{-\he }-1)   - 2 \b(1 - 2 \al \cJab) 
  + O(\ddc) \\
  & = O(\ddc(x)) + (2 \al \e + 4 \al \b) \cJab - 4 (1 - \bar \al)( x^{-\he } -1) 
  - 2 \b.
\eal 
\eeq
We treat the above second term from $II_{21}$ \eqref{eq:err_main_II2},
$II_1$ \eqref{eq:err_main_II1}, $II_4$ \eqref{eq:err_main_II4} as the main term, 
and the first as an error, which is small when \( |\b|,|\e| \ll 1 \).

\subsection{Proof of Theorem \ref{thm:1D_error}}\label{sec:thm_1D_error}

Recall $\cJab = \cJa(\wa)$ from \eqref{eq:wa}. Denote 
\bseq\label{eq:beta}
\beq
  G(\b) \teq(2\al \e+4\al \b) \cJ_{\al}(W_{\b})(\infty) - 2 \b =    (2\al \e+4\al \b) \int_0^{\infty} \wwwa(y) \ang y^{\b} y^{\al-1} - 2 \b.
\eeq
We consider $ \b < \alb - \al = \e$. To obtain an error decaying in $x$, for a fixed $\al$, we solve the equation 
\beq
G(\b)   = -4 (1-\alb) =  -\tf{8}{3} .
\eeq
\eseq

With the above preparation, we are in a position to prove Theorem \ref{thm:1D_error}.

\begin{proof}[Proof of Theorem \ref{thm:1D_error}]

\textbf{Determining $\b$}. Consider $ \b \in I_{\e}\teq [- \f{1}{2}\e , 0]$. 
Since $\wwwa(y) < 0$ for $y >0$ \eqref{eq:bw_sign}, and $2 \b + \e \geq 0$, we obtain 
\[
  \pa_{\b} G(\b) = 4 \al \int_0^{\infty} \wwwa(y) \ang y^{\b} y^{\al-1} 
  + (2\al \e + 4 \al \b) \int_0^{\infty} \wwwa(y) \log \ang y \ang y^{\b} y^{\al-1} dy - 2 < 0.
\]

Thus $G(\b)$ is decreasing in $I_{\e}$. Next, we derive $G(\b)$. Since $\b \in [-\e/2, 0]$, we get $\he = \alb - \al - \b = \e - \b \in [\e, \f{3}{2}\e]$ and $|\he| \asymp |\e|$. Using \eqref{eq:vel_pow:b} and 
\eqref{eq:J_asym} with $\g = \al + \b$, for $\b \in I_{\e}$, we obtain 
\[
  \cJ_{\al}(\wa)(\infty)
  =  \cJ_{\al + \b}(\wwwa)(\infty) + O(\e)
  = \cca \he^{-1} + O(\e) + O(\he^{-2/3})
  = \cca  \he^{-1} + O(\e^{-2/3}) ,
\]
where $\cca = -6$. Using the above estimate and $|\b| \les \e$, we have the following estimate of $G(\b)$
\beq\label{eq:gb_est}
  G(\b) = (2\al \e + 4 \al \b) ( \cca \he^{-1} + O(\e^{-2/3}))
  = O(\e^{1/3}) +   \cca \f{ 2 \al \e + 4 \al \b}{ \e - \b }.
\eeq

Recall $ \cca =-6$ \eqref{def:WF_para}. Since $\alb - \al = \e < \f{1}{1000}$,  we obtain
\[
  G(0) = O(\e^{1/3}) + 2 \al \cca = -4 + O(\e^{1/3} + \e), \quad G( - \e / 2 ) =  O(\e^{1/3}) .
\]

For $\e < \beps_1$ with $ \beps_1$ small enough, we obtain $G(-\e/2) > - \f{8}{3} > G(0)$. Since $G(\b)$ is continuous in $\b$ and is decreasing in $I_{\e}$, for any $\e < \beps_1$, we obtain a unique solution $\b(\al) \in I_{\e}$ to \eqref{eq:beta}. 
Since $\e-\b \in [\e, \f{3}{2}\e]$, from \eqref{eq:gb_est}, the solution satisfies
\beq\label{eq:gb_est2}
 | G(\b)  - \cca \f{2 \al \e + 4 \al \b}{\e -\b} |
 =  | -\f{8}{3}  - \cca \f{2 \al \e + 4 \al \b}{\e -\b} |  \les \e^{1/3}  ,
 \Rightarrow   |6 (2 \al \e + 4 \al \b ) - \f{8}{3} (\e - \b)| \les \e^{4/3} .
\eeq

Solving the above inequality and using  $|\al - \alb| = \e < \f{1}{1000}$ and $\alb = \f{1}{3}$, we obtain 
\beq\label{eq:beta_est2}
\bga
\B| ( \f{8}{3} + 24 \al)  \b - ( \f83 - 12 \al ) \e \B|
\les \e^{4/3} ,  \
 \Longrightarrow \ \b = \f{2-9\al}{2 + 18 \al } \e + O(\e^{4/3}) = - \f{\e}{8} + O(\e^{4/3}),
  \ega
\eeq
and prove the estimate \eqref{eq:beta_est} and the results for $\beta$ in Theorem \ref{thm:1D_error}.

\vs{0.05in}
\paragraph{\bf Estimate of the error $\crab$}

Since $\b$ chosen above satisfies $\al < \alb, \b \in [-\e, 0]$ \eqref{eq:beta_est2}, 
for $|\al-\alb| = \e$ small enough, the assumptions on $\al, \b$ in Lemma \ref{lem:vel_comp} are satisfied. Thus, we obtain estimates in Sections \ref{sec:err1D_vel}, \ref{sec:err1D_R}.

Recall the estimates \eqref{eq:err_comp_R2}. Since $|\b| \les |\e|$ from \eqref{eq:beta_est}, we have 
\beq\label{eq:err_comp_R3}
 \crab  = \B( (2 \al \e + 4 \al \b) \cJ_{ \al}(\wa) - 4 (1 - \bar \al)( x^{-\he} -1)  -2 \b \B)
 + O(\ddc(x) ) 
 \teq I+   O(\ddc(x) ) .
 \eeq

Next, we estimate $I$.

\paragraph{\bf Case 1: $ \e \lgp x > 1$} Since $\b$ satisfies \eqref{eq:beta}, we obtain 
\[
  I=  (2 \al \e + 4 \al \b) ( \cJ_{\al}(\wa) - \cJ_{\al}(\wa)(\infty) )
   - 4 (1 - \bar \al) x^{-\he}. 
\]

We estimate 
\[
  \cJ_{\al}(\wa)(x) - \cJ_{\al}(\wa)(\infty) 
  =   -\int_x^{\infty} \cca y^{-\alb} y^{\al-1}  y^{\b} 
    -\int_x^{\infty} (\wwwa(y)\ang y^{ \b } - 
\cca y^{-\alb}  y^{\b}  ) y^{\al-1} d y \teq I_1 + I_2.
\]
For $I_1$, since $\al + \b - 1 -\alb = -1 - \he$, we obtain 
\[
  I_1 =  - \int_x^{\infty} \cca y^{ -1 - \he } d y = - \cca \he^{-1} x^{-\he}.
\]

Next, we estimate $I_2$. Using the asymptotics of $\wwwa$ in \eqref{eq:bw_est:a}, for $y > 2$, we have
\[
 | \wwwa  \ang y^{\b} - \cca y^{-\alb}   y^{\b} |
 \leq  | (\wwwa - \cca y^{-\alb}) \ang y^{\b }  |
 +| \cca y^{-\alb} (y^{\b} - \ang y^{\b})|
 \les y^{\b} y^{-\alb} (\log y)^{-1/3} .
\]

Thus, we obtain 
\[
  |I_2| \les \int_x^{\infty} y^{\b -\alb + \al - 1} (\log y)^{-1/3} d y
  \les \int_x^{\infty} y^{-\he - 1} (\log y)^{-1/3} d y
  \les (\lgp x)^{-1/3}  \he^{-1} x^{-\he} .
\]

Combining the above two estimates, using $\he = \e -\b$,  \eqref{eq:gb_est2}, 
and $\e \lgp x > 1$, we obtain
\[
\bal
  I & = (2 \al \e + 4\al \b) \cdot (- \cca  \he^{-1} x^{-\he}) 
  - 4(1-\alb) x^{-\he}
  + O (\e \cdot  (\lgp x)^{-1/3}  \he^{-1} x^{-\he} ) \\
  & =  ( -\cca \f{2\al \e+4 \al \b}{ \e - \b} - \f{8}{3} ) x^{-\he} + O( \lgp x^{-1/3} x^{-\he} )
  = O( \e^{1/3} x^{-\he} ).
\eal
\]

\paragraph{\bf Case 2: $\e \lgp x \leq 1$ and $x \geq 2$}

In this case, using \eqref{eq:vel_pow:b} and Lemma \ref{lem:J_est}, we obtain 
\[
\bal
  I & = O(\e^2) + (2\al \e +4 \al \b) \cJ_{\al + \b}(\wwwa) - 4(1-\alb) (x^{-\he} - 1) \\
  & = O(\e^2) + (2\al \e +4 \al \b) \cdot \cca \f{1 - x^{ -\he }}{ \he } + 4(1-\alb) (1 -x^{-\he} )
  + O(\e  (\lgp x )^{2/3} ) .
  \eal
\]
Since $|1 - e^{-t}| \les |1 + e^{-t}|  t$, for any $k >0$ and $x > 2$, we obtain 
\beq\label{eq:xe_est}
  x^{-\he} = e^{-\he \log x} 
\les_k \la \he \lgp x\ra^{-k}, 
\quad |1- x^{-\he}|  = | 1 - e^{-\he \log x} |  \les \min( \e \lgp x , 1), \quad |x| > 2.
\eeq

Using estimate \eqref{eq:gb_est2}, and the above two estimates,
we bound 
\[
  |I| \les \e^{1/3} |1 - x^{-\he}| + \e (\lgp x)^{2/3} 
\les \e^{1/3} \e \lgp x + \e (\lgp x)^{2/3}  \les \e (\lgp x)^{2/3}.
\]

Combining the estimates in both cases and using \eqref{eq:xe_est} with $k= 1/3$, we obtain 
\[
  |I| \les \min( \e |\lgp x|^{2/3}, \e^{1/3} x^{-\he} ) 
 \les \min( \e \lgp x , 1) (\lgp x)^{-1/3}.
\]

\paragraph{\bf Estimates of $\ddc$}

Recall $\ddc$ from \eqref{eq:err_comp_R3} and $\dda, \ddc$ from \eqref{def:err_del}.
For $x > 1$, since $\e \asymp \he$ and $|\b| \les \e$, using \eqref{eq:xe_est}, we have 
\beq\label{eq:est_dda}
\bga
    \e |\lgp x |^{-1/3}
   \les \min(1, \e \lgp x) | \lgp x|^{-1/3} \\
 \e \dda(x) \les \e \ang x^{-\he} 
  = \e e^{- \he \lgp x } \les \e \ang{ \he \lgp x}^{-1/3} 
  \les  \min(1, \e \lgp x) | \lgp x|^{-1/3}.
\ega
\eeq
Combining the above estimates of $I, \dda$ and the definition 
of $\ddc$ \eqref{def:err_del}, we prove estimate \eqref{eq:err_comp_R4} for $x \geq 2$.
For $x \leq 2$, we have estimated $\crab$ in \eqref{eq:crab_near0}, which implies 
\eqref{eq:err_comp_R4} for $x \leq 2$.
\end{proof}

\subsection{Estimates for the approximate profile}

The approximate profile $(\wa, \va)$ satisfies the following estimates, which will be used 
in Section \ref{sec:1D_profile} to construct the exact $\al$-profile.

\begin{thm}\label{thm:al_appr_profile}
Let $\beps_1, \b$ be chosen as in Theorem \ref{thm:1D_error} and $\e = \f13 - \al$. Define $\ga = \f{1}{x} \va$. 
There exists $ \beps_2 \in (0, \beps_1)$ such that for any $\e \in (0, \beps_2)$, we have 
 the following estimates for the profile. 

\begin{enumerate}[label=(\roman*), leftmargin=1.5em]

\item The function $\cJab =\cJa(\wa),  \cJaa = (1 + \cJab^2)^{1/2}$ defined in  \eqref{eq:wa} satisfies 
\beq\label{eq:Ja_hat}
\bal
\cJaa(x) &=  6 \cdot \he^{-1} ( 1 - \ang x^{-\he} ) +  O( \min( \lgp x, \e^{-1} )^{2/3} ), \quad  \cJaa \asymp \min( \lgp x, \e^{-1} )  .
\eal
\eeq
\item The approximate profile $ \wa$  satisfies 
\bseq\label{eq:wa_est}
\begin{align}
   \B| \f{x \pa_x \wa}{\wa} + \alb - \b \B| & \les | \lgp x|^{-1 - \alb}, 
   \quad  | \wa | \asymp  \min( |y| , \ang y^{-\he - \al} ) , \quad \wa \leq 0 , \ \forall \, x \geq 0     \label{eq:wa_est:a}   , \\
  \B| \pa_x \big( \f{ x \pa_x \wa}{\wa} \big) \B| & \les \ang x^{-1} |\lgp x|^{-1}  \label{eq:wa_est:b} ,
\end{align}
\eseq
 the associated velocity $\va$ satisfies,
 \bseq\label{eq:va_est}
 \begin{align}
 |\va -  2 \al x \cJaa| & \les x,  \quad  & | \pa_x \va - 2 \al \cJaa| & \les 1 ,   \label{eq:va_est:a}  \\
  \ga  = \tf{1}{x} \va  & \asymp   \cJaa,   \quad &  |\pa_x \va | & \les  \cJaa , 
 \label{eq:va_est:b}   \\
      |\pa_x \va - \tf{1}{x} \va  | & \les \ang x^{-\he} , \quad &
   | \pa_x \va - \tf{1}{x} \va - 4 \ang x^{-\he}| & \les \ang x^{-\he} ( |\lgp x|^{-1/3} + \e) .
    \label{eq:va_dif}
 \end{align}

Moreover,  we have the following estimates of the difference 
\beq\label{eq:va_dif2}
  |\va - \vvva| 
\les  \one_{x <1} \e x^2 + \one_{x > 1} x \lgp x \min( \e  \lgp x , 1 ).
\eeq

\eseq

\end{enumerate}

\end{thm}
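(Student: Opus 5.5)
The proof will be a collection of direct consequences of Lemma \ref{lem:J_est}, Lemma \ref{lem:vel_comp}, the estimates \eqref{eq:err_comp_vel1}--\eqref{eq:err_comp_vel3} from Section \ref{sec:err1D_vel}, and the asymptotics of $\wwwa$ in Theorem \ref{thm:reg_alb}. Throughout we use $\b = -\e/8 + O(\e^{4/3})$ and $\he \in [\e, \tf32 \e]$ from Theorem \ref{thm:1D_error}. The only genuinely quantitative inputs are the $\e$-uniform far-field bounds; all constants depending on $\wwwa$ are treated as absolute.

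\textbf{Step (i): the estimate \eqref{eq:Ja_hat}.} By \eqref{eq:vel_pow:b}, $\cJab = \cJ_{\al+\b}(\wwwa) + O(\e\min(x,1))$. We apply \eqref{eq:J_asym:a} with $\g = \al+\b$, so that $\e_\g = \he$. This gives
\[
\cJab = -6\he^{-1}(1-\ang x^{-\he}) + O(\min(\lgp x,\e^{-1})^{2/3}).
\]
The first term is negative and is comparable to $\min(\lgp x,\he^{-1})$ by \eqref{eq:xe_est}. Since $\cJaa = (1+\cJab^2)^{1/2} = |\cJab| + O(1)$, we obtain \eqref{eq:Ja_hat}. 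Near $x=0$ we have $\cJaa \asymp 1$, which is harmless.

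\textbf{Step (ii): the profile estimates \eqref{eq:wa_est}.} We compute directly
\[
\f{x\pa_x\wa}{\wa} = \f{x\pa_x\wwwa}{\wwwa} + \b\f{x^2}{1+x^2}.
\]
Then \eqref{eq:bw_est:b} gives $|\f{x\pa_x\wa}{\wa}+\alb-\b| \les |\lgp x|^{-4/3} + |\b|\ang x^{-2}$, and the second term is absorbed since $|\b|\les\e$. The sign and size statements follow from \eqref{eq:bw_sign} and \eqref{eq:bw_est:a} multiplied by $\ang x^{\b}$, using $-\alb+\b = -\he-\al$. The bound \eqref{eq:wa_est:b} follows from the third estimate of \eqref{eq:bw_est:b} together with $|\pa_x(\b x^2/(1+x^2))| \les \e\ang x^{-3}$.

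\textbf{Step (iii): the velocity estimates \eqref{eq:va_est}.} By \eqref{eq:v_profile_est:b}, $\va = x - 2\al x\cJab + O(x\min(\ang x^{-\he},x^{1+\al}))$, and similarly for $\pa_x\va$. Since $\cJab\le 0$, we have $-\cJab = \cJaa + O(1)$, and \eqref{eq:va_est:a} follows. For \eqref{eq:va_est:b}, the upper bounds are immediate from \eqref{eq:va_est:a} and \eqref{eq:Ja_hat}. The lower bound $\ga \gtr \cJaa$ is the delicate point. For large $x$, $2\al\cJaa$ dominates the $O(1)$ error; for moderate $x$, it comes from $\ga = 1 - 2\al\cJab + O(\ang x^{-\he})$ with $-\cJab\ge 0$.

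The main obstacle is obtaining this lower bound uniformly in $\e$ on a fixed compact range of $x$. There I would instead compare with $\vvva$ through \eqref{eq:err_comp_vel3}, which gives $|\va-\vvva|\les \e x$ on compacts, together with $\vvva \gtr x\lgp x$ from \eqref{eq:bw_est:a}. Taking $\beps_2$ small then closes the argument. The estimate \eqref{eq:va_dif2} is exactly \eqref{eq:err_comp_vel3} after absorbing $\e x\dda$, since $\dda\les 1$ for $x>1$ and $\dda \les x^{1+\al}$ for $x<1$, and using $\min(\e\lgp x,1)+\e \les \min(\e\lgp x,1)$ for $x>1$.

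\textbf{Step (iv): the estimate \eqref{eq:va_dif}.} We use \eqref{eq:err_comp_vel2}:
\[
\pa_x\va - \tf1x\va = x^{-\he}(\pa_x\vvva - \tf1x\vvva) + O(\e\dda).
\]
Combining this with $|\pa_x\vvva - \tf1x\vvva - 4|\les|\lgp x|^{-1/3}$ from \eqref{eq:bw_est:a}, $\e\dda \les \e\ang x^{-\he}$, and $|x^{-\he}-\ang x^{-\he}|\les \e\ang x^{-\he}$ for $x\ge1$ gives the second bound of \eqref{eq:va_dif}. For $x\le 1$, both sides are $O(x^{1+\al})$ by \eqref{eq:v_profile_est}, so the first bound holds as well.
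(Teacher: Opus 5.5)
Your proposal is correct and follows the paper's proof essentially step for step. You use \eqref{eq:vel_pow:b} with \eqref{eq:J_asym:a} at $\g=\al+\b$ for (i), the identity $\f{x\pa_x\wa}{\wa}=\f{x\pa_x\wwwa}{\wwwa}+\b\f{x^2}{\ang x^2}$ for (ii), \eqref{eq:v_profile_est:b} together with comparison to $\vvva$ on a fixed compact range for the lower bound on $\ga$, and \eqref{eq:err_comp_vel2}, \eqref{eq:err_comp_vel3} for \eqref{eq:va_dif}, \eqref{eq:va_dif2}. One small correction: \eqref{eq:xe_est} only supplies upper bounds, so the lower bound $\cJaa \gtr \min(\lgp x,\e^{-1})$ must come from the elementary inequality $1-e^{-t}\gtr\min(t,1)$ combined with $\cJaa\ge 1$, or, as the paper does, directly from \eqref{eq:J_asym:b}.
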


\begin{proof} \textbf{Proof of \eqref{eq:Ja_hat}} Using estimate \eqref{eq:J_asym:a} with $\g =\al + \b$, 
estimate \eqref{eq:vel_pow:b}, , we obtain 
\[
  \cJab = \cJ_{\al + \b}(\wwwa) 
  + O( \e \min( |x|, 1 ) )
= - 6 \cdot \e_{\g}^{-1}  ( 1 - \ang x^{- \e_{\g}} ) + O( \min(\lgp x ,\e^{-1})^{2/3} )  +  O( \e \min( |x|, 1 ) ).
\]
Since $\e_{\g } = \alb - \g = \alb - \al - \b = \he$ by \eqref{def:para}, we prove 
the first estimate in \eqref{eq:Ja_hat}. Using \eqref{eq:J_asym:b} 
with $\g = \al + \b$ and the above relation, we prove 
the second estimate in \eqref{eq:Ja_hat}.

\paragraph{\bf Proof of \eqref{eq:wa_est:a} }

Since $\wa = \wwwa \ang x^{\b}$, we obtain
\beq\label{eq:wa_iden1}
 \f{x \pa_x \wa}{ \wa } = \f{x \pa_x \wwwa}{\wwwa} +\f{x \pa_x \ang x^{\b}}{\ang x^{\b} } 
 = \f{x \pa_x \wwwa}{\wwwa} + \b \f{ x^2}{\ang x^2}. 
\eeq

Using \eqref{eq:bw_est} and $|\b| \leq 1$ by \eqref{def:eb_ineq}, we obtain
\[
\bal
 \B|\f{x \pa_x \wa}{ \wa } + \alb - \b \B| &= \B| \f{x \pa_x \wwwa}{\wwwa}  + \alb + 
 \b  \f{x^2}{\ang x^2} - \b \B|
  \les |\lgp x|^{-1-\alb}
 + |\b| \cdot \ang x^{-2} 
 \les |\lgp x|^{-1-\alb}.
\eal
\]

Using \eqref{eq:bw_est} and $-\alb + \b =  -\al -\he $ by \eqref{def:para}, we obtain
$\wa \leq 0$ for $x \geq 0$ and 
\[
 |\wa | = \ang x^{\b} |\wwwa| \asymp \min(|x|, \ang x^{-\alb} ) \cdot \ang x^{\b}
 \asymp \min( |x|, \ang x^{-\alb + \b})
 = \min( |x|, \ang x^{ - \al -\he}).
\]

\paragraph{\bf Proof of \eqref{eq:wa_est:b} }
Using \eqref{eq:bw_est:b}, $\wa = \wwwa \ang x^{\b}$, identity \eqref{eq:wa_iden1}, 
and $|\b| \leq 1$ by \eqref{def:eb_ineq}, we prove
\[
 \B| \pa_x ( \f{x \pa_x \wa}{\wa} ) \B|
\leq \B| \pa_x ( \f{x \pa_x \wwwa}{\wwwa} ) \B| + \B| \b  \pa_x \tf{x^2}{\ang x^2} \B|
\les \ang x^{-1} |\lgp x|^{-1} +  \ang x^{-3} \les \ang x^{-1} |\lgp x|^{-1}.
\]

\paragraph{\bf Proof of \eqref{eq:va_dif}}
Using estimate \eqref{eq:err_comp_vel2} and \eqref{def:err_del}, 
and Theorem \ref{thm:reg_alb},  for any $x \geq 0$, we obtain 
\[
  |\pa_x \va - \tf{1}{x} \va - 4 x^{-\he}   |
  \leq | x^{-\he} ( \pa_x \vvva -\tf{1}{x} \vvva - 4)| +
  C \e \ang x^{-\he} 
  \les  | x^{-\he} |\lgp x |^{-1/3} | +    \e \ang x^{-\he} 
\]
Since $ |x -\ang x| \les \ang x^{-1} \les \ang \xx^{-\he} |\lgp x|^{-1/3}$, and $|\pa_x \va - \f{1}{x} \va| \les 1$ (from \eqref{eq:v_profile_est:b}), we prove 
\[
    |\pa_x \va - \tf{1}{x} \va - 4 \ang x^{-\he}   | 
    \les  \e \ang x^{-\he} + | \ang x^{-\he} |\lgp x |^{-1/3} |.
\]

\paragraph{\bf Proof of \eqref{eq:va_est:a} }

Since $ |x - \ang x | \les \ang x^{-1}$ for any $x$, we obtain
$ |\cJaa -\cJab| \les \cJaa^{-1}$. Thus, estimate \eqref{eq:va_est:a} follows from 
\eqref{eq:v_profile_est:b}. 

\paragraph{\bf Proof of \eqref{eq:va_dif2} }

Combining \eqref{eq:err_comp_vel3} and \eqref{eq:est_dda}, we prove \eqref{eq:va_dif2}.

\paragraph{\bf Proof of \eqref{eq:va_est:b} }

Since $\cJaa \gtr 1$ by \eqref{eq:Ja_hat}, estimate of $\pa_x \va$ in \eqref{eq:va_est:b} 
follows from Lemma \ref{lem:J_est} and \eqref{eq:Ja_hat}. Since the estimate \eqref{eq:va_est:a} just proved is uniform in $\e$, 
there exists $M$ large enough and independent of $\e$ such that 
 $\va \gtr x \cJaa$ for any $x \geq M$. 
 Since $\vvva \gtr x$ by \eqref{eq:bw_est:b}, taking $\e$ small enough and using 
 estimate \eqref{eq:va_dif2}, we obtain $\va \gtr x$ for $x \leq M$. Combining 
 these two estimates, we prove \eqref{eq:va_est:b}. 

 We complete the proof of Theorem \ref{thm:1D_error}.
\end{proof}

\section{Analytic construction of the exact $(\f13-\epsilon)$-profile}\label{sec:1D_profile}

In this section, we construct an exact $C^{\infty}$ profile for \eqref{eq:1D_dyn} with parameter $\al = \f13 -\e$ and sufficiently small $\e$ using a fixed point argument. The proof is purely analytic (pen-and-paper)

We choose \(\beta=\beta(\alpha)\) as in Theorem \ref{thm:1D_error}. We recall the approximate profile 
$\wa, \va$ from \eqref{eq:wa}. Since \(\alpha\) is fixed in most of the derivation, we suppress its dependence in the notation, for example writing \(\psioa\) as \(\psio\), and similarly for \(\cX\) and \(\cR\).

\subsection{Fixed point map}\label{sec:1D_fix_pt}

We follow Section \ref{sec:fix_point_map} to reformulate solving the profile equations \eqref{eq:1D_dyn}
\beq\label{eq:1D_dyn_recall}
 2 V \pa_x W = (3- \al - (1-\al) V_x) W .
\eeq
 as a fixed point problem.  Firstly, multiplying \eqref{eq:1D_dyn_recall} by $\f{1}{\wa} $, we obtain 
\[
  2 V \pa_x ( \f{ W }{\wa}  ) = (3 -\al - (1 -\al) V_x - 2 V \f{\pa_x \wa}{\wa} ) \cdot \f{W}{\wa} ,
  \quad V = x + \cK_{\al, 1}(W) .
\]
where $\cK_{\al, 1}$ is defined in \eqref{eq:ker}.  We introduce the residual operator, which is linear in $W$,
\beq\label{eq:err_ep}
 \cR(W) =  3 - \al - (1-\al) V_x - 2 V \tf{\pa_x \wa}{ \wa} , \quad V = x + \cK_{\al, 1}(W).
\eeq

\bseq\label{eq:eqn_equiv_ep}

We decompose $W = w + \wa$ into the approximate profile $\wa$ and perturbation $w$ with vanishing condition $ \lim_{x \to 0} \f{w(x)}{|x|} = 0$.  Since $\f{W}{\wa}(0) =1 + \lim_{x\to 0} \f{w}{\wa} (0) = 1$, integrating both sides, we obtain 
\beq
   \f{w}{ \wa}(x)
    = \f{W}{\wa}(x) -  \f{W}{\wa}(0)
    =  \int_0^x \f{ \cR(W) }{  2 V } \cdot ( 1 + \f{w}{\wa} ) d y ,
   \quad \lim_{x \to 0} \f{w(x)}{|x|} = 0.
\eeq
\eseq

We consider the following map $\cFR$
\bseq\label{eq:fix_map_ep}
\beq
  \cFR( w) = \wa  \int_0^x  \f{\cR( w + \wa)}{2 V} \cdot \f{w + \wa}{ \wa } ,
  \quad V =   x + \cK_{\al, 1}(w + \wa)  ,
\eeq

Then solving \eqref{eq:eqn_equiv_ep} is equivalent to a fixed point problem with the 
 vanishing condition 
\beq
 w =  \cFR(w) , \quad \lim\nolimits_{x \to 0} x^{-1} w(x) = 0.
\eeq
\eseq

\vs{0.05in}
\paragraph{\bf Constant for a far-field estimate}
 Recall the kernel $K_{\alb, \D}$ from \eqref{eq:ker}, which satisfies 
$K_{\alb, \D}(1, z)>0$ for $z>1$ 
and $K_{\alb, \D}(1, z) \leq 0$ for $z \in [0, 1)$ (see \eqref{eq:ker_sign}). 
Using these sign properties and
\[
 K_{\alb, \D}(1, z)  z^{-\alb}= \pa_z f(z), \quad f(z) = z^{1- \alb} ( |z-1|^{\alb} - |z+1|^{\alb} )
 \]
and $f(z) \to -2/3$ as $z \to \infty$ from \eqref{eq:asym_int}, we obtain the following identity:  \beq\label{def:cff}
  \bal
   \tts{ \int}_0^{\infty} | K_{\alb, \D}(1, z) | z^{ -1/3 } d z & = 
   \B| ( \tts{\int}_0^1 - \tts{\int}_1^{\infty} ) K_{\alb, \D}(1, z)  z^{ -1/3 } d z
    \B| 
     = |f(1) - f(0) - (f(\infty) - f(1))| \\
     &  = 2^{4/3} -2/3 \teq \cff,
    \eal    
  \eeq
 The constant $\cff \approx 1.85$ and satisfies $\cff \in (0,2)$. We will use $\cff < 2$ in the following proof.

\paragraph{\bf Choice of parameters}
We choose the following $\kp$-parameters for weight:
\beq\label{def:kp}
\cff \teq 2^{4/3 } -  \f23  \in(0,2), \quad 
  \kp = \f{\cff + 2}{4} \in ( \f{\cff}{2}, 1 ) \subset (0.8, 1), \ 
         \bbb = - b_1 = 1.2 ,\ 
  \kp_1 = \f{1-\kp}{1000}.
\eeq
with $\bb$ chosen in \eqref{def:mu_b}. We treat $\kp, \kp_1$ as absolute constants in the rest of the paper.

\vs{0.05in}
\paragraph{\bf Weights and norms}
Let $\kp, \kp_1$ be  as in \eqref{def:kp}. We define the weights $\vpb, \vpk$ and parameter 
\bseq\label{norm:X_ep}
\beq\label{def:vpb}
\vpb  = |\wa|^{-1} \ang x^{- \e_1} \cJaa^{- \kp} \vpk, \quad 
     \vpk = \exp\big( 9 \kp_1^{-1} \cdot \ang x^{-\e_1}  \big),     
      \quad \e_1 =  \kp_1 \e , 
\eeq
Recall the weights $\vpa$ from \eqref{def:vpa}. 
For some parameter $ \muc >  0$ to be chosen, we define the $\cX$-norm (different from the $\bcX$ norm in \eqref{norm:X}):
\beq
\bal
    || w ||_{\cX} & \teq  \max(  \muc || w ||_{\cXb}, \ \nlinf{ w \vpa }  ) 
    = \| w  \cdot \max( \muc \vpb , \vpa ) \|_{ \linff}  ,
\eal
\eeq
where $\vpa$ is the weight defined in \eqref{def:vpa} for contraction around the 
profile $\wwwa$ in Theorem \ref{thm:near_field_stab}. 
The subscript \(\mf{ctr}\) in \(\muc\) stands for contraction.
\eseq

Our main result in this section is the following.

\begin{thm}\label{thm:1D_solu}
There exists a constant $\muc$ small enough and $c( \muc ) < \f{1}{1000}$ small enough such that for any $\al = \alb - \e$ with $\e \in (0, c( \muc))$, the following statement holds true.  There exists absolute constants $ C_*$ and  $\lam_{\cF} \in (0, 1)$ independent of $\e,  \muc$ such that 
 for any $ \| w \|_{\cX} < \ddd = C_* \e  $, we have 
\beq\label{eq:1D_onto}
   || \cFR(w) ||_{\cX} < \ddd, \quad \ddd = C_* \e.
\eeq
For any $||w_1||_{\cX}, || w_2||_{\cX} < \ddd$, we have 
\beq\label{eq:1D_contra}
     || \cFR(w_1) - \cFR(w_2) ||_{\cX} < \lam_{\cF} || w_1 - w_2 ||_{\cX}. 
\eeq

In particular, the map $\cFR$ admits a fixed point $w$ with $|| w_{\al}||_{\cX} < \ddd$: $\cFR(w_{\al}) = w_{\al}$.

\end{thm}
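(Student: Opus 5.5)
We will mirror Section \ref{sec:onto}, but now purely analytically, with $\e$ as the small parameter. Decompose $\cFR(w)=\cLab(w)+\cEab(w)+\cNab(w)$ as in \eqref{eq:F_lin}, with $\rho\equiv1$ and $\wa,\va$ in place of $\bw,\bv$:
\[
\cLab(w)=\wa\int_0^x\f{\td\cR(w)}{2\va},\qquad \td\cR(w)=-(1-\al)\psio_x(w)-2\psio(w)\f{\pa_x\wa}{\wa},
\]
and $\cEab=\wa\int_0^x \f{\crab}{2V}\f{W}{\wa}$. The three ingredients are the following.
\begin{enumerate}
\item[(a)] A bound $\|\cEab(w)\|_{\cX}\le C\e$ with $C$ independent of $\muc$ and of $C_*$. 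It comes from Theorem \ref{thm:1D_error}, $|\crab|\les \e\min(x,\cJe|\lgp x|^{-1/3})$, together with $\va\gtr x\cJaa$.
\item[(b)] A contraction $\|\cLab w\|_{\cX}\le\lam\|w\|_{\cX}$ with $\lam<1$ uniform in $\e$.
\item[(c)] Quadratic bounds $\|\cNab(w)\|_{\cX}\les \|w\|_{\cX}^2$ and $\|\cNab(w_1)-\cNab(w_2)\|_{\cX}\les \ddd\|w_1-w_2\|_{\cX}$, copying Section \ref{sec:contin}.
\end{enumerate}
With these, choosing $C_*=2C/(1-\lam)$ and then $\e$ small (so that $\ddd$ is small) gives \eqref{eq:1D_onto} and \eqref{eq:1D_contra}. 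Since the $\cX$-ball is closed and $\cFR$ is a contraction on it, Banach's theorem yields the fixed point.

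For the linear estimate (b) we split the weight as in the definition of $\|\cdot\|_{\cX}$. In the $\vpa$ component we compare $\cLab$ with $\cL_{\wwwa}$. Lemma \ref{lem:vel_comp}, \eqref{eq:va_dif2} and \eqref{eq:wa_est} show that the coefficients of $\cLab$ differ from those of $\cL_{\wwwa}$ by $o(1)$ on $x\le R_\e$, where $R_\e\to\infty$ slowly (for example $\e\lgp R_\e\ll1$). Theorem \ref{thm:near_field_stab} then gives the bound $0.95+o(1)$ there. Beyond $R_\e$, the $\vpa$ component is controlled by the $\vpb$ component: $\vpa\les \ang x^{0}$ is much smaller than $\muc\vpb$ for large $x$, since $\vpb\gtr |x|^{\al+\he}\cJaa^{-\kp}\ang x^{-\e_1}$. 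Here $\muc$ is fixed first and $\e$ second. Conversely, a near-field $w$ feeds into the far field only through nonlocal terms, and that contribution carries a factor $\muc$, so it is small.

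The $\vpb$ component is the far-field estimate and is the main obstacle. Following Section \ref{sec:log_cancel}, write
\[
\td\cR(w)=-(1-\al)(\psio_x-\tf1x\psio)-2\tf{\psio}{x}\Big(\tf{x\pa_x\wa}{\wa}+\tf{1-\al}{2}\Big).
\]
By \eqref{eq:wa_est:a} the second factor equals $O(\e)+O(|\lgp x|^{-4/3})$, and since $\psio/x\les\cJaa\|w\|$, its contribution is integrable against $1/(x\cJaa)$ up to small errors.

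The mixed term $\psio_x-\psio/x=\cK_{\al,\D}(w)$ is where $\cff$ enters. Using $|w|\le \muc^{-1}\|w\|_{\cX}\vpb^{-1}\approx|\wa|\ang x^{\e_1}\cJaa^{\kp}$ and the homogeneity of $K_{\al,\D}$, we get to leading order
\[
|\cK_{\al,\D}(w)|\lesssim\Big(\int_0^\infty|K_{\alb,\D}(1,z)|z^{-1/3}\,dz+o(1)\Big)\,x^{1/3}|\wa|\cJaa^{\kp}=(\cff+o(1))\,x^{1/3}|\wa|\cJaa^{\kp}.
\]
The slowly varying factors $\ang{xz}^{\e_1},\cJaa^\kp$ are handled by Lemma \ref{lem:K_decay} and the $\log$-weight Lemma \ref{lem:vel_al}. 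Since $x^{1/3}|\wa|\to6x^{-\he}$, the integrand $\f{(2/3)\cK_{\al,\D}}{2\va}$ is at most $\f{\cff\cdot 2\cdot x^{-\he}\cJaa^\kp}{2\cdot 4\al x\cJaa}\approx \f{\cff}{2}\cdot\f{x^{-\he}}{x}\cJaa^{\kp-1}$.

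Integrating from $0$ to $x$ uses \eqref{eq:Ja_hat}, which gives $\pa_x\cJaa\approx 6x^{-1-\he}$. Hence
\[
\int_0^x x^{-1-\he}\cJaa^{\kp-1}\approx\frac{\cJaa^{\kp}}{6\kp}.
\]
Multiplying by $|\wa|\vpb$ returns a factor $\approx \f{\cff}{2\kp}\cdot\f{\ang x^{-\e_1}\vpk}{\ang y^{-\e_1}\vpk}$. This is at most $\f{\cff}{2\kp}<1$ because $\kp>\cff/2$ by \eqref{def:kp}, and because $\ang x^{-\e_1}\vpk$ is decreasing. The factor $\vpk=\exp(9\kp_1^{-1}\ang x^{-\e_1})$ is what absorbs the $O(1)$ and $O(\e)$ losses, such as $|1-x^{-\he}|$ and the $\tf{x\pa_x\wa}{\wa}$ errors: differentiating $\vpk^{-1}$ produces a gain $9\e\ang x^{-\e_1}/x$, which dominates those terms where $\e\lgp x\gtrsim1$.

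I expect the delicate point to be checking that this margin $1-\cff/(2\kp)\approx0.037$ is not eaten by the near-diagonal region $z\approx1$ or by the change from $\alb$ to $\al$ in the kernel. I would handle this with the same integration-by-parts device used for $I$ in Section \ref{sec:lin_est}, applied on the region $\e\lgp x\le1$, and with the direct kernel bound above on the region $\e\lgp x>1$. The contraction constant is then $\lam_\cF=\max(0.96,\ \cff/(2\kp)+o(1))+O(\ddd)<1$.
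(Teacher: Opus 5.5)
Your architecture matches the paper's. You split $\cFR=\cLab+\cEab+\cNab$, compare with $\cL_{\wwwa}$ in the near field through Theorem~\ref{thm:near_field_stab}, use the sharp constant $\cff$ in the far field to get $\lam_\kp=\cff/(2\kp)$, use $\vpk$-damping for the $O(\e)\,\psio$-terms, take the error bound from Theorem~\ref{thm:1D_error}, and conclude with Banach.

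The genuine gap is in (c), together with the lower bound on $V$, which you never state: these estimates are not uniform in $\e$. In Section~\ref{sec:contin}, $\rho\ge 1$ forces $w$ to be smaller than $\bw$. Here, by contrast, $\vpb^{-1}=|\wa|\ang x^{\e_1}\cJaa^{\kp}\vpk^{-1}$ lets $|w|$ exceed $|\wa|$ by the factor $\ang x^{\e_1}\cJaa^{\kp}$, and $\cJaa^{\kp}\sim\e^{-\kp}$ wherever $\e\lgp x\gtrsim 1$. Consequently $|\psio(w)|\les x\cJaa^{\kp+1}\nlinf{w\vpb}$ and $|\td\cR(w)|\les(\e\cJaa^{\kp+1}+\dots)\nlinf{w\vpb}$. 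What the weight actually gives is
\[
\|\cNab(w)\|_{\cX}\les \e^{-\kp}\|w\|_{\cX}^2,\qquad \|\cNab(w_1)-\cNab(w_2)\|_{\cX}\les \e^{-\kp}\ddd\,\|w_1-w_2\|_{\cX}.
\]
Even $V\ge\tf12\va$, which every bound on $\cEab$ and $\cNab$ needs, requires $\e^{-\kp}\ddd\ll1$.

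The argument closes only because $\ddd=C_*\e$ and $\kp<1$. Then $\e^{-\kp}\ddd^2=C_*^2\e^{2-\kp}\ll\ddd$ and $\e^{-\kp}\ddd=C_*\e^{1-\kp}\to0$. Your proposal never uses $\kp<1$; it uses only $\kp>\cff/2$. So the step ``take $\e$ small so that $\ddd$ is small'' must become ``so that $C_*\e^{1-\kp}$ is small''. You also omit the Lipschitz bound for $\cEab$, which depends on $w$ through $V$ and $W/\wa$. The paper shows it is $\les\e^{1/3}\|w_1-w_2\|_{\cX}$, and this must enter $\lam_{\cF}$.

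Two smaller corrections.

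First, the $O(\e)$ part of $\tf{x\pa_x\wa}{\wa}+\tf{1-\al}{2}=\tf{3\e}{8}+\dots$ is not ``integrable against $1/(x\cJaa)$ up to small errors''. It contributes roughly $\e\lgp x\,\ang x^{-\e_1}$ to the weighted bound, which reaches order $\kp_1^{-1}$. It is controlled only by the $\vpk$-damping: that gain is about $12\e\cJaa^{\kp+1}\vpk^{-1}$ and beats the loss of about $\e\cJaa^{\kp+1}\vpk^{-1}$ for all $x$. The paper puts this piece into $-(\e+2\b)\psiox$.

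Second, small $\muc$ is not there to make near-field contributions carry a factor $\muc$. The far-field bound is stated in terms of $\nlinf{w\vpb}\le\muc^{-1}\|w\|_{\cX}$ globally. Small $\muc$ instead places the crossover $\Rmua=\muc^{-2.9}$ of $\vpa$ and $\muc\vpb$ where the lower-order far-field error $C\cJaa^{-1/3}$ is below the margin $1-\lam_\kp$. The near-field radius can then be the fixed $\Rmub=\muc^{-3.1}$ rather than a growing $R_\e$.
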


 In Sections \ref{sec:1D_decomp}-\ref{sec:1D_fix_pt_pf}, we prove Theorem \ref{thm:1D_solu}. In Section \ref{sec:1D_prop}, we further establish several properties of the profile to 
 \eqref{eq:1D_dyn_recall}. Below, we discuss the weights and parameters in \eqref{norm:X_ep}.

\vs{0.1in}
\paragraph{\bf Estimates of weights}

From \eqref{eq:wa}, \eqref{def:para}, 
using definition of the weights  $\vpa$ \eqref{def:vpa}, $\vpb, \vpk$ in \eqref{def:vpb} and of parameters $\kp, \kp_1$ in \eqref{def:kp}, 
we obtain 
\bseq\label{eq:wg_equiv}
\beq\label{eq:wg_equiv:a}
\bga
 \vpa(x)  \asymp   |\xx|^{-\bbb} + 1 , \quad \vpk(x) \asymp 1 ,  \quad  \vpb  \asymp ( |x|^{-1} + |x|^{\alb - \b -\e_1} )  \cJaa^{-\kp} .
  \ega
\eeq 
Since $\bbb = -b_1\in (1, 1.3)$ by \eqref{def:kp}, \eqref{def:mu_b}, using \eqref{eq:wg_equiv:a} and the form of $\vpb$ in \eqref{norm:X_ep},
we obtain
\beq\label{eq:wg_equiv:b}
 \vpa \les |x|^{-\bbb+1} \vpb,
 \quad \max(\vpa, \vpb) |\wa| 
 \les  (|x|^{-\bbb+1} + 1 ) \ang x^{-\e_1} \cJaa^{-\kp}.
\eeq
Thus, for any function $f$, we have
\beq\label{eq:wg_nsingu}
\bal
|f| \max(\vpa, \muc \vpb)
\les | f \wa^{-1}|  \cdot \max(\vpa,  \vpb) |\wa|
\les | f \wa^{-1}| ( |x|^{-\bbb+1} + 1 ) \ang x^{-\e_1} \cJaa^{-\kp}.
\eal
\eeq
\eseq

From \eqref{eq:wg_equiv:a}, since $\muc |\xx|^{\alb} =1 $ when $|\xx| = \muc^{-3}$ and
$\cJaa \les \lgp x $ and since $(\alb - \b - \e_1) \cdot 2.9 < 0.99<1< 1.01< (\alb -\b-\e_1) \cdot 3.1 $, for $\muc < 1$ small enough and $\e < \e(\muc) \ll \muc$, we have
\beq\label{def:Rmu}
\bal
   \vpa(\xx) &> \muc \vpb(\xx) , \quad |\xx| \leq \Rmua =  \muc^{-2.9}, \\   
   \vpa(\xx) &< \muc \vpb(\xx) ,\quad |\xx| \geq \Rmub =  \muc^{-3.1},
  \eal
\eeq

\paragraph{\bf Range of $\he, \e_1$}
Recall $\he = \e - \b$ from \eqref{def:para}. From estimate \eqref{eq:beta_est}, 
by first requiring $\e$ small, we have the following range of different $\e$-parameters:
 \beq\label{ran:ep}
 \he   \in [ \f{9}{8} \e - C \e^{ \f43 }, \, \f{9}{8} \e + C \e^{ \f43 } ]
    \subset [ \f{8.9}{8} \e,  \ \f{9.1}{8} \e ],
    \quad \e_1 = \kp_1 \e \gtr \e.
 \eeq

Below, we list a few remarks about the weights and range of parameters.

\begin{remark}[\bf Two different scales]
The analysis of the fixed-point map $\cFR$ differs fundamentally in the two regimes $\e\log\xx\ll 1$ and $\e\log\xx\gtrsim 1$.  
 This is reflected in the estimates involving $\ang \xx^{ \he }, \min( \lgp x, \e^{-1})$ in Theorem \ref{thm:al_appr_profile}.  For $\e$ small enough and $x$ with  $\e \log x \ll 1$, 
we have $\ang x^{\e} = 1 + O(\e \lgp \xx)$
and $\cFR$ is a small perturbation to $\cF_{\alb, \R}$ for $\al = \f{1}{3} - \e$.
To control nonlocal terms perturbatively, especially $2(\alb- \b) ( \psiox - \f{\psio}{x} )$ appearing 
in $\td \cR(\ww)$ in \eqref{eq:lin_nloc_ep:b}, we design the decaying weight $\cJaa^{-\kp}$ in $\vpb$ 
\eqref{def:vpb}. 
We use $\cJaa^{-\kp}$ and the outgoing property, $\va > 0$, 
 to generate an effect  similar to an $O(1)$ damping term in the stability analysis
 \cite{chen2019finite,chen2021HL,ChenHou2023a}.

In the range of $\e \log x \gtr 1$, we have $\ang x^{\e} \gtr 1$. We use the weight 
$\ang x^{-\e_1}$ and $ \vpk$ to generate similar ``damping" effect to control 
 nonlocal terms perturbatively.
We choose the ratio \(\kp_1=\e_1/\e\) to be small \emph{relative} to \(1-\kp\) in order to close the fixed-point argument.
\end{remark}

\begin{remark}[\bf Role of $\vpk$]
While $\vpk$ is comparable to $1$ by \eqref{eq:wg_equiv}, 
\footnote{
The factor $9$ in  $ 9\kp_1^{-1}$ in $\vpk$ \eqref{def:vpb} can be replaced by any large \emph{absolute} constant. 
}
for $\e \log x \gtr 1$, we use $\vpk$ to generate a $O(1)$ ``damping" effect with main term given by
\[
   \f{2 V \pa_x \vpk}{\vpk} 
   =  4 \al x  \cJaa \cdot 9 \kp_1^{-1} \pa_x   \ang x^{-\e_1}  + l.o.t.
   = - 36 \al   \e \cJaa \cdot \ang x^{-\e_1} + l.o.t.,
   \quad  \e \cJaa \cdot \ang x^{-\e_1} \asymp  \ang x^{-\e_1},
\]
where l.o.t. denotes some much smaller lower order term and we use $\e_1 = \kp_1 \e$. 
Using  the above damping terms and similar terms arising from $\ang x^{-\e_1}$ in $\vpb$,
we control $(\e + 2 \b) \psiox$ in $\td \cR(w)$ perturbatively, 
\emph{without} requiring  $\kp_1 = \e_1 / \e$ to be bounded below.

\end{remark}

\begin{remark}[\bf Range of $\e_1$]
We choose $\e_1  = \kp_1 \e$ with an $\e$-independent constant $\kp_1 \gtr 1$, which may be \emph{arbitrary small},  to gain a factor $\ang x^{-\e_1}$ in various estimates, e.g. estimate of $(\e + 2 \b) \psiox$,  and to control the factor $\log x$ for large $x$. See Lemma \ref{lem:lgx_pow}.

\end{remark}

\begin{remark}[\bf Range of $\kp$ in \eqref{def:kp}]

The same proof works for any $\kp \in (\f{\cff}{2}, 1)$ with $ \tf{1}{2}\cff \approx 0.927$. We require \(\kp>\f{\cff}{2}\) to treat the nonlocal term \(\psiox-\f{\psio}{x}\) perturbatively; see \eqref{eq:1D_sharp_bound} and Remark \ref{rem:sharp_bound}. 
\footnote{
This range can be relaxed to \([0,1)\) by exploiting the more technical 
nonlocal cancellation in Section \ref{sec:log_cancel} and designing a weight \(\rho\) similar to \(|\log x|^{\alpha}\) in \eqref{eq:rho1}. To simplify the presentation, we do not use this refinement.
}
To close the fixed point argument for Theorem \ref{thm:1D_solu}, we require $\kp < 1$. 
We gain a small factor of order $\e^{c-\kp} $ with $ c \approx 1$ 
in nonlinear estimates so that we can treat nonlinear terms  perturbatively.

\end{remark}

\paragraph{\bf Idea of proof for Theorem \ref{thm:1D_solu} }

We estimate the map $\cFR$ for $x \in [0, \Rmub]$ by perturbing 
the near-field contraction estimate in Theorem \ref{thm:near_field_stab}. 
For large $x \geq \Rmua \gg 1$, by exploiting the far-field cancellations in Theorem \ref{thm:al_appr_profile}, we extract the main term $\f23 (\psiox - \f{1}{x} \psio)$ in \eqref{eq:lin_nloc_ep} in the linear estimate of $\cFR$, and use identity \eqref{def:cff} and $\cff < 2$ to control it. Other terms are small and we estimate them perturbatively. 
Combining estimates in these two overlapping regions, we prove Theorem \ref{thm:1D_solu}.

\subsection{Decomposition of the map}\label{sec:1D_decomp}
We denote $W$ and $V$ as follows with $ w$ being the perturbation 
\bseq\label{eq:1D_vw}
\beq
  V =   \va  + \psio(w)  , \quad  W = w + \wa, \quad \psio(w)  = \cK_{\al, 1 }(w),
  \quad  \va = x + \cK_{\al, 1}(\wa),
\eeq
and introduce $\ga$ and $\cJab$
\beq
    \ga \teq \tf{1}{x} \va, \quad \cJab \teq \cJ_{\al}( \wa ).
\eeq
\eseq

From \eqref{eq:err_ep}, we decompose the error as follows 
\bseq\label{eq:lin_nloc_ep}
\beq\label{eq:lin_nloc_ep:a}
\bal
      \cR(w + \wa )  & = \cR( \wa )  + \td \cR(w)  =      \crab + \td \cR(w), 
  \eal
\eeq
where we use $\crab$ from \eqref{eq:err_crab} and define $\td \cR$ as 
\beq
 \cR(\wa) = \crab,
 \quad     \td \cR( w )  \teq - (1-\al) \psiox (w) - 2 \psio  \tf{\pa_x \wa}{\wa}  .
\eeq

Using $\e = \alb - \al, 1-\al = 2 \alb + \e$, we rewrite $\td \cR(w)$ as 
\beq\label{eq:lin_nloc_ep:b}
\tcr(w) = - ( \e + 2 \b) \psiox - 2 (\alb - \b) ( \psiox  - \f{\psio}{x} )
  - 2 \psio \B(  \f{\pa_x \wa}{\wa} + (\alb - \b) \f{1}{ x} \B).
\eeq
\eseq

Using these notations, we decompose the fixed point map \eqref{eq:fix_map_ep} as
\bseq\label{eq:F_lin_ep}
\beq
  \cFR(w) = \cLab(w) + \cEab(w) + \cNab(w),
\eeq
where $\cLab,  \cNab, \cEab$ denote the linear part, nonlinear part, and the error part
\beq
\bga
  \cLab (w) \teq \wa  \int_0^{x}  \f{ \td \cR(w)}{2 \va }  ,  \\
 \cNab(w) \teq \wa  \int_0^x \td \cR(w) \cdot \B( \f{W}{2V \wa} - \f{1}{2 \va}  \B) ,
 \quad 
  \cEab(w) \teq  \wa \int_0^x  \f{ \cR(\wa)}{2 V} \cdot \f{W}{\wa}  , 
 \ega
\eeq
\eseq

In the following subsections, we will prove that $\cLab$ is a contraction in Theorem \ref{thm:contra_lin}, and estimate $\cNab, \cEab$ perturbatively.

\subsection{Estimates of nonlocal terms}
We have the following nonlocal estimates for $\cK_{\al,i}, \psioa$ \eqref{eq:ker}.

\begin{prop}\label{prop:vel_al}
Let \(\kp \in (\alb,1)\) be the parameter chosen in \eqref{def:kp}.
\footnote{
  The estimates in Proposition \ref{prop:vel_al} hold for much larger range of $\kp$. We restrict $\kp$ to this range since we only use $\kp$ in this range (see \eqref{def:kp}) and this assumption simplifies the proof.
}
 Let $\psio  = \cK_{\al, 1}(w), \psiox= \cK_{\al, 2}(w), \e =\alb -\al, \he =\alb -\al - \b$. 
For $w \in \cX$, we have 
\beq\label{eq:v_al_est}
   | \psiox + 2 \al \cJ_{\al}(w)| 
+   | \tf{1}{x} \psio(x) + 2 \al  \cJ_{\al}(w) |  \les 
 \cJaa^{\kp} x \ang x^{- \he - 1 + \e_1 } \nlinf{ w \vpb } .
\eeq

For $\psiox  - \f{ \psio }{x}$, we have the following estimates with a sharp constant for the main term
\beq\label{eq:vmix_sharp}
   |\psiox - \tf{1}{x} \psio | \leq ( 6 \cff \vpk^{-1} \cJaa^{\kp} + C \cJaa^{\kp - 1/3  } ) \ang \xx^{\e_1 - \he} \nlinf{ w \vpb    }  .
\eeq

The absolute constants in the above estimates are independent of $\e, \al, \b, \kp, R_1$.
For $\cJ_{\al}$, we have 
\bseq\label{eq:Ja_est}
\begin{align}
|\cJ_{\al}(w) |&   \leq  \B( 2 \vpk^{-1} |\cJaa(x)|^{\kp + 1} + C  | \cJaa(x) |^{\kp + 2/3}  \B)  || w ||_{\cX}, \label{eq:Ja_est:a}
 \\
|\cJ_{\al}(w) | & \les \min(x^{\al + 1},  \cJaa^{\kp + 1} )  \nlinf{ w \vpb} ,
\label{eq:Ja_est:b} 
\end{align}
\eseq
with some absolute constant $C$.

\end{prop}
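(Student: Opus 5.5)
Throughout, I will use the pointwise bound coming from the definition of $\vpb$ in \eqref{def:vpb}:
\[
|w(y)| \le \nlinf{w\vpb}\, |\wa(y)|\,\ang y^{\e_1}\cJaa(y)^{\kp}\vpk(y)^{-1}.
\]
Combined with $|\wa|\asymp \min(y,\ang y^{-\al-\he})$ from \eqref{eq:wa_est:a}, this gives
\[
|w(y)|\les \min(y,\ang y^{-\al-\he+\e_1})\,\cJaa(y)^{\kp}\nlinf{w\vpb}.
\]
All the estimates then follow from the scaling argument of Lemma \ref{lem:vel_est} and Lemma \ref{lem:vel_al}. The one new ingredient is that the weight $\cJaa^{\kp}$ is only logarithmically varying, since $\cJaa\asymp\min(\lgp x,\e^{-1})$ by \eqref{eq:Ja_hat}. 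Hence $\cJaa(y)/\cJaa(x)\les (\lgp(y/x+x/y))$, so $\Gamma=\cJaa^{-\kp}$ (or $\ang x^{-\e_1}\cJaa^{-\kp}$) is an admissible weight in Lemma \ref{lem:vel_al}.

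For \eqref{eq:v_al_est}, note that $\psiox+2\al\cJ_\al(w)=\cK_{\al,2,J}(w)$ and $\tf1x\psio+2\al\cJ_\al(w)=\tf1x\cK_{\al,1,J}(w)$ by \eqref{eq:cK_cKJ}. I will apply Lemma \ref{lem:vel_al} with $b=\al+\he-\e_1\in[0,\f12]$ and $\Gamma=\ang x^{-\e_1}\cJaa^{-\kp}$. The factor $\ang y^{\e_1}$ also satisfies the log-ratio condition, because $\e_1$ is small. This produces exactly $x^{1+\al}\ang x^{-b-1}\cJaa^{\kp}$, and $x^{\al}\ang x^{-\al}\le1$ gives the stated form.

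For \eqref{eq:Ja_est:b}, I will integrate $y^{\al-1}|w|$ directly. For $x\le1$ this gives $x^{\al+1}$. For large $x$, the integrand is bounded by $y^{-1-\he+\e_1}\cJaa^{\kp}$, and integrating $\int^x y^{-1}\ang y^{-(\he-\e_1)}\cJaa^{\kp}dy$ gives $\cJaa^{\kp+1}$. This holds because $\he-\e_1\asymp\e$, so the integral is $\les\min(\lgp x,\e^{-1})^{\kp+1}$.

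For the sharp bound \eqref{eq:Ja_est:a}, I will use the exact leading order. By \eqref{eq:bw_est:a}, \eqref{eq:Ja_hat}, we have $|\wa|y^{\al}\le 6\ang y^{-\he}+C\ang y^{-\he}\lgp^{-1/3}$. The main part of the integral is then $6\int_1^x y^{-1}\ang y^{-\he+\e_1}\cJaa^{\kp}\vpk^{-1}dy$. Since $\pa_y\cJaa\approx 6y^{-1}\ang y^{-\he}$ and $\vpk$ is decreasing in $x$ (so $\vpk(y)^{-1}\le\vpk(x)^{-1}$ for $y\le x$), the integrand is essentially $\cJaa^{\kp}\pa_y\cJaa\cdot\ang y^{\e_1}$. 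This integrates to $\cJaa^{\kp+1}/(\kp+1)$ times the $\ang x^{\e_1}$ loss. The constant $2$ absorbs $1/(\kp+1)$ together with the harmless factor $\ang y^{\e_1}\le\ang x^{\e_1}$. I need care here, though: the stated bound has no $\ang x^{\e_1}$, and $\ang x^{\e_1}$ could be large when $\e\log x\gg1$. There I will use instead that $\cJaa$ saturates at $\e^{-1}$, so the integral of $\ang y^{\e_1-\he}y^{-1}$ is $\les (\he-\e_1)^{-1}\asymp\e^{-1}\asymp\cJaa$. The constant $6/(\he-\e_1)$ versus $\cJaa\to6/\he$ gives the ratio $\he/(\he-\e_1)=1+O(\kp_1)\le2$. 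The $\lgp^{-1/3}$ corrections give $\cJaa^{\kp+2/3}$. Finally, $\|w\|_{\cX}\ge\muc\nlinf{w\vpb}$ — I would check whether the norm normalization absorbs $\muc$; if not, I would use the near-field part $\vpa$ for $x\le\Rmub$ instead.

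The main obstacle is \eqref{eq:vmix_sharp} with its sharp constant $6\cff$. I will write
\[
\psiox-\tf1x\psio=\int_0^\infty K_{\al,\D}(x,y)w\,dy = x^{\al-1}\int_0^\infty K_{\al,\D}(1,z)\,w(xz)\,x\,dz.
\]
I then insert the bound $|w(xz)|\le 6(xz)^{-\al-\he+\e_1}\cJaa(xz)^{\kp}\vpk(xz)^{-1}(1+C\lgp^{-1/3})$ for $xz\ge1$, and the linear bound for $xz\le1$. I will replace $K_{\al,\D}$ by $K_{\alb,\D}$ at cost $O(\e)$ via \eqref{eq:K_err_decay:mix_err}, and $z^{-\al-\he+\e_1}$ by $z^{-1/3}$ at cost $O(\e|\log z|)$. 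I will also replace $\cJaa(xz)^{\kp}\vpk(xz)^{-1}$ by its value at $x$, with relative error $O(\lgp^{-1}|\log z|)$ for $\cJaa$ and $O(\e|\log z|)$ for $\vpk$, all integrable against the kernel decay. The main term is then $6\ang x^{\e_1-\he}\cJaa^{\kp}\vpk^{-1}\int|K_{\alb,\D}(1,z)|z^{-1/3}dz=6\cff(\cdots)$ by \eqref{def:cff}. The errors are $O(\cJaa^{\kp-1/3})$: the $\lgp^{-1/3}$ from the asymptotics of $\wwwa$ is bounded by $\cJaa^{-1/3}$ up to constants once $\e\lgp x\gtrsim1$ is handled via $\e\les\cJaa^{-1}$. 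The region $xz\le1$ contributes $O(x^{-1-\al})$ relative error. Tracking that every error carries a factor $\le\cJaa^{-1/3}$, uniformly in $\e$, is the delicate bookkeeping step.
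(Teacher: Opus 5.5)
Your proposal is correct and follows the paper's proof essentially step by step. For \eqref{eq:v_al_est} you use Lemma~\ref{lem:vel_al} with $b=\alb-\b-\e_1$. For \eqref{eq:vmix_sharp} you rescale, freeze $\cJaa^{\kp}\vpk^{-1}$ at $x$, swap $K_{\al,\D}$ for $K_{\alb,\D}$ and use \eqref{def:cff}. For \eqref{eq:Ja_est:a} you pull out $\cJaa(x)^{\kp}\vpk(x)^{-1}$ and compare $\tfrac{6}{\he-\e_1}(1-\ang{x}^{-(\he-\e_1)})$ with $\cJaa$ via \eqref{eq:Ja_hat}; this comparison is uniform in $x$, so your regime split is unnecessary.

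Two small fixes. In \eqref{eq:v_al_est}, take $\Gamma=\cJaa^{-\kp}$ alone, since your $b$ already accounts for $\ang{y}^{\e_1}$. Adding $\ang{x}^{-\e_1}$ to $\Gamma$ costs a factor $\ang{x}^{\e_1}$, and $\ang{y}^{\e_1}$ is not covered by the $\log^2$-ratio hypothesis. Also, \eqref{eq:Ja_est:a} is meant with $\nlinf{w\vpb}$ on the right, which is what the paper proves and uses in \eqref{eq:cR_est:I1}. So no fallback through $\vpa$ is needed, and it would not work anyway: the $\cJaa^{\kp+1}$ growth comes from large $y$, where $\|w\|_{\cX}$ only controls $\muc\vpb w$.
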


\begin{remark}[\bf Explicit constants] \label{rem:expl_C1}
We use the explicit constant $6 \cff$ in the main terms in \eqref{eq:vmix_sharp} to prove the contraction property of the linear part of the map \(\cFR\). \footnote{Although we derive the explicit constant \(2\) in \eqref{eq:Ja_est:a}, we do not need this constant to be small. If \(2\) in \eqref{eq:Ja_est:a} is replaced by another constant \(C\), one can replace the weight \(\vpk\) by \(\vpk^{\,C+1}\) and still close the fixed point argument.}
See Remark \ref{rem:sharp_bound}.

\end{remark}

\begin{proof}

\textbf{Proof of \eqref{eq:v_al_est}} Recall the estimate of $\vpb$ from \eqref{eq:wg_equiv}. 
For $x > y$, we get
\[
  \lgp x = \log( x + 2 ) \leq \log( \f{x}{y} + 2 )( y + 2 )
  \leq \lgp y  + \lgp (\f{x}{y} + 2 )
  \les \lgp y \cdot \lgp \f{x}{y}, 
\]
For $x \leq y$, we have $\lgp x \leq \lgp y$. 
Since $\cJaa(x) \asymp \min( \lgp x, \e^{-1} )$, for any $\kp \in (0, 1)$, we obtain
\beq\label{eq:J_rat_bd}
  \f{\vp(x)}{\vp(y)} =  \f{ \cJaa^{\kp}(y) }{  \cJaa^{\kp}(x) } \les  (\lgp \f{x}{y} + \lgp \f{y}{x})^2 .
\eeq

Since $\psiox + 2 \al \cJa = \cK_{\al, 2, J},  \f{1}{x} \psio  + 2 \al \cJa = \cK_{\al, 1, J}$ \eqref{eq:Ja_hat}, using Lemma \ref{lem:vel_al} with $ b = \alb - \b - \e_1 $ and  $\Gam(x) = \cJaa^{-\kp}(x)$, we prove 
\[
\bal
     |\psiox + 2 \al \cJ_{\al}(w)| 
+   | \tf{1}{x} \psio(x) + 2 \al  \cJ_{\al}(w) |  & \les 
 \vp^{-1} x^{1+ \al} \ang x^{ - \alb + \b + \e_1-1} \nlinf{ w ( |x|^{-1} + |x|^{\alb - \b- \e_1 } ) \vp } \\
& \les \cJaa^{\kp} x \ang x^{ \al -\alb + \b + \e_1 - 1 } \nlinf{ w \vpb }. 
\eal
\]
Using $ \he =\alb - \al -\b$ \eqref{def:para}, we prove \eqref{eq:v_al_est}.

\vs{0.1in}

\paragraph{\bf Proof of \eqref{eq:vmix_sharp}}

Using the kernel $K$ in \eqref{eq:ker}, the definition of $\vpb$ in \eqref{norm:X_ep}, we obtain
\beq\label{eq:vel_al:pf1}
\bal
  | \psiox  - \f{\psio }{x} | & = \B| \cK_{\al, 2}(w)(x) - \f{1}{x} \cK_{\al, 1}(w)(x) \B|
 = \B| \int_0^{\infty} K_{\al, \D}(x, y) w(y) dy \B| \\
& \leq \nlinf{ w \vpb } \int_0^{\infty} K_{\al, \D}(x, y) \vpb^{-1}(y) dy,
\quad \vpb^{-1} =  |\wa(y)|  \cdot \ang y^{\e_1} \cJaa(y)^{\kp} \vpk^{-1}.
\eal
\eeq

Using \eqref{eq:bw_est} and the definition of \eqref{eq:wa}, for $y \geq 1$, we obtain
\[
  |\wa(y)|  \cdot \ang y^{\e_1} 
  = |\wwwa| \ang y^{\e_1 + \b}
  \leq (6 y^{-\alb} + C y^{-\alb} |\lgp y |^{-\alb} )  \ang y^{\e_1 + \b}
  \leq y^{-\alb +\b + \e_1 } (6 
+ C   |\lgp y |^{-\alb} ) .
\]
 Next, we estimate 
\beq\label{eq:vel_al_A}
A(y) = |\cJaa(y)|^{\kp} \vpk^{-1}(y).
\eeq
Clearly, $A(y)$ is increasing by definition \eqref{norm:X_ep}. For $y> x$, 
using mean-value theorem, $\vpk \asymp 1$ by \eqref{eq:wg_equiv}, and $\cJab = \cJa(\wa)$ \eqref{eq:wa}, for some $\xi \in [x, y]$, we obtain 
\[
  A(y) \leq A(x) + A^{\pr}(\xi) (y- x) ,
  \quad A^{\pr}(\xi) \les  |\cJaa(\xi)|^{\kp-1} |\pa_y \cJa(\wa)|
  + |\cJaa(\xi)|^{\kp}  |\pa_{\xi} \ang \xi^{-\e_1}|.
\]
From the definition of $\cJa$, $|\wa| \les \min(|x|, |x|^{-\alb + \b})$,
and $\cJaa \les \e^{-1}$, we obtain
  \[
  |\cJaa(\xi)|^{\kp}  |\pa_{\xi} \ang \xi^{-\e_1}|
  \les   |\cJaa(\xi)|^{\kp} \e \ang \xi^{-\e_1 -1}
  \les  |\cJaa(\xi)|^{\kp-1} |\xi|^{-1}, \quad |\pa_y \cJa(\wa)(\xi) | \les \wa(\xi) |\xi|^{\al-1} \les |\xi|^{-1}.
     \]
Since $A, \cJaa$ are increasing and $\xi \in [x, y]$ , we obtain
  \[
A^{\pr}(\xi) \les |\cJaa(\xi)|^{\kp-1} \xi^{-1}
\les \cJaa(x)^{\kp-1} x^{-1}.
  \]

Since $A(y) \leq A(x)$ for $y \leq x$, using the above estimate, we derive 
\[
  A(y) \leq A(x)  + C \one_{y> x} |\cJaa|(x)^{\kp-1} \cdot \tf{y-x}{x}
\]

Similarly, since $ \f13 < \kp < 1$ by \eqref{def:kp} and $\cJaa(y)$ is increasing, we obtain
\[
   |\cJaa(y)|^{\kp-\alb} \leq  
   |\cJaa(x)|^{\kp - \alb} +C  \one_{y > x} |\cJaa(x)|^{\kp - 4/3} \cdot \tf{y-x}{x}.
\]

Using $ 1 \les \cJaa(y) \les \lgp y$, $\alb = \f13$, 
$A(y) \les \cJaa(y)$, and combining the above estimate of $A(y)$ and $|\wa| \ang y^{\e_1}$, we obtain
\beq\label{eq:vel_al:pf2}
\bal
  \vpb(y)^{-1}
  & \leq  y^{-\alb +\b + \e_1 } (6 +  |\lgp y |^{- 1/3} )  A(y)
\leq  y^{-\alb +\b + \e_1 } ( 6 A(y) +  C |\cJaa(y) |^{\kp - 1/3 } ) \\
& \leq y^{-\alb +\b + \e_1 }  ( 6 A(x) + C |\cJaa(x)|^{\kp- 1/3}
+ C \one_{y > x}  |\cJaa(x)|^{\kp-1} \cdot \tf{y - x}{x}  ) .
\eal
\eeq

Recall from \eqref{eq:ker} that $ K_{\al, \D}(x, y)$ is $\al-1$-homogeneous. To obtain the upper bound of \eqref{eq:vel_al:pf1}, 
using a change of variable $y = x z$, we obtain
\[
  \int_0^{\infty} K_{\al, \D}(x, y) y^{-\alb +\b + \e_1 }   g( \f{y}{x} ) d y 
= x^{\al -\alb +\b + \e_1}  \int_0^{\infty} K_{\al, \D}(1, z) z^{-\alb +\b + \e_1 }  g( z  ) d z,
\]
where $g(z) = 1$, or $g(z)= \one_{z > 1} (z-1)$. Using the decay estimate \eqref{eq:K_decay:mix}, we bound 
\[
\bal
   \int_0^{\infty} | K_{\al, \D}(1, z )|  z^{-\alb +\b + \e_1 } ( 1 +  \one_{ z > 1} (z-1 ) ) d z \les 1.
\eal
\]

Applying the above estimates and \eqref{eq:vel_al:pf2} to \eqref{eq:vel_al:pf1}, and using $ \al - \alb + \b = -\e + \b = -\he$ by \eqref{def:para}, we obtain
\[
  |\psiox  - \f{\psio }{x}|  \leq x^{-\he + \e_1} 
  \B( 6 A(x) \cdot  I +  C \cJaa^{\kp- 1/3 } \B),
  \quad 
  I \teq \int_0^{\infty} |K_{\al, \D}(1, z) z^{-\alb +\b + \e_1} | d z.
\]

Finally, we evaluate the integral $I$ using $\pa_a s^a = \log s \cdot s^a$, mean value theorem,
$|\b| , \e_1 \les \e $ by \eqref{eq:beta_est} and \eqref{norm:X_ep},  we obtain
\[
|z^{-\alb +\b + \e_1} - z^{-\alb}|
\les |\log z| \cdot \e ( z^{-\alb + \e_1 + \b} + z^{-\alb} ),
\]
which along with the estimates \eqref{eq:K_decay:mix} and \eqref{eq:K_err_decay:mix_err}
on $K_{\al, \D}$ imply 
\[
\bal
  & |K_{\al, \D}(1, z) z^{ -\alb +\b + \e_1 } - K_{\alb ,\D}(1, z) z^{-\alb}|
 \leq | K_{\al, \D}(1, z) ( z^{ -\alb +\b + \e_1 } -z^{-\alb}  )|
+ | K_{\al, \D}(1, z) -  K_{\alb ,\D}(1, z) | z^{-\alb} \\
& \qquad \qquad \les \e |\log z|    ( z^{ -\alb +\b + \e_1 } + z^{-\alb} )  
\min( |z-1|^{\al-1}, z^{\al-3} )   + \e z^{-\alb}  (  \one_{z \leq 3}  |z-1|^{\alb/2-1}  + \one_{z > 3}   z^{\alb + \f{1}{50} - 3}  ).
\eal
\]
The above term is $L^1$-integrable with upper bound of the integral uniformly in $\al$.
Using the above estimate and the bound of $\cff$ \eqref{def:cff}, we obtain
\[
  I \leq C \e + \tts{\int}_0^{\infty}  | K_{\alb ,\D}(1, z) z^{-\alb} |
\leq C \e + \cff.
\]

Since $ \e \cJaa^{\kp} \les \cJaa^{\kp -1}  $ by \eqref{eq:Ja_hat}, 
and $x^{-\he + \e_1} \leq \ang x^{-\he + \e_1} + O(\ang x^{-1})$ for $x \geq 1$, using the above estimates and $A(x)= |\cJaa(x)|^{\kp} \vpk^{-1}(x)$ from \eqref{eq:vel_al_A}, we prove 
\[
   |\psiox - \tf{\psio }{x}| \leq \ang x^{-\he + \e_1}  ( 6 \cff |\cJaa(x)|^{\kp} \vpk^{-1}(x)  
   + C |\cJaa(x)|^{\kp- 1/3} ),
\]
which is \eqref{eq:vmix_sharp}, for $x \geq 1$. For $x \leq 1$, \eqref{eq:vmix_sharp}  follows from \eqref{eq:v_al_est} and triangle inequality.

\vs{0.1in}

\paragraph{\bf Proof of \eqref{eq:Ja_est:a} }

  Using \eqref{eq:bw_est}, $\wa = \wwwa \ang x^{\b} $ \eqref{eq:wa}, 
  and $ |x^{\b} - \ang x^{\b}| \les \ang x^{\b - 2} \les \ang x^{-1}$ for $x > 1$, we obtain
  \[
    |\wa(y)| \leq \min( C |y|, 6 |y|^{-1/3 + \b} + C |y|^{-1/3 + \b} |\lgp  y|^{-1/3} ) .
  \]

Using the definition of $\cJa$ \eqref{eq:Jw},  $\cJaa(y) \les \lgp y$ \eqref{eq:Ja_hat}, 
$\al \leq \f13 < \kp$ by \eqref{def:kp},  the properties that $\cJaa, \vpk^{-1}$ are increasing by \eqref{def:vpb}, and $\nlinf{w \vpb }$-norm \eqref{norm:X_ep}, we obtain
\[
\bal
  |\cJa(w) | &\leq \int_0^{x}  y^{\al-1} |\wa|  \cJaa^{\kp} \ang y^{\e_1} \vpk(y)^{-1} dy \cdot 
\nlinf{w \vpb } \\
& \leq \B(C +  6 \one_{x > 1}   \int_1^{\ang x}  ( |\cJaa(y)|^{\kp} + C |\cJaa(y)|^{\kp-1/3} )  y^{\al - 1 - 1/3 +\b + \e_1} 
  d y \B) \cdot \vpk^{-1}(x) \nlinf{w \vpb } \\
& \leq \big( C + \f{6}{\he -\e_1} (1 - \ang x^{- (\he - \e_1)})\cdot \vpk^{-1}(x) ( |\cJaa(x) |^{\kp}
+ C |\cJaa(x)|^{\kp-1/3}  ) \big)  \nlinf{w \vpb } ,
\eal 
 \]  
where $\he = \f{1}{3} - \al - \b$ is defined in \eqref{def:para}.  Using \eqref{eq:Ja_hat}, $ \he - \e_1 < \he$, 
and $\he , \he - \e_1 \asymp \e$, we obtain 
\[
  \cJaa + C \cJaa^{2/3} \geq  \tf{6}{\he } (1 - \ang x^{- \he } ) 
  \geq \tf{6}{\he } (1 - \ang x^{- (\he - \e_1) } ) .
\]

Since $\cJaa \gtr 1,\vpk \asymp 1$, and $ \e_1 \leq \f{1}{2 } \he $ by \eqref{ran:ep}, combining the above two estimates, we prove 
\[
\bal
  |\cJa(w) | & \leq \f{\he}{\he - \e_1} ( \cJaa \vpk^{-1}  + C \cJaa^{2/3} )
(|\cJaa^{\kp}(x) + C |\cJaa(x)|^{\kp-1/3} )  \nlinf{w \vpb }  \\
& \leq 2 ( \cJaa^{\kp+1} \vpk^{-1}  + C \cJaa^{\kp+2/3} ) \nlinf{w \vpb } .
\eal 
\]

\paragraph{\bf Proof of \eqref{eq:Ja_est:b} }

A direct calculation yields 
\[
   | \cJ_{\al}(w) | \leq \int_0^x y^{\al-1 } |w| d y \les \int_0^x |\wa| y^{\al-1} \lgy^{\kp} d y \cdot\nlinf{ w \vpb } \teq I \cdot \nlinf{ w \vpb }.
\]
Since $|y^{\al-1} \wa | \les \min(y^{\al},  y^{-\he})$, we get $| \cJ_{\al}(w) | \les x^{1+\al} 
\nlinf{ w \vpb }$ and prove the first bound in \eqref{eq:Ja_est:b}. For the second bound, from \eqref{eq:wa_est:a}, we obtain $|\wa| y^{\al-1} \les \ang y^{-1 - \he}$. Using change of
variables $t = \log z$, $s = \he t$, and $\kp > 0$, 
for $x >2$, we yield 
\[
I 
\les \int_0^{x} \ang z^{- \he - 1} |\lgp z|^{  \kp } d z
  \les 1 + \int_{\log 2}^{\log x } e^{- \he t } t^{\kp} d t
  \les   \he^{-k-1} \one_{x >2} \int_{\he \log 2}^{ \he \log x } e^{- s } s^{\kp} d s
  \les \he^{-k-1} \min( \he \log x, 1)^{k+1}.
\]
Since $\he \asymp \e$ by \eqref{ran:ep}, we prove $I \les \cJaa^{\kp+1}$ 
and thus prove the second bound in \eqref{eq:Ja_est:b}. 
\end{proof}

\subsection{Estimates in $x \leq \Rmub$}

Recall $\Rmua <\Rmub$ from \eqref{def:Rmu}.  For $x \leq \Rmub$, we show that the difference between
 $\crbb(w)$ in \eqref{eq:lin_near:def} and $\td \cR_{\al}(w)$ in \eqref{eq:lin_nloc_ep} is very small, and then use Theorem \ref{thm:near_field_stab} to estimate $\td \cR(w)$.  Below, we consider $x \leq \Rmub$.

\subsubsection{Difference between various terms}
Recall that $\Rmub$ only depends on $\muc$.

\vs{0.1in}
\paragraph{\bf Difference between power}
From \eqref{eq:beta} and $\b \log \ang x < 1$, for $x \leq \Rmub$, we have 
\beq\label{eq:x_ep}
\bal
  |\ang x^{\pm \b} - 1|  &\leq C(\muc) |\b| \log \ang x
   \leq C(\muc) |\b| ( \ang x - 1 ) \leq  C(\muc) \e \, x^2   ,\\
  |x^{ \e} - 1|  & \leq C(\muc)\, |\b  \log  x  | \, (x^{\e} + 1) .
 \eal
\eeq

\paragraph{\bf Difference between velocity}

Recall from \eqref{eq:ker}:
\beq\label{eq:v_cKa}
\psio  = \cK_{\al, 1}(w), \quad \psiox  = \cK_{\al, 2}(w),
\quad  \psiox  + 2\al \cJa(w) = \cK_{\al, 2, J}(w),
\quad  \psio  +  2\al x \cJa(w) = \cK_{\al,1,J}(w).
\eeq

For $x \leq \Rmub$, applying \eqref{eq:va_dif2}, we obtain 
\bseq\label{eq:near_dif_bv}
\beq
  |\va-  \vvva| \les    \one_{x \leq 1} \e x^2 + \one_{x > 1} \e x |\lgp \Rmub|^2
  \les \e C(\muc) \, x.
\eeq

Using the above estimates and $\va \gtr x$ by \eqref{eq:va_est}, for $|\xx| \leq \Rmub$,  we obtain
\beq\label{eq:near_v_rat}
 \f{ |\va -\vvva| }{ |\va| }   \les \e C(\muc) x \cdot x^{-1} \les \e C(\muc ),
 \quad \Longrightarrow \quad 
    \f{|\vvva|}{|\va|}   \leq 1 + \e C(\muc). 
\eeq
\eseq

Applying \eqref{eq:v_al_gen} with $\Gam = \cJaa^{-\kp}, b = \alb - \b$ and \eqref{eq:Ja_est}, 
for any $x \leq \Rmub$, we obtain
\beq\label{eq:near_est1}
\bal
|\cK_{\al, i, J}(x)|
 &  \leq  C x^{3-i+\al} \cJaa^{\kp}  || w||_{\cX}
 \les C(\muc)  x^{3-i+\al} || w||_{\cX},  \\
|\cJ_{\al}(w)(x)| & \leq C(\muc)  x^{\al+1}  || w ||_{\cX}, \\
|\cK_{\al, i}(x)| 
& \les |\cK_{\al, i, J}(x)| + x^{2-i} |\cJ_{\al}(w)| 
\les C(\muc) x^{3 + \al -i}  || w ||_{\cX}. \\
\eal 
\eeq

Next, we compare $\cK_{\al,i}(w)$ and $\cK_{\alb, i}(w)$. 
Using the relation \eqref{eq:Jw}, we decompose 
\[
  \cK_{\al,i}(w) - \cK_{\alb,i}(w) 
  =  ( \cK_{\al,i, J}(w) - \cK_{\alb,i, J}(w) )  
  - (2 \al x^{2-i} \cJa(w)  - 2 \alb x^{2-i} \cJ_{\alb}(w) ) = P_1 + P_2.
\]

Using Lemma \ref{lem:vel_comp_gen} with $\g = \al$, $\e = \alb - \al$, 
$|| w x^{-1}|| \les || w||_{\cX}$, and \eqref{eq:near_est1}, for $x \leq \Rmub$, we obtain 
\[
\bal
  |P_1|  & \leq |\cK_{\al,i,J}(w) (1 - x^{\e} )|
  + |x^{\e} ( \cK_{\al,i,J}(w) - x^{-\e} \cK_{\alb, i, J}(w) ) | \\
 & \les C(\muc)    x^{3+\al-i}  | 1 - x^{\e}|  \cdot || w ||_{\cX} 
 + \e x^{\e} x^{2-i} x^{\al + 1} || w ||_{\cX} \\
\eal
\]
Using \eqref{eq:x_ep} and $x^{\al} |\log x|  (x^{\e} + 1) \leq C(\muc)$ for $x \leq C(\muc)$,  we further bound
\[
 |P_1|  \les  C(\muc) \e   x^{3-i}  || w||_{\cX}
 \]

For $P_2$, applying \eqref{eq:vel_comp_gen:b} with $\g = \al$ in Lemma \ref{lem:vel_comp_gen}, 
$\max_{y \leq \Rmub} |w(y)|(|y|^{-1} + |y|^{\alb}) \leq C(\muc) || w||_{\cX}$ \eqref{eq:wg_equiv} 
(only the $L^{\infty}$-norm in region $y\leq x$ is needed in \eqref{eq:vel_comp_gen:b}), and \eqref{eq:near_est1}, 
for $x \leq \Rmub$, we obtain 
\[
\bal
  |P_2| 
& \leq 2 | (\al - \alb) x^{2-i} \cJa(w)| + 2 \alb x^{2-i} | \cJa(w) - \cJ_{\alb}(w)| \\
& \leq C(\muc) \e  x^{3-i+\al}  || w||_{\cX}
+  C(\muc) \e \cdot x^{3-i} || w||_{\cX}
\leq C(\muc) \e \cdot x^{3-i} || w||_{\cX} .
\eal
\]

Combining the above estimates on $P_1, P_2$, we bound 
\beq\label{eq:near_dif_v}
  |   \cK_{\al,i}(w) - \cK_{\alb,i}(w) | \les  C(\muc) \e \cdot x^{3-i} || w||_{\cX} .
\eeq

\paragraph{\bf Difference between profiles}

Using \eqref{eq:beta},\eqref{eq:wa},\eqref{eq:x_ep}, and $|\wwwa| \gtr C(\muc) x$ for $x \leq \Rmub$,  we get 
\bseq\label{eq:near_dif_w}
\beq
\bal
   |\wa(x) - \wwwa(x) |  &= |(\ang x^{\b} - 1) |\wwwa| 
  \leq  C(\muc) \e x^2  |\wwwa|,  \\
  |\wa^{-1} - \wwwa^{-1}| & = | (\ang x^{-\b} - 1) \wwwa^{-1} | 
  \leq C(\muc) \e x^2     |\wwwa^{-1}| 
  \les  C(\muc) \e  x . \\
\eal 
\eeq
For $x \leq \Rmub$, from the above estimates, we derive 
\beq\label{eq:near_w_rat}
\tf{|\wa|}{|\wwwa|}
\leq 1 +  |\tf{\wa - \wwwa}{\wwwa} | 
\leq 1 + C(\muc) \e.
\eeq
\eseq

\paragraph{\bf Difference between $ \crbb$ and $\td \cR(w)$}

Recall  $\crbb$ from \eqref{eq:lin_near:def} and $\td \cR(w)$ from \eqref{eq:F_lin_ep}
\beq\label{eq:near_recall_R}
\bal
     \crbb(w) & = - (1-\alb) \psio_{\alb, x} - 2 \psio_{\alb} \tf{\pa_x \wwwa}{\wwwa},
   \quad  \psio_{\alb}(w) = \cK_{\alb, 1}(w),    \\
    \td \cR( w )  & = - (1-\al) \psio_x (w) - 2 \psio  \tf{\pa_x \wa}{\wa} ,
    \quad \psio(w) = \cK_{\al, 1}(w).
\eal
\eeq
Here, we simplify $\psioa$ as $\psio$, and keep the subscript only for \(\psio_{\alb}\) to distinguish it from \(\psio=\psio_{\al}\).

Using the definition and $\e = \alb - \al$, we decompose
\[
\bal
     \crbb(w) - \td \cR(w)
       & = - (1-\alb) ( \psio_{\alb, x} - \psio_x )- 2 (\psio_{\alb} - \psio ) \f{\pa_x \wwwa}{\wwwa}
       + \e \psio_x 
       - 2 \psio( \f{\pa_x \wwwa}{\wwwa} - \f{\pa_x \wa}{\wa} ) \\
     &  = P_1 + P_2 + P_3 + P_4.
  \eal
\]

Using \eqref{eq:wa} and \eqref{eq:near_est1}, we obtain
\[
  |P_4| = \B|2 \psio( \f{\pa_x \wwwa}{\wwwa} - \f{\pa_x \wa}{\wa} )\B|
  = \B|2 \psio \cdot \f{\pa_x \ang x^{\b}}{ \ang x^{\b} } \B|
  \les |\b \psio \cdot \f{x}{ \ang x^2 } | \leq C(\muc) \e x || w||_{\cX}.
\]

Using $ |\wwwa| \gtr C(\muc) x$ for $x \leq \Rmub$ and \eqref{eq:near_dif_v}, we obtain 
\[
  |P_2| \leq C(\muc) \e x^2 \cdot x^{-1} || w||_{\cX} \leq C(\muc) \e x  || w||_{\cX}.
\]

Using \eqref{eq:near_dif_v} and \eqref{eq:near_est1}, we obtain
\[
  |P_1 +  P_3|
  \les C(\muc) ( \e x  + \e x  ) || w||_{\cX} 
\les C(\muc) \e x || w||_{\cX} .
\]

Combining the above estimates, for $x \leq \Rmub$, we prove
\bseq\label{eq:near_dif_R}
\beq
      |\crbb(w) - \td \cR(w) | 
    \leq C(\muc) \e x || w||_{\cX} .
\eeq
Moreover, using \eqref{eq:near_est1}, we obtain 
\beq
 |\td \cR(w)| \leq C(\muc) ( |\psiox| + \tf{1}{x} |\psio| )\leq C(\muc) x^{1+\al}  || w||_{\cX}.
\eeq

\eseq

Using \eqref{eq:near_dif_R} and \eqref{eq:near_dif_bv}, 
and $\va, \vvva \gtr x$ by \eqref{eq:va_est}, \eqref{eq:bw_est:a}, 
for $x \leq \Rmub$, we bound 
{\small
\beq\label{eq:near_dif_rat}
 \B| \f{ \crbb(w)}{\vvva} - \f{\td \cR(w) }{\va} \B|
  \leq \B| \f{\crbb(w) - \td \cR(w) }{\vvva} \B|
  +  \B| \f{\td \cR(w) (\va- \vvva)}{ \va \vvva} \B|
\leq C(\muc) \e \| w \|_{\cX}.
\eeq
}

\subsubsection{Estimate of $\cLab(w)$}\label{sec:near_Lwa}

Using estimate \eqref{eq:near_dif_rat} for the difference, Theorem \ref{thm:near_field_stab} for $ \crbb(w)$, $ \nlinf{w \vpa} \leq \nchi{w}$ \eqref{norm:X_ep}, and the estimate of ratios in \eqref{eq:near_w_rat}, 
for any $x \leq \Rmub$, we bound 
\beq
\bal
   \vpa |\wa| \B| \int_0^x & \f{  \td \cR(w)  }{ 2 \va }  d y \B|
 \leq \vpa (1 + \e C(\muc)) |\wwwa|  \B( \B| \int_0^x \f{ \crbb(w) }{2 \vvva} d y \B| 
+  \int_0^x C(\muc) \e \| w \|_{\cX} d y \B) \\
& \leq (1 + \e C(\muc)) ( \lamst +  C(\muc) \e \vpa |\wwwa| x  )  \| w \|_{\cX}  \\
  & \leq    (1 + \e C(\muc)) ( \lamst 
   + C(\muc) \e |x|^{2 - \bbb} ) \| w \|_{\cX}   \leq ( \lamst + C_{ \eqref{eq:1D_near_R:est} }(\muc) \cdot \e )  \| w \|_{\cX}, 
\eal
   \label{eq:1D_near_R:est}
\eeq
where in the third inequality, we have used $|\wwwa| \les |\xx| $ from \eqref{eq:bw_est:a} in Theorem \ref{thm:reg_alb}, and $\vpa \les_{\muc} x^{-\bbb}$ by \eqref{eq:wg_equiv}.
In the last inequality, we have used $\lamst < 1$ 
from Theorem \ref{thm:near_field_stab}.

\subsection{Estimates in $x \geq \Rmua$}

Recall the nonlocal term $\td \cR(w)$. We use the decomposition in \eqref{eq:lin_nloc_ep:b}
\beq
  \tcr(w) = - ( \e + 2 \b) \psiox - 2 (\alb - \b) ( \psiox - \f{\psio}{x} )
  - 2 \psio \B(  \f{\pa_x \wa}{\wa} + (\alb - \b) \f{1}{ x} \B)
  \teq I_{\eqref{eq:cR_1D:decomp}, 1} + I_{\eqref{eq:cR_1D:decomp},2} + I_{\eqref{eq:cR_1D:decomp}, 3}.
  \label{eq:cR_1D:decomp}
\eeq

Below, we use Proposition \ref{prop:vel_al} to estimate each term and consider $x \geq 0$ without loss of generality.
Clearly, using Proposition \ref{prop:vel_al} , \eqref{eq:wa_est} on $\wa$, \eqref{eq:va_est} on $\va$, for $x \leq 2$, we obtain
\beq
   |I_{\eqref{eq:cR_1D:decomp}, i}(x)| \les  x \nlinf{ w \vpb}.
   \label{eq:cR_est:near0}
\eeq

\subsubsection{Estimate of $I_{\eqref{eq:cR_1D:decomp}, 1}, I_{\eqref{eq:cR_1D:decomp}, 2}$}
Although $I_{\eqref{eq:cR_1D:decomp}, 1}$ contains a small factor $\e$, we do not gain any small factor in its estimates 
for large $x$ due to the growing bound in the estimate of $\cJa$ term.

Below, we use Proposition \ref{prop:vel_al} to derive the main terms in the upper bound with slowest decay. Using \eqref{eq:Ja_est:a}, \eqref{eq:v_al_est}, 
and $\al \leq 1/3$, we obtain
\beq
\bal
 |I_{ \eqref{eq:cR_1D:decomp}, 1} | & = (\e + 2\b)  |\psiox| \leq  (\e + 2\b) ( 2\al |\cJa(w) | + |\psiox + 2\al \cJa(w)| )  \\
   & \leq (\e + 2\b)  4 \al \cdot (  \vpk^{-1} \cJaa^{\kp+1}   + C \cJaa^{\kp + 2/3} ) 
\nlinf{ w \vpb }.
  \eal
  \label{eq:cR_est:I1}
\eeq

For $ I_{\eqref{eq:cR_1D:decomp}, 2}$, using \eqref{eq:vmix_sharp} and $|\b| \les \e$, $\e \cJaa \les 1 $ by \eqref{eq:Ja_hat}, we bound 
\beq
\bal
  |I_{\eqref{eq:cR_1D:decomp}, 2} | &\leq  2 (\alb - \b )  (6  \cff \vpk^{-1} \cJaa^{\kp} + C \cJaa^{\kp - 1/3 } ) \ang \xx^{\e_1 - \he} \nlinf{ w \vpb    }   \\
  & \leq   ( 4  \cff \vpk^{-1}  \cJaa^{\kp}   + C \cJaa^{\kp - 1/3  } ) \ang \xx^{\e_1 - \he} \nlinf{ w \vpb    }  .
\eal
\label{eq:cR_est:I2}
\eeq

Recall the weight $\vpb$ from \eqref{norm:X_ep}. To estimate the integral, we compare the above bound with 
\beq
  M_{ \eqref{def:1D_damp_al} }  = 2 \va \pa_x ( |\wa| \vpb )^{-1}
  = 2 \va   \pa_x ( \ang x^{\e_1} \cJaa^{\kp} \vpk^{-1}). 
  \label{def:1D_damp_al}
\eeq
To compute $  M_{ \eqref{def:1D_damp_al}} $, we evaluate 
\[
\bal
 \f{   M_{ \eqref{def:1D_damp_al} } }{ \ang x^{\e_1} \cJaa^{\kp}  \vpk^{-1} } & 
= 2 \va  \f{ \pa_x \big( \ang x^{\e_1} \cJaa^{\kp}   \exp( - 9\kp_1^{-1}  \ang x^{-\e_1} )  \big)  }{   \ang x^{\e_1} \cJaa^{\kp}  \exp( -  9 \kp_1^{-1}  \ang x^{-\e_1} )  }
= 2 \f{\va}{x} ( \f{ x \pa_x \ang x^{\e_1}}{\ang x^{\e_1}} + \f{x \pa_x \cJaa^{\kp}}{\cJaa^{\kp}}
- \f{9}{\kp_1} x \pa_x \ang x^{-\e_1}
 ) .
\eal
\]

Using $\e_1 = \kp_1 \e$ and a direct calculation, we yield 
\[
  \bal
\ang x^{-\e_1}( x \pa_x \ang x^{\e_1} ) & = \e_1 x^2 \ang x^{-2}
= \e_1 + O(\e \ang x^{-2}), \\
- \f{9}{\kp_1} x \pa_x \ang x^{-\e_1} & = 
\f{ 9 \e_1 }{\kp_1} x^2 \ang x^{-\e_1-2}
= 9 \e x^2 \ang x^{-\e_1-2}
= 9 \e \ang x^{-\e_1} +  O(\e \ang x^{-2}), 
  \eal
\]

Using the estimates \eqref{eq:Ja_hat} on $\cJaa$, $\cJab = \cJa(\wa)$ from \eqref{eq:wa}, 
$\wa = \wwwa \ang x^{\b}<0$ for $x>0$ \eqref{eq:bw_sign}, $\cJaa \les \lgp x$ by \eqref{eq:Ja_hat},  and estimate on $\wwwa$ \eqref{eq:bw_est}, we obtain
\[
\bal
\f{ x \pa_x \cJaa^{\kp}}{\cJaa^{\kp}}
& = \kp \f{ |\cJa(\wa)| \cdot x \pa_x |\cJa(\wa)| }{\cJaa^2}
= \kp \f{|\cJa( \wa )|}{ \cJaa^2} \cdot x \cdot |\wa| x^{\al- 1} \\
& = \kp (\cJaa^{-1} + O(\cJaa^{-2} ) )  \cdot ( 6 x^{-\alb } + O( x^{-\alb} (\lgp x)^{-1/3} ) ) x^{\al + \b} \\
& = ( 6 \kp \cJaa^{  - 1}  + O( \cJaa^{-4/3} ) ) x^{-\alb + \al  + \b} 
= ( 6 \kp \cJaa^{  - 1}  + O( \cJaa^{-4/3} ) ) x^{ -\he} ,
\eal
\]
where we use $\alb - \b -\al = \e - \b = \he $ by \eqref{def:para} in the last identity.
From \eqref{eq:va_est} and \eqref{eq:Ja_hat}, we obtain $\f{1}{x}\va = 2 \al  \cJaa +    O(1)
= 2 \alb \cJaa + O(1)$.  Using $\cJaa \les \min( \lgp x , \e^{-1} )$,  
 $\al =\f{1}{3} - \e$, and $\e_1 \les \e$ \eqref{norm:X_ep}, we derive the main terms 
\[
\bal
 \f{   M_{ \eqref{def:1D_damp_al} }  }{ \ang x^{\e_1} \cJaa^{\kp} \vpk^{-1}  } & = 2 ( 2 \alb  \cJaa +    O(1 ) )  \cdot \B( \e_1 + 9 \e \ang x^{-\e_1} +  O(\e \ang x^{-2}) + ( 6 \kp \cJaa^{  - 1}  + O( \cJaa^{-4/3} ) ) x^{ -\he} \B) \\
  & = 4 \alb \e_1 \cJaa
  + 24 \alb \kp \cdot  x^{ -\he }
  +  36 \alb \e \ang x^{-\e_1}
  + O(\cJaa^{-1/3} x^{ -\he } + \e \cJaa^{2/3} ) .
\eal
\]
Multiplying $\ang x^{\e_1} \cJaa^{\kp} \vpk^{-1}$ and using $\alb = \f13$, we obtain
\beq
\bal
    M_{ \eqref{def:1D_damp_al} } & = 4 \alb  \e_1 \cJaa^{\kp+1} \ang x^{\e_1} \vpk^{-1}
  + 8 \kp \cdot \cJaa^{\kp} \ang x^{ -\he + \e_1} \vpk^{-1}
  + 12 \e  \cJaa^{\kp} \vpk^{-1} \\
  & \quad +   O( \cJaa^{\kp-1/3}  x^{\e_1 -\he} + \e \cJaa^{2/3 + \kp} \ang x^{\e_1}   ) .
\eal
\label{eq:cR_est:M}
\eeq

Recall that $\kp \in (\f{\cff}{2}, 1) \subset (0.8, 1)$ from \eqref{def:kp}. We denote
\beq\label{def:lam_kp}
  \lam_\kp = \f{\cff}{2 \kp} \in (0.8, 1).
\eeq

We compare the main term in $M$ and those in $I_1, I_2$. Recall the range of parameter \eqref{ran:ep} and \eqref{def:kp}. 
Recall $\b =- \f{\e}{8} + O(\e^{4/3})$. 
For $\e$ small enough, we have $\e + 2 \b \in [0, \e]$  and 
\beq\label{eq:1D_sharp_bound}
\bal
0.8 < \tf{1}{2} \cff = \lam_{\kp} \kp < \kp,
\quad  (\e + 2 \b) \cdot 4 \al
< 4 \alb \e < \lam_{\kp} \cdot 12 \e.
  \eal
\eeq

Therefore, comparing $ |I_{ \eqref{eq:cR_1D:decomp}, 1} |,  |I_{ \eqref{eq:cR_1D:decomp}, 2} |$ \eqref{eq:cR_est:I1}, \eqref{eq:cR_est:I2} with $M$ in \eqref{eq:cR_est:M}, we obtain
\[
 |I_{ \eqref{eq:cR_1D:decomp}, 1} |
 +  |I_{ \eqref{eq:cR_1D:decomp}, 2} |
\leq ( \lam_{\kp} M + C_{err}(x)  ) \nlinf{w \vpb} ,
\]
where $C_{err}(x)$ denote the error part 
\[
  C_{err}(x) = 
  C \cJaa^{\kp-1/3}  x^{\e_1 -\he} + C \e \cJaa^{2/3 + \kp} \ang x^{\e_1}  .
\]

Recall the definition of $M$ from \eqref{def:1D_damp_al}. 
Since $ (|\wa| \vpb)^{-1}$ is increasing by \eqref{norm:X_ep}, integrating the above bound and using 
\eqref{eq:cR_est:near0}, we prove 
\[
\bal
 \int_0^x \f{| I_{\eqref{eq:cR_1D:decomp}, 1}| + | I_{\eqref{eq:cR_1D:decomp}, 2} |}{ 2 \va }
  & \leq  \big( C
  + \one_{x >2} \int_2^x \f{\lam_{\kp} 2 \va \pa_x ( |\wa|^{-1} \vpb^{-1} ) + C_{err}(y)}{ 2 \va } d y 
  \big)
  \nlinf{ w \vpb }  \\ 
& \leq  \big( \lam_{\kp} ( |\wa| \vpb)^{-1}   + C +\one_{x >2} \int_2^x \f{C_{err}(y)}{2 \va} d y  \big) \nlinf{ w \vpb } \, .
\eal
\]

Since $\cJaa(y)$ is increasing in $y$, using $\va \gtr x \cJaa$ by \eqref{eq:va_est}, $\he -\e_1 \gtr \e$ by \eqref{ran:ep}, the above bound of $C_{err}(y)$, and \eqref{eq:log_ineq_J:a} with $k = \kp -4/3 > -2/3, \kp -1/3 > 0$, we bound 
\[
\bal
  \int_2^x \f{C_{err}(y)}{2 \va} d y
  & \les \int_2^x \f{ |\cJaa(y)|^{\kp-1/3} \ang y^{\e_1 - \he} 
+ \e |\cJaa(y)|^{2/3 + \kp} \ang y^{\e_1}
  }{y \cJaa(y) } d y  \\
  & \les \int_2^x \f{ |\cJaa(y)|^{\kp-4/3} \ang y^{\e_1 - \he} 
  }{y } d y 
   + \e |\cJaa(x)|^{ \kp-1/3} \int_2^x \ang y^{\e_1-1} d y
  \\
 & \les | \cJaa(x)|^{\kp-1/3} + |\cJaa(x)|^{\kp-1/3} \ang x^{\e_1}
 \les  |\cJaa(x)|^{\kp-1/3} \ang x^{\e_1}.
\eal
\]

Recall $|\wa| \vpb \asymp \ang x^{-\e_1} \cJaa^{-\kp}$ from \eqref{eq:wg_equiv}. Combining the above estimates, multiplying $|\wa| \vpb$, and using $\kp > \f{1}{3}$ by \eqref{def:kp}, we prove 
\beq
\bal
  |\wa| \vpb \int_0^x \f{| I_{\eqref{eq:cR_1D:decomp}, 1} | + | I_{\eqref{eq:cR_1D:decomp}, 2} |}{ 2 \va } 
& \leq (\lam_{\kp} + C \cJaa^{-\kp} + C \cJaa^{-1/3} )  \nlinf{ w \vpb } \\
& \leq (\lam_{\kp} +  C \cJaa^{-1/3} )  \nlinf{ w \vpb }.
\label{eq:cR_est:I12}
\eal
\eeq

\begin{remark}[\bf Sharp bounds]\label{rem:sharp_bound}

We derive estimates with explicit constants for the main terms in 
\eqref{eq:vmix_sharp} to establish the first bound in \eqref{eq:1D_sharp_bound} with some $\lam_{\kp} < 1$. 
We use $\lam_{\kp} < 1$ to further prove the contraction estimates. 
See also Remark \ref{rem:expl_C1}.
\end{remark}

\subsubsection{Additional decay estimates}

The upper bound in \eqref{eq:cR_est:I12} does not decay in $x$.
Using \eqref{eq:v_al_est} and \eqref{eq:Ja_est:b} on $\psio, \cJa$, 
and $\e \cJaa \les 1$ by \eqref{eq:Ja_hat}, we bound 
\bseq\label{eq:cR_decay_bd}
\beq
\bal
  |  I_{\eqref{eq:cR_1D:decomp},1} | + |  I_{\eqref{eq:cR_1D:decomp},2}| 
  \les |\psiox + 2 \al \cJa| + |\cJa(w)| + | \tf{1}{x} \psio + 2 \al \cJa |
\les  \min( x,  \cJaa^{\kp+1} )  \| w \vpb \|.
\eal
\eeq

Since $\cJaa$ is increasing in $y$, integrating the above bounds, using $|\wa| \vpb = \ang x^{-\e_1} \cJaa^{-\kp} \vpk$, $\va \gtr x \cJaa(x)$, $\e_1 / 2 \gtr \e$ by \eqref{ran:ep},
and Lemma \ref{lem:lgx_pow}, we bound 
\beq
\bal
|\wa| \vpb \int_0^x \f{  \min( y,  \cJaa^{\kp+1} ) }{ y \cJaa } d y 
& \les \ang x^{-\e_1} |\cJaa(x) |^{-\kp} \cJaa(x)^{\kp} ( 1 + \int_1^x \f{1}{y} )   \\
& \les \ang x^{-\e_1} \lgp x \les \e^{-1} \ang x^{-\e_1/2}.
\eal
\eeq

\eseq

\subsubsection{Estimate of $ I_{\eqref{eq:cR_1D:decomp},3}$}
For $ I_{\eqref{eq:cR_1D:decomp},3}$, using Proposition \ref{prop:vel_al} and $\he > \e_1$ by \eqref{ran:ep}, we bound 
\[
\bal
  |\tf{1}{y} \psio | 
 &  \leq   |\tf{1}{y} \psio + 2 \al \cJa(w) | + |2\al \cJa(w)|
  \les  ( \cJaa^{\kp}   \ang y^{-\he + \e_1} +   \cJaa^{\kp+1}  )  \nlinf{ w \vpb }   \les   \cJaa^{\kp+1}  \nlinf{ w \vpb }.
  \eal
\]

Using the above estimate, \eqref{eq:wa_est} on $\wa$, and $\cJaa(y) \les \lgp y$ by \eqref{eq:Ja_hat} and $\kp \in (0, 1)$, we estimate $ I_{\eqref{eq:cR_1D:decomp},3}$ as
\bseq\label{eq:cR_est:I3}
\beq
\bal
| I_{\eqref{eq:cR_1D:decomp},3}(y)| 
&= \B| 2 \f{\psio}{y} \B(  \f{ y \pa_y \wa}{\wa} + (\alb - \b)  \B) \B|  \les   \,  \cJaa^{\kp+1}  
\lgy^{-1-\alb}  \nlinf{ w \vpb } \\
& \les \cJaa |\lgp y|^{\kp - 1- \alb} \nlinf{ w \vpb }.
\eal
\eeq

Recall $ |\wa| \vpb = \ang x^{-\e_1} \cJaa^{-\kp} \vpk$ from \eqref{norm:X_ep}. 
Using lower bound of $\va$ in \eqref{eq:va_est} and \eqref{eq:cR_est:near0}, 
$\kp > \f{1}{2} > \alb$ by \eqref{def:kp}, 
$\e_1 \gtr \e$ by \eqref{ran:ep}, and Lemma \ref{lem:lgx_pow} on $\cJaa$, we bound the integral as
\beq
\bal
 |\wa| & \vpb  \B| \int_{0}^x  \f{  I_{\eqref{eq:cR_1D:decomp},3} (y)  }{ 2 \va}   d y \B|
  \les |\wa| \vpb \B(  1 +  \int_{2}^x \f{   |\cJaa(y)| \cdot \lgy^{-1-\alb + \kp}  }{ y \cJaa }   d y \B) \cdot \nlinf{ w \vpb }  \\
 & \les |\wa| \vpb \B(  1 +  \int_{2}^x \f{    |\log y|^{-1-\alb + \kp}  }{ y  }   d y \B) \cdot \nlinf{ w \vpb }  \\
 & \les \ang x^{-\e_1} \cJaa^{-\kp} |\lgp x|^{-\alb + \kp} \nlinf{ w \vpb } 
 \leq C_{ \eqref{eq:cR_est:I3} }  \ang x^{-\e_1/2}  |\cJaa(x)|^{-1/3 } \nlinf{ w \vpb } .
\eal
\eeq
\eseq

Optimizing estimates  \eqref{eq:cR_est:I12}, \eqref{eq:cR_decay_bd} for $ I_{\eqref{eq:cR_1D:decomp},1},  I_{\eqref{eq:cR_1D:decomp},2}$, and applying \eqref{eq:cR_est:I3} to \eqref{eq:cR_1D:decomp}, 
we prove 
\beq
\bal
   |\wa| \vpb \int_0^x \f{   |\td \cR(w)(y)|  }{2 \va} dy 
   & \leq \min( \lam_{\kp} +   C_{ \eqref{eq:1D_far_R:est}  }
    \cJaa^{-\f13} , \ C \e^{-1} \ang x^{- \f12 \e_1} ) \cdot \nlinf{ w \vpb },
   \  \lam_{\kp} < 1. 
\eal
\label{eq:1D_far_R:est} 
\eeq

\vs{0.1in}

\paragraph{\bf Summary}
We combine estimates \eqref{eq:1D_far_R:est} and \eqref{eq:1D_near_R:est} to establish the contraction estimates for the linear part. 
Since $\lam_{\kp}, \lamst < 1$ by Theorem \ref{thm:near_field_stab} and \eqref{def:lam_kp}, we first choose $\muc$ sufficiently small so that $R_{\muc,1}$ associated with $\muc$ is large enough by \eqref{def:Rmu}, and then take $\e \leq c(  \muc )$ to ensure
\beq
\lam_{\kp} +  C_{ \eqref{eq:1D_far_R:est}  } | \cJaa( R_{\muc, 1})|^{-1/3} < \f{1 + \lam_{\kp}}{2}  ,
 \quad  
\lamst +  C_{ \eqref{eq:1D_near_R:est} }(\muc) \e < \f{1+ \lamst }{2} .
 \label{eq:cR_est:para}
\eeq

Recall $ \| w \|_{\cX} = \nlinf{ w \max(\vpa, \muc \vpb) }$.  For $x \leq \Rmub$, using \eqref{eq:1D_near_R:est} and \eqref{eq:cR_est:para}, we prove 
\bseq\label{eq:cR_tot:est}
\beq
  |\wa|\vpa \B| \int_0^x \f{  \td \cR(w) (y)  }{2 \va} d y\B|
  \leq ( \lamst + 
 C_{ \eqref{eq:1D_near_R:est} }(\muc) \e )   \| w \|_{\cX} 
  \leq \f{ 1 + \lamst }{2}  \| w \|_{\cX} .
  \label{eq:cR_tot:est1}
\eeq

For $x > \Rmua$, using  \eqref{eq:1D_far_R:est} and \eqref{eq:cR_est:para}, we prove 
\begin{align}
    \muc |\wa| \vpb  \int_0^x \f{  |\td \cR(w)(y)|  }{2 \va} d y 
   & \leq 
 \min( \lam_{\kp} +   C_{ \eqref{eq:1D_far_R:est}  }
    |\cJaa(R_{\muc, 1}) |^{-1/3} , \ C \e^{-1} \ang x^{-\e_1/2} ) \cdot \muc \nlinf{ w \vpb } \notag \\
  & \leq 
  \min( \tf{1}{2} (1 + \lam_{\kp} ), \, C \e^{-1} \ang x^{-\e_1/2} )  \| w \|_{\cX} .
  \label{eq:cR_tot:est2}
\end{align}
\eseq

When $x \in [\Rmua, \Rmub]$, both estimates in \eqref{eq:cR_tot:est} hold. Recall $\cLab(w)$ from \eqref{eq:F_lin_ep}. Since $\vpa \geq \muc \vpb$ for $x \leq \Rmua$, $\vpa < \muc \vpb$ for $x \geq \Rmub$ by \eqref{def:Rmu}, 
combining estimates in \eqref{eq:cR_tot:est}, we prove

\begin{thm}\label{thm:contra_lin}

 Let $ \lamst$, $\beps_2$, \(\lam_{\kp}\) be as defined in 
 Theorems \ref{thm:near_field_stab},  \ref{thm:al_appr_profile}, and \eqref{def:lam_kp}, respectively,
Let $\e = \f13 - \al$, $\muc, \beps_2$ be the parameter in the $\cX$-norm \eqref{norm:X_ep}, 
and in Theorem \ref{thm:1D_error}. There exists an absolute constant $\muc >0$ and a small $\beps_4(\muc) \in (0, \beps_3)$,  
such that for any $\e \in (0, \beps_4 )$,  we have
\[
\bal
| \max(\vpa, \muc \vpb)\cLab(w)(x) | & \leq  \max(\vpa, \, \muc \vpb) |\wa| \B| \int_0^x \f{  \td \cR(w)(y)   }{2 \va} d y \B| \leq \min( \lam_{\cX} , C \e^{-1} \ang x^{-\e_1} ) \| w \|_{\cX}, \\
  \eal
  \]
for any $x\geq 0$, where 
$\lam_{\cX} \teq \tf12 + \tf12 \max( \lam_{\kp},  \lamst ) 
= \tf12 + \tf12 \max( \f{\cff}{2 \kp} ,  0.95 ) \in (0, 1) $.
\end{thm}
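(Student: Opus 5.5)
The plan is to assemble the two regional estimates \eqref{eq:cR_tot:est1} and \eqref{eq:cR_tot:est2}, choosing the parameters in the order dictated by \eqref{eq:cR_est:para}. First I fix $\muc$ so small that $\Rmua=\muc^{-2.9}$ is large enough to give $\lam_\kp + C_{\eqref{eq:1D_far_R:est}} |\cJaa(\Rmua)|^{-1/3} < \f{1+\lam_\kp}{2}$. This is possible because $\cJaa(x)\asymp \min(\lgp x,\e^{-1})$ by \eqref{eq:Ja_hat}: once $\e\le c(\muc)$ is small enough that $\e^{-1}\gg \lgp \Rmua$, we have $\cJaa(\Rmua)\gtr \log \Rmua$, which is large. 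Then I shrink $\e$ further, to some $\beps_4(\muc)$, so that $C_{\eqref{eq:1D_near_R:est}}(\muc)\e<\f{1-\lamst}{2}$, and so that the comparison \eqref{def:Rmu} between $\vpa$ and $\muc\vpb$ holds. The latter uses $\e<\e(\muc)$, which keeps the $\cJaa^{-\kp}$ and $\ang x^{-\e_1}$ factors in $\vpb$ harmless on $[\Rmua,\Rmub]$.

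Next I split the $x$-line into three ranges. For $x\le \Rmua$, \eqref{def:Rmu} gives $\max(\vpa,\muc\vpb)=\vpa$, and \eqref{eq:cR_tot:est1} gives the bound $\f{1+\lamst}{2}\|w\|_{\cX}\le \lam_{\cX}\|w\|_{\cX}$. For $x\ge\Rmub$ the maximum is $\muc\vpb$, and \eqref{eq:cR_tot:est2} gives $\min(\f{1+\lam_\kp}{2}, C\e^{-1}\ang x^{-\e_1/2})\|w\|_{\cX}$. On the overlap $[\Rmua,\Rmub]$ both bounds apply, and since $\max(a,b)\le a+b$ would lose a factor $2$, I instead note that the maximum equals one of the two weights pointwise and apply whichever estimate corresponds to it. Note that \eqref{eq:cR_tot:est2} was derived for all $x\ge\Rmua$, and \eqref{eq:cR_tot:est1} for all $x\le\Rmub$. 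The first inequality in the statement is just the definition of $\cLab$ in \eqref{eq:F_lin_ep}, together with $|\int\cdot|\le\int|\cdot|$ where needed.

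For the decay factor, the regional estimate produces $\ang x^{-\e_1/2}$, whereas the statement asks for $\ang x^{-\e_1}$. I would handle this by rerunning \eqref{eq:cR_decay_bd} with a slightly smaller exponent. Since $\e_1=\kp_1\e$ with $\kp_1$ an arbitrary absolute constant, the bound $\ang x^{-\e_1}\lgp x\les \e^{-1}\ang x^{-\e_1/2}$ can be replaced by a constant times $\e^{-1}$ multiplied by any fixed fractional power of $\ang x^{-\e_1}$. Alternatively, if the stated power is not exactly attainable, I would read the claim modulo the harmless change $\e_1\to\e_1/2$. In the near region $x\le\Rmub$ the decay statement is trivial: there $\ang x^{-\e_1}\gtr 1$ as long as $\e\log\Rmub\les1$, so $C\e^{-1}\ang x^{-\e_1}\ge 1\ge\lam_{\cX}$ and the minimum equals $\lam_{\cX}$.

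The main obstacle I expect is this exponent bookkeeping in the decay factor, together with making sure the parameter order is not circular. $\muc$ must be chosen independently of $\e$, while $C(\muc)$ and $c(\muc)$ may depend on $\muc$. Also, $\lam_{\cX}$ must be uniform: by \eqref{def:lam_kp} and Theorem \ref{thm:near_field_stab} it depends only on $\cff$, $\kp$ and $0.95$, which are absolute constants.
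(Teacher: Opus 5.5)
This is essentially the paper's argument. You fix $\muc$ so that $\Rmua$ is large, shrink $\e$ as in \eqref{eq:cR_est:para}, and patch \eqref{eq:cR_tot:est1} (valid for $x\le\Rmub$) and \eqref{eq:cR_tot:est2} (valid for $x\ge\Rmua$) via \eqref{def:Rmu}. For $x\le\Rmub$ the decay bound then follows trivially from $\ang x^{-\e_1}\gtrsim 1$, exactly as in the paper.

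You are right that this only yields $C\e^{-1}\ang x^{-\e_1/2}$. The paper's remark that $\ang x^{-\e_1}$ ``follows from \eqref{eq:cR_tot:est2}'' is a slip. However, drop the ``rerun'' option, because the $\log x$ loss is genuine. For a typical $w$ one has $\tcr(w)(y)\to 2\al(\e+2\b)\cJa(w)(\infty)\neq 0$, while $\va\asymp y\,\e^{-1}$ when $\e\log y\gg 1$. Hence $\int_0^x \tcr(w)/(2\va)$ grows like $\log x$, and the weighted quantity behaves like $\e\log x\,\ang x^{-\e_1}\|w\|_{\cX}$. This exceeds $C\e^{-1}\ang x^{-\e_1}\|w\|_{\cX}$ once $\log x\gg\e^{-2}$. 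So the statement must be read with $\ang x^{-\e_1/2}$; any $\ang x^{-c\e_1}$ with $c<1$ also works. This reading is consistent with Theorem \ref{thm:contra_lin_exact}.
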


For $x \geq \Rmub$, the second bound $C \e^{-1} \ang x^{-\e_1}$ in Theorem \ref{thm:contra_lin} follows from \eqref{eq:cR_tot:est2}. For $x \leq \Rmub$, 
since $\ang x^{\e_1} \les 1$, by choosing $C$ large enough so that $C \ang x^{-\e_1} \geq 1> \f{1 + \lamst}{2}$, 
the bound $C \e^{-1} \ang x^{-\e_1}$ follows trivially from \eqref{eq:cR_tot:est}.

For the rest of this section, we fix \(\muc\) as in Theorem \ref{thm:contra_lin}. Below, the implicit constants may depend on \(\muc\), and this dependence is suppressed.

\subsection{Estimate of error term $\cEab$}

In this section, we estimate $\cEab$ from \eqref{eq:F_lin_ep} perturbatively.

Recall $\kp < 1$ from \eqref{def:kp}. Firstly, we impose the following constraint on $\ddd$ in Theorem \ref{thm:1D_solu} 
\beq\label{eq:ineq_ddd1}
   \ddd <  \e^{ (\kp + 1 ) /2 },
\eeq

For $|| w ||_{\cX} < \ddd $ with \eqref{eq:ineq_ddd1} and $\e$ small enough, 
since $\cJaa \les \e^{-1}$ \eqref{eq:Ja_hat}, using Proposition \ref{prop:vel_al} and \eqref{eq:va_est}
, we obtain
\beq\label{eq:ass_v}
\bal
  |\psio(w)|  &\leq C x \cJaa^{\kp+1} || w ||_{\cX} \leq C x \cJaa \e^{-\kp} \ddd 
  \leq  \tf{1}{2} \va  , \\
   V  & = \va + \psio(w)
 \geq x ( 2 \al \cJaa - C \cJaa^{2/3})) - C x \cJaa^{\kp + 1} || w||_{\cX} \geq \tf{1}{2} \va. 
  \eal
\eeq

Using the norm \eqref{norm:X_ep}, we bound 
\beq\label{eq:lin_est_w}
   |\f{W}{\wa} | \leq 1+  |\f{w}{\wa}| 
  \les 1 + \ang x^{\e_1} \cJaa^{\kp} || w ||_{\cX}
\eeq

Using \eqref{eq:err_comp_R4} on $\cR(\wa)$, \eqref{eq:va_est}, \eqref{eq:ass_v} on $V$,  we bound 
\beq\label{eq:lin_err_est}
\bal
  | \f{ \cEab  }{ \wa } |
  & = |\int_0^x  \f{ \cR(\wa)}{2 V} \cdot \f{W}{\wa}  |
   \les \int_0^x \f{ \e \min( \cJaa \lgy^{-1/3}, y ) }{ y \cJaa}  (1 + \ang y^{\e_1} |\cJaa(y)|^{\kp} 
   \| w \|_{\cX} ) .
\eal
\eeq

Using $1\les \cJaa(y) \les \lgy$ by \eqref{eq:Ja_hat}, we bound the integrand 
\beq\label{eq:lin_err_est_int1}
F(y) \teq \f{ \e \min( \cJaa \lgy^{-1/3}, y ) }{ y \cJaa}  
  \les \f{ \e \min( \lgy^{-1/3} , y ) }{ y } .
\eeq
The second bound $y$ in the minimum is only used for small $y$, e.g. $y \leq 10$. 

We estimate the integral with and without $||w||_{\cX}$ 
separately. 
Since $\cJaa$ is increasing, we have
\beq\label{eq:lin_err_est_int2}
\bal
  P_1 & \teq \int_0^x F(y) d y 
\les \int_0^x \f{ \e \min( \lgy^{-1/3} , y ) }{ y } d y
\les \e \min( x, \lgx^{2/3} ), \\
P_2 & \teq \int_0^x F(y) \ang y^{\e_1} |\cJaa(y)|^{\kp} dy 
\les |\cJaa(x)|^{\kp}   \int_0^x \f{ \e \min( \lgy^{-1/3} , y ) }{ y } \ang y^{\e_1} d y .
\eal
\eeq

If $x \leq 2$, we bound $P_2$ as 
\[
  P_2 \les  \e |\cJaa(x)|^{\kp} x.
\]

If $x > 2$, we bound $P_2$ as 
\[
  P_2 \les  \e |\cJaa(x)|^{\kp} \B( 1 + \int_2^x y^{\e_1 - 1}  |\log y|^{-1/3} 
( \one_{ \e \log y < 1}  + \one_{\e \log y \geq 1}  ) d y  \B)
\]

Since $\e_1 \asymp \e$ \eqref{norm:X_ep}, if $0 < \e \log y < 1$, we get $y^{\e_1} = e^{  \e_1 \log y }  \les 1$.
If $\e \log y > 1$, we get $|\log y|^{-1/3} < \e^{1/3}$. It follows 
\[
  P_2 \les 
  \e |\cJaa(x)|^{\kp} \B( 1 + \int_2^x y^{-1} |\log y|^{-1/3} + y^{\e_1-1} \e^{1/3} \B)
\les    \e |\cJaa(x)|^{\kp} ( \lgx^{2/3} + \e^{-2/3} \ang x^{\e_1} ),
\]
where we use $1  \les \lgp x$ in the last inequality.  Therefore, we estimate $P_2$ as 
\beq\label{eq:lin_err_est_int3}
  P_2 \les   \e |\cJaa(x)|^{\kp} \min( x, \, \lgx^{2/3} + \e^{-2/3} \ang x^{\e_1} ).
\eeq

Applying estimates of $P_1, P_2$ to \eqref{eq:lin_err_est}, for any $x \geq 0$, we prove
 \[
 \bal
   |  \cEab  \wa^{-1}|
   &  \leq   |P_1| + |P_2| \cdot || w ||_{\cX}   \\
 &  \les  \e \min(x, \lgx^{2/3}) +  \e |\cJaa(x)|^{\kp} \min( x, \, \lgx^{2/3} + \e^{-2/3} \ang x^{\e_1} ) || w||_{\cX} \\
  \eal
  \]

Since we have chosen $\muc$ in Theorem \ref{thm:contra_lin} and treat it as an absolute constant, Applying the above estimate,\eqref{eq:wg_nsingu}, $\bbb < 2$ from \eqref{def:vpa}, 
and Lemma \ref{lem:lgx_pow} with $\e_1 \gtr \e$ by \eqref{ran:ep}, we bound 
\[
 \bal
   |  \cEab  \max(\vpa, \muc \vpb) | 
 & \les  |  \cEab  \wa^{-1} | \cdot ( |x|^{-\bbb+1} + 1 ) \ang x^{-\e_1} \cJaa^{-\kp} \\
 &  \les \B( \e \min(x, \lgx^{2/3}) +  \e |\cJaa(x)|^{\kp} \min( x, \, \lgx^{2/3} + \e^{-2/3} \ang x^{\e_1} ) || w||_{\cX}  \B) \\
 & \qquad  \cdot ( |x|^{-\bbb+1} + 1 ) \ang x^{-\e_1} \cJaa^{-\kp} \\
 & \les \e \cJaa^{2/3 - \kp}
 + \e \ang x^{-\e_1} ( |\lgp x|^{2/3} + \e^{-2/3} \ang x^{\e_1} ) || w||_{\cX} .
  \eal
\]

Since $\kp > 2/3 $ by \eqref{def:kp}, using $\e_1 \gtr \e$ by \eqref{ran:ep} and  Lemma \ref{lem:lgx_pow} again, we prove
  \beq\label{eq:lin_err_est2}
      |  \cEab  \max(\vpa, \muc \vpb) |  \les \e +  \e^{1/3} || w||_{\cX} .
\eeq

\subsubsection{Contraction estimates of $\cEab$}

For later proof of contraction estimate, we estimate $\cEab( w_1 ) - \cEab(w_2)$ for two solution 
$w_1, w_2$ with $||w_i || < \ddd$ and $\ddd$ satisfying \eqref{eq:ineq_ddd1}. 
Consider 
\beq\label{eq:solu_contra}
  W_i = \va + w_i, \quad  V_i = \va + \psio( w_i), 
  \quad |w_i | < \ddd, \ i = 1,2.
\eeq

Using the definition of $\cEab(w_i)$ \eqref{eq:F_lin_ep}, we have 
\beq\label{eq:lin_contra_err1}
\bga
  \f{ \cEab(w_1) - \cEab(w_2)}{\wa} 
   = \int_0^x \f{ \cR(\wa) }{\wa} \cdot ( \f{W_1}{V_1} - \f{W_2}{V_2} ),\\
 \wa^{-1} (\f{W_1}{V_1} - \f{W_2}{V_2} )
  = \wa^{-1} ( \f{W_1 - W_2}{V_1} - \f{W_2(V_1 - V_2) }{V_1 V_2 } )
  = \wa^{-1} (  \f{w_1 - w_2}{V_1} - \f{W_2  \cdot \psio(w_1 - w_2) }{V_1 V_2} ).
\ega
\eeq

Using \eqref{eq:ass_v}, \eqref{eq:lin_est_w}, and \eqref{eq:va_est} we obtain
\[
  | \psio(w_i)| \leq \f{1}{2} \va \leq V_i,  \quad V_i \asymp x \cJaa, \quad  |\f{w_i}{\wa}| \les 1 + \ang x^{\e_1} \cJaa^{\kp} || w_i||_{\cX}. 
\]

Using the norm \eqref{norm:X_ep}, Proposition \ref{prop:vel_al} for $\psio(w_1 - w_2) $, 
and $|| w_i||_{\cX} < \ddd $, we obtain 
\[
\bal
    \B|\f{w_1 - w_2}{ \wa V_1} \B| 
  & \les \f{ \ang x^{\e_1}  \cJaa^{\kp} }{x \cJaa} || w_1 - w_2||_{\cX}, 
  , \\
|\f{W_2( \psio(w_1) - \psio(w_2) ) }{ \wa V_1 V_2} | 
& \les  (1 + \ang x^{\e_1} \cJaa^{\kp} || w_2||_{\cX} ) \f{ x \cJaa^{\kp+1}  }{ x^2 \cJaa^2 } || w_1 - w_2 ||_{\cX} \\
\eal
\]
Since $|| w ||_{\cX} < \ddd < \e^{\hk}$ \eqref{eq:ineq_ddd1} and $\cJaa \les \e^{-1}$, we get $\cJaa^{\kp} \nu \les 1$. Thus, we further obtain
\[
\bal
|\f{W_2 \cdot \psio(w_1 - w_2)  }{ \wa V_1 V_2} | 
 \les (1 + \ang x^{\e_1} ) \f{  x \cJaa^{\kp+1}  }{ x^2 \cJaa^2 } || w_1 - w_2 ||_{\cX}
 \les \f{  \ang x^{\e_1} \cJaa^{\kp}  }{ x \cJaa } || w_1 - w_2 ||_{\cX}.
\eal
\]

Applying the above estimates to \eqref{eq:lin_contra_err1}, we bound 
\beq\label{eq:lin_contra_err2}
\bal
\B|  \wa^{-1} (\f{W_1}{V_1} - \f{W_2}{V_2} ) \B|
   \les  \f{  \ang x^{\e_1} \cJaa^{\kp}  }{ x \cJaa } || w_1 - w_2 ||_{\cX}.
\eal
\eeq

Using \eqref{eq:err_comp_R4} on $\cR(\wa)$ and the above estimate, we bound 
\[
\B|   \f{ \cEab(w_1) - \cEab(w_2) }{\wa}  \B|
\les \int_0^x  \f{ \e \min( \cJaa \lgy^{-1/3}, \, y )  }{ y \cJaa }
\cdot 
     \ang y^{\e_1}  \cJaa^{\kp}    || w_1 - w_2 ||_{\cX} .
\]

From \eqref{eq:lin_err_est_int1}, \eqref{eq:lin_err_est_int2},  we observe that the above integral is the same as $P_2$ in \eqref{eq:lin_err_est_int2}. Thus, using \eqref{eq:lin_err_est_int3}, we prove 
\[
\bal
 |    (\cEab(w_1) - \cEab(w_2))  \wa^{-1}  |
  & \les  \e |\cJaa(x)|^{\kp} \min( x, \, \lgx^{2/3} + \e^{-2/3} \ang x^{\e_1} ) || w_1 - w_2 ||_{\cX} .
\eal
\]

We treat $\muc$ chosen in Theorem \ref{thm:contra_lin} as an absolute constant.
 Applying the above estimate, estimate \eqref{eq:wg_nsingu}, $\bbb < 2$ from \eqref{def:vpa}, 
and Lemma \ref{lem:lgx_pow} with $\e_1 \gtr \e$ by \eqref{ran:ep}, we bound 
\beq\label{eq:lin_contra_err}
\bal
  |    (\cEab(w_1) & - \cEab(w_2))  \max(\vpa, \muc \vpb)  | \\
 & \les  |    (\cEab(w_1) - \cEab(w_2))  \wa^{-1}  | \cdot ( |x|^{-\bbb+1} + 1 ) \ang x^{-\e_1} \cJaa^{-\kp }    \\
 & \les \e \min( x, \, \lgx^{2/3} + \e^{-2/3} \ang x^{\e_1} ) 
 \cdot ( |x|^{-\bbb+1} + 1 ) \ang x^{-\e_1}   || w_1 - w_2 ||_{\cX}    \\
 & \les \e ( \lgx^{2/3} + \e^{-2/3} \ang x^{\e_1} ) \ang x^{-\e_1}   || w_1 - w_2 ||_{\cX}    \\
 & \les \e ( \e^{-2/3} + \e^{-2/3} ) || w_1 - w_2 ||_{\cX}  
 \les \e^{1/3} || w_1 - w_2 ||_{\cX}  .
 \eal
 \eeq

\subsection{Estimate of nonlinear term $\cNab$}

Recall $\cNab$ from \eqref{eq:F_lin_ep},
\beq\label{eq:1D_non_recall}
\bal
   \cNab  & = \wa  \int_0^x \td \cR(w) \cdot \B( \f{W}{2V \wa} - \f{1}{2 \va}  \B) ,
\eal
\eeq
and $W = \wa + w , V = \va + \psio$ from \eqref{eq:1D_vw}. We decompose
\[
  \f{W}{ V \wa  } - \f{1}{\va}
  = \f{ W \va - V \wa  }{V \wa \va }
  = \f{ (W - \wa) \va + (\va - V) \wa  }{V \wa \va }
  = \f{w}{V \wa} - \f{\psio}{V \va}.
\]

Using $V \asymp \va$ \eqref{eq:ass_v} and the norm \eqref{norm:X_ep}, we obtain 
\[
  |\f{w}{ V \wa }| \les     \f{ \ang x^{\e_1} \cJaa^{\kp} }{ x \cJaa  }  || w ||_{\cX},
  \quad  |\f{\psio}{V \va} | \les \f{x \cJaa^{\kp+1}}{ x^2 \cJaa^2 } || w ||_{\cX}
\les \f{ \cJaa^{\kp}}{ x \cJaa } || w ||_{\cX}.
\]

Since $\ang x^{\e_1} \geq 1$, combining the above two estimates, we derive 
\beq\label{eq:lin_non_coe}
    | \f{W}{ V \wa  } - \f{1}{\va} | \les   \f{ \ang x^{\e_1} \cJaa^{\kp-1} }{ x   }  || w ||_{\cX}.
\eeq

\paragraph{\bf Estimate of $\cR$}
Recall the decomposition \eqref{eq:cR_1D:decomp} on $ \tcr(w)$. Combining estimates 
\eqref{eq:cR_est:near0},  \eqref{eq:cR_est:I1},  \eqref{eq:cR_est:I2}, \eqref{eq:cR_est:I3}, which holds for any $x \geq 0$, using $\vpk \asymp 1$ by \eqref{eq:wg_equiv},
$ |\b| \les \e$ \eqref{eq:beta_est}, 
and $\nlinf{ w \vpb} \les \nchi{ w}$ \eqref{norm:X_ep} ($\muc$ is fixed in Theorem \ref{thm:contra_lin}), we bound
\beq\label{eq:lin_non_cr}
   |\tcr(w)| \les  x \ang x^{-1}  \big(  \e \cJaa^{\kp+1}
     + \cJaa^{\kp} \ang x^{-\he + \e_1} 
     + \cJaa^{\kp+1} \lgx^{-4/3} \big) \nchi{ w} .
\eeq

For $x \leq 1$, the above estimate follows from \eqref{eq:cR_est:near0};
for $x > 1$, it follows from  \eqref{eq:cR_est:I1},  \eqref{eq:cR_est:I2}, \eqref{eq:cR_est:I3}.

Applying \eqref{eq:lin_non_coe} and \eqref{eq:lin_non_cr} to \eqref{eq:1D_non_recall}, we obtain 
\begin{align}\label{eq:lin_non_int1}
  |\f{\cNab }{\wa}| & \les \int_0^x 
     ( \e \cJaa^{\kp+1}
     + \cJaa^{\kp} \ang y^{-\he + \e_1} 
    + \cJaa^{\kp+1} \lgy^{-4/3} ) \f{y}{\ang y} \cdot \f{\ang y^{\e_1} \cJaa^{\kp-1} }{y} 
    d y || w ||_{\cX}^2 \notag  \\   
  & \teq (S_1 + S_2 + S_3) || w ||_{\cX}^2 ,
\end{align}
where $S_i$ denote the integral of the $i$-th summand in the integrand. All the functions in the integrands evaluate at $y$. 
Clearly, we have a trivial bound 
\beq\label{eq:lin_non_est1}
    |\cNab \cdot  \wa^{-1}(x)| \les x || w ||_{\cX}^2 , \quad x \leq 1.
\eeq

Next, we consider large $x$. Since $\cJaa$ is increasing, 
we bound 
\bseq\label{eq:lin_non_est:S}
\beq
\bal
  |S_1|  = \e \int_0^x  \cJaa^{\kp+1 + \kp - 1  } \ang y^{\e_1 - 1} d y
\les |\cJaa(x)|^{2\kp } \e \int_0^x \ang y^{\e_1 - 1} d y
\les |\cJaa(x)|^{2\kp } \ang x^{\e_1} .
\eal
\eeq

Recall $\he - 2 \e_1  \geq \f{(8.9-5)\e}{8} \gtr \e$ by \eqref{ran:ep}. 
Using \eqref{eq:log_ineq_J:a} with $k = 2 \kp - 1  > -1$,  we estimate $S_2$ as 
\beq
  |S_2| = \int_0^x \cJaa^{2 \kp - 1 } \ang y^{-\he + 2\e_1 - 1} d y
  \les  |\cJaa(x)|^{2 \kp} .
\eeq

For $S_3$, since $\cJaa, \ang y$ is increasing, we bound 
\beq
\bal
  |S_3| & = \int_0^x \cJaa^{2\kp } \lgy ^{-4/3} \ang y^{\e_1 -1} d y
\les |\cJaa(x)|^{2\kp } \ang x^{\e_1} \int_0^x \lgy^{-4/3} y^{-1} dy  \\
& \les |\cJaa(x)|^{2\kp } \ang x^{\e_1}.
\eal
\eeq
\eseq

Combining the above estimates and using $\cJaa \les \e^{-1}$ by \eqref{eq:Ja_hat}, we prove 
\beq\label{eq:lin_non_est2}
\bal
  |\cNab  \wa^{-1}| & \les  \one_{x \leq 1} x + \one_{x > 1}  | \cJaa(x)|^{2\kp }  \ang x^{\e_1}
   || w||_{\cX}^2   \les \min(x, \  \e^{-\kp}  |\cJaa(x)|^{\kp } \ang x^{\e_1}
  ) || w||_{\cX}^2 .
  \eal 
\eeq

Since we have chosen $\muc$ in Theorem \ref{thm:contra_lin} and treat it as an absolute constant, applying the above estimate, estimate \eqref{eq:wg_nsingu}, and $\bbb < 2$ from \eqref{def:vpa}, we bound 
\beq\label{eq:lin_non_est3}
\bal
  |\cNab \max(\vpa, \muc \vpb)  |
 &  \les   |\cNab  \wa^{-1}| \cdot (|x|^{-\bbb+1} + 1) \ang x^{-\e_1} \cJaa^{-\kp} 
  \les \e^{-\kp} || w||_{\cX}^2 .
\eal
\eeq

\subsubsection{Contraction estimates of $\cNab$}

For later proof of contraction estimate, we estimate $\cNab( w_1 ) - \cNab(w_2)$ for two solution 
$w_1, w_2$ with $||w_i || < \ddd$ and $\ddd$ satisfying \eqref{eq:ineq_ddd1}. 
We adopt the same notation as in \eqref{eq:solu_contra}. Using the definition of $\cNab$ \eqref{eq:1D_non_recall}, we obtain
\[
  \f{(\cNab(w_1) - \cNab(w_2))  }{\wa}
  =  \int_0^x ( \td \cR(w_1) - \td \cR(w_2) ) \cdot \B( \f{W_1}{2V_1 \wa} - \f{1}{2 \va}  \B) 
    + \tcr(w_2) \B( \f{W_1}{2V_1 \wa}  -  \f{W_2}{2V_2 \wa} 
   \B) 
  \teq \cN_{\D, 1} + \cN_{\D, 2}.
\]

From the definition \eqref{eq:lin_nloc_ep}, $\tcr(w)$ is linear in $w$. Thus, using \eqref{eq:lin_non_cr}, we obtain 
\[
\bal
|\tcr(w_1) - \tcr(w_2)|
& = |\tcr(w_1 -w_2)| \\
& \les  x \ang x^{-1} (  \e \cJaa^{\kp+1}
     + \cJaa^{\kp} \ang x^{-\he + \e_1} 
     + \cJaa^{\kp+1} \lgx^{-4/3}) || w_1 - w_2 ||_{\cX}.
\eal
\]

Applying the above estimate, \eqref{eq:lin_non_coe} with $(V, W, w) \rsa (V_1, W_1, w_1)$,  we bound 
\[
\bal
  |\cN_{\D, 1}| 
  \les & \int_0^x 
     ( \e \cJaa^{\kp+1}
     + \cJaa^{\kp} \ang y^{-\he + \e_1} 
    + \cJaa^{\kp+1} \lgy^{-4/3} ) \f{y}{\ang y} \cdot \f{\ang y^{\e_1} \cJaa^{\kp-1} }{y} d y \\
 &  \cdot    || w_1 - w_2||_{\cX}   || w_1 ||_{\cX} .
 \eal
\]

For $\cN_{\D, 2}$, applying \eqref{eq:lin_contra_err2} and \eqref{eq:lin_non_cr} with 
$w \rsa w_2$, we bound 
\[
\bal
  |\cN_{\D, 2}| 
   \les & \int_0^x 
     ( \e \cJaa^{\kp+1}
     + \cJaa^{\kp} \ang y^{-\he + \e_1} 
    + \cJaa^{\kp+1} \lgy^{-4/3} ) \f{y}{\ang y} 
      \cdot    \f{  \ang y^{\e_1} \cJaa^{\kp}  }{ y \cJaa } d y \\
     &   \cdot  ||w_2||_{\cX} || w_1 - w_2 ||_{\cX} ,
\eal
\]
where the functions in the integrand evaluate at $y$.

The above $y$-integrals are the same as those in \eqref{eq:lin_non_int1}, which have been estimated in 
\eqref{eq:lin_non_est1}-\eqref{eq:lin_non_est2}. Thus, applying \eqref{eq:lin_non_est2}, 
 $||w_i||_{\cX} < \ddd$ \eqref{eq:solu_contra}, 
and $\cJaa \les \e^{-1}$ by \eqref{eq:Ja_hat}, we bound 
\[
\bal
  |(\cNab(w_1) - \cNab(w_2) )  \wa^{-1} |
  & \leq  |\cN_{\D, 1}| + |\cN_{\D,2}| \\
  & \leq  \min(x, \   |\cJaa(x)|^{2\kp } \ang x^{\e_1}
   ) || w_1 - w_2||_{\cX}   ( || w_1 ||_{\cX} + ||w_2||_{\cX} ) \\
& \les  \min(x,  \ \e^{-\kp}   |\cJaa(x)|^{\kp } \ang x^{\e_1}
 ) \ddd || w_1 - w_2||_{\cX}    .
\eal
\]

We treat $\muc$ determined in Theorem \ref{thm:contra_lin} as an absolute constant.  Applying the above estimate, estimate \eqref{eq:wg_nsingu}, and $\bbb < 2$ from \eqref{def:vpa},  we prove
\begin{align}\label{eq:lin_contra_non}
  |(\cNab(w_1) - \cNab(w_2) ) \max(\vpa, \muc \vpb) |
   & \les   |(\cNab(w_1) - \cNab(w_2) )  \wa^{-1} | \cdot (|x|^{-\bbb+1} + 1) \ang x^{-\e_1} \cJaa^{-\kp}  
   \notag \\ 
  & \les \e^{-\kp} \nu  || w_1 - w_2||_{\cX}    .
\end{align}

\subsection{Proof of Theorem \ref{thm:1D_solu}}\label{sec:1D_fix_pt_pf}

We are in a position to prove Theorem \ref{thm:1D_solu}.

\subsubsection{Proof of into property  \eqref{eq:1D_onto}}

In this section, we prove \eqref{eq:1D_onto}. Recall the $\cX$-norm from \eqref{norm:X_ep} 
\beq\label{eq:norm:X_recall}
   || f ||_{\cX} = \nlinf{ f \max(\vpa, \muc \vpb) }.
\eeq
where  $\muc$ is determined in Theorem \ref{thm:contra_lin}.

Applying estimates \eqref{eq:lin_err_est2} on $\cEab$ and \eqref{eq:lin_non_est2} on $\cNab$, we bound 
\beq
  \| \cEab + \cNab(w) \|_{\cX}  = \nlinf{ (\cEab + \cNab) \max(\vpa, \muc \vpb) } 
   \leq  C_{ \eqref{eq:lin_1D_pf3}  } ( \e + \e^{1/3} || w||_{\cX} + \e^{-\kp}|| w||_{\cX}^2  ) ,
   \label{eq:lin_1D_pf3}
\eeq
for some absolute constant $C_{ \eqref{eq:lin_1D_pf3}  } $ independent of $  \e$. Recall the estimate of $\cLab$ from Theorem \ref{thm:contra_lin}
\[
  || \cLab(w) ||_{\cX} < \lam_{\cX} || w||_{\cX}, \quad \lam_{\cX} < 1,
\]

Now we choose 
\beq
\ddd =  C_{ \eqref{eq:ddd_set} } \e, 
 \quad C_{ \eqref{eq:ddd_set} } = \f{2 C_{ \eqref{eq:lin_1D_pf3}  }  }{1 -\lam_{\cX}} + 1, 
 \quad \Longrightarrow \ C_{ \eqref{eq:lin_1D_pf3}  }  \e < \f{1}{2} (1 - \lam_{\cX}) \ddd.
 \label{eq:ddd_set}
\eeq
Since $ C_{ \eqref{eq:ddd_set} }$ is an absolute constant, $\nu$ satisfies \eqref{eq:ineq_ddd1} for $\e$ small enough. 
Since $\kp < 1$ by \eqref{def:kp}, for $|| w||_{\cX} < \ddd$, by further requring $\e$ small and  combining 
\eqref{eq:lin_1D_pf3} and \eqref{eq:ddd_set}, we prove \eqref{eq:1D_onto}:
\[
\bal
  || \cLab + \cEab + \cNab||_{\cX}
& \leq \lam_{\cX} \ddd 
+ \f{1}{2} (1 - \lam_{\cX}) \ddd 
+  C(  C_{ \eqref{eq:lin_1D_pf3}  }  , C_{ \eqref{eq:ddd_set} } ) (  \e^{1/3} \cdot \e + \e^{-\kp} \e^{ 2  }  )  \\
  & \leq \tf{1}{2}(1 + \lam_{\cX} ) \ddd + 
  C(  C_{ \eqref{eq:lin_1D_pf3}  } , C_{ \eqref{eq:ddd_set} } ) (  \e^{4/3}  +  \e^{ 2 - \kp  }  ) 
   <  \tf{1}{3}( 2 + \lam_{\cX} ) \ddd < \ddd.
\eal
\]

\subsubsection{Proof of contraction property}\label{sec:1D_prop}

Next, we prove the contraction property \eqref{eq:1D_contra}. Consider 
\beq\label{eq:solu_contra2}
  W_i = \va + w_i, \quad  V_i = \va + \psio(w_i), \quad |w_i | < \ddd, \ i = 1,2.
\eeq

Recall the $\cX$-norm from \eqref{eq:norm:X_recall}. Applying \eqref{eq:lin_contra_err} for $\cEab$ and \eqref{eq:lin_contra_non} for $\cNab$, and $\ddd = C_* \e$ from \eqref{eq:ddd_set}, we bound 
\[
\bal
  ||  (\cEab + \cNab)(w_1) - (\cEab+ \cNab)(w_2) ||_{\cX}
 &  \leq C (\e^{1/3} + \e^{-\kp} \ddd  )  || w_1 - w_2||_{\cX}   \leq \bar C_2 (\e^{1/3} + \e^{1-\kp} ) || w_1 - w_2||_{\cX} , 
\eal
\]
for some absolute constant $\bar C_2$.  

Since $\cLab$ is linear, using  the estimate of $\cLab$ from Theorem \ref{thm:contra_lin}, we have 
\[
    || \cLab(w_1) - \cLab(w_2) ||_{\cX} < \lam_{\cX} || w_1 - w_2||_{\cX}, \quad \lam_{\cX} < 1.
\]

Since $ \lam_{\cX}< 1$, combining the above two estimates and taking $ \e$ small enough, we prove 
\[
  ||\cFR(w_1) - \cFR(w_2) ||_{\cX}
  \leq   ||  (\cEab + \cNab + \cLab)(w_1) - (\cEab+ \cNab + \cLab)(w_2) ||_{\cX}
  < \f{1}{2} (\lam_{\cX} + 1) || w_1 - w_2||_{\cX} , 
\]
Choosing $\lam_{\cF} =\f{1}{2} (\lam_{\cX} + 1) < 1$, we prove \eqref{eq:1D_contra}. 

Using \eqref{eq:1D_onto}, \eqref{eq:1D_contra}, and the Banach fixed point theorem, 
we construct a fixed point solution to 
$\cFR(w) = w$ with $||w||_{\cX} < \ddd$. We complete the proof of Theorem \ref{thm:1D_solu}.

\subsection{Regularity and sharp decays of the profile}

Let $w_{\al}$ be the fixed point constructed in Theorem \ref{thm:1D_solu}. 
We have 
\beq\label{eq:1D_profi_small}
  \| w_{\al} \|_{\cX}  < \nu \les \e . 
\eeq
Denote $\waa, \vaa$ 
\beq\label{def:1D_profi}
  \waa = \wa + w_{\al},\quad \vaa = \va + \psio(w_{\al})
  = x + \cK_{\al,1}(\waa) .
\eeq
Then $\waa, \vaa$ solve the profile equation \eqref{eq:1D_dyn_recall}
\beq\label{eq:profi_waa}
   2 \vaa \pa_z \waa = (3- \al - ( 1-\al ) \pa_z \vaa) \waa .
\eeq

Applying Proposition \ref{prop:vel_al} with $ w = w_{\al}$, using triangle inequality and \eqref{eq:va_est},  we obtain 
\beq\label{def:1D_profi:b}
\bal
   |\tf{1}{x} \psio( w_{\al})| + |\pa_x \psio(w_{\al})|
 \les \cJaa^{\kp+1}  \| w \|_{\cX} \les     \cJaa \e^{-\kp} \ddd 
  \les \e^{1-\kp} \cJaa , \\
   \vaa  \geq \tf{1}{2} \va > 0, \quad \vaa(x) \asymp \cJaa(x) \cdot |x| ,
   \quad |\pa_x \vaa | \les \cJaa.
   \eal
\eeq

From the boundedness of $\waa, \pa_x \vaa$, and the above lower bound of $\vaa$, we obtain that $\waa$ is Lipschitz.  Since $\vaa = x + \psioa(\waa)$, using the above estimates, we obtain
\beq\label{eq:1D_profi_psio}
  |\pa_x \psio_{\al}( \waa)|  + | \tf{1}{x} \psio_{\al}(\waa) |
  \les 1 + | \pa_x \vaa| +  | \tf{1}{x} \vaa|
  \les \cJaa.
\eeq

\begin{thm}\label{thm:1D_profile_prop}
Let $\kp$ be the parameter chosen in \eqref{def:kp}. Let $\beta =\beta(\al)$, \(\beps_2\), 
$\muc$ be chosen as in Theorems \ref{thm:1D_error}, \ref{thm:al_appr_profile}, \ref{thm:1D_solu}, respectively.  There exists $\beps_4 > 0$ such that for any $\e = \f13 - \al \in (0, \beps_4)$, the following results hold. 
There exists a $C^{\infty}$ $\al$-profile $\waa$ to \eqref{eq:1D_dyn} and it  satisfies 
$ \| \waa - \wa \|_{\cX} \les  \nu$, 
\beq\label{eq:wa_sign}
\bal
 \pa_x \waa(0) = \pa_x \wwwa(0) \in [-1 - 10^{-6}, -1 + 10^{-6}] , \quad   \waa(x) < 0,  \quad  \forall \, x > 0 , \\
    \nlinf{  (\waa - \ang x^{\b(\al) } \wwwa) (   |x|^{-\bbb} + |x|^{\alb - \b - \kp_1 \e } \cJaa^{-\kp}  ) }  \les \e , \quad \bbb = 1.2 , \eal
\eeq

\begin{enumerate}[label=(\roman*), leftmargin=1.5em]

\item \textsl{(Asymptotic)}.
 There exists a constant $c_{\wwwa} >0$ such that 
\begin{align}
      \exp( -c_{\wwwa} \cdot \e^{1- \kp }  )  \cdot    \ang x^{-c_{\wwwa} \cdot \e^{2 - \kp}}    
       |\wa| & \leq |\waa|  \leq        \exp( c_{\wwwa} \cdot \e^{1-\kp }  )  
        \cdot \ang x^{ c_{\wwwa} \cdot \e^{2-\kp}}  |\wa| , \label{eq:wa_refine:b}  
\end{align}
with $e^{ c_{\wwwa} \e^{2-\kp}} \les 1$, for some constant $c_{\wwwa} >0$ independent of $\al$.
In particular, we have
 \beq\label{eq:wa_upper_lower}
 \bga
     |\xx| \we |\xx|^{ -\hau } \les |\waa(\xx)| \les |\xx| \we |\xx|^{-\hal}, \\
           \hau =  \f13 + \f18 \e + c_{\wwwa} \e^{2-\kp}, 
     \quad \hal = \f13 + \f18 \e - c_{\wwwa} \e^{2 - \kp} .
     \ega
 \eeq 
The exponents $\b, \hal, \hau$ satisfy 
\beq\label{ran:ep2}
 \he = \e -  \b   , \  \hal - \al   , \  \hau - \al  \in [ \f{9}{8} \e - C \e^{ 2 -\kp }, \, \f{9}{8} \e + C \e^{ 2 -\kp } ]
     \subset [ \f{8.9}{8} \e,  \ \f{9.1}{8} \e ] .
\eeq

\item \textsl{(Regularity)}. The profile $\waa(x)$ has  $ C^{\infty}$-regularity is odd in $ x$, and satisfies 
\bseq\label{eq:waa_reg}
\begin{align}
  |\pa_x^k \waa |  & \les_k \ang x^{- k  -\hal },    \quad  \forall  \ k \geq 0,  \label{eq:waa_reg:a} \\
 \qquad  | \pa_x^{k+1} \vaa |  & \les_k \ang x^{-k + \al -\hal} , \quad  \forall \  k \geq 1.
 \label{eq:waa_reg:b}
\end{align}
\eseq

\item \textsl{(Scaling)}. Recall $\pa_x \psi(f)(0) = 2 \al \cJa(f)(\infty)$ from \eqref{eq:iden_psiz_J}. Denote
\bseq\label{eq:wa_decay_1D}
\beq\label{eq:wa_decay_1D:a}
\bar c_{l,\al  } = 2 - 4\al \cJa( \waa)(\infty), \quad  \bar c_{\om, \al}  = 2 + (1 - \al) 2 \al \cJa(\waa)(\infty) .
\eeq
We have 
\beq
\cJa(\waa)(\infty) = - \tf{16}{3 } \e^{-1} + O(\e^{2-\kp}), 
   \quad \bar c_{l,\al } \, , \quad  - \bar c_{\om, \al } \asymp \e^{-1},
\eeq
and 
\beq\label{eq:wa_decay_1D:c}
      \f{ \bar c_{\om, \al  }}{ \bar c_{l, \al  } }= -   \f{1}{3} - \f{\e}{8} 
      +  O (\e^{2 - \kp})
\in  [ - \f13- \f{\e}{7} ,- \f13- \f{\e}{9}]  , \ 
 \bar c_{l,\al } 
       = \f{64}{9 \e }  + O(\e^{-\kp}) , 
    \  \bar c_{\om, \al  } = - \f{64}{27 \e }  +  O(\e^{-\kp}). 
\eeq

\eseq

\item \textsl{(Log-improvement)}. We have the following estimates on $\waa$
\begin{align}
   |\f{x \pa_x \waa}{\waa} + \f{1}{3} | & \les \e  + |\lgp x|^{\kp-2}
     , \qquad  |  \f{ \pa_x \waa}{\waa} |  \les  |x|^{-1} . 
\label{eq:1D_prof_est2:c}
\end{align}
and 
\beq\label{eq:1D_prof_dxx} 
   | \pa_x  ( \f{x \pa_x \waa}{\waa} )  |  \les 
   \ang x^{-1} \cJaa^{-1} , \quad 
   | x\pa_x  ( \f{\pa_x \waa}{\waa} )  | \les |x|^{-1}  , 
\eeq

In particular, there exists $C_{\wwwa}$ only depending on $\wwwa$, such that \beq\label{eq:1D_prof:low}
   x  \f{ \pa_x \waa}{\waa} + \f{1-\al}{2} \geq - C_{\wwwa} \f{|\xx|}{\ang \xx } ( \e^{2-\kp} + |\lgp x|^{\kp - 2}  )  .
\eeq

\item \textsl{Estimates for $\vaa$} Recall $\he = \e - \b$ from \eqref{def:para}. We have the following estimates for $\vaa$
\begin{align}
   |\pa_x \vaa - \tf{1}{x} \vaa - 4 \ang x^{-\he} | \les 
   \ang x^{-\he}  |\lgp x |^{-1/3}  +  \e^{1- \kp} , \label{eq:1D_prof:va} \\
   \vaa \geq \tf{1}{2} \va > 0, \quad \vaa(x) \asymp \cJaa(x) \cdot |x| .
   \label{def:1D_profi:b2}
\end{align}
\end{enumerate}
We emphasize that all the implicit constants in the above estimates are independent of $\al, \e$.

\end{thm}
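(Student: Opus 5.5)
The profile is $\waa=\wa+w_\al$, with $w_\al$ the fixed point from Theorem \ref{thm:1D_solu} and $\|w_\al\|_{\cX}<\ddd\les\e$. The plan is to extract every item from this smallness, from the properties of $\wa,\va$ in Theorem \ref{thm:al_appr_profile}, and from the exact profile equation \eqref{eq:profi_waa}. The estimate \eqref{eq:wa_sign} comes straight from \eqref{eq:wg_equiv:a}: $\max(\vpa,\muc\vpb)\gtr |x|^{-\bbb}+|x|^{\alb-\b-\e_1}\cJaa^{-\kp}$, so $\nlinf{w_\al(\cdots)}\les\e$. For $\pa_x\waa(0)$ I would use that $w_\al=\cF(w_\al)$ with $w_\al/x\to0$, together with $\pa_x\wa(0)=\pa_x\wwwa(0)$ (because $\ang x^\b=1+O(x^2)$); this gives $\pa_x\waa(0)=\pa_x\wwwa(0)$. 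The estimates \eqref{def:1D_profi:b2} have already been recorded in \eqref{def:1D_profi:b}.

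\textbf{Asymptotics (i).} I would not bound $w_\al/\wa$ pointwise, since that loses $\ang x^{\e_1}\cJaa^\kp$. Instead I integrate along characteristics: $\log(\waa/\wa)(x)=\int_0^x \cR(\waa)/(2\vaa)$, and I use $\cR(\waa)=\crab+\tcr(w_\al)$.
\begin{itemize}
\item The $\crab$ part integrates to $O(\e^{1/3})$-type bounds, by \eqref{eq:err_comp_R4} and $\vaa\gtr x\cJaa$; I will refine this to $\e^{1-\kp}$.
\item For the $\tcr(w_\al)$ part, \eqref{eq:lin_non_cr} gives integrand $\les \ddd y^{-1}\ang y^{-1}y(\e\cJaa^{\kp}+\cJaa^{\kp-1}\ang y^{-\he+\e_1}+\cJaa^\kp|\lgp y|^{-4/3})$. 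Integrating this, as in the bounds on $S_i$, gives $\ddd\e^{-\kp}(1+\e\log\ang x)\les \e^{1-\kp}+\e^{2-\kp}\log\ang x$.
\end{itemize}
Exponentiating yields \eqref{eq:wa_refine:b}. Combining this with \eqref{eq:wa_est:a} and $\he+\al=\f13+\f{\e}{8}+O(\e^{4/3})$ gives \eqref{eq:wa_upper_lower} and \eqref{ran:ep2}. The sign $\waa<0$ also follows, since the ratio $\waa/\wa$ is positive.

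\textbf{Regularity (ii).} I apply Theorem \ref{thm:reg} with $\g=\hal$. Its hypotheses \eqref{eq:Cinfty_ass} hold for these reasons: $|\waa|\les\min(|x|,|x|^{-\hal})$ by (i); $|\pa_x\psio|+|\psio/x|\les\cJaa\asymp\cJe$ by \eqref{eq:1D_profi_psio} and \eqref{eq:Ja_hat}; and $1+\psio/x=\vaa/x\gtr\cJaa$. Theorem \ref{thm:reg} then gives \eqref{eq:waa_reg} with constants independent of $\e$.

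\textbf{Scaling (iii) and (iv), (v).} For (iii), I write $\cJa(\waa)(\infty)=\cJa(\wa)(\infty)+\cJa(w_\al)(\infty)$. The first term equals $-6\he^{-1}+O(\e^{-2/3})$ by the computation in the proof of Theorem \ref{thm:1D_error}. The second I plan to handle through the multiplicative form above, via $\int y^{\al-1}\wa(e^{\theta}-1)$, which gives a relative error $O(\e^{1-\kp})$. With $\he=\f98\e+O(\e^{4/3})$ this gives $-\f{16}{3}\e^{-1}$ at leading order, and the formulas \eqref{eq:wa_decay_1D:c} are then algebra.

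This error bookkeeping is the step I expect to be the main obstacle. The claimed error $O(\e^{2-\kp})$ for $\cJa(\infty)\sim\e^{-1}$ requires the relative error in $\waa/\wa$ to be $O(\e^{2-\kp}\log\ang x)$ with no $O(\e^{1-\kp})$ offset in the bulk. I would try to show the $\e^{1-\kp}$ constant only arises from $x\les1$, or else accept the weaker stated-type error.

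For (iv) and (v), dividing the profile equation gives $x\pa_x\waa/\waa=\f{(3-\al)-(1-\al)\pa_x\vaa}{2\vaa/x}$, and the rest is bookkeeping.
\begin{itemize}
\item Inserting $\pa_x\vaa-\vaa/x\approx4\ang x^{-\he}$, which follows from \eqref{eq:va_dif} plus the $\psio(w_\al)$ correction from \eqref{eq:vmix_sharp} of size $\e^{1-\kp}$, gives \eqref{eq:1D_prof:va}.
\item With $\vaa/x\asymp\cJaa$, the same substitution gives $x\pa_x\waa/\waa+\f13=O(\e+|\lgp x|^{\kp-2})$; this is \eqref{eq:1D_prof_est2:c}.
\item Differentiating once more and using \eqref{eq:waa_reg:b} gives \eqref{eq:1D_prof_dxx}.
\item The lower bound \eqref{eq:1D_prof:low} then follows because $\f{1-\al}{2}-\f13=-\f{\e}{2}$ is absorbed in the stated error only after using the sign of the main term; I would handle that sign with care.
\end{itemize}
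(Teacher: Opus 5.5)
Most of your plan coincides with the paper's proof. You use the ratio identity $\waa/\wa=\exp\int_0^x\cR(\waa)/(2\vaa)$ with $|\cR(\waa)|\les x\ang x^{-1}\e\cJaa^{\kp}$ for (i) and the sign, Theorem \ref{thm:reg} with $\g=\hal$ for (ii), and \eqref{eq:va_dif} plus \eqref{eq:vmix_sharp} for (v). Dividing the profile equation directly does give the first bound in \eqref{eq:1D_prof_est2:c} and \eqref{eq:1D_prof_dxx}; the paper instead compares with $\wa$.

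The genuine gap is \eqref{eq:1D_prof:low}. First, $\f{1-\al}{2}-\f13=+\f{\e}{2}$, not $-\f{\e}{2}$. With your sign the inequality would be false, since $x\pa_x\waa/\waa+\f13\approx-\f{\e}{8}$ once $\e\lgp x\gg1$. Second, even with the correct sign, your bound $|x\pa_x\waa/\waa+\f13|\les\e+|\lgp x|^{\kp-2}$ only gives $x\pa_x\waa/\waa+\f{1-\al}{2}\ge(\f12-C)\e-C|\lgp x|^{\kp-2}$ with an uncontrolled $C$. A loss of order $\e$ cannot be absorbed into $\e^{2-\kp}\ll\e$. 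What is missing is the sharp first-order correction
\[
\Big|\f{x\pa_x\waa}{\waa}+\f13+\f{\e}{8}\Big|\les\e^{2-\kp}+|\lgp x|^{\kp-2}.
\]
With it, $\f{\e}{2}>\f{\e}{8}$ closes the region $|x|\ge c$. For $|x|\le c$ you also need $x\pa_x\waa/\waa=1+O(x^2)$, from $\pa_x\waa(0)\approx-1$ and the uniform $C^3$ bounds of (ii), because the right side of \eqref{eq:1D_prof:low} vanishes at $x=0$. The refined bound follows from the identity you already used in (i). Multiplying $\pa_x\log(\waa/\wa)=\cR(\waa)/(2\vaa)$ by $x$ gives $\f{x\pa_x\waa}{\waa}=\f{x\pa_x\wa}{\wa}+\f{\cR(\waa)}{2\vaa/x}$. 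Here \eqref{eq:wa_est:a} gives $\f{x\pa_x\wa}{\wa}=-\f13+\b+O(|\lgp x|^{-4/3})$ with $\b=-\f{\e}{8}+O(\e^{4/3})$. Also $|\cR(\waa)|/(\vaa/x)\les\e\cJaa^{\kp-1}\les\e^{2-\kp}+|\lgp x|^{\kp-2}$, by splitting into $\e\lgp x\le1$ and $\e\lgp x>1$. Extracting the $-\f{\e}{8}$ from your direct formula would instead need $\vaa/x$ and $\pa_x\vaa$ to relative accuracy $\e^{1-\kp}$ in the regime $\e\lgp x\gtrsim1$, together with the equation defining $\b$.

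On (iii), do not chase the error $O(\e^{2-\kp})$ for $\cJa(\waa)(\infty)$. The paper's proof establishes $O(\e^{-\kp})$, so the exponent in the statement is a misprint. Your multiplicative bound gives the same $O(\e^{-\kp})$, as does the paper's shorter $|\cJa(w_\al)(\infty)|\les\cJaa^{1+\kp}\|w_\al\|_{\cX}\les\e^{-\kp}$ from \eqref{eq:Ja_est:b}. This is all that \eqref{eq:wa_decay_1D:c} needs. An $O(\e^{2-\kp})$ error is not available even for $\wa$. There one only has $\cJa(\wa)(\infty)=-6\he^{-1}+O(\e^{-2/3})$ with $\he=\f98\e+O(\e^{4/3})$, and the $y^{-1/3}|\lgp y|^{-1/3}$ tail of $\wwwa$ contributes at order $\he^{-2/3}$. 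Your proposed remedy would not help either: a relative error $O(\e^{2-\kp}\log\ang y)$ integrated against $|\wa|y^{\al-1}\asymp y^{-1-\he}$ still produces $\e^{2-\kp}\he^{-2}\asymp\e^{-\kp}$.
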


\begin{remark}[\bf $ \waa$ is close to $ \wa$]
 The above estimates except for the scaling \eqref{eq:wa_decay_1D} essentially show that $\waa$ is sufficiently close to  $\wa$, and \(\waa\) satisfies estimates analogous to those in Theorem \ref{thm:al_appr_profile} for the approximate profile \(\wa\). Moreover, \(\waa\) solves \eqref{eq:1D_dyn} \emph{exactly}.
\end{remark}

\begin{proof}

We first prove \eqref{eq:wa_upper_lower}, \eqref{ran:ep2}, and then prove \eqref{eq:wa_sign}.

\textbf{Proof of \eqref{eq:wa_upper_lower},\eqref{ran:ep2} and estimate between $\waa, \wa$}.
Multiplying the profile equation \eqref{eq:1D_dyn} with $\f{1}{\wa}$ and using \eqref{eq:err_ep}, we obtain 
\beq\label{eq:waa_pf1}
  2 \vaa \pa_x (\f{\waa}{\wa} )= \cR(\waa) \f{\waa}{\wa}, 
  \quad  \cR(\waa) = (3 - \al - (1-\al) V_x(\waa) - 2 V(\waa) \f{\pa_x \wa}{\wa} ) .
\eeq

Using \eqref{eq:lin_non_cr} with $w = w_{\al}$ and Theorem \ref{thm:1D_error}, we have the following estimates 
\[
\bal
  |\cR( \waa )| = |\crab| +| \td \cR(w_{\al})|
    \les &  x \ang x^{-1} (  \e \cJaa^{\kp+1}
     + \cJaa^{\kp} \ang x^{-\he + \e_1} 
     + \cJaa^{\kp+1} \lgx^{-4/3}) || w_{\al} ||_{\cX} \\
     &  +   \e  \min\B(  x, \  \min(  \lgp x, \e^{-1} )   |\lgp x|^{-1/3} \B).
  \eal
\]

Since $|| w_{\al}||_{\cX} \les \e$, using the estimate 
$\cJaa \asymp \min( \lgp x , \e^{-1} )$ \eqref{eq:Ja_hat} and $\kp > \f{2}{3}$ by \eqref{def:kp}, we bound 
\beq\label{eq:cR_waa:est}
\bal
    |\cR( \waa )| & \les x \ang x^{-1} 
    \big( \one_{\e \lgp x \leq 1}  |\lgp x|^{\kp} \e 
     + \one_{\e \lgp x > 1} \e^{1-\kp}  
      \big)  \\
    & \les x \ang x^{-1} \e \cdot \min(\lgx, \e^{-1} )^{\kp} 
     \les x \ang x^{-1} \e \cJaa^{\kp} \les x \ang x^{-1} \e^{1 -\kp}.
  \eal
\eeq

Using $\vaa \asymp x \cJaa$ from estimate \eqref{def:1D_profi:b}, 
and $\cJaa^{-1} \les \e^{-1} + \lgp x$ by \eqref{eq:Ja_hat}, we obtain
\beq\label{eq:wa_refine:a}
\bal
 \B| \f{\pa_x \waa}{\waa} - \f{\pa_x \wa}{\wa} \B| & =   \B| (\f{\waa}{\wa})^{-1}  \cdot \pa_x (\f{\waa}{\wa}  \B| = \B| \f{\cR(\waa)}{ 2 \vaa  } \B| 
  \les \ang x^{-1} \e \cJaa^{\kp-1} \\
  & \les \ang x^{-1} \e ( \e^{1-\kp} + |\lgp x|^{\kp-1}).
\eal
\eeq

Next, we estimate the integral 
\bseq\label{eq:S1_est1}
\beq
  S   \teq \int_0^{x}   \f{\cR(\waa)}{ 2 \vaa  }(y) d y  , \\
  \eeq
Using estimate \eqref{eq:cR_waa:est} on $\cR(\waa)$ and $\vaa(y) \gtr y \min( \lgp y, \e^{-1} )$ by
\eqref{def:1D_profi:b}, we bound 
  \beq
\bal
|S| & \les \int_0^x \ang x^{-1} \e \min(\lgx, \e^{-1} )^{\kp-1}  
\les \int_0^x \ang x^{-1} \e (\lgx^{\kp-1} \one_{ \e \lgp x < 1} +  \e^{1-\kp} )  \\
 & \les  \e^{1 -\kp} + \e^{ 2 - \kp } \log \ang x .
\eal 
\eeq
  The above integral can be obtained by discussing $\e \lgp x \leq 1$ and $ \e \lgp x > 1$.
\eseq

Since $ \f{\waa}{\wa}|_{x =0} = 1$, integrating the above estimates from $0$ to $x$, 
and using $\cJaa \les \e^{-1}$, we prove 
\beq\label{eq:wa_refine:b:pf}
\bal
  \f{\waa}{\wa} &= \exp( S(x)  ) \geq \ang x^{ - c\cdot  \e^{2-\kp} } \exp(- c \cdot \e^{1-\kp} ) , 
  \quad   
 \f{\waa}{\wa} = \exp( S(x)  ) \leq \ang x^{  c \cdot  \e^{2-\kp} } \exp( c \e^{1-\kp} )  .
 \eal
\eeq
for some constant $c= c(\kp)>0$. It follows \eqref{eq:wa_refine:b}.

Since $\b(\al) = - \f18 \e + O(\e^{4/3})$ \eqref{eq:beta_est}, 
and $\wa = \ang x^{\b} \wwwa$ \eqref{eq:wa}, estimate \eqref{eq:wa_upper_lower} follows from \eqref{eq:wa_refine:b}.

Estimate \eqref{ran:ep2} on $\hal, \hau, \b$ follows from 
\eqref{eq:wa_upper_lower}, \eqref{eq:beta_est} and by choosing $\e$ small enough.

\vs{0.1in}
\paragraph{\bf Proof of \eqref{eq:wa_sign} }
Since $\waa = \wa + w_{\al} \in C^{\infty}, \wa \in C^{\infty}, \pa_x \wa(0) = 
\pa_x \wwwa(0)$ by \eqref{eq:wa}, and $|w_{\al}| \les |x|^{\bbb}$ by $w_{\al} \in \cX$ and the definition of 
$\cX$-norm \eqref{norm:X_ep}, we obtain $\pa_x \waa(0) = \pa_x \wwwa(0) $. 
Using estimate for  $\pa_x \wwwa(0) $ from \eqref{eq:bw_sign}, we obtain estimate \eqref{eq:wa_sign} 
for $ \pa_x \waa(0) < 0$, and $\waa < 0$ for small $x>0$. Using \eqref{eq:wa_upper_lower}, we prove $\waa<0$ for any $x>0$.

Since $\e_1 = \kp \e $ \eqref{norm:X_ep}, from \eqref{eq:wg_equiv:a}, 
we obtain $\max(\vpa, \muc \vpb) \asymp |x|^{-\bbb} + |x|^{\alb - \b -\e_1}   \cJaa^{-\kp} $. 
Using $ w_{\al} = \wwwa- \ang x^{\b} \wwwa$ and $\nchi{w_{\al}} \les \e$,
we prove the second estimate in  \eqref{eq:wa_sign}.

\vs{0.1in}
\paragraph{\bf Proof of \eqref{eq:waa_reg} }

Recall that $\waa$ is locally Lipschitz from the discussion below \eqref{def:1D_profi:b}. 
Since $\waa$ solves the profile equations \eqref{eq:profi_waa}, combining the 
estimates in \eqref{eq:wa_upper_lower}, \eqref{eq:1D_profi_psio}, and \eqref{def:1D_profi:b},
we obtain
\[
 |\waa| \les \min( |x|, \ang x^{-\hal}), 
 \quad | \tf{1}{x} \psio_{\al}(\waa) | +|\pa_x \psio_{\al}(\waa)| \les \cJaa,
 \quad  1 + \tf{1}{x} \psio_{\al}(\waa) = \tf{1}{x} \vaa \gtr \cJaa.
\]

Therefore, the assumptions \eqref{eq:Cinfty_ass} in Theorem \ref{thm:reg} are satisfied 
for $\s$ being some large absolute constant, $\g = \hal$. Using 
\eqref{eq:profile_Ck} in Theorem \ref{thm:reg}, we prove \eqref{eq:waa_reg:a}. 
Since $\vaa = x + \psio_{\al}(\waa)$ and $\pa_x^{k+1} \vaa = \pa_x^{k+1} \psio_{\al}(\waa)$ for $k \geq 1$, using \eqref{eq:waa_reg:a} and Lemma \ref{lem:vel_high}, we prove \eqref{eq:waa_reg:b}.

\vs{0.1in}
\paragraph{\bf Proof of \eqref{eq:wa_decay_1D} }

Recall $\waa =\wa + w_{\al}$ with $ \| w_{\al} \|_{\cX} \les \e$. 
Using the estimates \eqref{eq:Ja_est:b} $\| w_{\al} \vpb \|_{L^{\infty}} 
\les \|  w_{\al}\|_{\cX} \les \e$, and \eqref{eq:Ja_hat}, we obtain
\[
|\cJa( w_{\al})(\infty)|
\les \cJaa^{1 + \kp}  \|  w_{\al}\|_{\cX}
\les \e^{-1-\kp } \e \les \e^{-\kp},
\quad  \cJa( \wa)(\infty) = - \tf{6}{\he} + O(\e^{-2/3}) , 
\]
where $\he = \e - \b = \tf{9}{8} \e + O(\e^{4/3})$ by \eqref{def:para} and 
\eqref{eq:beta_est}. Thus, for $\e>0$ small enough, 
using the definition of 
$\clone$ in \eqref{eq:wa_decay_1D} and  the above estimate, we obtain 
\[
\bal
 \cJa(\waa)(\infty) &= - 6 \he^{-1} + O(\e^{-\kp}) 
 = - \tf{16}{3 } \e^{-1} + O(\e^{-\kp}) ,  \quad \he =  \tf{9}{8} \e + O(\e^{4/3}), \\
\clone &= -4 \al \cJa(\waa)(\infty) + O(1)
 = \tf{64}{9} \e^{-1}   + O(\e^{-\kp}) \asymp \e^{-1},\\
  \cwone & = (1-\al) 2 \al \cJa(\waa)(\infty) + O(1)
 = - \tf{64}{27} \e^{-1} + O(\e^{-\kp}) \asymp - \e^{-1}.
\eal
\]

Using the definition of 
$\clone, \cwone$, and $\e = \f13 - \al$, we obtain
\[
 3 \cws + \cls  = 8 + (1-3\al) 2 \al  \cJa(\waa)(\infty) = 8 + 3 \e \cdot 
 \tf{2}{3} \cdot ( - \tf{16}{3} \e^{-1} ) + O(\e^{1-\kp} )
 = - \tf83  + O(\e^{1-\kp} ).
\]

For $\e$ small enough, using the above estimates, we prove \eqref{eq:wa_decay_1D:c}:
\beq\label{eq:cw_cl_rat}
\bal
  \f{ \cwone}{ \clone } + \f13 & = \f{ 3 \cws + \cls}{ 3 \cls}      
   = \f{  - \tf83 + O(\e^{1-\kp})  }{  \f{64}{3} \e^{-1} + O(\e^{- \kp}) }
  = - \f{\e}{8} + O(\e^{2-\kp}) \in [- \f{\e}{7} ,- \f{\e}{9}] ,
\eal
\eeq

\vs{0.05in}

\paragraph{\bf Proof of \eqref{eq:1D_prof_est2:c}}
Dividing \eqref{eq:waa_pf1} by $\f{\vaa}{x} \cdot \f{\waa}{\wa }$, we obtain
\beq\label{eq:waa_iden2}
 \f{\cR(\waa)}{ \vaa / x }=
  \f{2 x \pa_x ( \waa / \wa) }{  \waa / \wa } 
  =  \f{2 x \pa_x \waa}{\waa} - \f{2 x \pa_x \wa}{\wa}.
\eeq

Using \eqref{eq:cR_waa:est} for $\cR$, \eqref{def:1D_profi:b} for $\vaa / x$, 
$ -\b = \f{\e}{8} + O(\e^{4/3})$ by \eqref{eq:beta_est}, and \eqref{eq:wa_est:a}, we obtain
\[
\B| \f{ x \pa_x \waa}{\waa} + \f{1}{3} + \f{\e}{8} \B| \les \B| \f{x \pa_x \wa}{\wa}
+ \f{1}{3} - \b \B|  +  \f{ |\cR(\waa) | }{ |\vaa / x| } + \e^{4/3}
\les |\lgp x|^{-4/3} + \e^{4/3} + \f{ \e \cJaa^{\kp} }{\cJaa}
\]
Since $\cJaa^{-1} \les \e + |\lgp x|^{-1}$ and $\kp \in (0.8, 1)$ by \eqref{def:kp}, using Young's inequality, we prove: 
\beq\label{eq:1D_prof_est2:b}
\B| \f{ x \pa_x \waa}{\waa} + \f{1}{3} + \f{\e}{8} \B| \les \e ( \e^{1-\kp} + |\lgp x|^{\kp-1} )
+ |\lgp x|^{-4/3} + \e^{4/3}  \les \e^{2 -\kp} + |\lgp x|^{2-\kp}. 
\eeq

Using triangle inequality, $\e^{2-\kp} \les \e \leq 1, $ 
 $|\lgp x|^{-1} \les 1$, and the above estimate, we prove  \eqref{eq:1D_prof_est2:c}.

\vs{0.1in}

\paragraph{\bf Proof of \eqref{eq:1D_prof_dxx}}
Taking $D_x$ on both sides of \eqref{eq:waa_iden2} and using \eqref{eq:wa_est:b}, we obtain
\[
 | \pa_x \f{x \pa_x \waa}{\waa}| \les  | \pa_x \f{x \pa_x \wa}{\wa}|
 + | \pa_x  \f{\cR(\waa)}{ \vaa / x }|
 \les  \ang x^{-1} |\lgp x|^{-1} +  | \pa_x  \f{\cR(\waa)}{ \vaa / x }| .
\]

Since $\al - \hal < 0 $ by \eqref{ran:ep2}, using \eqref{eq:waa_reg:b} with $k=1$, we obtain
\beq\label{eq:waa_iden3}
\bal
 | \pa_x \f{\vaa}{x}| & = | \f{ \pa_x \vaa - \tf{1}{x} \vaa}{x}|
 = \B| \f{1}{x^2} \int_0^x \pa_{yy} \vaa(y) \cdot y d y \B|
 \les \f{1}{x^2} \int_0^x \min( |y|, \ang y^{\al-\hal} ) d y \\
& \les \min( 1,  \ang x^{\al-\hal-1} )
 \les \ang x^{-1}.
\eal
\eeq

Using \eqref{def:1D_profi:b}, the above estimate, $\al-\hal < 0$ by \eqref{ran:ep2}, and \eqref{eq:cR_waa:est}, we obtain
\[
\B| \pa_x  \f{\cR(\waa)}{ \vaa / x } \B| 
\les \B| \f{\pa_x \cR(\waa)}{  \vaa / x  } \B| + \B| \f{\cR(\waa) \pa_x (\vaa/x)}{ (\vaa/x)^2 } \B|
\les \f{ | \pa_x \cR(\waa) | }{\cJaa} + \f{ \ang x^{-1}  }{ \cJaa^2} .
\]

Using the formula for $\cR(\waa) $ in  \eqref{eq:wa_iden1} with $\vaa = V(\waa)$, we obtain
\[
\bal
 |\pa_x \cR(\waa)| & \les |  \pa_{xx} \vaa | + | \pa_x ( \f{ \vaa}{x} \cdot \f{x \pa_x \wa}{\wa} ) |
 \les |  \pa_{xx} \vaa | + | \pa_x ( \f{ \vaa }{x} ) \cdot \f{x \pa_x \wa}{\wa}   |
 +  |  \f{ \vaa}{x}  \cdot \pa_x \f{x \pa_x \wa}{\wa}  | \\
\eal
\] 

Using estimate \eqref{eq:waa_iden3} and \eqref{def:1D_profi:b} for $\vaa / x$,
\eqref{eq:waa_reg:b}, bound \eqref{eq:wa_est} for $\wa$, 
 and $\cJaa(x) \les \lgp x$, we estimate 
\[
  | \pa_x \cR(\waa)|  \les  \ang x^{\al -\hal - 1} + \ang x^{-1}
 + \cJaa \cdot  \ang x^{-1} |\lgp x|^{-1} \les \ang x^{-1}.
 \]

Combining the above estimates, we prove the first estimate in \eqref{eq:1D_prof_dxx}.
\[
 | \pa_x \tf{x \pa_x \waa}{\waa}| \les \ang x^{-1} ( |\lgp x|^{-1} + \cJaa^{-1} + \cJaa^{-2} ) \les 
\ang x^{-1} \cJaa^{-1} .
\]

The second estimate in \eqref{eq:1D_prof_dxx} follows from the above estimate and 
\eqref{eq:1D_prof_est2:c}, and triangle inequality.

\vs{0.1in}
\paragraph{\bf Proof of \eqref{eq:1D_prof:low} }

Denote $p(x) =   x  \f{ \pa_x \waa}{\waa} + \f{1-\al}{2}$. 
Since  $\waa \in C^3$ with estimates \eqref{eq:waa_reg:a} uniformly in $\al$, $\waa(0) = 0$, $\pa_x \waa(0) < -\f12$ \eqref{eq:wa_sign}, using Taylor expansion, we obtain
\[
  p(x) \geq \tf12 + \tf{1-\al}{2 } \geq \tf12 ,  \quad \forall \, |x| \leq c,
\]
with an absolute constant $c>0$ independent of $\al$.

Since $\al = \f{1}{3} -\e$, $\f{1-\al}{2} = \f{1}{3} + \f{\e}{2}$, for $|x| \geq c$, using \eqref{eq:1D_prof_est2:b}, we obtain
\[
  p(x) =  x  \f{ \pa_x \waa}{\waa} + \f{1-\al}{2}
  \geq x  \f{ \pa_x \waa}{\waa} + \f{1}{3} + \f{\e}{8} 
\geq - C ( \e^{ 2- \kp } + |\lgp x|^{ \kp-2} )
\geq - 
C  \f{|\xx|}{\ang \xx }( \e^{2-\kp} + |\lgp x|^{\kp-2} ).
\]
Combining the above estimates, we prove \eqref{eq:1D_prof:low}.

\vs{0.1in}
\paragraph{\bf Proof of  \eqref{eq:1D_prof:va}, \eqref{def:1D_profi:b2} }

Using $\vaa =\va + \psio$, estimate \eqref{eq:va_dif} for $\va$, Proposition \ref{prop:vel_al} for $\psio$, and triangle inequality, we bound 
\[
\bal
   |\pa_x \vaa - \tf{1}{x} \vaa - 4 \ang x^{-\he} |
& \leq    |\pa_x \va - \tf{1}{x} \va - 4 \ang x^{-\he} |
+ |\psiox - \tf{1}{x} \psio| \\
& \les \ang x^{-\he} ( |\lgp x|^{-1/3} + \e ) + \cJaa^{\kp} \ang \xx^{-\he + \e_1} || \om_{\al} ||_{\cX}.
\eal
\]
Since $\cJaa \les \e^{-1}$, $||\om_{\al} ||_{\cX} \les \e$, 
and $\he > \e_1 > 0$ by \eqref{ran:ep}, we prove  \eqref{eq:1D_prof:va}:
\[
\bal
     |\pa_x \vaa - \tf{1}{x} \vaa - 4 \ang x^{-\he} | & \les 
      \ang x^{-\he} |\lgp x|^{-1/3} + \e  + \e^{1-\kp}  \ang \xx^{-\he + \e_1}   \les       \ang x^{-\he}  |\lgp x|^{-1/3}   + \e^{1-\kp} .
\eal
\]

Estimate \eqref{def:1D_profi:b2} follows from \eqref{def:1D_profi:b}. We conclude the proof of Theorem \ref{thm:1D_profile_prop}.
\end{proof}

\subsection{Contraction estimates around the $\al$-profile}
We introduce the weight and norm 
\beq\label{def:vpcc}
\vpcc  = |\waa|^{-1}
\ang x^{ c_{ \wwwa} \e^{2 -\kp} } \ang x^{- \e_1} \cJaa^{- \kp} \vpk ,
\quad  \| w \|_{\cXc} \teq  \nlinf{ \max(  \vpa, \muc \vpcc ) w }.
\eeq
where $\vpk$ is defined in \eqref{def:vpb} 
and $\muc$ is the parameter in $\cX$-norm \eqref{norm:X_ep} and is chosen in Theorem \ref{thm:contra_lin},
and $c_{ \wwwa}$ is the constant in \eqref{eq:wa_refine:b}. We define the linearized operator around 
the profile $(\waa, \vaa)$ analog to $\cLab(w)$ in \eqref{eq:F_lin_ep} 
\beq\label{def:cLa}
\bga
  \cLa (w) \teq \waa  \int_0^{x}  \f{  \cRa(w)}{2 \vaa }  ,  
  \quad  \cRa( w )   = - (1-\al) \psiox (w) - 2 \psio  \f{\pa_x \waa}{\waa}  , 
 \ega
\eeq
where $\psio(w) = \cK_{\al, 1}(w)$ defined in \eqref{eq:ker}. We extend Theorem \ref{thm:contra_lin} as follows 
\begin{thm}\label{thm:contra_lin_exact}
Let $\kp_1$ be defined in \eqref{def:kp}. Let $\beps_4$ be the parameter, and let $(\waa, \vaa)$ be the profile constructed in Theorem \ref{thm:1D_profile_prop}. There exists $\beps_5 \in (0, \beps_4)$ such that for any $ \e \in (\beps_5, 0)$, we have
\[
 |\max(  \vpa, \muc \vpcc ) \cLa(w)| \leq  
  \max(  \vpa, \muc \vpcc ) \B| \waa  \int_0^{x}  \f{  \cRa(w)  }{2 \vaa }  \B|
\leq  \min( \lamcL , C \e^{-1} \ang x^{- \kp_1 \e /2} )
\| w \|_{\cXc}  ,
\]
where $\lamcL =\f12 (\lam_{\cX} +1) = \tf34 + \tf14 \max( \f{\cff}{2 \kp} ,  0.95 ) \in (0, 1) $,
and $\cff, \kp$ are the constants defined in \eqref{def:kp}.

\end{thm}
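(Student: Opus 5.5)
The plan is to obtain Theorem \ref{thm:contra_lin_exact} by perturbing the proof of Theorem \ref{thm:contra_lin}, in which $(\wa,\va)$ is replaced by the exact profile $(\waa,\vaa)$ from Theorem \ref{thm:1D_profile_prop}. Every ingredient in that proof depends on the profile only through a handful of structural estimates. These are $|\wa|\asymp\min(|x|,\ang x^{-\al-\he})$, the log-improvement $|\f{x\pa_x \wa}{\wa}+\alb-\b|\les|\lgp x|^{-4/3}$, $\va\asymp x\cJaa$, and the near-field comparison with $(\wwwa,\vvva)$ for $x\le\Rmub$. Theorem \ref{thm:1D_profile_prop} supplies the analogues for $\waa$, each with an error that is small in $\e$. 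In particular, \eqref{eq:wa_refine:b} gives $\waa/\wa = e^{O(\e^{1-\kp})}\ang x^{O(\e^{2-\kp})}$, \eqref{def:1D_profi:b2} gives $\vaa\ge\f12\va$ and $\vaa\asymp x\cJaa$, and \eqref{eq:1D_prof_est2:b} gives the log-improvement.

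First I compare weights. The factor $\ang x^{c_{\wwwa}\e^{2-\kp}}$ in $\vpcc$ is designed so that \eqref{eq:wa_refine:b} yields $\vpcc\asymp\vpb$, with $\vpcc\le e^{C\e^{1-\kp}}\vpb$ and $\vpcc\ge \vpb$ up to a factor $e^{-C\e^{1-\kp}}$. Thus $\|w\vpb\|_{\linf}\le(1+C\e^{1-\kp})\|w\vpcc\|_{\linf}$. It follows that $\|\cdot\|_{\cXc}$ and $\|\cdot\|_{\cX}$ are equivalent with constants $1+O(\e^{1-\kp})$. Next I split $\cLa(w)-\cLab(w)$ as follows:
\[
\cLa(w)-\cLab(w) = (\waa-\wa)\int_0^x\f{\cRa(w)}{2\vaa} + \wa\int_0^x\Big(\f{\cRa(w)-\td\cR(w)}{2\vaa} + \td\cR(w)\big(\tf{1}{2\vaa}-\tf{1}{2\va}\big)\Big).
\]
The difference $\cRa(w)-\td\cR(w) = -2\psio(w)\big(\f{\pa_x\waa}{\waa}-\f{\pa_x\wa}{\wa}\big)$ is controlled by \eqref{eq:wa_refine:a}, which gives the pointwise factor $\ang x^{-1}\e\cJaa^{\kp-1}$, together with $|\tf1x\psio|\les\cJaa^{\kp+1}\|w\vpb\|$. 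For the velocity difference $\vaa-\va=\psio(w_\al)$, Proposition \ref{prop:vel_al} and $\|w_\al\|_{\cX}\les\e$ give a relative size $O(\e^{1-\kp})$.

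I will not try to bound this difference absolutely, since the integrands grow like $\cJaa^{\kp+1}$ over $\e^{-1}$-long log-scales. Instead I redo the two-region argument with $\waa,\vaa$ in place of $\wa,\va$. In the far field $x\ge\Rmua$, I use the same decomposition \eqref{eq:cR_1D:decomp}, in which $\f{\pa_x\wa}{\wa}+(\alb-\b)\f1x$ becomes $\f{\pa_x\waa}{\waa}+(\alb-\b)\f1x$. By \eqref{eq:1D_prof_est2:b} this quantity is $O(x^{-1}(\e^{2-\kp}+|\lgp x|^{\kp-2}))$, and since $\kp-2<-1$ the third term stays integrable against $\cJaa^{\kp+1}/(x\cJaa)$ after multiplication by the weight. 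The damping quantity becomes $M=2\vaa\pa_x(|\waa|\vpcc)^{-1}$. Because $|\waa|\vpcc$ equals $\ang x^{c\e^{2-\kp}-\e_1}\cJaa^{-\kp}\vpk$ exactly, $M$ only acquires an extra nonnegative term $c\e^{2-\kp}\cJaa^{\kp}\ang x^{\e_1}\cdot 2\vaa/x\cdot(\ldots)$, which helps. The replacement $\va\to\vaa$ costs a factor $1+O(\e^{1-\kp})$. The sharp comparison \eqref{eq:1D_sharp_bound} therefore survives with $\lam_\kp$ replaced by $\lam_\kp+O(\e^{1-\kp})$. In the near field $x\le\Rmub$, I argue as in \eqref{eq:near_dif_rat}–\eqref{eq:1D_near_R:est}. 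I compare with $\cL_{\wwwa}$, using $|\waa-\wwwa|\le|\waa-\wa|+|\wa-\wwwa|\le C(\muc)\e^{1-\kp}|x|$ on $[0,\Rmub]$, and similarly for $\vaa$ and for $\f{\pa_x\waa}{\waa}$ via \eqref{eq:wa_refine:a}. This produces the bound $\lamst+C(\muc)\e^{1-\kp}$.

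Combining the two regions as in Theorem \ref{thm:contra_lin} gives the bound $\f12(1+\max(\lam_\kp,\lamst))+C(\muc)\e^{1-\kp}$. Taking $\e$ small makes this at most $\lamcL=\f12(\lam_{\cX}+1)$. The decaying bound $C\e^{-1}\ang x^{-\kp_1\e/2}$ follows as in \eqref{eq:cR_decay_bd}; the slight loss from $\e_1$ to $\e_1/2$ absorbs the factor $\ang x^{c\e^{2-\kp}}$ because $c\e^{2-\kp}\ll\kp_1\e/2$. I expect the main obstacle to be the far-field step, where one must check that no error of size $O(\e^{1-\kp})$ is multiplied by an unbounded factor such as $\cJaa$ or $\log x$. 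This amounts to verifying that the weaker log-improvement of \eqref{eq:1D_prof_est2:b}, with exponent $\kp-2$ and the extra $\e^{2-\kp}$, still gives an integrable contribution, and this is exactly where $\kp<1$ enters.
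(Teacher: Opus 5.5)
The far-field step does not close as written. For the third term of \eqref{eq:cR_1D:decomp}, with $\waa$ in place of $\wa$, you use \eqref{eq:1D_prof_est2:b}, i.e.\ $|\f{x\pa_x\waa}{\waa}+\alb-\b|\les\e^{2-\kp}+|\lgp x|^{\kp-2}$. The piece $|\lgp x|^{\kp-2}$ carries no power of $\e$. Combine it with $|\tf1y\psio|\les\cJaa^{\kp+1}\nlinf{w\vpb}$, $\vaa\gtrsim y\cJaa$ and $|\waa|\vpcc=\ang x^{c_{\wwwa}\e^{2-\kp}-\e_1}\cJaa^{-\kp}\vpk$. Up to the factor $\ang x^{c_{\wwwa}\e^{2-\kp}}$, the resulting bound on its weighted contribution is
\[
\ang x^{-\e_1}\cJaa(x)^{-\kp}\int_2^x\cJaa(y)^{\kp}|\lgp y|^{\kp-2}\,\f{dy}{y}\ \asymp\ (\log x)^{\kp-1}\quad\text{for large } x \text{ with } \e\log x\le 1,
\]
and in general it is $\les\cJaa(x)^{\kp-1}$. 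The condition $\kp-2<-1$ makes $\int|\lgp y|^{\kp-2}dy/y$ finite, but it does not make this term small. Since $\kp=\f{\cff+2}{4}\approx0.963$, the term is $O(1)$, decays only like $(\log x)^{-0.04}$, and is not $O(\e^{1-\kp})$. Your claim that \eqref{eq:1D_sharp_bound} survives with $\lam_{\kp}$ replaced by $\lam_{\kp}+O(\e^{1-\kp})$ is therefore unjustified. The far-field constant picks up an extra $C\cJaa(\Rmua)^{\kp-1}$. The $\muc$ fixed in Theorem \ref{thm:contra_lin} was chosen only to make $C\cJaa(\Rmua)^{-1/3}$ small, so nothing guarantees this extra term fits into the remaining margin $\lamcL-\f12(1+\lam_{\kp})$. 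Absorbing it would force a smaller $\muc$, i.e.\ a different norm from the one in the statement. This is exactly the step you flagged as the main obstacle, and it is resolved the wrong way.

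The repair is to avoid the Young-weakened bound. Split $\f{\pa_x\waa}{\waa}=\f{\pa_x\wa}{\wa}+\big(\f{\pa_x\waa}{\waa}-\f{\pa_x\wa}{\wa}\big)$ and bound the first piece by \eqref{eq:wa_est:a}, which reproduces the original $\cJaa^{-1/3}$ term. Bound the second by \eqref{eq:wa_refine:a}: the integrand is $\les\ang y^{-1}\e\cJaa^{2\kp-1}\nlinf{w\vpb}$, and after weighting this gives $\les\ang x^{-\e_1}(\e|\lgp x|^{\kp}+\e^{2-\kp}\lgp x)\les\e^{1-\kp}$ by Lemma \ref{lem:lgx_pow}.

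This computation also shows that your reason for refusing to bound $\cLa-\cLab$ absolutely is mistaken, and that the two-region rerun is unnecessary. The paper writes
\[
\cLa(w)=\waa\int_0^x\f{\td\cR(w)}{2\va}+\waa\int_0^x\f12\big(\f{\cRa(w)}{\vaa}-\f{\td\cR(w)}{\va}\big).
\]
It treats the first piece with Theorem \ref{thm:contra_lin} as a black box, via $|\waa|\le c_\e G_\e|\wa|$, $\vpb\le c_\e\vpcc$ and $\vpcc\le c_\e G_\e^2\vpb$, where $c_\e=e^{c_{\wwwa}\e^{1-\kp}}$ and $G_\e=\ang x^{c_{\wwwa}\e^{2-\kp}}$. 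The second piece consists only of $-\f{2\psio(w)}{\vaa}\big(\f{\pa_x\waa}{\waa}-\f{\pa_x\wa}{\wa}\big)$ and $-\f{\td\cR(w)\psio(w_{\al})}{\vaa\va}$. Both are $\les\ang y^{-1}\cJaa^{\kp}\cdot\e\cJaa^{\kp-1}\|w\|_{\cX}$, and the extra factor $\e\cJaa^{\kp-1}$ defeats the growth you were worried about. The weighted bound is $\les G_\e^3\ang x^{-\e_1/2}\e^{1-\kp}\|w\|_{\cXc}$, and the powers of $G_\e$ are removed by splitting at $\e^{1+\frac{1-\kp}{2}}\lgp x=1$.

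There are two smaller slips.
\begin{itemize}
\item The ratio is $\vpcc/\vpb=G_\e|\wa|/|\waa|\in[c_\e^{-1},c_\e G_\e^2]$. So only $\|w\|_{\cX}\le c_\e\|w\|_{\cXc}$ holds, and the two norms are not equivalent with constants $1+O(\e^{1-\kp})$. Every passage from $\vpb$ to $\vpcc$ costs a factor $G_\e$, which must be absorbed by the decaying bound where $\e^{2-\kp}\log x\gtrsim1$.
\item The new term in $2\vaa\pa_x(|\waa|\vpcc)^{-1}$ is $-c_{\wwwa}\e^{2-\kp}\f{x}{\ang x^2}\,2\vaa(|\waa|\vpcc)^{-1}\le0$. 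It slightly weakens the damping rather than helping. This is harmless only because $c_{\wwwa}\e^{2-\kp}\ll\e_1$ and the $\e_1$-damping is not used in \eqref{eq:1D_sharp_bound}.
\end{itemize}
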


\begin{proof}

First, we decompose $\cLa$ as 
{\small
\beq\label{eq:lin_exact_dec1}
   \cLa(w) = \waa \int_0^{x}  \f{  \td \cR(w) }{2 \va } 
+  \waa \int_0^{x} 
 \f12( \f{\cRa(w)}{ \vvva} - \f{ \td \cR(w)}{\va} )
\teq \waa \cdot ( \cL_1 + \cL_2).
\eeq
}
We denote by
\beq\label{def:Ge}
c_\e  = \exp( c_{\wwwa} \cdot \e^{1-\kp }  ) ,\quad  G_{\e} = \ang x^{ c_{\wwwa}  \e^{2-\kp}}  ,
\eeq
the functions and constants in \eqref{eq:wa_refine:b}, which are perturbations of $1$. Using the definitions of $\vpcc$ in \eqref{def:vpcc} and $\vpb$ in \eqref{def:vpb}, 
estimate \eqref{eq:wa_refine:b} on the ratio between $\wa, \waa$,  we compare
\beq\label{eq:lin_contra_pf1}
 |\waa| \leq c_{\e} G_{\e} |\wa|,  \quad 
\vpb \leq c_{\e} \vpcc, \quad \vpcc \leq \vpb c_{\e} G_{\e}^2 .
\eeq

We multiply the power weight $G_{\e}$ in $\vpcc$ in \eqref{def:vpcc} so that 
$\vpcc$ is stronger than $\vpb$ and the norm $\cXc$ is stronger than $\cX$ in \eqref{norm:X_ep}. Using Theorem \ref{thm:contra_lin} and the above estimates, we estimate $\cL_1$ as 
\beq\label{eq:cL1_est}
\bal
 & \max(\vpa, \muc \vpcc) |\waa| \B| \int_0^x \f{\td \cR(w)}{ 2 \va} \B|
  \leq (c_{\e} G_{\e})^3 \max(\vpa, \muc \vpb)\cdot |\wa|  \B| \int_0^x \f{ \td \cR(w) }{ 2 \va} \B| \\
 & \qquad \leq  (c_{\e} G_{\e})^3  \min( \lam_{\cX} , C \e^{-1} \ang x^{-\e_1} ) \| w \|_{\cX} 
\leq   (c_{\e} G_{\e})^4 \min( \lam_{\cX} , C \e^{-1} \ang x^{-\e_1} ) \| w \|_{\cXc}.
 \eal
\eeq

\paragraph{\bf Estimate of $\cL_2$}
Recall $\waa =\wa + w_{\al}$ from \eqref{def:1D_profi}. Using  $\td \cR(w)$ from \eqref{eq:lin_nloc_ep:a}, we decompose 
\[
 \f{\cRa(w)}{ \vvva} - \f{ \td \cR(w)}{\va}
 = \f{\cRa(w) - \td \cR(w)}{ \vvva} + \f{\td \cR(w) ( \va - \vvva  )}{ \vvva \va}
 = - \f{2 \psio(w)}{\vvva}  (  \f{\pa_x \waa}{\waa} - \f{ \pa_x \wa}{\wa} )
 -  \f{\td \cR(w) \psio(w_{\al}) }{ \vvva \va} \teq I + II.
\]

For $I$, using $\vaa \gtr x \cJaa$, estimate \eqref{eq:ass_v} for   $\psio(w)$,
 estimate \eqref{eq:wa_refine:a}, $\kp \in (1/2, 1) $ \eqref{def:kp}, we bound 
 \beq\label{eq:lin_exact_I}
|I(y) | \les \f{ y \cJaa^{\kp+1} \nchi{w}}{ y \cJaa } \cdot \ang y^{-1} ( \e  |\lgp y|^{\kp-1} + \e^{2 - \kp} )
\les   \f{ \cJaa^{\kp}}{ \ang y }    ( \e  |\lgp y|^{\kp-1} + \e^{2 - \kp} ) \nchi{w} .
 \eeq
For $II$, using \eqref{eq:lin_non_cr} for $\td \cR(w_{\al})$, 
$\nchi{w_{\al}} \les \e$, \eqref{eq:ass_v} , and $\vvva, \va \gtr x \cJaa$ \eqref{def:1D_profi:b2}, \eqref{eq:va_est}, we bound 
\begin{align}\label{eq:lin_exact_I}
|II(y)| & \les
  \f{y}{ \ang y}  \big(  \e \cJaa^{\kp+1}
     + \cJaa^{\kp} \ang y^{-\he + \e_1} 
     + \cJaa^{\kp+1} \lgp y^{-4/3} \big) \nchi{ w} \cdot \f{ y \cJaa^{\kp+1} \nchi{ w_{\al}}}{ y^2 \cJaa^2} \notag \\
     & \les \ang y^{-1} \cJaa^{\kp}  \cdot \e \cJaa^{\kp-1} \nchi{w} 
     \les \ang y^{-1} \cJaa^{\kp}  \cdot  ( \e  |\lgp y|^{\kp-1} + \e^{2 - \kp} ) \nchi{w}  .
\end{align}

Since  $\cXc$-norm is stronger than $\cX$-norm with $c_{\e} \les 1$ and 
$\cJaa$ is increasing, plugging in the above estimates for $I, II$ in $\cL_2$ \eqref{eq:lin_exact_dec1}, we bound 
\[
\bal
 |\cL_2 |  \les |\cJaa(x)|^{\kp}\| w \|_{ \cXc} \int_0^x \f{ ( \e  |\lgp y|^{\kp-1} + \e^{2 - \kp} )  }{\ang y} d y   
 \les |\cJaa(x)|^{\kp}\| w \|_{ \cXc} \min( \e x,  \e |\lgp x |^{\kp} + \e^{2 -\kp} \lgp x ).
  \eal
\]

Using  \eqref{eq:wg_equiv:b} for $\max(\vpa, \vpb) |\wa|$,  \eqref{eq:lin_contra_pf1},  the above estimate, and Lemma \ref{lem:lgx_pow} , we obtain
\[
\bal
 |\max(\vpa, \muc \vpcc) \waa \cL_2 |
& \les   G_{\e}^3 
 \max(\vpa, \muc \vpb) |\wa \cL_2 |
 \les   G_{\e}^3  ( |x|^{-\bbb+1} + 1) \ang x^{-\e_1} \cJaa^{-\kp} |\cL_2| \\
 & \les  G_{\e}^3  ( |x|^{-\bbb+1} + 1) \ang x^{-\e_1}  
\| w \|_{ \cXc} \min( \e x, \e |\lgp x |^{\kp} + \e^{2 -\kp} \lgp x ) \\
& \les  
 G_{\e}^3   \ang x^{-\e_1}  
\| w \|_{ \cXc}  ( \e |\lgp x |^{\kp} + \e^{2 -\kp} \lgp x + \e )
\les  G_{\e}^3 \ang x^{-\e_1/2}  \e^{1 - \kp} \| w \|_{ \cXc}  .
 \eal
\]

Recall $\e_1 \asymp \e$  \eqref{ran:ep}. We have $G_{\e}^3 \les  \ang x^{\e_1/2} $, 
$c_{\e}\leq 1 + C \e^{1-\kp}$ from \eqref{def:Ge} for $\e$ small enough, and
\[
\bal
G_{\e} & \leq \exp( C \e^{ \mhk}  ) 
\leq 1+ C \e^{\mhk}, \  &  \mbox{for} \ \e^{ 1 + \mhk} \lgp x \leq 1, \\
\quad    G_{\e}(x)^5 \ang x^{- \f{\e_1}{2} }  & \les x^{ - \f{\e_1}{4}} 
\leq e^{-C \e^{-\mhk} } \ll \e^2 , 
& \mbox{for}  \ \e^{ 1 + \mhk} \lgp x > 1 .
\eal
\]

Combining the above estimates, \eqref{eq:cL1_est} for $\cL_1$, the above estimate for $\cL_2$, and choosing $\e$ small enough, we prove the desired estimates.
\end{proof}

\subsection{Proof of Theorems \ref{thm:main_1D}, \ref{thm:main_contract}  }

Theorem \ref{thm:main_contract} follows directly from Theorem \ref{thm:contra_lin_exact}. 
The results in Theorem \ref{thm:main_1D} other than \eqref{eq:blowup_scaling} 
follows from Theorem \ref{thm:reg_alb} and Theorem \ref{thm:1D_profile_prop}. 

It remains to prove \eqref{eq:blowup_scaling}. 
Since $\al = \f13 - \e$,  using \eqref{eq:wa_decay_1D}, 
we obtain 
\[
\bal
  \al +  \tf{\cws}{ \cls}
 & = \al  - \tf13  - \tf18 \e + O(\e^{2-\kp})
= - \tf{9 \e}{8}  + O(\e^{2-\kp}),
\quad   \al +  \tf{\cws}{ \cls} =  [ - \tf{10}{9} \e, -\tf{8}{7} \e] , \\
   \clss  &  = \f{ -\cls}{  \al \cls + \cws}
   = - (  \al +  \f{\cws}{ \cls} )^{-1}
   =  \f89 \e^{-1} + O(\e^{-\kp}), 
   \quad \clss \asymp \e^{-1},  \\
         - (\al \cls + \cws) & = \f{\cls}{\clss} 
   =  \f{ \f{64}{ 8 \e} + O( \e^{-\kp}) }{  \f89 \e^{-1} + O(\e^{-\kp}) }
   = 8 + O( \e^{1-\kp}),
\eal
\]
 Similarly, we obtain $\cwss = -\f{8}{27} \e^{-1} +  O(\e^{-\kp}) $. 
Using the rescaling relation 
\eqref{eq:1D_rescale}, we prove \eqref{eq:blowup_scaling}.

\vs{0.15in}
\noindent\textbf{Acknowledgments.}
The work of JC was partially supported by NSF Grant
DMS--2408098.

\appendix 

\section{Basic estimates for kernels and the $\cJaa$-function}\label{app:basic}

In this appendix, we present some basic estimates for the kernels 
$\cK_{\alb, i}$ \eqref{eq:ker} and for the function $\cJaa$ \eqref{eq:Ja_hat}.
We have the following simple estimates for the kernels $K_{\al,i}$.

\begin{lem}\label{lem:K_decay_basic}
Let $K_{\al,i }, K_{\al, i, J}$ be the kernels in \eqref{eq:ker}.
For  $ |\al - \alb| < \f{1}{100}$ and any $z > 0$, we have 
\bseq\label{eq:K_decay}
\begin{align}
 |K_{\al, 1}(1, z)|  &\les \one_{z \leq 3 } z^{\al-1} + \one_{z > 3} |z|^{\al-3} ,
\label{eq:K_decay_basic:a}
  \\
 | K_{\al,2}(1, z) | & \les \one_{z \leq 3} ( z^{\al-1} + |z-1|^{\al-1} )
 + \one_{z > 3} |z|^{\al-3}, \label{eq:K_decay_basic:b} \\
  |K_{\al, 1, J}(1, z)| & \les \min( 1, z^{\al-3} ),  \label{eq:K_decay:a}  \\
   | K_{\al, 2, J}(1, z) |  + |K_{\al, \D}(1, z)| & \les \min( |z-1|^{\al-1}, z^{\al-3}  ) , \label{eq:K_decay:mix} 
 \end{align}
\eseq
and the following sign estimates for $z > 1$
\beq\label{eq:KJ_sign}
  K_{\al, i, J}(1, z) > 0, \quad K_{\D}(1, z) = K_{\al, 2, J}(1, z) -K_{\al, 1, J}(1, z) > 0 , 
 \ \forall  \, z > 1.
\eeq

\end{lem}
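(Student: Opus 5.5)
By homogeneity \eqref{eq:ker_scale} it suffices to work with the explicit one-variable functions $K_{\al,i}(1,z)$, $K_{\al,i,J}(1,z)$, $K_{\al,\D}(1,z)$. I would treat three regimes separately: $z\le 1/2$, $z\in[1/2,3]$, and $z>3$. All constants come from Taylor remainders of $(1\pm z)^{\al}$ or $(z\pm1)^{\al}$ with exponents in a compact range. They are therefore uniform in $|\al-\alb|<\f1{100}$.

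\textbf{Small $z$.} For $z\le 1/2$ the differences $|1+z|^{\al}-|1-z|^{\al}$ and $\al(1+z)^{\al-1}-\al(1-z)^{\al-1}$ are smooth and $O(z)$. Hence
\[
K_{\al,1}(1,z)=O(z)-2\al z^{\al-1},\qquad K_{\al,2}(1,z)=O(z)-2\al z^{\al-1},
\]
which gives \eqref{eq:K_decay_basic:a}--\eqref{eq:K_decay_basic:b}. The truncated kernels omit the $z^{\al-1}$ term on $z<1$, so they are $O(z)\les 1$. This gives \eqref{eq:K_decay:a} and \eqref{eq:K_decay:mix} there, since $|z-1|^{\al-1}\gtrsim 1$.

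\textbf{Intermediate $z$.} On $z\in[1/2,3]$ the only singularity is $|z-1|^{\al-1}$ coming from the $\sgn(1-z)|1-z|^{\al-1}$ term. Every other term is bounded, including $z^{\al-1}$ and $|z\pm1|^{\al}$. This gives all bounds, with $\min(|z-1|^{\al-1},z^{\al-3})\asymp|z-1|^{\al-1}$ in this range.

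\textbf{Large $z$ (the main work).} For $z>3$, write $(z+1)^{\al}-(z-1)^{\al}$ as a Taylor expansion in $1/z$. Only odd powers survive:
\[
(z+1)^{\al}-(z-1)^{\al}=2\al z^{\al-1}+O(z^{\al-3}).
\]
Subtracting $2\al z^{\al-1}$ gives $|K_{\al,1}|,|K_{\al,1,J}|\les z^{\al-3}$. Similarly, $\al(z+1)^{\al-1}+\al(z-1)^{\al-1}$ has its odd-order terms in $1/z$ cancel. I would double-check the sign convention here: for $z>1$ we have $\sgn(1-z)=-1$, so the kernel contains $+\al(z-1)^{\al-1}$. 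The expansion is then $2\al z^{\al-1}+O(z^{\al-3})$, and subtracting $2\al z^{\al-1}$ gives $z^{\al-3}$. The bound for $K_{\al,\D}=K_{\al,2,J}-K_{\al,1,J}$ follows from the triangle inequality. For a clean proof with uniform constants I would use the integral form of the remainder, $f(1)-f(-1)-2f'(0)=\tfrac12\int_{-1}^1(1-|t|)^2 f'''(t)\,dt$ with $f(t)=(z+t)^{\al}$, rather than the formal series. This is presumably the integral formula \eqref{eq:K_intform} the paper uses.

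\textbf{Sign property \eqref{eq:KJ_sign}.} This is the step I expect to be most delicate. For $z>1$, let $f(t)=(z+t)^{\al}$. Then
\[
K_{\al,1,J}(1,z)=f(1)-f(-1)-2f'(0),
\]
and since $f'''=\al(\al-1)(\al-2)(z+t)^{\al-3}>0$ for $\al\in(0,1)$, the integral remainder above shows $K_{\al,1,J}>0$. Likewise $K_{\al,2,J}(1,z)=g(1)+g(-1)-2g(0)$ with $g(t)=\al(z+t)^{\al-1}$ convex, so it is positive.

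For $K_\D$ the difference has no obvious fixed sign, so a finer argument is needed. The plan is to express $K_{\al,2,J}-K_{\al,1,J}$ as $\int_{-1}^1 k(t)(z+t)^{\al-3}\,dt$. Here $k(t)$ combines the two Peano kernels: $\tfrac12(1-|t|)^2\al(\al-1)(\al-2)$ from $K_{\al,1,J}$ and $(1-|t|)\al(\al-1)(\al-2)$ from $K_{\al,2,J}$, the latter obtained from the second-difference remainder of $g$. The difference $k(t)=\al(\al-1)(\al-2)(1-|t|)\bigl(1-\tfrac12(1-|t|)\bigr)\ge0$, which gives positivity. Should this bookkeeping fail, the fallback is to verify $K_\D(1,z)=z^{-\al}\cdot\partial_z$-type identities as in \eqref{eq:asym_int}, i.e. $\pa_z\bigl(z^{1-\al}((z-1)^{\al}-(z+1)^{\al})\bigr)$ scaled appropriately, and to check monotonicity of that function on $z>1$ directly.
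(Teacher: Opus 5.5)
Your argument follows the paper's proof. You use bounded-range estimates for $z\le 3$, and for $z>1$ the Taylor formula with integral remainder, which gives the $z^{\al-3}$ decay and both signs at once. Your Peano-kernel comparison $(1-|t|)-\tfrac12(1-|t|)^2\ge 0$ for $K_{\al,\D}$ is exactly the paper's inequality $1-s\ge\tfrac12(1-s)^2$; the paper just folds $\int_{-1}^1$ into $\int_0^1$ against $(z+s)^{\al-3}+(z-s)^{\al-3}$. So the sign argument already works and the fallback is unnecessary.

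One step is wrong. On $[1/2,3]$ it is not true that $\min(|z-1|^{\al-1},z^{\al-3})\asymp|z-1|^{\al-1}$. There $z^{\al-3}\le 2^{3-\al}$, so the minimum is $\asymp 1$, while $|z-1|^{\al-1}\to\infty$ as $z\to 1$. Hence your correct bound $|K_{\al,2,J}(1,z)|\les|z-1|^{\al-1}$ does not give \eqref{eq:K_decay:mix} near $z=1$.

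In fact no argument can give it. $K_{\al,2,J}(1,z)$ contains the term $-\al\,\sgn(1-z)|1-z|^{\al-1}$, and all its other terms are bounded near $z=1$. So $K_{\al,2,J}(1,z)$ and $K_{\al,\D}(1,z)=K_{\al,2,J}(1,z)-K_{\al,1,J}(1,z)$ are unbounded as $z\to1$, while the right-hand side of \eqref{eq:K_decay:mix} stays bounded. The estimate that is actually true is
\[
|K_{\al,2,J}(1,z)|+|K_{\al,\D}(1,z)|\les \one_{z\le 3}\,|z-1|^{\al-1}+\one_{z>3}\,z^{\al-3}.
\]
This is what the paper's own proof establishes by combining its $z\le 3$ and $z>3$ bounds. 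It is also the form used downstream, e.g.\ in \eqref{eq:vel_const_est:b}, where only integrability matters.

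State your conclusion in this piecewise form instead of asserting the comparison of the minimum. The minimum form in \eqref{eq:K_decay:a} is fine, since $K_{\al,1,J}(1,\cdot)$ is bounded.
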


\begin{proof}

Below, we fix $\al$ with $|\al - \alb| \leq \f{1}{100}$.

\vs{0.1in}
\paragraph{\bf Proof of \eqref{eq:K_decay:a}-\eqref{eq:K_decay:mix} }

 Recall the definition of kernels from \eqref{eq:ker}. For $z \leq 3$, we have
\beq\label{eq:K_decay_pf1}
\bal
  | K_{\al, 1, J}(1, z) | 
  & \les |1 + z|^{\al} + |1- z|^{\al} + z^{\al-1} \one_{z > 1} \les  1,  \\
  | K_{\al, 2, J}(1, z) | 
  & \les |z+1|^{\al - 1}  + |z-1|^{\al-1} + z^{\al - 1} \one_{z > 1} 
  \les |z-1|^{\al-1} .
\eal
\eeq

For $z > 1$, by definition, we have $K_{\al, i}(1, z) =  K_{\al,i,J}(1, z)$. Using integrations by parts, we obtain 
\bseq\label{eq:K_intform}
\beq
\bal
K_{\al, 1, J}(1, z) & = |1 + z|^{\al} - |1-z|^{\al} - 2 \al z^{\al-1}
= \tts{ \int}_0^1 \pa_s (  (z+s)^{\al} - (z-s)^{\al} - 2 \al s z^{\al-1} ) d s  \\
& = \tts{\int}_0^1 \pa_s^3 (  (z+s)^{\al} - (z-s)^{\al} - 2 \al s z^{\al-1} ) \cdot \f{ (1-s)^2 }{2 } d s  \\
& = \al (\al-1) (\al - 2) \int_0^1  (  (z+s)^{\al-3} + (z-s)^{\al-3} )  \cdot \f{ (1-s)^2 }{2 } d s , \\
K_{\al, 2, J}(1, z) & =  \al |1+z|^{\al-1} + \al  |1-z|^{\al-1} - 2 \al z^{\al-1}   
=  \al \tts{\int}_0^1 \pa_s (   (z+s)^{\al-1} + (z-s)^{\al-1} ) d s  \\
& = \al \tts{\int}_0^1 \pa_s^2 (  (z+s)^{\al-1} + (z-s)^{\al-1}  ) \cdot  (1-s) d s  \\
& =  \al (\al-1)(\al-2) \int_0^1   ( (z+s)^{\al-3} + (z-s)^{\al-3}  ) \cdot  (1-s) d s  .
 \eal
\eeq
\eseq

It is easy to check that the boundary terms at $s=0, 1$ vanish. 
Using \eqref{eq:K_intform}, for $z > 3$, we obtain 
\beq\label{eq:K_decay_pf2}
  |K_{\al, 1, J}(1, z)| \les z^{\al-3},  \quad | K_{\al, 2, J}(1, z)| \les z^{\al - 3}.
\eeq

Combining \eqref{eq:K_decay_pf1} and \eqref{eq:K_decay_pf2}, we prove \eqref{eq:K_decay:a} and the estimate for $K_{\al, 2, J}$ in \eqref{eq:K_decay:mix}. 
From the definition \eqref{eq:ker:c}, we have 
\bseq\label{eq:K_decay_pf3}
\begin{align}
  K_{\al, \D}(1, z) & = K_{\al, 2}(1, z) -K_{\al, 1}(1, z) 
  = K_{\al, 2, J}(1, z) -K_{\al, 1, J}(1, z) , \label{eq:K_decay_pf3:a} \\
 K_{\al, i}(1, z) & = K_{\al, i, J}(1, z )  - 2 \al z^{\al-1} \one_{z < 1}, \qquad i = 1, 2 .
 \label{eq:K_decay_pf3:b} 
\end{align}
\eseq
Thus, estimate \eqref{eq:K_decay:mix} for $K_{\al, \D}$ follows from 
the estimates for $K_{\al, i, J}$ in \eqref{eq:K_decay} and \eqref{eq:K_decay_pf3:a}.
Estimate \eqref{eq:K_decay_basic:a} 
follows from \eqref{eq:K_decay:a} and \eqref{eq:K_decay_pf3:b}, 
while \eqref{eq:K_decay_basic:b} follows from \eqref{eq:K_decay:mix} and 
\eqref{eq:K_decay_pf3:b}.

Using the formula \eqref{eq:K_intform}, $\al(\al-1)(\al-2) >0,  z \pm s > 0$ for $z >1$,  $ 1 - s \in [0,1]$, 
and $ 1- s \geq \f{ (1-s)^2}{2}$, we prove the sign properties \eqref{eq:KJ_sign}.
\end{proof}

Recall $\cJaa \asymp \min( \lgp x, \e^{-1} )$ from \eqref{eq:Ja_hat}. 
The following lemma compares $\cJaa$ and $\lgp x$:
\begin{lem}\label{lem:lgx_pow}

For any $k>0, \ell > 0$ and $\e < 1$, we have 
 \[
   \ang x^{-k \e} |\lgp x|^\ell \les_{k,\ell} |\cJaa(x)|^\ell  \les  \e^{-\ell}.
 \]
\end{lem}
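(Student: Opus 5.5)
The estimate follows from the two-sided bound $\cJaa(x) \asymp \min(\lgp x, \e^{-1})$ in \eqref{eq:Ja_hat}. I would first dispose of the right inequality and then reduce the left inequality to an elementary one-variable statement. The right inequality $|\cJaa|^\ell \les \e^{-\ell}$ is immediate from $\cJaa \les \e^{-1}$. For the left inequality it suffices to show
\[
\ang x^{-k\e}\, \lgp x \les_k \min(\lgp x, \e^{-1}),
\]
and then raise both sides to the power $\ell>0$. The implicit constant becomes $C_k^\ell$, which is admissible since the statement allows dependence on $k,\ell$.

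I would split into two cases according to which term attains the minimum.

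\emph{Case $\e \lgp x \le 1$.} The minimum equals $\lgp x$, and $\ang x^{-k\e}\le 1$ because $\ang x \ge 1$ and $k\e>0$. The inequality holds with constant $1$.

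\emph{Case $\e \lgp x > 1$.} The minimum equals $\e^{-1}$, so I need $\ang x^{-k\e}\lgp x \les_k \e^{-1}$. Set $t = \e \lgp x > 1$. Since $\lgp x = \log(x+2)$ and $\log\ang x \ge \log(x+2) - \log 2$ for all $x\ge 0$ (indeed $(x+2)^2 \le 4(1+x^2)$), we get
\[
\ang x^{-k\e} \le 2^{k\e} e^{-k\e\lgp x} \le 2^{k} e^{-kt}.
\]
Hence
\[
\ang x^{-k\e}\lgp x \le 2^k \e^{-1}\, t e^{-kt} \le 2^k \e^{-1} \sup_{t>0} t e^{-kt} = \frac{2^k}{ek}\, \e^{-1}.
\]

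Combining the two cases gives the claim. There is no real obstacle. The only point needing care is comparing $\log\ang x$ with $\lgp x$ uniformly down to $x=0$; the bound $(x+2)^2\le 4\ang x^2$ handles this and costs only the harmless factor $2^{k\e}\le 2^k$, using $\e<1$.
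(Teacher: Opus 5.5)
Your approach is the same as the paper's: reduce to $\cJaa\asymp\min(\lgp x,\e^{-1})$ from \eqref{eq:Ja_hat}, then split according to whether $\e\lgp x$ lies below or above $1$. The paper splits on $\e\log\ang x$ instead, but this is the same idea.

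One elementary step is false as written. The inequality $(x+2)^2\le 4\ang x^2$ is equivalent to $3x^2-4x\ge 0$, which fails for $0<x<4/3$; at $x=1$ it reads $9\le 8$. Your second case does not exclude this range, since there you only know $\lgp x>1$, i.e.\ $x>e-2$.

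The repair is immediate: use $(x+2)^2\le 2(x^2+4)\le 8\ang x^2$, where the first step is just $(x-2)^2\ge 0$. This gives $\log\ang x\ge\lgp x-\log(2\sqrt2)$, and your estimate becomes $\ang x^{-k\e}\lgp x\le (2\sqrt2)^{k}(ek)^{-1}\e^{-1}$. The rest of your argument goes through unchanged.
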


Note that the constant is independent of $\e$. The proof follows easily from $\ang x^{-k \e} = e^{ -k \e \log \ang x }, \lgp x \asymp \log (1 + \ang x)$ and discussing $ \e \log \ang x < 1 $ and $ \e \log \ang x > 1$.

We have the following estimates for the $\cJaa$-integral.

\begin{lem}
Let $\cJaa$ be as in \eqref{eq:Ja_hat}. For $ k > -1$ and $c > 0$, we obtain
\begin{align}\label{eq:log_ineq_J:a}
 \int_0^x  \ang y^{ - c \e - 1 }  \cJaa^{k}(y) d  y
 \les_{c, k} \min( |x|, \cJaa^{k+1}(x) ) .
\end{align}

\end{lem}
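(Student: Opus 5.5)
The plan is to reduce everything to the comparison $\cJaa(y)\asymp \min(\lgp y,\e^{-1})$ from \eqref{eq:Ja_hat} and then carry out an elementary one-variable integral after the substitution $t=\log\ang y$. Throughout, the implicit constants depend only on $c,k$. The claimed bound is a minimum of two quantities, so I would prove the two bounds $\les |x|$ and $\les \cJaa^{k+1}(x)$ separately.

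\textbf{Bound by $|x|$.} For $y\ge 0$ we have $\cJaa(y)\gtr 1$ and $\cJaa(y)\les \lgp y$. I would split according to the sign of $k$.
\begin{itemize}
\item If $k\le 0$, then $\cJaa^{k}\les 1$, so the integrand is $\les 1$ and the integral is $\les x$.
\item If $k>0$, I would use Lemma \ref{lem:lgx_pow}, which gives $\ang y^{-c\e/2}\cJaa^{k}(y)\les \ang y^{-c\e/2}|\lgp y|^{k}$. This is not bounded uniformly, so the direct bound $\les x$ only holds for $x\les 1$.
\end{itemize}
For $x\gtr 1$ and $k>0$, I would not prove the linear bound at all. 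Instead, I would note that $\cJaa^{k+1}(x)\les \lgp x\, \cJaa^{k}(x)\les x$ there, so the second bound already implies $\min(|x|,\cdot)$. Indeed, for $x\ge 1$ we have $\cJaa^{k+1}\les \lgp x^{\,k+1}\les_k x$, which is fine for fixed $k$. It therefore suffices to prove $\les \cJaa^{k+1}(x)$ for $x\ge 2$, and to treat $x\le 2$ by the trivial bound $\les x$, since the integrand is bounded there.

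\textbf{Main bound for $x\ge 2$.} The integral over $[0,2]$ is $O(1)\les \cJaa^{k+1}(x)$, using $\cJaa\gtr 1$ and $k+1>0$. On $[2,x]$ I would substitute $t=\log y$, so that $\ang y^{-c\e-1}dy\asymp e^{-c\e t}dt$ and $\cJaa^{k}(y)\asymp \min(t,\e^{-1})^{k}$. This leaves
\[
\int_{\log 2}^{\log x} e^{-c\e t}\min(t,\e^{-1})^{k}\,dt ,
\]
which I would split at $t=\e^{-1}$.
\begin{itemize}
\item On $t\le \min(\log x,\e^{-1})$, I would drop the exponential and integrate $t^{k}$. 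Since $k>-1$, this gives $\les_k \min(\log x,\e^{-1})^{k+1}$.
\item On $t>\e^{-1}$, the integrand is $\e^{-k}e^{-c\e t}$. Its integral is $\les \e^{-k}(c\e)^{-1}e^{-c}\les_c \e^{-k-1}$. This range only occurs when $\log x>\e^{-1}$, in which case $\cJaa(x)\asymp\e^{-1}$.
\end{itemize}
In both cases the total is $\les \min(\lgp x,\e^{-1})^{k+1}\asymp \cJaa^{k+1}(x)$.

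The only step needing care is the $k\in(-1,0)$ range on the first piece, where $t^{k}$ is integrable near $t=\log 2>0$ with no singularity. This step is where the hypothesis $k>-1$ is used, and I expect no real obstacle there.
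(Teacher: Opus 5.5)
Your argument is correct, but it is organized differently from the paper's. The paper avoids any change of variables or case split: it sets $H(x)=1+\int_0^x\ang y^{-c\e-1}\,dy$, observes that $H'=\ang x^{-c\e-1}$ and $H\asymp_c\min(\e^{-1},\lgp x)\asymp \cJaa$, and then bounds the integral by $\int_0^x H'H^k\,dy=\frac{H(x)^{k+1}-1}{k+1}\les H(x)^{k+1}$. In other words, it recognizes the weight as the derivative of a function comparable to $\cJaa$ and does the integral exactly.

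Your substitution $t=\log y$ with a split at $t=\e^{-1}$ is essentially the computation hidden in the claim $H\asymp_c\min(\e^{-1},\lgp x)$, carried out with the factor $\min(t,\e^{-1})^k$ already inside. This makes your version more explicit and self-contained. The paper's version is shorter and needs no bookkeeping of the regimes $t\le\e^{-1}$ versus $t>\e^{-1}$.

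One correction to your first part. The integrand is uniformly bounded for every $k>-1$, since $\ang y^{-c\e-1}\cJaa^k(y)\le\ang y^{-1}\cJaa^k(y)\les\ang y^{-1}(\lgp y)^{\max(k,0)}\les_k 1$. Hence the bound by $|x|$ holds directly for all $x$, which is why the paper calls it trivial. You lost this because you dropped the factor $\ang y^{-1}$ when writing $\ang y^{-c\e/2}\cJaa^k$.

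Also, the inequality $\cJaa\les\lgp y$ that you invoke there comes from \eqref{eq:Ja_hat}, not from Lemma \ref{lem:lgx_pow}, which gives the reverse direction. Your detour through $\cJaa^{k+1}(x)\les_k x$ for $x\ge 1$ is still valid, so this does not affect correctness.
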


\begin{proof}
The first bound in \eqref{eq:log_ineq_J:a} by $C_k |x|$ is trivial.  We define
$H(x) = 1 + \int_0^x \ang y^{-c \e - 1} d y$ and obtain
\[
H(0) = 1, \quad  H(x) \asymp_c \min( \e^{-1}, \lgp x ),
  \quad \pa_x H(x) = \ang x^{- c \e - 1} > 0 .
\]
Since $\cJaa \asymp \min( \e^{-1}, \lgp x) \asymp_c H$ \eqref{eq:Ja_hat}, and $k+1>0$,   we bound 
\[
\bal
  I & = \int_0^x \ang y^{-c \e - 1} \cJaa^k(y) d y 
\les_{c, k} \int_0^x \pa_y H(y) \cdot H(y)^k d y   \les_{c, k} 1 +  H(x)^{k+1}
\les \cJaa^{k+1}(x).
\eal
\]
We prove  \eqref{eq:log_ineq_J:a}.  
\end{proof}

\section{Numerics and piecewise bounds for the profiles}
\label{app:numerics}

In this appendix, we discuss the basis representation for the profile $\bw$, the computation of the associated nonlocal terms $V(\bw)$, and their rigorous piecewise bounds. 

\subsection{Parameters for the weight and estimates}\label{app:para_wg}

In this section, we present all the parameters in the estimates.
We choose the parameters $\mu_i , \bb_i$ appeared in the norm \eqref{norm:X} as follows 
\beq\label{def:mu_b}
\vmu = ( 1, 4, 30, 300, 5000 ), \quad 
\bb = ( -1.2, \, -0.5, \, 0, \,  0.2, \, 1/3 ) ,\quad \nmu = 5.
\eeq 
and the parameters $\mu_{\sst, i}, \bb_{\sst,i}$ appeared in the norm \eqref{def:vpa} as follows
\beq\label{def:mu_b_st}
\vmu_{\sst} = ( 1 , 4, 30 ), 
 \quad \bb_\sst = (-1.2, -0.5, 0), \quad \nst = 3.
\eeq

We choose the parameters for the fixed point argument in Lemma \ref{lem:close}.
\beq\label{def:delta_bw}
 \dfix = 3.5 \cdot 10^{-5} , \quad \dfixa = 3.4 \cdot 10^{-5} 
\eeq

We choose the following intervals $\cI = \{ I_i\{_{i\geq 1 }$ to partition $\R_+$ and use them in Lemma \ref{lem:vel_est} 
\beq\label{def:interval}
\cI = \{ I_i \{_{i=1}^4, \, 
 I_1 = [0, 1], \, I_2 = [1,2], \, I_3 = [2, 10], \,  I_4 = [10, 100], \, I_5 = [100, 10^5],   I_6 = [10^5, \infty). 
\eeq

We choose the parameter $x_1$ in the definition of $\rho$ \eqref{eq:rho1} as 
\beq\label{def:x1}
 x_1 = \mmx_{1000} \in [ 3708, 3709]. 
\eeq 
where $\mmx$ is the mesh in \eqref{def:x_i}. We choose $\mmx_1$ as a grid point on the mesh to simplify the estimates; its value is a floating-point number rather than a simple integer.

\subsection{Discretization and Bspline basis}\label{sec:basis}

In this subsection, we discuss the mesh and the Bspline basis functions used in \eqref{eq:W_rep}. 
We design increasing mesh $\mmx_i, 1 \leq i \leq \nnx + 5$ with the following properties 
near $x=0$ and in the far-field:
\bseq\label{def:x_i} 
\beq
 \mmx_i   = \tf{1}{ 384} (i-1) ,  \ 1 \leq i \leq  75, \quad  x_{i + 1200} = x_{1200} r_1^{i}, \  1 \leq i 
 \leq  805, \quad  \nnx = 2000, \   r_1 \approx 1.06,
\eeq
with $ \mmx_{\nnx+5} > 10^{20}$. We use the middle part of the mesh $\mmx_{i}$ to glue the 
 mesh near $x=0$ and in the far-field. We apply the filter $\mmx_i \to \mmx_{\mw{new},i} = \tf{1}{4} (\mmx_{i-1} + 2 \mmx_i + \mmx_{i+1})$ for $ 2\leq i\leq \nnx-1$ several times to obtain the mesh with $\mmx_1 = 0$ 
 and smoother in $i$. We introduce $\mmx_{0}, \mmx_{-1}$ 
 as follows and obtain
\footnote{
We design the above mesh so that (a) \(\mmx_i\) varies smoothly with \(i\); (b) \(\mmx_i\) is linear in \(i\) and finer near \(x=0\); (c) the spacing \(\mmx_{i+1}-\mmx_i\) is increasing; and (d) the ratio \(\frac{\mmx_{i+1}}{\mmx_i}\) grows exponentially. 
In the far field, we use an exponentially growing mesh \(\mmx_i\) to cover a large domain with relatively few points, while keeping the discretization error small by requiring the growth rate \(\frac{\mmx_{i+1}}{\mmx_i}-1=r_1-1\) to remain moderate. Properties (a) and (b) improve the approximation by the B-spline basis for moderate \(x\).
} 
\beq
 \mmx_{-1} < \mmx_{0} <   0 = \mmx_1 < \mmx_2< .. < \mmx_{\nnx+5}, \quad \mmx_{0} \teq - \mmx_2, 
 \quad  \mmx_{-1} \teq - \mmx_3.
\eeq
\eseq
Since these properties are not used in the proof and many other choices of \(\mmx_i\) are possible, we omit the explicit formula. The parameters \(h_i\) and \(N_i\) are also not used in the proof.

\vs{0.05in}
\paragraph{\bf Refine mesh for verification} 
For rigorous verification, we refine the mesh \eqref{def:x_i} $\mmx$ to obtain denser mesh $X$ 
with floating point values
\bseq\label{def:XX}
\beq
  X_{4(i-1) + 1} = \mmx_i, \ 
   X_{ 4(i-1) + j +1} = \mmx_i + \tf{j}{4} (\mmx_{i+1} - \mmx_i) , \
j= 1, 2,3, \   \quad \nnX = 4 (\nnx+4)+1= 8017,
\eeq
for any $1\leq i \leq \nnx + 4$, and $ X_{ \nnX} = \mmx_{\nnx+5}$. In particular, $X_1 = 0$, and for any $j$, we obtain $[X_j, X_{j+1}] \subset [\mmx_i, \mmx_{i+1}]$ for some $i$. We denote the last grid point in the mesh as $\xed$. From the definition of $\bwp$ \eqref{eq:W_rep}, the support property \eqref{eq:Bspline_prop1}, and \eqref{def:x_i}, we get 
\beq
 \xed \teq \mmx_{\nnx + 5} = X_{\nnX} > 10^{20},  \quad \supp( \bwp) \cap \R_+ \subset [0, \mmx_{\nnx+2} ] \subset   [ -\xed, \xed].
\eeq
\eseq

We emphasize that the Bspline basis representation \eqref{eq:Bspline}, \eqref{eq:W_rep} 
is based on the \emph{coarse mesh} $\mmx$. We evaluate various functions on the refined mesh $X_{\cdot}$.

\paragraph{\bf Parameters $z_0$ for $\bwf$} 
We choose $z_0$ in \eqref{eq:W_rep} for $\bwf$ and $z_L$ as 
\beq\label{def:z_0}
z_0 = \mmx_{300} \approx 1.937 , 
\quad z_{L_1} = 10^{10}, 
\quad z_L =  4 \cdot 10^{12}. 
\eeq

\subsubsection{Bspline basis and representation}

To approximate \emph{odd} function in a large domain, e.g. $\bwp$  \eqref{eq:W_rep}, we use Bspline representation:
\beq\label{eq:Bspline_rep}
F(x)  = \tts{\sum}_{1\leq i \leq \nnx }  a_i B_i(x) \in C^{2, 1} ,  \quad  \supp(F)  \subset [ -\xed, \xed ], \quad  \ \xed \approx 8.8 \cdot 10^{27} .
\eeq
with coefficients $a_i \in \R$. We choose the Bspline basis $B_i(x)$ as 
\footnote{
By definition of $\mmx$ \eqref{def:x_i},  for $i\geq 3$ and $|j|\leq 2$, we get $x_{i+j} \geq  0$.
Thus, $\mf{Bs}(x ;   \{ \mmx_{i +j} \}_{ -2\leq j \leq 2} )  =  \mf{Bs}(x ;   \{ \mmx_{i +j} \}_{ -2\leq j \leq 2} ) - \mBs(x ;  \{ - \mmx_{i +j} \}_{ -2\leq j \leq 2} ) =  $ for any $x \geq 0$, and it can be naturally extended to a $C^{2,1}$ odd function in $\R$.
}
\bseq\label{eq:Bspline}
\beq
\bal
 B_i(x)  & = C_{B_i} ( \mf{Bs}(x ;   \{ \mmx_{i +j} \}_{ -2\leq j \leq 2} ) -
 \mBs(x ;  \{ - \mmx_{i +j} \}_{ -2\leq j \leq 2} ) ) , &&  i= 1, \, 2 , \\
B_i(x ) & = C_{B_i}   \mf{Bs}(x ;   \{ \mmx_{i +j} \}_{ -2\leq j \leq 2} ) &&  3 \leq i \leq \nnx.
\eal 
\eeq
where $C_{B_i}$ is some parameter depending on $ \{ \mmx_{ i+j} \}_{ -2 \leq j \leq 2}$ to improve 
the condition number.
Given the parameters $\ss \in \R^5$,  $\mBs(x; \ss)$ is the standard $4$-th order Bspline basis function 
{\small
\beq\label{eq:Bspline:a}
 \mf{Bs}(x ; \ss  ) \teq  \sum \nolimits_{ 1 \leq l \leq 5} \f{ 4 ( s_l - x)^3}{  m_l(\ss) } \one_{s_l -x \geq 0}, 
 \quad 
 m_l(\ss) \teq \prod \nolimits_{1\leq j\leq 5, \, j \neq l}  (s_l - s_j) .
\eeq
}
where $ x_+ = \max(x, 0)$. We can rewrite the Bspline function equivalently as 
\footnote{
Given $\ss$, this identity follows from the identity $\sum_{1\leq l \leq 5}  \f{ ( s_l - x)^3}{  m_l(\ss) }  \equiv 0$ for any $x$. Using Lagrangian interpolation identity 
 $p(x) \equiv \sum_{1\leq l \leq 5} p(s_l) \f{ \prod_{ j \neq l} (x-s_j) }{ \prod_{ j\neq l} (s_l -s_j) } $ 
for any polynomial $p$ with degree $\leq 4$, 
choosing $p(x) = x^k, k\leq 3$, and comparing the coefficients of $x^4$ on both sides of the identity, 
we yield  $0 = \sum_{1\leq l \leq 5} p(s_l) ( \prod_{ j\neq l}  (s_l -s_j)  )^{-1}
= \sum_{1\leq l \leq 5} \ s_l^k m_l(\ss)^{-1}$.
}
{\small
\beq\label{eq:Bspline:b}
  \mf{Bs}(x ; \ss  )  =  -  \sum \nolimits_{ 1 \leq l \leq 5} \f{ 4 ( s_l - x)^3}{  m_l(\ss) } \one_{s_l -x < 0} . 
\eeq
}
\eseq
The above equivalent formula implies 
\beq\label{eq:Bspline_prop1}
\bal
\mBs(x; -\ss) &=  \mBs(-x; \ss), \quad \forall \  \ss \in \R^5, \quad x \in \R,  
\quad \supp(\mBs(\cdot ; \ss ))  \subset [s_1, s_5],  \\
\quad B_i(x)
& =  C_{B_i} ( \mf{Bs}(x ;   \{ \mmx_{i +j} \}_{ -2\leq j \leq 2} ) -
 \mBs(-x ;  \{  \mmx_{i +j} \}_{ -2\leq j \leq 2} )) ,  \quad  i= 1, 2 .
\eal 
\eeq
Thus,  \( B_i(x) \) in \eqref{eq:Bspline} is odd for \( i=1,2 \). Moreover, $B_i(x)$ and \( \bwp \) in \eqref{eq:Bspline_rep} have global \( C^{2,1} \) regularity, and are cubic polynomials on  \(  [x_j, \mmx_{j+1}] \); hence they belong to \( C^{\infty}([\mmx_j, \mmx_{j+1}]) \) for any \( j \).

\subsection{Rigorous piecewise bounds}\label{app:piece_bound}

To obtain rigorous bound for a function $f(x)$, we derive the piecewise upper and lower bounds using the following methods. 

\subsubsection{Piecewise bounds in bounded domain}

\vs{-0.05in}
\subsubsection*{\bf Interpolation Estimates}\label{bd:interp}
Given a function $f$, using standard linear interpolation, we obtain
\beq\label{eq:lin_interp}
 f(x) =\tf{b-x}{b-a} f(a) +   \tf{x-a}{b-a}  f(b) + \cE(f), \quad  \cE(f) = f^{\pr \pr}(\xi) \tf{(x-a)(x-b)}{2} , \quad  \forall  \ x \in [a, b],
\eeq
for some $ \xi \in (a, b)$. 
Since $ |(x-a)(x-b)| \leq \f{(b-a)^2}{4}$, we obtain the second order error estimates 
\bseq\label{eq:err_est}
\beq\label{eq:err_est:2nd}
f(x) \in [ \min(f(a), f(b)) - f_{\mw{2,err}} , \ \max(f(a), f(b) + f_{\mw{2, err}} ) ],
\quad    f_{\mw{2,err}} \teq \tf{(b-a)^2}{8} \| \pa_x^2 f \|_{ L^{\infty}(a,b)} , 
\eeq
for any $ x \in [a, b]$. By considering $x \in [a, \f{b+a}{2}], [ \f{b+a}{2}, b]$ separately, we obtain the first order estimate 
\beq\label{eq:err_est:1st}
 f(x) \in [ \min(f(a), f(b)) - f_{\mw{1, err}} , \ \max(f(a), f(b) + f_{\mw{1, err}} ) ],
\quad   f_{\mw{1, err}} \teq \tf{(b-a)}{2} \| \pa_x f \|_{ L^{\infty}(a,b)} ,
\eeq
\eseq
for any $x \in [a, b]$. Below, we first discuss how to obtain a rigorous bound for 
$\pa_x^k f$ with a large $k$.

\vs{-0.05in}
\paragraph{\bf Interval Arithmetic}\label{bd:intval}

For rigorous computer-assisted proof, we use interval arithmetic  \cite{rump2010verification,moore2009introduction}, and use the MATLAB toolbox INTLAB (version 11 \cite{Ru99a}) for the interval computations.  In the verification step, every real number $p$ in MATLAB is represented by an interval $[p_l,p_r]$ that contains $p$, where $p_l,p_r$ are \emph{precise} $16$-digit floating-point numbers. Every computation of real number summation, multiplication or division is performed using the interval arithmetic, so the output is an interval $[P_l,P_r]$ that rigorously contains the exact value $P$.

For an explicit function $f$, for example one involving $x^a$, $\log x$, linear combinations, products, max, or min of such terms, we enclose the values of $f(y)$ on a bounded interval $[a,b]$ by applying interval arithmetic in INTLAB to $f([a,b])$. The output is an interval $[f_l, f_r]$, where $f_l$ and $f_r$ are floating-point numbers and may equal $\pm\infty$,   such that $f(y)\in [f_l,f_r]$ for all $y\in [a,b]$.

We remark that we only use basis interval arithmetic and do not apply it directly to bound more complicated functions, e.g. integrals and nonlocal terms such as $\cK_{\alb, i}$ in \eqref{eq:ker}.

\vs{-0.05in}
\subsubsection*{\bf Method A.1: Bspline representation}\label{bd:Method1}
For $f$ representing as \eqref{eq:Bspline_rep} or \eqref{eq:Bspline} with $\ss = \{ \mmx_{i +j} \}_{ |j|\leq 2} $, e.g. $f =\bwp$  \eqref{eq:W_rep:WP}, 
 from the properties in \eqref{sec:basis}, 
 $f$ is a cubic polynomial on $[\mmx_i, \mmx_{i+1}]$. Thus, we obtain $\pa_x^k f(x) = 0$ 
for any $x \in [\mmx_i, \mmx_{i+1}]$ and any $k\geq 4$,
and $\pa_x^3 f(x) \equiv \mw{const}$ for any $x \in [\mmx_i, \mmx_{i+1}]$.

\vs{-0.1in}
\subsubsection*{\bf Method A.2: Explicit functions}\label{bd:Method2}

When $f$ is an explicit function, e.g. $ \bwf , \chi_1$ in \eqref{eq:W_rep}, we first derive the 
\emph{exact} formulas for the  derivatives $\pa_x^i f, i\leq 5$ using symbolic computation. 
Then by applying \hyr[bd:intval]{\its Interval Arithmetic}, we obtain a rigorous enclosure of a high order derivatives 
\[
\pa_x^k f( [a, b]) \subset [ f^l_{k}(a, b), f^u_{k}(a, b) ],
\]
where $l, u$ indicate \emph{lower} and \emph{upper}, and $f^l_{k}(a, b), f^u_{k}(a, b)$ are some floating point numbers.

\vs{-0.1in}
\subsubsection*{\bf Method A.3: Linear Combinations, Products, and Quotients}\label{bd:Method3}

When \( f= g + p \) is a linear combination of an explicit function \( g \) and a piecewise polynomial \( p \) that are cubic on each interval \( [\mmx_i, \mmx_{i+1}] \), e.g.
$f=\bw$ in \eqref{eq:W_rep}, \eqref{eq:Bspline_rep}  within  $[a, b] \subset [\mmx_i, \mmx_{i+1}]$, since $\pa_x^k p(x) =0$ for $k\geq 4$, we 
obtain high order derivative bounds using $\pa_x^k f = \pa_x^k g(x)$ and \hyr[bd:Method2]{\its Method A.2} for the explicit function $g$.

After we obtain high order derivative bounds for $\pa_x^k f$,
by evaluating $\pa_x^l f(a), \pa_x^l f(b), 0 \leq l \leq k-1$ using the 
basis representation \eqref{eq:Bspline_rep} for the piecewise polynomials $p(x)$, and 
the symbolic formulas for explicit functions $g(x)$, and applying 
\hyr[bd:interp]{\its Interpolation Estimates} \eqref{eq:err_est:1st} 
for $l= k-1$ and  \eqref{eq:err_est:2nd} for $l\leq k-2$, we obtain rigorous piecewise 
bounds for $\pa_x^k f$. 
\footnote{
Compared with applying \hyr[bd:intval]{\its Interval Arithmetic} directly to enclose $\pa_x^l f([a,b])$ for $l\leq k-1$, the method based on \eqref{eq:err_est} yields a tighter bound for $\pa_x^l f$, with $O((b-a)^2)$ error away from the grid values at $a,b$, whereas \hyr[bd:intval]{\its Interval Arithmetic} typically gives only $O(b-a)$ error.
}
Note that we use the identity
\[
\partial_x^3 p(x) \equiv \tfrac{1}{b-a} ( \partial_x^2 p(b) - \partial_x^2 p(a) )
\]
to evaluate \(\partial_x^3 p(x)\) when \(p\) is a cubic polynomial on \( [a,b] \subset [\mmx_i, \mmx_{i+1}] \) (so that \(\partial_x^2 p\) is linear).

If $f = gh$ for  functions $g,h$ in the classes discussed above, using the Leibniz rule 
\beq\label{eq:Leibiz}
 \pa_x^k f = \pa_x^k (g h) = \tts{ \sum\nolimits_{0\leq i\leq k} }\tts{ \binom{k}{i}  }\pa_x^i g \pa_x^{k-i} h,
\eeq
the piecewise bounds for $g,h$, we obtain the piecewise bounds for $\pa_x^k f$. Moreover, using the formulas for $g,h$ and Leibniz rule,  we obtain the formulas for $\pa_x^i f, i\leq k$. Then we apply the above method to obtain tight bounds for $\pa_x^i f, i\leq k-1$.
The case $f  = g/h$ is estimated similarly.

\subsubsection{Far-field estimates of profiles}\label{app:profile_far_field_bound}

The previous methods apply to derive piecewise bounds on bounded domain. For large $x \geq \xed$, we obtain $\bwp = 0$ and $\bw = \bwf$ \eqref{eq:W_rep}. 
We decompose $\bwf$  
\beq\label{def:bwfa}
\bwf(x) = \chi_1(x) x^{-1/3} \bwfa,
\quad \bwfa \teq  \cca + \ccb | \log |x| |^{-1/3}  + \ccc | \log |x| |^{- 2/3 } . 
\eeq
where $\mf{A}$ is short for asymptotics. Below, we derive the bound $ |\pa_x^k  \bwf| \leq C_k x^{-1/3 -k}$ for $k \geq 0$ and $x > 10^{10}$
by estimating $\pa_x^k  (x^p (\log x)^q)$ for $x^{-1/3} \bwfa$ and $\pa_x^k \chi_1(x)$.

\vs{0.05in}
\paragraph{\bf Far-field bounds for $y^p (\log y)^q$}

We fix $p$ and $q \leq 0$. We first derive the expansion recursively
\[
 \pa_x^i (y^p (\log y)^q) =  y^{p-i} ( \tts{\sum}_{ 0 \leq j\leq i} C_{i, j} (\log y)^{q- j} ).
\]

For $i=0$, we obtain $C_{0,0} = 1$ and $C_{0, j} = 0, j \geq 1$. Suppose the we have the above expansion for $i\geq 0$. Taking $\pa_x$, we obtain
\[
\pa_x^{i+1} (y^p \log y^q)
= y^{p-i-1} (p-i) \cdot 
(\tts{\sum}_{ 0 \leq j\leq i} C_{i, j} (\log y)^{q- j} )
+ y^{p-i-1} ( \sum\nolimits_{ 0 \leq j\leq i} C_{i, j} (q - j) (\log y)^{q- j - 1}  ) .
\]

Thus, we derive the coefficients 
\bseq\label{eq:bound_asym_tail}
\beq
C_{i+1,j}=\one_{j\leq i} (p-i) \cdot C_{i, j} + \one_{j\geq 1} (q-(j-1) )  \cdot C_{i, j-1} , \quad  \forall  \ 0\leq j \leq i+1,
\eeq
and $ C_{i+1, j} = 0, \ \forall j > i+1$. Using the above coefficients and $q\leq 0$, for any $y \geq L \geq e$, we bound 
\beq
 |\pa_x^i (y^p (\log y)^q)| \leq y^{p-i} \cdot \tts{\sum}_{0\leq j \leq i} |C_{i, j}|  (\log L)^{q - j} \teq C_{\log , p, q, i} y^{p-i}.
\eeq
\eseq

\paragraph{\bf Far-field bounds for cutoff}

Recall from \eqref{eq:W_rep}:
\beq\label{def:chi_0}
\chi_1(z) = \chi_0( \f{z}{z_0}  - 1),
 \quad 
 \chi_0(z) \teq \f{z^5}{1 + z^5} \one_{z > 0} = ( 1- \f{1}{1 + z^5}) \one_{z > 0}.
\eeq
To estimate $\pa_z^k \chi_0(z)$ for large $z \geq 10$, we first derive the formula for $p_k$ in
\[
\pa_z \chi_0 =  \f{5 z^4}{ (1 + z^5)^2}, \quad   \pa_z^k \chi_0(z) = \f{p_k(z)}{ (1 + z^5)^{k+1}} ,
\quad p_1(z) = 5 z^4,
\]
recursively for $k\geq 1$ and $z > 0$. Suppose that $p_k$ is given for $k \geq 1$. Taking $\pa_z$, we obtain
\[
 \pa_z^{k+1} \chi_0(z) = \f{ (1 + z^5)\pa_z p_k(z) - 5 (k+1) z^4 p_k(z) }{ (1 +z^5)^{k+2}},
 \ \Rightarrow  \ p_{k+1}(z) =  (1 + z^5)\pa_z p_k(z) - 5 (k+1) z^4 p_k(z) . 
\]

In particular, we obtain $\deg p_{k+1} = \deg p_k + 4$ for $k\geq 1$ and thus $\deg p_k = 4 k$. 
By extracting the coefficients of $p_k$ using symbolic derivations, for $z\geq L \geq 10, k \geq 1$, we obtain 
the explicit bounds
\[
|p_k(z)| = |\sum\nolimits_{ 0\leq i \leq 4k} a_{4k, i} z^i |
\leq z^{4 k} \sum \nolimits_{ 0\leq i \leq 4k} |a_{4k, i}| L^{i-4k}
\teq z^{4k} C_{\chi_0, k, L},  \quad
\Rightarrow \  |\pa_z^k \chi_0|\leq C_{\chi_0, k, L} z^{-5-k} .
\]

Using the relation \eqref{def:chi_0} and chain rule, for $z \geq L > 11 z_0$, 
we get $z / z_0 - 1 \geq L /z_0 - 1  \geq 10$ and 
\begin{align}
 | \pa_z^k \chi_1( z) | & = z_0^{-k} |(\pa_z^k \chi_0 )( \f{z-z_0} {z_0} )| 
 \leq z_0^{-k} C_{\chi_0, k, \f{L}{z_0} -1 } ( \f{z-z_0}{z_0})^{-k-5} 
  \leq z_0^{-k} C_{\chi_0, k, \f{L}{z_0} -1 }  (   \tf{L-z_0}{L z_0} )^{-k-5} z^{-k-5} \notag \\
  & \teq C_{ \eqref{eq:bound_chi_tail},  \chi_0, k, z_0, L}  z^{-k-5},
\label{eq:bound_chi_tail} 
\end{align}
where we have used $\f{ z -z_0}{z_0}  \geq  z \f{L-z_0}{L z_0}$ in the last inequality 
for $z \geq L > 11 z_0$.

\vs{0.1in}
\paragraph{\bf Far-field bounds for $\bwf$}

For $x \geq \xed$, since $\bw = \bwf = \chi_1(x)  x^{-1/3}( \cca + \ccb (\log x)^{-1/3} + (\log x )^{-2/3} )$ 
and since $\chi_1 \leq 1$, for any $x \geq \xed \geq L \geq 11 z_0$, applying \eqref{eq:bound_asym_tail} 
with $p = -\f13, q= 0, -\f13, -\f23$, estimate \eqref{eq:bound_chi_tail}, triangle inequality, and  Leibniz rule, we derive 
\beq
   |\pa_x^k \bwf(x) | \leq C_{ \eqref{eq:deri_WF_far}, z_0,  k,  L }  x^{-1/3 -k}.
   \label{eq:deri_WF_far}
\eeq
for some explicit constant $ C_{ \eqref{eq:deri_WF_far}, z_0,  k,  L }$.

When $k=0$, from \eqref{def:bwfa}, since $\cca = -6< 0, \ccb , \ccc > 0$, $\bwfa$ is decreasing in $x$.
Thus, for $x \geq L> 10$, we get 
\beq\label{eq:bwfa_far}
\bwfa  < -6 + |\ccb| + |\ccc| < -3 < 0, \  \Rightarrow  
 \ \bwfa(x)  \ \in [ \cca,  \bwfa(L)] \subset [-6, -3] .
\eeq

 From definition of $\chi_1$ in \eqref{eq:W_rep},  $\chi_1(x) \in [0,1]$ is increasing in $x$.  Thus, for any $x \geq L$, we get 
\begin{align}\label{eq:deri_WF_far_k0}
   \chi_1(x) & \in [ \chi_1(L), 1],   \\
 \bwf(x) x^{1/3} & = \chi_1 \bwfa(x) \in [ \chi_1(L), 1] \bullet [ \cca, \bwfa(L)], 
 \ 
 | \bwf(x) | \leq x^{-1/3} \max( |\cca|, |\bwfa(L)| ) \notag
\end{align}
where we use $[ a, b] \bullet [ c, d]$ to denote 
the enclosure for $ x y, x \in [a,b], y \in [c, d]$.

\subsection{Estimates and expansion of the kernels}

In this subsection,
we expand and estimate the kernels for later use in computing the nonlocal terms. Recall the kernels $\cK_{\alb,i}$ from \eqref{eq:ker}.
\beq\label{eq:ker_recall_app}
\bal
  K_{\alb, 1}(x, y) & = |x + y|^{\alb} - |x-y|^{\alb} - 2 \alb x y^{\alb - 1}  ,  \\
  K_{\alb, 2}(x, y) & = \alb |x+y|^{\alb-1} - \alb \cdot  \sgn(x-y) |x-y|^{\alb-1} - 2 \alb y^{\alb-1}  .
\eal
\eeq

\subsubsection{Taylor expansion}\label{app:kernel_TL}
We consider $x,y \ge 0$.  When $x $ and $y$ are comparable, 
we evaluate the kernel $K_{\alb, i}$ using \eqref{eq:ker_recall_app}. When $x \ll y$ or $ y \ll x$, however, a direct evaluation of the kernel $K_{\alb, i}$ in \eqref{eq:ker_recall_app} may suffer from significant round-off error, 
In these two regimes, we expand the kernels to reduce the round off error.

We fix a parameter $r \geq 2$ denoting the ratio between $x, y$. To simplify the notation, we introduce 
\beq\label{def:factor}
   \AAF(a, i) \teq \tts{ \prod\nolimits_{0\leq j \leq i-1} } (a-j), \quad \mf{A}(a,0) = 1.
\eeq

\paragraph{ \bf Case 1: $y > r x,r>2$}

We denote $s = \f{x}{y}$. By assumption, we get $s < r^{-1} < 2^{-1}$. Using the scaling symmetry, we obtain
\[
K_{\alb, 1}(x, y) = y^{\alb} ( (s+1)^{\alb} - (1- s)^{\alb} - 2 \alb s)
= y^{\alb}( g_1(s) - g_1^{\pr}(0) s) , \quad g_1(s) \teq (s+1)^{\alb} - (1- s)^{\alb} .
\]

Clearly,  $g_1$ is odd in $s$ and  $\pa^{2i} g_1(0)=0$ for any $i\geq0$. Thus, for any $k\geq 3$, using Taylor expansion, we obtain
\[
\bal
   g_1(s) - g_1^{\pr}(0) s & = \sum\nolimits_{ 3 \leq  i \leq k, \ i \mw{ \ is \  odd} } \, \f{1}{i!} \pa^i g_1(0)  s^i
    +  \f{1}{(k+1)!} \pa^{k+1} g_1( \xi )    ( s^{k+1} - (-s)^{k+1} ) . \\ 
  \eal
\]

Using notation \eqref{def:factor}, $|s| \leq r^{-1}$, 
and $\pa^j g_1(s) = \AAF(\alb, j) ( (1 + s)^{\alb-j} - (-1)^j (1-s)^{\alb-j} ) $, we obtain
\[
   g_1(s) - g_1^{\pr}(0) s = \sum\nolimits_{ 3 \leq  i \leq k, \ i \mw{ \  odd} }
   \f{2 \AAF(\alb, i)}{ i!} s^i + 
      g_{1, k, \mw{err}},  \
   |g_{1, k, \mw{err}}| 
   \leq \f{2  \mf{A(\alb, k+1)} }{ (k+1)!} s^{k+1} (1 - \f1r )^{\alb-k-1}.
  \]
We treat the first term on the right hand side as the main term, and $g_{1, k, \mw{err}}$ as the error.  
When $x \ll y$, the leading order term in $\cK_{\alb, 1 }(x, y) = y^{\alb} ( g_1(s) - g_1^{\pr}(0) s)$ is given by $C \f{x^3}{y^3} \cdot y^{\alb}$. Due to the round-off error, a direct evaluation of $\cK_{\alb,1}(x, y)$ cannot capture the small factor $\f{x^3}{y^3} $. It is crucial to use the above expansion.

Similarly, for $y > r x > 2 x$, we denote $s = \f{x}{y}$ and rewrite $\cK_{\alb, 2}(x,y)$ in \eqref{eq:ker_recall_app} as 
\[
 \cK_{\alb, 2} (x, y) = \alb y^{\alb-1} ( (1 + s)^{\alb-1} + (1-s)^{\alb- 1} - 2 )
 =  y^{\alb-1} \cdot \alb ( g_2(s) - g_2(0) ), \quad g_2(s) \teq (1 + s)^{\alb-1} + (1-s)^{\alb-1} .
 \]

Since $g_2(s)$ is even in $s$ for $|s| < r^{-1} < \tf12$, using Taylor expansion and
$ \pa^i g_2(s) = \AAF(\alb-1, i) ( (1 + s)^{\alb-1 -i} +  (-1)^i (1-s)^{\alb-1 - i} )$, 
for any $k\geq 2$, we obtain
\[
 g_2(s) - g_2(0)  = \sum\nolim_{ 2\leq i \leq k, \ i \mw{\  even} } \f{ 2 \AAF(\alb-1, i)}{ i!} 
 s^i  + g_{2, k, \mw{err}}, 
 \quad  |g_{2, k, \mw{err}}|
  \leq \f{ 2 \AAF(\alb-1, k+1)}{ (k+1) ! } s^{k+1} (1 - \f1r  )^{ \alb-2-k}.
\]

\vs{0.05in}
\paragraph{\bf Case 2: $x > r y, r>2$}

The cancellation in the kernels \eqref{eq:ker_recall_app} comes from the first two terms. 
Since $x > 2 y$, using the kernel $\cK_{\alb,i ,J}$ in \eqref{eq:ker:c}
and introducing $z = \f{y}{x} < r^{-1}$, we rewrite
\begin{align}\label{eq:deri_K_J_12_0}
\cK_{\alb, 1}(x, y) &= \cK_{\alb,1, J}(x, y) - 2 \alb x y^{\alb-1}, 
\quad \cK_{\alb, 1, J}(x, y) = x^{\alb} ( (1 + z)^{\alb} - (1-z)^{\alb} ) \teq x^{\alb} f_1(z), \\
 \cK_{\alb, 2}(x, y) &= \cK_{\alb,2, J}(x, y) - 2 \alb  y^{\alb-1}, 
 \quad \cK_{\alb, 2, J}(x, y) =  x^{\alb-1} \cdot \alb ( (1 + z)^{\alb-1}  - (1 - z)^{\alb-1} )
=  x^{\alb-1} \cdot  f_2(z). \notag
\end{align}

Both $f_1(z)$ and $f_2(z)$ are odd functions in $z, |z| < r^{-1}\leq \tf12$,
and we have $\pa^{2j} f_i(0) = 0$. Thus, for $k \geq 2$, using Taylor expansion for $f_i$ around $z=0$, we expand
\[
\bal
f_1(z) &= \sum\nolimits_{ 1 \leq i \leq k ,\  i \mw{ \ odd} }  \f{ 2 \AAF(\alb, i)}{ i!} z^i
+ f_{1, k, \mw{err}} , && | f_{1, k, \mw{err}} |  \leq \f{2 \AAF(\alb, k+1)}{ (k+1)!} z^{k+1}
(1 - r^{-1})^{\alb - k-1}  , \\
f_2(z) & = \sum \nolimits_{ 1 \leq i \leq k ,\  i \mw{ \  odd}}  \f{ 2\alb \AAF(\alb-1, i)}{ i!} z^i 
+ f_{2 , k, \mw{err}} , \quad 
  && | f_{2, k, \mw{err}} |   \leq \f{2 \alb \AAF(\alb-1, k+1)}{ (k+1)!} z^{k+1}
(1 - r^{-1})^{\alb - k- 2}  .
\eal
\]

We choose $k \leq 40$ relatively large, and treat $f_{i, k, \mw{err}}$ as the error parts. When $x > r y$, we use the above expansion to evaluate $K_{\alb, i, J}$ and then 
subtract the terms $ 2 \alb x y^{\alb-1}$ or $2 \alb y^{\alb-1}$ to evaluate 
$K_{\alb,i}$ with rigorous error control. 

\subsubsection{Derivative bounds} \textbf{ Case 1 $y > r x, r\geq 2$}.
Denote $z = \f{y}{x}$. In this case, we obtain $z \geq 2$ and
\[
\bal
K_{\alb, 1}(x, y) &= K_{ \alb, 1, J}(x, y) = 
x^{\alb  }  K_{\alb, i, J}(1, z) ,  && \pa_y^k K_{\alb, 1}(x, y)  = x^{\alb  - k} \pa_z^k  K_{\alb, 1, J}(1, z)  , \\
 K_{\alb, 2}(x, y) & =
 K_{\alb, 2, J}(x, y) 
= x^{\alb  -1 }  K_{\alb, 2, J}(1, z)  ,   
&& \pa_y^k K_{\alb, 2}(x, y)  = x^{\alb -1 - k} \pa_z^k  K_{\alb, 2, J}(1, z) .
\eal
\]

Applying the expansion of $K_{\alb, i, J}$ in \eqref{eq:K_intform} and using $z \geq r \geq 2$, 
$z-1 \geq ( 1- r^{-1} )z$, we bound 
\[
\bal
  | \pa_z^ k K_{\alb, 1, J}(1, z) | 
 & \leq 
 \B| \alb (\alb-1) (\alb - 2) \int_0^1  ( \pa_z^k (z+s)^{\alb-3} + \pa_z^k (z-s)^{\alb-3} )  \cdot \f{ (1-s)^2 }{2 } d s \B|  \\
  &  \leq \tf13 | \AAF(\alb-3 , k)  \AAF(\alb, 3)  |   (z-1)^{\alb-3- k} 
\leq \tf13 | \AAF(\alb-3 , k)  \AAF(\alb, 3) |   ( 1 - r^{-1}  )^{\alb-3- k} z^{\alb-3-k} 
 , \\
\eal
\]
where we have used $\AAF(\alb, 3) =\alb (\alb-1)(\alb-2)$ and  $\int_0^1 \f{(1-s)^2}{2} d s =\tf16$.
Applying similar estimates to $K_{\alb, 2, J}(1, z)$ in \eqref{eq:K_intform}
and using $\int_0^1 ( 1-s) ds = \f12$ instead of $\int_0^1 \f{(1-s)^2}{2} d s =\tf16$, we get
\[
 |\pa_z^k K_{\alb, 2, J}(1, z)  | \leq | \AAF(\alb-3 , k) \AAF(\alb, 3)  |  ( 1 - r^{-1}  )^{\alb-3- k} z^{\alb-3-k} .
\]
Thus, combining the above scaling relation and estimates, and using $z = \f{y}{x}$, and rescaling $K_{\alb, \D}(x, y) = K_{\alb, 2}(x, y) - \tf{1}{x} K_{\alb, 1}(x, y)$, for $y > r x \geq 2x$, we prove 
\beq
\bal
 |  \pa_y^k K_{\alb, 1}(x, y) | 
 & =  |  \pa_y^k K_{\alb, 1,J}(x, y) | 
  \leq 
  | \tf13 \AAF(\alb-3 , k)  \AAF(\alb, 3) | ( 1 - r^{-1} )^{\alb-3- k} x^{\alb-k} z^{\alb-3-k} \\
 & \teq C_{ \eqref{eq:deri_K_12}, 1, r ,k}   x^{\alb-k} z^{\alb-3-k} =  C_{ \eqref{eq:deri_K_12}, 1, r ,k}  
 x^3 y^{\alb-3-k}
, \\
 |  \pa_y^k K_{\alb, 2}(x, y) | 
  & =  |  \pa_y^k K_{\alb, 2, J }(x, y) | 
 \leq 
 | \AAF(\alb-3 , k) \AAF(\alb, 3) | ( 1 - r^{-1} )^{\alb-3- k} x^{\alb-1-k} z^{\alb-3-k} \\
 &  \teq C_{ \eqref{eq:deri_K_12}, 2, r ,k} x^{\alb-1-k} z^{\alb-3-k} 
= C_{ \eqref{eq:deri_K_12}, 2, r ,k} x^2 y^{\alb-3-k} , \\
 |\pa_y^k K_{\alb, \D}(x, y) | & \leq  ( C_{ \eqref{eq:deri_K_12}, 1, r ,k} + 
 C_{ \eqref{eq:deri_K_12}, 2, r ,k} ) x^2 y^{\alb-3-k} \teq C_{ \eqref{eq:deri_K_12}, \D , r, k } x^2 y^{\alb-3-k}.
\eal
\label{eq:deri_K_12}
\eeq
Since $\alb = \tf13$ is fixed, we suppress this dependence.

\paragraph{\bf Case 2: $ r y < x, r > 2$}

In this case, we obtain $z = \f{y}{x} < r^{-1} \leq \tf12$, we estimate the kernel $K_{\alb, i, J}$.
Using the formula \eqref{eq:deri_K_J_12_0}, the chain rule, we prove 
\bseq\label{eq:deri_K_J_12}
\begin{align}
 |\pa_y^k K_{\alb, 1, J}(x, y)|
 & = x^{\alb-k} |\pa_z^k  ( (1 + z)^{\alb} - (1-z)^{\alb} )|
 \leq 2 | \AAF(\alb, k) | x^{\alb - k}   (1 - \tf1r )^{\alb- k} , \label{eq:deri_K_J_12:a} \\
  |\pa_y^k K_{\alb, 2, J}(x, y)|
  & =  \alb x^{\alb-1 - k} | \pa_z^k ( (1 + z)^{\alb-1}  - (1 - z)^{\alb-1} ) | \notag \\
 &  \leq 2 \alb | \AAF( \alb - 1, k ) | 
x^{\alb - 1- k}  (1 - r^{-1})^{\alb-k-1}   \label{eq:deri_K_J_12:b} .
\end{align} 
\eseq

When $k=0$, using mean value theorem, triangle inequality, and $z = \f{y}{x} < r^{-1}$, we get 
\beq
\bal
 | K_{\alb, 1, J}(x, y)| & \leq 2\alb x^{\alb}  z (1 - r^{-1})^{\alb-1}
 \teq C_{\eqref{eq:deri_K_J:y<x},1,r} x^{\alb } z,  \\
   | K_{\alb, 2, J}(x, y)| & \leq 2 \alb|\alb-1| x^{\alb-1} z (1 - r^{-1})^{\alb-2}
  \teq  C_{\eqref{eq:deri_K_J:y<x},2,r} x^{\alb -1} z, \\
   | K_{\alb, \D}(1, z)|  & \leq  | K_{\alb, 1, J}(1, z)|
   + | K_{\alb, 2, J}(2, z)|
   \leq ( C_{\eqref{eq:deri_K_J:y<x},1,r} + C_{\eqref{eq:deri_K_J:y<x},2,r} ) z
   \teq  C_{\eqref{eq:deri_K_J:y<x},\D,r} z.
 \eal 
 \label{eq:deri_K_J:y<x}
\eeq

\subsection{Basic integral estimates and sharp constants in Lemma \ref{lem:vel_est}}
\label{app:basic_int_est}

In this subsection, we discuss  basic integrals estimates 
and use them to derive the sharp constants in Lemma \ref{lem:vel_est}. 

\vs{-0.1in}
\subsubsection*{\bf Method B.1: Non-singular integrals}\label{int:Method1}

For $g \in C^{\g}[a, b]$ with some $\g > 0$, 
to estimate $\int_a^b g(y) d y$, we subdivide $[a, b]$ as $a = y_1 < y_2 <.. <y_n = b$ using suitable mesh $y_i$. Then we bound 
\beq\label{eq:basic_int_est:ul}
  \tts{\int}_a^b g(y) d y = \tts{\sum_{ 1 \leq i\leq n-1 }} \int_{y_i}^{y_{i+1}} 
  g(y) d y \in \tts{\sum_{ 1 \leq i\leq n-1 }}  g( [y_i, y_{i+1}]) (y_{i+1} - y_i),
\eeq
where $g( [y_i, y_{i+1}])$ is an enclosure of the range of $g$ on $[y_i, y_{i+1}]$. We also use the trivial bound 
\[
  \tts{\int}_a^b| g(y)| d y = \tts{\sum_{ 1 \leq i\leq n-1 }} \int_{y_i}^{y_{i+1}} 
  |g(y) | d y \leq  \tts{\sum_{ 1 \leq i\leq n-1 }} \| g\|_{ L^{\infty}[y_i, y_{i+1}] }  (y_{i+1} - y_i)
\]

\vs{-0.1in}
\subsubsection*{\bf Method B.2: Integrals near $0, \infty$}\label{int:Method2}
For $y \gg 1$ or $0\leq y \ll 1$, we derive the asymptotics of the integrand 
by $g(y) = y^{-k} |\log y|^q$ for $k> 1.1$.
For $y_a > 1$, we choose $y_b \gg 1$ and decompose 
\beq\label{eq:int_tail1}
 \tts{\int}_{y_a}^{\infty}  g(y) d y
  = \tts{ \int}_{y_a}^{y_b}  g(y) d y + \tts{\int}_{y_b}^{\infty} g(y) d y \teq I + II.
\eeq

We estimate $I$ using \hyr[int:Method1]{\itshape Method B.1} by subdividing the bounded interval $[y_a, y_b]$ for 
the form in \eqref{eq:int_tail1}. 

If $q \leq 0$ and $y_b > e$, we bound $ |II |\leq \int_{y_b}^{\infty} y^{-k} = \f{1}{k-1} y_b^{-k+1}  $. 

If $q > 0$,  we have the following useful inequality 
\beq\label{eq:poly_log_mono}
y^{-a} (\log y)^q \leq  y^{-a} (\log y)^q |_{ y = \exp( \f{q}{a}) }
= e^{-q} ( \tf{q}{a})^q , \  \forall \  y>  1,
\quad  \pa_y (y^{-a} (\log y)^q) < 0, \quad y > \max( \exp( \tf{q}{a} ), 1),
\eeq
for any $a > 0, q \geq 0$, which is proved easily by taking derivatives and finding the critical points
$y_* = \exp(q / a )$. 
For $y_b >1$, we use this inequality with $a=0.1$ to bound 
{\small
\[
II \leq  e^{-q}  (10 q)^q \int_{y_b}^{\infty} y^{-k + 0.1 } d y
\leq   e^{-q}  (10 q)^q  \cdot  \f{1}{k-1.1} \cdot y_b^{- (k - 1.1)}. 
\] 
}

For the integral near $y \in [0, y_b]$ with $y_b \ll 1$, we derive the asymptotics of the integrand 
by  $h(y)= y^k |\log y|^q$ for some $k>0$. Now, using a change of variable $ y \to 1/y$, we get 
\[
 \tts{\int_0^{m} } y^k |\log y|^q d y = \tts{ \int_{  m^{-1} }^{\infty} }  \, y^{-k-2} |\log y|^q d y.
\]
Since $m^{-1} > 1$, we apply the above method for the integral in the region $y \gg 1$.

\vs{-0.1in}

\subsubsection*{\bf Method B.3: Kernel with a sign}\label{int:Method3}
Given a singular kernel $k(y) = \pa_y K(y)$ and a non-singular weight $g(y)$ 
with an explicit formula, if $k(y)$ has a fix sign on $[a, b]$, we estimate the integral
{\small
\[
  \int_a^b k(y) g(y) d y
  \, \in g([a,b])  \cdot  \int_a^b k(y) d y
  = g([a, b])  \cdot ( K(b) - K(a) ),
\]
}
where $g([a, b])$ is an enclosure of the range of  $g$ on $ [a, b]$. Taking absolute value, we obtain
\[
  \tts{\int}_a^b | k(y) g(y)| d y
\leq \| g \|_{L^{\infty}[a,b]}  \cdot | K(b) - K(a) | ,
\]

From \eqref{eq:K_intform} 
\eqref{eq:KJ_sign} and \eqref{eq:ker}, for $z> 1$, we have a fix sign 
\bseq\label{eq:ker_sign}
\beq\label{eq:ker_sign:a}
K_{\alb, i}(z)  =  K_{\alb, i, J}(z) > 0,  \quad 
  K_{\alb, \D}(1, z) = K_{\alb, 2, J}(1, z) - K_{\alb, 1, J}(1, z) > 0,  \quad \forall \, z > 1.
\eeq

For $z \in [0,1 )$, we have 
\beq\label{eq:ker_sign:b}
\bal
  K_{\alb, 1, J}(1, z) 
 & = (1 + z)^{\alb} - (1 - z)^{\alb} \geq 0, 
\quad  K_{\alb,2, J}(1, z) = \alb ( ( 1 + z)^{\alb-1} -  (1 - z)^{\alb-1} )   \leq 0 ,  \\
 K_{\alb, \D}(1, z) &= K_{\alb, 2, J}(1, z) - K_{\alb, 1, J}(1, z) \leq  0 -0 = 0 ,  \\
K_{\alb, 2}(1, z) & =  \alb ( ( 1 + z)^{\alb-1} -  (1 - z)^{\alb-1}  - 2 z^{\alb-1} )  \leq 0 .
\eal 
\eeq
\eseq
By obtaining anti-derivative for $k(z): K(z)= \int k(z) $, we apply the above methods to these  kernels.

\vs{-0.1in}
\subsubsection*{\bf Method B.4. Nonlocal error}\label{int:Method4}
Consider $f = f_P + f_A$ with $f_P$ being a piecewise polynomial in the form \eqref{eq:Bspline_rep}
and $f_A$ taking the form
\beq\label{eq:nloc_err_fA}
f_A(x) = h_A(x), \mw{ \ or \ }  f_A(x) = \one_{ [ y_1, y_2]}(x) h_A(x),
 \quad \mw{for \ }
 h_A(x) = c_1 \bwf(x) + c_2 \bwf(x)  \cdot \chi_0  ( p x + q) ,
\eeq
with some $p>0, q\in \R, c_1, c_2 \in \R$,
where $\chi_0$ is defined in \eqref{def:chi_0}. We can estimate the piecewise derivative bounds 
for $f$ using the methods in Appendix \ref{app:piece_bound}.
For $x \geq \xed$, $f_P$ vanishes, and we bound $f = f_A$ using methods in Appendix \ref{app:profile_far_field_bound}. 
For $f_A =  \one_{ [ y_1, y_2]}(x) h_A(x)$, we apply the methods in Appendix \ref{app:piece_bound} to estimate \(\partial_x^i f(x)\) for $x \in [y_1, y_2]$. 
We choose the powers $\bb_{f,i} \in \R$ and coefficients $\mu_{f,i}$ with $1\leq i \leq n_f$,  
and then bound the weighted $L^{\infty}$ norm
\beq\label{eq:intro_wg_nonlocal_err}
  \nlinf{ \vp_f f }, \quad \vp_f(x) \teq \max\nolimits_{ i \leq n_f} \mu_{f, i}^{-1}   |x|^{ \bb_{f, i}}.
\eeq

We derive sharp constants in Lemma \ref{lem:vel_est} using the partition 
$\cI$ in \eqref{def:interval} and methods in Appendix \ref{app:sharp_constant}. Then we apply Lemma \ref{lem:vel_est} to bound the nonlocal errors 
associated with $\cK_{\bullet }$ \eqref{eq:ker}, e.g.
\[ 
 |\psio_x( f) + 2 \alb \cJ(f)| \leq C_{\psio_x, J}( \mu_f, \bb_f, \cI, x) \nlinf{ f \vp_f }.
\]

Since $C_{\bullet}( \mu_f, \bb_f, \cI, x)$ in \eqref{eq:vel_const} are globally defined  explcit functions, we bound the error at any point $x$ 
and any bounded interval $[x_l, x_u]$ using \hyr[bd:intval]{\its Interval Arithmetic} to 
enclose the range $f([x_l, x_u])$. 

We also obtain tight bound for $\cJ(f)(x)$ \eqref{eq:Jw} following
\hyr[bd:Method3]{\its Method B.3}
\[
   |\tts{\int_0^x} y^{\alb-1} f(y) dy| \leq \sum_{ 0\leq i\leq n } \alb^{-1} |y_{i+1}^{\alb} - y_{i}^{\alb}|\cdot  |f([y_i, y_{i+1}])|, \quad 0 = y_0 < ..< y_n =x.
\]

From estimates \eqref{eq:vel_est:b}, for $\tf{1}{x}\psio( f) + 2 \alb \cJ(f), \psio_x( f) + 2 \alb \cJ(f)$ and $\tf{1}{x}\psio( f)  -  \psio_x( f)$, we can choose the power with $\max b_i \in (\alb, 1)$ to obtain faster decay estimates in $x$.

\vs{-0.05in}
\subsubsection*{\bf Method B.5. Nonlocal error for compact support source}\label{int:Method5}
Suppose that $r \geq 2$ and $x \geq r y$ for any $y \in \supp(f) \cap \R_+$. Using 
\eqref{eq:deri_K_J:y<x} and the definitions of $\cK_{\alb,i, J}$ \eqref{eq:ker}, we obtain
\beq
\bal
 |\cK_{\alb, i, J}(f)(x)|
 & =   | \tts{\int}_0^{\infty}  K_{\alb, i, J}(x, y) f(y)  d y |
 \leq C_{\eqref{eq:deri_K_J:y<x}, i , r} x^{\alb+ 1-i} 
  \tts{\int}_0^{\infty} |  \f{y}{x} f(y) | d y  \\
 & \leq C_{\eqref{eq:deri_K_J:y<x}, i , r} x^{\alb-i}  \| y f(y) \|_{L^1(\R_+)}
 \label{eq:K_L1_est}
 \eal
\eeq
for $i=1,2$. Compared to Lemma \ref{lem:vel_est}, we use the weaker $L^1$ norm for $w$. When $w$ has a compact support, the above estimates allow us to obtain much faster decay estimates in $x$. Using 
the method in Appendix \ref{app:piece_bound}, we obtain the piecewise bounds 
for $f(y)$ and $y f(y)$ and derive the $L^1$ norm.

\subsubsection{Sharp constants in Lemma \ref{lem:vel_est} }\label{app:sharp_constant}

Recall the constants $C_{\bullet}(b, I)$ from \eqref{eq:vel_const} 
\beq\label{eq:non_C_recall}
\mCC_{\bullet}(b, I) = \tts{\int}_I |k(1, z)| \cdot |z|^{-b} d z 
\eeq
for $k$ being $K_{\alb, i, J}, K_{\alb, i}, K_{\alb, \D}$ in \eqref{eq:ker_sign}. 
We choose the intervals $I \in \cI$ in \eqref{def:interval}.
From \eqref{def:interval}, the intervals satisfy $I \subset [1, \infty)$ or $I \subset [0,1]$. 

For $I \subset [1,\infty]$,  if $I$ is bounded, since all five kernels $k(z)$ have a fixed sign by \eqref{eq:ker_sign:a}, we bound $\mCC_{\bullet}(b, I)$ using 
\hyr[int:Method3]{\its Method B.3}. For $I = [y_b, \infty)$ with $y_b \gg 1,y_b>2$, using estimates \eqref{eq:deri_K_12} with $k=0$, we bound 
\[
 \tts{\int}_{y_b}^{\infty} |k(1, z)| z^{-b} d z
  \leq C_{ \eqref{eq:deri_K_12}, \bullet, y_b, 0 } \int_{y_b}^{\infty} z^{\alb - 3 - b} d z
  = C_{ \eqref{eq:deri_K_12}, \bullet, y_b, 0 }  \cdot \tf{1}{ b + 2 -\alb} y_b^{\alb-2-b}.
\]
for $C_{ \eqref{eq:deri_K_12}, \bullet, y_b, 0 }$ being the constants corresponding to kernel $k(1,z)$ in \eqref{eq:deri_K_12}. 

For $I = [a, b] \subset [0, 1]$,
we choose a small $a_1 \in (0, \f14]$ and sub-partition 
\[I  = [a, \max( a, a_1) ] \cup  [ \max(a, a_1), b] = I_1 \cup I_2.
\]
We apply \hyr[int:Method3]{\its Method B.3} to the 
interval $I_2$ and to four kernels $K_{\alb, i, J}(1,z), K_{\alb, 2}(1, z)$, $K_{\alb, \D}$, which have a fixed sign on $I$ by \eqref{eq:ker_sign:b}. 
Since  $K_{\alb, 1}(1, z) \in C^{\alb} (I_2)$ by \eqref{eq:ker} and is non-singular, we apply 
\hyr[int:Method1]{\its Method B.1} to bound its integral in $I_2$. 
In region $I_1 = [0, a_1] $, using \eqref{eq:deri_K_J:y<x}, we bound 
\beq
\tts{\int}_0^{a_1} |k(1,z)| \cdot |z|^{-b} d z 
\leq \tts{\int}_0^{a_1} C_{ \eqref{eq:deri_K_J:y<x}, \bullet, a_1^{-1} }  |z|^{1-b} d z 
= \tf{1}{2-b}  C_{ \eqref{eq:deri_K_J:y<x}, \bullet, a_1^{-1} }   a_1^{2-b},
\label{eq:nonloc_C_est1}
\eeq
for kernel $k =K_{\alb,i, J}$ or $K_{\alb, \D}$, where $ C_{ \eqref{eq:deri_K_J:y<x}, \bullet, a_1^{-1} }$ is the corresponding constants. For $K_{\alb, i}(1, z) = K_{\alb, i, J}(1, z) - 2 \alb z^{\alb-1} $
 by \eqref{eq:ker}, using triangle inequality, for any $b \leq \alb$, we bound 
\[
 \tts{\int}_0^{a_1} |K_{\alb, i}(1, z) | \cdot |z|^{-b}
 \leq  \tts{\int}_0^{a_1} |K_{\alb, i,J}(1, z) | \cdot |z|^{-b}
 + 2 \alb \tts{\int}_0^{a_1}  z^{\alb-1-b} d z
 = II_1 + II_2, \quad II_2  = \f{2 \alb}{ \alb- b } a_1^{\alb - b}.
\]
Note that the above bound is $\infty$ when $b=\alb$ and we never use such an estimate.
The term $II_1$ is estimate by \eqref{eq:nonloc_C_est1}. 
Thus, we can derive sharp bounds for \eqref{eq:non_C_recall} rigorously.

\subsubsection{Bounding $\pa_x^3 V(f)$}\label{app:Vxxx}

To control $\psio(\bw), \psio_x(\bw)$ not just on grid points $\mmx, X$ \eqref{def:x_i}, \eqref{def:XX}, we apply \hyr[bd:interp]{\its Interpolation Estimates} and control higher order derivatives $\pa_x^3 \psio(\bw)$.  Since $|\bw| \les \ang x^{-1/3}$, 
we aim to obtain explicit constants in $|\pa_x^3 \psio(\bw) | \les \ang x^{-1/3-2}$.

For $f$ being odd, since $\pa_x^3 x = 0$, $\pa_x^2 f $ is odd, and $\psi(f)$ is a convolution \eqref{eq:1D_dyn0}, taking derivatives $\pa_x^3 V(f)$ and passing to $f$, we obtain
\beq\label{eq:Vxxx_est1}
\bal
 \pa_x^3 V(f) & = \cK(f) \teq  \tts{\int}_{\R_+} k(x, y) \pa_y^2 f(y) d y, 
 \ 
   k(x, y) = \alb ( | x+ y|^{\alb-1} -  \sgn(x-y) |x- y|^{\alb-1} )  .
 \eal
\eeq
The kernel $k(x, y)$ is the same as $K_{\alb, 2, J}$ without the $y^{\alb-1}$ part in \eqref{eq:ker:c}.

Using mean-value theorem,  for $z \geq 0$, we have the following simple properties for $k(x, y)$
\beq
\bal
k(x, y) & = \pa_y  ( |x+y|^{\alb} + |x-y|^{\alb} ),  \qquad
k(x, y) > 0,  \  \forall y > x , 
\quad k(x, y) <0 , \  \forall \ y \in [0, x) , \\
|k(x, y) | &  \leq 2 \alb |y - x|^{\alb-1} 
\leq 2 \alb   y^{\alb-1} (1 - 1 / r)^{\alb-1}, \qquad \forall \ y \geq r x  > x.
\eal
\label{eq:kernel_kz}
\eeq

We choose $x_{\mw{low}} \geq 1$ and  discuss the  estimates  for  $x \geq 0$ and an improved estimate  for small $x \leq x_{\mw{low}}$.

\vs{0.05in} 
\paragraph{\bf Estimates for $x\geq 0$.} We fix $b \in [0, \alb]$.  We decompose \eqref{eq:Vxxx_est1} as 
\beq
\cK(f) = ( \tts{\int}_{0\leq y \leq x/2 } + \tts{\int}_{y \geq x/2 } ) k(x, y) \pa_y^2 f(y) d y \teq 
I_{ \eqref{eq:Vxxx_est2},1 }  + I_{ \eqref{eq:Vxxx_est2}, 2}. 
\label{eq:Vxxx_est2}
\eeq

For $I_{ \eqref{eq:Vxxx_est2}, 2 }$,  using a change of variable $z = y/x$ and  
$k(x, y) = x^{\alb-1} k(1, z)$, we obtain
\[
 |I_{ \eqref{eq:Vxxx_est2}, 2}|
 \leq   \int_{y \geq x/2 } | k(x, y) | \cdot |y|^{-2- b} d y \cdot  \nlinf{ y^{2+ b} f_{yy}}
 =  |x|^{-2 + \alb -b} 
  \int_{z \geq 1/2 } | k(1, z) | \cdot |z|^{-2- b} d z \cdot  \nlinf{ y^{2+ b } f_{yy}}
\]

To bound the $z$-integral, we use \hyr[int:Method3]{\its Method B.3} and the sign properties of $k(1,z)$ in \eqref{eq:kernel_kz}. 
To estimate the far-field of the $z$-integral, we use \eqref{eq:kernel_kz}
and get 
\[ 
\tts{\int}_{z \geq M} |k(1,z) z^{-2- b} |  d z 
\leq 2 \alb (1 - 1/M)^{\alb-1}   \int_{z \geq M}   z^{ \alb -3- b}   d z 
\leq \f{2 \alb }{2 + b -\alb}  (1 - 1/M)^{\alb-1} M^{-2 +\alb - b}  .
\]

For $I_{ \eqref{eq:Vxxx_est2},1 }$, since the kernel is non-singular 
and $f$ is odd , we use integration by parts to obtain
\[
I_{ \eqref{eq:Vxxx_est2},1 }
= {\int}_{0\leq y \leq x/2 }  k(x, y) \pa_y^2 f(y) d y 
= k(x, \tf{x}{2} ) (\pa_y f)( \tf{x}{2} )
-  (\pa_y k)(x, \tf{x}{2}  )  f( \tf{x}{2} )
+  {\int}_{0\leq y \leq x/2 } \pa_y^2 k(x, y)  f(y) d y 
\]
where the boundary term at $y=0$ vanishes since $k(x, 0) = 0$ by \eqref{eq:kernel_kz} 
and $f(0) = 0$. 
Changing $y = x z$, using the scaling symmetry again
$(\pa_y^i k)(x, y)
= x^{ \alb -  1 -i} \pa_z^i k(1, z)$, 
and bounding $| \pa_y^i f(y)| 
\leq (x z)^{-i -b} \nlinf{ y^{i+b} \pa_y^i f} $, we bound 
\[
\bal
|I_{ \eqref{eq:Vxxx_est2},1 }|
 \leq  |x|^{\alb -b - 2} & \big(  | k(1, \tf12 ) | \cdot (\tf12 )^{-1-b}  \nlinf{ x^{b + 1} \pa_x f }
+ | \pa_z k(1, \tf12 ) | \cdot (\tf12)^{-b}  \nlinf{ x^{b }  f }  \\
& \qquad
 + \tts{ \int_0^{1/2} } | \pa_z^2 k(1, z)| z^{-b} d z  \nlinf{ x^{b }  f }  \big) .
\eal
\]
The $z$-integral is bounded by $  \sup_{z\in[0,1/2]} |\pa_z^2 k(1, z) | \cdot \f{1}{1-b} (1/2)^(1-b)$.
We use \hyr[bd:intval]{\its Interval Arithmetic}  and the formula of $k$ to bound supremum.

\vs{0.1in}
 \paragraph{\bf Improved estimate for $0\leq x \leq x_{\mw{low}}$}

 For small $x \leq  x_{\mw{low}}$, we do not need to perform decay estimates and obtain an improved estimate. We fix $y_1 >  x_{\mw{low}}$ with $y_1 \asymp 1$ and decompose 
\beq
\cK(f) = ( \tts{\int}_0^{y_1} + \tts{\int}_{ y_1 }^{\infty} ) k(x, y) \pa_y^2 f(y) d y \teq 
I_{ \eqref{eq:Vxxx_est3},1 }  + I_{ \eqref{eq:Vxxx_est3}, 2}. 
\label{eq:Vxxx_est3}
\eeq

For $ I_{ \eqref{eq:Vxxx_est3}, 2}$, since $\f{y}{x}\geq \f{y_1}{  x_{\mw{low}}} $,  we bound it using the decay \eqref{eq:kernel_kz} 
\[
|I_{ \eqref{eq:Vxxx_est3}, 2}| 
\leq \nlinf{ |y|^{2 + b} \pa_y^2 f  }
2 \alb    (1 - \f{ x_{\mw{low}}}{y_1} )^{\alb-1}
 \int_{y \geq y_1} y^{\alb-3 - b}
 = \nlinf{ y^{2 + b} \pa_y^2 f  }
   (1 - \f{ x_{\mw{low}}}{y_1} )^{\alb-1}  \f{2 \alb }{2 + b -\alb} y_1^{ -2 + \alb-b }.
\]

For $I_{ \eqref{eq:Vxxx_est3},1 }$, 
for $x \leq x_{\mw{low}} < y_1$, we use the sign 
and $k(x, y)  = \pa_y  ( |x+y|^{\alb} + |x-y|^{\alb} )$ \eqref{eq:kernel_kz} to get
\[
\bal
|I_{ \eqref{eq:Vxxx_est3},1 }|
& \leq \| f_{yy} \|_{L^{\infty} [0, y_1]} ( \int_0^{x} + \int_x^{y_1}) |k(x, y)| d y \\
& \leq  \| f_{yy} \|_{L^{\infty} [0, y_1]} \big| ( |x+y|^{\alb} +  |x-y|^{\alb} )   \big|_0^{x} \, \big|
+ \big| ( |x+y|^{\alb} +  |x-y|^{\alb} )   \big|_x^{y_1} \big| \\
& \leq  \| f_{yy} \|_{L^{\infty} [0, y_1]} \big( ( 2 - 2^{\alb} ) x_{\mw{low}}^{\alb}
+ \max( (2 x_{\mw{low}})^{\alb}, 2 (x_{\mw{low}} +y_1 )^{\alb}) \big).
\eal
\]

We optimize the above estimates for the two choices $b = 0.2, \alb$ and obtain
an upper bound for $V_{xxx}$ with explicit functions $F$
\beq
 |\pa_{xxx} V(\bw)(x)| \leq 
 \min ( F_{\eqref{eq:bound_vxxx_f}} ( x, 0.2),  |F_{\eqref{eq:bound_vxxx_f}} ( x, \alb) ) ,
 \quad  F_{\eqref{eq:bound_vxxx_f}} ( x, b)  \les \ang x^{\alb - b - 2}, \ b = 0.2, \alb
 \label{eq:bound_vxxx_f} .
\eeq

Using the above functions $F$ and applying \hyr[bd:intval]{\its Interval Arithmetic},
we obtain a rigorous bound for $|\pa_{xxx} V(\bw)(x) |$ in any interval $x \in [a, b]$ 
and decay estimates for large $x$.

\subsection{Nonlocal terms for Bspline with rigorous error bound}\label{app:nonlocal}

In this section, we present a method to compute the nonlocal terms $\cK_{\alb,i}(w)(x)$ \eqref{eq:ker}, $i=1,2$, for any $x \ge 0$ and $w$ in in the Bspline representation \eqref{eq:W_rep:WP}, \eqref{eq:Bspline_rep} or $w =\bwf$ defined in \eqref{eq:W_rep}, and derive  rigorous error bounds.

\subsubsection{Nonlocal terms for Bspline in \eqref{eq:Bspline_rep}}

By linearity and the definition of $B_i(x)$ in \eqref{eq:Bspline}, for any $w$ with the representation in \eqref{eq:Bspline_rep}, it suffices to derive 
\beq\label{eq:Bs_U_comp1}
 \cK_{\alb,i}( \mBs( \cdot, \ss))(x) = \int_0^{\infty} K_{\alb, i}(x, y ) \mBs(y ; \ss) d y,
 \quad   \  \ss = \{ \mmx_{i+j} \}_{-2\leq j\leq 2}, 
\eeq
for any $i=1,2.., \nnx$. We fix $i_* \in \{ 1,2,..,\nnx\}$ and $\ss  = \{ \mmx_{i_* +j} \}_{-2\leq j\leq 2}$  for the Bspline $\mBs(y; s)$. We \emph{only} evaluate the above integral for \emph{finite} many points $x$.

Let $\mmx_2$ be the mesh point in \eqref{def:x_i}.
We estimate the integral in three cases of $x \geq 0 $ differently 
\bseq\label{eq:Bs_U_comp2:case}
\beq
\mathbf{(1)}  \ \max( x, \mmx_2 ) \mmf < \max(0, s_1) , \quad \mathbf{(3)} \ x > \mmf s_5 ,
\quad  \mathbf{(2)}  \ \mw{other \ cases},  \quad \mmf = 4, 
\eeq
Note that $D = \supp(\mBs(\cdot ; \ss)) \cap \R_+ \subset [\max(0,s_1), s_5]$ by \eqref{eq:Bspline_prop1}. 
For any $y \in D$, in case (2), $y$ is comparable to $x$.
Since $\ss = \{ \mmx_{i_* +j} \}_{-2\leq j\leq 2} $, for $y \in D$, we obtain
\beq\label{eq:Bs_U_comp2:supp}
\bga
\mathbf{(1)}  \  \mmf \max( x, x_2 )  < \max(0, s_1) \leq y , \quad \mathbf{(3)} \ x > \mmf s_5 
\geq \mmf y , \\ 
\forall \, y \in \supp(\mBs(\cdot ; \ss)) \cap \R_+ \subset [\max(0,s_1), s_5]  = [\max(0, \mmx_{i_*-2} ), \mmx_{i_*+2} ].
\ega
\eeq
\eseq

\subsubsection{Case (1)}\label{app:nonlocal_x_leq_y}

In this case, we get $\mmf \cdot x < y$. From the definition of $\cK_{\alb, i}$ in \eqref{eq:ker}, the kernels are smooth away from $y=0$ and $y=x$.  To compute the non-singular integral with small round-off error, we use the 
5-th order Gaussian quadrature 
\footnote{
We choose $5$-th order since the abscissa and weights are given 
\emph{explicitly} and \emph{exactly} (see \eqref{eq:GQ5}), which simplify the rigorous error estimates.
Moreover, the fifth-order method provides sufficient accuracy for our purposes.
}
to approximate the integral $\int_a^b f(y) dy$:
\bseq\label{eq:GQ5}
{\small
\beq
\GQF(f, a, b) \teq \f{b-a}{2} \sum \nolimits_{ 1\leq i \leq 5} p_i f( \f{a+b}{2} + \f{b-a}{2} q_i ) 
=  \int_a^b f(y)  d y + \cE(f, a, b)
\eeq
}
where the nodes $q_i$ and weights $p_i, 1 \leq i \leq 5 $  are given by 
\footnote{
The nodes $q_i$ and weights $p_i$ given in \eqref{eq:GQ5}
satisfy $\sum_{1\leq i\leq 5} p_i q_i^k = 2 (k+1)^{-1}$ for even $k$ and $0$ for odd $k$, $k\leq 9$.
}
\beq
  \tts{ \mathbf{q} =  \B( 0, \, \pm \f13 \sqrt{ 5 - 2 \sqrt{ \f{10}{7} } }, \, \pm \f13 \sqrt{ 5 + 2 \sqrt{ \f{10}{7} } } \B) ,  
  \quad \mathbf{p} = ( \f{128}{225},  \f{322 + 13 \sqrt{70}}{900}, 
   \f{322 + 13 \sqrt{70}}{900},  \f{322 - 13 \sqrt{70}}{900},  \f{322 - 13 \sqrt{70}}{900} ). }
\eeq

The term $\cE(f, a, b)$ denotes the error, which satisfies the bound 
\cite[Section 5.3]{atkinson2008introduction}
\beq\label{eq:GQ5_error}
 |\cE(f, a, b) | = \f{(b-a)^{2n+1} (n!)^4}{ (2 n + 1) [( 2 n )!]^3 } |\pa^{2n} f(\xi)|, 
 \quad n = 5 , \quad 
  \mw{ \ for \ some  \ } \xi \in (a, b).
\eeq

\eseq

From \eqref{eq:Bs_U_comp2:supp}, we get $\supp (\mBs(y ; \ss)) \cap \R_+ 
=  [ s_{j_0} , s_5 ]$ for some $j_0 \in \{1,2, 3\}$. 
For the integral in each interval $[s_j, s_{j+1}]$ with $j_0 \leq j \leq 4$, we approximate the integral  as
\beq\label{eq:Bs_U_comp3}
  \int_{ s_j }^{  s_{j+1}  } K_{\alb,i}(x, y) \mBs(y ; \ss) d y  
=  \GQF\B( K_{\alb,i}(x , \cdot ) \mBs(\cdot ; \ss) , s_j, s_{j+1} \B)
 + \cE_j( \ss)(x)  .
\eeq

To bound the error, we apply the error estimates \eqref{eq:GQ5_error} on interval $y \in [s_j, s_{j+1}]$. Since $\mBs( y, \ss )$ is a cubic polynomial on $[s_j, s_{j+1}]$, 
using Leibniz rule, we obtain
\beq\label{eq:GQ5_err_est1}
|\pa^{2 n} \big( K_{\alb,i}(x , \xi ) \mBs(\xi ; \ss) \big) | \leq   \sum \nolimits_{ i \leq 3 } 
 \tts{ \binom{ 2n}{i} } |\pa_{\xi}^i \mBs(\xi, \ss)| \cdot | \pa_{\xi}^{2n-i} K_{\alb, i}(x, \xi) |,
 \quad \xi \in [s_j, s_{j+1}].
\eeq

Note that in Case (1), for $\xi \in [s_j, s_{j+1} ]$, we get $\xi \geq \mmf \max(x, x_2)$ \eqref{eq:Bs_U_comp2:case}. We apply  estimate \eqref{eq:deri_K_12} for $ 
 \pa_{\xi}^{2n-i} \cK_{\alb, i}(x, \xi)$ with $r = \mmf \geq 2$. 
 We apply \hyr[bd:Method1]{\its Method A.1} to bound $ \pa_{\xi}^i \mBs(\xi, \ss)|$ with 
 $\xi \in [s_j, s_{j+1} ]$. Then we obtain rigorous bound for 
 $|\pa^{2 n} K_{\alb,i}(x , \xi ) \mBs(\xi ; \ss)|$ and use \eqref{eq:GQ5_error} to control the error for the integral.

\subsubsection{Case (2): $ x \asymp y$}

In this case, the kernel can be singular as $y$ close to $x$. We derive the integral  \eqref{eq:Bs_U_comp1} exactly. To reduce the round-off error, 
 we expand the integral as in \eqref{eq:Bs_U_comp3}, and 
 use \eqref{eq:Bspline:b} for $ \mBs(y), y \leq s_3$ and  \eqref{eq:Bspline:a} for $\mBs(y), y \geq s_3$
which consists of least summands 
{\small
 \begin{align}\label{eq:Bs_U_comp4}
 & \cK_{\alb,i}( \mBs( \cdot, \ss))(x)  \\
 &  =
 \int_{s_{j_0}}^{s_3} 
 K_{\alb, i}(x, y) \cdot \B( - \sum_{ 1 \leq l \leq 2} \f{4}{m_l(\ss)}  (s_l - y)^3 \one_{s_l - y <0}  \B) d y + \int_{s_3}^{s_5} 
 K_{\alb, i}(x, y) \cdot \B(  \sum_{ 4 \leq l \leq 5} \f{4}{m_l(\ss)}  (s_l - y)^3 \one_{s_l - y > 0}  \B)  d y \notag \\
 &  = \sum_{l =1,2} \f{-4}{m_l(\ss)} \int_{\max(s_l, s_{j_0})}^{s_3} K_{\alb, i}(x, y) (s_l - y)^3 d y
 + \sum_{l=4, 5}  \f{4}{m_l(\ss)} \int_{\max(s_3, s_{j_0})}^{s_l} K_{\alb, i}(x, y) (s_l - y)^3 d y
 \notag
 \end{align}
 }
In the first term, we restrict $l\leq 2$ in the summation since when $l\geq 3$, we get $s_l \geq s_3 \geq y$. In the second term, we restrict $l \geq 4$ since when $l\leq 3$, we get $s_l \leq s_3 \leq y$. 

\vs{0.05in}
\paragraph{\bf 1. Exact integral formulas}
We first derive the integral
\beq\label{eq:Bs_U_comp4:cI}
 \cI(w, a,  l, \s, y_a, y_b) = \int_{y_a}^{y_b} (w + \s y)^{ a} ( z- y)^l d y, \quad \s \in \{ 1, -1\},
 \quad y_a \leq y_b,
\eeq
for $w+ \s y_a, w+ \s y_b \geq 0$ and $l \in \mathbb{Z}_{\geq 0}$. We omit the dependency on $w, y_a, y_b$ when no confusion arises. For $\s = 1$, since $l$ is integer, using a change of variable $\xi = w  +y$, 
we get 
\[
\bal
\cI( a, l, 1) & = \int_{y_a}^{y_b} (w +  y)^{ a} ( z- y)^l d y
= \int_{y_a + w}^{y_b + w} \xi^a ( z + w - \xi)^l d \xi  = \sum_{0\leq i\leq l} \binom{l}{i} (-1)^i (z+w)^{l-i} \int_{y_a + w}^{y_b + w}   \xi^{a+i} d \xi  .
\eal 
\]
We derive the $\xi$-integral using the general formula for $0\leq s \leq t$
{\small
\beq\label{eq:int_pow}
   \int_s^t \xi^p d \xi = \f{1}{p+1} ( t^{p+1} - s^{p+1}) , \quad 0\leq s \leq t, p > -1.
\eeq
}

When $\s =-1$, recall the assumption $ w - y_a , w - y_b \geq 0$. We introduce $\xi = w-y$ and obtain
\[
\cI( a, l, -1)  = \int_{y_a}^{y_b} (w - y)^{ a} ( z- y)^l d y
= \int_{w - y_b}^{w- y_a} \xi^a ( z - w +\xi)^l d \xi  = \sum_{0\leq i\leq l} \binom{l}{i}  (z-w)^{l-i} \int_{ w - y_b }^{w - y_a }   \xi^{a+i} d \xi  ,
\]
where the $\xi$ integral is obtained using \eqref{eq:int_pow}.

\paragraph{\bf 2. Taylor expansion for $\cI$ }

The above exact formula suffers from significant round-off error when $| w + \s z| \gg |y_a - z|, |y_b-z|$. 
\footnote{
The integral is $\cI$ is small due to $(y-z)^l$ in \eqref{eq:Bs_U_comp4:cI}.
For example, when $\s=1$, the exact formula evaluates  $(y_b + w)^{a+i+1} - (y_a + w)^{a+i+1}$ numerically 
and suffers from significant round-off error when $|y_a - y_b| \ll |y_a + w|$.
}
Thus, when 
\beq\label{eq:Bs_U_comp4:ass}
w + \s z > \mf{m_2} d_{yz},\quad  d_{yz} =\max(|y_a-z|, |y_b-z|) , \quad w + \s z> 0,
\eeq
we expand the kernel in \eqref{eq:Bs_U_comp4:cI} using Taylor expansion:
\[
\bal
\cI =  \int_{y_a}^{y_b} (w + \s z + \s(y-z) )^{ a} ( z- y)^l d y
= \sum_{ 0\leq i\leq k} \f{\AAF(a, i)}{i!} (w +\s z)^{a-i}
\s^{i} (-1)^l  \int_{y_a}^{y_b}  (y-z)^{i+l} d y
 + \cI_{k, \mw{err}},
\eal
\]
where $ \cI_{k, \mw{err}}$ denotes the remainder term in the Taylor expansion.  We evaluate the integral for $(y-z)$ using \eqref{eq:int_pow}. 
To control the remainder term in the Taylor expansion, for $y \in [y_a, y_b]$, 
and any $\xi$ between $0$ and $\s(y-z)$, condition \eqref{eq:Bs_U_comp4:ass} implies $|w+ \s z | > \mf{m_2} |z-y| \geq \mf{m_2} |\xi|$ and
 \[
  |\pa_\xi^{k+1} ( w + \s z + \xi)^{a}|
=| \AAF(a, k+1) (w + \s z + \xi)^{a-k-1}|
\leq \AAF(a, k+1) ( (w + \s z)(1-1/ \mf{m_2}) )^{a-k-1}
\]

Using $\int_{y_a}^{y_b} |y-z|^{l+k+1} d y \leq |y_b-y_a| d_{yz}^{l +k+1} $, we bound the error term in the Taylor expansion  as
\[
 |\cI_{k, \mw{err}}| \leq \f{\AAF(a, k+1)}{ (k+1)!} | (w + \s z)(1 - 1 / \mf{m_2} )|^{a -k-1} 
|y_b-y_a| \cdot d_{yz}^{l +k+1}, \quad  d_{yz} =\max(|y_a-z|, |y_b-z|) .
\]

\paragraph{\bf 3. Estimate the integral \eqref{eq:Bs_U_comp4} }

 Recall $K_{\alb, i}$ from 
 \eqref{eq:ker_recall_app}. 
 For $x, y \geq 0$, to derive \eqref{eq:Bs_U_comp4}, we use the above formulas for \eqref{eq:Bs_U_comp4:cI} 
  with $ \s = 1, w=x$ and $w=0$ to evaluate
    \[
\tts{\int}_{y_a}^{y_b} (x+y)^{\alb } (s_l - y)^3 d y , \quad   \tts{ \int}_{y_a}^{y_b} (x + y)^{\alb-1} (s_l - y)^3 d y,  \quad
\tts{ \int}_{y_a}^{y_b} y^{\alb-1} (s_l - y)^3 d y .
 \]
 For $|x-y|^{\alb-1} , \sgn(x-y)$, and $p \in \{0, 1\}$, since $x-y$ may change sign for $y \in [y_a, y_b]$, we decompose 
 {\small
  \[
 \bal
\int_{y_a}^{y_b}|x-y|^{\alb-1} (\sgn(x-y))^p f(y) d y
 & = (-1)^p \one_{x < y_a} \int_{y_a}^{y_b} (y-x)^{\alb-1}  f(y)  d y
 + \one_{x > y_b} \int_{y_a}^{y_b} (x-y)^{\alb} f(y) \\
 & \qquad  + \one_{ y_a \leq x \leq y_b} \big(  \int_{y_a}^x (x-y)^{\alb}
 f(y) d y
 +(-1)^p  \int_x^{y_b}(y-x)^{\alb}  f(y)  d y \big),
\eal
 \]
 }
where $f(y) =(s_l - y)^3$. For each case, the integrand $x-y$ or $y -x$ does not change sign. 
We apply \eqref{eq:Bs_U_comp4:cI} to evaluate each term. Note that we only evaluate the integrals for \emph{finite many points $x$}.

\subsubsection{Away from the singularity: Case (3)}\label{app:nonlocal_x_geq_y}

We recall $ x > \mmf y \geq 2 y$ for $y \in \supp(\mBs(\cdot ; \ss)) \cap \R_+ $ 
from \eqref{eq:Bs_U_comp2:supp}. Since $y^{\alb-1}$ in the kernel $K_{\alb, i}(x, y)$ \eqref{eq:ker_recall_app} are singular near $y=0$, we decompose
\[
 \cK_{\alb,i}( \mBs( \cdot, \ss))(x) =  \int_0^{\infty} K_{\alb, i, J}(x, y ) \mBs(y ; \ss) d y
 - 2 \alb x^{2-i} \int_0^{\infty} y^{\alb-1} \mBs(y ; \ss) d y \teq I + II.
\]

For $I$, since $x > y$, the kernels $K_{\alb, 1, J}(x, y) = (x+y)^{\alb} -(x-y)^{\alb}, 
K_{\alb, 2, J} = \alb ((x+y)^{\alb-1} -(x-y)^{\alb-1} )$ are not singular. As in Appendix \ref{app:nonlocal_x_leq_y}, we apply  the Gaussian quadrature \eqref{eq:GQ5} with error estimate \eqref{eq:GQ5_error} to estimate $I$. To bound $\pa_\xi^{2n} (K_{\alb, i, J}(x, \xi ) \mBs( \xi ; \ss)) $ for the error in \eqref{eq:GQ5_error} with $\xi \in  \supp(\mBs(\cdot ; \ss)) \cap \R_+ $, we use the Leibniz rule \eqref{eq:GQ5_err_est1}, \hyr[bd:Method1]{\its Method A.1} for  $ \pa_{\xi}^i \mBs(\xi, \ss)$,
and estimate 
\eqref{eq:deri_K_J_12} for $ 
 \pa_{\xi}^{2n-i} K_{\alb, J, i}(x, \xi)$ with $r = \mmf \geq 2$. 
 Recall from \eqref{eq:Bs_U_comp2:supp} that $ \mmf y, \mmf \xi  \leq x$ in this case. 

We evaluate $II$ exactly using formulas for \eqref{eq:Bs_U_comp4:cI} 
 with $\s = 1, w = 0,l=3, a = \alb -1$ and linearity. 

Using the methods in Section \ref{app:nonlocal_x_leq_y}-\ref{app:nonlocal_x_geq_y}, 
we derive  \eqref{eq:Bs_U_comp1} with rigorous error control.

\subsubsection{Nonlocal estimates for $\bwp$ with large $x$}\label{app:WP_very_far}

For $x$ very large, the direct computation method in the previous sections suffers from  round-off error. Since $\bwp$ decays, we bound $\cK_{\alb, i, J}(\bwp)$ perturbatively. We decompose
\beq\label{eq:bwp_lg_decomp1}
 \bwp = \bwp \one_{ x\leq \mmx_{i_*} } + \bwp \one_{ x> \mmx_{i_*} }  , \quad  \mmx_{i_*} \approx 10^4,
\eeq
where $\mmx_{i_*}$ is a mesh point \eqref{def:x_i}. For $x \gg \mmx_{i_*}$, since $ \bwp \one_{ x\leq \mmx_{i_*} } $ is not small pointwisely but it has a compact support, we use \hyr[int:Method5]{\its Method B.5} to  estimate $\cK_{\bullet}( \bwp \one_{ x\leq \mmx_{i_*} } ), \bullet \in \{ (\alb, i,J), (\alb, \D ) \}$. We bound $\cK_{\bullet}( \bwp \one_{ x > \mmx_{i_*} } ), \bullet \in \{ (\alb, i,J), (\alb, \D ) \}$ 
using \hyr[int:Method4]{\its Method B.4} with parameters for the weight \eqref{eq:intro_wg_nonlocal_err}:
\beq
    \bb_{\eqref{def:bb_WP}} = (-1, 0, 0.4),\quad  
  \mu_{\eqref{def:bb_WP}} = (1,1,1).
  \label{def:bb_WP}
\eeq
Using Lemma \ref{lem:vel_est} and taking only one power $b=0.4$, we obtain decay estimates for large $x$,
\bseq\label{eq:cK_WP_far}
\begin{align}\label{eq:cK_WP_far:a}
  |\cK_{\alb, 1 ,J}(G)| & \leq \mCC_{ \psio/x, J} 
( 0.4,   \R_+  )  x^{\alb +1 -0.4 } \nlinf{ G |y|^{0.4} } 
,\quad   |\cK_{\alb, 2 ,J}(G)| \leq \mCC_{\psio_x, J}(  0.4 ,   \R_+  )  x^{\alb - 0.4}
 \nlinf{  G |y|^{0.4} }, \notag \\
   |\cK_{\alb, \D}(G)| & \leq \mCC_{ \D, 0}(  0.4 ,   \R_+  )   x^{\alb - 0.4}  \nlinf{ G |y|^{0.4} } ,
\end{align}
where $\mCC_{ \psio/x, J} , \mCC_{\psio_x, J}$ are the constants defined in \eqref{eq:vel_const}. We apply \eqref{eq:cK_WP_far:a} to bound $G =   \bwp \one_{ x> \mmx_{i_*}}$.

We derive $\cJ(\bwp)(x) = \int_0^x y^{\alb-1} \bwp(y) d y $ on finite many points $x = X_{ \eqref{def:XX}, i}$  using  \hyr[bd:Method1]{\its Method A.1} for piecewise bounds of $\bwp(y)$ and  \hyr[int:Method2]{\its Method B.2} for rigorous integral. 
Using \eqref{eq:cK_WP_far}, treating  $\cJ(\bwp)(x)$ as the main terms and $ \cK_{\alb, i, J}$ as an error, and using  \eqref{eq:cK_cKJ}, we obtain rigorous estimates for  $\cK_{\alb , i}(x)$:
\beq
\bal
\cK_{\alb , i}(\bwp) & =  - 2 \alb x^{2-i} \cJ(\bwp) +  \cK_{\alb, i, J}(\bwp) , \\
 |\cK_{\alb, i, J}(\bwp)|  & \les \ang x^{\alb-0.4 + 2 -i}, 
\quad \supp( \cJ(\bwp)) \subset [ -\xed, \xed ] ,
\eal
\eeq
\eseq
where $\cJ(\bwp)$ has a compact support due to  \eqref{eq:Jw_alb} 
and $\supp(\bwp) \subset [-\xed, \xed]$.

\subsection{Nonlocal terms for $\bwf$ in bounded region}\label{app:bwf_near}

Recall $\bwf$ from \eqref{eq:W_rep}:
\[
   \bwf(x) \teq    \chi_1( x ) |x|^{- \f13 }  \big( \cca + \ccb (\log |x|)^{-\f13 } 
   + \ccc (\log |x| )^{- \f23 } \big), \quad 
    \chi_1(x) =  \f{  (x- z_0)^5}{ z_0^5 + (x- z_0 )^5} \one_{x\geq z_0} .
\]
In this section, we discuss methods to compute $\cK_{\alb,i}(\bwf)(x)$ with rigorous error estimates. 
We cannot derive $V(\bwf),V_x(\bwf)$ explicitly. 
For $x \leq z_{L_1}$ ($z_{L_1}$ chosen in \eqref{def:z_0}) not very large, we interpolate $\bwf$ using Bspline \eqref{eq:Bspline_rep} 
and use  methods in Appendix \ref{app:nonlocal} to derive $ \cK_{\alb, i}(\cwf)(x)$.
For large $x \geq z_{L_1}$, we expand the kernels to derive the asymptotics for 
$ \cK_{\alb, i}(\cwf)(x)$. We define 
\beq\label{def:chi_2}
  \chi_2(x) = 1 - \chi_0( \f{x}{z_L} -1)
= 
    1 - \f{ ( x / z_L -1)^5}{ 1 + (x / z_L - 1)^5 } \one_{x \geq z_L}
   =   \f{ z_L^5 }{ z_L^5 + (x  - z_L)^5 } \one_{x \geq z_L} , \eeq
where $z_L$ is chosen in \eqref{def:z_0} and $\chi_0$ is defined in \eqref{def:chi_0}. The function $\chi_2(x)$ plays a role as a cutoff function 
and $\chi_2 \ll 1$ for $x \gg z_L$.  We use the Bspline  \eqref{eq:Bspline_rep} 
to interpolate $ \chi_2(x)  \bwf$ and obtain coefficients $a_{F,i}$:
\bseq\label{eq:bwf_decomp1}
\beq
\bwf = \chi_2 \bwf +   (1 - \chi_2(x)) \bwf, \quad 
\cwf \teq \tts{\sum\nolimits_{1 \leq i \leq  \nnx}} a_{F,i } B_i(x) \approx \chi_2 \bwf.
\eeq
We treat $\cwf - \chi_2 \bwf$ as an error and derive the nonlocal terms for $\cK_{\bullet}$ defined in \eqref{eq:ker} using 
\beq
\cK_{\bullet}( \bwf)
= \cK_{\bullet}( \cwf ) + \cK_{\bullet}( (1 - \chi_2(x)) \bwf) + 
\cK_{\bullet}( \chi_2 \bwf - \cwf).
\eeq
\eseq

\subsubsection{ Nonlocal error}
We treat $\cK_{\bullet}( \cwf - \chi_2 \bwf)$ in \eqref{eq:bwf_decomp1} as an error. By definition
of $\chi_2$ in \eqref{def:chi_2}, for $x \geq \xed$, we obtain
\[
  |\cwf - \chi_2 \bwf| = |\chi_2 \bwf| \leq \tf{z_0^5}{ (\xed - z_L)^5 }  C_{ \eqref{eq:deri_WF_far}, z_0,  0,  \xed  } x^{-1/3}.
\]

Since $  \cwf$ is a piecewise polynomial \eqref{eq:bwf_decomp1}
and $\chi_2 \bwf $ has the form \eqref{eq:nloc_err_fA} by definition of $\chi_2$ \eqref{def:chi_2},
 we bound the nonlocal error 
using \hyr[int:Method4]{\its Method B.4} with parameters for the weight in \eqref{eq:intro_wg_nonlocal_err}:
\beq
 \bb_{ \eqref{def:bb_mu_u2} } = (-1, \, -0.5,\, 0, \, 0.2),
 \quad \mu_{ \eqref{def:bb_mu_u2} } = (0.4, \, 0.4, \, 1, \, 30).
 \label{def:bb_mu_u2}
\eeq
These parameters are not unique. The choice in \eqref{def:bb_mu_u2} yields relatively tight nonlocal bounds.

\subsubsection{Estimate for $x \leq z_{L_1}$}

Since $\cwf$ is representing using Bspline basis, we compute the nonlocal terms $\cK_{\alb, i}(\cwf)$  with rigorous error control using the methods in Appendix \ref{app:nonlocal}. 

Since $ (1 - \chi_2(x)) \bwf(x)$ is supported in $x \geq z_L$,  
for $x \in [0, z_{L_1}]$ and $ y\in \supp( 1 - \chi_2(x))$, 
we get $y /x \geq \f{z_L}{z_{L_1}} > 10 $ by  \eqref{def:z_0}. We expand the kernel 
using Case 1 in Appendix \ref{app:kernel_TL}  to derive 
{\small
\[
\bal
 \cK_{\alb, 1}(f)(x) & =  \sum_{ 3 \leq i \leq k, \ i \mw{ \ is \  odd} }
   \int_0^{\infty} \f{2 \AAF(\alb, i) }{ i!} \f{x^i}{y^i} \cdot y^{\alb}  f(y) d y
  + \cE_{1,k}(x),  
   \\
   |\cE_{1,k}(x)|  & \leq \int_0^{\infty} \B| \f{ 2\AAF(\alb, k+1) }{ (k+1)!} \cdot \f{x^{k+1}}{y^{k+1}} (1 - 
\tf1r )^{\alb-k-1} \cdot y^{\alb} f(y) \B| d y ,  
\quad 
 r = \f{z_L}{ z_{L_1}} > 10, 
 \\
 \cK_{\alb, 2}(f)(x)  & = \sum_{  2\leq i \leq k, \ i \mw{ \ is \  even} }\alb  \int_0^{\infty} \f{ 2 \AAF(\alb-1, i) }{ i!} 
  \f{x^i}{y^i} y^{\alb-1} f(y) d y  + \cE_{ 2, k}(x) ,  \\
  | \cE_{2 , k}(x)|  & \leq \alb\int_0^{\infty } \B| \f{ 2 \AAF(\alb-1, k+1) }{ (k+1) ! } \f{x^{k+1}}{y^{k+1}} (1 - \tf1r )^{ \alb-2-k} \cdot y^{\alb-1} f(y) \B| d y
 \eal 
\]
}
for $ \cK_{\alb, i}(f)(x), f = (1-\chi_2) \bwf$.  
From \eqref{def:bwfa} and \eqref{eq:bwfa_far}, for $y \geq 10$, we obtain 
\[
g(y) = f(y) y^{\alb}= (1 -\chi_2 ) y^{\alb} \bwf 
= (1-\chi_2) \chi_1 \bwfa ,
\quad  \bwfa \in [-6, -3].
\]
Thus, $g(y)$ does not change sign in its support. To derive the main term and control the error, we
integrate $\int_{z_L}^{\infty} g(y) y^{ - l}$ for $l=3, .., k+1$ with error control.
We choose $S_m \gg 1$ and decompose 
\[
 \int_{z_L}^{\infty} g(y) y^{-l} d y =
z_L^{ 1- l}  \int_1^{\infty} g( z_L s) s^{-l } d s 
 =  z_L^{ 1- l} \cdot ( \int_{1}^{S_m} g( z_L s) s^{-l } d s 
 + \int_{S_m}^{\infty} g( z_L s) s^{-l } d s )   \teq 
z_L^{ 1- l} (  I_1 + \cE_{g, l} ).
\]
For $I_1$, the integrand is not singular and has an explicit formula. We apply 
\eqref{eq:basic_int_est:ul}, 
\hyr[int:Method1]{\its Method B.1} to obtain its  rigorous enclosure. 
Since $S_m > 1$ and by definition of $|g(x) | \leq |\bwf(x) x^{\alb} |$, using \eqref{eq:deri_WF_far} 
with $k=0, L =z_L$ and $\chi_i \in [0,1]$, we estimate $\cE_{g, l}$ directly
\[
 |\cE_{g, l}|  \leq 
\tf{1}{l-1} S_m^{1-l}  \sup\nolimits_{s \geq S_m} |g(z_L s)| 
\leq \tf{1}{l-1} S_m^{1-l}  \sup\nolimits_{z \geq  z_L} |z^{\alb} \bwf(z)|
\leq C_{ \eqref{eq:deri_WF_far}, z_0,  0,  z_L }  \cdot \tf{1}{l-1} S_m^{1-l} .
\]

\subsection{Nonlocal estimate for $\bwf, \bw$ with large $ x \geq z_{L_1} $}
\label{app:bwf_far}

For $x$ sufficiently large, e.g. $x \geq z_{L_1}$, 
we do not decompose \eqref{eq:bwf_decomp1} since the Bspline-based method suffers from round-off error.
Instead, we expand the integral to derive the asymptotics. 
For $\bwf = \chi_1(y) y^{-1/3} \bwfa $ \eqref{def:bwfa}, 
which is supported in $|y| \geq z_0$ by \eqref{eq:W_rep},  using the kernel $K_{\alb, i, J}$ 
\eqref{eq:ker}, we first decompose
\begin{align}
 \cK_{\alb ,i} \bwf(x)  & = - 2 x^{2-i} \alb \cJ(\bwf)(x)
 + \cK_{\alb, i, J} ( \one_{y \geq z_0} y^{-1/3} \bwfa)(x)
+ \cK_{\alb, i, J}( (\one_{y \geq z_0} -\chi_1)  y^{-1/3} \bwfa )(x) \notag \\
&  \teq - 2 x^{2-i} \alb \cJ(\bwf)(x) + I_{ \eqref{eq:K_log_decomp1}, i} + \cE_{\eqref{eq:K_log_decomp1}, i} .
\label{eq:K_log_decomp1}
\end{align}
From Corollary \ref{cor:vel_est}, the main term is given by $ \cJ$-term.
 We treat $\cE_{\eqref{eq:K_log_decomp1}, i}$ as an error, since for $y$ near $x$, 
$|\chi_1-1|$ in the integrand is very small. 
We do not treat  \(  I_{ \eqref{eq:K_log_decomp1}, i} \) as an error, since it does not decay significantly faster than the $\cJ$-term. 
While the following estimates appear technical, it is quite straightforward to 
derive and track the bounds with computer-assistance. 

Recall $\cca, \ccb, \ccc$ from \eqref{def:WF_para}.  We define the main term 
\beq
\cJM(z)  \teq 
   \cca  \log z + \tf{3}{2} \ccb  (\log z)^{2/3} + 3 \ccc (\log z)^{1/3} .
   \label{def:J_main}
\eeq

\subsubsection{Estimates for  nonlocal error $\cE_{\eqref{eq:K_log_decomp1}, i}$}\label{app:bwf_err_lg}

By definition of $\chi_1$ \eqref{eq:W_rep},
$F(x)= (\one_{x \geq z_0} -\chi_1)  x^{-1/3} \bwfa$ is supported in $x \geq z_0$. 
Since $F(x)$ is not small for $x$ is not very large, we decompose $F$ 
\beq\label{eq:K_log_decomp1_err}
 F(x) = (\one_{x \geq z_0} -\chi_1(x))  x^{-1/3} \bwfa(x) ,
 \quad F= F \one_{ x\leq \mmx_{i_*} } + F \one_{ x> \mmx_{i_*} } \teq F_1 + F_2,
 \quad  \mmx_{i_*} \approx 10^4,
\eeq
and extend each part $F_1, F_2$ as an odd function in $\R$, where $\mmx_{i_*}$ is a mesh point \eqref{def:x_i}. Since we consider very large $x \geq z_{L_1} \gg   \mmx_{i_*} $ (see $z_{L_1}$ \eqref{def:z_0}) and since $F_1$ is not small pointwisely but it has a compact support, we use \hyr[int:Method5]{\its Method B.5} to  estimate $\cK_{\alb, i, J}(F_1)$ and obtain its fast decay estimate in $x$. We bound $ \cK_{\alb, i, J}(F_2)$ using \hyr[int:Method4]{\its Method B.4} with 
\beq
 \bb_{ \eqref{def:bb_mu_u4} } = 0.4,  \quad  \mu_{  \eqref{def:bb_mu_u4} } = 1.
 \label{def:bb_mu_u4}
\eeq
 for the weight in \eqref{eq:intro_wg_nonlocal_err}.  We choose one power since these error estimates 
apply only for very large $x \geq z_{L_1}$. 
 Since $  \bb_{ \eqref{def:bb_mu_u4} } = 0.4,$, 
by Lemma \ref{lem:vel_est}, we obtain decay estimates for large $x$,
\begin{align}
  |\cK_{\alb, 1 ,J}(F)| & \leq \mCC_{ \psio/x, J} 
( 0.4,   \R_+  )  x^{\alb +1 - 0.4 } \nlinf{ F |y|^{0.4} } ,
\quad 
   |\cK_{\alb, 2 ,J}(F)| \leq \mCC_{\psio_x, J}(  0.4 ,   \R_+  )  x^{\alb - 0.4 } \nlinf{ F y^{0.4}} , \notag \\
  |\cK_{\alb, \D}(F) |  & \leq \mCC_{\D, 0}(  0.4 ,   \R_+  )x^{\alb - 0.4 } \nlinf{ F y^{0.4}} ,
  \label{eq:K_log_err:far}
\end{align}
where $\mCC_{ \psio/x, J} , \mCC_{\psio_x, J}$ are the constants defined in \eqref{eq:vel_const}.

\subsubsection{Estimates for $I_{\eqref{eq:K_log_decomp1}, i}$}

Recall $\cJ(f)$ from \eqref{eq:Jw_alb}. Since we consider $x \geq z_{L_1}$, we decompose 
{\small
\beq
  \cJ(\bwf)(x) = \int_0^{ \mmx_{ \eqref{def:x_i}, i_*} }  y^{\alb-1} \bwf(y) d y + \int_{ \mmx_{ \eqref{def:x_i}, i_*} }^x y^{\alb - 1} \bwf(y) d y \teq 
  I_{ \eqref{eq:bwf_J_decomp1}, 1} +   I_{ \eqref{eq:bwf_J_decomp1}, 2},
  \label{eq:bwf_J_decomp1}
\eeq
}
where  $\mmx_{ \eqref{def:x_i}, i_*} \approx 10^4$ is the mesh chosen in \eqref{def:x_i} 
with index $i_*$. 
\footnote{
The actual value of $\mmx_{ \eqref{def:x_i}, i_*}$ is not important. We choose   
$\mmx_{ \eqref{def:x_i}, i_*}$ relatively large, and it is on the mesh $\mmx_{\eqref{def:x_i} }$ 
to simplify the error estimate. 
}
To derive $  I_{ \eqref{eq:bwf_J_decomp1}, 1}$ with high accuracy, we refine the mesh
$[\mmx_i, \mmx_{i+1}]$ by $\mmx_i = y_{i, 1} < ..< y_{i, N+1} = \mmx_{i+1}$ 
with $y_{i, j+1} - y_{i, j} =  \f{1}{N} (  \mmx_{i+1} - \mmx_i)$, and use Simpson's rule 
to approximate the integral for $F = y^{\alb-1} \bwf(y)$:
\beq
  \int_{\mmx_i}^{\mmx_{i+1} } F(y)d y
  = \sum\nolim_{ 1\leq j \leq N}  
  \f{ y_{i,j+1}- y_{i, j}}{6} \cdot ( F( y_{i, j} ) + F(y_{i, j+1}) + 4 F( \f{1}{2} (y_{ i, j} + y_{i,j+1}) )  )
  + \cE_{\eqref{eq:bwf_J_simp}}
  \label{eq:bwf_J_simp}
\eeq
 Using error estimates for Simpson's rule \cite[Section 5.1]{atkinson2008introduction}, and $y_{i, j+1} - y_{i, j} =  \f{1}{N} (  \mmx_{i+1} - \mmx_i)$, we get
\beq
|\cE_{\eqref{eq:bwf_J_simp}}| \leq \tts{ \sum}_{1\leq j\leq N} \f{(y_{i, j+1} - y_{i, j})^5}{ 2880} 
  \| \pa_x^4 F \|_{L^{\infty}(\mmx_i, \mmx_{i+1})}
  = \f{(\mmx_{i+1} - \mmx_i )^5}{  2880 N^4}  
  \| \pa_x^4 F \|_{L^{\infty}(\mmx_i, \mmx_{i+1})} .
\eeq

We bound $\pa_x^i \bwf$ using \hyr[bd:Method2]{\its Method A.2} and Leibniz rule to further bound $\pa_x^4 F =  \pa_x^4( \bwf y^{\alb-1})$. 
By partitioning $[0, \mmx_{ \eqref{def:x_i},* }]$ by $[\mmx_i, \mmx_{i+1}], 1\leq i \leq i_*-1$ and using linearity, we bound  $ I_{ \eqref{eq:bwf_J_decomp1}, 1}$.

Recall $\bwf(y) = \chi_1(y) y^{-1/3} \bwfa(y)$ from \eqref{def:bwfa}. For  $ I_{ \eqref{eq:bwf_J_decomp1}, 2}$, since for $y > 
\mmx_{ \eqref{def:x_i}, i_*}$, $\chi_1$ is close to $1$ and $1-\chi_1 \geq 0$ is decreasing, we bound 
\beq
I_{ \eqref{eq:bwf_J_decomp1}, 2}
= \tts{\int}_{ \mmx_{ \eqref{def:x_i}, i_*} }^x y^{- 1} \bwfa 
+ \tts{\int}_{ \mmx_{ \eqref{def:x_i}, i_*} }^x y^{ - 1} ( \chi_1 -1) \bwfa 
\teq  II_{ \eqref{eq:bwf_J_decomp2}} +  \cE_{ \eqref{eq:bwf_J_decomp2}}
\label{eq:bwf_J_decomp2}
\eeq

By definition of $\chi_1$ \eqref{def:chi_0} and using \eqref{eq:bwfa_far} for $\bwfa$,
for $x \geq \mmx_{i_*} > 10$,
we obtain 
\beq
\bal
 (1 - \chi_1 ) |\bwfa| & =  \tf{ z_0^5}{ (x-z_0)^5} |\bwfa|
 \leq x^{-5} \tf{  \mmx_{ i_*}^5 z_0^5}{ (\mmx_{ i_*}-z_0)^5} 
\max( |\cca|, | \bwfa( \mmx_{i_*})  | )
 \teq C_{ \eqref{eq:bwf_J_decomp2_err} }(\mmx_{i_*}) x^{-5}, \\
 \Longrightarrow \qquad \quad  \,  |\cE_{ \eqref{eq:bwf_J_decomp2}}| & \leq 
 C_{ \eqref{eq:bwf_J_decomp2_err} }(\mmx_{i_*}) \tts{\int}_{ \mmx_{ \eqref{def:x_i}, i_*} }^x y^{ - 6}  d y
 \leq 
 \tf15 C_{ \eqref{eq:bwf_J_decomp2_err} }(\mmx_{i_*}) \mmx_{ \eqref{def:x_i}, i_*}^{-5}.
 \label{eq:bwf_J_decomp2_err}
 \eal
 \eeq

Using \eqref{def:bwfa} for $\bwf$, \eqref{def:J_main} for $\cJM$, 
and $\int y^{-1}  (\log y)^a d y
= \f{1}{1+a} (\log y)^{a+1} + C$,   we derive he main term:
\beq
\bal
II_{ \eqref{eq:bwf_J_decomp2}} 
& = \tts{\int}_{ \mmx_{ \eqref{def:x_i}, i_*} }^x y^{ - 1} \bwfa(y)  dy
=  \cJM(x) -
\cJM( \mmx_{ \eqref{def:x_i}, i_*} ) . \\
  \eal
\label{eq:J_main_bw}
\eeq

We apply the above derivations to obtain rigorous bound for $\cJ(\bwf)(x)$ 
with $x= X_{\eqref{def:XX},i}$. For $x \geq \xed$ 
beyond  $\supp(\bwp)$ \eqref{eq:W_rep}, since $\bw = \bwp$, using the above argument, we obtain
\begin{align}
 \cJ(\bwf)(x) &= \cJ(\bwf)( \xed )
+ \tts{\int}_{ \xed }^x y^{\alb - 1} \bwf(y) d y   \label{eq:J_main_bw2} \\
&  =
 \cJM(x)  + \cJ(\bwf)( \xed ) - \cJM( \xed ) + \cE_{ \eqref{eq:J_main_bw2}}
 , \quad | \cE_{ \eqref{eq:J_main_bw2} } |
\leq   \tf15 C_{ \eqref{eq:bwf_J_decomp2_err} }( \xed ) \xed^{-5} , \notag 
\end{align}
where $\cJM$ is defined in \eqref{def:J_main}. Note that in the error bound, we have replaced $\mmx_{i_*}$ in  \eqref{eq:bwf_J_decomp2_err}
by $\xed$. Since the proof is essentially the same, we omit the details.

\subsubsection{ Estimates for $I_{ \eqref{eq:K_log_decomp1}, i}$}\label{app:WF_log}

We estimate the most technical term $I_{ \eqref{eq:K_log_decomp1} , i}$. Using a change of variable $z = \f{y}{x}$ and the scaling \eqref{eq:ker_scale}:
 $K_{\alb, i, J}(x, y) y^{-1/3} d y= x^{2-i} K_{\alb,i,J}(1, z) z^{-1/3} d z$, 
 we obtain
 \beq
I_{ \eqref{eq:K_log_decomp1}, i}
= \int_{z_0}^{\infty} K_{\alb, i, J}(x, y) y^{-\f13 } \bwfa(y) d y
 =   x^{ 2 -i} ( \int_{xz \geq z_0}^{1} + \int_1^{\infty}) K_{\alb, i, J}(1, z)
z^{- \f13}  \bwfa(x z) d z .
\label{eq:K_log_decomp1:IM}
\eeq
From \eqref{def:z_0}, $\f{z_0}{x}$ is much smaller than $1$ for $x \geq z_{L_1}$. The difficult term 
is $|\log (x z)|^{-\g}, \g = \f13,\f23$ in $\bwfa$. We expand them below so that $\log x, \log z$ are 
decoupled. 

\vs{0.1in}
\paragraph{\bf Expand $ (\log(x z))^{ - \g} $} 

We fix $x$. For $xz \geq z_0$, we obtain $\log x + \log z \geq \log z_0 > 0$. 
Denote $F(w) = w^{-\g}$. Using Taylor expansion 
of $F(\log x + s)$ around $s=0$ 
and $(\pa^i F)(\log  x) 
= \AAF( -\g, i) (\log x )^{-\g - i} $ (recall $\AAF$ from \eqref{def:factor}), we obtain
\beq
 \f{1}{ (\log x + \log z )^{\g}} = \f{1}{ (\log x  )^{\g}} 
+  \sum\nolim_{ 1\leq i \leq k} \f{\AAF(-\g, i) \cdot (\log z)^i  }{i!  \cdot  ( \log x )^{\g +i}  } 
+   \f{\AAF(-\g, k+1)}{ (k +1)!}
 \f{(\log z)^{k+1}}{  (\log \xi_z)^{\g + k +1} } ,
 \label{eq:K_log_decomp2}
\eeq
for some $\xi_z$ between $ x,  x z$. 
Applying the above expansion to \eqref{eq:K_log_decomp1:IM}, 
and using $\bwfa(x)= \cca + \ccb (\log x)^{-1/3} + \ccc (\log x)^{-2/3}$ \eqref{def:bwfa}, we derive 
\bseq\label{eq:K_log_decomp2:full}
{\small
\begin{align}
 &  \f{ I_{ \eqref{eq:K_log_decomp1}, \s}}{  x^{ 2 - \s} }
  = \bwfa(x) \cdot (S_{\s, L, 0}( \f{z_0}{x})  + S_{\s, R,0})
 + 
 \sum_{ 1\leq i \leq k} \B( \f{ \ccb \AAF(- \f13, i)  }{i!  \cdot  ( \log x )^{ \f13 +i}  }  
 +  \f{ \ccc \AAF(-\f23, i)  }{i!  \cdot  ( \log x )^{ \f23 +i}  }   \B)
 \cdot (S_{\s, L, i}( \f{z_0}{x})  + S_{\s, R, i})\notag \\
 &  \ \quad +      \int_{ z_0/x}^{\infty} 
K_{\alb, \s, J}(1, z) z^{-1/3} (\log z)^{k+1}
( 
   \f{ \ccb \cdot \AAF(-1/3, k+1)}{ (k +1)!  \cdot  (\log \xi_z)^{ 1/3 + k +1} }
 +   \f{\ccc \cdot \AAF(-2/3, k+1)}{ (k +1)! \cdot (\log \xi_z)^{ 2/3 + k +1} }
) ,
 \end{align}
 }
where $S_{\s, L, i}(h)$ denotes the integral:
\beq
\bal
      S_{\s, L, i}(h) \teq  \tts{ \int_h^1} K_{\alb, \s, J}(1, z) \cdot z^{-1/3} (\log z)^{i}  d z , 
      \quad S_{\s, R, i} \teq  \tts{ \int_1^{\infty}} K_{\alb, \s, J}(1, z) \cdot z^{-1/3} (\log z)^{i}  d z,
\eal
\eeq
for $   \s  \in \{1, 2 \}$, where $L, R$ are short for \emph{left} and \emph{right}. Moreover, using the sign properties of $K_{\alb, i, J}$ in \eqref{eq:ker_sign}, we obtain
\beq
 \tts{\int_h^{\infty} } |K_{\alb, \s, J}(1, z) \cdot z^{-1/3} (\log z)^{i}| d z 
= |S_{\s, L, i}(h) |+ |S_{\s, R,i}|.
\eeq
 \eseq

Below,  we integrate $S_{\s, \bullet}(\cdot)$ show that 
the  error terms  are much smaller than $|\log x |^{-1/3 - k}$.

\subsubsection*{\bf Compute integrals}\label{int:log_expand}

Using the expansion \eqref{eq:K_log_decomp2}, we decouple $x, z$ in the integral \eqref{eq:K_log_decomp1:IM}. Since $x \geq z_{L_1}$ and  the domain of integral in \eqref{eq:K_log_decomp1:IM} depends on $x$, to obtain upper and lower bounds for the integral uniformly, we choose $s_1$  and define $f_p$ as
\[
s_1 > \max(  z_{L_1}^{-1/2}, \, z_0 / z_{L_1})
\Rightarrow s_1 \geq \max( x^{-1/2} , z_0 x^{-1} ) , 
\quad 
f_p(z) = z^{-1/3} (\log z)^p ,\quad p \geq 0 \in \mathbb{Z} .
\]
Then we  obtain
\beq
  \tts{\int}_{ z \geq z_{\mw{st}}  }^1 K_{\alb, i, J} f_p(z) d z 
  = \int_{s_1}^1  K_{\alb, i, J} f_p(z) d z + \cE_{\eqref{eq:K_log_decomp3} ,s_1},
  \  |\cE_{\eqref{eq:K_log_decomp3} ,s_1} | \leq \int_0^{s_1}  | K_{\alb, i, J} f_p(z)| d z , \
   \label{eq:K_log_decomp3}
\eeq
for $z_{\mw{st}} \in \{ x^{-1/2}, z_0 / x \}$,  where 
By computing the first integral (discussed below) and bounding the error, we obtain 
an enclosure for $\mw{LHS} \in [I_l, I_u]$ \emph{uniformly} in $x$.

We estimate the integral for $z \geq 1$ and $z \leq 1$ separately. We design mesh $1 = y_1 < y_2 < ..< y_m$ for $z \geq 1$ and $0 <s_1 < s_2< .. < s_n =1$ for $z \leq 1$. In each domain $[a, b] \subset [1, \infty)$ or $[a, b]  \subset [0,1]$, 
we use linear interpolation for $f_p(y) = y^{-1/3} (\log y)^p$ \eqref{eq:lin_interp} and decompose 
\[
 \int_a^b K_{\alb, i, J}(1, z) f_p(z) d z
  =  \int_a^b K_{\alb, i, J}(1, z) 
  ( f_p(a) - \f{f_p(b) - f_p(a)}{b-a} ( a- z)) d z
+\cE_{i, l, a, b} , \quad f_p =  y^{-1/3} (\log y)^p,
\] 
where $\cE_{i, l , a, b}$ denotes the error part and satisfies 
\[
  |\cE_{ i, l, a, b}| \leq \tf{(b-a)^2}{8}\| \pa_x^2 f_p \|_{L^{\infty}[a,b]}  | \textstyle{\int}_a^b K_{\alb,i,J} (1, z ) d z |  .
\]

In the above absolute integral, we do not take absolute value on $K_{\alb, i, J}(1, z)$ since it 
does not change sign on $z \in [1, \infty]$ and $[0, 1]$ 
by \eqref{eq:ker_sign}.  We use the explicit formula and \hyr[bd:intval]{\its Interval Arithmetic} to bound  $\pa_x^2 f_p$. 
The above $z$-integral in the main term is linear combinations of the form \eqref{eq:Bs_U_comp4:cI} 
with $l =0, 1$. We use the formulas for $\cI$ in \eqref{eq:Bs_U_comp4:cI}  to bound these integrals rigorously.

Using  $f_p(z) = z^{-1/3} (\log z)^p$ and \eqref{eq:deri_K_12} with $x=1, y=z, k=0$, 
and \eqref{eq:deri_K_J:y<x} with $ x= 1 , y = z  $, 
for any $a < 1/2 < 2 < b$, we bound 
\[
\bal
  \tts{\int}_{b}^{\infty} |K_{\alb , 1 J}(1, z) f_p(z)  |
 d z
& \leq C_{ \eqref{eq:deri_K_12}, i, b, 0}  \tts{\int}_b^{\infty} z^{\alb  - 3} f_p(z) 
=  C_{ \eqref{eq:deri_K_12}, i, b, 0}  \tts{\int}_b^{\infty} z^{  - 3} (\log z )^p d z ,  \\
 \tts{\int}_{0}^{a} |K_{\alb , i, J}(1, z) f_p(z)  |
& \leq 
C_{ \eqref{eq:deri_K_J:y<x}, i, a } \tts{\int}_0^a z |f_p(z)| d z 
  \leq  C_{ \eqref{eq:deri_K_J:y<x}, i, a } \tts{\int}_0^a  z^{2/3} |\log z|^p d z  
  =  C_{ \eqref{eq:deri_K_J:y<x}, i, a } \tts{\int}_{1/a}^{\infty}  z^{-8/3} |\log z|^p d z 
\eal
\]
where in the last identity, we apply a change of variable $z \to z^{-1}$. 
We use \hyr[int:Method2]{\its Method B.2} to bound these integrals and the error 
$\cE_{\eqref{eq:K_log_decomp3}, s_1}$ in \eqref{eq:K_log_decomp3}.

\vs{0.05in}
\paragraph{\bf Bounding Taylor remainder }

Recall the expansion 
and the  remainder from \eqref{eq:K_log_decomp2:full}. 
Denote 
\beq\label{def:log_f_1}
h_{k+ \g}(\xi) = (\log \xi)^{-\g - k-1}
\eeq
which is decreasing in $\xi$ for $k + \g \geq 0$. Since $\xi_z$ is between $x, xz$, to obtain a 
sharper estimate, we partition the range of $z$ according to
\[
(y_5(x), y_4(x), y_3(x), y_2(x), y_1(x), y_0) = ( z_0 x^{-1}, x^{-15/16},x^{-7/8}  , x^{-3/4}, 
x^{-1/2}, \infty ) 
\]
with $y_{i+1}(x) < y_i(x)$ for $x > z_{L_1}$. Then we obtain
\[
\bal
 h_{k+ \g}( \xi_z)  & \leq h_{k+ \g}(\min(x, xz)) \leq \tts{ \sum_{ 0 \leq i \leq 4}} \one_{z \in [y_{i+1}, y_i ] } h_{k+ \g}(  x y_{i+1}(x) ), \\
 \quad h_{k+ \g}( x y_1(x)) &= h_{k+ \g}(x^{1/2}) = (\tf{1}{2} \log x )^{-\g - k-1}.
 \eal
\]

We choose $y_i = x^{-a}$  with some $ a \in (0, 1)$ in the above partition because
 $\log (x y_i) = (1-a) \log x \asymp \log x$ 
 and the upper bound $h_{k+ \g}( x y_i)$ is not very large. 
Since $h_{k+ \g}(x y_i(x)), y_i(x)$ are decreasing in $x \geq z_{L_1}$, using the above partition
and \eqref{eq:deri_K_J:y<x} for $z \leq x^{-1/2} \leq z_{L_1}^{-1/2}$, we bound the integral of remainder as
\beq
\bal
 & |\int_{xz\geq z_0}  K_{\alb, i, J}(1, z)   (\log z)^{k+1} h_{k+ \g}(\xi_z)|
 \leq 
 (\tf{1}{2} \log x )^{-\g - k-1}
 \int_{z  \geq x^{-1/2}} | K_{\alb, i, J}(1, z)  (\log z)^{k+1} | d z  \\
& \quad + \sum_{ 1\leq i \leq 4}
h_{k+ \g}( z_{L_1 } y_{i+1}( z_{L_1})  ) 
 C_{ \eqref{eq:deri_K_J:y<x}, i, z_{L_1}^{1/2} } 
\int_0^{ y_i( x ) } z^{-1/3 +1}  |\log z|^{k+1} d z 
\teq \sum_{0\leq i\leq 4} II_{ \eqref{eq:TL_remain1} ,i },
\label{eq:TL_remain1}
\eal 
\eeq
where we recall $z_{L_1}$ from \eqref{def:z_0}. 
Since $x^{-1/2} \leq z_{L_1}^{-1/2}$,  we bound the first $z$-integral 
using the decomposition \eqref{eq:K_log_decomp3} and 
the methods in \hyr[int:log_expand]{\its Compute the integrals}.

For the $z$-integral in the summation, since $k$ is a positive integer, 
for any $b>0$ and integer $n \geq 0$, we bound the 
indefinite integral analytically using integration by parts and $z^b = \f{1}{b+1} \cdot \f{d}{dz} z^{b+1}$
\[
|\tts{\int_0^a} z^{b} (\log z)^n d z  |
\leq |\f{1}{1+b} a^{b+1} (\log a)^n  | + \f{n}{1 + b} | \int_0^a z^b (\log z)^{n-1} d z | ,
\]
which leads to an upper bound 
\beq\label{eq:TL_rem}
|\tts{\int}_0^{ y_i( x ) } z^{-1/3 +1}  |\log z|^{k+1} d z |
\leq y_i(x)^{5/3} ( \sum_{j \leq k+1} c_{i, j}  |\log y_i(x)|^{j} ) ,
\quad y_i(x) \leq x^{-1/2}, \quad y_i(x)^{5/3} \leq  x^{-5/6}, 
\eeq
for $ 1\leq i \leq 4$. Thus, the $z$-integral has a decay rate at least $x^{-5/6} |\log x |^{k+1}$,
which is much faster than the main term in \eqref{eq:TL_remain1} and 
\eqref{eq:K_log_decomp2:full}.

\subsubsection{Nonlocal estimates for $\bwf, \bw$ with $x \geq \xed$ }\label{app:nloc_very_far}

Recall $V, \cK_{\alb,i}$ from \eqref{eq:ker}. For $x \geq \xed$ out of $\supp(\bwp)$, we refine the estimate for $V_x(\bw) - \tf{V}{x}(\bw)-4$ to cancel the next order error, as motivated in Section \ref{sec:ansatz}. 
Using $\bw = \bwp + \bwf$ \eqref{eq:W_rep}, $ \bwf = \chi_1 x^{-1/3} \bwfa $ \eqref{def:bwfa}, 
we decompose 
\[
\bal
V_x(\bw) - \tf{V}{x}(\bw)-4 & = \cK_{\alb, \D}(\bw)-4=   \cK_{\alb, \D}(\bwf)-4 + \cK_{\alb,\D}(\bwp)  \\
 &  = \cK_{\alb , \D}( \one_{y \geq z_0} y^{-1/3} \bwfa) - 4 +   
\cK_{\alb, \D}( ( \chi_1- \one_{y \geq z_0} )  y^{-1/3} \bwfa ) + \cK_{\alb, \D}(\bwp) .
\eal
\]

We estimate  $\cK_{\alb, \D}(\bwp) , \cK_{\alb, \D}( ( \chi_1- \one_{y \geq z_0} )  y^{-1/3} \bwfa ) $ using estimate \eqref{eq:cK_WP_far} and \eqref{eq:K_log_err:far}, 
which has a decay rate $x^{\alb-0.4}$.  For the main term $ \cK_{\alb , \D}( \one_{y \geq z_0} y^{-1/3} \bwfa) - 4 $, since $\cK_{\alb, \D}(x) = \cK_{\alb, 2, J}(x) - \tf{1}{x} \cK_{\alb, 1, J}(x) $ by \eqref{eq:ker:c}, using the expansion 
\eqref{eq:K_log_decomp2:full} for $I_{ \eqref{eq:K_log_decomp1}, i}$, and linearity, we obtain
\beq
  \cK_{\alb , \D}( \one_{y \geq z_0} y^{-1/3} \bwfa)-4 
  = I_{ \eqref{eq:K_log_decomp1}, 2} - \tf{1}{x} I_{ \eqref{eq:K_log_decomp1}, 1} - 4
=  \bwfa \int_{ z_0 / x}^{\infty}  K_{\alb, \D}(1 ,z) z^{-1/3} d z -4 +
\cE_{\eqref{eq:K_ux_min_u:expand}} ,
\label{eq:K_ux_min_u:expand}
\eeq
with the error term $\cE_{\eqref{eq:K_ux_min_u:expand}}$ 
bounded by
\bseq\label{eq:K_ux_min_u:err}
{\small
\begin{align}
 & |\cE_{\eqref{eq:K_ux_min_u:expand}} |
\leq  \sum_{ 1\leq i \leq k} \B( \f{ | \ccb \AAF(- \f13, i) |  }{i!  \cdot  ( \log x )^{ \f13 +i}  }  
 +  \f{ | \ccc \AAF(-\f23, i) |  }{i!  \cdot  ( \log x )^{ \f23 +i}  }   \B)
 \cdot  | S_{2,L, i}( \f{z_0}{x}) + S_{2, R, i}  - S_{1, L,i}( \f{z_0}{x} ) - S_{1, R,i} |
 \notag 
\\
    &+      \int_{ z_0/x}^{\infty} 
\B| K_{\alb, \D}(1, z) z^{-1/3} (\log z)^{k+1}\B| \cdot
\B|  \f{ \ccb \cdot \AAF(-1/3, k+1)}{ (k +1)!  \cdot  (\log \xi_z)^{ 1/3 + k +1} }
 +   \f{\ccc \cdot \AAF(-2/3, k+1)}{ (k +1)! \cdot (\log \xi_z)^{ 2/3 + k +1} } \B| dz ,
 \label{eq:K_ux_min_u:err:a} 
\end{align}
}
where $S_{\s,\bullet, i}(\cdot)$ is defined in \eqref{eq:K_log_decomp2:full}. We have estimated all the $z$-integrals in Appendix \ref{app:WF_log}. Since all the bounds decay faster than $|\log x|^{-4/3}$, using monotonicity
\footnote{
The integrals for the error terms in \eqref{eq:K_ux_min_u:err:a} are bounded by a function $F_{\cE}(x)$, where $F_{\cE}(x)$ is a finite linear combination of the term $(\log x)^{-\g-k-1}$ appearing in \eqref{eq:TL_remain1} and terms of the form $x^{-a}(\log x)^i$ arising from \eqref{eq:TL_rem}, with $0\leq i\leq k+1$ and $a\geq \frac{5}{6}$. Moreover, all the coefficients in the upper bounds are positive. We apply the expansion \eqref{eq:K_log_decomp2:full} with $k\leq 20$. 
From \eqref{eq:poly_log_mono}, since $x^{-a} (\log x)^{ k+1 + 4/3 }$ 
is decreasing in $x$ for $a \geq 0.01$ and $x \geq 10^{10} \geq \max( \f{k+3}{ a}, 1)$, 
we easily obtain $F_{\cE}(x) \leq F_{\cE}( L) (\log L)^{4/3}  (\log x)^{-4/3} $
for any $x \geq L \geq 10^{10}$.
}
we further bound 
\beq
|\cE_{\eqref{eq:K_ux_min_u:expand}} | \leq C_{\eqref{eq:K_ux_min_u:err}} |\log x|^{-4/3} .
\eeq
\eseq

For the main term in \eqref{eq:K_ux_min_u:expand}, using the key identity \eqref{eq:asym_int} and \eqref{eq:deri_K_J:y<x}, 
for $x \geq \xed$, we obtain
\bseq\label{eq:K_ux_min_u:main1}
\beq
\tts{\int}_{ z_0 / x}^{\infty}  K_{\alb, \D}(1 ,z) z^{-1/3} d z
 = -\f{2}{3} -  \tts{\int}_{ 0}^{ z_0 / x }  K_{\alb, \D}(1 ,z) z^{-1/3} d z
 \teq -\f{2}{3}+ \cE_{ \eqref{eq:K_ux_min_u:main1} } ,
\eeq
with 
\beq
|\cE_{ \eqref{eq:K_ux_min_u:main1} } | \leq C_{\eqref{eq:deri_K_J:y<x}, \D, \f{z_0}{  \xed } } \tts{\int}_0^{z_0 / x} z^{2/3} d z
 = C_{\eqref{eq:deri_K_J:y<x}, \D, \f{x_0} { \xed }} \f{3}{5}  z_0^{5/3} x^{-5/3} .
\eeq
\eseq

Recall $\bwfa = \cca + \ccb (\log x)^{-1/3} + \ccc (\log x)^{-2/3}$ from \eqref{def:bwfa}
and $\cca <0, \ccb, \ccc >0$. Since $ \cca \cdot (-\f23) - 4 = 0$, combining the above estimates, we prove 
\bseq\label{eq:K_ux_min_u:main}
\beq
V_x(\bw) - \tf{V}{x}(\bw)-4 
= \cK_{\alb, \D}(\bwf)-4 + \cK_{\alb,\D}(\bwp)   = -\tf23 \ccb (\log x)^{-1/3} - \tf23 \ccc (\log x)^{-2/3} + \cE_{
\eqref{eq:K_ux_min_u:main}, V_x, V}(x)
\eeq
with the error part satisfying 
\beq
\bal
  |\cE_{\eqref{eq:K_ux_min_u:main}, V_x, V}(x)| 
  & \leq \max( |\cca|, | \bwfa( \xed )|) 
  \cdot  C_{\eqref{eq:deri_K_J:y<x}, \D, \f{x_0} { \xed }} \tf{3}{5}  z_0^{5/3} x^{-5/3}
  + C_{\eqref{eq:K_ux_min_u:err}} |\log x|^{-4/3}  \\
 & \quad   + 
| \cK_{\alb, \D}( (\one_{y \geq z_0} -\chi_1)  y^{-1/3} \bwfa )| 
 + | \cK_{\alb, \D}(\bwp)| \les  (\lgp  x )^{-4/3}
\eal
\eeq
for any $x \geq \xed$. We prove the first estimate in \eqref{eq:lin_nloc:far}.
\eseq

Similarly, for $x \geq \xed$, by bounding $\cK_{\alb, 1, J](\bw)}, \cK_{\alb, \D}(\bw)$ 
using \eqref{eq:vel_est:b} in Lemma \ref{lem:vel_est} with $ \bb = \alb, \mu = 1$, we bound 
\beq
\bal
  |\bv_x - \tf{1}{x} \bv|  & =  |\cK_{\alb, \D}(\bw)| \leq \mCC_{\D,0 }( \alb, \R_+) \nlinf{\bw y^{1/3}}  
 \teq C_{ \eqref{eq:V_mix:far}, \bv_{\D} } , \\
 |x + \cK_{\alb, 1, J(\bw)}| & \leq  (1 + \mCC_{\psio/x, J}( \alb, \R_+) \nlinf{\bw y^{1/3}} )  x
 \teq C_{ \eqref{eq:V_mix:far} , \bv }  \cdot x,   
 \eal
 \label{eq:V_mix:far}
\eeq
where $\mCC_{\bullet}$ is the constant defined in \eqref{eq:vel_const}. Using \eqref{eq:cK_WP_far}
and \eqref{eq:J_main_bw2}, for $x \geq \xed$, we prove 
\bseq \label{eq:J_main_far}
\beq
\bal
 \cJ(\bw)(x) & = \cJ(\bw)( \xed )
 +  \cJ(\bwf)( x) - \cJ(\bwf)( \xed )  
  = \cJM(x) 
  + \cE_{ \eqref{eq:J_main_far}, \cJM} ,  \\
 \cE_{ \eqref{eq:J_main_far}, \cJM} &=  \cJ(\bw)( \xed ) - \cJM( \xed )
 +  \cE_{ \eqref{eq:J_main_bw2}}
 , \quad | \cE_{ \eqref{eq:J_main_bw2} } |
\leq   \tf15 C_{ \eqref{eq:bwf_J_decomp2_err} }( \xed ) \xed^{-5} ,
 \eal
\eeq
where $ \cJM$ is the main term in \eqref{def:J_main} 
\beq
\cJM(x)  = 
   \cca  \log x + \tf{3}{2} \ccb  (\log x)^{2/3} + 3 \ccc (\log x)^{1/3}.
\eeq
\eseq
Since $\bv =- 2 \alb x \cJ(\bw)  + x + \cK_{\alb, 1, J}(\bw)$, 
$\cca = -6$ \eqref{def:WF_para}, combining the above estimates, 
and extracting the main term $-2\alb \cca \log x = 4 \log x$ from $-2\alb \cJM$ \eqref{def:J_main}, we prove 
\begin{align}
 & \bv    =  ( - 2 \alb \cJM + \cE_{ \eqref{eq:V_main:far}, V/x } ) x , 
 \ 
 |\cE_{ \eqref{eq:V_main:far}, V}| \leq  C_{ \eqref{eq:V_mix:far} , \bv/x } + 2 \alb | \cE_{ \eqref{eq:J_main_far}, \cJM} | \leq C_{  \eqref{eq:V_main:far} , V/x} ,   \notag  \\
 & \bv    =  ( 4 + \cE_{ \eqref{eq:V_main:far}, \log} ) x   \log x  , \label{eq:V_main:far} \\
 & |\cE_{ \eqref{eq:V_main:far}, \log }| 
  \leq   \tf32 |\ccb|  (\log \xed )^{-\tf13} + 3 |\ccc| (\log \xed)^{-\tf23} 
 +  C_{  \eqref{eq:V_main:far} ,V/x} (\log \xed )^{-1} 
 \teq C_{  \eqref{eq:V_main:far} ,\log} \notag
 \end{align}

We verify $ 4 + C_{  \eqref{eq:V_main:far}, \log} > 0$ so that $\bv \gtr x \log x$.

\section{Bounding the explicit integrals for the fixed point argument}\label{app:proof}

In this Appendix, we combine the estimates in Appendix \ref{app:numerics} to estimate the 
$B_{\bullet}$ functions 
\begin{align}
   \uds{ \eqref{eq:est_I} } {B_I }  & = 
    \f{ \MCC_{\psio/x} ( x\we x_1 ) }{3  |\bar g ( x\we x_1 )| }  + 
 \int_0^{x_1 \we x}  \MCC_{\psio/x}(y) \B|   \big( \tf13 \Vmix + \Wmix \big)  \f{1}{\bv} \B| d y , \notag \\
\uds{ \eqref{eq:est_II1}} { B_{II, 1} } & = | \f{\rho  \MCC_{\psio/x, J} }{3 \bar g }(x) | 
  +
| \f{\rho  \MCC_{\psio/x, J} }{3 \bar g}(x_1) | 
+ \f13 \int_{x_1}^x \MCC_{\psio/x, J}(y) \B| \f{\rho_y \bar g - \rho \bar g_y}{  \bar g^2}  \B| d y , \notag  \\
 \uds{ \eqref{eq:est_II3} }{ B_{II,3 } }  
  & =  \int_{x_1}^x \B| \f{ \MCC_{\psio/x}(y) \cdot \Wmix(y)  }{ \bar V(y)} \B|  \rho(y) d y ,  \quad \uds{ \eqref{eq:est_II_III} } { B_{II,2, III} } 
  = 
 \int_{x_1}^x \B| 
     \f{\pa_y \rho}{\rho} + \f{2 \alb y^{\alb} \bw}{3 \bv} \B| 
     \cdot \min_i( \mu_i |y|^{-b_i}) \f{1}{|\bw|} d y  ,  \notag  
\end{align}
\begin{align}
\uds{ \eqref{eq:est_err} }{ B_{\cE}(x, \dfix ) } 
&= \int_0^x \f{|\cR(\bw)|  \cdot  (\rho + \vp^{-1} |\bw|^{-1} \dfix ) }{ 2 (\bv(y) - \MCC_{\psio/x}(y) y  \dfix )_+ } , \quad 
\uds{ \eqref{eq:est_non}}{ B_{\cN}(x, \dfix ) }     =  
 \int_0^x  B_{\cR} 
\cdot 
\f{  \vp^{-1} |\bw|^{-1}   + \bar g^{-1} \MCC_{\psio/x}(y) \rho  }{ 2 ( \bv(y) - \MCC_{\psio/x}(y) y 
\dfix )_+ }   , 
 \notag \\
  \uds{ \eqref{eq:est_cR:b} }{ B_{\cR}(x) }
 & = \min\B( \f{2}{3} \MCC_{\psio_x}( x) 
 + 2 \B| x \f{\bar W_x}{ \bar W } \B| \MCC_{\psio/x}(  x ),
 2 |  \Wmix(x)  | \MCC_{\psio_x}(  x) 
 + 2 \B| x \f{\bar W_x}{ \bar W } \B| \MCC_{\D}( x ) \B) ,
  \label{eq:cB_pf1}
\end{align}
in Lemma \ref{lem:close}, where we have used the following quantities 
to rewrite $B_I, B_{II,3}, B_{\cR}$ equivalently :
\beq\label{def:mix}
\Wmix(y) \teq  \tf{ y \pa_y \bw(y)}{\bw(y)} + \tf13,
\quad \Vmix(y) \teq \tf{ y \pa_y \bv(y)}{\bv(y) } - 1,
\quad  \tf13 \Vmix + \Wmix  = \tf{ y \pa_y \bv(y)}{3 \bv(y) } + \tf{ y \pa_y \bw(y) }{\bw(y)}.
\eeq
We use  $\Wmix , \Vmix$ to exploit cancellation and reduce round-off error. All the functions in the integral evaluate at $y$,  $\bar g(y) = \f{\bar V(y)}{y}$ \eqref{eq:nota1}, and $\MCC_{\bullet}(x)$ denotes $\MCC_{\bullet}(\vmu,\bb,\cI,x)$ for 
$\bullet \in \{ \psio/x,\psio_x,\D, (\psio/x, J), (\psio_x, J) \}$ in Lemma~\ref{lem:vel_est} for nonlocal estimates.

Note that we have substituted $\dfix$ for $\| w \|_{\cX}$ in the functions $B_{\cE}(x, \| w \|_{\cX} ), B_{\cN}(x, \| w \|_{\cX}  )$.

We recall the profiles $\bv, \bw$ from \eqref{eq:W_rep}, residual error $\cR(\bw)$ \eqref{eq:lin_nloc}, weights $\vp$ \eqref{norm:X}, $\rho$ \eqref{eq:rho1}.

\subsection{Estimates for $x \leq \xed$ }\label{app:pf_bound}

Recall $\bw = \bwf + \bwp$ from \eqref{eq:W_rep}, $V(\bw) = x + \psio(\bw) = x + \cK_{\alb,1}(\bw)$ from \eqref{eq:ker}
and the refined mesh $X_i$ \eqref{def:XX}. 
We summarize the evaluations and estimates of these functions in bounded domain $[0, \xed ]$.

\begin{itemize}[leftmargin=1em]
  \item 
  Evaluate $\pa_x^i \bwp, \pa_x^i \bwf, \pa_x^i \bw, i \leq 5$   on $x = X_i$ using \eqref{eq:W_rep}; bound $\pa_x^i \bwp, \pa_x^i \bwf, \pa_x^i \bw, i \leq 5$ using \hyr[bd:Method1]{\its Method A.1}, \hyr[bd:Method2]{\its A.2}, \hyr[bd:Method3]{\its A.3} and \hyr[bd:intval]{\its Interval Arithmetic}.

  \item  $\bv$:  Evaluate $\cK_{\alb, i}(\bwp)(x)$ following Appendix \ref{app:nonlocal} and $\cK_{\alb, i}(\bwf)(x)$ 
following Appendices \ref{app:bwf_near}, \ref{app:bwf_far}  on the mesh $x= X_i$ \eqref{def:XX},  with rigorous error estimates. 
Obtain explicit  bounds for $V_{xxx}(\bw) = \pa_x^3 \cK_{\alb, 1}(\bw)(x)$ 
for all $x \in \R_+$ following Appendix \ref{app:Vxxx}. 
Then using \hyr[bd:interp]{\its Interpolation Estimates}, we obtain rigorous piecewise bounds 
for $\bv, \pa_x \bv$. For $\pa_{xx} V(\bw) = \pa_{xx} \psio$, we use 
\[
  |\pa_{xx} V(x) - \tf{1}{b-a} ( \pa_y V(b) - \pa_y V(a) ) |
  =  |\pa_{xx} V(x) - \tf{1}{b-a} \tts{\int}_a^b \pa_y^2 V(y) d y|
\leq \| \pa_x^3 V \|_{L^{\infty}[a, b]} \f{b-a}{2}
\]
and the values of $\pa_x \bv$ at $X_i, X_{i+1}$,  
to bound $\pa_{xx} \bv(x)$ on $[X_i, X_{i+1}]$. 
Using grid point values and \hyr[bd:interp]{\its Interpolation Estimates}, 
we derive sharp piecewise bounds for $\pa_x \bar g= \f{\bv_x -  \bv / x }{x},
\f{x \pa_x \bw}{\bw} +  \f{1}{3} , \f{x \pa_x \bv}{\bv} - 1$.

Recall
\beq\label{eq:cR_recall}
\cR(\bw) =  3 - \alb - (1-\alb) \bv_x - 2 \bv \tf{\pa_x \bw}{ \bw},
\quad \bw \cR(\bw) = \bw(   3 - \alb - (1-\alb) \bv_x ) -2 \bv \pa_x \bw .
\eeq

\item For $\cR(\bw)$ in \eqref{eq:cR_recall}, we evaluate $\cR(\bw), \bw \cR(\bw)$ on mesh $x = X_i$ using values for $V, V_x, W$ on $x=X_i$ and Leibniz rule \eqref{eq:Leibiz}.  Obtain piecewise bound for $\pa_x^2 (\bw \cR(\bw))$ using Leibniz rule \eqref{eq:Leibiz},  \hyr[bd:Method1]{\its Method A.3}, and bounds for $\pa_x^i \bv, \pa_x^i \bw$ with $i\leq 3$. 
Using 
\hyr[bd:interp]{\its Interpolation Estimates}, \hyr[bd:intval]{\its Interval Arithmetic} and the above bounds for $\bw$, we obtain tight bound for $\bw \bar \cR(\bw)$ and $\bar \cR(\bw)$.

\item Obtain piecewise tight bounds for explicit weights $\vp, \rho$ using \hyr[bd:intval]{\its Interval Arithmetic}. 

\item Bound constants $\mCC_{\bullet}( \bb, \cI)$ \eqref{eq:vel_const} with parameters $\bb$ in \eqref{def:mu_b} and partition $\cI$ in \eqref{def:interval}  following Appendix \ref{app:sharp_constant}. Assemble functions $\MCC_{\bullet}(\vmu, \bb, \cI, x)$ as in \eqref{eq:vel_const}. 
Evaluate piecewise tight bounds for $\MCC_{\bullet}(\vmu, \bb, \cI, x)$ using \hyr[bd:intval]{\its Interval Arithmetic}.
\end{itemize}

\paragraph{\bf Removable singularity at $x=0$} 
The integrands in $ B_I$, $ B_{\cN}(x, \dfix)$, $B_{\cE}(x, \dfix )$ \eqref{eq:cB_pf1}  have a removable singularity 
$\f{\MCC_{\psio/x}}{\bv}$ at $x=0$. Since  $\bb_1<-1$ \eqref{def:mu_b}, 
by definition \eqref{eq:J_est} and using \eqref{eq:vel_const}, 
\[
\MCC_{\bullet}(\vmu, \bb, \cI, x) \leq \MCC_{\bullet, 0}(\vmu, \bb, \cI, x) 
\leq  \mu_1 \mCC_{ \bullet, 0}( \bb_1, \R_+ ) |x|^{\alb - \bb_1} ,
\quad \bullet \in \{ \psio/x, \psio_x \}
\]

Thus $\f{\MCC_{\bullet}}{x}$ is bounded near $x=0, \bullet \in \{ \psio/x, \psio_x \}$. 
Similarly, using $ \cR(\bw)(0) = 0$ due to $\pa_x \bv(0)=1$ (see Remark \ref{rem:wx0})
and $|\pa_x^2 \cR(\bw)| \les 1$,  we bound $ |\tf{1}{x} \cR(\bw)| \les 1$ near $x=0$. 
As a result, we  bound the functions $B_{\bullet}  \les C_{1, \bullet} |x|^{\alb +1}$
for $\bullet \in \{ I, \cN, \cE \}$ with some explicit constant $C_{1, \bullet}$.  
From \eqref{norm:X} and \eqref{def:mu_b}, since the coefficient in \eqref{eq:profile:b} satisfies 
$\vp |\bw| \les |x|^{1 + \bb_1} \les |x|^{-1}$, we obtain bounded functions 
$ \vp |\bw| B_{\bullet}, \bullet \in \{ I, \cN, \cE \}$, and hence bounded $B_{\mf{tot}}$ \eqref{eq:profile:b}  near $x=0$.

Using the above piecewise bounds, we obtain piecewise upper/lower bounds for integrands and boundary terms in 
\eqref{eq:cB_pf1} on  $[X_i, X_{i+1}] ,\forall 1 \leq i \leq \nnx-1$. Applying \hyr[int:Method1]{\its Method B.1}, 
we bound the integrals associated with \eqref{eq:cB_pf1} and the functions in \eqref{eq:cB_pf1}
for $x \in [X_i, X_{i+1}], 1\leq i \leq \nnX-1$.

\subsection{Far-field estimates for $x \geq \xed$ }\label{app:cR_far}

For $x \geq \xed$, we decompose the integral in \eqref{eq:cB_pf1} as
\[
\tts{  \int}_a^x f(y) d y = \int_a^{ \xed } f(y) d y+ \int_{ \xed }^x  f(y) dy = I +II.
\]
We estimate $I$ following Appendix \ref{app:pf_bound}. For $II$, we derive explciit bounds 
\beq\label{eq:pf_far_asym1}
 |f(y)| \leq  \tts{\sum}_{ 1\leq i \leq l } c_i x^{-1}  (\log x)^{-1- i/3 } ,
\quad \int_{\xed }^{x} x^{-1}  (\log x)^{-1- p} \leq \tf{1}{p}  (\log \xed )^{-p}, \ \forall \ p >0,
\eeq
with finite many terms and explicit constant $c_i$.

It remains to estimate the asymptotics for the functions in \eqref{eq:cB_pf1}.
Recall $\Wmix$ from \eqref{def:mix}. For $ x \geq \xed$, from \eqref{eq:W_rep}, 
we get 
$\bw = \bwf = \chi_1 x^{-1/3} \bwfa$ 
\eqref{eq:W_rep}, \eqref{def:bwfa}, we bound 
\beq\label{eq:W_mix_asym}
 \Wmix = \f13 + \f{x \pa_x \bw}{\bw}
 =   \f{ x \pa_x \bwfa   }{\bwfa  }+ \f{x\pa_x \chi_1}{\chi_1} 
= \f{- \tf{1}{3} \ccb (\log x)^{-4/3} - \f{2}{3} \ccc (\log x)^{-5/3}  }{ \bwfa}  + \f{x\pa_x \chi_1}{\chi_1}  .
\eeq

We estimate the first term on RHS using \eqref{eq:bwfa_far}. 
Since $x^{-5} |\log x|^{4/3}$ is decreasing for $x \geq \xed > 10^{20}$ by \eqref{eq:poly_log_mono}, \eqref{def:XX}, using \eqref{eq:bound_chi_tail}, \eqref{eq:deri_WF_far_k0}, 
and the formula \eqref{eq:W_rep} for $\chi_0$, we bound 
\beq
\bal
| \f{x\pa_x \chi_1}{\chi_1}|
\leq 
 \f{  C_{ \eqref{eq:bound_chi_tail},  \chi_0, k, z_0, \xed }  x^{-5} }{ \chi_1(\xed) }
 & \leq  \f{  C_{ \eqref{eq:bound_chi_tail},  \chi_0, k, z_0, \xed }  \xed^{-5} |\log \xed |^{4/3}  }{ \chi_1(\xed) } |\lgp x |^{-4/3}, \\
 (1 - \chi_1)(x) & \leq \tf{ \xed^5 z_0^5}{ ( \xed -z_0)^5 } x^{-5} .
  \label{eq:chi_asym_rat}
 \eal
\eeq

Using these estimates and \eqref{eq:W_mix_asym}, for $x \geq \xed$, we obtain
\beq
  |\Wmix| = |\tf13 + \tf{x \pa_x \bw}{\bw}| 
= | \tf{ x \pa_x ( x^{1/3} \bw)}{ x^{1/3} \bw } |
  \leq C_{ \eqref{eq:WW_mix_asym}}  (\log x )^{-4/3},
  \quad   | \tf{x \pa_x \bw}{\bw}| \leq \tf13 + C_{ \eqref{eq:WW_mix_asym}}  (\log \xed )^{-4/3},
  \label{eq:WW_mix_asym}
\eeq
with an explicit constant $C_{ \eqref{eq:WW_mix_asym}}$. The asymptotics of other terms are estimated similarly to \eqref{eq:chi_asym_rat}.

Below, we summarize the asymptotics estimates for functions in the formulas \eqref{eq:cB_pf1} for $x \geq \xed$.

\begin{itemize}[leftmargin=1.5em]

\item $\bw=\bwf $:
Since 
$ \bwf = \chi_1 y^{-1/3} \bwfa $, 
using 
\eqref{eq:deri_WF_far} and following Appendix \ref{app:profile_far_field_bound}, we estimate $\bwf = C x^{-1/3}$ with a tight bound for $C \in [C_l ,C_u]$. 
We also use  \eqref{eq:WW_mix_asym}.

\item $\bv$: Recall $\bar g = \tf{1}{x} \bv$. We use \eqref{eq:V_main:far} and \eqref{eq:V_mix:far} to get 
\[
\bv \in [4 -  C_{  \eqref{eq:V_main:far} ,\log}, 4 +  C_{  \eqref{eq:V_main:far} ,\log}] \cdot x \log x, \
| \pa_x \bar g |= |\tf{1}{x} ( \pa_x \bv - \tf{1}{x} \bv)|
\leq C_{ \eqref{eq:V_mix:far},\bv_{\D} } x^{-1}.
\] 

\item Weight $\vp$ \eqref{norm:X}: Since $\vpa |x|^{-\alb}$ is decreasing in $x$, 
we get $\vpa(x) \in [ \vpa( \xed )| \xed |^{-\alb}, \mu_\nmu^{-1} ] x^{\alb}$.
For $\rho$ \eqref{eq:rho1}, since $ \xed > x_1$ by \eqref{def:x1}, we get
\beq\label{eq:expand_rho}
 \rho(x) = \log(x_1)^{-1/3} (\log x)^{1/3}, \quad \rho_x = \tf13 x^{-1} \log(x_1)^{-1/3} (\log x)^{-2/3}.
\eeq

\item Nonlocal estimates:
We get
$\MCC_{ \bullet }( \vmu, \bb, \cI, x) \leq \mu_\nmu \mCC_{\bullet}( \alb ,\R_+) ,  \bullet \in \{ (\psio/x, J), ( \psio_x, J),
(\D, 0).$  by \eqref{eq:vel_const} \\
For $\MCC_J( \vmu, \bb, x)$ in \eqref{eq:J_est} and $x \geq \xed $, 
since $\bb_5 =\alb$ by \eqref{def:mu_b}, we have 
\beq
\bal
 \MCC_J(\vmu, \bb, x) & \leq  \MCC_J(\vmu, \bb, \xed ) + (\log x_1)^{1/3} \tts{\int}_{ \xed }^x 
 \vmu_{\nmu} y^{-1} 
 (\log y)^{-1/3} d y \\
  = &   \MCC_J(\vmu, \bb, \xed ) + 
   \vmu_{\nmu}  \tf32 (\log x_1)^{1/3}   ( (\log x)^{2/3}- (\log \xed )^{2/3} )
  \leq  C_{ \eqref{eq:J_e_asym} } (\log x)^{2/3}.
 \eal
 \label{eq:J_e_asym}
\eeq
The above function is linear in $(\log x)^{2/3}$. We derive $ C_{ \eqref{eq:J_e_asym} }$ 
using monotonicity. Combining the above two bounds,
and using definition \eqref{eq:J_est}, we derive 
\beq
 \MCC_{ \bullet }(\vmu, \bb, \cI, x) \leq  (2 \alb C_{ \eqref{eq:J_e_asym}} 
+ \MCC_{ \bullet, J} ( \log \xed )^{-2/3} )  (\log x)^{2/3} 
\teq C_{ \eqref{eq:vel_const_asym_psi}, \bullet }  (\log x)^{2/3} ,
\label{eq:vel_const_asym_psi}
\eeq
for  $\bullet \in \{ \psio/x, \psio_x\}$. Combining \eqref{eq:V_main:far} and the above estimate, we bound 
\beq
  (\bv(y) - \MCC_{\psio/x}(y) y \dfix)_+
  \geq ( 4 -  C_{  \eqref{eq:V_main:far} } - 
\dfix    C_{ \eqref{eq:vel_const_asym_psi}, \psio/x }  \log( \xed )^{-1/3} ) x \log x 
\teq C_{ \eqref{eq:V_l_asym} }  x \log x .
\label{eq:V_l_asym}
\eeq

\end{itemize}

Using the above estimates, we bound the asymptotics for integrands and boundary terms in 
\eqref{eq:cB_pf1} using triangle inequality except for the following two terms. Note that $B_I(x) = 0$
\eqref{eq:cB_pf1}
 for $x \geq \xed$. 

\subsubsection{Cancellations in integrands} 

We need to exploit the crucial cancellations for  $\f{\pa_x \rho}{\rho} + \f{2 \alb y^{\alb} \bw}{3 \bv}$ in the integrands for $B_{II,2,III}$, $B_{\cE}$, and $\cR(\bw)$ in \eqref{eq:cB_pf1}. See more discussions in Remark \ref{rem:int_wg} and Section \ref{sec:log_cancel}. Using a direct 
calculations and \eqref{eq:V_main:far}, we get 
\[
\bal
 I = \f{\pa_x \rho}{\rho} + \f{ 2 \alb x^{\alb} \bw}{3 \bv}
  = \f{1}{3 x \log x} + \f{ 2\alb \chi_1 \bwfa  }{3 \bv}
 = \f{1}{3 \bv} ( \f{ 1 }{x \log x} \bv + 2 \alb \bwfa + 2 \alb (\chi_1-1) \bwfa).
 \eal 
\]
Using $ \bv =( - \f23 \cJM + \cE_{ \eqref{eq:V_main:far}, V/x } ) x$  \eqref{eq:V_main:far} 
and the forms for $\cJM$ \eqref{eq:J_main_far} and $\bwfa$ \eqref{def:bwfa}, we derive
\beq
\bal
I & =\tf{1}{3 \bv} ( \tf23( - \tf{1}{\log x} \cJM + \bwfa) +  \cE )
= \tf{1}{3 \bv} ( 
   -\tf13 \ccb (\log x )^{-1/3}
  -  \tf43 \ccc (\log x)^{-2/3}
 +  \cE )  , \\
 |\cE | & \leq 2\alb (1-\chi_1) |\bwfa| 
+  |\log x |^{-1} C_{ \eqref{eq:V_main:far}, V/x}
\label{eq:expand_mix2}
\eal 
\eeq
We use \eqref{eq:chi_asym_rat} to bound asymptotics of $1-\chi_1$.

\subsubsection{Expand $ \cR(\bw)$ }\label{app:cR_expand}

We exploit the cancellation $\bv_x - \tf{1}{x} \bv - 4$ \eqref{eq:K_ux_min_u:main} 
and $ \tf13 + \f{x \pa_x \bw}{\bw}$ in \eqref{eq:WW_mix_asym} for a faster decay.  Using $\alb = \tf13$
and \eqref{eq:W_mix_asym}, we rewrite $\cR(\bw)$ in \eqref{eq:cR_recall} as 
\[
 \cR(\bw) = - \tf23 ( \bv_x - \tf1x \bv -4 ) - 2 \tf{\bv}{x} ( \tf{\pa_x \bw}{ \bw} + \tf13)
= - \tf23 ( \bv_x - \tf1x \bv -4 ) + 4 \alb \cJM \cdot \tf{x \pa_x (\bw x^{1/3}) }{ \bw x^{1/3}} - 2 (\tf{\bv}{x} + 2 \alb \cJM) \cdot \tf{x \pa_x (\bw x^{1/3}) }{ \bw x^{1/3}}
\]
where $\cJM$ is given in \eqref{eq:J_main_far}. Using \eqref{eq:K_ux_min_u:main},
\eqref{eq:V_main:far}, and \eqref{eq:W_mix_asym}, we derive 
\beq
\bal
 \cR(\bw) & = \f49 ( \ccb  (\log x )^{-1/3} + \ccc ( \log x)^{-2/3} )
 + \f43 \cJM  \cdot  \f{x \pa_x \bwfa }{ \bwfa  }  \\
 & \quad 
- \f23 \cE_{ \eqref{eq:K_ux_min_u:main},V_x, V} 
+ \f43 \cJM  \cdot \f{x \pa_x \chi_1}{\chi_1} 
 - 2 \cE_{ \eqref{eq:V_main:far}, V/x } \f{x \pa_x (\bw x^{1/3}) }{ \bw x^{1/3}}
\teq \cM_{\eqref{eq:cR_decomp} } 
+ \cE_{\eqref{eq:cR_decomp} } .
\label{eq:cR_decomp}
\eal
\eeq
where the main term $\cM_{\eqref{eq:cR_decomp} } $ denotes the first row on the $\mw{RHS}$,
and error part $\cE_{\eqref{eq:cR_decomp} }$ denotes the second row. 
Denote $s = (\log x )^{-1/3}$. 
Recall $\cJM$ from \eqref{def:J_main} and $\bwfa$ from \eqref{def:bwfa}.
We get
\[
\cJM = \log x \cdot ( \cca + \tf32 \ccb s + 3 \ccc s^2), 
\  \bwfa = \cca + \ccb s + \ccc s^2, 
\quad
 \pa_x s(x) = - \f{s(x)}{3x \log x},  
 \   \pa_x \bwfa = - \f{\ccb s + 2 \ccc s^2}{3 x \log x}. 
\]
Thus,  we estimate the main term $\cM$ as 
\[
\bal
\cM_{\eqref{eq:cR_decomp} } 
& = \tf49 ( \ccb s  + \ccc s^2 ) - \tf49( \cca + \tf32 \ccb s + 3 \ccc s^2) 
\cdot  \f{ \ccb s + 2 \ccc s^2}{  \cca + \ccb s + \ccc s^2 }  =  -\frac{2s^2\left(10 \ccc^2 s^2+8 \ccb \ccc s+\ccb^2 + 2 \cca \ccc \right)}{9\left(\ccc s^2+ \ccb s+ \cca \right)}  .
\eal
\]
Recall $\cca = -6$. Using \hyr[bd:intval]{\its Interval Arithmetic} 
to the $s$-function on $s \in [  0,  (\log \xed )^{-1/3}  ]$, we evaluate 
\beq
|\cM_{\eqref{eq:cR_decomp} } | \leq C_{ \eqref{eq:cR_main:far} } (\log x )^{-2/3}, \
\forall x \geq \xed .
\label{eq:cR_main:far}
\eeq

The estimates of $\cE_{\eqref{eq:cR_decomp}}$  follows from the bound for $\chi_1$ and \eqref{eq:WW_mix_asym}
\[
|\cE_{\eqref{eq:cR_decomp}}|
\leq 
\tf23 |\cE_{ \eqref{eq:K_ux_min_u:main},V_x, V} | 
+ \tf43 |\cJM|  \cdot  |\tf{x \pa_x \chi_1}{\chi_1} | 
 + 2 | C_{  \eqref{eq:V_main:far} ,V/x}| 
 \cdot 
 C_{ \eqref{eq:WW_mix_asym}}  (\log x )^{-4/3}.
\]

Since all constants in the above estimates are bounded, combining the above two estimates,
we prove the second estimate for $\cR(\bw)$ in \eqref{eq:lin_nloc:far}.

\subsection{Estimate for $B_\sst$ in Lemma \ref{lem:profile2} }\label{app:proof_near_stab}

The estimate for $B_\sst(x)$ in Lemma \ref{lem:profile2} follow the same strategy. 
For $x \in [0, \xed ]$,  we estimate it following Appendix \ref{app:pf_bound}. 
For $x \geq \xed$, instead of deriving the asymptotics of integrand 
in the form \eqref{eq:pf_far_asym1}, we derive 
asymptotics in the form of \eqref{eq:near_field_stab_idea}. 
Recall $\vmu_\sst,\bb_\sst \in \R^3$ from \eqref{def:mu_b_st}. Using 
\eqref{eq:V_l_asym} and \eqref{eq:vel_const}, we bound the asymptotics 
\[
  (\bv(y) - C_{\psio/x}(y) y \dfix)_+ \geq 
   C_{ \eqref{eq:V_l_asym} } \log \xed \cdot y ,
   \quad  C_{\psio/x}( \vmu_\sst, \bb_\sst, \cI, y)
   \leq \vmu_{\sst, \nst } \mCC_{ \psio/x, 0 }( \bb_{\sst, \nst }  , \R_+) 
   |y|^{\alb-\bb_{\sst, \nst }  },
\]
where $\nst$ denotes the last index in vectors $\vmu_{\sst}, \bb_\sst$. The asymptotics for $\vp, \rho$ are derived using monotonicity. With these asymptotics, 
we derive the far-field estimates following Appendix \ref{app:cR_far}.

\subsection{Implementation of the computer-assisted proof}\label{app:code}

Below, we discuss the implementation of the computer-assisted proof and the organization of the codes, which consist of five main folders. 
The complete computer-assisted codes are available at the Dropbox link.
\footnote{
\url{https://www.dropbox.com/scl/fo/2zhepdazxil8ocab2ezv9/AHNj-IlPrVqGaZLPGqHIawY?rlkey=ea3t5cdiz5rd5stxvucyo70z1&st=0dizs7oz&dl=0}
}

\vs{0.05in}
\paragraph{\bf Construct approximate profile: \texttt{Solve\_profile}.} 

This folder contains codes for constructing the approximate profile without tracking errors, 
implementing the numerical scheme in Section~\ref{sec:appr_profile}. 

\begin{itemize}[leftmargin=1.5em]

\item Run \texttt{Main\_setup.m} to prepare mesh, B-splines, and matrices for computing the nonlocal velocity.

\item Run \texttt{run\_Bspline.m} to solve the PDE~\eqref{eq:1D_dyn_solve} and construct the approximate profile.

\end{itemize}

Run \texttt{Plot\_solution.m} to generate Figure \ref{fig:solu_profile}. 
We have already constructed the profile and saved it in the file 
\texttt{solu\_C13\_pertb\_2000.mat}. Since these codes are \emph{not} part of the rigorous proof, one does \emph{not} need to run them and may skip this folder when verifying the proof.

\vs{0.05in}
\paragraph{\bf Main steps for the proof:  \texttt{Rigorous\_proof} }

This is the main folder containing codes for the proof.  One should implement the following codes in order.

\begin{itemize}[leftmargin=1em]

\item \textbf{Proof of Lemma \ref{lem:bw_basic_sign}}: \texttt{Main\_profile\_est.m}. Evaluate the approximate profile and error $\bw, \bv, \bar \cR$ on mesh $X$ \eqref{def:XX}, 
derive their piecewise bounds and asymptotics estimates.  
This codes verify  basic profile properties
in Lemma \ref{lem:bw_basic_sign}.

The main time-consuming step is the construction of the matrix $H_X$, which rigorously  evaluates the nonlocal terms with error controlled on the mesh $X$. We provide two options:
(1) set \texttt{Use\_HX = 1} to load the precomputed matrix;
(2) set \texttt{Use\_HX = 0} to generate the matrix $H_X$, which takes about $30$ minutes. \footnote{
Benchmark environment: All computations were performed on a MacBook Air (13-inch, 2024) with an Apple M3 chip, 16\,GB memory, running macOS Sonoma 14.6.
}
Other than computing this matrix, other steps take less than 3 minutes.

The following two programs each run in less than $5$ seconds.

\item \textbf{Proof of Lemma \ref{lem:close}}:  \texttt{Main\_fix\_point.m}.  It rigorously encloses the $B_{\bullet}$-functions in \eqref{eq:cB_pf1} following Appendix \ref{app:pf_bound}, \ref{app:cR_far}, and verify the inequalities in Lemma \ref{lem:close}.

\item 
\textbf{Proof of Lemma \ref{lem:profile2}}: \texttt{Main\_contraction.m} It rigorously encloses the function $B_{\sst}$ \eqref{eq:linW_est} in the proof of Theorem \ref{thm:near_field_stab} and  verify the inequalities in Lemma \ref{lem:profile2}.

\end{itemize}

This folder also contains several estimates of piecewise bounds following Appendix \ref{app:numerics}.

\vs{0.05in}
\paragraph{\bf \texttt{Rigorous\_evaluation}.}

This folder contains codes for evaluating B-spline representation \eqref{eq:Bspline_rep}, \(\bwf\) \eqref{eq:W_rep}, and the nonlocal terms \(\cK_{\bullet}(F)(x)\) \eqref{eq:ker}, 
following  Appendix \ref{app:numerics}.
The codes support both

\texttt{itl = 0}: computations without tracking errors, used to compute the approximate profiles;

\texttt{itl = 1} or \texttt{itl\_opt = 1}: rigorous interval-arithmetic computations tracking truncation and round-off errors, used in the rigorous proof with certified error control.

This folder does not contain the main executable scripts. 

\begin{itemize}[leftmargin=1.5em]

\item \texttt{Build\_solu\_WF.m}: Constructs functions and nonlocal terms associated with $\bwf$.

\item \texttt{BS4\_1D\_interp.m}: Constructs matrices mapping the coefficients $a_i$ in the Bspline representation \eqref{eq:Bspline_rep} to the values of its derivatives on the mesh $Z$.

\item \texttt{Calpha\_K\_interp.m}: Constructs matrices mapping the coefficients $a_i$ in the B-spline representation to the nonlocal terms $\psio(\ww)$ and $\psio_x(\ww)$ on the mesh $Z$.
\end{itemize}

\vs{0.05in}
\paragraph{\bf INTLAB toolbox and compatible functions: \texttt{Code\_itl}, \texttt{Intlab\_V11}} \
The basic MATLAB functions, \texttt{prod.m} for products, \texttt{cumsum.m} for cumulative sum, 
\texttt{sign.m} for sign, are not compatible with INTLAB. We therefore provide 
their compatible versions in 
 \texttt{Code\_itl}. 
 \texttt{Intlab\_V11} contains the MATLAB toolbox INTLAB (version 11 \cite{Ru99a}) for interval-arithmetic computations.

\bibliographystyle{plain}
\bibliography{selfsimilar}

\end{document}